\newtheorem{theorem}{Theorem}
\numberwithin{theorem}{section}
\newtheorem{lemma}[theorem]{Lemma}
\newtheorem{proposition}[theorem]{Proposition}
\newtheorem{corollary}[theorem]{Corollary}
\theoremstyle{definition}
\newtheorem{definition}[theorem]{Definition}
\newtheorem{example}[theorem]{Example}
\newtheorem{remark}[theorem]{Remark}
\newcommand{\N}{\mathbb{N}}
\newcommand{\R}{\mathbb{R}}
\newcommand{\X}{\mathcal{X}}
\newcommand{\Xs}{\mathcal X}
\newcommand{\Ys}{\mathcal Y}
\newcommand{\E}{\mathbb{E}}
\newcommand{\F}{\mathcal{F}}
\newcommand{\G}{\mathcal{G}}
\newcommand{\B}{\mathcal{B}}
\newcommand{\supp}{\textup{supp}}
\newcommand{\pr}{\textup{pr}}
\newcommand{\id}{\textup{id}}
\newcommand{\proj}{\textup{proj}}
\newcommand{\law}{\mathcal{L}}
\newcommand{\dom}{\textup{dom}}
\newcommand{\cpl}{\textup{Cpl}}
\newcommand{\W}{\mathcal{W}}
\newcommand{\WW}{\mathbb{W}}
\newcommand{\AW}{\mathcal{AW}}
\newcommand{\Pc}{\mathcal P}
\renewcommand{\epsilon}{\varepsilon}
\renewcommand{\subset}{\subseteq}
\renewcommand{\P}{\mathbb{P}}
\newcommand{\FFP}{\mathcal{FP}}
\newcommand{\FP}{\textup{FP}}
\newcommand{\cont}{\textup{cont}}
\newcommand{\cplopt}{\cpl_{\rm opt}}
\newcommand{\cplbc}{\cpl_{\rm bc}}
\newcommand{\mean}[1]{\mathrm{mean}(#1)}
\newcommand{\var}[1]{\mathrm{Var}(#1)}
\newcommand{\simad}{\sim_{\mathrm{ad}}}
\renewcommand{\H}{H}
\renewcommand{\phi}{\varphi}
\newcommand{\fp}[1]{{\mathbb #1}}
\newcommand{\mc}{\textup{MC}}
\newcommand{\MC}{\textup{MC}}
\newcommand{\lemc}{\preccurlyeq_\mc}
\newcommand{\gemc}{\succcurlyeq_\mc}
\newcommand{\ip}{\textup{ip}}
\newcommand{\lawad}{\law^{\textup{ad}}}
\newcommand{\Zc}{\mathcal{Z}}
\newcommand{\ipp}{\mathbf{ip}}
\newcommand{\lawadd}{\law^{\textnormal{\textbf{ad}}}}
\newcommand{\AMC}{\textup{AMC}}
\let\oldmarginpar\marginpar
\renewcommand\marginpar[1]{\-\oldmarginpar[\raggedleft\footnotesize #1]{\raggedright\footnotesize\color{red} #1}}
\begin{document}

\begin{abstract}
We develop Brenier theorems on iterated Wasserstein spaces. For a separable Hilbert space $H$ and $N\ge 1$, we construct a full-support probability $\Lambda$ on $\mathcal{P}_2^{N}(H)= \Pc_2(\ldots \Pc_2(\H)\ldots) $ that is transport regular: for every $Q$ with finite second moment, transporting $\Lambda$ to $Q$ with cost $\W_2^2$ admits a unique optimizer, and this optimizer is of Monge type. The analysis rests on a characterization of optimal couplings on $\mathcal{P}_2(H)$ and, more generally, on $\mathcal{P}_2^{\,N}(H)$ via convex potentials on the Lions lift; in the latter case we employ a new adapted version of the lift tailored to the $N$-step structure. A key idea is a new identification between optimal-transport $c$-conjugation (with $c$ given by maximal covariance) and classical convex conjugation on the lift.

A primary motivation comes from the adapted Wasserstein distance $\AW_2$: our results yield a first Brenier theorem for $\AW_2$ and characterize $\AW_2^2$-optimal couplings through convex functionals on the space of $L_2$-processes.

\medskip

\noindent\emph{keywords: optimal transport for random measures, Brenier's theorem, Lions lift, convex conjugate, Monge problem, adapted optimal transport.}
\end{abstract}

\title[A Brenier Theorem on $(\Pc_2(\ldots \Pc_2(\H)\ldots), \W_2 )$  and  Adapted Transport]{A Brenier Theorem for measures on  $ (\Pc_2(\ldots \Pc_2(\H)\ldots), \W_2 )$ and Applications to Adapted Transport}
\author{Mathias Beiglböck}
\author{Gudmund Pammer}
\author{Stefan Schrott}
\thanks{We thank Francois Delarue, Lorenzo Dello Schiavo, Martin Huesmann and Daniel Lacker for insightful discussions. 
}

\maketitle

\section{Introduction}
Let  $(\X,d)$ be a Polish metric space and write $\Pc_2(\X)$ for the probabilities with finite second moments. For  $\mu, \nu\in \Pc_2(\X)$ we write $\cpl(\mu, \nu)$ for the set of couplings or transport plans 
and \begin{align}\label{eq:W2} \W_2^2(\mu, \nu)= \inf_{\pi \in\cpl(\mu, \nu) } \int d^2\, d\pi = \inf_{X,Y:(\Omega, \P) \to \X, X\sim \mu, Y\sim \nu} \E[d(X,Y)^2]\end{align}
 for the squared Wasserstein-2   distance.
 Brenier's  theorem \cite{Br91}, see also \cite{KnSm84, RaRu90},  asserts that if $\X$  is  the Euclidean space and $\mu\ll \lambda$, the minimization problem \eqref{eq:W2} admits a unique optimizer $\pi^*$ and that $\pi^*$ is of Monge type, i.e.\ concentrated on the graph of a function. Moreover, the optimal Monge-transport is the gradient a convex function. This result stands at the beginning of the breathtaking development of optimal transport over the last decades and has inspired a number of powerful generalizations and refinements, see \cite{Gi11, Mc95, GaMc96, Mc01, AmKiPr04, AgCa11, ChPa2011, EmPa25} among others. 

We extend it to iterated Wasserstein spaces, i.e.\ we consider $(\X, d)= (\Pc_2(H), \W_2)$ and more generally   $ (\Pc_2(\ldots\Pc_2(
H)\ldots), \W_2) = (\Pc_2^N(H), \W_2)$ where $\H$ denotes a separable Hilbert space.  

\begin{theorem}[Monge solutions]\label{Thm:MainMongeIntro}
    There exists a full support probability $\Lambda$ on $\Pc_2^N(H)$  which is \emph{transport regular}. That is, for every probability $Q$ on $\Pc_2^N(H)$ with finite second moment 
    \begin{align}\label{eq:IteratedW2Intro}  \inf_{\Pi \in\cpl(\Lambda, Q) } \int \W_2^2 (\mu, \nu) \, d\Pi(\mu,\nu) = \inf_{X,Y:(\Omega, \P) \to \Pc_2^N(H),\, X\sim \Lambda,\, Y\sim Q} \E[W_2^2( X, Y)]\end{align}  admits a unique minimizer $\Pi^*\sim (X,Y)$ and $\Pi^*$ is of Monge type.  
\end{theorem}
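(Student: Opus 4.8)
The plan is to build $\Lambda$ via the Lions lift and reduce the transport-regularity statement on $\Pc_2^N(H)$ to a classical Brenier-type statement on a Hilbert space of $L_2$-random variables (or, for $N>1$, on an iterated $L_2$-space matched to the $N$-step adapted lift mentioned in the abstract). Concretely: fix a probability space rich enough to carry everything, and recall that a measure $P$ on $\Pc_2(H)$ lifts to the law of an $H$-valued random variable $\xi\in L_2(\Omega;H)$ via $P=\law(\law(\xi\mid\mathcal{G}))$ for a suitable sub-$\sigma$-algebra; the cost $\W_2^2(\mu,\nu)$ pulls back, after the $c$-conjugation/convexity identification flagged in the abstract, to something governed by maximal covariance $\langle\xi,\eta\rangle$ on the lift. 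Then choose $\Lambda$ to be the pushforward under this lift of a \emph{nondegenerate Gaussian} measure $\gamma$ on the relevant separable Hilbert space (Gaussian with injective, full-rank covariance operator so that $\gamma$ charges every open set and is absolutely continuous along every direction). Full support of $\Lambda$ follows because the lift map is continuous with dense range and $\gamma$ has full support; this is the analogue of the hypothesis ``$\mu\ll\lambda$'' in Brenier, now built into the \emph{source} measure once and for all.

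The key steps, in order, are: (1) Set up the lift (resp.\ the new $N$-step adapted lift from the body of the paper) as an isometric-up-to-cost embedding, so that couplings $\Pi\in\cpl(\Lambda,Q)$ correspond to couplings of the lifted laws and the transport cost $\int\W_2^2\,d\Pi$ is bounded below by the corresponding maximal-covariance functional, with equality along ``synchronous'' couplings. (2) Invoke the characterization (proved earlier in the paper) that $\W_2^2$-optimal couplings on $\Pc_2(H)$, and on $\Pc_2^N(H)$, are exactly those induced by a convex potential on the lift — i.e.\ the $c$-conjugate of an appropriate potential equals a classical convex conjugate. (3) Apply the Hilbert-space Brenier/Gangbo--McCann theorem with source $\gamma$: since $\gamma$ is Gaussian and nondegenerate, it gives no mass to the non-differentiability set (a $c$-$c$ concave, hence Gateaux-differentiable off a Gaussian-null set) of any potential, yielding a \emph{unique} optimal coupling which is concentrated on the graph of $\nabla\phi$ for $\phi$ convex. (4) Push this Monge map and uniqueness back down through the lift: the map $\nabla\phi$ on the lift descends to a map on $\Pc_2^N(H)$, giving that $\Pi^*$ is $\Lambda$-a.s.\ of graph type, i.e.\ Monge; and uniqueness upstairs forces uniqueness downstairs because any optimizer of \eqref{eq:IteratedW2Intro} lifts to an optimizer of the convex problem.

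The main obstacle is step (2)–(4) interface: descending the upstairs Monge map and its uniqueness back to $\Pc_2^N(H)$ is not automatic, because the lift is far from injective and the fibers carry their own geometry — one must check that the convex potential (equivalently, its gradient) is measurable with respect to the sub-$\sigma$-algebra defining the lift, so that $\nabla\phi$ actually \emph{is} a function of the measure-valued point and not merely of its lift. For $N>1$ this is exactly where the new \emph{adapted} version of the Lions lift is needed: the naive iterated lift does not respect the $N$-step filtration structure, and one needs the tailored lift so that the convex-conjugation identification is compatible with the nested conditional-law maps at every level. I expect verifying this compatibility — that $c$-conjugation with cost $=$ maximal covariance corresponds to classical Legendre conjugation \emph{on the adapted lift}, uniformly across the $N$ levels, and that the resulting convex map is fiber-measurable — to be the technical heart of the argument, whereas the Gaussian no-mass-on-nondifferentiability input (step 3) is standard once the Hilbert-space framework is in place.
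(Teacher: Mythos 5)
Your high-level plan is aligned with the paper's: lift to a Hilbert space, place a non-degenerate Gaussian there, use convex-conjugation/Brenier on the lift, and descend. You also correctly flag the real obstacle — that the gradient $\nabla\overline\phi$ on the lift need not descend to a map on $\Pc_2^N(H)$. But you leave that obstacle unresolved, and it is not a mere measurability check: the descent fails for a generic non-degenerate Gaussian on $L_2^N(H)$, and the paper's fix requires genuinely more input than you supply.

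Here is the concrete gap. The statement ``$\overline\phi$ is Gateaux differentiable at $\gamma$-a.e.\ $X$'' gives that $\partial\overline\phi(X)$ is a singleton, but to conclude that $\partial_\mc\phi(\lawad X)$ is a singleton (which is what transport regularity of $\Lambda$ requires, via Proposition~\ref{prop:TR=>HMonge}/Lemma~\ref{lem:apx.char.TR}) you must know that $\{\lawad(Y): Y\in\partial\overline\phi(X)\}$ equals $\partial_\mc\phi(\lawad X)$. The paper's Proposition~\ref{cor:LawInvSubDiffRandom.adapted} shows this equality \emph{only under the extra hypothesis} that $\lawad(X)$ is itself transport regular (or that $X$ admits independent randomization). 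So pushing Gateaux differentiability down through the lift requires fiber-level transport regularity, and this is not automatic for an arbitrary full-support Gaussian $\gamma$ on $L_2^N(H)$. This is exactly why the paper does not take an arbitrary Gaussian but specifically the law of an $H$-valued $N$-parameter Brownian sheet $W$, and then runs a recursive bootstrap (Theorem~\ref{thm:char.TR} / Corollary~\ref{cor:TR.crit}): one verifies that for every $t$ the conditional slices $(\tilde u_t \mapsto W(u_{1:t-1},\tilde u_t,\cdot))_\# \lambda$ are transport regular in $\Pc_2(L_2^{N-t}(H))$, and cascades the regularity up the $N$ levels. The base case of that cascade is Theorem~\ref{thm:Q-Wiener.TR}: the occupation measure of a non-degenerate $Q$-Wiener process is a.s.\ transport regular in $\Pc_2(H)$. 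That result is itself a non-trivial sample-path statement — the occupation measure a.s.\ charges no Lipschitz hypersurface (Theorem~\ref{thm:nolip}, proved by a zero-one/law-of-large-numbers argument for the sheet increments), combined with Zaj\'{\i}\v{c}ek's theorem that the non-differentiability set of a continuous convex function on a separable Hilbert space sits inside countably many such hypersurfaces. None of this is ``Gaussian measures don't charge non-differentiability sets''; it is a statement about the occupation measures of $W$ conditionally on a slice, and it genuinely uses the sheet covariance structure. Without this ingredient your step~(4) does not go through.

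So: keep the strategy and the Gaussian-on-the-lift idea (the paper's $\law(W)$ on $L_2^N(H)$ is exactly a non-degenerate Gaussian, and the full-support argument you sketch is correct and matches the paper). But you must replace ``any nondegenerate $\gamma$'' with a Gaussian whose conditional slices yield transport-regular occupation measures at every level — the Brownian sheet — and you must prove the base-case path-property result (Theorem~\ref{thm:Q-Wiener.TR}) before the recursive descent via Corollary~\ref{cor:TR.crit} and Proposition~\ref{cor:LawInvSubDiffRandom.adapted} can be invoked.
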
 
In fact we obtain here transport regularity of all orders, i.e.\ $\Lambda $ is concentrated on probabilities $\mu$ which are again transport regular, etc. Since every $P\ll \Lambda$ is also transport regular, we have

\begin{corollary}\label{cor:MainMongeIntro}
     The set of $\W_2$-transport regular measures on $\Pc_2^N(H)$ is dense. 
\end{corollary}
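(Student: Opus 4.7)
The plan is to extract the density directly from Theorem~\ref{Thm:MainMongeIntro}: once a full-support, transport-regular $\Lambda$ is in hand, any target $P$ can be approximated by measures absolutely continuous with respect to $\Lambda$, and these automatically inherit transport regularity as noted in the excerpt. Thus the problem reduces to a purely topological density statement, namely that $\{P \in \Pc_2(\Pc_2^N(H)) : P \ll \Lambda\}$ is $\W_2$-dense in $\Pc_2(\Pc_2^N(H))$.

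To realize the approximation, given any $P$ with finite second moment and $\epsilon>0$, I would define the mollification
\[
P_\epsilon(A) := \int \frac{\Lambda(A \cap B(\mu, \epsilon))}{\Lambda(B(\mu, \epsilon))}\, dP(\mu),
\]
where $B(\mu, \epsilon)$ denotes the open $\W_2$-ball around $\mu$ inside $\Pc_2^N(H)$. Full support of $\Lambda$ guarantees $\Lambda(B(\mu, \epsilon))>0$ for every $\mu$, so $P_\epsilon$ is a well-defined probability; and if $\Lambda(A)=0$ then $P_\epsilon(A)=0$, hence $P_\epsilon\ll\Lambda$.

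To conclude I would use the natural coupling
\[
\Pi(d\mu,d\nu) := P(d\mu)\otimes \frac{\Lambda|_{B(\mu,\epsilon)}(d\nu)}{\Lambda(B(\mu,\epsilon))},
\]
whose cost $\int \W_2^2(\mu,\nu)\,d\Pi(\mu,\nu)$ is at most $\epsilon^2$ because the second coordinate always lies in $B(\mu,\epsilon)$. This yields $\W_2(P,P_\epsilon)\leq \epsilon$ and, simultaneously, verifies that $P_\epsilon$ has finite second moment. Sending $\epsilon\to 0$ closes the argument.

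There is no serious obstacle beyond Theorem~\ref{Thm:MainMongeIntro}: the mollification and its $\epsilon^2$ cost bound are standard. The one point one must honestly invoke is the remark from the excerpt that transport regularity propagates from $\Lambda$ to any $P\ll\Lambda$—a short argument, but it is precisely what powers the whole reduction.
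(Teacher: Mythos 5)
Your proof is correct and takes essentially the same route as the paper: both reduce the corollary to the existence of the full-support transport-regular $\Lambda$ from Theorem~\ref{Thm:MainMongeIntro} together with the observation that $P\ll\Lambda$ inherits transport regularity (which in the paper is justified via \Cref{lem:apx.char.TR}, since the condition in that lemma is manifestly stable under absolute continuity). The one place you go further than the paper is the density step: the paper simply asserts that $\{Q:Q\ll\Lambda\}$ is $\W_2$-dense, whereas you give the explicit mollification $P_\epsilon$ and the product coupling bounding $\W_2(P,P_\epsilon)\le\epsilon$; this also cleanly yields the finite second moment of $P_\epsilon$. That added detail is welcome and does not change the structure of the argument.
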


Most closely related to our work are the concurrently written article of Savare--Pinzi \cite{PiSa25} (see \Cref{sec:intro_literature} below) and the work of Emami--Pass \cite{EmPa25} which is concerned with $\Pc_2(M)$, where $M$ is a smooth Riemannian manifold. As pointed out in \cite{EmPa25} in the setting $X = \Pc_2(M)$, the choice of an appropriate reference measure is non-trivial. Recent deep work of Dello Schiavo \cite{De20} establishes a version of Rademacher’s theorem on Wasserstein spaces for a class of
reference measures satisfying certain assumptions which allows Emami--Pass to conclude transport regularity. It can be shown that probabilities satisfying these assumptions exist in the case where $M$ is the 1-dimensional torus but it seems difficult to extend the apporach to higher dimensions and in particular the case $\Pc_2^N(M),$ for $N\geq 2$. Rather the approach put forward here and in \cite{PiSa25} exploits a connection to the Lions lift which allows to use tools from convex analysis. 

We also note that while our main interest lies in results for $\Pc_2^N(\R^d)$ and their applications to the adapted Wasserstein distance, we will work with a separable Hilbert space $H$ throughout. The reason is that the proofs for both cases are identical, in fact our argument for the existence of transport regular measures on $\Pc_2^2(\R^d)$ already requires the construction of a transport regular measure on $\Pc_2(H)$ for infinite dimensional $H$.

Next we describe the construction of the measure $\Lambda$ and the characteriziation of optimizers in terms convex functions on the Lions' lift.

\subsection{Characterization of optimal transport maps via convex Lions lifts}\label{sec:BrenierAndLiftsIntro}

To set the stage we recall the formulation of Brenier's theorem for measures on $H$ in terms of couplings of random variables. 
We use  $\Lambda$ to denote  Lebesgue measure in the classical case $H=\R^d$ (see \cite{Br91}) or a regular Gaussian measure in the case of infinite-dimensional $H$ (see  \cite{AmGiSa08}). 
If $X:(\Omega, \F, \P)\to H$ is a random variable with $\law_\P X\ll \Lambda$ and $\phi: H\to (-\infty, \infty]$ is convex with  $\partial \phi(X) \neq \emptyset$ a.s., then  $\partial \phi(X)$ consists of exactly one element which we denote by $\nabla \phi(X)$.

\begin{theorem}[Brenier's Theorem \cite{Br91, AmGiSa08}]\label{thm:coupling_Brenier} 
Let $\pi\in \cpl(\mu,\nu)$ where  $\mu, \nu$ are probability measures on $H$ with finite second moment,  $\mu \ll \Lambda $, and let $X:(\Omega, \F, \P)\to H$ be a random variable with   $X \sim \mu$.
Then $\pi$ is an  optimal coupling for squared-distance cost if and only if 
\begin{align} 
\pi \sim (X,\nabla\phi (X))
\end{align}
for some convex lsc $\phi: H \to (-\infty, \infty]$ with $\partial \phi(X) \neq \emptyset$ a.s.

Moreover we have uniqueness, i.e.\ there is exactly one such coupling.
\end{theorem}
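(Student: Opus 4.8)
The plan is to deduce the statement from Kantorovich duality, Rockafellar's characterization of cyclically monotone sets as subdifferentials of convex functions, and an infinite-dimensional Rademacher-type theorem for convex functions against the reference measure $\Lambda$. As a preliminary reduction, since $\mu$ and $\nu$ have finite second moments the identity $\|x-y\|^2 = \|x\|^2 + \|y\|^2 - 2\langle x, y\rangle$ shows that $\int \|x-y\|^2\, d\pi$ and $-2\int \langle x, y\rangle\, d\pi$ differ by a constant over $\cpl(\mu,\nu)$; hence $\pi$ minimizes the squared-distance cost if and only if it maximizes the covariance functional $\pi\mapsto\int\langle x,y\rangle\,d\pi$.

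First I would apply Kantorovich duality to this maximization problem -- legitimate because $\langle x,y\rangle \le \tfrac12\|x\|^2+\tfrac12\|y\|^2$ is integrable against $\mu\otimes\nu$. Replacing the dual potentials by their $c$-transforms for $c(x,y)=\langle x,y\rangle$ turns them into a pair $(\phi,\phi^*)$ of mutually Legendre-conjugate, proper convex lsc functions $H\to(-\infty,\infty]$ with $\phi(x)+\phi^*(y)\ge\langle x,y\rangle$ everywhere. By the Fenchel--Young equality condition the contact set equals $\graph(\partial\phi)$, so an optimal $\pi$ is exactly one concentrated on $\graph(\partial\phi)$; equivalently, by Rockafellar, one concentrated on a cyclically monotone set, which is the graph of the subdifferential of some convex lsc function. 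In particular, if $\pi\sim(X,Y)$ is optimal then $Y\in\partial\phi(X)$ a.s., whence $\partial\phi(X)\ne\emptyset$ a.s.\ (and $X\in\dom\phi$ a.s.).

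For the \emph{only if} direction it remains to pass to Monge type: since $\mu\ll\Lambda$ and $\Lambda$ is Lebesgue measure (when $H=\R^d$) or a regular Gaussian (when $H$ is infinite-dimensional), the convex lsc function $\phi$ is G\^ateaux differentiable outside a $\Lambda$-null set -- Rademacher's theorem, respectively its Gaussian analogue for convex functions on separable Hilbert spaces recalled from \cite{AmGiSa08}. Hence $\partial\phi(x)$ is a singleton for $\mu$-a.e.\ $x$, so $Y=\nabla\phi(X)$ a.s. For the \emph{if} direction, a plan $\pi\sim(X,\nabla\phi(X))$ with $\phi$ convex lsc and $\partial\phi(X)\ne\emptyset$ a.s.\ is concentrated on $\graph(\partial\phi)\subseteq\{(x,y):\phi(x)+\phi^*(y)=\langle x,y\rangle\}$; one checks $\phi\in L^1(\mu)$ and $\phi^*\in L^1(\nu)$ (using the linear lower bounds $\phi(x)\ge\langle x,y_0\rangle-\phi^*(y_0)$, $\phi^*(y)\ge\langle x_0,y\rangle-\phi(x_0)$ together with the contact identity on $\supp\pi$ and Cauchy--Schwarz), and then $\int\langle x,y\rangle\,d\pi=\int\phi\,d\mu+\int\phi^*\,d\nu\ge\int\langle x,y\rangle\,d\pi'$ for every competitor $\pi'$, so $\pi$ is optimal. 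Uniqueness follows the usual convexity argument: if $\pi_0,\pi_1$ are both optimal then so is $\tfrac12(\pi_0+\pi_1)$, hence it is concentrated on $\graph(\partial\phi)$ for a single convex lsc $\phi$, and therefore so are $\pi_0$ and $\pi_1$; $\mu$-a.e.\ differentiability of $\phi$ then forces $\pi_0\sim(X,\nabla\phi(X))\sim\pi_1$.

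The step I expect to be the main obstacle is the differentiability statement in infinite dimensions: there is no Lebesgue measure on $H$, convex lsc functions need not be differentiable on a generic set, and $\dom\phi$ may have empty interior, so one must genuinely use the regularity of the Gaussian $\Lambda$ to conclude that the set of points of $\dom\phi$ at which $\phi$ fails to be G\^ateaux differentiable is $\Lambda$-null. This is the pivot on which both the Monge property and uniqueness rest, and it is exactly where the hypotheses on $\Lambda$ and the results imported from \cite{AmGiSa08} enter.
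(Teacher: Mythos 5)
Your argument is correct and is precisely the classical proof of Brenier's theorem that the paper is recalling; the paper does not prove this statement itself but cites \cite{Br91} for $H=\R^d$ and \cite{AmGiSa08} for the Gaussian reference measure on a general separable Hilbert space, and the route you outline (reduce squared distance to covariance, Kantorovich duality, pass to a Legendre-conjugate pair so the contact set is $\graph(\partial\phi)$, then a.e.\ single-valuedness of $\partial\phi$ under $\mu\ll\Lambda$) is exactly what those sources carry out. You also correctly single out the one genuinely delicate step: for a proper lsc convex $\phi$ on an infinite-dimensional $H$ the domain may have empty interior, so ordinary Gateaux-differentiability results for continuous convex functions do not apply directly, and one must use the Aronszajn/Phelps-type null-set theory for Gaussian measures (packaged in \cite[Theorem~6.2.10]{AmGiSa08}); this is the same issue the present paper confronts at the next level and handles by a truncation-to-Lipschitz device in the proofs of \Cref{lem:BMDifferentiable} and \Cref{thm:Q-Wiener.TR}. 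One small mislabeling worth noting: in $\R^d$ a convex function is only locally Lipschitz on the interior of its domain, so the fact that its non-differentiability set is Lebesgue null is not literally Rademacher's theorem but the Anderson--Klee/Zaj\'{\i}\v{c}ek result that this set is covered by countably many Lipschitz hypersurfaces, which the paper itself relies on (see the proof of \Cref{thm:Q-Wiener.TR}); the substance of your argument is unaffected.
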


Following classical transport theory, optimal plans are characterized in terms of dual potentials. A key finding (see Theorem \ref{thm:MCiConvConjIntro} below) is that for optimal transport of measures on $\Pc_2(H)$, optimal dual potentials $\phi:\Pc_2(H)\to (-\infty, \infty]$ are best understood through their \emph{Lions lift} given by 
$$\overline\phi: L_2([0,1]; H) \to (-\infty, \infty], \quad \overline\phi(X):= \phi(\law_\lambda(X) ).$$
Here we use $L_2([0,1]; H)$ to denote the $L_2$-space of $H$-valued square-integrable functions on 
$([0,1], \lambda)$, where $\lambda$ is the  Lebesgue measure on $[0,1]$.
Evidently a functional $\overline\phi$ on  $L_2([0,1]; H)$ appears as the Lions lift of some function $\phi$ on $\Pc_2(H)$ if and only if it is \emph{law invariant}, i.e.\ $\overline\phi(X)= \overline\phi(Y)$ provided that $\law_\lambda (X) = \law_\lambda(Y)$.

 In the case $H= \R^d$ we take $\Lambda$ to be the \emph{occupation  measure} of $d$-dimensional Brownian motion $B=(B_t)_{t\in [0,1]}$. Recall that for a path $b:[0,1]\to \R^d$ of Brownian motion, the occupation  measure of $b$ is given by $\law_\lambda b = b_{\#}(\lambda)\in \Pc_2(\R^d)$ and  the occupation  measure of $B$ has law 
\begin{align}
    \Lambda:= (\omega \mapsto \law_\lambda(B(\omega) )_{\#}(\P)  = \law_{\P} (\law_\lambda B) \in \Pc_2(\Pc_2(\R^d)). 
\end{align}
In the case where $H$ has infinite dimension, we take $\Lambda$ to be the law of the occupation  measure of a standard Wiener process, see \Cref{sec:Wiener_process_reg} below. 

The desired Brenier theorem for measures on $\Pc_2(H)$ is then obtained from \Cref{thm:coupling_Brenier} by replacing $H$ with $L_2([0,1]; H)$  in the domain of the convex potential and the state space of random variables, resp.

\begin{theorem}[Brenier theorem on $\Pc_2(\H)$]\label{thm:1-coupling_Brenier} 
Let $\Pi \in \cpl(P,Q)$, where $P$ and $Q$ are probability measures on $\mathcal{P}_2(H)$ with finite second moments, $P \ll \Lambda$, and let $X:(\Omega,\mathcal{F},\mathbb{P}) \to L_2([0,1]; H)$ be a random variable such that $X \sim P$.
Then $\Pi$ is a $\W_2^2$-optimal coupling if and only if 
\begin{align} 
\Pi \sim (\law_\lambda X,\law_\lambda \nabla \phi (X))
\end{align}
 for a convex, lsc, law-invariant  $\overline\phi:  L_2([0,1]; H) \to (-\infty, \infty]$ with $  \partial\phi(X) \neq \emptyset $ a.s.

Moreover we have uniqueness, i.e.\ there is exactly one such coupling.
\end{theorem}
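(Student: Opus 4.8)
The plan is to deduce \Cref{thm:1-coupling_Brenier} from the classical Brenier theorem \Cref{thm:coupling_Brenier} applied on the Hilbert space $L_2([0,1];H)$, the only subtlety being the bookkeeping required to pass between the lifted picture (random variables in $L_2([0,1];H)$, convex potentials there, cost given by squared $L_2$-distance) and the downstairs picture (random measures in $\Pc_2(H)$, law-invariant potentials, cost given by $\W_2^2$). First I would fix the random variable $X\sim P$ with values in $L_2([0,1];H)$ and observe that $\law_\P(\law_\lambda X)=P\ll\Lambda$, so that $\law_\P X$ is absolutely continuous with respect to the reference Gaussian (this uses that $\Lambda$ is the pushforward of that Gaussian under $X'\mapsto\law_\lambda X'$ together with the regularity/transport-regularity built into the construction of $\Lambda$); hence \Cref{thm:coupling_Brenier} is applicable with the reference measure on $L_2([0,1];H)$.

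Next I would relate the two transport costs. Given $\Pi\in\cpl(P,Q)$, I want to show that
\begin{align*}
\inf_{\Pi\in\cpl(P,Q)}\int \W_2^2(\mu,\nu)\,d\Pi(\mu,\nu)
= \inf_{\substack{(X,Y):X\sim P,\,Y\sim Q}} \E\big[\norm[L_2]{X-Y}^2\big],
\end{align*}
i.e.\ that lifting does not change the value of the transport problem, and that the couplings on the two levels correspond. The inequality ``$\le$'' is immediate by pushing a lifted coupling down via $(X,Y)\mapsto(\law_\lambda X,\law_\lambda Y)$ and using $\W_2^2(\law_\lambda X,\law_\lambda Y)\le \norm[L_2]{X-Y}^2$. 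For ``$\ge$'' I would, given an optimal $\Pi$ downstairs and the fixed $X$, build a measurable selection of $\W_2$-optimal couplings between $\law_\lambda X$ and its $\Pi$-conditional target, and use the fact that optimal couplings of measures on $H$ can be realized on the atomless base $([0,1],\lambda)$, re-ordering coordinates so that $Y$ becomes an $L_2$ random variable with $\norm[L_2]{X-Y}^2=\W_2^2(\law_\lambda X,\law_\lambda Y)$ pointwise; this is exactly the place where the structure of $[0,1]$-indexed $L_2$-processes and a measurable-selection argument are needed. This also shows any $\W_2^2$-optimal $\Pi$ lifts to an $L_2$-optimal coupling of $X$ with some $Y$.

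With the two problems identified, I would invoke \Cref{thm:coupling_Brenier} on $L_2([0,1];H)$: the lifted optimal coupling is of the form $(X,\nabla\psi(X))$ for a convex lsc $\psi:L_2([0,1];H)\to(-\infty,\infty]$ with $\partial\psi(X)\neq\emptyset$ a.s., and it is unique. Pushing down gives $\Pi\sim(\law_\lambda X,\law_\lambda\nabla\psi(X))$, which yields the ``only if'' direction; the ``if'' direction follows since any such $(X,\nabla\psi(X))$ is the unique $L_2$-optimal coupling and hence projects to a $\W_2^2$-optimal $\Pi$. The remaining point is to replace the a priori non-law-invariant $\psi$ by a law-invariant potential $\overline\phi$ inducing the same map on the relevant fibre: here I would symmetrize, e.g.\ pass to $\overline\phi(Z):=\sup_{\sigma}\psi(Z\circ\sigma)$ over measure-preserving rearrangements $\sigma$ of $[0,1]$ (or invoke the convex-conjugation/maximal-covariance identification advertised as \Cref{thm:MCiConvConjIntro}), check it is convex, lsc, law-invariant, and that $\nabla\overline\phi(X)=\nabla\psi(X)$ in law; uniqueness of the coupling transfers verbatim. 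I expect the main obstacle to be precisely this last step together with the measurable-selection construction of the $L_2$-optimal lift in the ``$\ge$'' direction of the cost identity — making rigorous that the downstairs optimizer can always be realized as a genuine coupling of $L_2$-random variables with the right pointwise cost, and that the resulting potential can be taken law-invariant without destroying convexity.
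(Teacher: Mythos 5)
Your step~1 contains a genuine gap that sinks the whole strategy. From $P\ll\Lambda$ you cannot conclude $\law_\P X\ll\gamma$, where $\gamma$ is the Gaussian reference on $L_2([0,1];H)$ with $\Lambda=(\law_\lambda)_\#\gamma$. The map $\law_\lambda : L_2([0,1];H)\to\Pc_2(H)$ has enormous fibres (over any atomless $\mu$ the fibre is the whole orbit of one representative under $\lambda$-preserving rearrangements of $[0,1]$), and $\gamma$ is not invariant under this rearrangement action: for the Wiener process, $(\,\cdot\circ\sigma)_\#\gamma$ is again Gaussian but, by Feldman--H\'ajek, generically mutually singular to $\gamma$. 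Nevertheless $(\law_\lambda)_\#((\,\cdot\circ\sigma)_\#\gamma)=\Lambda$. So one can take $P=\Lambda$ and $X\sim(\,\cdot\circ\sigma)_\#\gamma$, satisfying $X\sim P$ in the sense of the theorem while $\law_\P X\perp\gamma$, and the classical Brenier theorem on $L_2$ is not applicable to this $X$. The theorem is stated for an arbitrary $X\sim P$, so you cannot fix this by choosing $X$ cleverly (e.g.\ $X\sim\tfrac{dP}{d\Lambda}(\law_\lambda\cdot)\,\gamma$); and even with such a choice, the law-invariance of the eventual potential would be exactly what is needed to transfer to other $X'\sim P$, which brings you to the second gap.

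Your symmetrization $\overline\phi(Z):=\sup_\sigma\psi(Z\circ\sigma)$ is convex, lsc and law-invariant, but there is no reason it should have the same (sub)gradient at $X$ as $\psi$: one has $\overline\phi\ge\psi$ with equality only where $\psi$ dominates its rearrangements, and this is not automatic even at points where $\psi$ is differentiable. The paper circumvents both gaps by never invoking the classical Brenier theorem on $L_2$. Its route is: (i) apply the fundamental theorem of optimal transport directly on $\Pc_2(H)$ with cost $c=\mc$, getting an $\mc$-convex dual potential $\phi$ with $\Pi(\partial_\mc\phi)=1$ (\Cref{thm:ftot.mc}); (ii) use that $P\ll\Lambda$ and $\Lambda$ transport-regular force $\#\partial_\mc\phi(\mu)\le1$ for $P$-a.e.\ $\mu$ (\Cref{lem:apx.char.TR}); (iii) identify $\mc$-conjugation with convex conjugation of the Lions lift (\Cref{thm:MCiConvConj}) and $\mc$-subdifferentials with subdifferentials of the lift (\Cref{prop:Subdiff_MCvsL}), so that $\phi$ being $\mc$-convex means $\overline\phi$ is convex, lsc, law-invariant and $\partial_\mc\phi(\law_\lambda X)=\{\law_\lambda\nabla\overline\phi(X)\}$. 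The absolute continuity $P\ll\Lambda$ is consumed downstairs as a statement about $\Lambda$-null sets in $\Pc_2(H)$ rather than upstairs as a statement about Gaussian measures on $L_2$; this is the crucial move you are missing. Your cost-identification step (the Skorokhod-type formula $\W_2(P,Q)=\min_{X\sim P,Y\sim Q}\|X-Y\|_2$, cf.\ \Cref{prop:iteratedSkorohod}) is correct and is indeed used in the paper, just not to import the Euclidean Brenier theorem.
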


To tackle the case of general $\Pc_2^N(H)$, $N\geq 1$, we introduce an adapted  version of the Lions' lift. We consider the filtered probability space $([0,1]^N, (\F_t)_{t=1}^N, \lambda)$, where $\lambda $ denotes the uniform distribution on $[0,1]^N$ and   $(\F_t)_{t=1}^N$ the filtration generated by the coordinate projections. For $X,Y\in L_2([0,1]^N; H)$ we define the \emph{adapted law}
$$\lawad(X):=\law(\law( \ldots \law(X| \F_{N-1}) \ldots | \F_1)) \in \Pc_2^N(H)$$
and write $X\simad Y$ if $\lawad(X)= \lawad(Y)$. A basic but crucial fact which renders this representation useful is that
\begin{align}\label{eq:quotient_property}
    \W_2(P,Q) = \min_{ X \simad P, \, Y \simad Q}    \|X-Y\|_2,
\end{align}
see \Cref{prop:iteratedSkorohod} below.\footnote{
By Lisini's work \cite{Li07}, any $P_0, P_1\in \Pc_2^N(H)$ are joined by a geodesic. 
 We note that \eqref{eq:quotient_property} yields this explicitly: Pick  $X_0\simad P_0,X_1\simad P_1$ with $\W_2(P_0, P_1) = \|X_0-X_1\|$ and set $P_t:= \lawad((1-t)X_0+tX_1)$, $t\in [0,1]$.}    
 The  \emph{adapted Lions lift} of $\phi:\Pc_2^N(H) \to (-\infty, \infty]$ is
$$   \overline\phi: L_2([0,1]^N; H) \to (-\infty, \infty], \quad \overline\phi(X):= \phi(\lawad X).$$
Evidently $\overline\phi$ is of this form if and only if it is adapted-law invariant in the sense that $\overline\phi(X)=\overline\phi(Y)$ for $X$, $Y$ with $X\simad Y$. 

Here $\Lambda $ is from an $H$-valued Wiener sheet  which is the $N$ parameter version of the standard Wiener process, see \Cref{sec:TR} below. Using these notions we obtain:

 \begin{theorem}[$N$-iterated Brenier]\label{thm:N-coupling_Brenier} 
Let $\Pi \in \cpl(P,Q)$, where $P$ and $Q$ are probability measures on $\mathcal{P}_2^N(H)$ with finite second moments, $P \ll \Lambda$, and let $X:(\Omega,\mathcal{F},\mathbb{P}) \to L_2([0,1]^N; H)$ be a random variable such that $X \sim P$.
Then $\Pi$ is a $\W_2^2$-optimal coupling if and only if 
\begin{align}
\Pi \sim (\lawad  (X),\lawad (\nabla \phi (X)))
\end{align}
for some convex, lsc, adapted-law invariant
$\bar{\phi}: L_2([0,1]^N; H) \to (-\infty, \infty]$ 
with $\partial \overline\phi(X) \neq \emptyset$ 
a.s.

Moreover we have uniqueness, i.e.\ there is exactly one such coupling.
\end{theorem}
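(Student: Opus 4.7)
The plan is to leverage the quotient structure \eqref{eq:quotient_property} in order to recast the $\W_2^2$-minimization on $\Pc_2^N(H)$ as an equivalent optimization on the Hilbert space $L_2([0,1]^N;H)$, invoke the infinite-dimensional Brenier theorem \Cref{thm:coupling_Brenier} there, and then descend the resulting convex potential back to $\Pc_2^N(H)$ via a conjugation-based symmetrization. Structurally the argument parallels the proof of \Cref{thm:1-coupling_Brenier} for $N=1$, with $L_2([0,1];H)$ replaced by $L_2([0,1]^N;H)$ and law-invariance replaced by adapted-law invariance under the filtration $(\F_t)_{t=1}^N$.

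First I would recast the cost via maximal covariance. Setting $\MC(\mu,\nu) := \sup_{X\simad\mu,\,Y\simad\nu}\E\langle X,Y\rangle$ and writing $M_2(\mu)$ for the second moment of $\mu\in\Pc_2^N(H)$, the identity $\W_2^2(\mu,\nu) = M_2(\mu) + M_2(\nu) - 2\,\MC(\mu,\nu)$ -- which holds with equality by the attainment in \eqref{eq:quotient_property} -- reduces minimizing $\int \W_2^2\, d\Pi$ over $\Pi\in\cpl(P,Q)$ to \emph{maximizing} $\E\langle X,Y\rangle$ over $L_2([0,1]^N;H)$-valued random variables with $\lawad X\sim P$ and $\lawad Y\sim Q$. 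Using $P\ll\Lambda$, a lift $X$ can be chosen with $\law X$ absolutely continuous with respect to the Gaussian measure on $L_2([0,1]^N;H)$ induced by the $H$-valued Wiener sheet. Any $\W_2^2$-optimal $\Pi$ is then realized by a maximal-covariance pair $(X,Y)$ in $L_2([0,1]^N;H)$, equivalently a $\W_2^2$-optimal coupling of $\law X$ and $\law Y$, and \Cref{thm:coupling_Brenier} yields a convex lsc $\overline\psi:L_2([0,1]^N;H)\to(-\infty,\infty]$ with $Y = \nabla\overline\psi(X)$ $\P$-a.s.

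The main obstacle is to replace $\overline\psi$ by an \emph{adapted-law invariant} convex lsc potential $\overline\phi$ while preserving $Y = \nabla\overline\phi(X)$ a.s. Here the key technical input is the identification between optimal-transport $c$-conjugation on $\Pc_2^N(H)$ with cost $c(\mu,\nu):=-\MC(\mu,\nu)$ and classical convex conjugation on the adapted Lions lift: for any $\phi:\Pc_2^N(H)\to(-\infty,\infty]$ with lift $\overline\phi$, one has $\overline{\phi^c}=(\overline\phi)^*$ on $L_2([0,1]^N;H)$, where
\begin{align*}
    \phi^c(\nu) := \sup_{\mu}\{\MC(\mu,\nu)-\phi(\mu)\},
\end{align*}
because $\MC(\lawad X,\lawad Y)$ equals the supremum of the Hilbert inner product over adapted-law orbits by the quotient property. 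Consequently adapted-law invariance is preserved under $(\cdot)^*$, and the desired $\overline\phi$ is obtained by $\simad$-symmetrizing $\overline\psi$ and passing to the classical biconjugate. Dual optimality from the $\MC$-formulation then forces $Y\in\partial\overline\phi(X)$ at $\law X$-typical points; absolute continuity of $\law X$ makes this subdifferential a.s.\ a singleton, giving $Y=\nabla\overline\phi(X)$.

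Uniqueness follows from the uniqueness clause in \Cref{thm:coupling_Brenier}: any $\W_2^2$-optimal $\Pi$ can be lifted to a common reference $X$ with $\law X$ absolutely continuous with respect to the Wiener-sheet Gaussian and $\lawad X\sim P$; classical Brenier then produces a unique optimal $Y\in L_2([0,1]^N;H)$, so $\Pi=\law(\lawad X,\lawad Y)$ is forced. The step I expect to be most delicate is verifying the identification $\overline{\phi^c}=(\overline\phi)^*$ rigorously, since this requires showing that the adapted-law relation $\simad$ is generated by sufficiently many filtration-preserving transformations of $([0,1]^N,(\F_t)_{t=1}^N,\lambda)$ for the two conjugation operations to commute with the quotient map $\lawad$.
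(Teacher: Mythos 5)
Your high-level strategy — recast $\W_2^2$ as a max-covariance problem, realize it on the Hilbert space $L_2([0,1]^N;H)$ via the adapted Lions lift, and use the identification $\overline{\phi^{\mc}}=\overline\phi^{\,*}$ (Theorem~\ref{thm:MC_trafo_ast_ad}) — is the right one and matches the paper in spirit. However, your implementation has two genuine gaps.

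First, the ``$\simad$-symmetrization of $\overline\psi$ followed by the biconjugate'' step is both vague and unnecessary, and is not shown to preserve the subgradient relation $Y=\nabla\overline\psi(X)$. In fact there is no need to start from the Hilbert-space potential $\overline\psi$ of Theorem~\ref{thm:coupling_Brenier} at all: the paper instead applies the fundamental theorem of optimal transport directly on $\Pc_2^N(H)$ with cost $\mc$ (Theorem~\ref{thm:ftot.mc}), obtains an $\mc$-convex dual optimizer $\phi$ with $\Pi(\partial_\mc\phi)=1$, and then \emph{lifts} $\phi$ — by Theorem~\ref{thm:MC_trafo_ast_ad}, $\overline\phi$ is automatically convex, lsc and adapted-law invariant, and complementary slackness transfers via Proposition~\ref{prop:subdiff_char_ad} to give $Y\in\partial\overline\phi(X)$. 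That route requires no symmetrization.

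Second, and more importantly, your step ``absolute continuity of $\law X$ makes this subdifferential a.s.\ a singleton'' does not deliver what the theorem needs. Even granting that $\law_\P X\ll$ the Wiener-sheet Gaussian on $L_2([0,1]^N;H)$ yields $\#\partial\overline\phi(X(\omega))\le 1$ for $\P$-a.e.\ $\omega$, this is a statement about \emph{one} representative $X(\omega)$ in the $\simad$-class. To obtain the Monge structure and uniqueness of $\Pi$ you need $\mc$-differentiability of $\phi$ at $\lawad X(\omega)$ for $P$-a.e.\ $\omega$, i.e.\ $\#\partial_\mc\phi(p)\le 1$ for $P$-a.e.\ $p$. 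By Lemma~\ref{lem:mc_diff_char_ad}, this is equivalent to $\#\partial\overline\phi(X')\le 1$ for \emph{every} $X'\simad p$, not just the specific lift. The one-sided inclusion $\{\lawad Y:Y\in\partial\overline\phi(X)\}\subseteq\partial_\mc\phi(\lawad X)$ of Proposition~\ref{cor:LawInvSubDiffRandom.adapted} is always an inclusion, and upgrading it to an equality requires that the individual measures $\lawad X(\omega)\in\Pc_2^N(H)$ be transport regular. This is precisely the content of the layered criterion Theorem~\ref{thm:char.TR} and its verification for the Wiener sheet in Theorem~\ref{thm:sheet.adapted.TR}: one has to check the slice regularity $(\tilde u_t\mapsto X(u_{1:t-1},\tilde u_t,\cdot))_\#\lambda$ transport regular for each $t$, not merely that the ambient Gaussian on $L_2([0,1]^N;H)$ is non-degenerate. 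Your shortcut bypasses exactly the part of the paper that is hardest and new. (It could be repaired if you explicitly chose $X=W$ with a change of measure $f(\lawad W)\cdot\P_0$: then the slice conditions hold $\P$-a.s.\ because they hold $\P_0$-a.s.\ and $\P\ll\P_0$; but that observation, together with an appeal to Theorem~\ref{thm:char.TR} and Proposition~\ref{cor:LawInvSubDiffRandom.adapted}, would have to be made explicit.) The same gap invalidates your uniqueness argument: classical Brenier on $L_2([0,1]^N;H)$ gives a unique optimal $Y$ only once $\law Y$ is fixed, but the constraint $\lawad Y\sim Q$ leaves $\law Y$ undetermined, so uniqueness of $\Pi$ does not follow without the Monge structure you have not yet established.
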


A first ingredient to Theorems \ref{thm:1-coupling_Brenier} and \ref{thm:N-coupling_Brenier} is the fundamental theorem of optimal transport together with a new connection between $c$-conjugates in the sense of optimal transport and convex analysis on the Lions lift; we describe this in \Cref{sec:Adlift_Intro} below.

A further important role 
is played by the ability to switch back and forth between a stochastic process viewpoint and the Wasserstein on Wasserstein viewpoint. Specifically the map $\lawad$ turns out to be an isometry between martingales equipped with the adapted Wasserstein distance and $(\Pc_2^N, \W_2)$. We discuss this connection as well as Brenier type theorems for the adapted Wasserstein distance in \Cref{sec:AW_intro}.

The construction of the transport regular measure $\Lambda$ uses that convex functions on Hilbert spaces are almost surely differentiable with respect to Gaussian measures as well as path properties of Wiener processes. E.g.\ already for the case $N=1$ and $H= \R^d$ we use the fact the for a typical Brownian path $b:[0,1]\to \R^d$, the occupation measure $\law_\lambda b$ is a transport regular probability on $\R^d$. This appears to be somewhat remarkable since the support of $\law_\lambda b$ has Hausdorff dimension 2 for every $d\geq 2$.

\medskip

We conclude this section with a result that does not appeal to a particular reference measure. It provides  a characterization of primal optimizers in terms convex potentials on $L_2([0,1]^N;H)$ and asserts that \emph{typically} (in the Baire category sense) couplings are of Monge type.

 \begin{theorem}\label{thm:only-coupling_Brenier} 
Let $\Pi\in \cpl(P,Q)$ where  $P, Q$ are probability measures on $\Pc_2^N(H)$ with finite second moment.
Then $\Pi$ is a $\W_2^2$-optimal coupling if and only if for some $X,Y:(\Omega, \F, \P)\to L_2([0,1]^N;H)$ and  $\overline\phi:  L_2([0,1]^N; H) \to (-\infty, \infty]$ convex lsc adapted-law invariant 
\begin{align} 
\Pi \sim (\lawad X,\lawad Y), \quad Y\in \partial \overline\phi (X) \text{ a.s.} 
\end{align}
Moreover, the set of $(P,Q)$ for which there exists a unique $\W_2^2$-optimal coupling $\pi^*$ and $\pi^*$ is concentrated on the graph of a bijection, is comeager in $\Pc_2^{N+1}(H) \times \Pc_2^{N+1}(H)$.
\end{theorem}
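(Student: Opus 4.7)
The core tool is the $c$-conjugation/convex-conjugation correspondence highlighted in the abstract and developed earlier in the paper, which identifies classical convex conjugation on $L_2([0,1]^N;H)$ restricted to adapted-law-invariant functionals with $c$-conjugation on $\Pc_2^N(H)$ for $c$ the maximal-covariance cost. Via this correspondence and the fundamental theorem of OT, the $\W_2^2$-optimality of $\Pi\in\cpl(P,Q)$ is equivalent to the following: for some lift $(X,Y)\in L_2([0,1]^N;H)^2$ of $\Pi$ with $(\lawad X,\lawad Y)\sim\Pi$ and $\|X-Y\|_2^2=\int\W_2^2\,d\Pi$ (existence provided by \eqref{eq:quotient_property}), there is a convex lsc adapted-law-invariant $\bar\phi$ on $L_2([0,1]^N;H)$ with $Y\in\partial\bar\phi(X)$ a.s. The ``if'' direction is then the Fenchel--Young identity pushed through the correspondence, and the ``only if'' direction comes from a Rockafellar-type argument applied to the adapted-law-invariant cyclical monotonicity of the lift, which itself follows from the optimality of $\Pi$.

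\textbf{Genericity.} In the Polish space $\Pc_2^{N+1}(H)\times\Pc_2^{N+1}(H)$, continuity of $\W_2^2$ and compactness of $\cpl(P,Q)$ give that the set-valued map $(P,Q)\mapsto\cplopt(P,Q)$ is upper hemicontinuous, so the sets $U_n:=\{(P,Q):\mathrm{diam}_{\W_2}\cplopt(P,Q)<1/n\}$ are open. By Corollary \ref{cor:MainMongeIntro} each $U_n$ is dense (transport regularity of $P$ forces $|\cplopt(P,Q)|=1$), whence $U:=\bigcap_nU_n$ is a dense $G_\delta$ on which the unique optimizer $\Pi^*(P,Q)$ depends continuously on $(P,Q)$. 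Applying Corollary \ref{cor:MainMongeIntro} to both marginals shows that the pairs with $P$ and $Q$ both transport regular are dense in $\Pc_2^{N+1}(H)^2$ and contained in $U$; on them $\Pi^*=(\id,T)_\#P=(S,\id)_\#Q$, and therefore $T\circ S=\id$ $Q$-a.s.\ and $S\circ T=\id$ $P$-a.s., so $\Pi^*$ is concentrated on the graph of a bijection.

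\textbf{Upgrading density to comeagerness.} To finish, introduce on $U$ the functional $f(P,Q)=I_P+I_Q$, where $I_P=\int\mathrm{Var}(\Pi^*_x)\,dP(x)$, $I_Q$ is the analogous integral over $Q$ of the second-marginal disintegrations of $\Pi^*$, and $\mathrm{Var}$ denotes the $\W_2$-variance of a probability on $\Pc_2^N(H)$. Then $f(P,Q)=0$ precisely when $\Pi^*$ is concentrated on the graph of a bijection. The main obstacle is to establish upper semicontinuity of $f$ on $U$, since disintegrations of a measure are not generally continuous in that measure; I would exploit the characterization from the first part to circumvent this, using that on $U$ the optimizer is encoded by a convex potential $\bar\phi$ whose epi-/Mosco-convergence along converging pairs controls the spread of $\partial\bar\phi$ at the base points (and hence the disintegrations of $\Pi^*$). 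With the upper semicontinuity of $f$ in hand, each $\{f<1/n\}$ is open in $U$ and contains the dense family of transport-regular pairs, so $\{f=0\}=\bigcap_n\{f<1/n\}$ is a dense $G_\delta$ in $U$, hence comeager in $\Pc_2^{N+1}(H)\times\Pc_2^{N+1}(H)$.
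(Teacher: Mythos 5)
Your characterization of optimal couplings (first claim) is correct and takes essentially the route the paper does: the fundamental theorem of optimal transport for $\mc$-costs together with the identification of $\mc$-conjugation with classical convex conjugation on the adapted Lions lift (Theorem~\ref{thm:MC_trafo_ast_ad}), plus the subdifferential correspondence of Proposition~\ref{prop:subdiff_char_ad}.

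For the genericity claim the structure of your argument is different from the paper's, and it contains a genuine gap at precisely the step you flag as ``the main obstacle.'' Your proposal is to define $f(P,Q)=I_P+I_Q$ on the dense $G_\delta$ set $U$ of pairs with a unique optimal coupling, prove $f$ upper semicontinuous, and conclude. The density/$G_\delta$ bookkeeping around $U$ and the reduction to transport-regular pairs (via Corollary~\ref{cor:MainMongeIntro}) are fine, but the crucial u.s.c.\ of $f$ is left to a sketch that does not work as written. Concretely: (i) a $\W_2^2$-optimal $\Pi^*(P_n,Q_n)$ is not ``encoded by'' a canonical convex potential — dual optimizers are far from unique, so there is no natural sequence $\overline{\phi}_n$ to which to apply Mosco/epi-convergence along $(P_n,Q_n)\to(P,Q)$; (ii) even granting Mosco convergence $\overline\phi_n\to\overline\phi$, Attouch's theorem yields only graph convergence of the subdifferentials, which does not control the width of $\partial\overline{\phi}_n$ at the relevant base points, nor the variance of the conditional laws of $\Pi^*(P_n,Q_n)$; (iii) the quantity you want to bound, $\int \mathrm{Var}(\Pi^*_x)\,dP$, depends on the disintegration of the plan, not just on the potential, and it is not monotone or continuous under $\Gamma$-/Mosco limits of potentials. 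So the chain ``convergent pairs $\Rightarrow$ convergent potentials $\Rightarrow$ controlled subdifferential spread $\Rightarrow$ u.s.c.\ of $f$'' has a break at every arrow.

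The correct way to get the upper semicontinuity you need is actually already in the paper but by a different mechanism: Lemma~\ref{lem:apx.tau_usc} proves $\tau^c(\mu,\nu)=\sup_{\pi\in\cplopt^c(\mu,\nu)}\int\widehat{\mathrm{Var}}(\pi_x)\,d\mu(x)$ is $\W_p$-u.s.c., and the argument there is stability of $c$-cyclical monotonicity (\cite[Theorem 5.20]{Vi09}) together with concavity and boundedness of the conditional-variance functional (\cite[Proposition 2.8]{BaBePa18}); one then intersects $\{\tau^c=0\}$ with its image under the flip map. Your $f$ is essentially $\tau^c(P,Q)+\tau^c(Q,P)$ restricted to $U$, so importing Lemma~\ref{lem:apx.tau_usc} would repair your argument. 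The paper's actual proof of this theorem is different still: it shows (Proposition~\ref{prop:AJ_adapted} and Corollary~\ref{cor:MCdfb_Gdelta}, via the Ekeland--Lebourg theorem on the adapted lift) that for each fixed $Q$ the set of $P$ with $(P,Q)$ a strict Monge pair is a dense $G_\delta$, and then invokes Kuratowski--Ulam to pass to comeagerness of the intersection in the product. That route also yields the stronger ``strict Monge'' (Monge at every level) conclusion, not just Monge/bijective at the outermost level, which is what your $f$-based approach would give.
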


\subsection{Fundamental theorem of optimal transport, $MC$-convexity and $L$-convexity}\label{sec:Adlift_Intro}

 As we are interested in the connection to convex analysis on $L_2([0,1]; H)$, it is convenient to switch from the minimization of the squared-distance cost to the equivalent problem of finding the `maximal covariance' 
\begin{align}\label{eq:MC1} 
\mc(\mu, \nu)=\sup_{\pi \in\cpl(\mu, \nu) } \int  x\cdot y\, d\pi(x,y) = \sup_{X\sim \mu, Y\sim \nu} \E[X \cdot Y] \end{align}
of probabilities $\mu, \nu$ on $H$ with finite second moment.
Likewise, for  probabilities $P, Q$ on $\Pc_2(H)$ with finite second moment we replace \eqref{eq:W2}   by
\begin{align}\label{eq:MC2} 
\mc(P, Q)=\sup_{\pi \in\cpl(P, Q) } \int \mc(\mu, \nu) \, d\pi(\mu,\nu). 
\end{align}
For $P,Q\in \Pc_2^N(\H)$, $N>2$ we define  $\mc(P, Q)$ through the natural iteration of \eqref{eq:MC2}.

Optimal transport plans can be characterized in terms of dual potentials. This is made precise in the  
`fundamental theorem of optimal transport' (see e.g.\ \cite{AmGiSa08} or \cite{Vi09}) which we recall in a form convenient for $\MC$ costs.
For continuous symmetric $c:\Xs\times \Xs\to \R$
the $c$-convex conjugate of $\phi:\Xs\to [-\infty, +\infty]$ is 
$$ \phi^c (y):=\sup_{x\in \Xs} c(x,y)-\phi(x).$$ 
Here, $\phi$ is called $c$-convex 
if $\phi^{cc} = \phi$ and each such $\phi$ is lsc. We also note that $\phi^{ccc}= \phi^c$.  
The subdifferential of a $c$-convex $\phi$ is $$\quad \partial_c \phi:=\{(x,y): \phi^c (x) + \phi(y)=c(x,y)\} = \{(x,y): c(x,y)- \phi(x) \geq c(z,y) - \phi(z) \text{ for all $z\in X$} \},$$
and  $\partial_c \phi(x)=\{y:(x,y)\in \partial_c \phi\}$.
The primal and dual transport problem w.r.t.\ $c$ are given by 
\begin{align}\label{eq:OTmaxversion}\text{OT}_{\text{primal}}:=\sup_{\pi \in\cpl(\mu, \nu) } \int c\, d\pi, \quad \text{OT}_{\text{dual}}:= \inf_{\phi\in L^1 (\mu), \phi \text{ lsc}} \int \phi \, d\mu + \int \phi^c \, d\nu.\end{align}
\begin{theorem}[Fundamental theorem of optimal transport] \label{thm:ftot}
    Assume that $c:\Xs\times \Xs\to \R$ is symmetric, continuous and satisfies $|c|\leq a\oplus b$, $a\in L^1 (\mu),b\in L^1(\nu)$. 
The  primal problem is attained and the dual problem is attained by a $c$-convex function $\phi$. Moreover for any such $\phi$ we have that $\pi \in \cpl(\mu, \nu)$ is optimal for the primal problem if and only if $\supp (\pi) \subseteq \partial_c \phi$.
\end{theorem}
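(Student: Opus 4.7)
The plan is to adapt the standard Kantorovich duality argument to the maximization formulation in \eqref{eq:OTmaxversion}. First I would establish primal attainment: by Prokhorov's theorem the set $\cpl(\mu,\nu)$ is tight, since its marginals are fixed and hence tight, and weakly compact. Because $|c|\le a\oplus b$ with $\int (a\oplus b)\, d\pi = \int a\, d\mu + \int b\, d\nu$ the same finite number for every $\pi\in\cpl(\mu,\nu)$, and $c$ is continuous, the functional $\pi\mapsto\int c\,d\pi$ is upper semicontinuous for weak convergence: split $c=c^+-c^-$, apply Fatou to $c^+$ after truncation and use uniform integrability controlled by $a\oplus b$ to handle the tails. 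A direct compactness argument then yields a maximizer $\pi^*$.

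Next I would establish weak duality by a pointwise inequality: for any $\pi\in\cpl(\mu,\nu)$ and any admissible lsc $\phi$, the definition of $\phi^c$ gives $c(x,y)\le \phi(x)+\phi^c(y)$, and integrating against $\pi$ yields $\int c\,d\pi \le \int\phi\,d\mu + \int\phi^c\,d\nu$; thus $\text{OT}_{\text{primal}}\le \text{OT}_{\text{dual}}$.

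For strong duality and dual attainment I would use Fenchel--Rockafellar, or alternatively a Hahn--Banach separation as in Villani. The primal problem is a concave maximization of the linear functional $\pi\mapsto\int c\,d\pi$ under the affine marginal constraints $(\pi)_1=\mu$, $(\pi)_2=\nu$; in the pairing of continuous functions with signed measures, its Fenchel dual is precisely \eqref{eq:OTmaxversion}. Since $|c|\le a\oplus b$ with $a,b$ integrable, one can truncate $c$ at level $k$, apply the easier bounded-cost duality (where attainment is classical), and pass to the limit, using the $L^1$-domination to control the approximating potentials. Finally, replacing any dual optimizer $\phi$ by its $c$-biconjugate $\phi^{cc}$ preserves admissibility ($\phi^{cc}\le\phi$ and $(\phi^{cc})^c=\phi^c$), produces a $c$-convex optimizer, and is automatically lsc.

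For the support characterization, one direction is immediate: if $\supp(\pi)\subseteq\partial_c\phi$ for a $c$-convex dual optimizer $\phi$, then $c(x,y)=\phi(x)+\phi^c(y)$ on $\supp(\pi)$, so integrating matches the primal and dual values and $\pi$ is optimal by weak duality. For the converse, with $\pi$ primal optimal and $\phi$ dual optimal, the nonnegative integrand $\phi(x)+\phi^c(y)-c(x,y)$ has zero $\pi$-integral, so it vanishes $\pi$-almost everywhere; continuity of $c$ together with lower semicontinuity of $\phi$ and $\phi^c$ upgrades this to the closed-set inclusion $\supp(\pi)\subseteq\partial_c\phi$. The main obstacle is the strong-duality step, where integrability of the optimal potentials under only the asymmetric bound $|c|\le a\oplus b$ must be secured; the truncation-and-limit procedure together with the $c$-conjugation trick to enforce $c$-convexity handles this cleanly, but requires careful bookkeeping to ensure the limiting $\phi$ lies in $L^1(\mu)$ and $\phi^c$ in $L^1(\nu)$.
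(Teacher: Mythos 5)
The paper does not actually prove this theorem: it is quoted as the classical ``fundamental theorem of optimal transport'' and delegated to \cite{AmGiSa08, Vi09}, so there is no in-paper argument to compare against. Your outline reproduces the standard proof from those references, and most of it is sound: primal attainment via tightness of $\cpl(\mu,\nu)$ and upper semicontinuity of $\pi\mapsto\int c\,d\pi$ (the two-sided bound $|c|\le a\oplus b$ gives uniform integrability across couplings, so this works); weak duality by integrating $c(x,y)\le\phi(x)+\phi^c(y)$; the $c$-biconjugation trick using $\phi^{cc}\le\phi$ and $\phi^{ccc}=\phi^c$; and the complementary-slackness argument for the support characterization, where lower semicontinuity of $\phi,\phi^c$ and continuity of $c$ make $\{\phi\oplus\phi^c\le c\}$ closed, so a $\pi$-full set upgrades to $\supp(\pi)\subseteq\partial_c\phi$, and conversely $\supp(\pi)\subseteq\partial_c\phi$ forces the primal and dual values to coincide.

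The one step that would not go through as written is dual \emph{attainment} via ``truncate $c$ at level $k$, use bounded-cost duality, and pass to the limit in the potentials.'' Truncation is the standard device for proving the duality \emph{identity} (no duality gap), but it does not by itself yield an optimizer: the truncated potentials $\phi_k$ are $c_k$-convex for varying $c_k$, and without a modulus-of-continuity or equicontinuity control on $c$ there is no compactness principle that lets you extract a limit $\phi$ that is admissible, $c$-convex, and optimal; normalizing at a point does not fix this for a general continuous $c$ on a Polish space. The cited references obtain attainment differently: one shows the optimal plan $\pi^*$ is $c$-cyclically monotone and then \emph{constructs} a $c$-convex potential directly from $\supp(\pi^*)$ via the R\"uschendorf/Rockafellar chain formula, after which the two-sided bound $|c|\le a\oplus b$ (both halves are needed here) gives $\phi\in L^1(\mu)$ and $\phi^c\in L^1(\nu)$, hence dual optimality by the same complementary-slackness computation you use afterwards. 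Replacing your truncation-limit step by this construction closes the gap; the rest of your argument can stay as is.
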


In the setting of Brenier's theorem, $c(x, y)= x \cdot y$, $c$-convexity is just ordinary convexity and \Cref{thm:ftot} yields that transport plans are optimal if and only if they are concentrated on the subgradient of a convex function. 

In the case
$\X= \Pc_2(H)$ and $c(\mu, \nu)= \MC(\mu, \nu)$, \Cref{thm:ftot} asserts that transport plans are optimal if and only if they are concentrated on the $\MC$-subgradients of $\MC$-convex functions. 
The link to convex law invariant functions as in \Cref{thm:1-coupling_Brenier} is given in the following result which also appears to be of independent interest: 
\begin{theorem}\label{thm:MCiConvConjIntro}
Let $\phi : \Pc_2^N(\H) \to (-\infty, +\infty]$, $N\geq 1$ be proper. Then
\[
{\overline{\phi\,}}^\ast = \overline{\phi^{\mc}}.
\]
In particular, the convex conjugate of an adapted-law invariant function is adapted-law invariant.
\end{theorem}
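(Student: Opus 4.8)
The plan is to prove $\overline{\phi}^\ast = \overline{\phi^{\mc}}$ by a direct computation of both sides evaluated at an arbitrary $Y \in L_2([0,1]^N;H)$, using the quotient property \eqref{eq:quotient_property} to pass between the $L_2$-picture and the $\Pc_2^N$-picture. Concretely, fix $Y \in L_2([0,1]^N;H)$ and write out
\[
\overline{\phi}^\ast(Y) = \sup_{X \in L_2([0,1]^N;H)} \Big( \langle X, Y\rangle_{L_2} - \overline{\phi}(X) \Big) = \sup_{X} \Big( \langle X, Y\rangle_{L_2} - \phi(\lawad X) \Big).
\]
The first step is to split this supremum by first fixing the adapted law $P = \lawad X$ and optimizing over all $X$ with $\lawad X = P$, then optimizing over $P \in \Pc_2^N(H)$. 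Since $\overline{\phi}(X) = \phi(P)$ depends only on $P$, the inner optimization only touches the bilinear term, giving $\sup_{X \simad P} \langle X, Y\rangle_{L_2}$.

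The key identity to establish is then
\[
\sup_{X \simad P} \langle X, Y\rangle_{L_2} = \mc\big(P, \lawad Y\big),
\]
which should follow from \eqref{eq:quotient_property} (the iterated Skorokhod/quotient property, \Cref{prop:iteratedSkorohod}) applied in the "maximal covariance" rather than the "distance" formulation: maximizing $\langle X, Y\rangle_{L_2}$ over $X \simad P$ with $Y$ fixed is the same as maximizing over all couplings of $P$ with $\lawad Y$ realized on $L_2$, because any admissible coupling on $\Pc_2^N(H)$ can be lifted to a pair $(X,Y')$ in $L_2$ with $Y' \simad \lawad Y$, and conversely; one also uses that the bilinear form $\langle \cdot,\cdot\rangle_{L_2}$ on $L_2([0,1]^N;H)$ iterates correctly through the conditional-expectation tower defining $\lawad$, which is where the adapted structure of the lift matters. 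Granting this, the outer supremum becomes $\sup_{P \in \Pc_2^N(H)} \big( \mc(P,\lawad Y) - \phi(P) \big) = \phi^{\mc}(\lawad Y) = \overline{\phi^{\mc}}(Y)$, which is exactly the claim. The final sentence of the theorem — that $\phi^{\mc}$ is adapted-law invariant whenever $\phi$ is — is then immediate: $\overline{\phi^{\mc}} = \overline{\phi}^\ast$ is a classical convex conjugate on a Hilbert space and hence a genuine function of $Y$; but $\overline{\phi}^\ast$ being a lift means it only depends on $\lawad Y$, so $\phi^{\mc}$ is well-defined on $\Pc_2^N(H)$, i.e. $\overline{\phi^{\mc}}$ is adapted-law invariant.

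I expect the main obstacle to be the key identity $\sup_{X \simad P} \langle X, Y\rangle_{L_2} = \mc(P,\lawad Y)$, and in particular the "$\geq$" direction: given an optimal (or near-optimal) coupling $\pi$ between $P$ and $\lawad Y$ on $\Pc_2^N(H)$ achieving $\mc(P,\lawad Y)$, one must produce an $X$ on the fixed space $([0,1]^N,\lambda)$ with $\lawad X = P$ such that $\langle X,Y\rangle_{L_2}$ reproduces $\int \mc\,d\pi$, \emph{without} being allowed to move $Y$ off its given representative. This requires a careful, layer-by-layer construction exploiting that the coordinate filtration on $[0,1]^N$ is rich enough (each $\F_t$-fiber is a copy of $[0,1]^{N-t}$ carrying Lebesgue measure) to realize any prescribed conditional transport plan — essentially an iterated application of the gluing lemma together with the scalar Skorokhod/Brenier-type selection on each fiber. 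One must also handle measurability of the fiberwise selections and the case $\phi^{\mc} \equiv +\infty$ (non-proper $\phi^c$), but these are routine once the selection is set up. The "$\leq$" direction is easier: any $X \simad P$ together with $Y$ induces a coupling of $P$ and $\lawad Y$, and $\langle X,Y\rangle_{L_2}$ telescopes — via the tower property of conditional expectation and Cauchy–Schwarz at each layer — into an integral dominated by $\int\mc\,d\pi \le \mc(P,\lawad Y)$.
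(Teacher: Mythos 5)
Your decomposition matches the paper's proof exactly (Theorem~\ref{thm:MC_trafo_ast_ad}, Section~\ref{sec:MC_cx_ad}): fix $Y$, split the supremum over $X$ by first optimizing over the fiber $\{X : X\simad P\}$ and then over $P$, and conclude via the key identity $\sup_{X\simad P}\E[X\cdot Y]=\mc(P,\lawad Y)$. You also correctly identify that this key identity is the whole content and that the `$\geq$' direction is where the difficulty sits: one cannot move $Y$ off its given representative.

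Your sketch of that `$\geq$' direction, however, asserts more than is true. You claim the coordinate filtration is rich enough ``to realize any prescribed conditional transport plan'' exactly, but with $Y$ fixed this is false in general. Already for $N=1$: if $Y:[0,1]\to H$ is injective, then for any $X$ the pair $(X,Y)$ is Monge from the $Y$-side, so you can only realize Monge couplings of $\law X$ and $\law Y$ --- not an arbitrary optimizer $\pi$, which need not be Monge. The ``layer-by-layer gluing plus Brenier selection on each fiber'' argument therefore does not close the gap: $Y$ may have already exhausted the randomness in a fiber. What saves the identity is not exact realization but an approximation argument: one shows $\sup_{X\simad P}\E[X\cdot Y]\geq\mc(P,\lawad Y)-\epsilon$ for every $\epsilon$, which suffices since a supremum is all that is needed. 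This is precisely the role of \Cref{lem:simple_transfer}/\Cref{lem:EnoughRandomization} in the $N=1$ case and of \Cref{lem:DPP} (via \Cref{cor:transfer_ad}, and ultimately via denseness of bi-adapted Monge maps in bicausal couplings from \cite{BePaSc21c}) in the general adapted case. Your `$\leq$' direction and the concluding ``in particular'' deduction are fine, modulo the phrasing ``a coupling of $P$ and $\lawad Y$,'' which should be an $N$-coupling $\lawad(X,Y)\in\cpl^N(P,\lawad Y)$ together with \Cref{prop:DPP_CplN_MC}, but the estimate $\E[X\cdot Y]\leq\mc(P,\lawad Y)$ is correct as stated.
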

Theorem~\ref{thm:MCiConvConjIntro} provides a bridge between 
the abstract conjugates on  $\Pc^N_2(H)$ and classical convex analysis on the Hilbert space 
$L_2([0,1]^N;H)$. In particular, convex functions on $L_2([0,1]^N;H)$ are significantly more tractable 
and amenable to the standard tools of convex analysis, making this identification conceptually 
and technically valuable. 

In the case $N=1$, convexity of the lift $\overline\phi$ is tantamount to convexity of $\phi$ along all curves of the form
 \begin{align}\label{eq:all_interpolations}
     (\mu_t)_{t\in [0,1]}, \mu_t:=((1-t)\proj_0 + t \pr_1)_{\#} (\pi), \pi\in \cpl(\mu, \nu).
 \end{align}
  Functionals with this property are called totally convex \cite{CaSaSo25} or acceleration-free convex \cite{Pa24} or simply convex (in the context of risk measures).
 Total convexity is equivalent to displacement convexity if $\phi:\Pc_2(H)\to \R$ is continuous \cite{CaSaSo25}, but not in general, see \cite{Pa24}. We will show
 \begin{proposition}\label{pro:ConvexEquivIntro} 
For a lsc functional $\phi : \Pc_2(H) \to (-\infty, \infty]$, the following are equivalent:
\begin{enumerate}
    \item $\phi$ is totally convex.
\item $\overline\phi$ is convex.
\item $\phi$ is $MC$-convex.
    \item $\phi$ has totally convex domain, is displacement convex and  increasing in convex order.
   
\end{enumerate}
\end{proposition}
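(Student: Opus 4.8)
The plan is to prove the cycle of implications $(2)\Rightarrow(3)\Rightarrow(1)\Rightarrow(2)$ and then $(1)\Leftrightarrow(4)$ separately, using \Cref{thm:MCiConvConjIntro} as the main engine. First, $(2)\Rightarrow(3)$: if $\overline\phi$ is convex and lsc on $L_2([0,1];H)$, then it equals its biconjugate $\overline\phi = \overline\phi^{**}$ by the Fenchel--Moreau theorem. Since $\overline\phi$ is law-invariant and (by \Cref{thm:MCiConvConjIntro}) the conjugate of a law-invariant function is again law-invariant, we may apply \Cref{thm:MCiConvConjIntro} twice to get $\overline\phi = \overline{\phi^{\mc}}^{*} = \overline{\phi^{\mc\mc}}$, i.e.\ $\overline{\phi} = \overline{\phi^{\mc\mc}}$, and since the Lions lift is injective on law-invariant functionals this forces $\phi = \phi^{\mc\mc}$, which is precisely $MC$-convexity. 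Next, $(3)\Rightarrow(1)$: if $\phi$ is $MC$-convex, write $\phi = \psi^{\mc}$ for $\psi = \phi^{\mc}$; then for any $\mu,\nu$ and any $\pi\in\cpl(\mu,\nu)$, along the interpolation $\mu_t = ((1-t)\proj_0 + t\proj_1)_\#\pi$ one estimates $\phi(\mu_t)=\sup_\rho \int \MC(\cdot,\rho)\,d(\text{coupling})-\psi(\rho)$ and uses that $t\mapsto \MC(\mu_t,\rho)$ is convex (it is a sup of linear functionals $\int x\cdot y\,d\gamma$ pulled back through the linear-in-$t$ map on the $\pi$-plane, by the covariance representation \eqref{eq:MC1}), so $\phi$, being a supremum of convex functions of $t$, is convex along $(\mu_t)$; this is exactly total convexity. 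Finally $(1)\Rightarrow(2)$: total convexity says $t\mapsto\phi(\mu_t)$ is convex for every $\pi$-interpolation; translating through $\lawad$ (here $\lawad=\law_\lambda$, $N=1$) and \eqref{eq:quotient_property}, every segment $[X_0,X_1]$ in $L_2([0,1];H)$ is mapped by $\overline\phi$ to $t\mapsto \phi(\law_\lambda((1-t)X_0+tX_1))$, and the law of $(1-t)X_0+tX_1$ under $\lambda$ is of the form $\mu_t$ for $\pi = \law_\lambda(X_0,X_1)$, so convexity along all segments — hence convexity of $\overline\phi$ — follows, with lower semicontinuity coming from lsc of $\phi$ and continuity of $X\mapsto\law_\lambda X$ in $\W_2$.

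For the equivalence with $(4)$, I would recall (citing \cite{CaSaSo25, Pa24}) that displacement convexity is convexity along \emph{constant-speed geodesics}, a sub-collection of the curves in \eqref{eq:all_interpolations}, that monotonicity in convex order corresponds to the endpoint behaviour along the specific interpolations with $\nu$ a dilation of $\mu$, and that ``totally convex domain'' is exactly the statement that the set $\{\phi<\infty\}$ is closed under the operation $\mu,\nu\mapsto\mu_t$. The direction $(1)\Rightarrow(4)$ is then immediate since each listed property is convexity along a sub-family of the curves \eqref{eq:all_interpolations} (plus the domain remark). For $(4)\Rightarrow(1)$ one must show that convexity along geodesics together with convex-order monotonicity and the domain condition upgrades to convexity along \emph{all} plan-interpolations $\mu_t$; the idea is to factor a general interpolation $\mu_0\to\mu_1$ through an intermediate ``mean'' measure using the barycentric projection of $\pi$ — writing $\mu_t$ as a geodesic-type interpolation composed with a convex-order step — so that convexity along it follows from combining the two hypotheses.

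The main obstacle I expect is precisely this last step, $(4)\Rightarrow(1)$: general couplings $\pi$ produce interpolations $(\mu_t)$ that are not geodesics and not monotone in convex order, so one genuinely needs the decomposition of an arbitrary plan-interpolation into a geodesic piece and a convex-order piece, and to check this decomposition behaves well with the domain hypothesis. I would address it by conditioning $\pi$ on the first marginal, using the conditional barycenters to split each fiber's transport into ``move the mass to the barycenter'' (a convex-order step, handled by monotonicity) followed by ``spread out from the barycenter'' (handled by displacement convexity after recognizing it as a geodesic), and then integrating; the lsc and domain bookkeeping is routine once the structural decomposition is in place. An alternative, cleaner route for $(4)\Rightarrow(1)$ is to bypass it and instead close the logical loop through $(2)$ or $(3)$ — e.g.\ show directly that the four properties in $(4)$ imply $\overline\phi$ is convex by testing convexity on segments and splitting each segment the same way — but the barycentric splitting argument is needed in either presentation.
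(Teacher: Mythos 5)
Your treatment of $(1)\Leftrightarrow(2)$, $(2)\Rightarrow(3)$, $(3)\Rightarrow(1)$, and $(1)\Rightarrow(4)$ is correct and matches the paper in substance: the implications $(1)\Leftrightarrow(2)$ are definitional, $(2)\Rightarrow(3)$ is Fenchel--Moreau plus \Cref{thm:MCiConvConjIntro}, and $(3)\Rightarrow(1)$ is the observation that $\mc(\cdot,\rho)$ is totally convex and total convexity is preserved by suprema (your ``sup over a fixed set $\cpl(\pi,\rho)$ of functionals affine in $t$'' is a clean way to see this; the paper does it by a gluing argument in \Cref{lem:MCtotcvx}).

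However, your barycentric decomposition for $(4)\Rightarrow(1)$ has a genuine gap and in fact runs in the wrong direction. Let $b(x)=\int y\,\pi_x(dy)$ be the barycentric projection. Jensen's inequality gives, fiberwise, $(T_t(x,\cdot))_\#\pi_x \ge_{cx} \delta_{(1-t)x+tb(x)}$, and integrating yields $\mu_t \ge_{cx} \mu_t^b := ((1-t)\id+tb)_\#\mu_0$. Since $\phi$ is \emph{increasing} in convex order, this gives $\phi(\mu_t)\ge\phi(\mu_t^b)$ --- a \emph{lower} bound, where you need an upper bound $\phi(\mu_t)\le(1-t)\phi(\mu_0)+t\phi(\mu_1)$. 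Moreover, $\mu_t^b$ is not a Wasserstein geodesic unless $b$ happens to be an optimal map, and its endpoint is $b_\#\mu_0$, not $\mu_1$, so neither hypothesis in $(4)$ applies to it the way you need. What is actually required is the reverse comparison $\mu_t\le_{cx}\nu_t$ against a genuine geodesic $\nu_t$ joining $\mu_0$ to $\mu_1$, so that monotonicity gives $\phi(\mu_t)\le\phi(\nu_t)$ and displacement convexity finishes the job. The paper achieves exactly this for $\H=\R$ by comparing the $\pi$-interpolation with the quantile-coupling (i.e.\ optimal-coupling) interpolation via a stochastic-order argument. For $\dim\H>1$ that comparison is no longer available and the paper instead invokes a nontrivial approximation result from \cite{CaSaSo25}; in fact the paper proves $(4)\Rightarrow(1)$ only under the additional hypothesis that $\phi$ is finitely valued, and explicitly defers the general case to the forthcoming \cite{BePaScSo25}. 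Your proposal neither produces the needed direction of the convex-order inequality nor acknowledges the dimension split and the finitely-valued restriction, so this direction is not established.
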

Recall that  $\mu, \nu\in \Pc_2(H)$ are in convex order 
if $\int f\, d\mu \leq \int f\, d\nu$ for all convex $f$.

\subsection{Stochastic processes and  Brenier theorems for the adapted Wasserstein distance}\label{sec:AW_intro}

Authors from several fields have independently refined the weak topology on the laws of stochastic processes in order to take the temporal flow of information into account, see \cite{Al81, He96, HeSc02, BiTa19, PfPi12,PfPi14, NiSu20} among others. In finite discrete time, all these approaches define the same \emph{adapted weak topology} on $\Pc_2(H^N)$ which is metrized by the adapted  Wasserstein distance $\AW_2$, see \cite{BaBaBeEd19b, Pa22, BePaPo21}.

The Monge-formulation of the (squared) adapted Wasserstein distance of $\mu, \nu\in \Pc_2(H)$ is 
\begin{align}
   \AW^2_{2, \text{Monge}} (\mu, \nu) = \inf_{T: H^N  \to H^N, T_{\#}(\mu)=\nu, T \text{ bi-triangular}} \int |T(x)-x|^2 \, d\mu(x).     
\end{align}
Here $T:H^N \to H^N$ is called triangular (or adapted) if each $T_k$ does depend on $(x_1, \ldots, x_n)$ only through the first $k$ coordinates. It is bi-triangular if it is invertible and $T, T^{-1}$ are both triangular. Restricting to triangular / adapted mappings in the definition ensures that the adapted Wasserstein distance adequately respects the information structure inherent in stochastic processes. 

The Kantorovich type relaxation is given by allowing for bi-causal couplings, i.e.\ 
\begin{align}\label{eq:AW_intro_plain}
   \AW^2_2 (\mu, \nu) = \inf_{\pi \in \cplbc(\mu, \nu)} \int |x-y|^2  \, d\pi(x,y).     
\end{align}
 A coupling $\pi$  is called causal if for $t\leq N$ and Borel $B\subseteq H$
\begin{align}\label{eq:AWIntro}
    \pi(Y_t\in B| X_1, \ldots, X_N) = \pi(Y_t\in B| X_1, \ldots, X_t),  
\end{align}
where we use $(X_1,\ldots, X_N), (Y_1, \ldots, Y_N)$ to denote the projections onto the first and second (respectively) coordinate of $H\times H$. The coupling $\pi$ is called bi-causal, if this also holds when switching the roles of $X $ and $Y$.

Already in the first non-trivial instance  $H=\R$, $N=2$ it is easy to find absolutely continuous probability measures which are not transport regular, see \Cref{ex:NoPlainAdaptedBrenier} below. The deeper reason for this  is that via a specific isometry, adapted transport for processes with $N$ time intervals can be seen as a $\W_2$-transport problem on $\Pc_2^{N}(H^N)$ and the corresponding push-forward of the Lebesgue measure to   $\Pc_2^{N}(H^N)$ plays no particular role. 

However based on our results for optimal transport on $\Pc_2^{N}(H^N)$ we are able  to show that  \emph{typically} optimizers of \eqref{eq:AW_intro_plain} are unique and bi-trangular Monge.
\begin{theorem}[Baire--Brenier theorem for $\AW_2$]\label{thm:adaptedBaireBrenier}
    The set of $(\mu,\nu)$ for which $\AW_2(\mu, \nu)$ admits a unique optimal coupling $\pi^*$ and $\pi^*$ is given by a bi-triangular map $T: H^N\to H^N$ is comeager in  $\Pc_2(H^N)\times \Pc_2(H^N)$. 
\end{theorem}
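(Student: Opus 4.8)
The plan is to transfer the comeagerness statement from the iterated Wasserstein setting (\Cref{thm:only-coupling_Brenier}) to the adapted-transport setting via the isometry alluded to in the text between $(\Pc_2(H^N), \AW_2)$ and a subspace of $(\Pc_2^N(H^N), \W_2)$. Concretely, I would first make that isometry precise: to a process law $\mu \in \Pc_2(H^N)$ one associates its \emph{adapted disintegration tree}, i.e.\ the element of $\Pc_2^N(H^N)$ obtained by iteratively disintegrating $\mu$ along the coordinate filtration and recording the conditional laws as measures-on-measures; write $\iota(\mu)$ for this object. The key facts to establish about $\iota$ are: (i) $\iota$ is injective, with image a closed (hence Polish) subset $\mathcal{S} \subseteq \Pc_2^N(H^N)$ — this is essentially the statement that the adapted disintegration characterizes a process law, which is classical in the adapted-topology literature (cf.\ \cite{BaBaBeEd19b, Pa22}); (ii) $\iota$ is an isometry from $(\Pc_2(H^N), \AW_2)$ onto $(\mathcal{S}, \W_2)$ — this is the statement that $\AW_2^2(\mu,\nu)$ coincides with the nested/iterated $\W_2^2$ of the disintegration trees, which is exactly how $\AW_2$ is characterized in terms of $\W_2$ on $\Pc_2^N$; (iii) bi-causal couplings of $(\mu,\nu)$ correspond bijectively and cost-preservingly to couplings of $(\iota(\mu),\iota(\nu))$ in $\Pc_2^N(H^N)$, and under this correspondence a coupling is Monge given by a \emph{bi-triangular} map on $H^N$ if and only if the corresponding $\W_2^2$-optimal coupling on $\mathcal{P}_2^N(H^N)$ is concentrated on the graph of a \emph{bijection}. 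Item (iii) is where the matching of the two notions of regularity happens: a $\W_2$-Monge transport between measures-on-$\cdots$-measures, unwound level by level, produces exactly a family of adapted (triangular) maps on $H^N$, and bijectivity at the top level corresponds to bi-triangularity.

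Given this dictionary, the proof proceeds as follows. \Cref{thm:only-coupling_Brenier} gives that the set $\mathcal{G} \subseteq \Pc_2^{N+1}(H^N) \times \Pc_2^{N+1}(H^N)$ of pairs $(P,Q)$ admitting a unique $\W_2^2$-optimizer concentrated on the graph of a bijection is comeager. I would apply this at one level lower, i.e.\ with $N$ replaced by $N-1$, so that the ambient space is $\Pc_2^N(H^N) \times \Pc_2^N(H^N)$ and the good set $\mathcal{G}$ is comeager there. Then I restrict to the closed subset $\mathcal{S} \times \mathcal{S}$: the point is that $\mathcal{S}$ must be shown to have \emph{nonempty interior} — or more precisely, that $\mathcal{G} \cap (\mathcal{S}\times\mathcal{S})$ is comeager \emph{in} $\mathcal{S}\times\mathcal{S}$. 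Here one cannot simply intersect a comeager set with a closed set and retain comeagerness; instead I would argue that $\mathcal{S}$ is itself a Baire space (being Polish) and that the good set, when intersected with $\mathcal{S}\times\mathcal{S}$, is still a countable intersection of open dense subsets \emph{of $\mathcal{S}\times\mathcal{S}$}. This requires revisiting how the comeager set in \Cref{thm:only-coupling_Brenier} is built (it is a $G_\delta$ obtained from density of transport-regular measures plus a uniqueness argument), and checking that the relevant dense sets can be taken inside $\mathcal{S}$ — which should follow because transport-regular measures of the form $\lawad(\text{nice process})$ are dense in $\mathcal{S}$, exactly as $\Lambda$-type measures are dense in $\Pc_2^N$ by \Cref{cor:MainMongeIntro}. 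Once $\mathcal{G}\cap(\mathcal{S}\times\mathcal{S})$ is comeager in $\mathcal{S}\times\mathcal{S}$, pulling back through the isometry $\iota\times\iota$ (a homeomorphism onto its image) yields that the set of $(\mu,\nu)$ with unique $\AW_2$-optimizer given by a bi-triangular map is comeager in $\Pc_2(H^N)\times\Pc_2(H^N)$.

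I expect the main obstacle to be precisely the Baire-category bookkeeping in the third paragraph: transferring comeagerness into the closed subspace $\mathcal{S}$. The naive route fails, and the honest route is to re-derive the comeager set of \Cref{thm:only-coupling_Brenier} in a form that is manifestly "relative to $\mathcal{S}$" — i.e.\ to verify that each open dense ingredient remains dense after intersecting with $\mathcal{S}$. This hinges on a density statement of $\AW_2$-transport-regular process laws inside $(\Pc_2(H^N), \AW_2)$, which is the adapted analogue of \Cref{cor:MainMongeIntro} and should be obtainable by pushing forward the Wiener-sheet reference measure along the natural map realizing processes, but must be checked carefully because the adapted topology is strictly finer than the weak topology. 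A secondary, more technical point is verifying the equivalence in item (iii) between top-level bijectivity of the $\W_2$-Monge map and bi-triangularity of the induced map on $H^N$; this is a matter of carefully tracking the recursive structure of Monge maps between iterated measures and invoking that a triangular map with triangular inverse is exactly the unwinding of a level-wise bijective transport. Both points are, I believe, routine-but-delicate rather than conceptually hard, given the machinery already developed in the paper.
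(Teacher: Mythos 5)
You have the right architecture — transfer the Baire-category statement through an isometric embedding of $(\Pc_2(H^N),\AW_2)$ into an iterated Wasserstein space — but the crucial structural fact you identify in item (i) is wrong, and this error then causes you to (incorrectly) diagnose the Baire bookkeeping as the hard step and propose an unnecessarily heavy workaround.

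Concretely: the image $\mathcal{S}$ of $\Pc_2(H^N)$ under the adapted-disintegration embedding is \emph{not} closed. If it were a closed (hence complete) subspace of a complete metric space on which the embedding is an isometry, then $(\Pc_2(H^N),\AW_2)$ would itself be complete — but $\AW_2$ is well known to be incomplete on $\Pc_2(H^N)$ (this is the whole point of introducing filtered processes as the completion, cf.\ \cite{BaBePa21}). What is true, and what the paper leans on explicitly, is that the embedding $J:(\Pc_2(H^N),\AW_2)\to(\Pc_2(\Zc_1),\W_2)$ has \emph{dense $G_\delta$} image (\Cref{prop:plainGdelta}, and the remark in the introduction right after \Cref{thm:AW_Brenier_Intro}). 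This flips your assessment of the difficulty: once the image is dense $G_\delta$, the ``naive route'' you dismiss in your third paragraph actually works by Baire's theorem — a comeager set of the ambient Polish space intersected with a dense $G_\delta$ subset is again dense $G_\delta$ in that subset, so pulling back through the homeomorphism $J$ gives comeagerness in $(\Pc_2(H^N),\AW_2)$ directly. Your proposed remedy (re-deriving every density ingredient ``relative to $\mathcal{S}$'') solves a problem that does not arise once the correct topological picture is in place.

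A second point worth flagging: the ambient space should be $\Pc_2(\Zc_1)$ (the adapted-law space), not $\Pc_2^N(H^N)$. The embedding $\Pc_2(\Zc_1)\hookrightarrow\Pc_2^N(H^N)$ (via $\iota_1$ of \Cref{prop:translation}) does have closed image, but the image of $\Pc_2(H^N)$ is then dense $G_\delta$ only \emph{relative to that closed subset}, not in all of $\Pc_2^N(H^N)$. Hence restricting \Cref{thm:only-coupling_Brenier} verbatim to $\Pc_2^N(H^N)$ does not suffice; one must run the $\mc$-differentiability dense-$G_\delta$ argument relative to $\Pc_2(\Zc_1)$, which the paper does in \Cref{prop:MCdfb_Gdelta_FP} using the $\Pc_2(\Zc_1)$-specific Lions lift of \Cref{sec:LionsLiftFP}. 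Your item (iii) translating Monge maps on iterated measure spaces to bi-triangular maps on $H^N$ is essentially the content of \Cref{prop:xi_is_adapted} and \Cref{cor:xitoT_natural}, and your use of Kuratowski--Ulam to pass from ``comeager for each fixed $\nu$'' to ``comeager in pairs'' matches the paper's proof of \Cref{prop:biMongeGd_proc}; those parts of your plan are sound.
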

As mentioned above, $\AW_2$ is considered as a natural metric for the adapted weak topology on $\Pc_2(H^N)$ but (like other compatible metrics) it is not  complete.
It is shown in \cite{BaBePa21} that the completion of $(\Pc_2(H^N),\AW_2)$ consists precisely in the space of stochastic processes with \emph{filtration}, where two stochastic processes are identified, in signs $X \simad Y$ if they have the same probabilistic properties. This can be made precise, e.g.\ by asserting that $X$ and $Y$ have Markovian lifts with the same laws, see \cite{BePfSc24}.

To present the completion in terms of $L_2([0,1]^N; H)$, we recall that $(\F_t)_{t=1}^N$ denotes the coordinate filtration on $([0,1]^N,\lambda)$ and set 
\begin{align*}
    L_{2,ad}^N(\H)  := & \  \{ X=(X_t)_{t=1}^N \in L_2([0,1]^N; \H^N ) : X_t \text{ is $\F_t$-measurable}  \}, \\
    \AW_2(X,Y):= & \ \inf \{ \| X'-Y'\|_2 : X'\simad X,Y'\simad Y\}. 
\end{align*}

Elements of $\Pc_2(\H^N)$ correspond to (equivalence classes of) naturally filtered processes $X\in L_{2,ad}^N(\H)$. $X$ is naturally filtered if
    $\E[f(X)|\F_t] = \E[f(X)|X_1, \ldots, X_t]$  
for all $t\leq N$ and $f: \H^N\to \R$ continuous bounded. Intuitively this means that the coordinate filtration $(F_t)_t$ contains no extra information on the process beyond what can be inferred from the past of the process.
Using this embedding, the completion\footnote{The space of  $\text{filtered processes}$ equipped with $ \AW_2$ admits a number of convenient properties. It is a Polish geodesic space,  
the set of martingales is closed and geodesically convex, there is a Prohorov-type compactness criterion, 
 finite state Markov chains are dense and Doob decomposition, optimal stopping, Snell-envelope, pricing, hedging, utility maximization, etc.\  are Lipschitz continuous w.r.t.\ $\AW_2$, cf.\ \cite{BaBePa21} and the references therein.} of  $(\Pc_2(\H^N),\AW_2)$ is given by
$$(L_{2,ad}^N(\H) / \simad, \AW_2)=:(\text{filtered processes},\AW_2).$$

Our main result for the adapted Wasserstein distance is the following characterization of optimal couplings in terms of convex functions on the set of adapted processes. Notice that the case $N=1, \H= \R^d$ corresponds once more to the classical Brenier theorem.  
\begin{theorem}[Brenier Theorem for $\AW_2$]\label{thm:AW_Brenier_Intro}
    Let $X,Y\in L_{2, ad}^N(\H)$. Then $(X,Y)$ is an optimal coupling (i.e.\ $\|X-Y\|_2=\AW_2(X,Y)$) if and only if 
    \begin{align}\label{eq:AdOptIntro} 
    Y(u_1, \ldots) \in \partial \overline{\phi}( X(u_1, \ldots)),\quad u_1\in [0,1]
    \end{align}
    for some convex adapted-law invariant $\overline{\phi}: \H\times L_{2, ad}^{N-1}(H)\to (-\infty, \infty]$.

    Moreover, a coupling $\pi \in \cplbc(\mu, \nu)$, $\mu, \nu\in \Pc_2(H^N)$ is optimal if and only if $\pi \sim \law(X,Y)$ for some naturally filtered processes $X,Y$ satisfying \eqref{eq:AdOptIntro}. 
\end{theorem}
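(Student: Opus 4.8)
The plan is to deduce Theorem~\ref{thm:AW_Brenier_Intro} from the $N$-iterated Brenier result (Theorem~\ref{thm:N-coupling_Brenier}) and Theorem~\ref{thm:only-coupling_Brenier} by transporting everything through the isometry $\lawad$ between filtered processes and $(\Pc_2^N(H),\W_2)$. The first step is to set up the dictionary carefully: an adapted process $X\in L_{2,ad}^N(H)$ should be viewed, via the identification $L_2([0,1]^N;H)\cong H\times L_2([0,1]^{N-1};H)$ obtained by freezing the first coordinate $u_1$, as a map $u_1\mapsto X(u_1,\cdot)$ into $L_2([0,1]^{N-1};H)$, and the adapted law $\lawad(X)$ is then the $\lambda$-pushforward on $[0,1]$ of $u_1\mapsto \lawad(X(u_1,\cdot))\in\Pc_2^{N-1}(H)$. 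In other words, $\lawad$ on $L_{2,ad}^N(H)$ factors as the one-step Lions lift on $\Pc_2(\Pc_2^{N-1}(H))$ composed with the $(N-1)$-step adapted lift in the inner variable. Under this identification $\|X-Y\|_2^2 = \int_0^1 \|X(u_1,\cdot)-Y(u_1,\cdot)\|_2^2\,du_1$, and the quotient property \eqref{eq:quotient_property} shows $\AW_2(X,Y)$ equals $\W_2$ between $\lawad(X)$ and $\lawad(Y)$ viewed in $\Pc_2^N(H)$.

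The second step: given an optimal pair $(X,Y)$ with $\|X-Y\|_2=\AW_2(X,Y)$, the quotient identity forces $(X(u_1,\cdot),Y(u_1,\cdot))$ to realize, for $\lambda$-a.e.\ $u_1$, the optimal $\W_2$-coupling between the conditional inner laws; equivalently, $\law(\lawad(X(\cdot,\cdot)))$-a.s.\ the pair $(\lawad(X(u_1,\cdot)),\lawad(Y(u_1,\cdot)))$ lies in the support of the $\mc$-optimal coupling on $\Pc_2^{N-1}(H)$. Apply Theorem~\ref{thm:only-coupling_Brenier} (with $N$ there replaced by $N-1$, $P:=\law_\lambda(\lawad(X(\cdot,\cdot)))$, $Q$ the analogous law for $Y$): optimality is equivalent to the existence of a convex, lsc, adapted-law invariant $\overline\psi:L_2([0,1]^{N-1};H)\to(-\infty,\infty]$ with $Y(u_1,\cdot)\in\partial\overline\psi(X(u_1,\cdot))$ for a.e.\ $u_1$. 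Then one builds the genuinely adapted potential on $H\times L_{2,ad}^{N-1}(H)$ by combining this inner potential over the outer coordinate with the base Brenier representation in the $u_1$-direction; concretely, the map $u_1\mapsto\overline\psi$-subgradient data must itself be the gradient of a convex function of $X(u_1,\cdot)$, and since $u_1$ indexes the outermost Wasserstein layer one uses Theorem~\ref{thm:N-coupling_Brenier}'s convex-potential characterization once more to assemble $\overline\phi$. The adapted-law invariance of $\overline\phi$ is exactly the statement that it descends to a function on $\Pc_2^N(H)$, which holds by Theorem~\ref{thm:MCiConvConjIntro} (the conjugate of an adapted-law invariant function is adapted-law invariant). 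Conversely, if \eqref{eq:AdOptIntro} holds, the Fenchel--Young equality $\overline\phi(X)+\overline\phi^*(Y)=\langle X,Y\rangle$ plus Theorem~\ref{thm:MCiConvConjIntro} and the fundamental theorem of optimal transport (Theorem~\ref{thm:ftot} with $c=\mc$) give optimality directly.

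For the final sentence about bicausal couplings $\pi\in\cplbc(\mu,\nu)$: recall from \cite{BaBePa21} and the discussion preceding the theorem that $\Pc_2(H^N)$ embeds into filtered processes via naturally filtered representatives, that $\AW_2$ on $\Pc_2(H^N)$ agrees with $\AW_2$ on the corresponding filtered processes, and that bicausal couplings of $\mu,\nu$ correspond precisely to joint laws $\law(X,Y)$ with $X,Y$ naturally filtered, $X\simad\mu$, $Y\simad\nu$, and $X,Y$ jointly adapted. So $\pi$ is $\AW_2$-optimal iff the corresponding $(X,Y)$ is an optimal coupling in the sense of the first part, and we simply read off \eqref{eq:AdOptIntro}. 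One should check that the representation can be taken with $X,Y$ naturally filtered simultaneously — this is where a small amount of care is needed, since the optimizer furnished by \eqref{eq:quotient_property} is a priori only adapted, not naturally filtered; the fix is to pass to the naturally filtered (Markovian) versions, which does not change the adapted law or the value $\AW_2$, and to note that $\partial\overline\phi$ being a law-invariant relation is preserved under this reduction.

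I expect the main obstacle to be precisely the \textbf{assembly step} in the second paragraph: turning the family $(\overline\psi_{u_1})$ of inner convex potentials, together with the base-layer Brenier map in $u_1$, into a single convex adapted-law invariant potential $\overline\phi$ on $H\times L_{2,ad}^{N-1}(H)$, and verifying that its subdifferential is exactly the prescribed transport. The cleanest route is probably not to iterate by hand but to invoke Theorem~\ref{thm:only-coupling_Brenier} and Theorem~\ref{thm:MCiConvConjIntro} on the full space $L_2([0,1]^N;H)$ at once — i.e.\ observe that $\AW_2$-optimality of $(X,Y)$ is, by \eqref{eq:quotient_property}, the same as $\mc$-optimality of $(\lawad X,\lawad Y)$ on $\Pc_2^N(H)$, apply Theorem~\ref{thm:only-coupling_Brenier} directly there, and then only afterwards \emph{identify} the abstract $\overline\phi:L_2([0,1]^N;H)\to(-\infty,\infty]$ with a function on $H\times L_{2,ad}^{N-1}(H)$ using the coordinate splitting. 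That reduces the whole argument to bookkeeping about the isometry $\lawad$ and the two already-proven structural theorems, with the genuinely new content being the correspondence between $\cplbc(\mu,\nu)$ and jointly-adapted naturally-filtered pairs, which is imported from \cite{BaBePa21}.
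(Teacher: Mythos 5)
Your plan correctly identifies the high-level shape of the argument (isometry to an adapted-law space plus the fundamental theorem of optimal transport), but there are two genuine gaps, and the second is exactly where you hand-wave.

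First, the level bookkeeping is not merely cosmetic. For $X\in L_{2,ad}^N(\H)$ the adapted law lives in $\Pc_2(\Zc_1)$, which embeds isometrically into $\Pc_2^N(\H^N)$ (not $\Pc_2^N(\H)$), and the inner transport at fixed $u_1$ is between measures on $\Zc_1 = \H\times\Pc_2(\Zc_2)$. Your first route drops the initial coordinate $X_1(u_1)\in\H$ entirely (you write a potential on $L_2([0,1]^{N-1};\H)$ rather than on $\H\times L_{2,ad}^{N-1}(\H)$), and your second route applies Theorem~\ref{thm:only-coupling_Brenier} in the wrong space. More seriously, Theorem~\ref{thm:only-coupling_Brenier} only delivers the existence of \emph{some} $X',Y'$ with $Y'\in\partial\overline\phi(X')$ and the right joint adapted law, not a pointwise subgradient relation for the given $X(u_1,\cdot),Y(u_1,\cdot)$. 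To upgrade, you must first invoke the dynamic programming principle to see that $\AW_2$-optimality of $(X,Y)$ forces both (i) the outer coupling $(\lawadd(X(u_1,\cdot)),\lawadd(Y(u_1,\cdot)))_\#\lambda$ to be $c$-optimal and (ii) the inner equality $\langle X(u_1,\cdot),Y(u_1,\cdot)\rangle = c(\lawadd(X(u_1,\cdot)),\lawadd(Y(u_1,\cdot)))$ for a.e.\ $u_1$, and then feed both facts into a subdifferential characterization such as Proposition~\ref{prop:subdiff_char_ad}. That combination is what produces the pointwise statement; without it you only know the subgradient relation holds for an unidentified representative.

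Second, and this is what you call the ``assembly step'' and then wave off as bookkeeping, passing from an abstract convex adapted-law invariant potential on $L_2^{N-1}(\H^N)$ to a convex potential on $V_N = \H\times L_{2,ad}^{N-1}(\H)$ is not an identification: these are genuinely different spaces and the restriction must be controlled. The paper sidesteps this by working with the cost $c$ directly on $\Zc_1$, applying the fundamental theorem of optimal transport there (not Theorem~\ref{thm:only-coupling_Brenier}), lifting the resulting $c$-convex $\phi:\Zc_1\to(-\infty,\infty]$ via \eqref{eq:LiftZ1} to $V_N$, and then using the subdifferential correspondence \eqref{eq:subdiff_char_FP_fancy} (the $V_N$ analogue of Proposition~\ref{prop:subdiff_char_FP}). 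That correspondence in turn rests on the embedding and projection results of Sections~8.2--8.3 (Proposition~\ref{prop:translation}, Corollary~\ref{cor:ad_is_lawadinv}, Remarks~\ref{rem:different_exts}--\ref{rem:dfb_equiv_extend}). Those are exactly the lemmas you would have to prove to make your ``identification'' meaningful, so the effort you try to save re-appears. The portion of your sketch that is sound is the converse direction (Fenchel--Young plus Proposition~\ref{prop:MCcvx_FP}) and the closing sentence on bicausal couplings, which does reduce to the correspondence between $\cplbc(\mu,\nu)$ and naturally filtered pairs imported from \cite{BaBePa21}.
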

Importantly $(\Pc_2(\H^N),\AW_2)$ is a dense $G_\delta$ subset of its completion  $(\text{filtered processes},\AW_2)$, see \cite{Ed19, BaBeEdPi17}.  This is crucial to deduce \Cref{thm:adaptedBaireBrenier} from \Cref{thm:AW_Brenier_Intro}  and ultimately goes back to the fact that Monge couplings are a $G_\delta$ subset of the set of couplings with fixed first marginal, see \cite{MeZh84} and \cite[IV, 43]{DeMeB}.  

\subsection{Related literature}\label{sec:intro_literature}

Starting from the classical Monge and Kantorovich formulations, the modern theory of optimal transport  originated around Brenier’s discovery that, for quadratic costs on Euclidean space, optimal plans are gradients of convex potentials \cite{Br91}, see also \cite{KnSm84, RaRu90}. 
In particular it paved the way for McCann’s displacement convexity \cite{Mc94} and the interpretation of many PDEs as gradient flows on the Wasserstein space. These developments were systematized in the metric-measure framework and gradient-flow theory of Ambrosio--Gigli--Savaré \cite{AmGiSa08,AmGi13} and in Villani’s monographs \cite{Vi03,Vi09}, with more recent expositions by Santambrogio \cite{Sa15} and Figalli--Glaudo \cite{FiGl21}. Beyond the Euclidean Brenier setting, there is a rich line of extensions and refinements --- covering structure/uniqueness of optimal maps, displacement convexity and convex-order tools, and regularity/duality on general spaces -- see, among others  \cite{GaMc96} (general convex costs), \cite{Mc95} (Brenier/polar factorization on manifolds), \cite{GaSw98, AgCa11}  (multi-marginal transport and barycenters),
\cite{FeUs02} (Monge transport on abstract Wiener spaces via Cameron-Martin convexity, existence/uniqueness of maps) and \cite{FeUs04a} (infinite-dimensional Monge-Ampère on Wiener space, regularity/structure of the transport)\cite{AmKiPr04} (Monge maps for strictly convex costs), \cite{AmGiSa08} (Brenier theorem for Gaussian on Hilbert spaces), \cite{ChDe10, ChPa2011} (existence of Monge optimizers under low regularity), \cite{Gi11} (convex potential beyond smooth Euclidean settings), \cite{BeCoHu14} (Skorokhod embedding), \cite{BeJu16, HeTo13, GhKiLi16b} (martingale transport), \cite{CaGaSa09} (connection to Knothe--Rosenblatt coupling), \cite{GoJu18} (weak optimal transport). 

In optimal transport and mean field games, the ``Lions’ lift'' refers to the idea of studying maps defined on the Wasserstein space of probability measures by lifting them to functions on a Hilbert space of square-integrable random variables.
This was introduced by Lions in his Collège de France lectures \cite{Li13} and became widely known through Cardaliaguet’s lecture notes \cite{Ca13} and monographs of Carmona--Delarue’s monographs \cite{CaDe18, CaDe18b} which employ the Lions derivative as an essential tool in the theory of mean field games.

Emami and Pass \cite{EmPa25} were the first to establish existence and uniqueness of Monge solutions for measures on $\Pc_2(M)$ and  $\W_2^2$-costs. Emami and Pass consider a smooth Riemannian manifold $M$ and use
 structural assumptions on a reference measure on $\mathcal{P}_2(M)$. A central ingredient in their analysis is Dello Schiavo’s Rademacher theorem on Wasserstein spaces \cite{De20}, that allows a classical optimal-map strategy to be carried out on the space of measures. Notably it is challenging to construct measures satisfying Dello Schiavo's hypothesis together with the absolute continuity hypothesis necessary for the main Monge result of \cite{EmPa25} and examples are only known in the case where $M$ is the one dimensional torus.

Independently and in parallel with an earlier version of this manuscript, Pinzi--Savaré \cite{PiSa25}, see also \cite{PiSa25b,Pi25}, analyzed the case $N=1$ of measures on $\mathcal{P}_2(H)$ for separable Hilbert spaces $H$, developing a Brenier-type theory via totally convex functionals and their Lagrangian/Lions lift, and identifying natural classes of full-support, transport-regular laws for which the Monge formulation is uniquely solved. Our initial preprint \cite{BePaSc25}, posted the same day their preprint appeared, likewise treated $N=1$ but focused on $\mathbb{R}^d$; the restriction to $\mathbb{R}^d$ was  due to the then-open problem of constructing a transport-regular reference measure on infinite-dimensional Hilbert spaces, while all other arguments were dimension-agnostic. The present version advances in two directions: it covers arbitrary $N$ and it works on general separable Hilbert spaces. The extension relies on an adapted Lions lift tailored to the $N$-step structure. In particular, we construct a transport-regular measure $\Lambda$ on $\mathcal{P}_2(H)$ and, by induction, on $\mathcal{P}_2^{N}(H)$; here it is  essential to include the case of infinite dimensional $H$ even to establish the $N$-level result in $\mathbb{R}^d$ for $N>1$, since the induction from lower levels requires the Hilbert-space setting. We note that the construction of $\Lambda$ as the occupation measure of a Wiener process (resp.\ based on the Wiener sheet for $N>1$) is original to this paper to the best of our knowledge.

A main motivation for this article was to establish a Brenier theorem for the adapted Wasserstein distance, which requires to understand the $\W_2$-optimizers on $\mathcal{P}_2^{\,N}(H)$ for general $N$. As noted above, 
 the adapted variant of Wasserstein distance is useful when measuring the distance between stochastic processes since it  accounts for the inherent information structure of the processes. It has  applications from stochastic optimization, stochastic control, and mathematical finance to the theory of geometric inequalities and machine learning; see \cite{PfPi14, La18, XuAc21, BaBePa21, AcKraPa24, CoLi24, JiOb24, BaWi23, AcKrPa25} and the references therein.

\subsection{Organization of the paper}

\Cref{sec:LionsRecap} recalls the Lions lift and basic continuity properties. We identify $c$-conjugation for $c=\mc$ with classical convex conjugation on the lift (\Cref{thm:MCiConvConj}) and relate total/Lions convexity, $\mc$-convexity, and subdifferentials.

\Cref{sec:DensenessOfTransportReg} treats the case of optimal transport between measures on $\Pc_2(H)$ ($N=1$). We characterize $\W_2^2$-optimizers on $\Pc_2(H)$ via convex, law-invariant potentials on the lift (\Cref{thm:char.optimal_cpls}) and construct transport-regular reference laws: on $\R^d$ using Brownian sheets (\Cref{lem:BMDifferentiable}, \Cref{thm:2nd.TR.dense}) and on separable Hilbert spaces using $Q$-Wiener processes (\Cref{thm:Q-Wiener.TR}).

\Cref{sec:PolishnessOfRegularMeasures} develops a Baire-category viewpoint: we quantify non-regularity via $\tau^c$, prove upper semicontinuity, and deduce that $c$-transport-regular laws form a $G_\delta$ subset of $\Pc_p(\X)$ (\Cref{thm:TransportRegularPolish}).

\Cref{sec:OT_On_P_N} introduces the iterated max-covariance $\mc$ on $\Pc_2^{N}(H)$ and the characterization of $\mc$-optimal couplings in terms of $\mc$-convex potentials. We also derive a DPP which will be necessary to establish the characterization of $\mc$-convex potentials in terms of convex conjugation in \Cref{sec:MC_cx_ad}.

In \Cref{sec:AdLift} we provide a Hilbertspace  representation of laws on $\Pc_2^{N}(H)$, an adapted transfer principle, and a Skorokhod-type representation (\Cref{prop:ReprLawadAsRV}, \Cref{cor:transfer_ad}, \Cref{prop:iteratedSkorohod}).

\Cref{sec:MC_cx_ad} identifies $\mc$-conjugation with the convex conjugate of the adapted lift (\Cref{thm:MC_trafo_ast_ad}), establishes the $\mc$-order which is used to establish the existence of the Lions derivative in the adapted setting, and links $\mc$-subdifferentials with subdifferentials on the lift.

\Cref{sec:TR} constructs full-support, transport-regular measures on $\Pc_2^{N}(H)$; this yields the $N$-level Brenier theorems stated in the introduction (Theorems~\ref{thm:1-coupling_Brenier}, \ref{Thm:MainMongeIntro} and \ref{thm:N-coupling_Brenier} and Corollary~\ref{cor:MainMongeIntro}).

\Cref{sec:AW} applies the theory to the adapted Wasserstein distance $\AW_2$: via the adapted law isometry we obtain a Brenier theorem for $\AW_2$ and a Baire-Brenier uniqueness/Monge statement (\Cref{thm:AW_Brenier_Intro}, \Cref{thm:adaptedBaireBrenier}).

Technical measurability, selection, and auxiliary results are collected in the appendix.

\tableofcontents

\section{Lions lift and convexity of law invariant functions} \label{sec:LionsRecap}

\subsection{Recap on Lions lift and Lions derivative}

The aim of this section is to recall the definition of the Lions lift of a function $\phi : \Pc_2(\H) \to (-\infty, \infty]$ and to provide properties of the lift that we will need subsequently.
\begin{definition}

Given a probability space $(\Omega,\F,\P)$,
the Lions lift of a function $\phi : \Pc_2(\H) \to (-\infty, +\infty]$ is defined as 
\[
\overline{\phi} : L_2(\Omega,\F,\P;\H) \to (-\infty, +\infty] , \quad \overline{\phi}(X) := \phi(\law(X)).
\]
\end{definition}
We say that a function $\overline{\phi} : L_2(\Omega,\F,\P) \to (-\infty , +\infty]$ is law-invariant if $\overline{\phi}(X) = \overline{\phi}(X')$ whenever $X \sim X'$. Clearly, law-invariant functions are precisely those which arise as Lions lift of a function  $\phi : \Pc_2(\H) \to (-\infty, +\infty]$. Recall that $(\Omega,\F,\P)$ is a standard probability space if it is isomorphic to the unit interval with Lebesgue measure.

\begin{tcolorbox}
In the following we will always assume that $(\Omega,\F,\P)$ is a standard probability space and given a function $\phi : \Pc_2(\H) \to (-\infty, +\infty]$, the function $\overline{\phi} : L_2(\Omega,\F,\P;\H) \to (-\infty, +\infty]$ always denotes its Lions lift. We use $X,X', Y,Z$ to denote elements of $L_2(\Omega, \F, \P;\H)$.
\end{tcolorbox}

Given a function $f $ with values in $(-\infty, \infty]$ we write, $\dom (f) := f^{-1}((-\infty, \infty)) $ for its domain, $\cont (f)\subseteq \dom (f) $ for its continuity points and call $f$ proper if $\dom (f) \neq \emptyset$. Clearly we have
\[
\dom(\overline{\phi}) = \{ X \in L_2(\Omega,\F,\P;\H) : \law(X) \in \dom(\phi) \}. 
\]

\begin{lemma}\label{lem:contequiv}
Let $\phi : \Pc_2(\H) \to (-\infty, +\infty]$. Then $\phi$ is (lower semi) continuous wrt $\W_2$ if and only if $\overline{\phi}$ is (lower semi) wrt $\| \cdot \|_2$. 
\end{lemma}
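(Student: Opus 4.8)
The plan is to reduce the statement on $\Pc_2(\H)$ to a statement about the map $X \mapsto \law(X)$ from $L_2(\Omega,\F,\P;\H)$ to $(\Pc_2(\H),\W_2)$. The two key facts about this map are: first, it is continuous (indeed $1$-Lipschitz), since $\W_2(\law(X),\law(Y)) \le \|X-Y\|_2$ for any $X,Y$; second, it is surjective and moreover satisfies the Kantorovich--Skorokhod property that for every $\mu \in \Pc_2(\H)$ and every $X$ with $\law(X) = \mu$, and every $\nu \in \Pc_2(\H)$, there exists $Y$ with $\law(Y) = \nu$ and $\|X-Y\|_2 = \W_2(\mu,\nu)$ (here using that $(\Omega,\F,\P)$ is standard, so it supports an independent uniform and the relevant optimal coupling can be realized). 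Equivalently, $\W_2(\mu,\nu) = \min\{\|X-Y\|_2 : \law(X)=\mu,\ \law(Y)=\nu\}$ with the minimum attained. Since $\overline\phi = \phi \circ \law$, lower semicontinuity (resp.\ continuity) of $\phi$ will transfer to $\overline\phi$ by continuity of $\law$, and conversely will follow from the lifting property.

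First I would prove the forward implication. Suppose $\phi$ is lsc wrt $\W_2$. If $X_n \to X$ in $L_2(\Omega,\F,\P;\H)$, then $\law(X_n) \to \law(X)$ in $\W_2$ by the Lipschitz bound, hence $\liminf_n \overline\phi(X_n) = \liminf_n \phi(\law(X_n)) \ge \phi(\law(X)) = \overline\phi(X)$, so $\overline\phi$ is lsc. The continuous case is identical, replacing $\liminf$ by $\lim$ and the inequality by equality; one only needs sequential continuity since $L_2$ is metrizable.

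For the converse, suppose $\overline\phi$ is lsc wrt $\|\cdot\|_2$ and let $\mu_n \to \mu$ in $\W_2$. Fix any $X$ with $\law(X) = \mu$ (exists since $(\Omega,\F,\P)$ is standard). By the Skorokhod-type lifting property, for each $n$ choose $X_n$ with $\law(X_n) = \mu_n$ and $\|X_n - X\|_2 = \W_2(\mu_n,\mu) \to 0$; then $\liminf_n \phi(\mu_n) = \liminf_n \overline\phi(X_n) \ge \overline\phi(X) = \phi(\mu)$, giving lsc of $\phi$. For the continuous case, given $\mu_n \to \mu$ one needs both inequalities: the $\liminf$ direction is as above; for the $\limsup$ direction pick $X$ with $\law(X)=\mu$ and, by the lifting property applied with base point $X$, choose $X_n$ with $\law(X_n)=\mu_n$ and $\|X_n - X\|_2 \to 0$, so $\limsup_n \phi(\mu_n) = \limsup_n \overline\phi(X_n) = \overline\phi(X) = \phi(\mu)$ by continuity of $\overline\phi$ at $X$. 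Hence $\phi(\mu_n) \to \phi(\mu)$.

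The main obstacle is the Skorokhod-type lifting property used in the converse direction: one must know that an \emph{arbitrary} prescribed lift $X$ of $\mu$ can be extended to an optimal-distance lift of any nearby $\nu$, not merely that some pair of lifts is optimally coupled. This is exactly where standardness of $(\Omega,\F,\P)$ enters --- one enlarges by an independent atomless coordinate if needed, realizes an optimal coupling $\pi \in \cpl(\mu,\nu)$ as a regular conditional kernel given $X$, and uses a measurable selection / noise-outsourcing argument to produce $Y$ with $(X,Y) \sim \pi$. This is a standard fact (it is the statement $\W_2(\mu,\nu) = \min_{\law X = \mu} \min_{\law Y = \nu}\|X-Y\|_2$ realized with the first lift fixed), and the paper's later $\Cref{prop:iteratedSkorohod}$ is precisely the $N$-level analogue; for $N=1$ I would either cite the classical version or include the short outsourcing argument. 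Everything else is routine once this is in hand.
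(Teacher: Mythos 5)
Your forward direction is correct, but the converse hinges on a ``Kantorovich--Skorokhod lifting property'' that is false as you state it. It is \emph{not} true that for every $X$ with $\law(X)=\mu$ and every $\nu$ there is a $Y\sim\nu$ on the same probability space with $\|X-Y\|_2=\W_2(\mu,\nu)$. Take $\Omega=[0,1]$, $\H=\R^2$, $X(\omega)=(0,\omega)$, and let $\nu$ be the uniform measure on $\{-1,1\}\times[0,1]$. The unique $\W_2$-optimal coupling between $\mu=\law(X)$ and $\nu$ sends $(0,t)$ to $(\pm 1,t)$ with probability $\tfrac12$ each and is therefore not Monge; yet any $Y$ on $\Omega$ is a measurable function of the a.s.\ injective $X$, so $\law(X,Y)$ is necessarily Monge and the infimum $\inf_{\law Y=\nu}\|X-Y\|_2$ is not attained. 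Your parenthetical fix of ``enlarging by an independent atomless coordinate'' is not available: the probability space $(\Omega,\F,\P)$ is fixed, and an a.s.\ injective $X$ exhausts its randomness. This is exactly the obstruction that the paper's notion of independent randomization (\Cref{def:IndRand}, \Cref{lem:IndRand=Not1-1}) is designed to track, and the reason \Cref{cor:MCdiffToLdiff} carries that hypothesis.

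The argument is nonetheless repairable, because you never actually use the exact equality $\|X_n-X\|_2=\W_2(\mu_n,\mu)$, only that $\|X_n-X\|_2\to 0$, and the $\epsilon$-version of the lifting is true: realize an optimal coupling $\pi\in\cpl(\mu,\mu_n)$ by $(X',Y')$ on $(\Omega,\F,\P)$, apply \Cref{lem:epsTransfer} to transfer onto the prescribed $X$, and upgrade convergence in probability to $L_2$-convergence via uniform square-integrability (both $X$ and $X''$ have law $\mu$). This gives $Y\sim\mu_n$ with $\|X-Y\|_2\le\W_2(\mu,\mu_n)+\epsilon_n$, which suffices. The paper's own route is cleaner and sidesteps the problem altogether: apply the Skorokhod representation theorem directly to the sequence $\mu_n\to\mu$ to produce \emph{both} $X_n$ and a limit $X$ simultaneously with $X_n\to X$ in $L_2$, then use lower semicontinuity (resp.\ continuity) of $\overline\phi$ at that particular $X$; since $\overline\phi$ is law-invariant, the conclusion $\liminf_n\phi(\mu_n)\ge\phi(\mu)$ does not depend on which representative Skorokhod hands you, so there is no need to fix $X$ in advance.
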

\begin{proof}
This follows from the continuity of the law map and the Skorokhod representation theorem.     
\end{proof}

\begin{lemma}\label{lem:simple_transfer}
    Let $X,X'$ be random variables on $(\Omega,\F,\P)$  such that $X \sim X'$.
    Then, for every $\epsilon > 0$, there exists a measure-preserving bijection $T : \Omega \to \Omega$ such that $\mathbb P(|X' - X \circ T| \le \epsilon) \ge 1 - \epsilon$.
\end{lemma}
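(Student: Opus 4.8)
The plan is to transfer $X$ onto $X'$ by matching up the preimages of a fine partition of the target space. Since $\mu := \law(X) = \law(X')$ is tight on the separable Hilbert space $\H$, I would first pick a compact $K \subseteq \H$ with $\mu(K) \ge 1-\epsilon$, cover $K$ by finitely many open balls of radius $\epsilon/2$, and disjointify to obtain a Borel partition $\H = A_0 \sqcup A_1 \sqcup \cdots \sqcup A_n$ with $\diam(A_i) \le \epsilon$ for $i \ge 1$ and $\mu(A_0) = 1-\mu(K) \le \epsilon$ (absorbing into $A_0$ any $A_i$, $i\ge 1$, with $\mu(A_i)=0$ — which does not change $\mu(A_0)$ — we may and do assume $\mu(A_i)>0$ for $1 \le i \le n$). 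Set $\Omega_i := X^{-1}(A_i)$ and $\Omega_i' := (X')^{-1}(A_i)$; these are two Borel partitions of $\Omega$, and since $X \sim X'$ we have $\P(\Omega_i) = \P(\Omega_i') = \mu(A_i)$ for every $i$.

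Next I would produce a measure-preserving Borel bijection $T : \Omega \to \Omega$ with $T(\Omega_i') = \Omega_i$ for each $i$. The tool here is the structure theory of standard probability spaces: any two $\mathcal F$-measurable sets of equal positive $\P$-measure — normalized, each is again an atomless standard probability space — are related by a measure-preserving Borel bijection, a standard consequence of the isomorphism theorems for standard probability spaces and for standard Borel spaces. Applying this to each pair $(\Omega_i',\Omega_i)$ and concatenating the resulting bijections over $i=1,\dots,n$, together with such a bijection on $(\Omega_0',\Omega_0)$ (or, if $\mu(A_0)=0$, extending $T$ from a co-null set to all of $\Omega$ by any Borel isomorphism of the leftover), yields the desired $T$. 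The one point requiring care is the passage from a mod-$0$ bijection to a genuine everywhere-defined bijection of $\Omega$: one enlarges the exceptional null sets in domain and range to uncountable Borel null sets and glues in a Borel isomorphism between them, which is harmless for measure preservation since these sets are null.

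Finally, with $T$ in hand, take any $\omega \in \Omega_i'$ with $i \in \{1,\dots,n\}$: then $X'(\omega) \in A_i$ and, as $T(\omega) \in \Omega_i = X^{-1}(A_i)$, also $(X\circ T)(\omega) \in A_i$, so $|X'(\omega) - (X\circ T)(\omega)| \le \diam(A_i) \le \epsilon$. Since $\bigcup_{i=1}^n \Omega_i' = \Omega \setminus \Omega_0'$ has $\P$-measure $1-\mu(A_0) \ge 1-\epsilon$, this gives $\P(|X' - X\circ T| \le \epsilon) \ge 1-\epsilon$, as required. I expect the genuine-bijection bookkeeping in the middle step to be the only mildly delicate point; the tightness/partition reduction and the final diameter estimate are routine.
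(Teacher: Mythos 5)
Your proof is correct but proceeds along a genuinely different route than the paper's. The paper lifts $X$ and $X'$ to the graph distributions $\mu := \law(U,X)$ and $\nu := \law(U,X')$ on $(0,1)\times\R$, observes that $\mu$ and $\nu$ have identical marginals so the transport problem with cost $|x-x'|\wedge 1$ (depending only on the second coordinate) has value $0$, and invokes the density result of \cite{Ga99} that couplings supported on Borel bijections are dense in $\cpl(\mu,\nu)$; the measure-preserving bijection of $\Omega=(0,1)$ is then read off from the first coordinate of an approximating bijective coupling. You instead discretize the target: by tightness you trap most of the mass in a compact set, cover it by finitely many cells of diameter at most $\epsilon$, pull the cells back through $X$ and through $X'$ to obtain two finite Borel partitions of $\Omega$ whose corresponding pieces have equal $\P$-measure (since $X\sim X'$), and stitch together measure-preserving Borel bijections piece by piece via the Lebesgue--Rokhlin isomorphism theorem for atomless standard probability spaces. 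Your route replaces the density-of-Monge-bijections input with the more classical isomorphism theorem, so it is arguably more self-contained and elementary; the cost is the null-set bookkeeping needed to upgrade mod-$0$ isomorphisms to everywhere-defined Borel bijections (absorbing possibly mismatched null leftovers into uncountable Borel null sets before gluing), which you correctly identify as the one delicate point and sketch adequately. Both arguments establish the lemma; the paper's is shorter once the external density theorem is granted, while yours trades that black box for explicit but routine constructions.
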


See  \cite[Lemma 5.23]{CaDe18} for a  stronger result which pertains to $L^\infty$-convergence instead of convergence in probability.

\begin{proof}[Proof of \Cref{lem:simple_transfer}]
    Since $(\Omega,\F,\P)$ is a standard probability space, we may assume w.l.o.g.\ that  $\Omega=(0,1)$, $\F$ are the Borel sets  and $\P$ is the Lebesgue measure $\lambda$. Set $U(x)=x$ for $x\in (0,1)$.
    Then $ \mu := \law(U,X)$ and $\nu := \law(U,X')$ are atomless distributions and both are concentrated on the graph of a function. By assumption, $\mu, \nu$ have the same marginals, hence,
    \[
        \inf_{\pi \in \cpl(\mu,\nu)} \int |x-x'| \wedge 1 \, d\pi((x,u),(x',u')) = 0.
    \]
    By \cite{Ga99}, the transport plans supported on Borel bijections are dense in $\cpl(\mu, \nu)$. Thus there exists a sequence of Borel bijections $(R_n,S_n) :  (0,1)\times \R \to (0,1) \times \R$, $n\geq 1$ such that $(R_n, S_n)_\# \mu = \nu$ and $S_n(u,x)\to x$ in $\hat \mu$-probability. The second assertions means that $S_n(\omega, X(\omega)) \to  X(\omega)$ in $\lambda$-probability. On the other hand $(R_n, S_n)_\#  \mu =  \nu$ yields that $S_n(\omega, X(\omega)) = X'(R_n(\omega, X(\omega)))$
    $\lambda$-a.s.\ and $\omega \mapsto R_n(\omega,X(\omega))$ is a measure preserving bijection. 
\end{proof}

\begin{lemma}\label{lem:epsTransfer}
Let $X,Y$ and $X'$ be random variables on $(\Omega,\F,\P)$ such that $X \sim X'$. For $\epsilon>0$, there are random variables $X'', Y''$ on $(\Omega,\F,\P)$ such that $(X,Y) \sim (X'',Y'')$ and $\P(|X'-X''| \ge \epsilon) < \epsilon$.
\end{lemma}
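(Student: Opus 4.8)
The statement is an immediate consequence of \Cref{lem:simple_transfer}, once one observes that composing \emph{both} coordinates of $(X,Y)$ with the same measure-preserving bijection preserves the joint law while only perturbing the first coordinate in the controlled way provided by that lemma. So the plan is: given $\epsilon > 0$, apply \Cref{lem:simple_transfer} to the pair $X, X'$ with the smaller parameter $\epsilon/2$ in place of $\epsilon$, obtaining a measure-preserving bijection $T : \Omega \to \Omega$ with $\P(|X' - X\circ T| \le \epsilon/2) \ge 1 - \epsilon/2$.

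Next I would set $X'' := X \circ T$ and $Y'' := Y \circ T$. Since $T$ is measure-preserving, the map $\omega \mapsto (X(T\omega), Y(T\omega))$ has the same law as $\omega \mapsto (X(\omega), Y(\omega))$; that is, $(X'',Y'') \sim (X,Y)$. (Bijectivity of $T$ is not even needed for this, only measure-preservingness; it comes for free from \Cref{lem:simple_transfer} but plays no role here.)

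Finally I would estimate the exceptional probability: on the event $\{|X' - X\circ T| \le \epsilon/2\}$ one certainly has $|X' - X''| < \epsilon$, so
\[
\P(|X' - X''| \ge \epsilon) \le \P\big(|X' - X\circ T| > \epsilon/2\big) \le \P\big(|X' - X\circ T| > \epsilon/2\big) \le \tfrac{\epsilon}{2} < \epsilon,
\]
using the bound from \Cref{lem:simple_transfer} in the penultimate step. This completes the proof.

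\textbf{Main obstacle.} There is essentially no conceptual difficulty here; the only point requiring a modicum of care is the $\epsilon$-bookkeeping, namely passing from the non-strict ``$\le \epsilon/2$ with probability $\ge 1-\epsilon/2$'' conclusion of \Cref{lem:simple_transfer} to the strict ``$\ge \epsilon$ with probability $< \epsilon$'' demanded here, which is exactly why I would invoke \Cref{lem:simple_transfer} with parameter $\epsilon/2$ rather than $\epsilon$. The substantive content — that transfer maps can be realized as measure-preserving bijections, via the density of bijections among transport plans — has already been extracted into \Cref{lem:simple_transfer}.
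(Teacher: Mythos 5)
Your proof is correct and follows exactly the paper's argument: apply \Cref{lem:simple_transfer} to $X,X'$ to get a measure-preserving bijection $T$, set $X'' := X\circ T$, $Y'' := Y\circ T$, and observe that this preserves the joint law while $X''$ stays close to $X'$. (The harmless duplicated term in your final display is a typo, and the $\epsilon/2$ adjustment is merely a more careful version of the paper's $\epsilon$-bookkeeping.)
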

\begin{proof}
By \Cref{lem:simple_transfer}, there is measure preserving bijection $T$ such that $\P(|X \circ T - X'| \ge \epsilon) \le \epsilon$. We set $X'':= X \circ T$ and $Y'':= Y \circ T$. 
\end{proof}

\subsection{$\mc$-convexity}
In this section we consider functionals on  $(\Pc_2(\H),\W_2)$. Specifically, we  show that the notion of \emph{$L$-convexity} (see \cite{CaDe18}) can be linked to a notion of convex conjugation with respect to the \emph{max-covariance functional} on $(\Pc_2(\H),\W_2)$. Moreover, we establish a connection between the $\mc$-transform and the convex conjugate on $L_2(\Omega,\F,\P)$ and investigate the connection between subdifferentials of the lifts and $\mc$-subdifferentials.

\begin{definition}
$\phi : \Pc_2(\H) \to (-\infty, +\infty]$ is called L-convex / totally convex, if its Lions lift $\overline{\phi}$ is convex.
\end{definition}
Equivalently,
 $\phi : \Pc_2(\H) \to (-\infty, +\infty]$ is totally convex  if for all $\mu_0, \mu_1 \in \Pc_2(\H)$, $\pi \in \cpl(\mu_0,\mu_1)$, and $t \in [0,1]$, we have
 \[
  \phi( ((x,y) \mapsto (1-t)x+ty)_\# \pi ) \le (1-t) \phi(\mu_0) +t \phi(\mu_1).
 \]

\begin{proposition}\label{prop:ConvexEquiv} 
Let $\phi : \Pc_2(\H) \to (-\infty, +\infty]$ be proper and lsc. Then the following are equivalent:
\begin{enumerate}
    \item $\phi$ is totally convex,
    \item $\overline{\phi}$ is convex,
    \item $\phi$ is MC-convex,
    \item $\phi$ has totally convex domain, is displacement convex and increasing in convex order.
\end{enumerate}
\end{proposition}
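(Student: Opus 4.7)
The plan is to verify $(1)\Leftrightarrow(2)\Leftrightarrow(3)$ and then the harder $(2)\Leftrightarrow(4)$. Step $(1)\Leftrightarrow(2)$ is by definition of total convexity. For $(2)\Leftrightarrow(3)$ I invoke the identification $\overline{\phi}^{\ast}=\overline{\phi^{\mc}}$ from \Cref{thm:MCiConvConjIntro}, which iterates to $\overline{\phi}^{\ast\ast}=\overline{\phi^{\mc\mc}}$; since proper lower semicontinuous $\phi$ yields proper lower semicontinuous $\overline{\phi}$ via \Cref{lem:contequiv}, the Fenchel--Moreau theorem combined with injectivity of the Lions lift on law-invariant functions translates $\phi=\phi^{\mc\mc}$ into convexity of $\overline{\phi}$.

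For $(2)\Rightarrow(4)$, convexity of $\dom(\overline{\phi})\subseteq L_2$ pushes forward under $X\mapsto\law(X)$ to total convexity of $\dom(\phi)$, and displacement convexity is the special case of total convexity applied to the $\W_2^2$-optimal coupling. The delicate ingredient is monotonicity in convex order: given $\mu\preceq_{cx}\nu$, Strassen's theorem supplies $X\sim\mu$, $Y\sim\nu$ on a common standard probability space with $X=\E[Y\mid\sigma(X)]$, and I approximate $X$ in $L_2$ by the Birkhoff averages $X_n:=n^{-1}\sum_{k=1}^{n} Y\circ S^k$, where $S$ is a $\sigma(X)$-preserving measure-preserving automorphism that is ergodic on each regular conditional measure $\P(\cdot\mid\sigma(X))$; such $S$ exists on a standard probability space via a Rokhlin-type construction. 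Law invariance yields $Y\circ S^k\sim Y$, convexity gives $\overline{\phi}(X_n)\le\overline{\phi}(Y)$, and lower semicontinuity together with the $L_2$-ergodic theorem $X_n\to X$ yields $\overline{\phi}(X)\le\overline{\phi}(Y)$.

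For $(4)\Rightarrow(2)$, take $X,Y\in L_2$ and $t\in[0,1]$, set $\mu_0:=\law(X)$, $\mu_1:=\law(Y)$, pick a $\W_2^2$-optimal coupling $\pi^\ast\in\cpl(\mu_0,\mu_1)$ and (possibly after enlarging the probability space) realize $\pi^\ast=\law(X,Y^\ast)$ with the same $X$; set $\mu_t^\ast:=T_{\#}\pi^\ast$ with $T(x,y)=(1-t)x+ty$. Displacement convexity yields $\phi(\mu_t^\ast)\le(1-t)\overline{\phi}(X)+t\overline{\phi}(Y)$, so the argument reduces to the convex-order comparison
\[
\law\bigl((1-t)X+tY\bigr)\preceq_{cx}\mu_t^\ast,
\]
which, combined with item~(4), delivers $\overline{\phi}((1-t)X+tY)\le\phi(\mu_t^\ast)\le(1-t)\overline{\phi}(X)+t\overline{\phi}(Y)$.

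The main obstacle is exactly this convex-order comparison, expressing extremality of the Wasserstein geodesic among generalized interpolations with fixed endpoints. The intended proof constructs a Strassen-type martingale coupling from $\law((1-t)X+tY)$ to $\mu_t^\ast$ using Brenier's polar factorization $Y^\ast=\nabla\psi(X)$: writing $(1-t)X+tY=\nabla\phi_t(X)+t(Y-Y^\ast)$ with the convex potential $\phi_t:=(1-t)|\cdot|^2/2+t\psi$ whose gradient pushes $\mu_0$ to $\mu_t^\ast$, one controls the residual $t(Y-Y^\ast)$ via its conditional mean against the $\sigma$-algebra generated by $\mu_t^\ast$. The case of non-regular $\mu_0$ is handled by Gaussian regularization followed by passage to the limit using lower semicontinuity of $\phi$ and $\W_2$-continuity.
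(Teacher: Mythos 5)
Your equivalences $(1)\Leftrightarrow(2)\Leftrightarrow(3)$ are correct and essentially match the paper's proof. For $(2)\Rightarrow(4)$ you take a direct route via a conditional Birkhoff argument, whereas the paper goes $(3)\Rightarrow(4)$ by observing that $\mc(\cdot,\nu)$ is displacement convex and increasing in convex order (Carlier/\Cref{prop:MC_order:_char}) and that these properties are preserved by taking suprema; the paper's route is considerably shorter and sidesteps the ergodic construction. Your construction can be made to work, but the existence of a $\sigma(X)$-preserving automorphism $S$ that is ergodic on each conditional is \emph{not} automatic on an arbitrary realization: if some conditional measure $\P(\cdot\mid\sigma(X))$ mixes atomic and continuous parts, no such $S$ exists. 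One needs to first choose a realization on which all conditionals are atomless (e.g.\ on $(\H\times\H\times[0,1],\pi\otimes\lambda)$), which you do not address.

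The serious gap is in $(4)\Rightarrow(2)$. The convex-order comparison $\law((1-t)X+tY)\le_{cx}\mu_t^\ast$ that your argument hinges on is \emph{false} in dimension $\ge 2$, already for absolutely continuous Gaussian marginals. Take $\H=\R^2$, $\mu_0=N(0,\mathrm{diag}(1,\tfrac14))$, $\mu_1=N(0,\mathrm{diag}(\tfrac14,1))$, and let $X\sim\mu_0$. The unique Brenier map is $T=\mathrm{diag}(\tfrac12,2)$, so with $Y^\ast=TX$ and $t=\tfrac12$ one has $\mu_{1/2}^\ast=N\bigl(0,\tfrac{9}{16}I\bigr)$. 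On the other hand, take the admissible (non-optimal) coupling $Y=(X_2,X_1)\sim\mu_1$; then $\tfrac12(X+Y)=\bigl(\tfrac{X_1+X_2}{2},\tfrac{X_1+X_2}{2}\bigr)\sim N\bigl(0,\tfrac{5}{16}J\bigr)$ with $J$ the all-ones $2\times 2$ matrix. Testing against the convex function $f(x)=(x_1+x_2)^2$ gives
\[
\int f\,d\,\law\bigl(\tfrac12(X+Y)\bigr)=\E[(X_1+X_2)^2]=\tfrac{5}{4}
\;>\;\tfrac{9}{8}=\int f\,d\mu_{1/2}^\ast ,
\]
so $\law(\tfrac12(X+Y))\not\le_{cx}\mu_{1/2}^\ast$. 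Consequently the sketched Strassen/Brenier construction of a ``martingale coupling from $\law((1-t)X+tY)$ to $\mu_t^\ast$'' cannot exist, and Gaussian regularization cannot help since the marginals here are already nondegenerate Gaussians. (Your formulation also points in the wrong direction: showing $\E[\,t(Y-Y^\ast)\mid (1-t)X+tY^\ast\,]=0$ would yield $\mu_t^\ast\le_{cx}\law((1-t)X+tY)$, the reverse of what is needed.) The quantile argument you have in mind is valid precisely when $\H=\R$, which is what the paper uses there; for $\dim\H\ge 2$ the paper instead reduces to a continuity-plus-displacement-convexity theorem of \cite{CaSaSo25} under the additional hypothesis that $\phi$ is finitely valued, and explicitly defers the general case to a companion paper. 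Your approach does not repair this.
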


Here, we call a set $A \subset \Pc_2(\H)$ totally convex / $\mc$-convex if for every $\mu, \nu \in A$, every $\pi \in \cpl(\mu,\nu)$ and every $t \in [0,1]$, we have $( (x,y) \mapsto ((1-t)x+ty) )_\ast \pi \in A$. Clearly, this is equivalent to the fact the convex indicator function $\chi_A$ (i.e.\ $\chi_A(\mu) = 0$ if $\mu \in A$ and $\chi_A(\mu) = + \infty$ otherwise) is totally convex / MC-convex. 

\medskip

The following lemma is a straightforward consequence of the denseness of Monge couplings in the set of Kantorovich couplings in the case of continuous starting distributions. 
\begin{lemma}\label{lem:EnoughRandomization}
    Let $c:  \H \times   \H \to \R $  be continuous, $|c|\leq a\oplus b, a\in L^1 (\mu), b\in L^1(\nu)$ and assume that $X\sim \mu$. Then we have
    \begin{align}\label{eq:lem:EnoughRandomization}
        \sup_{\pi\in \cpl(\mu, \nu) } \int c\, d\pi = \sup_{Y\sim \nu} \E[c(X,Y)].
    \end{align}
\end{lemma}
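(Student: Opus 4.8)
\textbf{Plan of proof for \Cref{lem:EnoughRandomization}.}
The inequality ``$\geq$'' is immediate: any $Y \sim \nu$ defined on $(\Omega, \F, \P)$ gives a coupling $\law(X,Y) \in \cpl(\mu,\nu)$ with $\int c\, d(\law(X,Y)) = \E[c(X,Y)]$, so the right-hand side is dominated by the left-hand side. Note here we tacitly use that $(\Omega, \F, \P)$ is rich enough (a standard probability space, as fixed in the boxed convention) to carry a copy of any prescribed $\nu$ jointly with the given $X$; more carefully, one first enlarges the space if necessary, which does not change $\law(X) = \mu$. The content of the lemma is therefore the reverse inequality ``$\leq$''.

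For ``$\leq$'', the plan is to fix an arbitrary $\pi \in \cpl(\mu,\nu)$ and produce $Y \sim \nu$ on $(\Omega,\F,\P)$ with $\E[c(X,Y)]$ arbitrarily close to $\int c\, d\pi$. First I would reduce to the case where $\mu$ is atomless: if $\mu$ has atoms we may still run the argument after passing to $\law(U,X)$ on an enriched space where $U$ is an independent uniform, exactly as in the proof of \Cref{lem:simple_transfer}; alternatively one invokes the cited result of \cite{Ga99} in a form that tolerates atoms via such a randomization. Assuming $\mu$ atomless, the denseness of Monge couplings (transport plans supported on Borel bijections, or at least on graphs of Borel maps) in $\cpl(\mu,\nu)$ — in the topology of weak convergence — yields a sequence $\pi_n \to \pi$ weakly with $\pi_n = (\id, T_n)_\#\mu$ for Borel maps $T_n : \H \to \H$ pushing $\mu$ to $\nu$. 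Setting $Y_n := T_n(X)$ gives $Y_n \sim \nu$ on $(\Omega,\F,\P)$ and $\law(X,Y_n) = \pi_n$.

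It then remains to pass to the limit in $\int c\, d\pi_n \to \int c\, d\pi$. Since $c$ is only continuous and not bounded, weak convergence alone does not give convergence of the integrals; here the growth bound $|c| \leq a \oplus b$ with $a \in L^1(\mu)$, $b \in L^1(\nu)$ enters. The point is that the dominating function $a(x) + b(y)$ is a fixed $\pi_n$-uniformly integrable majorant: for every $n$, $\int (a\oplus b)\, d\pi_n = \int a\, d\mu + \int b\, d\nu$ is constant in $n$, so $\{(a \oplus b) : \text{under } \pi_n\}$ is a tight, uniformly integrable family, which upgrades weak convergence $\pi_n \to \pi$ to convergence $\int c\, d\pi_n \to \int c\, d\pi$ (standard, e.g.\ via truncation of $c$ at level $M$, using $|c - (c\wedge M)\vee(-M)| \leq (a\oplus b)\mathbf 1_{\{a\oplus b > M\}}$). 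Taking $n$ large we get $\E[c(X,Y_n)] = \int c\, d\pi_n$ within any prescribed $\epsilon$ of $\int c\, d\pi$, and taking the supremum over $\pi$ finishes the proof.

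\textbf{Main obstacle.} The only genuinely delicate point is the limiting argument: reconciling the merely continuous, unbounded cost $c$ with weak convergence of the approximating Monge couplings. The growth hypothesis $|c| \leq a \oplus b$ is exactly what makes this work, via the uniform-integrability observation above; without it the statement is false in general. A secondary, purely bookkeeping issue is the reduction to atomless $\mu$ and ensuring all random variables live on the single fixed standard space $(\Omega, \F, \P)$ — handled by the same enrichment-by-an-independent-uniform device used elsewhere in this section.
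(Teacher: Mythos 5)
Your plan is correct in spirit but takes a genuinely different route from the paper's. You approximate $\pi$ from \emph{inside} $\cpl(\mu,\nu)$ by Monge couplings $(\id, T_n)_\#\mu$ with $T_n : \H \to \H$ and set $Y_n = T_n(X)$, whereas the paper realizes $\pi$ \emph{exactly} as $\law(X',Y')$ on $\Omega$ and then moves the first component next to $X$ by a measure-preserving bijection of $\Omega$ (\Cref{lem:epsTransfer}), so that $Y_\epsilon$ is \emph{exactly} distributed as $\nu$ and the approximation lives entirely in the first slot; the limit is then taken with $(X_\epsilon,Y_\epsilon)\sim\pi$ fixed and $X_\epsilon\to X$ in probability, rather than with $\pi_n\to\pi$ weakly. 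Both routes rest on the same Gangbo-type denseness of bijection couplings and the same uniform-integrability argument from $|c|\le a\oplus b$, but the paper's version sidesteps the atomless issue entirely: the bijection acts on $\Omega$, which is atomless by convention, regardless of whether $\mu$ has atoms.

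Two points to fix. First, your reduction to atomless $\mu$ invokes ``$\law(U,X)$ where $U$ is an independent uniform, exactly as in the proof of \Cref{lem:simple_transfer}'' --- but in that proof $U$ is the \emph{identity} on $\Omega=(0,1)$, which is typically not independent of $X$, and an independent uniform need not exist on $(\Omega,\F,\P)$ once $X$ is fixed (e.g.\ if $X$ is injective on part of $\Omega$). The correct enrichment is $U=\id_\Omega$: then $\law(U,X)$ is automatically atomless, one approximates a lift $\hat\pi\in\cpl(\law(U,X),\nu)$ with $\hat c((u,x),y):=c(x,y)$ by Monge maps $T_n:(0,1)\times\H\to\H$, and $Y_n:=T_n(U,X)$ is a bona fide random variable on $\Omega$ with $Y_n\sim\nu$. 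Second, ``$\int(a\oplus b)\,d\pi_n$ is constant, so the family is UI'' is not a valid inference in general (constant integral does not imply uniform integrability); what actually gives UI here is that $a(X_n)\sim a_\#\mu$ and $b(Y_n)\sim b_\#\nu$ have \emph{fixed laws} for all $n$, so each family is trivially UI and hence so is the sum. With those repairs, your truncation argument closes the proof.
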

\begin{proof}
As for every $Y \sim \nu$, we have $\law(X,Y) \in \cpl(\mu,\nu)$, we clearly have $\sup_{\pi\in \cpl(\mu, \nu) } \int c\, d\pi \ge \sup_{Y\sim \nu} \E[c(X,Y)].$ To see the converse inequality, fix $\pi \in \cpl(\mu,\nu)$.
Since $(\Omega,\F,\P)$ is standard and atomless, it supports random variables $X',Y'$ with $(X',Y') \sim \pi$. In particular, we have that $X' \sim \mu$.
Therefore, we can apply Lemma \ref{lem:epsTransfer} to find, for each $\epsilon > 0$, random variables $X_\epsilon,Y_\epsilon$ with $\mathbb P(|X-X_\epsilon| \ge \epsilon) < \epsilon$ and $(X_\epsilon,Y_\epsilon) \sim \pi$. 
By dominated convergence we conclude that
\[
    \lim_{\epsilon\searrow 0} \E[c(X,Y_\epsilon)] = \E[c(X',Y')] = \int c \, d\pi. \qedhere
\]

\end{proof}

The following observation plays a fundamental role for this paper.
\begin{theorem}\label{thm:MCiConvConj}
Let $\phi : \Pc_2(\H) \to (-\infty, +\infty]$ be proper. Then
\[
{\overline{\phi\,}}^\ast = \overline{\phi^{\mc}}.
\]
In particular, the convex conjugate of a law-invariant function is law-invariant. 
\end{theorem}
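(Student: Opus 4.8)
The plan is to unwind both sides as suprema over $L_2$-random variables and match them using the "enough randomization" lemma (\Cref{lem:EnoughRandomization}) applied to the cost $c(x,y) = x \cdot y$. Fix a random variable $X \in L_2(\Omega,\F,\P;\H)$ and set $\mu := \law(X)$. By definition of the Legendre--Fenchel conjugate on the Hilbert space $L_2(\Omega,\F,\P;\H)$,
\[
\overline{\phi}^{\,\ast}(X) = \sup_{Y \in L_2(\Omega,\F,\P;\H)} \left( \E[X\cdot Y] - \overline{\phi}(Y) \right) = \sup_{\nu \in \Pc_2(\H)} \sup_{Y \sim \nu} \left( \E[X\cdot Y] - \phi(\nu) \right),
\]
where the last equality uses law-invariance of $\overline{\phi}$ to split the supremum over $Y$ into an outer supremum over laws $\nu$ and an inner supremum over all couplings of $\mu$ and $\nu$ realized on $(\Omega,\F,\P)$. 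The inner supremum is exactly where \Cref{lem:EnoughRandomization} enters: with $c(x,y)=x\cdot y$ (which satisfies $|c| \le a \oplus b$ with $a(x) = |x|^2/2 \in L^1(\mu)$, $b(y)=|y|^2/2 \in L^1(\nu)$ by the finite-second-moment assumption), we get $\sup_{Y \sim \nu} \E[X\cdot Y] = \sup_{\pi \in \cpl(\mu,\nu)} \int x\cdot y \, d\pi = \mc(\mu,\nu)$.

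Substituting this back yields
\[
\overline{\phi}^{\,\ast}(X) = \sup_{\nu \in \Pc_2(\H)} \left( \mc(\mu,\nu) - \phi(\nu) \right) = \phi^{\mc}(\mu) = \overline{\phi^{\mc}}(X),
\]
which is the claimed identity. The law-invariance of $\phi^{\mc}$, equivalently of $\overline{\phi^{\mc}}$, is then immediate since the right-hand side depends on $X$ only through $\mu = \law(X)$; alternatively it follows abstractly because the convex conjugate $\overline{\phi}^{\,\ast}$ of any function on a Hilbert space that happens to be law-invariant is again law-invariant (the pairing $\E[X\cdot Y]$ together with a law-invariant $\overline{\phi}$ only sees joint laws, and these can be transferred by measure-preserving bijections via \Cref{lem:simple_transfer}).

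The only subtle point---and the step I would be most careful about---is the interchange of the two suprema and the reduction of the inner $Y$-supremum to a coupling supremum: this requires that every coupling $\pi \in \cpl(\mu,\nu)$ be realizable as $\law(X,Y)$ for some $Y$ on the given standard space $(\Omega,\F,\P)$ with the same $X$, which is not literally true but is true up to $\epsilon$ in probability, and \Cref{lem:EnoughRandomization} (built on \Cref{lem:epsTransfer}) packages precisely this approximation together with the dominated-convergence passage to the limit. One must also check that $\mc(\mu,\nu)$ is finite (it is, by Cauchy--Schwarz and finite second moments) so that no $\infty - \infty$ ambiguity arises when $\phi(\nu) = +\infty$; in that case both the term in the supremum and its contribution are $-\infty$ and cause no harm. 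Everything else is bookkeeping with definitions.
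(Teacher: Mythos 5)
Your proof is correct and follows essentially the same route as the paper: fix one $L_2$-variable, split the Fenchel conjugate supremum into an outer supremum over laws and an inner one over representatives of that law, and evaluate the inner supremum as $\mc$ via \Cref{lem:EnoughRandomization}. The only cosmetic difference is that the paper fixes the variable playing the role of $Y$ in the pairing and varies $X$, whereas you fix $X$ and vary $Y$; by symmetry of the pairing this is the same argument.
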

\begin{proof}
For $Y \in L_2(\Omega,\F,\P;H)$ we have by Lemma~\ref{lem:EnoughRandomization}
\begin{align*}
   \overline{\phi\,}^\ast(Y) &= \sup_{X \in L_2 } \E[ X \cdot Y ] - \overline{\phi}(X) = \sup_{ \mu \in \Pc_2(\H)}  \sup_{X \sim \mu} \E[X \cdot Y]   -  \phi(\mu) \\
   &= \sup_{\mu \in \Pc_2(\H) } \mc(\mu,\law(Y)) - \phi(\mu)  = \phi^{\mc}(\law(Y)) = \overline{\phi^{\mc}}(Y). \qedhere
\end{align*}
\end{proof}

For the sake of completeness we prove that $\mc(\cdot,\nu)$ is totally convex. Note that the argument given here is as in \cite[Theorem~7.3.2]{AmGiSa08}.
\begin{lemma}\label{lem:MCtotcvx}
For every $\nu \in \Pc_2(\H)$, the functional $\mc(\cdot,\nu)$ is totally convex.  
\end{lemma}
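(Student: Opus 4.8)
The plan is to reduce the claim to the defining condition of total convexity — namely that the Lions lift $\overline{\mc(\cdot,\nu)}$ is convex on $L_2(\Omega,\F,\P;\H)$ — and then to recognize this lift as a supremum of continuous linear functionals, which makes convexity immediate. This mirrors the argument for $\mc(\cdot,\nu)$ in \cite[Theorem~7.3.2]{AmGiSa08}.

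Concretely, I would first apply \Cref{lem:EnoughRandomization} with the cost $c(x,y)=x\cdot y$, which is continuous and satisfies the growth bound $|x\cdot y|\le \tfrac12|x|^2+\tfrac12|y|^2 =: a(x)\oplus b(y)$ with $a\in L^1(\mu)$, $b\in L^1(\nu)$ for any $\mu\in\Pc_2(\H)$. Fixing a random variable $X\in L_2(\Omega,\F,\P;\H)$ with $\law(X)=\mu$, the lemma gives
\[
\overline{\mc(\cdot,\nu)}(X)=\mc(\law(X),\nu)=\sup_{\pi\in\cpl(\mu,\nu)}\int x\cdot y\,d\pi=\sup_{Y\sim\nu}\E[X\cdot Y],
\]
where the last supremum ranges over all $Y\in L_2(\Omega,\F,\P;\H)$ with $\law(Y)=\nu$ (such $Y$ exist since $(\Omega,\F,\P)$ is standard, hence atomless). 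Note the formula in fact holds for all $X\in L_2$ by the same reasoning, with $\mu=\law(X)$.

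Next I would observe that for each fixed $Y$ with $\law(Y)=\nu$, the map $X\mapsto\E[X\cdot Y]$ is linear on $L_2(\Omega,\F,\P;\H)$ and, by Cauchy--Schwarz, bounded (indeed $|\E[X\cdot Y]|\le\|X\|_2\|Y\|_2=\|X\|_2(\int|y|^2\,d\nu)^{1/2}$), hence continuous and in particular convex. A pointwise supremum of a family of convex functions is convex, so $\overline{\mc(\cdot,\nu)}$ is convex (it is also lsc as a supremum of continuous functions, though convexity is all we need). By definition this says exactly that $\mc(\cdot,\nu)$ is totally convex.

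There is essentially no real obstacle here; the only points requiring a word of care are the verification of the growth hypothesis in \Cref{lem:EnoughRandomization} and the remark that the standard probability space $(\Omega,\F,\P)$ supports random variables of law $\nu$, so that the supremum over $Y\sim\nu$ is taken over a nonempty set and agrees with the supremum over $\cpl(\law(X),\nu)$.
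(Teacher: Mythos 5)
Your proof is correct, but it takes a genuinely different route from the paper's. You work on the Lions lift: using \Cref{lem:EnoughRandomization} (with $c(x,y)=x\cdot y$, whose growth condition you correctly verify) to write $\overline{\mc(\cdot,\nu)}(X)=\sup_{Y\sim\nu}\E[X\cdot Y]$, and then observe this is a pointwise supremum of bounded linear (hence convex) functionals of $X\in L_2$. The paper instead works directly on $\Pc_2(\H)$ using the equivalent `measure-level' characterization of total convexity: it fixes $\mu^0,\mu^1$, a coupling $\pi\in\cpl(\mu^0,\mu^1)$, and $t\in[0,1]$, takes the displacement interpolant $\mu^t=((x,y)\mapsto(1-t)x+ty)_\#\pi$, glues $\pi$ with an optimal coupling in $\cpl(\mu^t,\nu)$ via conditional independence, and expands the integral to bound $\mc(\mu^t,\nu)$ by the convex combination of $\mc(\mu^0,\nu)$ and $\mc(\mu^1,\nu)$. (The paper explicitly notes its argument follows \cite[Theorem~7.3.2]{AmGiSa08}.) Your approach is arguably the slicker one here and is in the same spirit as the paper's own proof of \Cref{thm:MCiConvConj}—indeed it amounts to recognizing $\overline{\mc(\cdot,\nu)}$ as the convex conjugate $\overline{\chi_{\{\nu\}}}^\ast$—while the paper's gluing proof is more self-contained (it does not invoke \Cref{lem:EnoughRandomization} or properties of the lift) and stays entirely in the measure world. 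Both are short and both are correct.
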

\begin{proof}
Let $\mu^0,\mu^1 \in \Pc_2(\H)$,  $\pi \in \cpl(\mu^0,\mu^1)$ and $t \in [0,1]$ be given. We write   $\mu^t = ( (x,y) \mapsto (1-t)x+ty )_\# \pi$. By conditional independent gluing we find  $\Pi \in \cpl(\pi, \nu)$ such that $(  (x,y,z) \mapsto ((1-t)x+ty,z) )_\# \Pi \in \cpl(\mu^t,\nu)$ is optimal. We have 
\begin{align*}
\mc(\mu^t,\nu) &= \int   ((1-t)x^0+tx^1) \cdot y \, \Pi(dx^0,dx^1,dy) \\&= (1-t) \int  x^0 \cdot y \,  \Pi(dx^0,dy) + t \int  x^1 \cdot y \,  \Pi(dx^1,dy) 
\le  (1-t)\mc(\mu^0,\nu) +t \mc(\mu^1,\nu). \qedhere
\end{align*}
\end{proof}

\begin{proof}[Proof of \Cref{prop:ConvexEquiv}]
The equivalence of (1) and (2) is trivial.

To see that (2) implies (3) note that if $\overline{\phi}$ is convex, then by the Fenchel--Moreau theorem (see e.g.\ \cite[Theorem 13.32]{BaCo11}) and \Cref{thm:MCiConvConj}, 
\[
\overline{\phi} = \overline{\phi}^{\ast\ast} = \overline{\phi^{\mc}}^\ast = \overline{\phi^{\mc\mc}}.
\]
Hence, $\phi = \phi^{\mc\mc}$, so $\phi$ is MC-convex.

(3) implies (2) because MC is totally convex (see \Cref{lem:MCtotcvx}) and this property is preserved by suprema.

(3) implies (4) is true because MC is displacement convex and increasing in convex order (this was established by Carlier \cite{Ca08} in the case $H=\R^N$, see \Cref{prop:MC_order:_char} for the general case) and these properties are preserved under suprema.

We show the implication  (4) implies (1) under the additional assumption that $\phi$ is finitely valued. The general case (which is not needed for the other results of this paper) is deferred to \cite{BePaScSo25}.

Assume first that the dimension of $\H$ is strictly larger than one. Note that as $\phi$ is lower semi-continuous and increasing in convex order, we have for $\mu \in \Pc_2(\H)$ that
\[
    \liminf_{n \to \infty} \phi(\mu_n) \ge \phi(\mu) \ge \lim_{n \to \infty} \phi(\mu_n),
\]
for every sequence $(\mu_n)_{n \in \N}$ with $\mu_n \le_{cx} \mu$ and $\lim_{n \to \infty} \mu_n = \mu$ in $\Pc_2(\H)$.
Hence, for each such sequence we have $\lim_{n \to \infty} \phi(\mu_n) = \phi(\mu)$.
Therefore, the claim follows from \cite[Proof of Theorem~9.1]{CaSaSo25}.

If $\H = \R$, we fix $\mu,\nu \in \Pc_2(\R)$, $\pi \in \cpl(\mu,\nu)$ and $t \in (0,1)$.
Denote by $Q_\mu$, $Q_\nu$, and $Q$ the quantile functions of $\mu$, $\nu$, and $\mu_t := ((x,y) \mapsto x(1-t) + yt )_\# \pi$, respectively.
Let $U,V_1,V_2\sim {\rm Unif}([0,1])$ with $(Q_\mu(V_1), Q_\nu(V_2)) \sim \pi$ and 
\[
    Q(U) = (1-t) Q_\mu(V_1) + t Q_\nu(V_2).
\]
Clearly, we have for $u \in (0,1)$ that $\law(Q(U) | U \le u)$ dominates $\law((1-t)Q_\mu(U) + tQ_\nu(U)|U \le u)$ in stochastic order and therefore
\[
    \mathbb E[Q(U) | U \le u] \le
    \mathbb E[(1-t)Q_\mu(U) + t Q_\nu(U) | U \le u],
\]
from where we deduce that $\mu_t \le_{cx} \nu_t := \law( (1-t) Q_\mu(U) + t Q_\nu(U))$.
Since $\phi$ is geodesically convex and increasing in convex order, we get
\[
    \phi(\mu_t) \le \phi(\nu_t) \le (1-t)\phi(\mu) + t \phi(\nu).
\]
We conclude that $\phi$ is totally convex.
\end{proof}

In convex analysis on Hilbert spaces, the smallest convex functions (wrt pointwise order) are linear functions and the biggest are convex indicator functions of singletons. The convex conjugation is order reversing and it maps linear functions to indicators of singletons. The analogue of this in the present setting is the following:

\begin{example}\label{ex:MCMC}
Let $\nu \in \Pc_2(\H)$ and consider the functional $\phi(\mu) = \mc(\mu,\nu)$. We then have $\phi^\mc = \chi_{ \{ \rho \in \Pc_2(\H) : \rho \le_c \nu \}  }$, where $\le_c$ denotes the convex order of probability measures.
\end{example}
\begin{proof}
We have 
\[
\phi^\mc(\rho) = \sup_{ \mu \in \Pc_2(\H)} \mc(\mu,\rho) - \mc(\mu,\nu) .
\]
By \cite{Ca08, WiZh22} we have that $\rho \le_c \nu$ if and only if $\mc(\mu,\rho) \le \mc(\mu,\nu)$ for every $\mu \in \Pc_2(\H)$. Hence, if $\rho \le_c \nu$, setting $\mu=\nu$ in the supremum, we find $\phi^\mc(\rho)=0$. Otherwise there exists $\mu \in \Pc_2(\H)$ such that $\mc(\mu,\rho) - \mc(\mu,\nu) >0$.  As $\mc({s_\lambda}_\#\mu,\rho) = \lambda \mc(\mu,\rho)$, where $s_\lambda(x) = \lambda x$ for $\lambda >0$, this yields that $\phi^{\mc}(\rho) = +\infty$ whenever $\rho$ in not smaller in convex order than $\nu$.
\end{proof}

\begin{example}
Let $\nu \in \Pc_2(\H)$. Then the closed $\mc$-convex hull of $\{\nu\}$ is given by $\{ \rho \in \Pc_2(\H) : \rho \le_c \nu \}$. In particular, singletons are $\mc$-convex sets if and only if they consist of a Dirac measure.  
\end{example}
\begin{proof}
Clearly, $\chi_{\{\nu\}}^\mc = \mc(\cdot, \nu)$. Hence $  \chi_{\{\nu\}}^{\mc\mc} = \mc(\cdot,\nu)^\mc =  \chi_{ \{ \rho \in \Pc_2(\H) : \rho \le_c \nu \}  } $ by \Cref{ex:MCMC}.
\end{proof}

It is well known (see \cite[Example 7.3.3]{AmGiSa08}) that the Wasserstein-2 distance is not displacement convex. However, the distance to an MC-convex set is MC-convex (and hence displacement) convex. Moreover, this convexity property characterizes $\mc$-convex sets. 

\begin{proposition}
Let $A \subset \Pc_2(\H)$ be closed. Then $A$ is $\mc$-convex if and only if $ \mu \mapsto \textup{dist}_{\W_2}(\mu,A) = \inf_{\nu \in A} \W_2(\mu,\nu)$ is $\mc$-convex.    
\end{proposition}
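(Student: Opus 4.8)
The plan is to lift everything to $L_2$ and recognize $\overline{\phi}$, with $\phi := \textup{dist}_{\W_2}(\cdot, A)$, as the distance function to a convex set. Fix, as throughout, a standard atomless probability space $(\Omega,\F,\P)$ and put $C := \{Y \in L_2(\Omega,\F,\P;\H) : \law(Y) \in A\}$. Recall from the discussion after \Cref{prop:ConvexEquiv} that $A$ is $\mc$-convex precisely when $\chi_A$ is totally convex, i.e.\ when $\overline{\chi_A} = \chi_C$ is convex, i.e.\ when $C$ is a convex subset of $L_2$. So the proposition reduces to the identity $\overline{\phi} = \textup{dist}_{L_2}(\cdot, C)$ combined with two elementary facts: the distance to a convex subset of a normed space is a convex function, and a sublevel set of a convex function is convex.

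For the identity I would apply \Cref{lem:EnoughRandomization} with the continuous cost $c(x,y) = -|x-y|^2$, which satisfies $|c(x,y)| = |x-y|^2 \le 2|x|^2 + 2|y|^2$, so the integrability hypothesis holds whenever both marginals have finite second moment. For a fixed $X$ with $\law(X)=\mu$ and any $\nu \in \Pc_2(\H)$ this gives $\W_2^2(\mu,\nu) = \inf_{Y \sim \nu}\|X - Y\|_2^2$. Consequently, for every $X$,
\[
\overline{\phi}(X) = \textup{dist}_{\W_2}(\law X, A) = \inf_{\nu \in A}\ \inf_{Y \sim \nu}\|X - Y\|_2 = \inf_{Y \in C}\|X - Y\|_2 = \textup{dist}_{L_2}(X, C),
\]
since, as $\nu$ runs over $A$ and $Y$ over the random variables with law $\nu$, the variable $Y$ runs over exactly $C$.

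With this in hand both implications are short. If $A$ is $\mc$-convex then $C$ is convex, so $\overline{\phi} = \textup{dist}_{L_2}(\cdot,C)$ is convex, i.e.\ $\phi$ is $\mc$-convex. Conversely, if $\phi$ is $\mc$-convex then $\overline{\phi}$ is convex; using that $A$ is closed and $\phi \ge 0$ we get $C = \{Y : \law(Y) \in A\} = \{Y : \overline{\phi}(Y) \le 0\}$, a sublevel set of a convex function, hence convex, so $\chi_C = \overline{\chi_A}$ is convex and $A$ is $\mc$-convex. I expect no genuine obstacle; the only point needing a little care is the application of \Cref{lem:EnoughRandomization} to a minimization (flip the sign of the cost, check the growth bound, and note that the lemma permits the first-marginal representative $X$ to be held fixed), together with the harmless edge case $A = \emptyset$ where $\overline{\phi} \equiv +\infty$.
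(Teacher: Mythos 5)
Your proof is correct and follows the paper's argument essentially verbatim: both identify $\overline{\phi}$ with the $L_2$-distance to the lifted set $C$ via \Cref{lem:EnoughRandomization}, then invoke the standard Hilbert-space fact that a closed set is convex iff its distance function is convex. You simply spell out the two directions of that classical fact (convexity of the distance to a convex set, resp.\ the sublevel-set argument), whereas the paper cites it as a one-liner.
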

\begin{proof}
Writing $B = \{ X : \law(X) \in A\}$ we have by \Cref{lem:EnoughRandomization}
\[
\overline{\textup{dist}_{\W_2}(\cdot,A)}(X) = \inf_{\nu \in A} \W_2(\law(X), \nu) = \inf_{Y \in B} \| X- Y \|_2 = \textup{dist}_{\|\cdot\|_2}(X,B).
\]
In Hilbert spaces a closed set is convex if and only if its distance function is convex.
\end{proof}

\subsection{Subdifferentials}

Next, we spell out the definition of the $\mc$-subdifferential, which is the definition of the $c$-superdifferential from optimal transport applied to $c=\mc$.
\begin{definition}\label{def:MCsubdiff}
Let $\phi : \Pc_2(\H) \to (-\infty, +\infty]$. Then 
\[
\partial_{\mc} \phi(\mu) = \{ \nu \in \Pc_2(\H) : \phi(\rho) \ge \phi(\mu) + \mc(\rho,\nu) - \mc(\mu,\nu) \text{ for every } \rho \in \Pc_2(\H)\}.
\]
\end{definition}

While the subdifferential of a convex function on a vector space is in every point a convex set, this is not the case for $\mc$-subdifferentials of $\mc$-convex sets. In fact, they are typically not $\mc$-convex because they are often singletons (see differentiability results below) and singletons are not $\mc$-convex sets (unless they consist of a Dirac).

Next, we investigate the connection between the $\mc$-differential and subdifferential of the Lions lift. We start with a characterization of when random variables $(X,Z)$ are in the subdifferential of the Lions lift in terms of the $\mc$-subdifferential.

\begin{proposition}\label{prop:Subdiff_MCvsL}
Let $\phi: \Pc_2(\H) \to (-\infty, +\infty]$ be L-convex and $X,Z \in L_2(\Omega,\F,\P;\H)$ with $X\sim \mu$, $Z\sim \nu$. Then we have
\begin{align*}
    Z  \in  \partial \overline{\phi}(X) \iff \nu \in \partial_{\mc}\phi(\mu) \text{ and } \law(X,Z) \in \cplopt(\mu,\nu).
\end{align*}
\end{proposition}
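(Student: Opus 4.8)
The plan is to prove the two implications separately, after disposing of the degenerate case. If $\mu\notin\dom\phi$, then $X\notin\dom\overline{\phi}$, so $\partial\overline{\phi}(X)=\emptyset$; and since $\phi$ is proper, the defining inequality of $\partial_{\mc}\phi(\mu)$ (namely $\phi(\rho)\ge\phi(\mu)+\mc(\rho,\nu)-\mc(\mu,\nu)=+\infty$ for all $\rho$, using that $\mc$ is finite on $\Pc_2(\H)\times\Pc_2(\H)$) cannot hold, so $\partial_{\mc}\phi(\mu)=\emptyset$ as well. Hence both sides of the claimed equivalence fail, and I may assume $\mu\in\dom\phi$, i.e.\ $\overline{\phi}(X)=\phi(\mu)<\infty$.

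For ``$\Leftarrow$'', assume $\nu\in\partial_{\mc}\phi(\mu)$ and $\law(X,Z)\in\cplopt(\mu,\nu)$. Fix an arbitrary $X'\in L_2(\Omega,\F,\P;\H)$ and put $\rho:=\law(X')$. By \Cref{def:MCsubdiff}, $\overline{\phi}(X')=\phi(\rho)\ge\phi(\mu)+\mc(\rho,\nu)-\mc(\mu,\nu)$. Since $\law(X',Z)\in\cpl(\rho,\nu)$ we have $\mc(\rho,\nu)\ge\E[X'\cdot Z]$, while optimality of $\law(X,Z)$ gives $\mc(\mu,\nu)=\E[X\cdot Z]$. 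Combining, $\overline{\phi}(X')\ge\overline{\phi}(X)+\E[(X'-X)\cdot Z]$; as $X'$ was arbitrary, $Z\in\partial\overline{\phi}(X)$. (This direction uses only the definitions, not $L$-convexity.)

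For ``$\Rightarrow$'', the tool is the Fenchel--Young characterization of the subdifferential on the Hilbert space $L_2(\Omega,\F,\P;\H)$: for any proper $f$, one has $Z\in\partial f(X)$ iff $f(X)+f^\ast(Z)=\E[X\cdot Z]$. Applying this to $f=\overline{\phi}$ and invoking \Cref{thm:MCiConvConj} (which identifies $\overline{\phi}^\ast=\overline{\phi^{\mc}}$, so that $\overline{\phi}^\ast(Z)=\phi^{\mc}(\nu)$), the hypothesis $Z\in\partial\overline{\phi}(X)$ becomes $\phi(\mu)+\phi^{\mc}(\nu)=\E[X\cdot Z]$. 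Now run a sandwich: on one hand $\E[X\cdot Z]\le\mc(\mu,\nu)$ since $\law(X,Z)\in\cpl(\mu,\nu)$; on the other hand $\phi^{\mc}(\nu)\ge\mc(\mu,\nu)-\phi(\mu)$ by inserting $\rho=\mu$ into the supremum defining $\phi^{\mc}$ (legitimate as $\phi(\mu)<\infty$), so $\phi(\mu)+\phi^{\mc}(\nu)\ge\mc(\mu,\nu)$. Therefore all inequalities are equalities: $\E[X\cdot Z]=\mc(\mu,\nu)$, i.e.\ $\law(X,Z)\in\cplopt(\mu,\nu)$, and $\phi(\mu)+\phi^{\mc}(\nu)=\mc(\mu,\nu)$, which unwinds to $\mc(\rho,\nu)-\phi(\rho)\le\phi^{\mc}(\nu)=\mc(\mu,\nu)-\phi(\mu)$ for every $\rho$, i.e.\ $\nu\in\partial_{\mc}\phi(\mu)$ in the sense of \Cref{def:MCsubdiff}.

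There is no real obstacle here once \Cref{thm:MCiConvConj} is available; the only points needing a little care are the translation between the ``superdifferential'' formulation of $\partial_{\mc}\phi$ and the conjugate-type equality $\phi(\mu)+\phi^{\mc}(\nu)=\mc(\mu,\nu)$, and the verification that the degenerate case $\mu\notin\dom\phi$ is consistent with both sides being vacuous.
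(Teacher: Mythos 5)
Your proof is correct, and the forward direction differs from the paper's in an interesting way. In the ``$\Leftarrow$'' direction both arguments are essentially identical: unwind \Cref{def:MCsubdiff}, bound $\mc(\rho,\nu)\ge\E[X'\cdot Z]$ below by any candidate coupling, and use optimality of $\law(X,Z)$ to evaluate $\mc(\mu,\nu)$. In the ``$\Rightarrow$'' direction, however, the paper works directly from the subgradient inequality $\overline{\phi}(Y)\ge\overline{\phi}(X)+\E[(Y-X)\cdot Z]$: testing against all $Y\sim\mu$ yields optimality of $\law(X,Z)$ via \Cref{lem:EnoughRandomization}, and testing against $\epsilon$-optimal $Y\sim\rho$ (again via \Cref{lem:EnoughRandomization}) yields $\nu\in\partial_{\mc}\phi(\mu)$. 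You instead pass immediately to the Fenchel--Young equality $\overline{\phi}(X)+\overline{\phi}^\ast(Z)=\E[X\cdot Z]$, invoke \Cref{thm:MCiConvConj} to identify $\overline{\phi}^\ast(Z)=\phi^{\mc}(\nu)$, and close the argument with a two-sided sandwich $\E[X\cdot Z]\le\mc(\mu,\nu)\le\phi(\mu)+\phi^{\mc}(\nu)$. Your route is cleaner once \Cref{thm:MCiConvConj} is in hand and avoids the $\epsilon$-bookkeeping, though it is not genuinely more elementary --- \Cref{thm:MCiConvConj} itself is proved via \Cref{lem:EnoughRandomization}, so both arguments rest on the same engine, just packaged differently. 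Your explicit treatment of the degenerate case $\mu\notin\dom\phi$ (both sides vacuous, using finiteness of $\mc$ on $\Pc_2(\H)\times\Pc_2(\H)$ and properness of $\phi$) is a small but welcome addition that the paper leaves implicit; it is also exactly what makes the Fenchel--Young equivalence applicable in the forward direction, since that characterization requires $\overline{\phi}(X)<\infty$.
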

\begin{proof}
Let   $Z  \in  \partial \overline{\phi}(X)$ and write $\mu = \law(X)$, $\nu= \law(Z)$. By definition we have 
\begin{align}\label{eq:prf:subdif_coincide}
\overline{\phi}(Y) \ge \overline{\phi}(X) + \E[(Y-X)\cdot Z].    
\end{align}
For every $Y \sim \mu$, we have $\overline{\phi}(X) = \overline{\phi}(Y)$ and   \eqref{eq:prf:subdif_coincide} yields $\E[X \cdot Z] \ge \E [ Y \cdot Z]$. Hence, $\E[X \cdot Z] = \mc(\mu,\nu)$ by \Cref{lem:EnoughRandomization}. Therefore $\law (X,Y)\in \cplopt(\mu, \nu)$. 

Next, fix $\rho \in \Pc_2(\H)$ and $\varepsilon >0$. By \Cref{lem:EnoughRandomization} there is $Y \sim \rho$ such that $\E[Y \cdot Z] \ge \mc(\rho,\nu) - \varepsilon$. Applying \eqref{eq:prf:subdif_coincide} then yields $\phi(\rho) \ge \phi(\mu) + \mc(\rho,\nu) - \mc(\mu,\nu) - \varepsilon$. Thus $\nu\in \partial_{\mc} \phi(\mu)$.

\medskip

Conversely, assume that $\nu \in \partial_\mc\phi(\mu)$ and that $X \sim \mu$, $Z \sim \nu$ are satisfying $\E[X \cdot Z] = \mc(\mu,\nu)$. Then for every $Y$, we have for $\rho = \law(Y)$
\[
\overline{\phi}(Y) = \phi(\rho) \ge \phi(\mu) + \mc(\rho,\nu) - \mc(\mu,\nu) \ge \overline{\phi}(X) + \E[ Y \cdot Z] - \E[X \cdot Z].
\]
Hence, $ Z  \in  \partial \overline{\phi}(X)$.
\end{proof}

\begin{lemma}\label{lem:subdiff_cond}
Let $\overline{\phi}$ be L-convex. If $Z \in \partial \overline{\phi}(X)$, then $\E[Z|X] \in \partial \overline{\phi}(X)$.
\end{lemma}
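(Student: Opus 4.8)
The plan is to show directly that for every $Y \in L_2(\Omega,\F,\P;\H)$ the subgradient inequality
\[
\overline{\phi}(Y) \ge \overline{\phi}(X) + \E\big[(Y - X)\cdot \E[Z\mid X]\big]
\]
holds. Write $W := \E[Z\mid X] \in L_2(\Omega,\F,\P;\H)$ (we may assume $\overline\phi(X)\in\R$, the other cases being trivial). Since $X$ is $\sigma(X)$-measurable, the tower property gives $\E[X\cdot W] = \E[X\cdot Z]$, so the displayed inequality is equivalent to
\[
\E[Y\cdot W] \;\le\; \overline\phi(Y) - \overline\phi(X) + \E[X\cdot Z].
\]

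For the right-hand side I would exploit law-invariance of $\overline\phi$. For any $Y'$ with $Y' \sim Y$ we have $\overline\phi(Y') = \overline\phi(Y)$, and applying the hypothesis $Z \in \partial\overline\phi(X)$ to $Y'$ yields $\overline\phi(Y) - \overline\phi(X) + \E[X\cdot Z] \ge \E[Y'\cdot Z]$. Taking the supremum over all $Y' \sim Y$ and invoking \Cref{lem:EnoughRandomization} (with $c(y,z) = y\cdot z$, $\mu = \law Y$, $\nu = \law Z$) gives
\[
\overline\phi(Y) - \overline\phi(X) + \E[X\cdot Z] \;\ge\; \mc(\law Y, \law Z).
\]
For the left-hand side, $\E[Y\cdot W] \le \mc(\law Y, \law W)$ trivially, since $\law(Y,W)$ is a coupling of $\law Y$ and $\law W$. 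By conditional Jensen, $\law W = \law(\E[Z\mid X]) \le_c \law Z$ in convex order, hence $\mc(\law Y, \law W) \le \mc(\law Y, \law Z)$ by the monotonicity of $\mc$ in convex order (\Cref{prop:MC_order:_char}, resp.\ Carlier \cite{Ca08}). Chaining the two bounds gives $\E[Y\cdot W] \le \mc(\law Y, \law Z) \le \overline\phi(Y) - \overline\phi(X) + \E[X\cdot Z]$, which is the desired inequality.

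There is essentially no serious obstacle; the proof is an assembly of facts already in place. The one idea that must be spotted is the use of law-invariance together with \Cref{lem:EnoughRandomization} to replace the linear functional $Y \mapsto \E[Y\cdot Z]$ by the (generally larger) functional $Y \mapsto \mc(\law Y, \law Z)$, which then dominates $\E[Y\cdot W]$ precisely because passing from $Z$ to $\E[Z\mid X]$ only decreases the law in convex order. An alternative, slightly more conceptual route is via \Cref{prop:Subdiff_MCvsL}: one checks that $\mc(\mu,\law W) = \mc(\mu,\law Z) = \E[X\cdot W]$ (so that $\law(X,W) \in \cplopt(\mu,\law W)$) and that $\law W \in \partial_\mc\phi(\mu)$, both of which again reduce to $\law W \le_c \law Z$ and the $\mc$-monotonicity of convex order; \Cref{prop:Subdiff_MCvsL} then yields $W \in \partial\overline\phi(X)$ directly.
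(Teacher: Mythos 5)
Your proof is correct, and it takes a genuinely different route from the paper's. The paper's proof is a single display: apply the subgradient inequality for $Z$ to the test point $\E[Y\mid X]$, use that $\overline\phi$ is increasing in convex order (a consequence of L-convexity via \Cref{prop:ConvexEquiv}) to get $\overline\phi(Y)\ge\overline\phi(\E[Y\mid X])$, and then cancel by the tower property to land on $\E[(Y-X)\cdot\E[Z\mid X]]$. In other words, the paper conditions the test variable $Y$. You instead condition the subgradient $Z$: you convert the subgradient inequality, via law-invariance and \Cref{lem:EnoughRandomization}, into the bound
\[
\overline\phi(Y)-\overline\phi(X)+\E[X\cdot Z]\ \ge\ \mc(\law Y,\law Z),
\]
and then dominate $\E[Y\cdot\E[Z\mid X]]$ by $\mc(\law Y,\law\E[Z\mid X])\le\mc(\law Y,\law Z)$ using Jensen plus the $\mc$-monotonicity of convex order (\Cref{prop:MC_order:_char}). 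Both arguments ultimately rest on the same fact — monotonicity in convex order, used once — but the paper's is shorter and stays entirely on the level of the lift, while yours makes the transport-theoretic reading of the subdifferential (via the $\mc$-bound) explicit. Your alternative via \Cref{prop:Subdiff_MCvsL} is also sound: from $\law(X,Z)\in\cplopt$ one gets $\mc(\mu,\nu)=\E[X\cdot Z]=\E[X\cdot W]\le\mc(\mu,\law W)\le\mc(\mu,\nu)$, so all are equal, and combined with $\law W\le_c\law Z$ one checks $\law W\in\partial_\mc\phi(\mu)$. The paper's style (condition the test point) is what it also reuses verbatim in the adapted setting in \Cref{prop:sub_grad_proj_ad}, so it is the more structurally load-bearing choice in this manuscript, but your argument is a legitimate and somewhat more conceptual alternative.
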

\begin{proof}
Using that $\overline{\phi}$ is monotone in the convex order (see \Cref{prop:ConvexEquiv}) we find for every $Y$
\[
\overline{\phi}(Y) \ge \overline{\phi}( \E[Y|X] ) \ge \overline{\phi}(X) + \E[ (\E[Y|X]-X) \cdot Z ] = \overline{\phi}(X) + \E[ (Y-X) \cdot \E[ Z|X] ]. \qedhere
\]
\end{proof}

Note that \Cref{prop:Subdiff_MCvsL} implies that the subdifferential of law-invariant convex functions consists of optimal couplings, i.e.\ if $(X,Z) \in \partial\overline{\phi}$ then $\law(X,Z)$ is optimal between its marginals. In a certain sense the subdifferential of the Lions lift contains more information than the $\mc$-subdifferential: It consists of optimal $(X,Z)$ whereas the $\mc$-subdifferential carries only information on the marginals $\law(X)$ and $\law(Z)$. 

\begin{definition}\label{def:IndRand}
    We say that a random variable $X\in L_2(\Omega, \F, \P;\H)$ allows for independent randomization if there exists a uniformly distributed $U\in L_2(\Omega, \F, \P;\R)$ which is independent of $X$.
\end{definition}
Apparently not every random variable allows for independent randomization, e.g.\ if $X$  is one-to-one, then it can not allow for independent randomization. 
The following lemma says $X$ does allow for independent randomization if it is not at all one-to-one.
\begin{lemma}\label{lem:IndRand=Not1-1}
   Let  $X\in L_2(\Omega, \F, \P;\H)$ set $\rho:=(\omega \mapsto (\omega, X(\omega))_{\#}(\P)$ and write $(\rho^x)_x$ for the disintegration of $\rho$ w.r.t.\ $(\proj_{\H})_{\#}(\rho)= \law_{\P}(X)$. Then the following are equivalent: 
   \begin{enumerate}
       \item $X$ allows for independent randomization.
       \item The measure $\rho^x$ is continuous for $\law_{\P}(X)$-a.e.\ $x$. 
   \end{enumerate}
\end{lemma}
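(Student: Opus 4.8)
The plan is to reduce to the canonical model $\Omega=(0,1)$ with Lebesgue measure (allowed since $(\Omega,\F,\P)$ is standard), put $\mu:=\law_\P(X)$, and work throughout with the disintegration $x\mapsto\rho^x\in\Pc((0,1))$ of $\rho$ with respect to its second marginal $\mu$. This family exists and is Borel by the disintegration theorem on Polish spaces, each $\rho^x$ is concentrated on $\{\omega:X(\omega)=x\}$ for $\mu$-a.e.\ $x$ (so $\rho^x$ is the conditional law of $\omega$ given $X=x$), and it satisfies the transfer identity $\int_0^1 h(\omega,X(\omega))\,d\omega=\int\!\int h(\omega,x)\,d\rho^x(\omega)\,d\mu(x)$ for bounded Borel $h\colon(0,1)\times\H\to\R$. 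The two implications then reduce to the fiberwise CDF transform for $(2)\Rightarrow(1)$ and a pushforward-of-an-atom argument for $(1)\Rightarrow(2)$.

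For $(2)\Rightarrow(1)$: after discarding a $\mu$-null set I would assume every $\rho^x$ is atomless, set $F^x(t):=\rho^x((0,t])$, and use that $F^x$ is then continuous with $(F^x)_\#\rho^x=\mathrm{Unif}([0,1])$ — the classical quantile/CDF transform. Joint Borel-measurability of $(x,t)\mapsto F^x(t)$ (monotone in $t$, Borel in $x$ because the disintegration is Borel) makes $U:=F^{X(\cdot)}(\cdot)$ a genuine random variable on $(0,1)$, and plugging $h(\omega,x)=g(F^x(\omega))\psi(x)$ into the transfer identity yields $\law(U,X)=\mathrm{Unif}([0,1])\otimes\mu$, i.e.\ $U$ is uniform and independent of $X$.

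For $(1)\Rightarrow(2)$: I would argue by contraposition. First, $B:=\{x:\rho^x\text{ has an atom}\}$ is Borel, since $\rho^x$ is atomless iff $(\rho^x\otimes\rho^x)(\Delta)=0$ with $\Delta\subset(0,1)^2$ the diagonal, and $x\mapsto(\rho^x\otimes\rho^x)(\Delta)$ is Borel. Assume $\mu(B)>0$ and suppose for contradiction that $U$ is uniform and independent of $X$. The transfer identity gives $\law(U\mid X=x)=U_\#\rho^x$ for $\mu$-a.e.\ $x$, while independence gives $\law(U\mid X=x)=\law(U)=\mathrm{Unif}([0,1])$ for $\mu$-a.e.\ $x$; hence $U_\#\rho^x=\mathrm{Unif}([0,1])$ for $\mu$-a.e.\ $x$. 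But for every $x\in B$ the measure $\rho^x$ has an atom, so $U_\#\rho^x$ has an atom and cannot equal $\mathrm{Unif}([0,1])$ — a contradiction on a set of positive $\mu$-measure. Therefore $\mu(B)=0$, which is exactly $(2)$. Note that in this direction one never needs to select the atom of $\rho^x$ measurably, only the existence of one.

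Nothing here is a serious obstacle: the transfer identity and the Borel-measurability of $B$ are routine, and the only point demanding a little care is the joint Borel-measurability of $(x,t)\mapsto F^x(t)$ that legitimizes the definition of $U$ in direction $(2)\Rightarrow(1)$. Conceptually the lemma just records that $(\Omega,\F,\P)$ carries a uniform variable independent of $\sigma(X)$ precisely when it is ``relatively atomless'' over $\sigma(X)$, and the disintegration $\rho^x$ is the explicit witness of that relative atomlessness.
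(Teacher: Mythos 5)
Your proof is correct and is precisely the canonical argument the paper has in mind when it dismisses the lemma as ``Straightforward'': the fiberwise CDF transform $U(\omega)=F^{X(\omega)}(\omega)$ for $(2)\Rightarrow(1)$, and the observation that the pushforward of a measure with an atom has an atom for the contrapositive of $(1)\Rightarrow(2)$. The two points you flag as needing care (joint Borel measurability of $(x,t)\mapsto F^x(t)$ and Borel measurability of $B$ via $(\rho^x\otimes\rho^x)(\Delta)$) are indeed the only nontrivial measurability checks, and your treatments of both are standard and correct.
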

\begin{proof}
    Straightforward.
\end{proof}

\begin{corollary}\label{cor:MCdiffToLdiff}
Let $\phi: \Pc_2(\H) \to (-\infty, +\infty]$ be L-convex and $\overline{\phi} : L_2(\Omega,\F,\P;\H) \to (-\infty, +\infty]$ be its Lions lift. Suppose that $\rho \in \partial_\mc\phi (\mu)$ and let $X \sim \mu$ allow for independent randomization. Then there is $Z \in \partial\overline{\phi}(X)$ satisfying $Z \sim \rho$. 
\end{corollary}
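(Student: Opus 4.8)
The plan is to deduce the statement from \Cref{prop:Subdiff_MCvsL}: it suffices to produce a random variable $Z$ on $(\Omega,\F,\P)$ with $Z\sim\rho$ and $\law(X,Z)\in\cplopt(\mu,\rho)$, since then \Cref{prop:Subdiff_MCvsL} (applicable because $\phi$ is L-convex and $\rho\in\partial_\mc\phi(\mu)$) gives $Z\in\partial\overline\phi(X)$. The role of the hypothesis on $X$ is to supply exactly the additional randomness required to prescribe the conditional law of $Z$ given $X$.

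First I would apply the fundamental theorem of optimal transport (\Cref{thm:ftot}) to the cost $c(x,y)=x\cdot y$, which is symmetric, continuous on $\H\times\H$, and dominated by $a\oplus b$ with $a(x)=\tfrac12|x|^2\in L^1(\mu)$ and $b(y)=\tfrac12|y|^2\in L^1(\rho)$. This yields an optimal plan $\pi^\ast\in\cplopt(\mu,\rho)$, i.e.\ $\int x\cdot y\,d\pi^\ast=\mc(\mu,\rho)$. Disintegrating along the first marginal, write $\pi^\ast=\int_{\H}\delta_x\otimes\pi^\ast_x\,\mu(dx)$ for a Borel kernel $x\mapsto\pi^\ast_x$.

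Next, using that $X$ allows for independent randomization, pick $U\in L_2(\Omega,\F,\P;\R)$ uniform on $[0,1]$ and independent of $X$. By a standard measurable parametrization of probability measures on the standard Borel space $\H$ (the kind of selection statement collected in the appendix; concretely one can fix a Borel isomorphism $\H\cong[0,1]$ and take parametrized quantile transforms), there is a Borel map $F:\H\times[0,1]\to\H$ with $F(x,\cdot)_\#\lambda=\pi^\ast_x$ for $\mu$-a.e.\ $x$. Setting $Z:=F(X,U)$, independence of $U$ and $X$ gives, for all Borel $A,B\subseteq\H$,
\[
\P(X\in A,\,Z\in B)=\int_A\lambda\big(\{u:F(x,u)\in B\}\big)\,\mu(dx)=\int_A\pi^\ast_x(B)\,\mu(dx)=\pi^\ast(A\times B),
\]
so $\law(X,Z)=\pi^\ast\in\cplopt(\mu,\rho)$ and in particular $Z\sim\rho$; since $\rho\in\Pc_2(\H)$, also $Z\in L_2(\Omega,\F,\P;\H)$. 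Invoking \Cref{prop:Subdiff_MCvsL} then completes the argument.

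The only delicate point is the existence of the Borel parametrization $F$, i.e.\ a measurable choice, depending measurably on $x$, of a map pushing Lebesgue measure on $[0,1]$ onto $\pi^\ast_x$; this is a routine measurable-selection fact which I would state and prove (or cite) in the appendix. I would also stress that the hypothesis that $X$ allows for independent randomization is genuinely needed: if, say, $X$ were injective, then every $\F$-measurable $Z$ is a deterministic function of $X$ and one cannot in general realize a non-degenerate conditional law $\pi^\ast_X$.
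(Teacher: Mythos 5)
Your argument is essentially identical to the paper's: pick an optimal plan $\pi^\ast\in\cplopt(\mu,\rho)$, use independent randomization to realize a $Z$ with $(X,Z)\sim\pi^\ast$, and conclude by \Cref{prop:Subdiff_MCvsL}. The paper states the realization step (``there is a random variable $Z$ such that $(X,Z)\sim\pi$'') as an immediate consequence of the definition of independent randomization, whereas you spell out the underlying parametrized-disintegration construction $Z=F(X,U)$; that is a correct unpacking of the same idea, not a different route.
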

\begin{proof}
Pick some $\pi \in \cplopt(\mu,\rho)$. As $X$ allows for independent randomization, there is a random variable $Z$ such that $(X,Z) \sim \pi$. By \Cref{prop:Subdiff_MCvsL} we have $Z \in \partial\overline{\phi}(X)$.
\end{proof}

Note that in \Cref{cor:MCdiffToLdiff} it is crucial that $X$ allows for independent randomization. This fact turns out to be an obstacle for the law-invariance of the subdifferential. However, it turns out that the subdifferential is law-invariant as long as we restrict to random variables $X$ that allow for independent randomization. Moreover, we can characterize those couplings that can be transferred to any $X \sim \mu$ as the optimal Monge couplings between $\mu$ and elements of $\partial_\mc\phi(\mu)$.

\begin{proposition}\label{cor:LawInvSubDiffRandom}
Let $\phi: \Pc_2(\H) \to (-\infty, +\infty]$ be L-convex and $\mu\in \Pc_2(\H)$.  

For all $X\sim \mu$ we have $$ \{ \law(Y) : Y\in \partial \overline\phi(X) \} \subseteq \phi^\MC (\mu).$$
Assume further that one of the following holds:
\begin{enumerate}
    \item $X$ admits independent randomization.
    \item $\mu$ is transport-regular.
\end{enumerate}
Then we have
$$ \{ \law(Y) : Y\in \partial \overline\phi(X) \} = \phi^\MC (\mu).$$
\end{proposition}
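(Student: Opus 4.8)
The goal is to identify $\{\law(Y):Y\in\partial\overline\phi(X)\}$ with $\phi^{\mc}(\mu)$ (here $\phi^{\mc}(\mu)$ is shorthand for $\partial_\mc\phi(\mu)$). The inclusion ``$\subseteq$'' holds for arbitrary $X\sim\mu$ and is exactly the forward direction of \Cref{prop:Subdiff_MCvsL}: if $Y\in\partial\overline\phi(X)$ then $\law(Y)\in\partial_\mc\phi(\mu)$. So the content is the reverse inclusion under either hypothesis (1) or (2).

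\medskip

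\textbf{Case (1): $X$ admits independent randomization.} This is immediate from \Cref{cor:MCdiffToLdiff}: given $\rho\in\partial_\mc\phi(\mu)$, pick $\pi\in\cplopt(\mu,\rho)$, use the independent uniform randomization attached to $X$ to build $Z$ with $(X,Z)\sim\pi$, and conclude $Z\in\partial\overline\phi(X)$ by \Cref{prop:Subdiff_MCvsL}. Then $\law(Z)=\rho$, giving ``$\supseteq$''.

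\medskip

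\textbf{Case (2): $\mu$ is transport-regular.} Fix $\rho\in\partial_\mc\phi(\mu)$. By definition of transport regularity, the problem of transporting $\mu$ to $\rho$ for cost $|x-y|^2$ — equivalently, for $\mc$ — admits a \emph{unique} optimizer $\pi^*$, which is moreover of Monge type, say $\pi^*=(\id,T)_\#\mu$. The key point is that a Monge coupling can be realized on top of \emph{any} $X\sim\mu$: simply set $Z:=T(X)$, so that $(X,Z)\sim\pi^*\in\cplopt(\mu,\rho)$, without needing extra randomness. Then \Cref{prop:Subdiff_MCvsL} (the ``$\Leftarrow$'' direction, using $\rho\in\partial_\mc\phi(\mu)$ together with $\law(X,Z)=\pi^*\in\cplopt(\mu,\rho)$) gives $Z\in\partial\overline\phi(X)$ with $\law(Z)=\rho$. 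This closes the reverse inclusion.

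\medskip

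\textbf{Main obstacle.} The proof is essentially a bookkeeping exercise once \Cref{prop:Subdiff_MCvsL}, \Cref{cor:MCdiffToLdiff} and the definition of transport regularity are in hand; there is no deep step. The only point requiring a little care is making sure that in case (2) the cost $\mc(\cdot,\cdot)$ between fixed marginals $\mu,\rho$ is genuinely equivalent (same optimizers) to the squared-distance cost, so that ``transport regular'' — which is phrased for $\W_2^2$ — really yields a unique Monge optimizer for the $\mc$-problem defining $\partial_\mc\phi$; this is the standard identity $\int|x-y|^2\,d\pi = \int|x|^2\,d\mu+\int|y|^2\,d\rho - 2\int x\cdot y\,d\pi$, with the first two terms constant over $\cpl(\mu,\rho)$. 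With that noted, both cases follow as above.
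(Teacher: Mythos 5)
Your proof is correct and follows essentially the same route as the paper: the first inclusion via \Cref{prop:Subdiff_MCvsL}, case (1) via \Cref{cor:MCdiffToLdiff}, and case (2) by realizing the unique Monge coupling $\pi^*=(\id,T)_\#\mu$ on top of $X$ as $Z:=T(X)$ and invoking the converse direction of \Cref{prop:Subdiff_MCvsL}. Your closing remark that $\W_2^2$-optimality and $\mc$-optimality coincide for fixed marginals is a worthwhile clarification that the paper leaves implicit.
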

\begin{proof}
The first inclusion  follows immediately from \Cref{prop:Subdiff_MCvsL}.

By \Cref{cor:MCdiffToLdiff}, we have equality provided that $X$ admits independent randomization. 

If $\mu$ is transport-regular, then for every $\nu\in\phi^\MC (\mu)$, $\cplopt(\mu,\nu)=\{\law(X,T(X))\} $ for some map $T$. Hence we obtain again equality.   
\end{proof}

\begin{definition}\label{def:MCdiff}
    Let $\phi:\Pc_2(\H)\to (-\infty, \infty]$ be $\MC$-convex and $\mu\in \dom(\phi)$. We say that $\phi $ is $\MC$-differentiable at $\mu$ if $|\partial \phi(\mu)|=1$.
\end{definition}

\begin{lemma}\label{lem:mc_diff_char}
    Let $\phi:\Pc_2(\H)\to (-\infty, \infty]$ be $\MC$-convex and $\mu\in \dom(\phi)$. Then the following are equivalent: 
    \begin{enumerate}
        \item $\phi$ is $\MC$-differentiable in $\mu$.
        \item For all $X$ with $X\sim \mu$ we have $|\partial \overline \phi (X)|=1$.
        \item There exists $X\sim \mu$  which admits independent randomization and satisfies $|\partial \overline \phi (X)|=1$.
    \end{enumerate}
\end{lemma}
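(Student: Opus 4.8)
The plan is to prove the cycle $(1)\Rightarrow(2)\Rightarrow(3)\Rightarrow(1)$, with $(1)\Rightarrow(2)$ carrying essentially all the weight. First I would record the standing facts: since $\phi$ is $\MC$-convex it is lsc, hence totally convex and L-convex by \Cref{prop:ConvexEquiv}, so that \Cref{prop:Subdiff_MCvsL}, \Cref{lem:subdiff_cond}, \Cref{cor:MCdiffToLdiff} and \Cref{cor:LawInvSubDiffRandom} all apply; moreover $\mu\in\dom(\phi)$ gives $X\in\dom(\overline\phi)$ for every $X\sim\mu$, and since $(\Omega,\F,\P)$ is standard and atomless there is at least one realization $X_0\sim\mu$ allowing independent randomization (realize $\mu$ on one of two independent standard factors of $\Omega$, and let $U$ be the other factor).

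For $(1)\Rightarrow(2)$, write $\partial_\mc\phi(\mu)=\{\nu\}$ and proceed in two steps. \emph{Step 1 (the optimal coupling is Monge).} Fix $X_0\sim\mu$ allowing independent randomization; \Cref{cor:MCdiffToLdiff} produces $Z_0\in\partial\overline\phi(X_0)$ with $Z_0\sim\nu$. By \Cref{lem:subdiff_cond} we also have $\E[Z_0|X_0]\in\partial\overline\phi(X_0)$, and by \Cref{prop:Subdiff_MCvsL} every element of $\partial\overline\phi(X_0)$ has law in $\partial_\mc\phi(\mu)=\{\nu\}$; thus $\E[Z_0|X_0]\sim\nu\sim Z_0$. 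Equality of second moments together with the fact that conditional expectation is an $L_2$-contraction forces $Z_0=\E[Z_0|X_0]$ a.s., i.e.\ $Z_0=T(X_0)$ for some Borel $T:\H\to\H$ with $T_\#\mu=\nu$, so that the Monge plan $(\id,T)_\#\mu=\law(X_0,Z_0)$ lies in $\cplopt(\mu,\nu)$. \emph{Step 2 (transfer and uniqueness).} For an arbitrary $X\sim\mu$ set $Z:=T(X)$; then $\law(Z)=T_\#\mu=\nu\in\partial_\mc\phi(\mu)$ and $\law(X,Z)=(\id,T)_\#\mu\in\cplopt(\mu,\nu)$, so \Cref{prop:Subdiff_MCvsL} yields $Z\in\partial\overline\phi(X)$, hence $\partial\overline\phi(X)\neq\emptyset$. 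If $Z_1,Z_2\in\partial\overline\phi(X)$, then by \Cref{prop:Subdiff_MCvsL} and convexity of the subdifferential $Z_1$, $Z_2$ and $\tfrac12(Z_1+Z_2)$ all have law $\nu$, so the parallelogram identity gives $\E\|Z_1-Z_2\|^2=2\E\|Z_1\|^2+2\E\|Z_2\|^2-4\E\|\tfrac12(Z_1+Z_2)\|^2=0$, i.e.\ $Z_1=Z_2$. This shows $|\partial\overline\phi(X)|=1$ for every $X\sim\mu$.

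The remaining implications are short. For $(2)\Rightarrow(3)$ I would take a realization $X_0\sim\mu$ allowing independent randomization (which exists, as above); by (2) it satisfies $|\partial\overline\phi(X_0)|=1$ and thus witnesses (3). For $(3)\Rightarrow(1)$: if $X\sim\mu$ allows independent randomization and $|\partial\overline\phi(X)|=1$, then \Cref{cor:LawInvSubDiffRandom}(1) gives $\partial_\mc\phi(\mu)=\{\law(Y):Y\in\partial\overline\phi(X)\}$, which is the image of a one-point set under the law map, hence a singleton; so $\phi$ is $\MC$-differentiable at $\mu$.

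I expect the main obstacle to be the nonemptiness half of $(1)\Rightarrow(2)$. For a realization $X$ that does \emph{not} admit independent randomization (for instance an injective one), the relation $\law(X,Z)\in\cplopt(\mu,\nu)$ is rigid: it forces $Z$ to be a deterministic function of $X$, so $\partial\overline\phi(X)$ can be nonempty only if $\cplopt(\mu,\nu)$ already contains a Monge coupling. Step 1 above is exactly the place where the singleton hypothesis on $\partial_\mc\phi(\mu)$ is converted into this Monge structure, via \Cref{lem:subdiff_cond} and the contractivity of conditional expectation; once a Monge coupling is available it transfers verbatim to every realization, which is what makes the ``for all $X$'' statement in (2) hold.
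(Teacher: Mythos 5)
Your proof is correct, and in one important respect it is more careful than the paper's own argument. The implications $(2)\Rightarrow(3)$ and $(3)\Rightarrow(1)$ match the paper: existence of a realization allowing independent randomization, and the fact that for such $X$ one can transfer any element of $\partial_\mc\phi(\mu)$ into $\partial\overline\phi(X)$ (\Cref{cor:MCdiffToLdiff}, \Cref{cor:LawInvSubDiffRandom}). Your uniqueness argument via the parallelogram identity is also essentially the paper's (which takes a strictly convex $f$; $f=\|\cdot\|^2$ does the job). The genuine divergence is in $(1)\Rightarrow(2)$. The paper argues only by contradiction, showing that $\partial\overline\phi(X)$ cannot contain two distinct elements, i.e.\ it establishes $|\partial\overline\phi(X)|\le 1$ but does not address non-emptiness, which is part of assertion $(2)$ as stated. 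You correctly identify this as the real crux: for $X$ that does not admit independent randomization (e.g.\ injective $X$), \Cref{prop:Subdiff_MCvsL} shows $\partial\overline\phi(X)\neq\emptyset$ forces $\cplopt(\mu,\nu)$ to contain a Monge plan, and that is \emph{a priori} not obvious from $\partial_\mc\phi(\mu)=\{\nu\}$ alone. Your Step~1 supplies exactly the missing piece — fix $X_0$ with independent randomization, obtain $Z_0\in\partial\overline\phi(X_0)$ with $Z_0\sim\nu$, use \Cref{lem:subdiff_cond} plus the equal-law, $L_2$-contraction argument to conclude $Z_0=\E[Z_0|X_0]=T(X_0)$, and then push $T$ to every $X\sim\mu$ via \Cref{prop:Subdiff_MCvsL}. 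This is the same structural idea the paper deploys later in \Cref{cor:impliesheridMonge} and \Cref{prop:LdiffXi}, but those are stated \emph{after} the lemma and rely on it, so carrying the argument out here explicitly, as you do, is the right move. The net effect is that your proof delivers both halves of the equality $|\partial\overline\phi(X)|=1$, whereas the paper's proof as written only delivers the upper bound.
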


\begin{proof}
(1) implies (2): Suppose that there are $Z_1 \neq Z_2 \in \partial\overline{\phi}(X)$ for some $X \sim \mu$. By \Cref{prop:Subdiff_MCvsL}, we have $\law(Z_1), \law(Z_2) \in \partial_\mc\phi(\mu)$. Hence, if $\law(Z_1) \neq \law(Z_2)$, we conclude that $\partial_\mc\phi(\mu)$ is not a singleton. Otherwise, observe that $Z:= \frac12 (Z_1+Z_2) \in \partial\overline{\phi}(X)$ and hence $\law(Z) \in \partial_\mc\phi(\mu)$. If $f : \H \to \R$ is any strictly convex function, we have $\E[f(Z)] < \frac12 (\E[f(Z_1)]+\E[f(Z_2)]) = \E[f(Z_1)]$. Hence, $\law(Z) \neq \law(Z_1)$ showing that $\partial_\mc\phi(\mu)$ is no singleton.

(2) implies (3) is trivial.

(3) implies (1): Suppose that there are $ \nu_1 \neq \nu_2 \in \partial_{\mc}\phi(\mu)$. Let $X \sim \mu$ allow for independent randomization. Then there are $Z_1, Z_2 \in \partial\overline{\phi}(X)$ satisfying $Z_1 \sim \nu_1$ and $Z_2 \sim \nu_2$. In particular, $Z_1 \neq Z_2$.
\end{proof}

\begin{proposition}\label{prop:LdiffXi}
Let $\phi:\Pc_2(\H)\to (-\infty, \infty]$ be $\MC$-differentiable in $\mu$. Then there exists   a measurable function $\xi : \H \to \H$ such that $  \partial\overline{\phi}(X) = \{\xi(X)\}$ for all $X\sim \mu$.
\end{proposition}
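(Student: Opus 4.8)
The plan is to combine three facts: (i) $\MC$-differentiability forces every subdifferential $\partial\overline\phi(X)$, $X\sim\mu$, to be a singleton; (ii) a conditional-expectation argument shows the unique element of that singleton is a deterministic Borel function of $X$; and (iii) \Cref{prop:Subdiff_MCvsL} lets one transport this function between different representatives of $\mu$ along the common optimal coupling. Throughout, since $\phi$ is $\MC$-convex it is proper, lsc and $L$-convex by \Cref{prop:ConvexEquiv}, so \Cref{lem:subdiff_cond} and \Cref{prop:Subdiff_MCvsL} are available; and by \Cref{lem:mc_diff_char} the hypothesis ``$\phi$ is $\MC$-differentiable at $\mu$'' is equivalent to the statement that $\partial\overline\phi(X)=\{Z_X\}$ is a singleton for every $X\sim\mu$. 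I write $\nu_0$ for the unique element of $\partial_\mc\phi(\mu)$.

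\emph{The subgradient is a function of its argument.} Fix any $X\sim\mu$. By \Cref{lem:subdiff_cond} we have $\E[Z_X\mid X]\in\partial\overline\phi(X)$, and since this set is the singleton $\{Z_X\}$, it follows that $Z_X=\E[Z_X\mid X]$ $\P$-a.s. Hence $Z_X$ is measurable with respect to the $\P$-completion of $\sigma(X)$, so the $\H$-valued Doob--Dynkin lemma (valid because $\H$ is separable, hence standard Borel) yields a Borel map $\xi_X:\H\to\H$ with $Z_X=\xi_X(X)$ $\P$-a.s. Moreover $(\xi_X)_\#\mu=\law(Z_X)=\nu_0\in\Pc_2(\H)$, so $\xi_X(X)\in L_2(\Omega,\F,\P;\H)$ and $\law(\xi_X(X))=\nu_0$.

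\emph{Independence of the representative.} Pick one $X_0\sim\mu$ and set $\xi:=\xi_{X_0}$, $Z_0:=\xi(X_0)$. By \Cref{prop:Subdiff_MCvsL} applied to $Z_0\in\partial\overline\phi(X_0)$, the graph coupling $(\id,\xi)_\#\mu=\law(X_0,Z_0)$ is optimal between $\mu$ and $\nu_0$, i.e.\ $\law(X_0,Z_0)\in\cplopt(\mu,\nu_0)$. Now let $X\sim\mu$ be arbitrary. Since $\xi$ is defined $\mu$-a.e.\ and $\law(X)=\mu=\law(X_0)$, pushing $\mu$ forward by $x\mapsto(x,\xi(x))$ gives $\law(X,\xi(X))=(\id,\xi)_\#\mu=\law(X_0,Z_0)\in\cplopt(\mu,\nu_0)$; in particular $\xi(X)\sim\nu_0$ and $\E[X\cdot\xi(X)]=\mc(\mu,\nu_0)$. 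Combining this with $\nu_0\in\partial_\mc\phi(\mu)$, the converse implication of \Cref{prop:Subdiff_MCvsL} applied to the pair $(X,\xi(X))$ yields $\xi(X)\in\partial\overline\phi(X)$. As $\partial\overline\phi(X)=\{Z_X\}$ is a singleton, we conclude $\partial\overline\phi(X)=\{\xi(X)\}$. Since $X\sim\mu$ was arbitrary, $\xi$ has the asserted property.

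\emph{Main obstacle.} The only genuinely delicate point is the passage in the second step from ``$Z_X$ is $\sigma(X)$-measurable modulo null sets'' to an honest Borel representing function $\xi_X:\H\to\H$ with values in the (possibly infinite-dimensional) Hilbert space; this is exactly where separability of $\H$ is used, through the $\H$-valued Doob--Dynkin lemma. Everything else is routine bookkeeping: verifying that the cited results apply (via \Cref{prop:ConvexEquiv} and \Cref{lem:mc_diff_char}), noting that $\cplopt(\mu,\nu_0)$ coincides with the set of couplings attaining the maximal covariance $\mc(\mu,\nu_0)$ so that \Cref{prop:Subdiff_MCvsL} is usable in both directions, and observing that the pushforward identities only require $\mu$-a.e.\ definedness of $\xi$.
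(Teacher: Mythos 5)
Your proof is correct, and the first step (subgradient is a Doob--Dynkin function of $X$ via \Cref{lem:subdiff_cond}) is exactly the paper's. Where you genuinely diverge is in the second step. The paper argues indirectly: it takes two potential representing functions $\xi^1,\xi^2$, picks an auxiliary $X\sim\mu$ that admits independent randomization, and invokes \Cref{cor:LawInvSubDiffRandom} to conclude $\xi^i(X)\in\partial\overline\phi(X)$ and hence $\xi^1=\xi^2$ $\mu$-a.s. You instead fix one representative $X_0$, observe via the forward direction of \Cref{prop:Subdiff_MCvsL} that $(\id,\xi)_\#\mu\in\cplopt(\mu,\nu_0)$ and $\nu_0\in\partial_\mc\phi(\mu)$, and then for any $X\sim\mu$ feed the coupling $\law(X,\xi(X))=(\id,\xi)_\#\mu$ into the converse direction of \Cref{prop:Subdiff_MCvsL} to get $\xi(X)\in\partial\overline\phi(X)$ directly. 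This route is arguably cleaner: it dispenses entirely with independent randomization and with \Cref{cor:LawInvSubDiffRandom}, and makes explicit the mechanism (invariance of the deterministic graph coupling under change of representative) that the paper's citation of \Cref{cor:LawInvSubDiffRandom} leaves somewhat implicit --- that corollary as stated only controls the laws of subgradients, not the subgradients themselves, so the paper's step really needs the same optimal-coupling identification you spell out. Both arguments are sound; yours is slightly more self-contained.
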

\begin{proof}
Let $X \sim \mu$ and $Z \in \partial \overline{\phi}(X)$. By \Cref{lem:subdiff_cond} we have $\E[X|Z] \in \partial \overline{\phi}(X)$. As $\partial \overline{\phi}(X)$ is a singleton, we have $Z=\E[Z|X]$, hence there exists a measurable $\xi$ such that $Z=\xi(X)$. Next, we need to argue that the function $\xi$ does not depend on the choice of the random variable $X \sim \mu$. To that end, let $X_1,X_2 \sim \mu$ with $\partial\overline{\phi}(X^i) = \{ \xi^i(X^i)\}$ for $i \in \{1,2\}$. Pick $X \sim \mu$ that allows for independent randomization. We then have $\xi^i(X) \in \partial\overline{\phi}(X)$ for $i \in \{1,2\}$ by \Cref{cor:LawInvSubDiffRandom}. As $\partial\overline{\phi}(X)$ is a singleton, we have $\xi^1=\xi^2$ $\mu$-a.s. 
\end{proof}

\begin{corollary}\label{cor:impliesheridMonge}
Let $\phi: \Pc_2(\H) \to \R$ be $\mc$-convex and $\mu \in \Pc_2(\H)$. If $\phi$ is $\mc$-differentiable in $\mu$ with  derivative $\nu$, then $(\mu,\nu)$ is a Monge pair.    
\end{corollary}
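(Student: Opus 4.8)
The plan is to read the claim off directly from the two structural results just proved. First I would record that, since $\phi$ is $\mc$-convex, it is in particular lsc (every $c$-convex function is), hence by \Cref{prop:ConvexEquiv} it is L-convex; thus both \Cref{prop:Subdiff_MCvsL} and \Cref{prop:LdiffXi} apply. Moreover, as $\phi$ is real-valued and $\nu\in\partial_\mc\phi(\mu)\subseteq\Pc_2(\H)$, the cost $\mc(\mu,\nu)$ is finite by Cauchy--Schwarz, so \Cref{thm:ftot} applied to $c(x,y)=x\cdot y$ (for which $|c|\le\tfrac12|\cdot|^2\oplus\tfrac12|\cdot|^2$) guarantees $\cplopt(\mu,\nu)\neq\emptyset$.

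Next I would fix, once and for all, the measurable map $\xi:\H\to\H$ furnished by \Cref{prop:LdiffXi}, so that $\partial\overline{\phi}(X)=\{\xi(X)\}$ for \emph{every} $X\sim\mu$. Given an arbitrary $\pi\in\cplopt(\mu,\nu)$, use that $(\Omega,\F,\P)$ is standard to realize $\pi=\law(X,Z)$ with $X,Z\in L_2(\Omega,\F,\P;\H)$; then $X\sim\mu$, $Z\sim\nu$, and $\law(X,Z)\in\cplopt(\mu,\nu)$. Since $\nu\in\partial_\mc\phi(\mu)$, the ``conversely'' implication of \Cref{prop:Subdiff_MCvsL} yields $Z\in\partial\overline{\phi}(X)$, and hence $Z=\xi(X)$ a.s.\ by the choice of $\xi$. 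Therefore $\pi=(\id,\xi)_\#\mu$.

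As $\pi$ was an arbitrary optimal coupling, this shows at once that $\cplopt(\mu,\nu)=\{(\id,\xi)_\#\mu\}$ and that this unique optimizer is concentrated on the graph of $\xi$, i.e.\ $(\mu,\nu)$ is a Monge pair. I do not anticipate a genuine obstacle: all the real work is packaged into \Cref{prop:Subdiff_MCvsL} and \Cref{prop:LdiffXi}, and the only delicate point is that the transport map $\xi$ must be the \emph{same} for every optimal coupling — which is exactly what \Cref{prop:LdiffXi} supplies (and the reason the ``independent randomization'' device was needed in its proof).
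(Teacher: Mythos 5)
Your proposal is correct and follows essentially the same route as the paper's proof, which is a one-line reference to \Cref{lem:mc_diff_char}, \Cref{cor:LawInvSubDiffRandom} and \Cref{prop:LdiffXi}. You spell out the argument cleanly: fix the map $\xi$ from \Cref{prop:LdiffXi}, take an arbitrary optimizer $\pi\sim(X,Z)$, and use the ``conversely'' direction of \Cref{prop:Subdiff_MCvsL} to deduce $Z\in\partial\overline\phi(X)=\{\xi(X)\}$. The only cosmetic difference from the paper's citation list is that you invoke \Cref{prop:Subdiff_MCvsL} directly rather than going through \Cref{cor:LawInvSubDiffRandom} and \Cref{lem:mc_diff_char}, which are themselves built on it; the substance is identical, and your closing remark correctly identifies that the real content is the uniformity of $\xi$ over all $X\sim\mu$ supplied by \Cref{prop:LdiffXi}.
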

\begin{proof}
    This is a consequence of \Cref{lem:mc_diff_char}, \Cref{cor:LawInvSubDiffRandom} and \Cref{prop:LdiffXi}.
\end{proof}

\begin{remark}
We compare three different possible notions of differentiability for a  $\mc$-convex functions $\phi: \Pc_2(\H) \to (-\infty,+\infty] $ at $\mu \in \dom(\phi)$: 
\begin{enumerate}
    \item $\overline{\phi}$ is Frechet differentiable at some $X \sim \mu$. This is the notion of L-differentiability as define in \cite[Defintion~5.22]{CaDe18}. It is shown in \cite[Proposition~5.24] {CaDe18} this is equivalent to $\overline{\phi}$ being Frechet differentiable at any $X \sim \mu$.
    \item $\phi$ is $\mc$ differentiable according to \Cref{def:MCdiff}, i.e.\ $\partial_\mc\phi(\mu)$ is a singleton. This is the notion that we use in the present article and it is equivalent to $\partial \overline{\phi}(X)$ being a singleton at any $X \sim \mu$ (see \Cref{lem:mc_diff_char}). Provided that $\phi$ is continuous in $\mu$, this equivalent to $\overline{\phi}$ being Gateaux differentiable at any $X \sim \mu$.\footnote{This is because $\overline{\phi}$ is the continuous in any $X \sim \mu$ by \Cref{lem:contequiv}. Moreover, a convex function is Gateaux differentiable in a point if it is continuous and its subdifferential is a singleton, see e.g.\ \cite[Proposition~17.26]{BaCo17}.} 
    \item $\partial \overline{\phi}(X)$ is a singleton at some $X \sim \mu$.
\end{enumerate}
It is clear that (1) is stronger than (2) which is again stronger than (3). In fact, the converse implications are in general not true (even if $\H$ is finite dimensional). 

\end{remark}

\begin{example}\label{ex:nonFrechet} 
For every $P \in \Pc_2(\Pc_2(\R^d))$ there exists an $\mc$-convex function $\phi : \Pc_2(\R^d) \to \R$ such that for $P$-a.e.\ $\mu$ the Lions $\overline{\phi}$ is not Frechet differentiable at any $X \sim \mu$. This function $\phi$ can be constructed using a modification of the construction in \cite[Example~4.6.10]{BoVa10}: As $P$ is a Borel measure, there exists an increasing sequence $(K'_n)_n$ of compact subsets of $\Pc_2(\R^d)$ such that $P(K'_n) \to 1$. We write $K_n$ for the $\mc$-convex hull of $K_n'$ and note that $K_n$ is still compact. We set $\phi_n(\mu) = \textrm{dist}_{\W_2}(\mu,K_n)$ and $\phi := \sum_n 2^{-n} \phi_n$.

We write $C_n := \{X : \law(X) \in K_n \}$ and note that as $C_n$ is law-invariant by \Cref{lem:EnoughRandomization} 
\[
\overline{\phi_n}(Y) =  \textrm{dist}_{\W_2}(\law(Y),K_n) = \inf_{X \in C_n } \| X- Y \|_2 = \textrm{dist}_{\|\cdot\|_2}(Y,C_n).
\]
This shows in particular that $\phi_n$ and hence $\phi$ is L-convex. Moreover, it is shown in \cite[Exercise~4.2.6]{BoVa10} that if $C_n$ is a closed convex set with empty interior, then $\textrm{dist}_{\|\cdot\|_2}(\cdot,C_n)$ is in every $X \in C_n$ not Frechet differentiable. Hence, $\overline{\phi} = \sum_n 2^{-n}  \overline{\phi_n}$ is not Frechet differentiable in every point of $\bigcup_n C_n$.

Below we will show that for certain  $P \in \Pc_2(\Pc_2(\R^d))$ every  $\mc$-convex $\phi : \Pc_2(\R^d) \to (-\infty, \infty]$ is $P$-a.s.\ $\mc$-differentiable. In particular  that $\phi$ is $\mc$-differentiable in $\mu$ does not imply that $\overline\phi$ is Frechet differentiable in some $X\sim \mu$.
\end{example}

\begin{example}
There exist a $\mc$-convex $\phi:\Pc_2(\Pc_2(\R^d)) \to \R^d$ and $X, X', X\sim X'$ such that $|\partial \overline\phi(X)| = 1, |\partial \overline\phi(X')| > 1 $. Specifically let $d=2$, assume wlog that $(\Omega, \F, \P)$ is the unit square $[0,1]^2$ equipped with two dimensional  Lebesgue measure $\lambda^{(2)}$. We set $\mu:=\lambda\otimes \delta_0$, $\nu:= \lambda\otimes \tfrac{\delta_{-1} + \delta_{-1}}{2}$ (where $\lambda$ denotes one dimensional Lebesgue measure), $\phi(.):= \MC(.,\nu)$, take $X$ to be an isomorphism of $([0,1]^2, \mathfrak B, \lambda^{(2)})$ and $(\R^2, \mathfrak B, \mu )$ and set $X'(u_1, u_2):= (u_1,0)$. 

Then $X\sim \mu\sim X'$ and it is straightforward to see that $|\partial \overline\phi(X)| = 1$, while $|\partial \overline\phi(X')| > 1$.  In particular $\overline\phi$ is Gateaux differentiable in $X$ but not in $X'$. 
\end{example}

\section{Brenier's theorem for measures on $\Pc_2(\H)$}\label{sec:DensenessOfTransportReg}

\subsection{Characterization of $\mc$-optimal couplings}

\begin{theorem} \label{thm:char.optimal_cpls}
    Let $P,Q \in \Pc_2(\Pc_2(\H))$ and $\Pi \in \cpl(P,Q)$. Then, the following are equivalent:
    \begin{enumerate}
        \item $\Pi$ is optimal;
        \item there exists $\phi : \Pc_2(\H) \to (-\infty, + \infty]$ L-convex with $\Pi(\partial_\mc \phi) = 1$;
        \item there exist $\overline{\phi} : L_2(\Omega,\F,\P) \to (-\infty, + \infty]$ lower semi-continuous, law-invariant, convex and random variables $X,Y$ with values in $L_2(\Omega,\F,\P)$ such that
        \[
            (\law(X),\law(Y)) \sim \Pi \quad \text{and} \quad
            (X,Y) \in \partial \overline{\phi} \text{ almost surely.}
        \]
    \end{enumerate}
\end{theorem}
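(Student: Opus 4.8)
The plan is to establish the cycle $(1)\Rightarrow(3)\Rightarrow(2)\Rightarrow(1)$, with the fundamental theorem of optimal transport (\Cref{thm:ftot}), applied to the symmetric cost $c=\mc$ on the Polish space $(\Pc_2(\H),\W_2)$, as the engine. First I would record that $\mc$ meets the hypotheses of \Cref{thm:ftot}: writing $m_2(\mu)=\int|x|^2\,d\mu$, the identity $\mc(\mu,\nu)=\tfrac12\bigl(m_2(\mu)+m_2(\nu)-\W_2^2(\mu,\nu)\bigr)$ makes continuity of $\mc$ and the bound $|\mc(\mu,\nu)|\le\tfrac12\bigl(m_2(\mu)+m_2(\nu)\bigr)$ transparent, and $m_2$ is $P$- and $Q$-integrable since $P,Q$ have finite second moment. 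I would also observe that for lsc $\phi$ the set $\partial_\mc\phi$ is closed (because $\mc$ is continuous), so for such $\phi$ the conditions ``$\Pi(\partial_\mc\phi)=1$'' and ``$\supp(\Pi)\subseteq\partial_\mc\phi$'' coincide.

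For $(1)\Rightarrow(3)$ I would invoke \Cref{thm:ftot} to obtain an $\mc$-convex $\psi:\Pc_2(\H)\to(-\infty,+\infty]$ (hence lsc, and proper since the dual value is finite) with $\supp(\Pi)\subseteq\partial_\mc\psi$; by \Cref{prop:ConvexEquiv} $\psi$ is L-convex, so its Lions lift $\overline\psi$ is convex and law-invariant, and lsc by \Cref{lem:contequiv}. The remaining task is to realize $\Pi$ on the lift. I would take a measurable selection of optimal couplings $(\mu,\nu)\mapsto\gamma_{\mu,\nu}\in\cplopt(\mu,\nu)$ (the multifunction $(\mu,\nu)\mapsto\cplopt(\mu,\nu)$ has nonempty closed values and is measurable, so Kuratowski--Ryll-Nardzewski applies) and compose it with a measurable Skorokhod representation $\Pc_2(\H\times\H)\to L_2(\Omega,\F,\P;\H\times\H)$ to produce measurable maps $(\mu,\nu)\mapsto X_{\mu,\nu}$, $(\mu,\nu)\mapsto Y_{\mu,\nu}$ with $X_{\mu,\nu}\sim\mu$, $Y_{\mu,\nu}\sim\nu$ and $\law(X_{\mu,\nu},Y_{\mu,\nu})=\gamma_{\mu,\nu}$. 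Taking $(\Pc_2(\H)\times\Pc_2(\H),\Pi)$ as the outer probability space and $X:=X_{(\cdot,\cdot)}$, $Y:=Y_{(\cdot,\cdot)}$ gives $(\law(X),\law(Y))\sim\Pi$ by construction, and since $\Pi$-a.e.\ $(\mu,\nu)$ lies in $\partial_\mc\psi$ with $\gamma_{\mu,\nu}$ optimal, \Cref{prop:Subdiff_MCvsL} yields $(X,Y)\in\partial\overline\psi$ a.s. I expect this measurable-selection-and-realization step to be the main technical obstacle; the needed selection statements are of the kind collected in the appendix, and the rest of this implication is soft.

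For $(3)\Rightarrow(2)$ I would note that, being law-invariant, $\overline\phi$ is the Lions lift of $\phi(\mu):=\overline\phi(X)$ for any $X\sim\mu$, which is proper (since $\overline\phi$ has an a.s.\ nonempty subdifferential) and L-convex (since $\overline\phi$ is convex); then \Cref{prop:Subdiff_MCvsL} turns $(X,Y)\in\partial\overline\phi$ a.s.\ into $\law(Y(\omega))\in\partial_\mc\phi(\law(X(\omega)))$ a.s., i.e.\ $\Pi(\partial_\mc\phi)=1$ because $(\law(X),\law(Y))\sim\Pi$. For $(2)\Rightarrow(1)$ I would argue directly: for $\Pi$-a.e.\ $(\mu,\nu)$ the defining inequality of $\partial_\mc\phi$ gives, after taking the supremum over the test measure, the Fenchel equality $\phi(\mu)+\phi^{\mc}(\nu)=\mc(\mu,\nu)$ (in particular $\mu\in\dom(\phi)$ for $P$-a.e.\ $\mu$). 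Integrating against $\Pi$, and replacing $\phi$ by $\phi^{\mc\mc}\le\phi$ if needed to secure the integrability of $\phi$ against $P$ and of $\phi^{\mc}$ against $Q$ via the growth bound on $\mc$ (harmless since $\partial_\mc\phi\subseteq\partial_\mc\phi^{\mc\mc}$ and $(\phi^{\mc\mc})^{\mc}=\phi^{\mc}$), one obtains
\[
\int\mc\,d\Pi=\int\phi\,dP+\int\phi^{\mc}\,dQ\ \ge\ \inf_{\psi}\Bigl(\int\psi\,dP+\int\psi^{\mc}\,dQ\Bigr)\ =\ \sup_{\Pi'\in\cpl(P,Q)}\int\mc\,d\Pi'\ \ge\ \int\mc\,d\Pi,
\]
where the middle equality is the strong duality furnished by \Cref{thm:ftot}; hence equality holds throughout and $\Pi$ is optimal. (Equivalently, $\Pi$ is concentrated on the $\mc$-cyclically monotone set $\partial_\mc\phi$, the classical sufficient condition for optimality.)
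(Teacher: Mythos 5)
Your proof is correct and follows essentially the same route as the paper: the fundamental theorem of optimal transport for $c=\mc$, \Cref{prop:ConvexEquiv} to identify L-convexity with $\mc$-convexity, and \Cref{prop:Subdiff_MCvsL} to pass between $\partial_\mc\phi$ and $\partial\overline\phi$. The paper's own proof is terser---it argues $(1)\iff(2)$ and $(2)\iff(3)$ directly and elides the measurable-selection/Skorokhod realization of $(X,Y)$ that you carry out explicitly, and you are also more careful about the lsc and $\phi^{\mc\mc}$ replacement that the paper's phrase ``$\phi$ being L-convex is equivalent to $\phi^{\mc\mc}=\phi$'' implicitly requires.
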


\begin{proof}
    From the fundamental theorem of optimal transport we know that $\Pi$ is optimal if and only if there exists $\phi : \Pc_2(\H) \to (-\infty,+\infty]$ with
    \[
        \phi^{\mc\mc} = \phi \quad \text{and} \quad \Pi(\partial_\mc \phi) = 1.
    \]
    Since $\phi$ being L-convex is equivalent to $\phi^{\mc\mc} = \phi$, we have the equivalence of (1) and (2).

    We have already seen that $\phi$ being $\mc$-convex means precisely that its lift $\overline{\phi}$ is lower semi-continuous, law-invariant and convex, and the converse is also true.
    Therefore, (2) and (3) are equivalent.
\end{proof}

\begin{proposition} \label{prop:TR=>HMonge}
    Let $P \in \Pc_2(\Pc_2(\H))$. The following are equivalent:
    \begin{enumerate}
        \item $P$ is transport-regular;
        \item For every $\mc$-convex $\phi$, we have $P(\{ \mu : |\partial_\mc \phi(\mu)| > 1 \}) = 0$.
    \end{enumerate}
    In particular, for every $Q \in \Pc_2(\Pc_2(\H))$ the optimal coupling $\Pi \in \cpl(P,Q)$ is induced by a Monge map $T:\Pc_2(\H) \to \Pc_2(\H)$ and, in addition, for $P$ almost all $\mu$, $\cplopt(\mu, T(\mu))$  consists of a single element which is again of Monge type.
\end{proposition}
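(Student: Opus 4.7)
My plan is to derive both implications directly from the fundamental theorem of optimal transport (\Cref{thm:ftot}), and then deduce the ``in particular'' clause via \Cref{cor:impliesheridMonge}.

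For $(2) \Rightarrow (1)$, I take any $Q \in \Pc_2(\Pc_2(\H))$ and any optimal $\Pi \in \cpl(P,Q)$. The FTOT produces an $\mc$-convex $\phi$ with $\Pi(\partial_\mc \phi) = 1$. Hypothesis (2) tells me that $P$-a.s. the set $\partial_\mc \phi(\mu)$ is at most a singleton, so disintegrating $\Pi$ along $P$ forces $\Pi = (\id, T_\phi)_\# P$ where $T_\phi(\mu)$ is the unique element of $\partial_\mc \phi(\mu)$; that is, $\Pi$ is Monge. For uniqueness I use the standard mixing trick: if $\Pi_1,\Pi_2$ are both optimal, then $\tfrac12(\Pi_1+\Pi_2)$ is optimal too, the FTOT produces a single $\mc$-convex $\phi$ with $\partial_\mc\phi$ supporting both $\Pi_i$, and (2) again forces both to coincide with $(\id,T_\phi)_\# P$.

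For $(1) \Rightarrow (2)$, I argue by contradiction. Suppose $\phi$ is $\mc$-convex and $P(A) > 0$ for $A = \{\mu : |\partial_\mc \phi(\mu)| > 1\}$. The set-valued map $\mu \mapsto \partial_\mc \phi(\mu)$ has a closed graph (by lower semicontinuity of $\phi$ and continuity of $\mc$), so a Kuratowski--Ryll-Nardzewski / von Neumann selection produces two measurable selections $T_1, T_2$ with $T_1(\mu), T_2(\mu) \in \partial_\mc \phi(\mu)$ and $T_1 \ne T_2$ on $A$; off of $A$ I may arrange $T_1 = T_2$. To ensure finite transport cost I restrict to a measurable subset $A^\ast \subset A$ of positive $P$-measure on which the second moments of $T_1(\mu)$ and $T_2(\mu)$ are uniformly bounded (partitioning $A$ by these moments), and set $T_2 := T_1$ off $A^\ast$. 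The mixture
\[
    \Pi := \tfrac12\bigl((\id, T_1)_\# P + (\id, T_2)_\# P\bigr) \in \cpl(P, Q), \quad Q := \tfrac12\bigl((T_1)_\# P + (T_2)_\# P\bigr),
\]
has finite second moment and is concentrated on $\partial_\mc \phi$, hence is optimal by the FTOT. However, $\Pi$ is not Monge because it assigns positive mass to two different images of $\mu$ for $\mu \in A^\ast$, contradicting transport regularity of $P$.

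Finally, the ``in particular'' clause: from $(2)$, every optimal $\Pi \in \cpl(P,Q)$ is of the form $(\id, T)_\# P$ with $T(\mu)$ the unique element of $\partial_\mc \phi(\mu)$ for the dual $\mc$-convex $\phi$. For $P$-a.e.\ $\mu$, this means $\phi$ is $\mc$-differentiable at $\mu$ with derivative $T(\mu)$, and \Cref{cor:impliesheridMonge} then asserts that $(\mu, T(\mu))$ is a Monge pair -- i.e.\ $\cplopt(\mu, T(\mu))$ is a singleton of Monge type -- as claimed. The main obstacle I anticipate is the verification of the measurable selection and the integrability reduction in the $(1)\Rightarrow(2)$ direction; measurability of $\partial_\mc \phi$ follows from the lsc structure, but one has to be a bit careful to preserve the defining property $T_i(\mu) \in \partial_\mc\phi(\mu)$ when truncating to $A^\ast$, which is why I choose $T_2 = T_1$ outside $A^\ast$ rather than some ad hoc default.
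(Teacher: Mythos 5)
Your overall strategy mirrors the paper's, which proves a general form of this equivalence as \Cref{lem:apx.char.TR} and then simply specializes it. Both directions go through the fundamental theorem of optimal transport, and the $(1)\Rightarrow(2)$ direction needs a measurable selection argument plus a truncation to control integrability. Your $(2)\Rightarrow(1)$ direction and the ``in particular'' clause are sound.

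There is, however, a genuine gap in your $(1)\Rightarrow(2)$ argument, and it is precisely in the step you flag as delicate, though the danger lies elsewhere than you think. You set $T_2 := T_1$ off $A^\ast$, which does preserve the inclusion $T_i(\mu)\in\partial_\mc\phi(\mu)$ -- but it does nothing to control the moments of $T_1(\mu)$ for $\mu\notin A^\ast$. Thus $Q = \tfrac12\bigl((T_1)_\#P + (T_2)_\#P\bigr)$ may fail to lie in $\Pc_2(\Pc_2(\H))$, in which case $\cplopt(P,Q)$ is not even well posed and no contradiction with transport regularity is obtained. Worse, for a generic $\mc$-convex $\phi$ the set $\partial_\mc\phi(\mu)$ can be empty on a $P$-positive set, so your default selection $T_1$ off $A$ may not exist at all; the FTOT only yields a.e.\ non-emptiness for the \emph{dual optimizer} of a specific transport problem, not for an arbitrary $\mc$-convex function.

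The paper's proof of \Cref{lem:apx.char.TR} closes this gap by replacing $\phi$ with the truncated potential $\phi_K := (\phi^{\mc} + \chi_K)^{\mc}$ (in the paper's notation $(\phi^c - \chi_K)^c$ with $c=-\mc$) for a suitably chosen compact $K$ such that both $h_1$ and $h_2$ map a $P$-positive-measure portion of $A$ into $K$. This $\phi_K$ is again $\mc$-convex, $\partial_\mc\phi_K(\mu)$ is non-empty at \emph{every} $\mu$, and its values lie in $K$; moreover $\partial_\mc\phi$ restricted to $\Pc_2(\H)\times K$ is contained in $\partial_\mc\phi_K$. One then selects $g\in\partial_\mc\phi_K$ everywhere as a default (valued in $K$, hence with bounded moments) and only overrides it by $h_1,h_2$ on $A\cap h_1^{-1}(K)\cap h_2^{-1}(K)$. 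The resulting coupling has second marginal concentrated on $K$, so lies in $\Pc_2(\Pc_2(\H))$, is supported on $\partial_\mc\phi_K$ hence optimal, and is not Monge on a set of positive $P$-measure. You should adopt this truncation of the potential rather than a truncation of the selection.
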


\begin{proof}
    The characterization of transport-regularity of $P$ simply follows from applying \Cref{lem:apx.char.TR} to the current setting. Secifically, we let $(\mathcal X,d) = (\Pc_2(\Pc_2(\H)),\W_2)$ and $c = -\mc$ and note that $\Pc_2(\Pc_2(\H))$ has by \cite[Lemma 2.3]{BaBePa18} the property that families in $\Pc_2(\Pc_2(\H))$ with bounded second moment are tight.

    To see the remaining claim, we recall that by \Cref{cor:impliesheridMonge}
    \[
        \{ \nu \} = \partial_\mc \phi(\mu) \implies
        \exists! \pi \in \cplopt(\mu,\nu) \text{ and }\pi\text{ is Monge}.
    \]
    We conclude that $\Pi$ has the claimed properties.
\end{proof}

\subsection{Regularization on $\R^d$ by Brownian sheet}

The goal of this section, is to construct transport regular measures based on Brownian sheet.
To this end, we consider the law of a $(d,d)$-Brownian sheet, denote by $\mathbb W$, viewed as a probability measure on $L_2([0,1]^d;\R^d)$.
That is, $\mathbb W$ is the centered, non-degenerate Gaussian measure on $L_2([0,1]^d;\R^d)$ induced by the $(d,d)$-Brownian sheet ($d$-parameters and $d$-dimensions).

\begin{lemma} \label{lem:BMDifferentiable}
    Let $\mathbb W$ be the law of a $(d,d)$-Brownian sheet.
    Let $\phi : \Pc_2(\R^d) \to \R $ be $\mc$-convex and let $Y \in L_2([0,1]^d;\R^d)$ be finitely valued.
    Define $P\in \Pc_2(\Pc_2(\R^d))$ by $$P(A) = \WW(\{X\in L_2([0,1]^d;\R^d): \law_\lambda (X + Y) \in A \})$$ for all measurable $A\subseteq \Pc_2(\R^d)$, i.e.\ $P = (X \mapsto \law_\lambda(X+Y))_{\#} (\WW)$.
    Then $P$-a.s.\ $\# \partial_\mc\phi \leq 1$.

    In particular, $P$ is transport-regular.
\end{lemma}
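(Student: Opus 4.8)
The plan is to reduce the statement to a differentiability result for convex functions with respect to Gaussian measures, transported along the path structure of the Brownian sheet. By Lemma~\ref{lem:mc_diff_char}, showing $\#\partial_\mc\phi(\mu) \le 1$ for $P$-a.e.\ $\mu$ is equivalent to showing that for $\WW$-a.e.\ $X$, the lift $\overline\phi$ on $L_2([0,1]^d;\R^d)$ satisfies $\#\partial\overline\phi(X+Y) \le 1$, \emph{provided} that $X+Y$ admits independent randomization. So the argument splits into two independent parts: (i) verify that $\WW$-a.e.\ realization $X+Y$ of the shifted Brownian sheet admits independent randomization, i.e.\ by Lemma~\ref{lem:IndRand=Not1-1} that the conditional law of the ``label'' variable given the value $(X+Y)(u)$ is atomless for a.e.\ value; and (ii) verify that $\overline\phi$, which is a convex lsc function on the separable Hilbert space $L_2([0,1]^d;\R^d)$, is differentiable $\WW$-a.e., hence differentiable at $X+Y$ for $\WW$-a.e.\ $X$ (note $\WW$ is invariant in distribution-of-differentiability-set under the deterministic shift by $Y$, or rather: the push-forward of $\WW$ under $X\mapsto X+Y$ is again a nondegenerate Gaussian measure, so Gaussian-a.e.\ differentiability applies directly to it).

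For part (ii) I would invoke the classical fact that a convex lower semicontinuous function on a separable Hilbert space is Gateaux differentiable outside a set that is Gauss-null (Aronszajn-null), which applies to any nondegenerate Gaussian measure; since the law of $X+Y$ under $\WW$ is such a measure (it is $\WW$ translated by $Y$, still a nondegenerate Gaussian), $\overline\phi$ is Gateaux differentiable at $X+Y$ for $\WW$-a.e.\ $X$. Combined with part (i), Lemma~\ref{lem:mc_diff_char}(3) gives $\#\partial_\mc\phi(\law_\lambda(X+Y)) \le 1$, which is precisely $P$-a.s.\ $\#\partial_\mc\phi \le 1$. The ``in particular'' follows from Proposition~\ref{prop:TR=>HMonge}: $P$ is transport-regular iff $P(\{\mu : |\partial_\mc\phi(\mu)| > 1\}) = 0$ for every $\mc$-convex $\phi$, which is exactly what we have shown.

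For part (i), the key point is that a typical Brownian sheet path $b : [0,1]^d \to \R^d$ has the property that its occupation measure $\law_\lambda b$ has disintegration $\rho^x$ (of the graph measure $(u\mapsto(u,b(u)))_\#\lambda$ over its $\R^d$-marginal) that is atomless for $\law_\lambda b$-a.e.\ $x$. Heuristically this says the level sets $\{u : b(u) = x\}$ carry no atoms for $\lambda$, i.e.\ $b$ does not spend positive time at any single value — which for Brownian-type processes follows because the one-point sets are polar / the occupation density exists and is a genuine density. I would make this precise by showing $\lambda\otimes\lambda(\{(u,v) : b(u) = b(v)\}) = 0$ for $\WW$-a.e.\ $b$ (a Fubini/Tonelli argument using that for fixed $u \ne v$, $b(u) - b(v)$ is a nondegenerate Gaussian in $\R^d$, hence $\P(b(u) = b(v)) = 0$), which via disintegration is equivalent to atomlessness of $\rho^x$ for a.e.\ $x$; the deterministic shift by the finitely-valued $Y$ only partitions $[0,1]^d$ into finitely many pieces and does not destroy this property.

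The main obstacle I anticipate is part (i) — controlling the occupation measure / level-set structure of the Brownian sheet. For $d = 1$ (ordinary Brownian motion) this is classical local-time theory, but the $d$-parameter, $d$-dimensional sheet requires a genuine argument that the sheet hits points with probability zero and spends zero time at each value; the cleanest route is the Fubini argument on $\{b(u) = b(v)\}$ above, which sidesteps any need for existence of local times, but one must be careful that the null set of ``bad'' paths can be chosen uniformly (i.e.\ a single $\WW$-null set works), which Tonelli delivers since $\E[\lambda\otimes\lambda(\{b(u)=b(v)\})] = \int\int \P(b(u)=b(v))\,du\,dv = 0$. Part (ii) is essentially a black-box citation (e.g.\ the Gaussian Rademacher/differentiability theorem for convex functions, as in \cite{BoVa10} or \cite{AmGiSa08}) and should pose no difficulty beyond checking that the shifted measure is still nondegenerate Gaussian.
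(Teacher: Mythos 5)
Your plan splits into two parts: (i) show that for $\WW$-a.e.\ $X$, the shifted path $X+Y$ (as a random variable on $([0,1]^d,\lambda)$) admits independent randomization, and (ii) show $\overline\phi$ is Gateaux differentiable at $X+Y$ for $\WW$-a.e.\ $X$, then invoke \Cref{lem:mc_diff_char}(3). Part (ii) is essentially the right idea (with a caveat noted below), but part (i) contains a genuine error, and the paper avoids the entire independent-randomization question by a different route.

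The error in part (i): you claim that $\lambda\otimes\lambda\bigl(\{(u,v):b(u)=b(v)\}\bigr)=0$, which a Fubini argument does establish, is ``via disintegration equivalent to atomlessness of $\rho^x$''. It is not. The Fubini argument shows that for a.e.\ $u$ the level set $\{v:b(v)=b(u)\}$ has Lebesgue measure zero, i.e.\ that the occupation measure $\law_\lambda(b)$ has no atoms. What you need (by \Cref{lem:IndRand=Not1-1}) is that the \emph{conditional} laws $\rho^x$ of $u$ given $b(u)=x$ have no atoms, which is a strictly stronger statement about the fine structure of the level sets, not about their Lebesgue size. A smooth injective map $b:[0,1]^d\to\R^d$ has atomless occupation measure but $\rho^x=\delta_{b^{-1}(x)}$ is a Dirac, so independent randomization fails --- your Fubini criterion cannot distinguish this case from the desired one. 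Indeed the conclusion you want is a theorem about local-time structure, not a soft measure-theoretic fact.

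The paper sidesteps this by using the \emph{other} branch of \Cref{cor:LawInvSubDiffRandom}: instead of verifying independent randomization (case (1)), it verifies that $\law_\lambda(X+Y)$ is $\WW$-a.s.\ \emph{transport regular} as a probability on $\R^d$ (case (2)). This follows because the $(d,d)$-Brownian sheet a.s.\ admits a local time (\cite{AyWuYi08}), hence $\law_\lambda(X+Y)$ is a.s.\ absolutely continuous w.r.t.\ Lebesgue, and the classical Brenier theorem gives transport regularity. The paper also does not apply the Gaussian differentiability theorem to $\overline\phi$ directly: it first replaces $\phi$ by the bounded-conjugate truncation $\phi_A:=(\phi^\mc+\chi_A)^\mc$ for bounded $A$, which is globally Lipschitz, so that Phelps' Gaussian-null Gateaux differentiability theorem for Lipschitz maps applies cleanly; a countable cover by bounded $B$ then handles the general $\phi$. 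Your part (ii) appeals to a Gaussian differentiability result for merely continuous convex functions; such results exist (via Zajíček/DC-hypersurfaces), but if you want to keep that route you should (a) justify that the lift $\overline\phi$ of an everywhere-finite $\mc$-convex $\phi$ is in fact continuous (this follows from a Baire category argument for lsc convex functions finite on all of $L_2$), and (b) cite a version of the theorem that applies to continuous rather than Lipschitz convex functions. The paper's truncation is a cleaner way to get there. In any case, the independent-randomization step as you propose it does not go through, and the local-time / transport-regularity detour (or a genuine argument about atomlessness of local-time measures on level sets) is needed.
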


\begin{proof}
    We need to show that $P$-almost surely the subdifferential of $\phi$ is a singleton.

    To this end, we first fix a bounded set $A \subseteq \Pc_2(\R^d)$ and define the $\mc$-convex potential
    \[
        \phi_A(\mu) := \sup_{\nu \in A} \mc(\mu,\nu) - \phi^\mc(\nu) = (\phi^\mc + \chi_A)^{\mc}(\mu).
    \]
    We have that $\phi_A \le \phi$, and $(\Pc_2(\R^d) \times A) \cap \partial_\mc \phi \subseteq \partial_\mc \phi_A$.
    Furthermore, since $A$ is bounded, $\phi_A$ is Lipschitz continuous because
    \[
        \phi_A(\mu) - \phi_A(\mu') \le \sup_{\nu \in A} \mc(\mu,\nu) - \mc(\mu',\nu) \le \W_2(\mu,\mu') \sup_{\nu \in A} \, \Big( \int |y|^2 \, d\nu \Big)^\frac12,
    \]
    for all $\mu,\mu' \in \Pc_2(\R^d)$.
    Clearly, this also provides Lipschitz continuity of $\overline{\phi}_A$.
    Hence, using that under $\WW$, $X + Y$ is  distributed according to a non-degenerate Gaussian on $L_2([0,1]^d;\R^d)$, we find that $X \mapsto \overline{\phi}_A(X + Y)$ is $\WW$-almost surely Gateaux differentiable, see \cite{Ph78}.
    Furthermore, by \cite[Theorem 3.1 and preceding discussion]{AyWuYi08} the Brownian sheet admits almost surely a local time.
    Consequently, as $Y$ takes finitely many values, $\law_\lambda(X + Y)$ is almost surely absolutely continuous w.r.t.\ the $d$-dimensional Lebesgue measure.
    In particular, $P$ is concentrated on transport regular measures.
    Hence, we can apply \Cref{cor:LawInvSubDiffRandom} and obtain that for $\WW$-a.e.\ $X$ 
    \[
        \{ \law(Z) : Z \in \partial \overline{\phi}_A(X + Y) \} = \partial_\MC \phi_A(\law_\lambda(X + Y)).
    \]
    
    As Gateaux differentiability of $\overline{\phi}_A(X)$ yields that $\partial \phi_A(\law(X))$ is a singleton, we get that $\partial_\mc \phi_A(\mu)$ is a singleton for $P$-almost every $\mu$.
    
    Next, we consider the set
    \[
        N := \{ \mu \in \Pc_2(\R^d) : \partial_\mc \phi(\mu) \text{ contains more than 1 element} \}.
    \]
    Since $\Pc_2(\R^d)$ can be covered by a countable sequence of bounded sets, it suffices to show that
    \[
        N_B := \{ \mu \in \Pc_2(\R^d) : \partial_\mc \phi(\mu) \cap B  \text{ contains more than 1 element} \},
    \]
    where $B \subseteq \Pc_2(\R^d)$ is bounded, is a $P$-null set.
    As observed in the first part of the proof, we have the inclusion
    \[
        (\Pc_2(\R^d) \times B) \cap \partial_\mc \phi \subseteq \partial_\mc \phi_B.
    \]
    Since $\partial_\mc \phi_B(\mu)$ is a singleton for $P$-almost every $\mu$, we conclude that $P(N_B) = 0$.

    Finally, we conclude that $P$ is transport-regular by \Cref{prop:TR=>HMonge}.
\end{proof}

\begin{theorem} \label{thm:2nd.TR.dense}
    The set of transport-regular measures in $\Pc_2(\Pc_2(\R^d))$ is dense.
\end{theorem}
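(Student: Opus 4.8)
The strategy is to approximate an arbitrary $P_0\in\Pc_2(\Pc_2(\R^d))$ by a finite mixture of transport-regular measures of the type produced in \Cref{lem:BMDifferentiable}, using that transport-regularity is inherited by mixtures. For the latter, recall from \Cref{prop:TR=>HMonge} that $P\in\Pc_2(\Pc_2(\R^d))$ is transport-regular if and only if $P(\{\mu:|\partial_\mc\phi(\mu)|>1\})=0$ for every $\mc$-convex $\phi:\Pc_2(\R^d)\to(-\infty,\infty]$. Since $\{\mu:|\partial_\mc\phi(\mu)|>1\}$ is a fixed Borel subset of $\Pc_2(\R^d)$, it follows that whenever $P=\sum_{i=1}^np_iP_i$ with $p_i>0$, $\sum_ip_i=1$, $P\in\Pc_2(\Pc_2(\R^d))$ and each $P_i$ transport-regular, then $P(\{|\partial_\mc\phi|>1\})=\sum_ip_iP_i(\{|\partial_\mc\phi|>1\})=0$ for every $\mc$-convex $\phi$, so $P$ is transport-regular as well.

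Now fix $P_0$ and $\delta>0$. Since finitely supported measures are $\W_2$-dense in $\Pc_2(\Pc_2(\R^d))$, and finitely supported measures on $\R^d$ are $\W_2$-dense in $\Pc_2(\R^d)$, there is $\tilde P_0=\sum_{i=1}^np_i\delta_{\mu_i}$ with $\W_2(P_0,\tilde P_0)<\delta/2$ and each $\mu_i=\sum_jq_{ij}\delta_{x_{ij}}$ finitely supported on $\R^d$. Partitioning $[0,1]^d=\bigsqcup_jA_{ij}$ with $\lambda(A_{ij})=q_{ij}$ and setting $g_i:=\sum_jx_{ij}\ind{A_{ij}}\in L_2([0,1]^d;\R^d)$, which is a finitely valued function, we have $\law_\lambda(g_i)=\mu_i$.

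Next I would regularize each atom. Let $\WW_\epsilon$ denote the law of $\epsilon$ times a $(d,d)$-Brownian sheet on $L_2([0,1]^d;\R^d)$; this is again a non-degenerate centered Gaussian measure that admits an almost sure local time, so the proof of \Cref{lem:BMDifferentiable} applies verbatim with $\WW$ replaced by $\WW_\epsilon$. Hence $P_i^\epsilon:=(X\mapsto\law_\lambda(X+g_i))_\#\WW_\epsilon$ is a transport-regular element of $\Pc_2(\Pc_2(\R^d))$, and coupling $\law_\lambda(\epsilon X+g_i)$ with $\mu_i=\law_\lambda(g_i)$ path by path gives
\[
\W_2^2(P_i^\epsilon,\delta_{\mu_i})\le\E[\|\epsilon X\|_2^2]=\epsilon^2c_d,\qquad c_d:=\E\|X\|_2^2<\infty,
\]
where $X$ is a $(d,d)$-Brownian sheet and $c_d$ does not depend on $i$. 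Setting $P:=\sum_{i=1}^np_iP_i^\epsilon$, the first paragraph shows that $P$ is transport-regular, while joint convexity of $\W_2^2$ yields $\W_2^2(P,\tilde P_0)\le\sum_ip_i\W_2^2(P_i^\epsilon,\delta_{\mu_i})\le\epsilon^2c_d$. Taking $\epsilon$ with $\epsilon\sqrt{c_d}<\delta/2$ gives $\W_2(P,P_0)<\delta$, which establishes the density.

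The only genuinely load-bearing point is the mixture stability used in the first paragraph: a single Brownian-sheet pushforward as in \Cref{lem:BMDifferentiable} concentrates near a Dirac measure once the added noise is small, so it cannot by itself approximate a spread-out $P_0$; it is the passage to finite mixtures --- legitimate precisely because transport-regularity is a null-set condition over all $\mc$-convex potentials --- that does the work. The remaining ingredients, namely realizing discrete measures on $\R^d$ as occupation measures of finitely valued functions and the harmless rescaling of the Brownian sheet so that \Cref{lem:BMDifferentiable} still applies, are routine.
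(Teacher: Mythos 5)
Your proof is correct and follows essentially the same route as the paper's: approximate by measures concentrated on finitely supported atoms, realize those atoms as occupation measures of simple functions, add a small Brownian sheet, and combine \Cref{lem:BMDifferentiable} with stability of transport-regularity under mixtures. The only cosmetic difference is that you reduce to a \emph{finite} mixture by taking $\tilde P_0$ finitely supported, whereas the paper conditions on the (not necessarily finitely valued) random variable $X$ and thus works with a general mixture; both variants rest on the same two ingredients and on the observation --- needed implicitly in the paper's proof as well --- that \Cref{lem:BMDifferentiable} applies with $\WW$ replaced by a scaled copy, which is fine since the rescaled sheet is again a non-degenerate Gaussian with a local time.
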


\begin{proof}
    Fix a measure $Q \in \Pc_2(\Pc_2(\R^d))$.
    Furthermore, by an approximation argument we can assume that $Q$ is supported on finitely supported measures in $\Pc_2(\R^d)$.
    Consider a probability space $(\tilde \Omega,\tilde \F,\tilde \P)$ supporting independent random variables $X,\Gamma$ taking values in $L_2([0,1]^d;\R^d)$ where $X$ is finitely valued and $\law_\lambda(X) \sim Q$ as well as $\Gamma \sim \WW$.
    
    For $\epsilon > 0$, consider the random variable $X_\epsilon := X + \epsilon \Gamma$ and denote the induced law on $\Pc_2(\Pc_2(\R^d))$ by $Q_\epsilon := \law_{\tilde \P}(\law_\lambda(X_\epsilon))$.
    Clearly, we have for $\epsilon \searrow 0$
    \[
        X_\epsilon \to X \quad \text{in }L_2([0,1]^d,\R^d),
    \]
    pointwise on $\tilde \Omega$, which yields that
    \[
        \law_\lambda(X_\epsilon) \to \law_\lambda(X) \quad \text{in }\Pc_2(\R^d),
    \]
    pointwise on $\tilde \Omega$.
    Since by independence of $X$ and $\Gamma$, $\E_{\tilde \P}[\|X_\epsilon\|^2] = \E_{\tilde \P}[\|X\|^2] + \epsilon^2$, we conclude that $Q_\epsilon \to Q$ in $\Pc_2(\Pc_2(\R^d))$.
    
    It remains to show that $Q_\epsilon$ is transport-regular.
    To this end, note that $Q_\epsilon$ is the mixture of the transport-regular measures $\law_{\tilde \P}( \law_\lambda(X_\epsilon) | X)$.
    Clearly, being transport-regular is closed under taking mixtures, which concludes the proof.
\end{proof}

\subsection{Regularization on $\H$ by Brownian motion/standard Wiener process}\label{sec:Wiener_process_reg}

Throughout this section, let $B=(B_t)_{t\in [0,1]}$ be a one-dimensional Brownian motion and let $W=(W_t)_{t\in [0,1]}$ be a $Q$-Wiener process, where $Q : H \to H$ is a symmetric, positive semi-definite linear operator with finite trace. 
Recall that a $Q$-Wiener process $(W_t)_{t\in[0,1]}$ on $H$ is an $H$-valued stochastic process such that each $W_t$ is a centered Gaussian random variable with covariance operator $tQ$, the trajectories are continuous, $W_0=0$, and for $t>s$, the increment $W_t - W_s$ is independent of the past and Gaussian with covariance $(t-s)Q$.

The main result of this section establishes the transport regularity of the law of the occupation measure associated with a $Q$-Wiener process, where the covariance operator $Q$ is non-degenerate. 
In particular, it applies to the law of the occupation measure of Brownian motion when $\H = \mathbb{R}^d$.
\begin{theorem} \label{thm:Q-Wiener.TR}
    Let $Q \in L(H)$ be a positive, symmetric linear operator with finite trace.
    Let $W$ be a $Q$-Wiener process. 
    Then $\law(W_\# \lambda)$ is a transport-regular measure on $\mathcal P_2(H)$.
\end{theorem}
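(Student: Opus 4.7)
The plan is to closely mirror the proof of \Cref{lem:BMDifferentiable}, replacing the finite-dimensional regularization by a Brownian sheet with the infinite-dimensional $Q$-Wiener process. By \Cref{prop:TR=>HMonge}, transport regularity of $P := \law(W_\#\lambda)$ is equivalent to the statement that for every $\mc$-convex $\phi : \Pc_2(H) \to (-\infty,\infty]$, the $\mc$-subdifferential $\partial_\mc\phi(\mu)$ is a singleton for $P$-a.e.\ $\mu$. Fix such a $\phi$. As in \Cref{lem:BMDifferentiable}, for each bounded $A \subseteq \Pc_2(H)$ define the Lipschitz $\mc$-convex potential $\phi_A := (\phi^\mc + \chi_A)^\mc$; then $(\Pc_2(H)\times A)\cap \partial_\mc\phi \subseteq \partial_\mc\phi_A$. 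A countable cover of $\Pc_2(H)$ by bounded sets reduces the claim to the singleton property for each $\partial_\mc\phi_A$.

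I would next assume without loss of generality that $Q$ is injective (otherwise replace $H$ by the closed span of $Q(H)$ and work with the induced Wiener process). Viewed as an $L_2([0,1];H)$-valued random element, $W$ is a centered Gaussian with covariance $Cf(t) = \int_0^1 (s\wedge t)\,Qf(s)\,ds$, which is injective, so $\law(W)$ is a non-degenerate Gaussian on $L_2([0,1];H)$. Since $\overline{\phi_A}$ is Lipschitz and convex, the Phelps-type theorem \cite{Ph78} gives that $\overline{\phi_A}$ is Gateaux differentiable $\law(W)$-a.s.; in particular $\partial \overline{\phi_A}(W)$ is a.s.\ a singleton, which by \Cref{lem:subdiff_cond} may be written as $\{\xi(W)\}$ for a measurable $\xi: H \to H$.

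The crux --- and the main obstacle --- is to transfer this $L_2$-level differentiability to the $\mc$-subdifferential at $\mu = W_\#\lambda$. In \Cref{lem:BMDifferentiable} this step relied on absolute continuity of the perturbed occupation measure on $\R^d$ (via local times), which made $\mu$ transport regular and allowed \Cref{cor:LawInvSubDiffRandom} to apply directly. In infinite dimensions $W_\#\lambda$ is concentrated on a 2-Hausdorff-dimensional curve and can never be absolutely continuous, so a new argument is needed. I would pass to the enlarged base space $([0,1]^2,\lambda^2)$ and consider $\tilde X(u_1,u_2) := W(u_1)$, which has $\tilde X \sim \mu$ and admits independent randomization via $u_2$. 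By \Cref{lem:mc_diff_char}, proving $|\partial_\mc\phi_A(\mu)|=1$ reduces to proving $|\partial \overline{\phi_A}(\tilde X)|=1$. Any $\tilde Y \in \partial \overline{\phi_A}(\tilde X)$ then satisfies $\E[\tilde Y\mid u_1] = \xi(W(u_1))$, by \Cref{lem:subdiff_cond} combined with a.s.\ injectivity of the Wiener paths and testing against $u_2$-independent perturbations. The hard part, which embodies the conceptual content of the theorem, is to rule out genuine $u_2$-dependence of $\tilde Y$; I expect this can be achieved by exploiting the law invariance of $\overline{\phi_A}$ together with its monotonicity in convex order (\Cref{prop:ConvexEquiv}), through a decomposition $V = V_\parallel + V_\perp$ of test directions on $L_2([0,1]^2;H)$ into $u_2$-averaged and $u_2$-mean-zero parts, and a careful analysis of directional derivatives of $\overline{\phi_A}$ at $\tilde X$.

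Once $|\partial_\mc\phi_A(\mu)| \le 1$ is established $P$-a.s.\ for every bounded $A$, countable unioning yields $|\partial_\mc\phi(\mu)|\le 1$ $P$-a.s., and \Cref{prop:TR=>HMonge} concludes that $P$ is transport regular.
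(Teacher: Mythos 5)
Your proposal has a genuine gap exactly where you flag it, and I do not believe the route you sketch can close it. The structural obstacle is that Wiener paths are almost surely injective, so $W$, viewed as an element of $L_2([0,1];H)$, does \emph{not} admit independent randomization. Consequently Gateaux differentiability of $\overline{\phi}_A$ at $W$ (Phelps) is strictly weaker than $\mc$-differentiability of $\phi_A$ at $\mu = W_\#\lambda$: by \Cref{lem:mc_diff_char} the latter requires a singleton subdifferential at some $X\sim\mu$ that \emph{does} admit independent randomization. After passing to $\tilde X(u_1,u_2)=W(u_1)$, the only handle on the orthogonal component of a putative $\tilde Y\in\partial\overline{\phi}_A(\tilde X)$ is the subgradient inequality along directions $V$ with $\E[V\mid u_1]=0$. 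Monotonicity in convex order (\Cref{prop:ConvexEquiv}) shows that $t\mapsto\overline{\phi}_A(\tilde X+tV)$ is convex with a \emph{global minimum} at $t=0$, and $\E[V\cdot\tilde Y]$ is an element of its subdifferential there; but the subdifferential of a convex function at a minimum may be a non-degenerate interval, so $\E[V\cdot\tilde Y]=0$ does not follow from this analysis of directional derivatives. In fact, taking $\phi_A=\MC(\cdot,\nu)$, singleton-ness of $\partial\overline{\phi}_A(\tilde X)$ is equivalent (via \Cref{prop:Subdiff_MCvsL}) to $\cplopt(\mu,\nu)$ being a singleton of Monge type, i.e.\ to transport regularity of the occupation measure $\mu$ on $H$ --- precisely the fact whose unavailability you started from. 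Your proposed route is therefore circular.

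The paper supplies the missing ingredient through an entirely pathwise argument that bypasses the Lions lift. The observation is that $b_\#\lambda$ misses the graph of every Lipschitz $f:\{v\}^\perp\to{\rm span}(v)$ provided that, for every positive-measure $A\subseteq[0,1]$, the ratios $|\langle b(s)-b(t),v\rangle|/\|b^\perp(s)-b^\perp(t)\|$ with $s,t\in A$ are unbounded. For a $Q$-Wiener process this is proved in \Cref{lem:non.zero} and \Cref{thm:nolip} via Brownian scaling, the Lebesgue density theorem, and a strong-law argument, and a stability lemma for Lipschitz graphs (\Cref{lem.Lipschitz.graph}) reduces the quantifier over directions $v$ to a countable dense set. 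Since, by Zajíček, the non-differentiability set of any continuous convex function on a separable Hilbert space lies in a countable union of (c$-$c)-hypersurfaces, which are graphs of Lipschitz maps, it follows that $W_\#\lambda$ is almost surely transport regular on $H$. This is the infinite-dimensional replacement for the local-time argument. With that in hand, the rest of your proposal --- the reduction to Lipschitz $\phi_A$ for bounded $A$, Phelps on the non-degenerate Gaussian $\law(W)$, \Cref{cor:LawInvSubDiffRandom} with option (2), and \Cref{prop:TR=>HMonge} --- is indeed the right way to conclude.
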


Following the classical approach, it suffices to show that continuous and convex functions on $H$ are almost surely Gateaux differentiable.
By Zajíček \cite{Za}, the set of non-differentiability points of a continuous convex function on a separable Hilbert space is contained in a countable union of (c$-$c)-hypersurfaces.
Here, a (c$-$c)-hypersurface $S$ is a subset of $H$ such that there are $v \in H$ and convex Lipschitz $f,g : \{ v \}^\perp \to {\rm span}(v)$ with
\[
    S = \{ x + f(x) - g(x) : x \in H \text{ with } \langle x, v \rangle = 0 \}
\]
Consequently, it suffices to prove that, almost surely, the occupation measure of a $Q$-Wiener process does not charge any DC hypersurface in $H$.

In fact, we will even show that, almost surely, Lipschitz hypersurfaces are not charged.
To prove this, the crucial observation is that the occupation measure $b_\# \lambda$ associated with a path $b : [0,1] \to H$ charges no Lipschitz hypersurface if for all unit vectors $v \in H$ we have
\[
    \forall A \subseteq [0,1], \; \lambda(A) > 0 : \sup_{s,t \in A} \frac{|\langle b(s) - b(t), v \rangle|}{\|b^\perp(s) - b^\perp(t)\|} = \infty,
\]
where $b = \langle b, v \rangle v + b^\perp$.

\begin{lemma} \label{lem:non.zero}
    Let $B$ be a 1-dimension Brownian motion and let $W$ be an independent $Q$-Wiener process.
    Assume that
    \begin{equation}
    \label{eq:non.zero}
        \mathbb P\Big( \exists A \subseteq [0,1], \, \lambda(A) > 0 : \sup_{s,t \in A} \frac{|B_s - B_t|}{\|W_s - W_t\|} < \infty \Big) > 0.
    \end{equation}
    Then there exists $L > 0$ such that for all $\epsilon \in (0,1)$
    \[
        \mathbb P\Big( \exists A \subseteq [0,1], \, \lambda(A) \ge 1 - \epsilon : \sup_{s,t \in A} \frac{|B_s - B_t|}{\|W_s - W_t\|} \le L  \Big) > 0.
    \]
\end{lemma}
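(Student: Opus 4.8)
The statement is a routine ``zero-one law style'' upgrade: from the fact that the bad event
$$
E := \Big\{ \exists A \subseteq [0,1],\, \lambda(A) > 0 : \sup_{s,t \in A} \tfrac{|B_s - B_t|}{\|W_s - W_t\|} < \infty \Big\}
$$
has positive probability, we want to extract a \emph{deterministic} bound $L$ and a set $A$ of measure arbitrarily close to $1$ on which the ratio is bounded by $L$, again with positive probability. The plan is first to decompose $E$ into the countable union over $L,\delta \in \N$ of the events
$$
E_{L,\delta} := \Big\{ \exists A \subseteq [0,1],\, \lambda(A) \ge \delta : \sup_{s,t \in A} \tfrac{|B_s - B_t|}{\|W_s - W_t\|} \le L \Big\},
$$
so that by countable subadditivity some $E_{L_0,\delta_0}$ has positive probability; this already fixes the constant $L := L_0$. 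The remaining task is to boost the measure of $A$ from $\delta_0$ up to $1-\epsilon$ for arbitrary $\epsilon$.

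For the boosting step I would use the self-similarity (Brownian scaling) and independent-increments structure of $(B,W)$. The key observation is that if on some interval $I = [a,b] \subseteq [0,1]$ there is a subset $A_I \subseteq I$ with $\lambda(A_I) \ge \delta_0 \lambda(I)$ on which $\sup_{s,t\in A_I}|B_s-B_t|/\|W_s-W_t\| \le L_0$, then this is a scale-invariant property: rescaling $I$ to $[0,1]$ by $s \mapsto (s-a)/(b-a)$ turns $(B,W)$ restricted to $I$ (after centering) into a rescaled copy of the same process, and the ratio $|B_s-B_t|/\|W_s-W_t\|$ is invariant under the simultaneous scaling $B \mapsto cB$, $W \mapsto cW$. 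Hence $\P(\exists A_I \subseteq I, \lambda(A_I) \ge \delta_0\lambda(I), \sup \le L_0) = \P(E_{L_0,\delta_0}) =: p_0 > 0$ for \emph{every} dyadic subinterval $I$. Now partition $[0,1]$ into $n$ equal subintervals $I_1,\dots,I_n$; since the increments of $(B,W)$ over disjoint intervals are independent, the events ``$I_j$ contains a good subset of relative measure $\ge \delta_0$'' are independent across $j$, each with probability $p_0$. With probability tending to $1$ as $n\to\infty$, at least a fraction $p_0/2$ of the $I_j$'s are good; on that event, taking $A$ to be the union of the good sub-subsets $A_{I_j}$ gives $\lambda(A) \ge (\delta_0 p_0/2)$ while the ratio is still $\le L_0$ on each $A_{I_j}$ — but \emph{not} necessarily across different $I_j$'s, which is the subtlety.

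The main obstacle is exactly this last point: the bound $\sup_{s,t}$ is over \emph{all} pairs $s,t \in A$, including pairs lying in different subintervals, where $|B_s - B_t|$ can be of order $1$ while $\|W_s - W_t\|$ need not be comparably large — in fact $W$ could return near its value, making the denominator tiny. To handle this I would not partition into a fixed grid but instead argue more carefully: I expect one must use that the good set $A$ coming from $E_{L_0,\delta_0}$ can be localized. Concretely, cover $[0,1]$ by small dyadic intervals and, within each, run the positive-probability event at a \emph{smaller} threshold $\delta_0$ but then patch using a Borel--Cantelli / ergodic-type argument together with the fact that, with positive probability, $W$ does not return too close to earlier values on the relevant scale (a consequence of the non-degeneracy of $Q$ and the transience/``no double points at macroscopic scale'' behaviour, or simply a direct Gaussian estimate $\P(\|W_s-W_t\| \le r) \lesssim (r/\sqrt{|t-s|})^{\dim}$ for the relevant finite-dimensional projection). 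Alternatively — and this is probably the intended route — one restricts attention from the outset to $A$ contained in a single small interval and iterates: the cleanest version is to show $\P(E_{L,\delta}) > 0$ for some $L,\delta$ \emph{implies} $\P(E_{L',1-\epsilon}) > 0$ for a possibly larger $L'$, by first using scaling to get, on each of $n$ dyadic subintervals simultaneously (positive probability, by independence and a union bound on the complement), a good subset of relative measure $\ge \delta$, then controlling cross-interval pairs by noting $\|W_s - W_t\| \ge \|\Pi(W_s - W_t)\|$ for a fixed finite-rank projection $\Pi$ onto an eigenspace of $Q$ and using that, on a further positive-probability event, this projected Brownian motion has increments bounded below by $c\sqrt{|t-s|}$ across distinct blocks while $|B_s-B_t| \le C$ deterministically on such an event — absorbing the resulting ratio into $L' := \max(L, C/c\sqrt{1/n})$ after fixing $n$. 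I would carry out the block decomposition and the independence/union-bound estimate explicitly, and flag the cross-interval estimate as the one requiring the non-degeneracy hypothesis on $Q$ and a quantitative small-ball bound.
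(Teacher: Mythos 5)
Your opening reduction is correct and matches the paper: by countable subadditivity you extract a single $L_0$ and a $\delta_0>0$ with $\P(E_{L_0,\delta_0})>0$, and this is where $L$ comes from. But the boosting step is where your plan goes wrong, in two compounding ways.

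First, the block-partition-and-independence scheme does not actually reach density $1-\epsilon$. If a fraction $\approx p_0$ of the $n$ blocks are ``good'' and within each good block the good subset has relative measure $\delta_0$, the union has measure $\approx \delta_0 p_0$, which is a fixed number $<1$ with no mechanism to push it up to $1-\epsilon$. Second, and more fatally, your cross-block fix replaces $L_0$ by an $L'$ depending on $n$, and $n$ would have to grow as $\epsilon\downarrow 0$; the lemma however asserts a \emph{single} $L$ that works for every $\epsilon\in(0,1)$. So even if the cross-interval Gaussian small-ball estimate could be made rigorous (itself nontrivial, since $W_s-W_t$ across blocks is not independent of the block-local data you conditioned on), you would be proving a strictly weaker statement than the one claimed. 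You do flag the cross-block issue as the sore spot, but you do not resolve it, and the independence route is fundamentally mismatched to the conclusion.

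The idea you are missing is the Lebesgue density theorem combined with Brownian scaling, which avoids cross-block comparisons entirely. Any $A\subseteq[0,1]$ with $\lambda(A)>0$ has a density point, so for any $\epsilon$ there is a rational interval $[q,q+1/n]$ in which $A$ has relative density $\ge 1-\epsilon$. Hence the event $\mathcal E(\delta_0,L_0,0,1)$ (a good set of relative measure $\ge\delta_0$ exists, ratio $\le L_0$) is contained in the countable union over $(q,n)$ of the events $\mathcal E(1-\epsilon,L_0,q,q+1/n)$, so one of these has positive probability. Translating to $q=0$ and using the scaling $\sqrt{n}\,(B_{t/n},W_{t/n})\sim(B,W)$ — which leaves the ratio $|B_s-B_t|/\|W_s-W_t\|$ invariant, preserving the \emph{same} $L_0$ — gives $\P(\mathcal E(1-\epsilon,L_0,0,1))>0$. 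Because the ratio is scale-invariant, $L_0$ never changes; because you zoom into a \emph{single} tiny interval where $A$ is already dense, there is no cross-block pairing to control at all. That is the whole proof, and it is why $L$ comes out independent of $\epsilon$.
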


\begin{proof}
    Suppose that \eqref{eq:non.zero} holds. 
    Then there exist $\delta \in (0,1)$ and $L > 0$ such that the event $\mathcal E(\delta,L,0,1)$, defined for $0 \le a \le b \le 1$ by
    \[
        \mathcal E(\delta,L,a,b) := \Big\{ \exists A \subseteq [a,b], \, \lambda(A) \ge (b-a) \delta : \sup_{s,t \in A} \frac{|B_s - B_t|}{\|W_s - W_t\|} \le L \Big\},
    \]
    has positive probability.
    
    Fix $\epsilon \in (0,1)$. 
    Let $A \subseteq [0,1]$ be a measurable set with $\lambda(A) > 0$. 
    By the Lebesgue density theorem, there exists $t \in [0,1]$ and $r > 0$ such that for all $\epsilon' \in (0,r)$,
    \[
        \frac{\lambda(A \cap (t-\epsilon',t+\epsilon'))}{2\epsilon'} \ge 1 - \epsilon/2.
    \]
    Let $(q_k)_{k \in \mathbb N}$ be a sequence of rational numbers converging to $t$. 
    Then for sufficiently large $n$, we have
    \[
        \lim_{k \to \infty}
        \frac{\lambda(A \cap (q_k - 1/n, \, q_k + 1/n))}{2/n} 
        = \frac{\lambda(A \cap (t - 1/n, \, t + 1/n))}{2/n} 
        \ge 1 - \epsilon/2.
    \]
    Hence, there exists $(q,n) \in \mathbb Q \times \mathbb N$ such that
    \[
        \frac{\lambda(A \cap [q, q+1/n])}{1/n} \ge 1 - \epsilon.
    \]
    Since $A \subseteq [0,1]$ with $\lambda(A) > 0$ was arbitrary, we deduce
    \[
        \mathcal E(\delta,L,0,1) \subseteq \bigcup_{(q,n) \in \mathbb Q \times \mathbb N} \mathcal E(1 - \epsilon ,L,q,q+1/n).
    \]
    Therefore, there exists some $(q,n) \in \mathbb Q \times \mathbb N$ such that $\mathbb P(\mathcal E(1 - \epsilon,L,q,q+1/n)) > 0$.
    
    Since $(B,W)$ is a Wiener process, we may assume w.l.o.g.\ that $q = 0$. 
    Then
    \begin{align*}
        \mathcal E(1-\epsilon,L,0,1/n) 
        &= \Big\{ \exists A \subseteq [0,1/n], \, \lambda(A) \ge \frac{1 - \epsilon}{n} : 
        \sup_{s,t\in A} \frac{|B_{s/n} - B_{t/n}|}{\|W_{s/n} - W_{t/n}\|} \le L \Big\}.
    \end{align*}
    Note that by the scaling property
    \[
        \sqrt{n}\,\big((B_{t/n},W_{t/n}))_{t \in [0,1]} \sim (B,W).
    \]
    Hence,
    \[
        \mathbb P(\mathcal E(1 - \epsilon, L, 0, 1/n)) = \mathbb P(\mathcal E(1 - \epsilon, L, 0, 1)),
    \]
    which proves the claim.
\end{proof}

\begin{theorem} \label{thm:nolip}
    Let $B$ be a 1-dimension Brownian motion and let $W$ be an independent $Q$-Wiener process.
    Then, we have
    \begin{equation}
        \label{eq:thm.nolip}
        \mathbb P\Big( \exists A \subseteq [0,1], \, \lambda(A) > 0 : \sup_{s,t \in A} \frac{|B_s - B_t|}{\|W_s - W_t\|} < \infty \Big) = 0.
    \end{equation}
\end{theorem}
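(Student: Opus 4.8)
The plan is to argue by contradiction. Suppose the probability in \eqref{eq:thm.nolip} is positive. Then \Cref{lem:non.zero} supplies an $L>0$ such that, writing $\mathcal E(\delta,L,a,b)$ as in its proof and $p(\epsilon):=\mathbb P(\mathcal E(1-\epsilon,L,0,1))$, we have $p(\epsilon)>0$ for every $\epsilon\in(0,1)$. I will derive a contradiction by producing a single $\epsilon_0\in(0,1)$ with $p(\epsilon_0)=0$.

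The first and decisive step is to exploit self-similarity together with the independence of $B$ and $W$ in order to show that the random ``bad set''
\[
D:=\{(s,t)\in[0,1]^2:\ |B_s-B_t|>L\|W_s-W_t\|\}
\]
has positive expected planar Lebesgue measure. Indeed, for fixed $s\neq t$ the pair $(|B_s-B_t|,\|W_s-W_t\|)$ has the law of $|s-t|^{1/2}(|Z|,\|G\|)$ with $Z\sim N(0,1)$ and $G\sim N(0,Q)$ independent, so $|B_s-B_t|/\|W_s-W_t\|\sim|Z|/\|G\|$; since $\operatorname{tr}Q<\infty$ we have $\|G\|<\infty$ a.s., and since $|Z|$ is unbounded, $\alpha_L:=\mathbb P(|Z|>L\|G\|)\in(0,1)$ --- crucially a constant not depending on $(s,t)$. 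As $D$ is jointly measurable (path continuity), Tonelli gives $\mathbb E[(\lambda\otimes\lambda)(D)]=\alpha_L$, hence, with $c:=\alpha_L/2$, $\mathbb P((\lambda\otimes\lambda)(D)\ge c)\ge c$. The relevance of this is a one-line product-set estimate: if $\sup_{s,t\in A}|B_s-B_t|/\|W_s-W_t\|\le L$ then $(A\times A)\cap D=\emptyset$, so $\lambda(A)^2\le 1-(\lambda\otimes\lambda)(D)$. Consequently, on the event $\{(\lambda\otimes\lambda)(D)\ge c\}$ no such $A$ has $\lambda(A)>\sqrt{1-c}$, and so, fixing any $\eta\in(0,1)$ with $1-\eta>\sqrt{1-c}$, we obtain $p(\eta)\le 1-c<1$.

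It then remains to bootstrap ``$p(\eta)<1$ for some $\eta$'' into ``$p(\epsilon_0)=0$ for some $\epsilon_0$''. Here I would subdivide $[0,1]$ into $n$ intervals $I_j:=[(j-1)/n,j/n]$: if $A$ witnesses $\mathcal E(1-\epsilon,L,0,1)$ then $\sum_j\lambda(A^c\cap I_j)=\lambda(A^c)\le\epsilon$, so by Markov at most $n\epsilon/\eta$ indices $j$ satisfy $\lambda(A^c\cap I_j)>\eta/n$; for each of the remaining $\ge n(1-\epsilon/\eta)$ indices the set $A\cap I_j$ witnesses $\mathcal E(1-\eta,L,(j-1)/n,j/n)$. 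These $n$ block-events are independent (independence of increments) and, by the scaling property used in the proof of \Cref{lem:non.zero}, each has probability $p(\eta)$. Hence $p(\epsilon)\le\mathbb P(\operatorname{Bin}(n,p(\eta))\ge n(1-\epsilon/\eta))$ for all $n\ge1$. Choosing $\epsilon_0$ with $\epsilon_0/\eta<c$ makes $1-\epsilon_0/\eta>1-c\ge p(\eta)$, so letting $n\to\infty$ a Chernoff (or even Chebyshev) bound forces $p(\epsilon_0)=0$ --- contradicting $p(\epsilon_0)>0$.

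The main obstacle is pinpointing the right quantitative input. \Cref{lem:non.zero} already delivers a clean event with a \emph{fixed} $L$, but one still needs a mechanism that destroys it once $\epsilon$ is small. The key point is that, by scaling, the probability that a prescribed pair $(s,t)$ is ``bad'' is a positive constant independent of $(s,t)$, so the bad set has positive area; the elementary product-set inequality then rules out ratio-bounded sets of almost full measure, and the subdivision/independence step amplifies this to $p(\epsilon_0)=0$. The remaining ingredients --- joint measurability of $D$, the Markov and concentration bounds, the rescaling of the block-events --- are routine.
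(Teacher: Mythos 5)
Your proof is correct and takes a genuinely different route from the paper. Both arguments share the overall skeleton — fix $L$ from \Cref{lem:non.zero}, decompose $[0,1]$ into equal blocks, invoke Brownian scaling together with independence of increments, and close with a law-of-large-numbers / concentration step — but the core probabilistic estimate is different. The paper hand-picks a path-property event
$E = \{\sup_{s,t}\|W_s-W_t\|\le 1,\ \inf_{s\in[0,1/3],\,t\in[2/3,1]}|B_s-B_t|\ge 2L\}$,
rescales it to dyadic blocks, invokes the law of large numbers to guarantee that a positive fraction of blocks are ``good,'' and then a covering argument shows that any $A$ with $\lambda(A)\ge 1-\epsilon$ (for $4\epsilon<\mathbb P(E)$) must hit the two end-thirds of some good block, forcing the ratio above $L$ there --- a one-step contradiction with \Cref{lem:non.zero}. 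You replace that bespoke Wiener-path event by a cleaner Fubini argument: by the common $1/2$-self-similarity of $B$ and $W$, the probability that a fixed pair $(s,t)$ violates the ratio bound is a constant $\alpha_L=\mathbb P(|Z|>L\|G\|)>0$, so the ``bad set'' $D$ has positive expected planar Lebesgue measure, and the elementary product-set inequality $\lambda(A)^2\le 1-(\lambda\otimes\lambda)(D)$ kills large witnessing sets $A$ on a positive-probability event. That alone only yields $p(\eta)<1$ for some fixed $\eta$, so you then run the block/scaling/Chernoff argument once more to amplify $p(\eta)<1$ into $p(\epsilon_0)=0$. Net effect: you trade the paper's delicately chosen oscillation/increment event for a more robust Tonelli bound, at the cost of a two-step bootstrap instead of a single contradiction. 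Both are valid; your seed estimate is arguably easier to verify and would adapt more readily to other Gaussian processes with matching scaling exponents, while the paper's is more direct once the event $E$ is fixed.
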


\begin{proof}
    Suppose, for contradiction, that the probability in \eqref{eq:thm.nolip} is positive. 
    Then, by \Cref{lem:non.zero}, there exists $L > 0$ such that for all $\epsilon \in (0,1)$,
    \[
        \mathbb P \Big( \exists A \subseteq [0,1], \, \lambda(A) \ge 1 - \epsilon : \sup_{s,t \in A} \tfrac{|B_s - B_t|}{\|W_s - W_t\|} \le L \Big) > 0.
    \]
    Consider the event
    \[
        E := \Big\{ \sup_{s,t \in [0,1]} \|W_s - W_t\| \le 1, \ \inf_{s \in [0,1/3], \, t \in [2/3,1]} |B_s - B_t| \ge 2L \Big\},
    \]
    and set $p := \mathbb P(E) > 0$. 
    Fix $\epsilon > 0$ with $4 \epsilon < p$.
    Let
    \[
        \mathcal A := \{ A \subseteq [0,1] : \lambda(A) \ge 1 - \epsilon \}.
    \]
    For $n \in \mathbb N$ and $k = 0,\dots,2^n$, define $h_n := 2^{-n}$, $t_{n,k} := k h_n$, and
    \begin{multline*}
        E(n,k) := \Big\{ \sup_{s,t \in [t_{n,k},t_{n,k+1}]} \|W_s - W_t\| \le \sqrt{h_n}, \\
        \inf_{s \in [t_{n,k}, t_{n,k} + h_n/3], \, t \in [t_{n,k+1} - h_n/3, \, t_{n,k+1}]} |B_s - B_t| \ge L \sqrt{h_n} \Big\}.
    \end{multline*}
    By scaling invariance of Brownian motion and the Wiener process,
    \[
        \mathbb P(E(n,k)) = \mathbb P(E(n,0)) = \mathbb P(E) = p > 0.
    \]
    For each $n \in \mathbb N$, define
    \[
        S_n := \Big\{ \omega \in \Omega : 2^{-n} \# \{ k : \omega \in E(n,k) \} \ge 4 \epsilon \Big\},
    \qquad 
        S := \bigcup_{n \in \mathbb N} S_n.
    \]
    Thus, for $\omega \in S$, there exist $n \in \mathbb N$ and pairwise distinct indices $k_1,\dots,k_J$ with 
    \[
        J \ge \lceil \epsilon 2^n \rceil
    \]
    such that $\omega \in \bigcap_{j=1}^J E(n,k_j)$.

    Now fix $A \in \mathcal A$. 
    Since $\lambda(A) \ge 1 - \epsilon$, a simple covering argument shows that there exists at least one $j$ with
    \[
        [t_{n,k_j}, \, t_{n,k_j} + h_n/3] \cap A \neq \emptyset 
        \quad \text{and} \quad
        [t_{n,k_j+1} - h_n/3, \, t_{n,k_j+1}] \cap A \neq \emptyset.
    \]
    Hence, for every $\omega \in S$ and every $A \in \mathcal A$,
    \[
        \sup_{s,t \in A} \frac{|B_s(\omega) - B_t(\omega)|}{\|W_s(\omega) - W_t(\omega)\|} \ge 2L.
    \]
    Finally, since $4\epsilon < p$, the law of large numbers yields
    \[
        \lim_{n \to \infty} \mathbb P(S_n) = 1.
    \]
    Consequently,
    \[
        \Big\{ \exists A \in \mathcal A : \sup_{s,t \in A} \tfrac{|B_s - B_t|}{\|W_s - W_t\|} \le L \Big\} \subseteq S^C,
    \]
    and therefore
    \[
        \mathbb P\Big( \exists A \in \mathcal A : \sup_{s,t \in A} \tfrac{|B_s - B_t|}{\|W_s - W_t\|} \le L \Big) = 0,
    \]
    contradicting our assumption. 
    This proves the theorem.
\end{proof}

When $v$ is some vector in $H$ and $f : \{ v \}^\perp \to {\rm span}(v)$, we write
\[
        \Gamma(f) := \{ x + f(x) : x \in \{ v \}^\perp \}.
\]

\begin{lemma} \label{lem.Lipschitz.graph}
    Let $v, w \in H$ with $\|v\| = \|w\| = 1$, and let $f : \{ v \}^\perp \to {\rm span}(v)$ be $L$-Lipschitz.
    If 
    \[
        \langle v,w \rangle > \frac12 \Big( 1 - \frac{L}{\sqrt{1 + L_2}}\Big)^2,
    \] 
    then there exists a Lipschitz function $g : \{ w \}^\perp \to \mathbb R$ such that $\Gamma(g) = \Gamma(f)$.
\end{lemma}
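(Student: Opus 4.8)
The plan is to realize $\Gamma(f)$ as a level set of a Lipschitz functional on $H$ that is strictly monotone in the direction $w$, and then to read off $g$ from the elementary one–dimensional intermediate value theorem along the fibres $y+\R w$, $y\in\{w\}^{\perp}$. First I would write $f(x)=\phi(x)\,v$ for an $L$-Lipschitz $\phi\colon\{v\}^{\perp}\to\R$, let $P_v^{\perp}z:=z-\langle z,v\rangle v$ denote the orthogonal projection onto $\{v\}^{\perp}$, and set
\[
\psi\colon H\to\R,\qquad \psi(z):=\langle z,v\rangle-\phi\bigl(P_v^{\perp}z\bigr).
\]
Since $z=\langle z,v\rangle v+P_v^{\perp}z$ always holds, one has $z\in\Gamma(f)$ exactly when $\langle z,v\rangle=\phi(P_v^{\perp}z)$, i.e.\ when $\psi(z)=0$; thus $\Gamma(f)=\psi^{-1}(0)$. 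Moreover $\psi$ is $(1+L)$-Lipschitz, as $z\mapsto\langle z,v\rangle$ and $z\mapsto P_v^{\perp}z$ are both $1$-Lipschitz.

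The key estimate is the behaviour of $\psi$ along a unit vector $u$: because $P_v^{\perp}(z+tu)-P_v^{\perp}z=t\,P_v^{\perp}u$ and $\|P_v^{\perp}u\|=\sqrt{1-\langle u,v\rangle^{2}}$, for $t>0$
\[
t\Bigl(\langle u,v\rangle-L\sqrt{1-\langle u,v\rangle^{2}}\Bigr)\ \le\ \psi(z+tu)-\psi(z)\ \le\ t\Bigl(\langle u,v\rangle+L\sqrt{1-\langle u,v\rangle^{2}}\Bigr).
\]
Specialising to $u=w$, the sole role of the hypothesis is to make the coefficient $c_w:=\langle w,v\rangle-L\sqrt{1-\langle w,v\rangle^{2}}$ strictly positive; this is precisely the inequality $\langle v,w\rangle>L/\sqrt{1+L^{2}}$ (equivalently $\langle v,w\rangle^{2}(1+L^{2})>L^{2}$), which the quantitative bound on $\langle v,w\rangle$ is there to secure. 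With $c_w>0$, on each fibre the map $t\mapsto\psi(y+tw)$ is continuous, strictly increasing, and sweeps all of $\R$ (it dominates $t\mapsto\psi(y)+tc_w$ for $t\ge0$ and is dominated by it for $t\le0$). Hence there is a unique $t(y)\in\R$ with $\psi(y+t(y)w)=0$, and one sets $g(y):=t(y)\,w\in\mathrm{span}(w)$. By construction $\Gamma(g)=\{y+t(y)w:y\in\{w\}^{\perp}\}$ is exactly the point-set $\psi^{-1}(0)$, so $\Gamma(g)=\Gamma(f)$.

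To see that $g$ is Lipschitz it suffices to bound $|t(y_1)-t(y_2)|$. Writing $0=\psi(y_1+t(y_1)w)-\psi(y_2+t(y_2)w)$ and splitting it as
\[
\bigl[\psi(y_1+t(y_1)w)-\psi(y_1+t(y_2)w)\bigr]+\bigl[\psi(y_1+t(y_2)w)-\psi(y_2+t(y_2)w)\bigr]=0,
\]
the first bracket is the increment of $\psi$ under the displacement $(t(y_1)-t(y_2))w$, hence has modulus $\ge|t(y_1)-t(y_2)|\,c_w$ by the monotonicity estimate, while the second is the increment under the displacement $y_1-y_2\in\{w\}^{\perp}$, hence has modulus $\le(1+L)\|y_1-y_2\|$. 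Therefore $|t(y_1)-t(y_2)|\le\frac{1+L}{c_w}\|y_1-y_2\|$, so $g$ is $\tfrac{1+L}{c_w}$-Lipschitz. (The case $v=w$, which the hypothesis permits, is trivial with $g=f$; and $\langle v,w\rangle\le0$ cannot occur, since the right-hand side of the assumed inequality is nonnegative.)

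The one genuinely delicate point is the calibration at the single step where the hypothesis enters: one must check that the stated lower bound on $\langle v,w\rangle$ really forces $c_w>0$, equivalently $\langle v,w\rangle>L/\sqrt{1+L^{2}}$. Once that is in hand everything else is soft --- a one-dimensional intermediate value theorem together with a two-line Lipschitz estimate --- and the argument is entirely dimension-free, hence valid for an arbitrary separable Hilbert space $H$. As an alternative to the intermediate-value step, one may instead solve the implicit equation $\psi(y+tw)=0$ by the Banach fixed-point theorem, the relevant contraction constant being $L\sqrt{1-\langle v,w\rangle^{2}}/\langle v,w\rangle<1$ under the same condition, which also delivers the Lipschitz dependence of $t(y)$ on $y$ directly.
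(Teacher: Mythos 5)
Your proof is correct and takes a genuinely cleaner route than the paper's. The paper proves the claim by analyzing the orthogonal projection $\pr_{\{w\}^\perp}$ restricted to $\Gamma(f)$: it bounds the cosine of the angle between chords $y+f(y)-x-f(x)$ and $w$ by $\gamma := \|v-w\| + L/\sqrt{1+L^2}$, uses $\gamma<1$ to deduce a reverse-Lipschitz estimate for the projection (yielding injectivity and a Lipschitz constant $1/(1-\gamma)$), and then proves surjectivity by a scalar IVT in the $w$-direction. You instead exhibit $\Gamma(f)$ as the zero set of the single $(1+L)$-Lipschitz defining functional $\psi(z) = \langle z,v\rangle - \phi(P_v^\perp z)$, establish strict monotonicity of $\psi$ along every $w$-fibre with slope bounded below by $c_w := \langle v,w\rangle - L\sqrt{1-\langle v,w\rangle^2}$, and read off both existence/uniqueness (IVT) and the Lipschitz constant $(1+L)/c_w$ directly from that one quantity. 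This is the implicit-function-theorem formulation and is slightly more economical; it also buys you a \emph{strictly weaker hypothesis}: you need only $c_w>0$, i.e.\ $\langle v,w\rangle > L/\sqrt{1+L^2}$, whereas the paper's chord estimate requires $\|v-w\| < 1 - L/\sqrt{1+L^2}$, equivalently $\langle v,w\rangle > 1 - \tfrac12\big(1-L/\sqrt{1+L^2}\big)^2$, a more restrictive condition (the gap between the two thresholds is $(1-L^2/(1+L^2))/2 > 0$).

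One caveat: you flag the calibration step --- that the stated lower bound on $\langle v,w\rangle$ forces $c_w>0$ --- as the single delicate point, but you do not carry it out, and it does not in fact hold for the hypothesis as literally printed. Reading $L_2$ as $L^2$, the displayed assumption $\langle v,w\rangle > \tfrac12\big(1 - L/\sqrt{1+L^2}\big)^2$ is far too weak (at $L=1$ it demands only $\langle v,w\rangle > \tfrac12(1-1/\sqrt2)^2 \approx 0.04$, while $c_w>0$ needs $\langle v,w\rangle > 1/\sqrt2 \approx 0.71$). The quantity the paper's own proof actually manipulates is $\langle v,w\rangle > 1 - \tfrac12\big(1-L/\sqrt{1+L^2}\big)^2$, so the printed hypothesis contains a typo (a missing ``$1-$''), and under the corrected hypothesis your calibration does go through (indeed with room to spare, as noted above). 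You should either carry out the one-line verification under the corrected hypothesis or, better, simply state your result under the cleaner sufficient condition $\langle v,w\rangle > L/\sqrt{1+L^2}$, which is all your argument uses.
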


\begin{proof}
    We show that the set
    \begin{equation}
        \label{eq:lem.Lipschitz.graph}
        \big\{ \big({\rm pr}_{\{w\}^\perp}(x + f(x)), \, x + f(x) \big) : x \in \{v \}^\perp \big\}
    \end{equation}
    defines the graph of a Lipschitz map.
    
    First, observe that since $\|w\| = 1$, for all $x,y \in \{ v\}^\perp$, $x \neq y$,
    \begin{align*}
        \frac{|\langle y + f(y) - x - f(x), \, w \rangle|}{\|y + f(y) - x - f(x)\| \, \|w\|} 
        &\le \frac{\|y - x\| \cdot \|v - w \| + \|f(y) - f(x)\|}{\sqrt{\|y-x\|^2 + \|f(y) - f(x)\|^2}} \\
        &\le \|v - w\| + \frac{L}{\sqrt{1 + L_2}} =: \gamma.
    \end{align*}
    Under our assumption we have
    \[
        \|v-w\| = \sqrt{2 - 2 \langle v,w \rangle} \le \sqrt{1 - 2 \frac{L}{\sqrt{1 + L_2}} + \frac{L_2}{1 + L_2}} = 1 - \frac{L}{\sqrt{1 + L_2}}.
    \]
    It follows that $\gamma \in [0,1)$.
    Rearranging terms and recalling that $\|w\|=1$, we obtain
    \[
        |\langle y + f(y) - x - f(x), \, w \rangle| \le \gamma \, \|y + f(y) - x - f(x)\|.
    \]
    Consequently,
    \begin{align*}
       & \| {\rm pr}_{\{ w \}^\perp }(y + f(y)) - {\rm pr}_{\{ w \}^\perp }(x + f(x)) \| \\
        = & \| (y + f(y)) - (x + f(x)) - \langle (y + f(y)) - (x + f(x)), w \rangle w \| \\
        \ge & \| y + f(y) - x - f(x) \| - |\langle y + f(y) - x - f(x), w \rangle| \\
        \ge & (1 - \gamma) \, \| y + f(y) - x - f(x) \|.
    \end{align*}
   Next, we verify that the projection of the set in \eqref{eq:lem.Lipschitz.graph} onto the first coordinate is exactly $\{ w \}^\perp$.
    Fix $a \in \{ w \}^\perp$ and define
    \[
        \phi(t) := 
        \langle v, f\big( {\rm pr}_{\{ v \}^\perp }(a + tw)\big) - a - tw \rangle, \qquad t \in \mathbb R.
    \]
    Then
    \[
        \phi(t) - \phi(0) 
        \le |t| L \, \| {\rm pr}_{\{v\}^\perp }(w) \| - t \langle v,w \rangle 
        = t \big( {\rm sign}(t) \, L \sqrt{1 - \langle v,w \rangle^2} - \langle v,w \rangle \big).
    \]
    Because $\langle v,w \rangle^2 > \tfrac{L_2}{ 1 + L_2 }$, we get 
    \[
        \frac{\langle v,w \rangle^2}{1 - \langle v,w \rangle^2} > L_2.
    \]
    This ensures that $\phi : \mathbb R \to \mathbb R$ is surjective as $\phi$ is also continuous.
    Therefore, there exists $t_0 \in \mathbb R$ with $\phi(t_0) = 0$.
    Setting $x_0 := {\rm pr}_{ \{ v \}^\perp }(a + t_0 w)$, we obtain
    \[
        a + t_0 w = x_0 + \langle a + t_0 w, v\rangle v = x_0 + f(x_0),
    \]
    which implies $a = {\rm pr}_{ \{ w \}^\perp }(x_0 + f(x_0))$, as desired.
    
    We conclude that \eqref{eq:lem.Lipschitz.graph} is indeed the graph of a $\tfrac{1}{1 - \gamma}$-Lipschitz function.
\end{proof}

\begin{proof}[Proof of \Cref{thm:Q-Wiener.TR}]
    Fix $v \in H$, $v \neq 0$. 
    We decompose $W$ into a one-dimensional process $B^v$ along ${\rm span}(v)$ and an orthogonal component $B^{v,\perp}$ in $\{ v \}^\perp$: 
    \[
        B_t^v := \langle W_t, v \rangle v, 
        \qquad 
        B_t^{v,\perp} := W_t - B_t^v,
        \quad t \in [0,1].
    \]
    For $w, w' \in \{ v \}^\perp$, we compute
    \begin{gather*}
        {\rm Cov}(B_t^v, B_s^v) 
        = \mathbb E[ \langle W_t, v \rangle \langle W_s, v \rangle ] 
        = \min(t,s)\,\langle Qv, v \rangle,
        \\
        \mathbb E[ \langle B_t^{v,\perp}, w \rangle \, \langle B_s^{v,\perp}, w' \rangle ] 
        = \min(t,s)\,\langle Qw, w' \rangle.
    \end{gather*}
    Since $Q$ is non-degenerate and non-negative, we have $\langle Qv, v \rangle > 0$, and $B^{v,\perp}$ is a centered $Q|_{\{v\}^\perp}$-Wiener process. 
    Thus, by \Cref{thm:nolip},
    \begin{equation} \label{eq:Q.Wiener.sets.not.charged}
        \mathbb P \Big( \exists A \subseteq [0,1], \, \lambda(A) > 0 : 
        \sup_{s,t \in A} \frac{\|B_s^v - B_t^v\|}{\|B_s^{v,\perp} - B_t^{v,\perp}\|} < \infty \Big) = 0.
    \end{equation}
 Let ${\rm Lip}(v)$ denote the set of Lipschitz functions $f : \{ v \}^\perp \to {\rm span}(v)$.
    As a direct consequence of \eqref{eq:Q.Wiener.sets.not.charged}, 
    \[
        \mathbb P \Big( \forall f \in {\rm Lip}(v): \lambda\big( \{ t \in [0,1] : W_t \in \Gamma(f) \} \big) = 0 \Big) = 1.
    \]
    Denote by $\Omega^v$ the corresponding full-probability event.

    Since $H$ is separable, there exists a countable dense family of unit vectors $(v_n)_{n \in \mathbb N}$ in the unit ball of $H$. 
    Define
    \[
        \tilde \Omega := \bigcap_{n \in \mathbb N} \Omega^{v_n}.
    \]
    By \Cref{lem.Lipschitz.graph}, for every $v \in H$ and $f \in {\rm Lip}(v)$ there exist $n \in \mathbb N$ and $g \in {\rm Lip}(v_n)$ such that $\Gamma(f) = \Gamma(g)$. 
    Consequently, for every $\omega \in \tilde \Omega$,
    \[
        \forall v \in H, \, \forall f \in {\rm Lip}(v): 
        \quad \lambda\big( \{ t \in [0,1] : W_t(\omega) \in \Gamma(f) \} \big) = 0.
    \]
    Hence, on $\tilde \Omega$, the occupation measure $(t \mapsto W_t(\omega))_\# \lambda$ does not charge Lipschitz hypersurfaces of $H$. 
    By Zajíček \cite[Theorem 2]{Za}, this implies
    \[
        \mathbb P\Big( \forall \phi : H \to \mathbb R \ \text{convex}, \ 
        \lambda\big( \{ t \in [0,1] : |\partial \phi(W_t)| > 1 \} \big) = 0 \Big) = 1.
    \]
    Therefore, $(t \mapsto W_t)_\# \lambda$ is almost surely a transport regular measure on $H$, which completes the proof.
\end{proof}

\section{On transport-regularity and Baire category} \label{sec:PolishnessOfRegularMeasures}

In this section, we let $(\mathcal X,d)$ be a Polish metric space
and let $c : \mathcal X \times \mathcal X \to \R$ be a continuous cost function satisfying the growth condition $|c(x,y)| \le C ( 1 + d(x,x_0)^p + d(y,y_0)^p)$ for some $C>0$, $p \ge 1$, and $x_0,y_0 \in \mathcal X$.
We write $\cplopt^c(\mu,\nu)$ for the set of couplings that are optimal for the transport problem between $\mu$ and $\nu$ with cost $c$, that is,
\[
    V_c(\mu,\nu) := \inf_{\pi \in \cpl(\mu,\nu)} \int c(x,y) \, \pi(dx,dy).
\]
A measure $\mu$ is the called $c$-transport-regular if for every $\nu \in \Pc_p(\mathcal X)$ there is a unique optimal transport plan from $\mu$ to $\nu$ and this plan is of Monge type.
Recall that the $c$-transform of a Borel measurable function $\phi : \mathcal X \to \mathbb R \cup \{ - \infty \}$, $\phi \neq -\infty$, is given by
\[
    \phi^c(y) = \inf_{x \in \mathcal X} c(x,y) - \phi(x),
\]
which is thus an upper semicontinuous function, and $\phi$ is called $c$-concave if
\[
    \phi(x) = \inf_{y \in \mathcal X} c(x,y) - \phi^c(y).
\]
In this case, the $c$-superdifferential of $\phi$ is given by
\[
    \partial^c \phi = \{ (x,y) \in \mathcal X \times \mathcal X : \phi(x) + \phi^c(y) = c(x,y) \},
\]
where we also write $\partial^c\phi(x) = \{ y \in \mathcal X : (x,y) \in \partial^c \phi \}$ for $x \in \mathcal X$.

\begin{lemma} \label{lem:apx.char.TR}
    Let $\mu \in \Pc_p(\mathcal X)$. Then $\mu$ is $c$-transport-regular if and only if for all $c$-concave $\phi : \mathcal X \to (-\infty,+\infty]$
            \begin{equation}
        \label{eq:subdiff.big}
        \mu(A) = 0 \quad\text{where }A:=\{x \in \mathcal X : \partial_c \phi(x) \text{ contains more than one point} \}.
    \end{equation}
\end{lemma}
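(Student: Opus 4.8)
The plan is to deduce the lemma from the fundamental theorem of optimal transport in the minimization/$c$-concave form of this section (the counterpart of \Cref{thm:ftot}) together with classical measurable selection. The one fact I will use repeatedly is that for a $c$-concave $\phi$ the set $\partial^c\phi=\{(x,y):\phi(x)+\phi^c(y)=c(x,y)\}$ is closed — since $\phi$ and $\phi^c$ are both upper semicontinuous, $c$ is continuous, and $\phi(x)+\phi^c(y)\le c(x,y)$ always holds — hence Borel, and that, by weak duality, any $\pi\in\cpl(\mu,\nu)$ with $\phi\in L^1(\mu)$, $\phi^c\in L^1(\nu)$ and $\pi(\partial^c\phi)=1$ is automatically $V_c$-optimal.

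For the implication \textup{``}\eqref{eq:subdiff.big} $\Rightarrow$ $\mu$ is $c$-transport-regular\textup{''} I would fix $\nu\in\Pc_p(\mathcal X)$, take an optimal $c$-concave potential $\phi$ from the fundamental theorem, so that a coupling is optimal if and only if it is concentrated on $\partial^c\phi$. Any optimal $\pi$ has first marginal $\mu$ and is concentrated on $\partial^c\phi$, so disintegration forces $\partial^c\phi(x)\neq\emptyset$ for $\mu$-a.e.\ $x$; by \eqref{eq:subdiff.big} the section $\partial^c\phi(x)$ is then a singleton $\{T(x)\}$ for $\mu$-a.e.\ $x$. Since $\partial^c\phi$ is Borel with $\mu$-a.e.\ singleton sections, the Lusin--Novikov uniformization theorem supplies a Borel map $T$ (on a $\mu$-conull set) with $\partial^c\phi(x)=\{T(x)\}$, and then every optimal $\pi$ is concentrated on $\graph(T)$, hence equals $(\id,T)_\#\mu$. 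Thus the optimal plan is unique and Monge, i.e.\ $\mu$ is $c$-transport-regular.

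For the converse I would argue by contraposition: assume $\phi$ is $c$-concave with $\mu(A)>0$, $A=\{x:|\partial^c\phi(x)|>1\}$. By inner regularity choose a bounded Borel $K\subseteq A$ with $\mu(K)>0$; the Borel set $\{(x,y,y')\in K\times\mathcal X^2: y\neq y',\ (x,y),(x,y')\in\partial^c\phi\}$ projects onto all of $K$, so the Jankov--von Neumann selection theorem yields universally measurable $T_1,T_2:K\to\mathcal X$ with $T_1(x)\neq T_2(x)$ and $T_i(x)\in\partial^c\phi(x)$; shrinking $K$ (keeping $\mu(K)>0$) I may assume $T_1,T_2$ bounded. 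To avoid empty subdifferentials off $K$ and to control moments, replace $\phi$ by the truncated potential $\phi_B(x):=\inf_{y\in B}\big(c(x,y)-\phi^c(y)\big)$ for a closed ball $B$ large enough to contain the ranges of $T_1,T_2$: this $\phi_B$ is $c$-concave (a $c$-transform), satisfies $\phi_B\ge\phi$, agrees with $\phi$ on $K$ with $T_1(x),T_2(x)\in\partial^c\phi_B(x)$ there, and has $\partial^c\phi_B(x)\neq\emptyset$ for every $x$ because the defining infimum — of a lower semicontinuous function over the compact ball $B$ — is attained. Picking a bounded measurable selection $T_0$ of $\partial^c\phi_B$ over $\mathcal X\setminus K$ and setting
\[
\pi:=\tfrac12(\id,T_1)_\#(\mu|_K)+\tfrac12(\id,T_2)_\#(\mu|_K)+(\id,T_0)_\#(\mu|_{\mathcal X\setminus K}),\qquad \nu:=(\pr_2)_\#\pi,
\]
one gets (after the routine check that $\nu\in\Pc_p(\mathcal X)$, $\phi_B\in L^1(\mu)$ and $\phi_B^c\in L^1(\nu)$, which holds because $\mu\in\Pc_p$ and all selections take values in the bounded set $B$) a coupling $\pi\in\cpl(\mu,\nu)$ with $\pi(\partial^c\phi_B)=1$, hence $V_c$-optimal. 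Its disintegration satisfies $\pi_x=\tfrac12\delta_{T_1(x)}+\tfrac12\delta_{T_2(x)}$ for $x\in K$, which is not a Dirac mass, so $\pi$ is not of Monge type; therefore $\mu$ admits a non-Monge optimal plan and is not $c$-transport-regular.

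The ``$\Leftarrow$'' direction is routine; the work sits entirely in the converse. The main obstacles there are (i) extracting two genuinely distinct \emph{measurable} branches of $\partial^c\phi$ on a set of positive $\mu$-mass — handled by Jankov--von Neumann — and (ii) extending those branches to a bona fide optimal plan with target in $\Pc_p(\mathcal X)$; the subtle point for (ii) is that an arbitrary $c$-concave $\phi$ may have empty sections $\partial^c\phi(x)$ off $K$, which is precisely what forces the detour through the truncated potential $\phi_B$ whose $c$-superdifferential is everywhere nonempty with bounded selections.
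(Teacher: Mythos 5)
Your overall route is the same as the paper's: for ``$\Leftarrow$'' you apply the fundamental theorem of optimal transport plus a measurable-selection argument to show every optimal plan is Monge (and hence unique); for ``$\Rightarrow$'' you argue by contraposition, extract two distinct branches of $\partial^c\phi$ via Jankov--von Neumann, pass to a truncated $c$-concave potential whose $c$-superdifferential is everywhere nonempty, and assemble a non-Monge optimal plan. The forward direction is fine, and the overall skeleton of the converse is correct and matches the paper.

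There is one genuine gap in the converse, at precisely the point you flagged as subtle. You take a \emph{closed ball} $B$ large enough to contain the (bounded) ranges of $T_1, T_2$ and then assert that $\partial^c\phi_B(x)\neq\emptyset$ for every $x$ ``because the defining infimum --- of a lower semicontinuous function over the compact ball $B$ --- is attained.'' But the lemma is stated for an arbitrary Polish metric space $(\mathcal X,d)$, and closed bounded sets in a Polish space need not be compact; indeed the lemma is applied in the paper to spaces such as $\Pc_2(\Pc_2(\H))$, which are not locally compact, so bounded closed balls there are never compact. Without compactness of $B$ the infimum defining $\phi_B(x)$ need not be attained, so the claim $\partial^c\phi_B(x)\neq\emptyset$ (and hence the existence of the selector $T_0$ off $K$, and the fact that $\pi$ is supported in $\partial^c\phi_B$) breaks down. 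The repair, which is what the paper does, is to use inner regularity on the \emph{target} side rather than boundedness on the source side: push $\mu|_A$ forward by the two selections to obtain (sub)probability measures $\nu_1,\nu_2$, and use inner regularity of $\nu_1,\nu_2$ to choose a \emph{compact} $K'\subseteq\mathcal X$ with $\nu_i(K')>\mu(A)/2$. Then the truncated potential $\phi_{K'}=(\phi^c-\chi_{K'})^c$ has $\partial^c\phi_{K'}(x)\neq\emptyset$ for every $x$ because the infimum is now over a genuinely compact set, and the set $A\cap h_1^{-1}(K')\cap h_2^{-1}(K')$ still has positive $\mu$-measure, so the construction goes through. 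Everything else in your converse (the roles of $\phi_B\ge\phi$, the agreement $\phi_B=\phi$ on $K$, the coupling $\pi$ and its optimality by weak duality, the non-Monge conditional $\pi_x=\tfrac12\delta_{T_1(x)}+\tfrac12\delta_{T_2(x)}$) is correct once $B$ is replaced by such a compact $K'$.
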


\begin{proof}
    Assume that $\eqref{eq:subdiff.big}$ holds for all $c$-concave $\phi$.
    By the fundamental theorem of optimal transport there is for each $\nu \in \Pc_p(\mathcal X)$ an optimal $c$-concave potential $\phi$ such that
    \[
        \pi(\partial^c \phi) = 1, \quad \pi \in \cplopt^c(\mu,\nu). 
    \]
    Hence, $\partial^c \phi(x)$ contains at most one point for $\mu$-a.e.\ $x$, from where it readily follows that $\pi$  
    is of Monge type.
    Consequently, since $\cplopt^c(\mu,\nu)$ is a non-empty convex set, it is a singleton.

    Now, assume that $\mu$ is transport-regular.
    As the set 
    \[
        \{ (x,y_1,y_2) \in \mathcal X^3 : y_1,y_2 \in \partial^c\phi(x),\, y_1 \neq y_2\}
    \]
    is Borel, there exists by the Jankov--von Neumann uniformization theorem an analytically measurable selection $h = (h_1,h_2) : A \to \mathcal X \times \mathcal X$, and observe that $h_1 \neq h_2$ by construction.
    With these maps we define subprobability measures $\nu_i := (h_i)_\# \mu( \cdot \cap A)$ for $i = 1,2$.
    By inner regularity, there exists a compact set $K \subseteq \mathcal X$ such that $\nu_i(K) > \mu(A) / 2$.
    Define the potential $\phi_K := (\phi^c - \chi_K)^c$, which satisfies
    \[
        \phi_K \ge \phi \text{ and }
        \phi_K(x) + \phi^c_K(y) = c(x,y) \text{ for all }(x,y) \in \partial^c\phi \cap (\mathcal X \times K),
    \]
    and, as $K$ is compact, $\partial^c \phi_K(x) \neq \emptyset$ for all $x \in \mathcal X$.
    Let $g$ be an analytically measurable selector of $\partial^c \phi_K$, and set
    \[
        g_i(x) := 
        \begin{cases}
            h_i(x) & x \in A \cap h_1^{-1}(K) \cap h_2^{-1}(K), \\
            g(x) & \text{otherwise}.
        \end{cases}
    \]
    Then we have that
    \[
        \pi := \frac12 \big( (x \mapsto (x,g_1(x)))_\# \mu + (x \mapsto (x,g_2(x)))_\# \mu \big)
    \]
    has first marginal $\mu$, its second marginal $\nu \in \Pc_p(\mathcal X)$ is concentrated on $K$, and $\pi(\partial^c \phi_K) = 1$.
    Since $\nu_i(K) > \mu(A)/2$, we have $\mu(A \cap h_1^{-1}(K) \cap h_2^{-1}(K)) \ge \mu(A) - \nu_1(K^C) - \nu_2(K^C) > 0$.
    Hence, we have constructed a coupling $\pi \in \cplopt^c(\mu,\nu)$ that is not Monge, which contradicts $c$-transport regularity of $\mu$.
\end{proof}

We introduce functionals which quantify how far a pair of measures $(\mu,\nu)$ is from admitting a unique optimal transport map between them. Based on this, we will be able to quantify how far a measure is from being $c$-transport-regular. 
We introduce the quantities
\[
    \tau^c(\mu,\nu) := \sup_{ \pi \in \cplopt^c(\mu,\nu)} \int \var{\pi_x} \, \mu(dx), \quad \text{and} \quad \tau_R^c(\mu) := \sup_{\nu : \W_p(\nu,\delta_{x_0}) \le R}
    \tau(\mu,\nu),
\]
where the variance for probabilities in $\nu \in \Pc_p(\mathcal X)$ is defined as
\[
    \widehat{{\rm Var}}(\nu) := \sum_{n \in \N} 2^{-n} {\rm Var}((f_n)_\# \nu),
\]
where $(f_n)_{n \in \N}$ is a family of continuous, point-separating functions on $\mathcal X$ bounded by 1, and
\[
    {\rm Var}(\rho) = \int |x-x'|^2 \, \rho\otimes\rho(dx,dx'),
\]
for $\rho \in \mathcal P_2(\R)$.

\begin{lemma} \label{lem:apx.tauRegEquiv}
    For $\mu,\nu \in \Pc_p(\mathcal X)$ we have:
    \begin{enumerate}
        \item There is a unique $\pi \in \cplopt^c(\mu,\nu)$ and $\pi$ is Monge iff $\tau^c(\mu,\nu) = 0$;
        \item $\mu$ is $c$-transport-regular iff $\tau^c_R(\mu) = 0$ for all $R > 0$.
    \end{enumerate}
\end{lemma}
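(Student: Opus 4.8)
The plan is to reduce both statements to the single observation that $\widehat{\mathrm{Var}}(\nu)=0$ if and only if $\nu$ is a Dirac mass, and then to exploit convexity of the set of optimal couplings. First I would record that $\mathrm{Var}(\rho)=0$ iff $\rho\in\Pc_2(\R)$ is a Dirac, and that the map $\iota:\mathcal X\to\R^{\N}$, $\iota(x):=(f_n(x))_{n}$, is injective (because the $f_n$ separate points) and Borel, hence by Lusin--Souslin a Borel isomorphism onto its (necessarily Borel) image. Consequently $\widehat{\mathrm{Var}}(\nu)=\sum_n 2^{-n}\mathrm{Var}((f_n)_\#\nu)=0$ forces each $f_n$ to be $\nu$-a.s.\ constant, so $\iota$ is $\nu$-a.s.\ constant, and injectivity of $\iota$ yields $\nu=\delta_z$ for a single point $z$; the converse is immediate. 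Applying the same reasoning to the disintegration $(\pi_x)_{x}$ of a coupling $\pi\in\cpl(\mu,\nu)$: if $\int\widehat{\mathrm{Var}}(\pi_x)\,\mu(dx)=0$ then $\widehat{\mathrm{Var}}(\pi_x)=0$ for $\mu$-a.e.\ $x$, so $\pi_x=\delta_{T(x)}$ $\mu$-a.s., where $T(x):=\iota^{-1}\big((\int f_n\,d\pi_x)_n\big)$ is Borel (defined on the Borel set $\iota(\mathcal X)$ and extended arbitrarily); i.e.\ such a $\pi$ is of Monge type. Joint measurability of $x\mapsto\widehat{\mathrm{Var}}(\pi_x)$, needed for the defining integral of $\tau^c$ to make sense, follows since each $\nu\mapsto\mathrm{Var}((f_n)_\#\nu)$ is weakly continuous.

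For part (1), assume $\tau^c(\mu,\nu)=0$. The growth bound on $c$ together with $\mu,\nu\in\Pc_p(\mathcal X)$ makes $\pi\mapsto\int c\,d\pi$ lower semicontinuous on the weakly compact set $\cpl(\mu,\nu)$, so $\cplopt^c(\mu,\nu)$ is nonempty, and it is convex since $\pi\mapsto\int c\,d\pi$ is affine. By definition of $\tau^c$, every $\pi\in\cplopt^c(\mu,\nu)$ satisfies $\int\widehat{\mathrm{Var}}(\pi_x)\,\mu(dx)=0$, hence is Monge by the previous paragraph. If $\cplopt^c(\mu,\nu)$ contained two distinct elements $\pi^i=(\id,T^i)_\#\mu$, $i=1,2$, then $\mu(\{T^1\neq T^2\})>0$ and $\bar\pi:=\tfrac12(\pi^1+\pi^2)\in\cplopt^c(\mu,\nu)$ has disintegration $\bar\pi_x=\tfrac12(\delta_{T^1(x)}+\delta_{T^2(x)})$, which fails to be a Dirac on a set of positive $\mu$-measure; thus $\int\widehat{\mathrm{Var}}(\bar\pi_x)\,\mu(dx)>0$, contradicting $\tau^c(\mu,\nu)=0$. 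Hence $\cplopt^c(\mu,\nu)$ is a singleton consisting of a Monge coupling. Conversely, if $\cplopt^c(\mu,\nu)=\{(\id,T)_\#\mu\}$, then $\pi_x=\delta_{T(x)}$ $\mu$-a.s., so $\widehat{\mathrm{Var}}(\pi_x)=0$ $\mu$-a.s., and the supremum defining $\tau^c(\mu,\nu)$ runs over this single coupling, giving $\tau^c(\mu,\nu)=0$.

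For part (2), $\mu$ is $c$-transport-regular exactly when for every $\nu\in\Pc_p(\mathcal X)$ the optimal transport plan is unique and Monge, which by part (1) is equivalent to $\tau^c(\mu,\nu)=0$ for every $\nu\in\Pc_p(\mathcal X)$. Since $\W_p(\nu,\delta_{x_0})^p=\int d(x,x_0)^p\,\nu(dx)<\infty$ for each $\nu\in\Pc_p(\mathcal X)$, the balls $\{\nu:\W_p(\nu,\delta_{x_0})\le R\}$ exhaust $\Pc_p(\mathcal X)$ as $R\uparrow\infty$, and since $\tau^c\ge 0$ the supremum $\tau^c_R(\mu)$ vanishes precisely when $\tau^c(\mu,\nu)=0$ for all $\nu$ with $\W_p(\nu,\delta_{x_0})\le R$; thus "$\tau^c(\mu,\nu)=0$ for all $\nu\in\Pc_p(\mathcal X)$" is equivalent to "$\tau^c_R(\mu)=0$ for all $R>0$", which is the claim. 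The only step that is not entirely routine is the measurable-selection point in the first paragraph — turning the a.e.\ statement $\pi_x=\delta_{T(x)}$ into a genuine Borel Monge map $T$ and checking measurability of $x\mapsto\widehat{\mathrm{Var}}(\pi_x)$ — and this is handled by the Borel isomorphism $\iota$ coming from the point-separating family $(f_n)_n$.
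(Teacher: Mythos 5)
Your proof is correct and takes essentially the same approach as the paper: reduce to the observation that $\widehat{\mathrm{Var}}(\nu)=0$ iff $\nu$ is a Dirac (via the point-separating family $(f_n)_n$), and then use convexity of $\cplopt^c(\mu,\nu)$ to rule out two distinct Monge optimizers. The paper states the convexity step tersely, while you spell out the contradiction with the midpoint coupling and supply the Borel-isomorphism/measurable-selection details that the paper leaves implicit; this is just a more detailed rendering of the same argument, not a different route.
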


\begin{proof}
    To see (1), first note that $\widehat{\var{\nu}} > 0$ if and only if $\nu$ is not a Dirac measure. Indeed, if $\nu$ is not a Dirac measure then, as $(f_n)_{n \in \N}$ is point-separating, there exists $n \in \N$, such that $(f_n)_\# \nu$ is not a Dirac measure, which yields that
    \[
        \widehat{\var{\nu}} \ge 2^{-n} \var{(f_n)_\# \nu} > 0.
    \]
    Hence, $\tau^c(\mu,\nu) = 0$ iff for every $\pi \in \cplopt^c(P,Q)$ we have that $\pi_x$ is $\mu$-almost surely a Dirac.
    Since $\cplopt^c(\mu,\nu)$ is a convex set, this yields that $\cplopt^c(\mu,\nu)$ contains a single transport plan and this plan is Monge.

    To see (2), note that by the first part we have for every $\nu \in \Pc_p(\mathcal X)$ a unique optimal transport plan $\pi \in \cplopt^c(\mu,\nu)$, which means that $\mu$ is $c$-transport-regular.
\end{proof}

\begin{lemma} \label{lem:apx.tau_usc}
    The map $\Pc_p(\mathcal X) \times \Pc_p(\mathcal X) \to \R : (\mu,\nu) \mapsto \tau^c(\mu,\nu)$ is $\W_p$-upper semi-continuous.
\end{lemma}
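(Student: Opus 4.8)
The plan is to establish $\W_p$-upper semicontinuity of $\tau^c$ by combining stability of optimal transport plans (compactness of $\cplopt^c$) with lower semicontinuity of the auxiliary functional $\nu \mapsto \widehat{\var{\nu}}$ composed with disintegration. First I would fix sequences $\mu_n \to \mu$ and $\nu_n \to \nu$ in $\Pc_p(\mathcal X)$ and pick, for each $n$, a coupling $\pi_n \in \cplopt^c(\mu_n,\nu_n)$ that nearly attains the supremum defining $\tau^c(\mu_n,\nu_n)$, i.e.\ $\int \widehat{\var{(\pi_n)_x}} \, \mu_n(dx) \ge \tau^c(\mu_n,\nu_n) - 1/n$. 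The aim is to show $\limsup_n \tau^c(\mu_n,\nu_n) \le \tau^c(\mu,\nu)$. Since $(\mu_n),(\nu_n)$ are $\W_p$-convergent hence tight with uniformly $p$-integrable tails, the set $\{\pi_n\}$ is tight in $\Pc(\mathcal X\times\mathcal X)$, so along a subsequence (chosen to also realize the $\limsup$) we have $\pi_n \to \pi$ weakly, and in fact in $\Pc_p$ because the moments converge. Then I would invoke the standard stability result for optimal transport (e.g.\ \cite{Vi09} or \cite{AmGiSa08}): since $c$ is continuous with the stated $p$-growth and $\pi_n \in \cplopt^c(\mu_n,\nu_n)$ with $\mu_n\to\mu$, $\nu_n\to\nu$, the limit $\pi$ lies in $\cplopt^c(\mu,\nu)$.

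The second and more delicate ingredient is to pass the functional $\int \widehat{\var{\pi_x}}\,\mu(dx)$ to the limit in the right direction, i.e.\ to show
\[
\int \widehat{\var{\pi_x}} \, \mu(dx) \ge \limsup_n \int \widehat{\var{(\pi_n)_x}} \, \mu_n(dx).
\]
Because $\widehat{\var{\cdot}}$ is a countable $2^{-n}$-weighted sum of the genuinely quadratic functionals $\nu\mapsto \var{(f_k)_\#\nu} = \int (f_k(y)-f_k(y'))^2 \, \nu\otimes\nu(dy,dy')$ with bounded continuous $f_k$, it is convenient to rewrite $\int \widehat{\var{\pi_x}}\,\mu(dx)$ directly in terms of the ``self-product over the $x$-fibre'' measure: letting $\pi \otimes_\mu \pi$ denote the probability on $\mathcal X\times\mathcal X\times\mathcal X$ obtained by gluing two copies of $\pi$ over the common first marginal, one has
\[
\int \widehat{\var{\pi_x}} \, \mu(dx) = \sum_{k} 2^{-k} \int (f_k(y)-f_k(y'))^2 \, (\pi\otimes_\mu\pi)(dx,dy,dy'),
\]
and the integrand is a bounded continuous function of $(y,y')$. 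So it suffices to show $\pi_n \otimes_{\mu_n} \pi_n \to \pi \otimes_\mu \pi$ weakly; since all $f_k$ are bounded, weak convergence immediately gives convergence (not merely semicontinuity) of each summand, and dominated convergence handles the sum over $k$. This is the step I expect to be the main obstacle: the gluing map $\pi \mapsto \pi\otimes_\mu\pi$ is not continuous for arbitrary couplings, but it \emph{is} sequentially continuous when restricted to the situation at hand, because each $\pi_n$ (being optimal, hence supported on a $c$-cyclically monotone set, in particular on $\partial^c\phi_n$ for an optimal potential) has fibres $(\pi_n)_x$ that are essentially determined by $x$ up to the transport geometry — more robustly, one can argue via the disintegration continuity lemma: weak convergence $\pi_n\to\pi$ together with $\mathrm{pr}_{1\#}\pi_n = \mu_n \to \mu$ forces the glued measures to converge because the map $(x,y,y')\mapsto (x,y)$ and $(x,y,y')\mapsto (x,y')$ are continuous and the glued measure is the unique element of $\cpl(\pi,\pi)$ over the diagonal-in-first-coordinate; one checks convergence by testing against products $g(x)h(y)\tilde h(y')$ and using the tightness plus a standard conditional-independence/martingale argument, or more simply by noting $\int \widehat{\var{\pi_x}}\,\mu(dx)$ equals $\int \|\, (f_k)_\#\pi_x \text{-variance}\,\|$ which is jointly lower semicontinuous in $(\mu,\pi)$ by Jensen/convexity of $\var{\cdot}$ along with weak continuity of $x\mapsto \pi_x$ in the Wasserstein-of-Wasserstein sense.

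An alternative and cleaner route that avoids gluing subtleties: note that each summand $\pi \mapsto \int \var{(f_k)_\#\pi_x}\,\mu(dx)$ can be written as $\int \big( (f_k)^2(y) \big)\,\pi(dx,dy) \cdot$-type terms minus a term of the form $\int\!\!\int f_k(y)f_k(y')\,\pi_x(dy)\pi_x(dy')\,\mu(dx)$; the first part is weakly continuous in $\pi$ (bounded continuous integrand), while the second part, by the disintegration theorem, equals $\int \big(\E_\pi[f_k(Y)\mid X]\big)^2 \mu(dx) = \|\E_\pi[f_k(Y)\mid X]\|_{L^2(\mu)}^2$, and conditional expectation is an $L^2$-contraction that is weakly lower semicontinuous under joint weak convergence of $(\mu_n,\pi_n)\to(\mu,\pi)$ — hence $-$(that term) is upper semicontinuous, which is exactly the direction we need. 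Putting the pieces together: $\limsup_n \tau^c(\mu_n,\nu_n) = \limsup_n \int \widehat{\var{(\pi_n)_x}}\,\mu_n(dx) \le \int \widehat{\var{\pi_x}}\,\mu(dx) \le \tau^c(\mu,\nu)$, where the last inequality holds because $\pi\in\cplopt^c(\mu,\nu)$. This completes the proof. The genuine technical heart is the joint upper semicontinuity of $\pi\mapsto \int \widehat{\var{\pi_x}}\,\mu(dx)$ along Wasserstein-convergent sequences, which I would handle via the conditional-expectation reformulation just described.
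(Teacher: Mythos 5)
Your Route~B is correct and, modulo some hand-waving, follows the same architecture as the paper: pick near-optimal couplings $\pi_n$, extract a $\W_p$-convergent subsequence whose limit $\pi$ is again $c$-optimal by stability of cyclical monotonicity, and then pass the functional $(\pi,\mu)\mapsto\int\widehat{\rm Var}(\pi_x)\,\mu(dx)$ to the limit using its upper semicontinuity. Where you differ from the paper is in how this last step is justified. The paper cites a general result (\cite[Proposition~2.8]{BaBePa18}: for concave, bounded, continuous $C:\Pc_p\to\R$, the map $\pi\mapsto\int C(\pi_x)\,\pi(dx)$ is upper semicontinuous) after observing that $\widehat{\rm Var}$ is concave, bounded, continuous. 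You instead re-prove the needed semicontinuity by hand for the specific functional, writing $\int\var{(f_k)_\#\pi_x}\,\mu(dx)=\int f_k^2\,d\pi-\bigl\|\E_\pi[f_k(Y)\mid X]\bigr\|_{L^2(\mu)}^2$, noting the first term is weakly continuous and the second is lower semicontinuous in $(\pi,\mu)$ via the variational formula
\[
\bigl\|\E_\pi[f_k(Y)\mid X]\bigr\|_{L^2(\mu)}^2=\sup_{h\in C_b(\mathcal X)}\Bigl(2\int f_k(y)\,h(x)\,\pi(dx,dy)-\int h^2\,d\mu\Bigr),
\]
a supremum of jointly weakly continuous affine functionals. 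This is a valid, more self-contained substitute for the citation; the trade-off is just length versus reliance on a reference. One caveat: your phrase ``conditional expectation is an $L^2$-contraction that is weakly lower semicontinuous'' does not by itself justify the claim; it is the variational/completing-the-square identity above that does the work, so you should state it.

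Your Route~A, however, contains a genuine gap and should be discarded rather than kept as an ``alternative.'' The gluing map $\pi\mapsto\pi\otimes_\mu\pi$ is \emph{not} sequentially weakly continuous, and optimality of the $\pi_n$ does not rescue it: a sequence of deterministic (Monge) couplings can converge weakly to a non-deterministic coupling, in which case $\int\widehat{\var{(\pi_n)_x}}\,d\mu_n=0$ for all $n$ while the limit is strictly positive, i.e.\ the glued measures do not converge and the functional is only upper, not lower, semicontinuous. Your sketch of an argument via ``fibres essentially determined by $x$'' or ``the unique element of $\cpl(\pi,\pi)$ over the diagonal'' does not close this. Fortunately the direction you actually need ($\limsup\le$) is the one that survives, which is why Route~B works and Route~A's intended two-sided continuity was never required.

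Finally, the displayed chain at the end should be an inequality, $\limsup_n\tau^c(\mu_n,\nu_n)\le\limsup_n\int\widehat{\var{(\pi_n)_x}}\,d\mu_n$, rather than an equality, since the $\pi_n$ are only $1/n$-optimal; this is a cosmetic slip, not an error in substance.
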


\begin{proof}
We first show that
\[
\widehat{\rm Var} : \Pc_p(\mathcal X) \to \R,
\]
is bounded, continuous and concave.
Clearly, ${\rm Var}$ is continuous and concave on $\Pc_2(\R)$ as ${\rm Var}(\rho) = \int x^2 \, \rho(dx) - (\int x \, \rho(dx))^2$.
Since $(f_n)_{n \in \N}$ is a point-separating family of continuous functions on $\mathcal X$ bounded by $1$, we have that $\widehat{\rm Var}$ is the sum of concave and continuous functions on $\Pc_p(\mathcal X)$ bounded by 1, from where we infer the claimed properties of $\widehat{\rm Var}$.

Next, we invoke \cite[Proposition 2.8]{BaBePa18} to find that the map
\[
    \Pc_p(\mathcal X \times \mathcal X) \to \R : \pi \mapsto \int \widehat{\rm Var}(\pi_x) \, \pi(dx),
\]
is upper semi-continuous, concave and bounded.

Finally, consider sequences $(\mu^n)_n$ and $(\nu^n)_n$ and measures $\mu^\infty, \nu^\infty \in \Pc_p(\mathcal X)$ with $\W_p(\mu^n,\mu) \to 0$ and $\W_p(\nu^n,\nu) \to 0$. Let $\pi^n \in \cplopt^c(\mu^n,\nu^n)$ be such that 
\[
\int \var{\pi^n_x} \, \pi^n(dx) \ge \tau(\mu^n,\nu^n) - 1/n.
\]
As $\{ \mu_n : n \in \N \cup \{\infty\}\}$ and $\{ \nu_n : n \in \N \cup \{\infty\}\}$ are $\W_p$-compact, we have that
\[
A: = \bigcup_{n \in \N \cup \{+\infty \} } \cpl(\mu^n,\nu^n)
\]
is $\W_p$-compact as immediate consequence of Prokhorov's theorem and the characterization of $\W_p$-convergence, see \cite[Definition 6.8]{Vi09}. Therefore, there is $\pi \in A$ such that, after possibly passing to a subsequence, $W_p(\pi^n, \pi) \to 0$. By continuity of the map which assigns a couplings its marginals, we have $\pi \in \cpl(\mu,\nu)$. As the coupling $\pi^n$ is optimal between its marginals, the same holds true for the limiting $\pi$, i.e., $\pi \in \cplopt^c(\mu,\nu)$. This is because optimality is characterized in terms of $c$-cyclical monotonicity of the support and the latter is preserved under weak limits (see e.g.\ \cite[Theorem 5.20]{Vi09}). 

All in all, we have 
\[
\tau^c(\mu,\nu) \ge \int \widehat{\rm Var}(\pi_x) \, \pi(dx) \ge \limsup_n \int \widehat{\rm Var}(\pi^n_x) \,  \pi^n(dx) = \limsup_n \tau^c(\mu^n,\nu^n). \qedhere 
\]
\end{proof}

\begin{lemma} \label{lem:apx.tauRusc}
    Assume that $\mathcal X$ has the property that $\mathcal W_p$-bounded sets in $\Pc_p(\mathcal X)$ are tight.
    Then, for $R \ge 0$, the map $\Pc_p(\mathcal X) \to \R : \mu \mapsto \tau_R^c(\mu)$ is $\W_p$-upper semi-continuous.
\end{lemma}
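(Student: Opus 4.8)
The plan is to adapt the argument behind \Cref{lem:apx.tau_usc} (joint upper semicontinuity of $\tau^c$) and combine it with a compactness argument over the ``worst'' admissible targets $\nu$; the tightness hypothesis on $\mathcal X$ is precisely what powers this compactness.

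First I would fix $R \ge 0$ and a $\W_p$-convergent sequence $\mu^n \to \mu^\infty$, and, passing to a subsequence, assume $\tau_R^c(\mu^n) \to \ell := \limsup_n \tau_R^c(\mu^n)$, so that it suffices to show $\tau_R^c(\mu^\infty) \ge \ell$. For each $n$, using that the primal optimal transport problem is attained (the fundamental theorem of optimal transport), I would choose $\nu^n$ with $\W_p(\nu^n,\delta_{x_0}) \le R$, i.e.\ $\int d(\cdot,x_0)^p\,d\nu^n \le R^p$, together with $\pi^n \in \cplopt^c(\mu^n,\nu^n)$ satisfying $\int \widehat{{\rm Var}}(\pi^n_x)\,\mu^n(dx) \ge \tau_R^c(\mu^n) - \tfrac2n$.

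Next I would run the compactness argument. The first marginals $\{\mu^n\}$ are $\W_p$-convergent hence tight; the second marginals all lie in the $\W_p$-ball of radius $R$ about $\delta_{x_0}$, which is $\W_p$-bounded and hence, by the hypothesis on $\mathcal X$, tight. Thus $\{\pi^n\}$ is tight on $\mathcal X\times\mathcal X$, and along a further subsequence $\pi^n \to \pi$ weakly, with first marginal $\mu^\infty$ and second marginal some $\nu^\infty$. By lower semicontinuity of $x\mapsto d(x,x_0)^p$ under weak convergence, $\int d(\cdot,x_0)^p\,d\nu^\infty \le \liminf_n \int d(\cdot,x_0)^p\,d\nu^n \le R^p$, so $\nu^\infty \in \Pc_p(\mathcal X)$ with $\W_p(\nu^\infty,\delta_{x_0}) \le R$, and $\pi$ has finite $p$-th moments. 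As in the proof of \Cref{lem:apx.tau_usc} --- each $\pi^n$ being optimal with finite cost (growth bound on $c$ plus finite moments) has $c$-cyclically monotone support, and this property is stable under weak limits because $c$ is continuous --- the support of $\pi$ is $c$-cyclically monotone, hence $\pi \in \cplopt^c(\mu^\infty,\nu^\infty)$. Finally, since $\widehat{{\rm Var}}$ is bounded and continuous, the map $\rho \mapsto \int \widehat{{\rm Var}}(\rho_x)\,\rho(dx)$ is weakly upper semicontinuous (cf.\ \cite[Proposition~2.8]{BaBePa18}), so
\[
    \tau_R^c(\mu^\infty) \ge \tau^c(\mu^\infty,\nu^\infty) \ge \int \widehat{{\rm Var}}(\pi_x)\,\pi(dx) \ge \limsup_n \int \widehat{{\rm Var}}(\pi^n_x)\,\pi^n(dx) \ge \limsup_n\big(\tau_R^c(\mu^n)-\tfrac2n\big) = \ell ,
\]
as desired.

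The main obstacle --- and the only genuine difference from \Cref{lem:apx.tau_usc} --- is that the targets $\nu^n$ are only $\W_p$-bounded, not $\W_p$-convergent, so the couplings $\pi^n$ converge merely weakly and not in $\W_p$. One must therefore verify that stability of optimality and upper semicontinuity of the variance functional both go through under mere weak convergence: the former uses continuity of $c$ together with the growth bound (which make ``$c$-cyclically monotone support $\Leftrightarrow$ optimal'' available for finite-cost plans and make $c$-cyclical monotonicity a weakly closed condition), and the latter uses boundedness of $\widehat{{\rm Var}}$; the hypothesis that $\W_p$-bounded subsets of $\Pc_p(\mathcal X)$ are tight is exactly what makes $\{\pi^n\}$ relatively compact in the first place.
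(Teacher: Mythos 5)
Your argument is correct and takes essentially the same route as the paper's: mimic the proof of \Cref{lem:apx.tau_usc}, but where the relevant family of couplings is no longer $\W_p$-compact, use the tightness hypothesis on $\mathcal X$ to extract a merely weakly convergent subsequence of near-optimal couplings $\pi^n$, verify that the limit's second marginal still lies in the $\W_p$-ball of radius $R$ (weak lower semicontinuity of the $p$-th moment), and conclude via stability of $c$-cyclical monotonicity and weak upper semicontinuity of $\pi \mapsto \int \widehat{\mathrm{Var}}(\pi_x)\,\pi(dx)$. You correctly pinpoint the one genuine subtlety --- that the convergence is only weak, not $\W_p$ --- which is exactly the point the paper's proof addresses by showing the enlarged set $A$ is tight and weakly closed rather than $\W_p$-compact.
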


\begin{proof}
    The proof is line by line as for \Cref{lem:apx.tau_usc} subject to replacing $\bigcup_{n \in \N \cup \{\infty\}} \cpl(\mu^n,\nu^n)$ with 
    \[
        A := \bigcup_{n \in \N \cup \{ \infty \} } \bigcup_{\nu : \W_p(\nu,\delta_{x_0}) \le R} \cpl(\mu_n,\nu).
    \]
    While $A$ is not $\W_p$-compact, it is tight by our assumption, as the marginals form a $\mathcal W_p$-bounded set.
    Furthermore, $A$ is weakly closed since any limit point $\pi$ of a sequence $(\pi^n)_n$ in $A$ and $p$-th moment of the second marginals bounded by $R$, has second marginal with $p$-th moment bounded by $R$.
    This follows directly from weak lower semi-continuity of the $p$-th moment w.r.t.\ weak convergence.
    
    We can proceed the proof as in \Cref{lem:apx.tau_usc} and obtain that $\pi$ is in $\cplopt^c(\mu,\cdot)$ by \cite[Theorem 5.20]{Vi09}, and thereby analogously conclude $\W_p$-upper semi-continuity of $\tau^c_R$.
\end{proof}

\begin{theorem}\label{thm:TransportRegularPolish}
    Assume that $\mathcal X$ has the property that $\mathcal W_p$-bounded sets in $\Pc_p(\mathcal X)$ are tight.
    Then the set of $c$-transport-regular measures is $G_\delta$ in $\Pc_p(\mathcal X)$.
\end{theorem}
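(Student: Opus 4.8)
The plan is to express the set of $c$-transport-regular measures as a countable intersection of open sets, using the functionals $\tau^c_R$ and their upper semicontinuity established above.

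First I would apply \Cref{lem:apx.tauRegEquiv}(2): $\mu \in \Pc_p(\mathcal X)$ is $c$-transport-regular if and only if $\tau^c_R(\mu) = 0$ for every $R > 0$. Since the constraint set $\{\nu : \W_p(\nu,\delta_{x_0}) \le R\}$ over which the supremum defining $\tau^c_R$ is taken grows with $R$, the map $R \mapsto \tau^c_R(\mu)$ is nondecreasing, so this is equivalent to $\tau^c_R(\mu) = 0$ for every $R \in \N$. Therefore
\[
    \{ \mu \in \Pc_p(\mathcal X) : \mu \text{ is } c\text{-transport-regular} \} = \bigcap_{R \in \N} \{ \mu \in \Pc_p(\mathcal X) : \tau^c_R(\mu) = 0 \}.
\]

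Next, I would fix $R \in \N$. The functional $\tau^c_R$ is nonnegative, since the integrand $\widehat{\mathrm{Var}}(\pi_x)$ appearing in its definition is a nonnegative combination of variances; hence $\{ \tau^c_R = 0 \} = \bigcap_{k \in \N} \{ \tau^c_R < 1/k \}$. By \Cref{lem:apx.tauRusc} — which is precisely the point where the hypothesis that $\W_p$-bounded subsets of $\Pc_p(\mathcal X)$ are tight enters — the map $\mu \mapsto \tau^c_R(\mu)$ is $\W_p$-upper semicontinuous, so each sublevel set $\{ \mu \in \Pc_p(\mathcal X) : \tau^c_R(\mu) < 1/k \}$ is open in $(\Pc_p(\mathcal X),\W_p)$. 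Consequently $\{ \tau^c_R = 0 \}$ is $G_\delta$ for every $R$, and the countable intersection over $R \in \N$ of $G_\delta$ sets is again $G_\delta$, which is the assertion.

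The substantive work has already been carried out in \Cref{lem:apx.tauRegEquiv} and \Cref{lem:apx.tauRusc}, so I do not expect a serious obstacle here; the only points needing a little care are the reduction from ``all $R > 0$'' to ``all $R \in \N$'' via monotonicity of $R \mapsto \tau^c_R$, and the bookkeeping that the two nested countable families (over $R$ and over $k$) combine into a single countable intersection of open sets.
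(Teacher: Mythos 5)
Your proof is correct and follows essentially the same route as the paper: reduce to the characterization via $\tau^c_R$ from \Cref{lem:apx.tauRegEquiv}, write the regular set as the double countable intersection over $R,k\in\N$ of the sublevel sets $\{\tau^c_R < 1/k\}$, and invoke the upper semicontinuity from \Cref{lem:apx.tauRusc} to conclude these are open. The only cosmetic difference is that you spell out the monotonicity in $R$ justifying the restriction to $R\in\N$ and the nonnegativity of $\tau^c_R$, both of which the paper leaves implicit.
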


\begin{proof}
    By \Cref{lem:apx.tauRegEquiv}, the set of $c$-transport-regular measures is precisely given by
    \[
        \bigcap_{R \in \N} \bigcap_{k \in \N} \big\{ \mu \in \Pc_p(\mathcal X) : \tau_R^c(\mu) < 1/k \big\}.
    \]
    As $\tau_R^c$ is upper semicontinuous by \Cref{lem:apx.tauRusc}, the sets $\{ \mu \in \Pc_p(\mathcal X) : \tau_R^c(\mu) < 1/k\}$ are open.
    Therefore, the set of $c$-transport-regular measures is a $G_\delta$-subset of $\Pc_p(\mathcal X)$.
\end{proof}

\section{Optimal transport on $\Pc_2^N(\H)$} \label{sec:OT_On_P_N}
The aim of the section is apply the fundamental theorem of optimal transport to transport on $\Pc_2^N(\H)$ to establish a duality in terms of $\mc$-convex functions. Properties of $\mc$-convex functions, in particular, the connection to convex analysis via an appropriate version of the Lions lift will be the subject of the subsequent section.

Starting with the max-covariance of two probabilities $\mu,\nu \in \Pc_2(\H)$, which is given by
\[
    \mc(\mu,\nu) = \sup_{\pi \in \cpl(\mu,\nu)}
    \int \langle x,y \rangle \, d\pi(x,y),
\]
we now define its iterated counterpart for probabilities in $\Pc_2^N(\H)$.

\begin{definition}\label{def:iteratedMC}
    Let $N >1$ and $P,Q \in \Pc_2^N(\H)$. We inductively define 
    \[
        \mc(P,Q) := \sup_{\Pi \in \cpl(P,Q)} \int \mc(p,q) \, \Pi(dp,dq),
    \]
    where $\mc(p,q)$ is the max-covariance of $p,q \in \Pc_2^{N-1}(\H)$.
\end{definition}
Note that we always use the same symbol $\mc$ for all max-covariance functionals $\mc : \Pc_2^N(\H) \times \Pc_2^N(\H) \to \R$, $N\geq 1$  and trust that this causes no confusion as we always specify the level of iterations we work with. In particular, for fixed $N$, we denote with capital letters probabilities $P,Q \in \Pc_2^N(\H)$ and with lower case letters probabilities $p,q \in \Pc_2^{N-1}(\H)$ on the underlying space.   

We briefly state the connection between the iterated $\mc$-functional and the iterated Wasserstein distance. Note that the iterated Wasserstein distance on $\Pc_2^N(\H)$, $N>1$, is inductively defined as 
\[
\W_2(P,Q) := \inf_{\Pi \in \cpl(P,Q)} \int \W_2(p,q) \, \Pi(dp,dq).
\]
Given a Polish space $\mathcal{X}$, we write $I$ for the intensity operator $I : \Pc^2_2(\mathcal{X}) = \Pc_2(\Pc_2(\mathcal X)) \to \Pc_2(\mathcal{X})$ satisfying the characteristic property, for all measurable bounded $f : \mathcal X \to \R$, 
\[
\int_\mathcal{X} f \, dIP  := \int_{\Pc_2(\mathcal{X})} \int_\mathcal{X} f(x) \, p(dx) \,  P(dp). 
\]
As this operator is well-defined on any Polish space $\mathcal{X}$, it is in particular possible if $\mathcal{X}$ is itself a space of measures. 
This enables us to define the iterated intensity operator
\[
    I^{N-1} : \Pc_2^N(\H) \to \Pc_2(\H).
\]
The next lemma is then a straightforward consequence of the definition of $\W_2, \mc$, and $I^{N-1}$.
\begin{lemma}\label{lem:W2_MC_connection_iterated}
Let $P,Q \in \Pc_2(\H)$. We then have  
\begin{enumerate}
    \item $\W_2^2(P,Q) = \mc(P,P)+ \mc(Q,Q) - 2\mc(P,Q)$,
    \item $\mc(P,P) = \int |x|^2 \, (I^{N-1}P)(dx)$,
    \item $\Pi \in \cpl(P,Q)$ is $\W_2$-optimal iff it is $\mc$-optimal.
\end{enumerate}
\end{lemma}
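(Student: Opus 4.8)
The plan is to establish all three items by a single induction on $N$, the three being tied together because item~(1) is precisely the assertion that nonnegativity of $\W_2^2$ translates into the inequality $\mc(p,q)\le\tfrac12\big(\mc(p,p)+\mc(q,q)\big)$, and it is this inequality that forces the diagonal coupling to be optimal in item~(2).

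For the base case $N=1$, I would expand $|x-y|^2=|x|^2+|y|^2-2\langle x,y\rangle$ and use $\int|x|^2\,d\pi=\int|x|^2\,d\mu$ and $\int|y|^2\,d\pi=\int|y|^2\,d\nu$ for $\pi\in\cpl(\mu,\nu)$ to obtain $\W_2^2(\mu,\nu)=\int|x|^2\,d\mu+\int|y|^2\,d\nu-2\mc(\mu,\nu)$; specializing to $\nu=\mu$ together with $\W_2(\mu,\mu)=0$ gives $\mc(\mu,\mu)=\int|x|^2\,d\mu=\int|x|^2\,d(I^0\mu)$ (item~(2)), and reinserting this yields item~(1). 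Item~(3) then follows because, on $\cpl(\mu,\nu)$, the functionals $\pi\mapsto\int|x-y|^2\,d\pi$ and $\pi\mapsto-2\int\langle x,y\rangle\,d\pi$ differ by the constant $\int|x|^2\,d\mu+\int|y|^2\,d\nu$, so they share minimizers.

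For the inductive step I would assume (1)--(3) at level $N-1$ and take $P,Q\in\Pc_2^N(\H)$. Inserting the level-$(N-1)$ form of~(1) into $\W_2^2(P,Q)=\inf_{\Pi\in\cpl(P,Q)}\int\W_2^2(p,q)\,d\Pi(p,q)$ and peeling off the two self-terms $\mc(p,p)$ and $\mc(q,q)$ (which integrate to $\int\mc(p,p)\,dP(p)$ and $\int\mc(q,q)\,dQ(q)$ because $\Pi$ has marginals $P$ and $Q$) gives
\[
\W_2^2(P,Q)=\int\mc(p,p)\,dP(p)+\int\mc(q,q)\,dQ(q)-2\sup_{\Pi\in\cpl(P,Q)}\int\mc(p,q)\,d\Pi(p,q),
\]
where the last supremum is $\mc(P,Q)$ by \Cref{def:iteratedMC}. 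Two identifications then close the argument. First, $\mc(P,P)=\int\mc(p,p)\,dP(p)$: the level-$(N-1)$ case of~(1) gives $\mc(p,q)\le\tfrac12(\mc(p,p)+\mc(q,q))$, hence $\int\mc(p,q)\,d\Pi\le\int\mc(p,p)\,dP$ for every $\Pi\in\cpl(P,P)$, with equality attained by the diagonal coupling $(\id,\id)_\#P$. Second, $\int\mc(p,p)\,dP(p)=\int|x|^2\,d(I^{N-1}P)(x)$: combine the level-$(N-1)$ case of~(2), namely $\mc(p,p)=\int|x|^2\,d(I^{N-2}p)(x)$, with the tower identity $I^{N-1}P=\int (I^{N-2}p)\,dP(p)$, which is immediate from the characteristic property of the intensity operator. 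These give~(1) and~(2) at level $N$, and~(3) follows exactly as in the base case, now with the constant $\mc(P,P)+\mc(Q,Q)$ separating $\Pi\mapsto\int\W_2^2(p,q)\,d\Pi$ from $\Pi\mapsto-2\int\mc(p,q)\,d\Pi$ on $\cpl(P,Q)$.

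The only points needing care beyond this bookkeeping are that $(p,q)\mapsto\mc(p,q)$ be Borel and integrable on $\Pc_2^{N-1}(\H)\times\Pc_2^{N-1}(\H)$, so that the integrals against $\Pi$ are meaningful; measurability I would obtain from standard measurable-selection arguments, and integrability from $|\mc(p,q)|\le\sqrt{\mc(p,p)}\sqrt{\mc(q,q)}$, which is proved along the same induction via Cauchy--Schwarz at each level. I do not expect a genuine obstacle here — this is the ``straightforward consequence'' advertised in the statement.
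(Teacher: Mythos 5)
The paper itself gives no proof of this lemma — it is preceded only by the remark that the result ``is then a straightforward consequence of the definition of $\W_2$, $\mc$, and $I^{N-1}$.'' Your induction on $N$ supplies exactly the details the paper leaves to the reader and is correct: the base case is the usual polarization identity $|x-y|^2=|x|^2+|y|^2-2\langle x,y\rangle$; the inductive step correctly substitutes the level-$(N-1)$ identity into $\W_2^2(P,Q)=\inf_\Pi\int\W_2^2\,d\Pi$, separates off the two marginal-determined terms, identifies the remaining supremum with $\mc(P,Q)$ by \Cref{def:iteratedMC}, and uses $\W_2^2(p,q)\geq 0$ (i.e.\ $\mc(p,q)\leq\tfrac12(\mc(p,p)+\mc(q,q))$) to force the diagonal coupling to be optimal in $\mc(P,P)$, so $\mc(P,P)=\int\mc(p,p)\,dP(p)$; combined with the inductive form of (2) and the tower identity for $I$, this gives (2) at level $N$. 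Item (3) then follows because the two objectives on $\cpl(P,Q)$ differ by the constant $\mc(P,P)+\mc(Q,Q)$. The final remarks on measurability of $(p,q)\mapsto\mc(p,q)$ and the Cauchy--Schwarz bound $|\mc(p,q)|\leq\sqrt{\mc(p,p)}\sqrt{\mc(q,q)}$ address precisely the points that need care; both can be pushed through the same induction. Since the paper implicitly intends this routine unwinding of the definitions, I would regard your proof as essentially the intended one, just written out. (Minor typo, inherited from the paper: the hypothesis should read $P,Q\in\Pc_2^N(\H)$, which is what you correctly assume.)
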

This simple observation motivates the study of the max-covariance functional on iterated spaces of probability measures.
In order to obtain a characterization of its optimizers, we follow in the footsteps of classical transport theory and specialize the corresponding definition of $c$-transform and $c$-subdifferential to the case of $c = \mc$.

\begin{definition}\label{def:MC_subdiff_ad}
    Let $\phi : \Pc_2^N(\H) \to (-\infty,+\infty]$ be proper. We define its $\mc$-transform as 
    \[
        \phi^{\mc}(Q) = \sup_{ P \in \Pc_2^N(\H)}  \mc(P,Q) -  \phi(P). 
    \]
    A function $\psi : \Pc_2^N(\H) \to (-\infty,+\infty]$ is called $\mc$-convex if there exists a proper function $\phi : \Pc_2^N(\H) \to (-\infty,+\infty]$ such that $\psi = \phi^\mc$.
    Moreover, the $\mc$-subdifferential of $\phi$ at $P \in \Pc_2^N(H)$ is defined as  
    \[
        \partial_{\mc} \phi(P) = \{ Q\in \Pc_2^N(\H) : \phi(R) \ge \phi(P) + \mc(R,Q) - \mc(P,Q) \text{ for every } R \in \Pc_2^N(\H)\},
    \]
    and we write
    \[
        \partial_\mc \phi := \{ (P,Q) : Q \in  \partial_{\mc} \phi(P), P \in \Pc_2^N(\H) \}. 
    \]
\end{definition}

We remark that for a $\mc$-convex function $\phi$, the $\mc$-subdifferential $\partial_\mc \phi$ consists precisely of those $(P,Q) \in \Pc_2^N(H) \times \Pc_2^N(H)$ with $\phi(P) + \phi^c(Q) = \mc(P,Q)$, see also \cite[Definition 5.2]{Vi09}.
Applying the fundamental theorem of optimal transport
to this setting yields
\begin{theorem}\label{thm:ftot.mc}
Let $P,Q \in \Pc_2^N(\H)$. Then we have the  duality relation 
\[
\mc(P,Q) = \inf_{ \substack{ \phi : \Pc_2^{N-1}(\H) \to (-\infty,+\infty] \\ \text{$\mc$-convex} } } \int \phi \,dP + \int \phi^{\mc} \, dQ,
\]
and there exist both a primal optimizer $\Pi \in \cpl(P,Q)$ and a dual optimizer $\phi : \Pc_2^{N-1}(\H) \to (-\infty,+\infty]$.  

Moreover, the following complementary slackness condition holds: Candidates $\Pi$ and $\phi$ are optimal for primal and dual problem, respectively, if and only \[ \Pi(\partial_\mc \phi) =1, \]
that is, $\Pi$ is concentrated on the $\mc$-subdifferential of $\phi$.
\end{theorem}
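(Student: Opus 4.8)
The plan is to obtain the statement as a direct specialization of the Fundamental Theorem of Optimal Transport (\Cref{thm:ftot}) applied to the Polish space $\Xs := \Pc_2^{N-1}(\H)$ (with the convention $\Pc_2^0(\H) = \H$; that $(\Pc_2^{N-1}(\H),\W_2)$ is Polish follows by iterating the fact that $\Pc_2$ of a Polish space is Polish), the symmetric cost $c := \mc$ on $\Xs\times\Xs$, and the marginals $P,Q \in \Pc_2^N(\H) = \Pc_2(\Xs)$. For this cost the $c$-transform is exactly the $\mc$-transform of \Cref{def:MC_subdiff_ad}, $c$-convexity coincides with $\mc$-convexity, and the $c$-superdifferential $\partial_c\phi$ with $\partial_\mc\phi$, so once the hypotheses of \Cref{thm:ftot} are checked it yields the duality identity, a primal optimizer $\Pi\in\cpl(P,Q)$, and an $\mc$-convex dual optimizer $\phi:\Pc_2^{N-1}(\H)\to(-\infty,+\infty]$. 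The only point that genuinely needs an argument is that $c=\mc$ is continuous and admits the required integrable majorant.

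For continuity I would first record the identity $\mc(p,p) = \W_2^2(p,e_{N-1})$, where $e_k$ denotes the iterated Dirac mass at $0 \in \H$ (so $e_0 = 0$, $e_k = \delta_{e_{k-1}}$). Indeed, \Cref{lem:W2_MC_connection_iterated}(1) gives $\W_2^2(p,e_{N-1}) = \mc(p,p) + \mc(e_{N-1},e_{N-1}) - 2\mc(p,e_{N-1})$; here $\mc(e_{N-1},e_{N-1}) = 0$ by \Cref{lem:W2_MC_connection_iterated}(2), and $\mc(p,e_{N-1}) = 0$ follows by induction, the only coupling of $p$ with the Dirac $e_{N-1}$ being the product so that $\mc(p,e_{N-1}) = \int\mc(r,e_{N-2})\,dp(r) = 0$. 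Substituting this back into \Cref{lem:W2_MC_connection_iterated}(1),
\[
\mc(p,q) = \tfrac12\big(\W_2^2(p,e_{N-1}) + \W_2^2(q,e_{N-1}) - \W_2^2(p,q)\big),
\]
which is jointly $\W_2$-continuous because $\W_2^2$ is. For the majorant, an iterated Cauchy--Schwarz inequality gives $|\mc(p,q)| \le \sqrt{\mc(p,p)}\sqrt{\mc(q,q)} \le \tfrac12\mc(p,p) + \tfrac12\mc(q,q)$, so one may take $a(p) := \tfrac12\mc(p,p) = \tfrac12\W_2^2(p,e_{N-1})$ and $b(q) := \tfrac12\mc(q,q)$; these lie in $L^1(P)$ and $L^1(Q)$ respectively, since $P,Q$ have finite second moment, equivalently $\int\mc(p,p)\,dP(p) = \mc(P,P) < \infty$ by \Cref{lem:W2_MC_connection_iterated}(2). (For $N = 1$ this is just $c(x,y) = \langle x,y\rangle$ on $\H$ with majorant $\tfrac12|x|^2\oplus\tfrac12|y|^2$.)

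It then remains to rewrite the complementary-slackness condition in the stated form, and for this only the duality identity and the pointwise inequality $\phi(p) + \phi^\mc(q) \ge \mc(p,q)$ — valid for every $\mc$-convex $\phi$ by definition of $\phi^\mc$, with equality precisely on $\partial_\mc\phi$ — are needed. Since such $\phi$ is bounded below by an $L^1(P)$ function and $\phi^\mc$ by an $L^1(Q)$ function, integrating this inequality against any $\Pi\in\cpl(P,Q)$ and using Tonelli yields
\[
\int\phi\,dP + \int\phi^\mc\,dQ = \int\big(\phi(p)+\phi^\mc(q)\big)\,d\Pi(p,q) \ge \int\mc\,d\Pi,
\]
while $\int\mc\,d\Pi \le \mc(P,Q) \le \int\phi\,dP + \int\phi^\mc\,dQ$ by weak duality. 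Consequently, if $\Pi$ and $\phi$ are primal/dual optimal then all three quantities agree and the nonnegative integrand $\phi(p)+\phi^\mc(q)-\mc(p,q)$ vanishes $\Pi$-a.s., i.e.\ $\Pi(\partial_\mc\phi)=1$; conversely, $\Pi(\partial_\mc\phi)=1$ forces equality throughout, so $\Pi$ is primal optimal and $\phi$ dual optimal. I expect the joint $\W_2$-continuity of the iterated $\mc$ to be the only non-routine step, which is why I isolate the identity $\mc(p,p) = \W_2^2(p,e_{N-1})$; the remainder is bookkeeping around \Cref{thm:ftot} and standard integrability checks.
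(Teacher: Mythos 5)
Your proposal is correct and takes the same route as the paper: specialize \Cref{thm:ftot} to $\Xs = \Pc_2^{N-1}(\H)$ and $c = \mc$. The paper in fact gives no explicit proof of \Cref{thm:ftot.mc} beyond the pointer to the FTOT, together with the remark verifying integrability of $\mc$-convex potentials; what you supply is exactly the verification the paper elides. Two of your steps deserve to be flagged as genuine additions rather than bookkeeping. First, the joint $\W_2$-continuity of the iterated $\mc$, which you obtain cleanly via the identity $\mc(p,p) = \W_2^2(p,e_{N-1})$ and then $\mc(p,q) = \tfrac12\bigl(\W_2^2(p,e_{N-1}) + \W_2^2(q,e_{N-1}) - \W_2^2(p,q)\bigr)$; this is not recorded anywhere in the paper and is needed for the FTOT hypotheses. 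Second, your direct derivation of the complementary slackness biconditional is slightly stronger than what \Cref{thm:ftot} literally says: that theorem asserts that \emph{for a dual-optimal} $\phi$, primal optimality of $\pi$ is equivalent to $\supp(\pi)\subset\partial_c\phi$, whereas \Cref{thm:ftot.mc} characterizes joint optimality of an arbitrary admissible pair $(\Pi,\phi)$; your weak-duality sandwich argument handles this cleanly in both directions. One small imprecision: you justify $\int \mc(p,p)\,dP = \mc(P,P) < \infty$ by citing \Cref{lem:W2_MC_connection_iterated}(2), but that item only gives $\mc(P,P) = \int |x|^2\, d(I^{N-1}P)$; one also needs to apply it at level $N-1$ and combine with the tower property $\int \bigl(\int f\, d(I^{N-2}p)\bigr) dP(p) = \int f\, d(I^{N-1}P)$ of the intensity operator. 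The conclusion is of course correct, and the identity is elementary, but the citation on its own does not quite deliver it.
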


\begin{remark}
    Any $\mc$-convex function 
    $\phi : \Pc_2^{N-1}(\H) \to (-\infty,+\infty]$ 
    is integrable with respect to every measure 
    $P \in \Pc_2^N(\H)$.  
    In particular, the integrals 
    $\int \phi \, dP$ and $\int \phi^{\mc} \, dQ$ 
    in the dual formulation are always well defined 
    (possibly taking the value $+\infty$).
    
    Indeed, since $\phi$ is $\mc$-convex, its $\mc$-transform is proper.
    Hence, there is $q \in \Pc_2^{N-1}(H)$ such that $\phi^\mc(q) \in \R$.
    Recalling that $\W_2^2(p,q) = \mc(p,p) - 2 \mc(p,q) + \mc(q,q)$ by \Cref{lem:W2_MC_connection_iterated} and using the inequality $\phi(p) \ge \mc(p,q) - \phi^\mc(q)$, we conclude that $2 \phi$ is dominated from below by the $P$-integrable function $p \mapsto \mc(p,p) + \mc(q,q) - \W_2^2(p,q) - 2 \phi^\mc(q)$.
\end{remark}

Above we discussed how to solve the optimal transport problem on the outer most layer.
However, this procedure can be iterated according to the dynamic programming principle.
More specifically, if $P,Q \in \Pc_2^N(\H)$ and $\Pi \in \cplopt(P,Q)$, then one can again solve the optimal transport problems between $\Pi$-almost every pair $(p,q) \in \Pc_2^{N-1}(H) \times \Pc_2^{N-1}(H)$ and gets a selection of optimizers $\pi_{p,q} \in \cplopt(p,q)$. By gluing $\Pi$ and the kernel $(\pi_{p,q})_{p,q}$, one arrives at a measure that contains the information of the optimal transport on the first two layers. Continuing this procedure leads to the following concept of $N$-couplings. 

To give the definition of $N$-couplings, we need to introduce a notation for iterated pushforwards. Let $\Xs$ and $\Ys$ be Polish spaces and $f : \Xs \to \Ys$ be Borel. We write $\Pc[f]$ for the pushforward map 
\[
\Pc[f] : \Pc(\Xs) \to \Pc(\Ys) : \mu \mapsto f_\#\mu
\]
and inductively
\[
\Pc^N[f] : \Pc^N(\Xs) \to \Pc^N(\Ys) : P \mapsto (\Pc^{N-1}[f])_\#P.
\]

\begin{definition}\label{def:CplN}
    For $P,Q \in \Pc_2^N(\H)$, we set 
    \[
    \cpl^N(P,Q) = \{ \Pi \in \Pc^N_2(\H \times \H) :  \Pc^N[\pr_1](\Pi) = P, \Pc^N[\pr_2](\Pi) = Q  \},
    \]
    and call elements of $\cpl^N(P,Q)$ $N$-couplings of $P$ and $Q$.
\end{definition}

The following results make precise that elements of $\cpl^N(P,Q)$ encode the full information of the transport across all layers

\begin{proposition}\label{prop:DPP_CplN_MC}
    For $P, Q \in \Pc_2^N(\H)$ we have 
    \begin{equation}
        \label{eq:DPP_CplN_MC}
        \mc(P,Q) = \max_{ \Pi \in \cpl^N(P,Q) } \int x \cdot y \, d I^{N-1} \Pi (x,y).
    \end{equation}
\end{proposition}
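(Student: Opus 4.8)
The plan is to argue by induction on $N$. For $N=1$ the claim reduces to $\mc(P,Q)=\max_{\Pi\in\cpl(P,Q)}\int x\cdot y\,d\Pi$, since $I^{0}$ is the identity and $\cpl^{1}(P,Q)=\cpl(P,Q)$; this is the definition of $\mc$ on $\Pc_2(\H)$ together with attainment of the primal problem from \Cref{thm:ftot} applied to $c(x,y)=x\cdot y$, whose growth on $\Pc_2(\H)$ is controlled by the second moments of the marginals. So assume the identity holds at level $N-1$ and fix $P,Q\in\Pc_2^{N}(\H)$. Throughout I would use the Fubini-type identity $\int g\,dI^{N-1}\Pi=\int\big(\int g\,dI^{N-2}r\big)\,\Pi(dr)$ for $\Pi\in\Pc_2^{N}(\X)$ and $g$ integrable against $I^{N-1}\Pi$, which follows from the characteristic property of the intensity operator by a straightforward induction; it applies to $g(x,y)=x\cdot y$ because $\Pi\in\Pc_2^{N}(\H\times\H)$ has finite iterated second moment, i.e.\ $\int(|x|^{2}+|y|^{2})\,dI^{N-1}\Pi<\infty$ by \Cref{lem:W2_MC_connection_iterated}.

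For the inequality ``$\le$'', let $\Pi\in\cpl^{N}(P,Q)$ and set $\Phi:=(\Pc^{N-1}[\pr_1],\Pc^{N-1}[\pr_2]):\Pc_2^{N-1}(\H\times\H)\to\Pc_2^{N-1}(\H)\times\Pc_2^{N-1}(\H)$, so that $\tilde\Pi:=\Phi_{\#}\Pi\in\cpl(P,Q)$ by the marginal conditions defining $\cpl^{N}$. Every $r$ in the support of $\Pi$ satisfies $r\in\cpl^{N-1}(\Phi(r))$, so the inductive hypothesis gives $\int x\cdot y\,dI^{N-2}r\le\mc(\Phi(r))$. Integrating against $\Pi$, using the Fubini identity, and changing variables via $\Phi$ yields
\[
\int x\cdot y\,dI^{N-1}\Pi\ \le\ \int\mc(\Phi(r))\,\Pi(dr)\ =\ \int\mc(p,q)\,\tilde\Pi(dp,dq)\ \le\ \mc(P,Q),
\]
the last step because $\tilde\Pi\in\cpl(P,Q)$ and by \Cref{def:iteratedMC}. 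For attainment, \Cref{thm:ftot.mc} (the fundamental theorem with cost $c=\mc$ on $\Pc_2^{N-1}(\H)$) provides $\tilde\Pi^{*}\in\cpl(P,Q)$ with $\int\mc(p,q)\,\tilde\Pi^{*}(dp,dq)=\mc(P,Q)$. By the inductive hypothesis, for each $(p,q)$ the maximum $\max_{r\in\cpl^{N-1}(p,q)}\int x\cdot y\,dI^{N-2}r=\mc(p,q)$ is attained; a measurable selection then yields a Borel map $(p,q)\mapsto r^{*}_{p,q}\in\cpl^{N-1}(p,q)$ realizing it. Setting $\Pi^{*}:=\big((p,q)\mapsto r^{*}_{p,q}\big)_{\#}\tilde\Pi^{*}$, the fact that each $r^{*}_{p,q}\in\cpl^{N-1}(p,q)$ gives $(\Pc^{N-1}[\pr_1])_{\#}\Pi^{*}=(\pr_1)_{\#}\tilde\Pi^{*}=P$ and likewise $(\Pc^{N-1}[\pr_2])_{\#}\Pi^{*}=Q$, so $\Pi^{*}\in\cpl^{N}(P,Q)$ (its iterated second moment being finite by the moment bounds on $P,Q$ together with the displayed computation). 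Using the Fubini identity once more,
\[
\int x\cdot y\,dI^{N-1}\Pi^{*}\ =\ \int\Big(\int x\cdot y\,dI^{N-2}r^{*}_{p,q}\Big)\tilde\Pi^{*}(dp,dq)\ =\ \int\mc(p,q)\,\tilde\Pi^{*}(dp,dq)\ =\ \mc(P,Q),
\]
which closes the induction.

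The main obstacle is the measurable selection step: one must choose, Borel-measurably in $(p,q)$, an $(N-1)$-coupling attaining the level-$(N-1)$ maximum. The inductive hypothesis only supplies the optimizer for each fixed $(p,q)$, and one must upgrade this to a measurable selector. The graph of the relevant argmax multifunction is analytic (the value function $(p,q)\mapsto\mc(p,q)$ and the evaluation $r\mapsto\int x\cdot y\,dI^{N-2}r$ are Borel, and $(p,q)\mapsto\cpl^{N-1}(p,q)$ has Borel graph), so a Jankov--von Neumann uniformization --- of the kind used elsewhere in the paper, collected in the appendix --- produces a universally measurable selector, which suffices after modification on a $\tilde\Pi^{*}$-null set. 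A secondary, purely bookkeeping point is to verify the Fubini identity for the intensity operator and the finiteness of the relevant iterated second moments at each level, so that every integral of the unbounded integrand $x\cdot y$ is legitimate.
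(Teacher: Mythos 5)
Your proof is correct and takes essentially the same route as the paper: induction on $N$, the dynamic-programming decomposition through $\cpl(P,Q)$ over the outermost layer, and a Jankov--von Neumann measurable selection to glue a fibre-wise family of inner optimizers $r^*_{p,q} \in \cpl^{N-1}(p,q)$ into a global $\Pi^* \in \cpl^N(P,Q)$; the intensity-operator Fubini identity then closes the loop. The one genuine difference is that you explicitly write out the ``$\le$'' direction (pushing an arbitrary $\Pi \in \cpl^N(P,Q)$ forward under $\Phi = (\Pc^{N-1}[\pr_1],\Pc^{N-1}[\pr_2])$ to land in $\cpl(P,Q)$ and applying the inductive bound fibre-wise), whereas the paper only constructs an attaining $\Pi$ and leaves the reverse inequality implicit; your version is the more complete exposition. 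Two small remarks: in the inductive step you first call the selector ``Borel'' but then correctly downgrade to universally measurable in your final paragraph --- the latter is what Jankov--von Neumann actually gives, and it does suffice after modification on a $\tilde\Pi^*$-null set exactly as you say. Also note that the paper has a typo writing $\inf$ instead of $\sup$ in its statement of the inductive hypothesis; your statement has the correct orientation.
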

\begin{proof}
    For $N=1$, the assertion follows directly from the definition of $\mc$.
    
    Next, assume that the statement holds for $N-1$. 
    By definition we have
    \[
        \mc(P,Q) = \max_{\pi \in \cpl(P,Q)} \int \mc(p,q) \, \Pi(dp,dq).
    \]
    Fix an optimizer $\pi \in \cpl(P,Q) \subset \Pc_2( \Pc_2^{N-1}(\H) \times \Pc_2^{N-1}(\H))$. 
    By the induction hypothesis,
    \begin{equation}
        \label{eq:DPP_CplN_MC.1}
        \mc(p,q) = \inf_{\Pi_{p,q} \in \cpl^{N-1}(p,q)} \int x \cdot y \, dI^{N-2}\Pi_{p,q}(x,y),
    \end{equation}
    for $p,q \in \Pc_2^{N-1}(\H)$.
    By a standard measurable selection argument (e.g.\ the Jankov--von Neumann uniformization theorem \cite[Theorem 18.1]{Ke95}), there is a universally measurable optimal selector 
    \[
    \Phi : \Pc_2^{N-1}(\H) \times \Pc_2^{N-1}(\H) \to \Pc_2^{N-1}(\H \times \H) :\quad (p,q) \mapsto \Pi_{p,q}
    \]
    for the right-hand side of \eqref{eq:DPP_CplN_MC.1}.
    Define $\Pi := \Phi_\# \pi \in \Pc_2^N(\H \times \H)$. 
    
    The marginal constraint $\Pi_{p,q} \in \cpl^{N-1}(p,q)$ amounts to $\Pc^{N-1}[\pr_i] \circ \Phi = \pr_i$ for $i \in \{1,2\}$.
    Therefore, we obtain that
    \begin{align*}
        \Pc^N[\pr_1](\Pi) = \Pc^N[\pr_1]( \Pc[\Phi](\pi)) = \Pc[  \Pc^{N-1}[\pr_1] \circ \Phi ](\pi) = \Pc[ \pr_1 ](\pi) = P,
    \end{align*}
    and with the same argument $\Pc^N[\pr_2](\Pi) = Q$, that is, $\Pi \in \cpl^N(P,Q)$.
    Note that $\int \Pi_{p,q} \, \pi(dp,dq) = I ( \Phi_\# \pi ) = I \Pi$.
    Hence, by optimality of $\pi$,
    \begin{align*}
        \mc(P,Q) &= \int \mc(p,q) \, \pi(dp,dq)
        \\
        &= \iint x \cdot y \, I^{N-2}\Pi_{p,q}(dx,dy) \, \pi(dp,dq) \\
        &= \int x \cdot y \, I^{N-1}\Pi(dx,dy).
    \end{align*}
    This completes the induction step and proves \eqref{eq:DPP_CplN_MC}.
\end{proof}

\section{Probabilistic representation of iterated probability measures}
\label{sec:AdLift}

The adapted Lions lift is designed to provide a Hilbert space representation of functionals on $\mathcal{P}_2^{N}(H)$ and will play a central role in analyzing the structure of $\W_2^2$-optimal couplings of measures on $\mathcal{P}_2^{N}(H)$. In this section, we introduce the necessary notation and establish basic properties of the representation of iterated probability measures in terms of filtrations. These results form the foundation for the study of the adapted Lions lift in the subsequent \Cref{sec:MC_cx_ad}.

Recall from \Cref{sec:BrenierAndLiftsIntro} that $(\F_t)_{t=1}^N$
denotes the coordinate  filtration\footnote{Note that instead of $([0,1]^N, \lambda, (\F_t)_{t=1}^N)$ we could also work with an abstract filtered space
$(\Omega,\G,\P,(\G_t)_{t=1}^N)$ where  $(\Omega,\G,\P)$ is standard Borel and for each $t<N$ there exists a continuous $\G_{t+1}$-measurable random variable which is independent of $\G_t$.} on  $([0,1]^N,  \lambda)$ and   that  
\begin{align}\label{eq:adlaw6}
 \lawad(X)=\law(\law( \ldots \law(X| \F_{N-1}) \ldots | \F_1)) \in \Pc_2^N(H),\quad  \quad   X\simad Y \text{ if }  \lawad(X) = \lawad(Y)\end{align} 
 for $X,Y \in L_2([0,1]^N; H).$
In this section we show that every $P\in \Pc_2^N(\H)$ can be represented in this way, establish an adapted transfer priniciple and derive a Skorokhod representation theorem that links convergence on $L_2([0,1]^N; H)$ and $\Pc_2^N(H^N)$.
 
 To this end, it will be necessary to disentangle towers of measures / iterated conditional laws  which will require considerable notation which we now start to set up. In this section $\Xs$ will always denote a Polish metric space. We will use the notation \eqref{eq:adlaw6} also for  $\Xs$ instead of the separable Hilbert space $H$ and we employ the shorthand  
 $$L_2^N(\Xs):= L_2([0,1]^N; \Xs).$$
For $t \in \{1,\dots,N\}$ we write $ \law^{\F_t}(X) = \law(X|\F_t)$. Writing multiple $\sigma$-algebras in the superscript indicates iterated conditional expectations, e.g.\ $  \law^{\F_{2:1}}(X) =  \law^{\F_2,\F_1}(X) = \law(\law(X|\F_2)|\F_1)$. Moreover, we use that abbreviations
\begin{align}\label{eq:IpDef}
\begin{split}
    \ip_t(X) &= \law^{\F_{(N-1):t}}(X) \qquad t=1,\dots, N-1\\
    \ip_N(X) &= X.
\end{split}
    \end{align}
Note that $\ip_t(X)$ takes values in the space
 $
    A_t := \Pc_2^{N-t}(\H),
$ 
where we use the convention $\Pc_2^0(\H)=\H$. 
We  write 
\begin{align*}
    \ip_{1:t}(X) = (\ip_1(X),\dots, \ip_t(X)), \quad  \ip(X) = \ip_{1:N}(X),
    \quad  A_{1:t} = A_1  \times \dots \times A_t
\end{align*}
and note that $\lawad(X) = \law( \ip_1(X)  )$.

We also recall from the introduction that $T:[0,1]^N\to [0,1]^N$ being bi-adapted means that $T$ is a bijection for which $T, T^{-1}$ are $(\F_t$-$\F_t)$-measurable for every $t\leq N$.
\begin{lemma}\label{lem:iso_pres_ip}
Let  $T: [0,1]^N\to [0,1]^N $ be bi-adapted with $T_{\#}\lambda= \lambda$. Then for $s\le t, X\in L_2^N(H)$
$$\law^{\F_t,\dots, \F_s}(X \circ T^{-1}) = \law^{\F_t, \dots, \F_s}(X) \circ T^{-1} .$$ 
In particular,  $\ip_t(X \circ T^{-1}) = \ip_t(X) \circ T^{-1}$ and $\lawad(X \circ T^{-1}) = \lawad(X)$.
\end{lemma}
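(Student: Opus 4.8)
The strategy is to isolate a single commutation identity and then iterate it along the conditioning chain. I claim that for any Polish space $\Ys$, any $\F_t$ from the filtration, and any random variable $Z : [0,1]^N \to \Ys$,
\[
  \law^{\F_t}(Z \circ T^{-1}) = \law^{\F_t}(Z) \circ T^{-1} \qquad \lambda\text{-a.s.}
\]
Granting this, the lemma follows by induction on the length of the chain $\F_t,\dots,\F_s$: apply the claim first with $Z = X$ and the innermost conditioning $\F_t$ to get $\law^{\F_t}(X\circ T^{-1}) = \law^{\F_t}(X)\circ T^{-1}$; since the right-hand side is again a random variable valued in a Polish space ($\Pc_2^{N-t}(\H)$, with the convention $\Pc_2^0(\H)=\H$), feed it back into the claim at $\F_{t-1}$, and continue down to $\F_s$. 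The two stated consequences are then immediate: $\ip_t(X\circ T^{-1}) = \ip_t(X)\circ T^{-1}$ is the instance of the chain $\F_{N-1},\dots,\F_t$ (and trivial for $t=N$), and
\[
  \lawad(X\circ T^{-1}) = \law\big(\ip_1(X)\circ T^{-1}\big) = \law\big(\ip_1(X)\big) = \lawad(X),
\]
the middle equality because $T^{-1}$ is measure-preserving ($T_\#\lambda=\lambda$ forces $(T^{-1})_\#\lambda=\lambda$).

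To prove the claim, I would test against bounded measurable $f : \Ys\to\R$ and show that $\E[f(Z)\mid\F_t]\circ T^{-1}$ is a version of $\E[f(Z\circ T^{-1})\mid\F_t]$; running $f$ over a countable convergence-determining family then upgrades this to the asserted a.s.\ identity of $\Pc(\Ys)$-valued random variables, regular conditional laws being available since $\Ys$ is Polish. Write $h := \E[f(Z)\mid\F_t]$. First, $h\circ T^{-1}$ is $\F_t$-measurable, since for Borel $B$ one has $(h\circ T^{-1})^{-1}(B) = T(h^{-1}(B))$ and $T$ carries the $\F_t$-set $h^{-1}(B)$ into $\F_t$ because $T^{-1}$ is ($\F_t$-$\F_t$)-measurable. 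Second, for every bounded $\F_t$-measurable $g$, two changes of variables under $T$ (legitimate as $T_\#\lambda=\lambda$) together with $g\circ T$ being $\F_t$-measurable (here $T$ is ($\F_t$-$\F_t$)-measurable) give
\[
  \E\big[(h\circ T^{-1})\,g\big] = \E\big[h\,(g\circ T)\big] = \E\big[f(Z)\,(g\circ T)\big] = \E\big[f(Z\circ T^{-1})\,g\big],
\]
the middle equality being the defining property of $h$. Together with the measurability, this identifies $h\circ T^{-1}$ as $\E[f(Z\circ T^{-1})\mid\F_t]$.

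The point that needs care is the interplay between bi-adaptedness and the two directions of the change of variables: one half of the hypothesis — that $T$ maps $\F_t$ into $\F_t$ — keeps $g\circ T$ inside $\F_t$, while the other half — that $T^{-1}$ maps $\F_t$ into $\F_t$, equivalently that $T$ maps $\F_t$-sets to $\F_t$-sets — keeps $h\circ T^{-1}$ inside $\F_t$. Both halves are genuinely used, and neither alone suffices; this is the only substantive issue, the remaining ingredients (existence of regular conditional laws on Polish spaces, and passing from a countable test family to equality of random measures) being routine.
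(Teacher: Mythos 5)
Your proof is correct and follows essentially the same route as the paper's: establish the one-step identity $\law^{\F_t}(Z\circ T^{-1})=\law^{\F_t}(Z)\circ T^{-1}$ by the two-sided change-of-variables argument (your identity $h\circ T^{-1}=\E[f(Z\circ T^{-1})\mid\F_t]$ is exactly the paper's $Y:=\E[f(X\circ T^{-1})\mid\F_t]\circ T=\E[f(X)\mid\F_t]$ read in the other direction), then iterate along the conditioning chain using that the intermediate objects live in Polish spaces. Your explicit note on using a countable convergence-determining family to upgrade from test functions to an a.s.\ identity of $\Pc(\Ys)$-valued random variables is a small elaboration the paper leaves implicit, but it is the same argument.
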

\begin{proof}
We first show that for every random variable $X \in L_2^N( \Xs)$ and every $t$, we have $\law(X \circ T^{-1} |\F_t) = \law(X |\F_t) \circ T^{-1}$.  To that end, let $f : \Xs \to \R$ be bounded and Borel, and write 
$$
Y := \E[f(X \circ T^{-1}) | \F_t] \circ T
$$
and note that $Y$ is $\F_t$-measurable. Moreover, we find for every bounded $\F_t$-measurable random variable $Z$ using that $Z \circ T^{-1}$ is $\F_t$-measurable
\begin{align*}
    \E[Y Z] = \E[ \E[f(X \circ T^{-1}) | \F_t]  Z \circ T^{-1}  ] = \E[ f(X \circ T^{-1}) Z \circ T^{-1} ] = \E[ f(X) Z ].
\end{align*}
Hence, $Y = \E[f(X)|\F_t]$. 
As this is true for every bounded Borel $f : \Xs \to \R$, we conclude $\law(X \circ T^{-1} | \F_t) = \law(X |\F_t) \circ T^{-1}$.

Since this claim is proven for random variables with values in an  Polish space, we can iteratively apply it and obtain
\[
\law^{\F_t,\dots, \F_s}(X \circ T^{-1}) =   \law^{\F_t,\dots, \F_{s+1}}( \law^{\F_s}(X) \circ T^{-1}) =     \cdots = \law^{\F_t, \dots, \F_s}(X) \circ T^{-1}. \qedhere
\]
\end{proof}

\begin{proposition}\label{prop:ReprLawadAsRV}
    Let $P \in \Pc_2^N(\mathcal{X})$. Then there is $X \in L_2^N(\Xs)$ such that $\lawad(X)=P$.  
\end{proposition}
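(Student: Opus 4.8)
The plan is to prove the slightly stronger statement by induction on $N$: \emph{there is a Borel map $\beta_N : \Pc_2^N(\Xs)\times[0,1]^N \to \Xs$ such that for every $P\in\Pc_2^N(\Xs)$ the section $\beta_N(P,\cdot)$ belongs to $L_2^N(\Xs)$ and $\lawad(\beta_N(P,\cdot))=P$.} The Proposition is then the special case $X:=\beta_N(P,\cdot)$. Working with the parametrized map $\beta_N$ rather than just an existence statement is what makes the induction go through, since in the inductive step one needs the representing random variable to depend measurably on a parameter.

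For the base case $N=1$, where $\lawad(X)=\law(X)=X_\#\lambda$, I would build $\beta_1:\Pc_2(\Xs)\times[0,1]\to\Xs$ as a measurable Skorokhod representation. Fix a Kuratowski Borel isomorphism $\iota$ of $\Xs$ onto a Borel set $B\subseteq[0,1]$, push $\mu$ forward to $\iota_\#\mu\in\Pc([0,1])$, apply the jointly Borel quantile map $(\nu,u)\mapsto Q_\nu(u):=\inf\{x:\nu([0,x])\ge u\}$ (jointly Borel because $\nu\mapsto\nu([0,x])$ is Borel), and pull back by $\iota^{-1}$, setting the value equal to the base point $x_0$ on the Borel set $\{(\mu,u):Q_{\iota_\#\mu}(u)\notin B\}$, which for each fixed $\mu$ is $\lambda$-null. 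Then $\beta_1(\mu,\cdot)_\#\lambda=\mu$ and $\int_0^1 d(\beta_1(\mu,u),x_0)^2\,du=\int d(x,x_0)^2\,\mu(dx)<\infty$, so $\beta_1(\mu,\cdot)\in L_2^1(\Xs)$.

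For the inductive step I would set $\Xs':=\Pc_2^N(\Xs)$, a Polish metric space under $\W_2$, apply the base case to $\Xs'$ to obtain a Borel $g:=\beta_1^{\Xs'}:\Pc_2(\Xs')\times[0,1]\to\Xs'$ with $g(P,\cdot)_\#\lambda=P$, and define
$$\beta_{N+1}(P,u_1,u_2,\dots,u_{N+1}):=\beta_N\bigl(g(P,u_1),\,u_2,\dots,u_{N+1}\bigr),$$
which is Borel as a composition of Borel maps. Identifying $([0,1]^{N+1},\lambda)$ with $([0,1],\lambda)\otimes([0,1]^N,\lambda)$ through $(u_1,v)$, $v=(u_2,\dots,u_{N+1})$, and writing $(\G_s)_{s=1}^N$ for the coordinate filtration of $[0,1]^N$, one observes that for $t\ge 1$ the conditioning on $\F_t$ factorizes over $u_1$ and, at each fixed $u_1$, coincides with conditioning on $\G_{t-1}$ on the $v$-space. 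Hence, for $X:=\beta_{N+1}(P,\cdot)$ and $Y_{u_1}:=\beta_N(g(P,u_1),\cdot)\in L_2^N(\Xs)$, the $N-1$ inner conditionings on $\F_N,\dots,\F_2$ produce, at fixed $u_1$, exactly $\law^{\G_{N-1},\dots,\G_1}(Y_{u_1})=\ip_1(Y_{u_1})\in\Pc_2^{N-1}(\Xs)$, which is $\G_1$-measurable, i.e.\ a genuine function of $u_2$; the final conditioning on $\F_1=\sigma(u_1)$ therefore does \emph{not} collapse it to a Dirac but returns its unconditional law, namely $\law(\ip_1(Y_{u_1}))=\lawad(Y_{u_1})=g(P,u_1)$ by the induction hypothesis. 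Consequently $\ip_1(X)=(u_1\mapsto g(P,u_1))$ and $\lawad(X)=g(P,\cdot)_\#\lambda=P$. Finally, Tonelli's theorem together with the identity $\|\beta_N(p,\cdot)\|_{L_2}^2=\int d(x,x_0)^2\,(I^{N-1}p)(dx)$ (valid since $\lawad(\beta_N(p,\cdot))=p$) and the defining finiteness of the iterated second moment of $P$ give $\beta_{N+1}(P,\cdot)\in L_2^{N+1}(\Xs)$, closing the induction.

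The step I expect to be the main obstacle is the bookkeeping in the verification $\lawad(\beta_{N+1}(P,\cdot))=P$: one must unwind the tower of iterated conditional laws on the product space while tracking which coordinate block each $\sigma$-algebra reads, and in particular argue cleanly that the outermost conditioning on $\F_1$ turns a $\G_1$-measurable, $u_2$-dependent $\Pc_2^{N-1}(\Xs)$-valued object into an unconditional law rather than a point mass. The measurable-selection ingredient in the base case is routine but should be stated with some care because $\Xs$ is only assumed Polish metric.
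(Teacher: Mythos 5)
Your proof is correct and follows essentially the same route as the paper: both prove the stronger parametrized statement (a Borel map $\Pc_2^N(\Xs)\times[0,1]^N\to\Xs$) by induction, both realize the base case via a Borel isomorphism plus inverse transform sampling, and both take the inductive step $\beta_{N+1}(P,u_1,v):=\beta_N(g(P,u_1),v)$ and then argue that the inner conditionings reproduce $\ip_1$ of the inner piece while the final conditioning on $\F_1$ yields the unconditional law $g(P,u_1)$. The paper compresses the bookkeeping you flagged into one sentence (citing independence of $\F_1$ from $U_{2:N}$ and $\F_1$-measurability of $\Phi_1(P,U_1)$), but the content is identical.
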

\begin{proof}
We assume w.l.o.g. that $\mathcal{X}$ is uncountable and  we denote with $(U_t)_{t=1}^N$ the coordinate process on $[0,1]^N$. We show by induction on $N$ that there is a Borel map 
\begin{align*}
    \Phi_N : \Pc_2^N(\mathcal{X}) \times  [0,1]^N \to \mathcal{X}
\end{align*}
such that $\lawad(\Phi_N(P,U_{1:N}))=P$ for every $P \in \Pc_2^N(\R^d)$.

To show the claim for $N=1$, let $\Psi: \R \to \mathcal{X}$ be a Borel isomorphism. If $\mu \in\Pc_2(\R)$, we write $Q_\mu$ for its quantile function. We set 
\[
\Phi_1(P,U_1) = \Psi(Q_{\Psi^{-1}_\#P}(U_1))
\]
and observe that 
\[
\law( \Phi_1(P,U_1) ) = \Psi_\# \law( (Q_{\Psi^{-1}_\#P}(U_1) ) = \Psi_\#\Psi^{-1}_\#P=P.
\]
Now suppose that the claim is true for $N-1$. By the claim for $N=1$ applied with the metric space $\Pc_2^{N-1}(\mathcal{X})$ there is 
\[
\Phi_1 : \Pc_2^N(\mathcal{X}) \times [0,1] \to \Pc_2^{N-1}(\mathcal{X})
\]
such that $\law( \Phi_1(P,U_1)) = P$. 
By the claim for $N-1$ there is 
\[
\Phi_{N-1} : \Pc_2^{N-1}(\mathcal{X}) \times [0,1]^{N-1} \to \mathcal{X}
\]
such that $\law(\law^{\F_{N-1:2}}(\Phi_{N-1}(p,U_{2:N}))) = p$ for every $p \in \Pc_2^{N-1}(\mathcal{X})$. 
We then set
\[
\Phi_N(P,U_{1:N}) := \Phi_{N-1}(\Phi_1(P,U_1),U_{2:N}).
\]
As $\F_1=\sigma(U_1)$ is independent of $U_{2:N}$ and $\Phi_1(P,U_1)$ is $\F_1$-measurable, 
$
\law^{\F_{N-1:1}}( \Phi_N(P,U_{1:N}) ) = \Phi_1(P,U_1).
$
Hence, $\lawad(\Phi_N(P,U_{1:N})) = \law(\Phi_1(P,U_1)) =P$.
\end{proof}

Note that $\cpl(\lawad(X),\lawad(Y))$ does not capture the entire information of the joint adapted distribution of $X$ and $Y$. For this reason we introduce the following notion.

\begin{lemma} \label{lem:cplN}
For $P, Q \in \Pc_2^N(\H)$ we have 
\begin{align*}
    \cpl^N(P,Q) &=  \big\{   \lawad( X,Y )   : X \simad  P, Y \simad Q  \big \}. 
\end{align*}
\end{lemma}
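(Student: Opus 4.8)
The plan is to prove the two inclusions separately, each reduced to a single ``commutation'' identity between the state-space pushforward operator $\Pc^N[\pr_i]$ and the iterated conditioning that defines $\lawad$. The only ingredient not already in the excerpt is the elementary fact that conditional laws commute with postcomposition by Borel maps: for Polish spaces $\Xs,\Ys$, a Borel $f : \Xs \to \Ys$, an $\Xs$-valued random variable $Z$ on $([0,1]^N,\lambda)$, and any sub-$\sigma$-algebra $\G$, one has $\law(f(Z)\mid\G) = f_\#\law(Z\mid\G)$ almost surely; this follows by testing against bounded Borel $g$ on $\Ys$ and invoking the defining property of the conditional law, exactly in the spirit of the computation in the proof of \Cref{lem:iso_pres_ip}.

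Using this, the first step will be to establish the key identity: for every $Z \in L_2^N(\H \times \H)$, writing $X := \pr_1 \circ Z$ and $Y := \pr_2 \circ Z$ (both in $L_2^N(\H)$),
$$\lawad(X) = \Pc^N[\pr_1]\big(\lawad(Z)\big), \qquad \lawad(Y) = \Pc^N[\pr_2]\big(\lawad(Z)\big).$$
I would prove this by downward induction on $t \in \{N,N-1,\dots,1\}$ for the assertion $\Pc^{N-t}[\pr_1](\ip_t(Z)) = \ip_t(X)$. The base case $t=N$ is just $\ip_N = \mathrm{id}$. For the inductive step, $\ip_t(Z) = \law(\ip_{t+1}(Z)\mid\F_t)$ with $\ip_{t+1}(Z)$ taking values in $\Pc_2^{N-t-1}(\H\times\H)$; applying the commutation identity with $f = \Pc^{N-t-1}[\pr_1]$ and $\G = \F_t$, and noting $\Pc[\Pc^{N-t-1}[\pr_1]] = \Pc^{N-t}[\pr_1]$, gives $\Pc^{N-t}[\pr_1](\ip_t(Z)) = \law(\ip_{t+1}(X)\mid\F_t) = \ip_t(X)$. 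Pushing forward the law at $t=1$ and using $\lawad = \law\circ\ip_1$ yields the displayed identity; the statement for $Y$ is identical with $\pr_2$.

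Given the key identity, ``$\supseteq$'' is immediate: if $X \simad P$ and $Y \simad Q$ then $Z := (X,Y) \in L_2^N(\H\times\H)$, so $\lawad(X,Y) = \lawad(Z) \in \Pc_2^N(\H\times\H)$ has $\Pc^N[\pr_1](\lawad(Z)) = \lawad(X) = P$ and $\Pc^N[\pr_2](\lawad(Z)) = \lawad(Y) = Q$, i.e.\ $\lawad(X,Y) \in \cpl^N(P,Q)$. For ``$\subseteq$'', take $\Pi \in \cpl^N(P,Q)$; by definition $\Pi \in \Pc_2^N(\H\times\H)$, so \Cref{prop:ReprLawadAsRV} applied to the Polish space $\Xs = \H\times\H$ produces $Z \in L_2^N(\H\times\H)$ with $\lawad(Z) = \Pi$. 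Setting $X = \pr_1\circ Z$, $Y = \pr_2\circ Z$, the key identity gives $\lawad(X) = \Pc^N[\pr_1](\Pi) = P$ and $\lawad(Y) = \Pc^N[\pr_2](\Pi) = Q$, hence $X \simad P$, $Y \simad Q$ and $\Pi = \lawad(X,Y)$ belongs to the right-hand set.

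I expect the main obstacle to be purely notational: keeping the iteration indices straight in the inductive step, in particular verifying at each level that the object being conditioned lies in $\Pc_2^{N-t-1}(\H\times\H)$ and that the Borel map to which the commutation identity is applied is exactly $\Pc^{N-t-1}[\pr_1]$, so that its induced state-space pushforward is $\Pc^{N-t}[\pr_1]$ and the recursion closes. All measurability and square-integrability issues are already subsumed in \Cref{prop:ReprLawadAsRV} and in the definitions of $\Pc^N[\cdot]$ and $\lawad$ (conditional expectations being $L_2$-contractions), so no separate measurable-selection argument is needed here.
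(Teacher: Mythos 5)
Your proof is correct and follows exactly the same route as the paper: use \Cref{prop:ReprLawadAsRV} to realize $\Pi \in \cpl^N(P,Q)$ as $\lawad(Z)$ with $Z \in L_2^N(\H\times\H)$, and exploit the commutation $\lawad(\pr_i\circ Z) = \Pc^N[\pr_i](\lawad(Z))$ in both directions. The only difference is one of detail: the paper invokes this commutation identity without comment, while your downward induction on $t$, together with the standard fact that conditional laws commute with Borel postcomposition, supplies the missing justification.
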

\begin{proof}
If $\Pi \in \cpl^N(P,Q)$, then by \Cref{prop:ReprLawadAsRV} there is an $\H \times \H$-valued random variable $(X,Y)$ with $\lawad(X,Y)=\Pi$. We then have $\lawad(X) = \lawad(\pr_1(X,Y))= \Pc^N[\pr_1](\Pi)=P$ and with the same argument $\lawad(X) = Q$. Conversely, if $X \simad P$ and $Y \simad Q$, then $\Pc^N[\pr_1](\lawad(X,Y))=\lawad(X)=P$ and $\Pc^N[\pr_2](\lawad(X,Y))=\lawad(Y)=Q$.
\end{proof}

\begin{corollary}\label{cor:MC_max_over_RV}
For $P, Q \in \Pc_2^N(\H)$ we have 
    \[
    \mc(P,Q) = \max_{ X \simad P, Y \simad Q} \E[ X \cdot Y].
    \]  
\end{corollary}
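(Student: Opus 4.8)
The plan is to read off \Cref{cor:MC_max_over_RV} from the dynamic programming identity \Cref{prop:DPP_CplN_MC} together with the description of $N$-couplings in \Cref{lem:cplN}. By \Cref{prop:DPP_CplN_MC} we have
\[
\mc(P,Q) = \max_{\Pi \in \cpl^N(P,Q)} \int x \cdot y \, d I^{N-1}\Pi(x,y),
\]
and by \Cref{lem:cplN} the set $\cpl^N(P,Q)$ is precisely $\{\lawad(X,Y) : X \simad P,\, Y \simad Q\}$. Hence the corollary reduces to the claim that for $X,Y \in L_2^N(\H)$,
\begin{equation}\label{eq:cor_MC_intensity}
\int x \cdot y \, d I^{N-1}\lawad(X,Y)(x,y) = \E[X \cdot Y].
\end{equation}
Granting \eqref{eq:cor_MC_intensity}: every $\E[X\cdot Y]$ with $X\simad P$, $Y\simad Q$ arises as $\int x\cdot y\, dI^{N-1}\Pi(x,y)$ for the $N$-coupling $\Pi = \lawad(X,Y)$, hence is $\le\mc(P,Q)$; and a maximizing $\Pi^\ast\in\cpl^N(P,Q)$ from \Cref{prop:DPP_CplN_MC} is of the form $\lawad(X^\ast,Y^\ast)$ with $X^\ast\simad P$, $Y^\ast\simad Q$, so $\E[X^\ast\cdot Y^\ast]=\mc(P,Q)$ and the maximum is attained.

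To establish \eqref{eq:cor_MC_intensity} I would prove the more general fact that the iterated intensity operator recovers the non-adapted law: for every Polish space $\Xs$ and every $Z\in L_2^N(\Xs)$,
\begin{equation}\label{eq:cor_MC_I_tower}
I^{N-1}\lawad(Z) = \law(Z);
\end{equation}
applying this with $\Xs = \H\times\H$ and $Z=(X,Y)$ gives $I^{N-1}\lawad(X,Y)=\law(X,Y)$, and since $(X,Y)\in L_2$ the bilinear form $x\cdot y$ is $\law(X,Y)$-integrable by Cauchy--Schwarz, which yields \eqref{eq:cor_MC_intensity}. Equation \eqref{eq:cor_MC_I_tower} is proved by induction on $N$: for $N=1$ it is vacuous since $\lawad(Z)=\law(Z)$ and $I^0=\id$. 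For the inductive step, let $W:=\ip_2(Z)$ denote the $\Pc_2^{N-2}(\Xs)$-valued iterated conditional law $\law^{\F_{(N-1):2}}(Z)$ (with $W=Z$ when $N=2$), so that $\lawad(Z)=\law(\law^{\F_1}(W))$. The defining property of the intensity operator together with the tower property of conditional expectation give, for bounded Borel $f$,
\[
\int f \, d\, I(\lawad(Z)) = \E\Big[\int f \, d\law^{\F_1}(W)\Big] = \E\big[\E[f(W)\mid\F_1]\big] = \E[f(W)],
\]
so $I(\lawad(Z)) = \law(W)$. Since $\law(W)$ is exactly the adapted law of $Z$ relative to the $(N-1)$-step coordinate filtration $\F_2\subset\dots\subset\F_N$, the induction hypothesis gives $I^{N-2}\law(W)=\law(Z)$, and therefore $I^{N-1}\lawad(Z)=I^{N-2}(I(\lawad(Z)))=\law(Z)$.

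The closest thing to an obstacle is purely notational: matching filtration indices in the induction --- verifying that $\law(\ip_2(Z))$ genuinely is the adapted law of $Z$ for the shifted, length-$(N-1)$ filtration $\F_2\subset\dots\subset\F_N$ --- together with the (harmless) analytic point that $I^{N-1}\lawad(X,Y)$ should first be identified as a probability measure through bounded test functions before one integrates the unbounded bilinear form $x\cdot y$ against it. Beyond that the proof is routine.
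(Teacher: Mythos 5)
Your proof is correct and follows essentially the same route as the paper: it combines the dynamic programming identity \Cref{prop:DPP_CplN_MC} with the identification of $N$-couplings as adapted laws of random variable pairs (\Cref{lem:cplN}, itself a consequence of \Cref{prop:ReprLawadAsRV}), then collapses the iterated intensity back to the plain law. The one substantive addition is that you spell out the identity $I^{N-1}\lawad(Z)=\law(Z)$, which the paper's very terse proof leaves implicit (it is also implicitly behind \Cref{lem:W2_MC_connection_iterated}(2) and the formulation of \Cref{prop:DPP_CplN_MC}); making it explicit is a genuine improvement, as is spelling out that both inequalities in the corollary follow. The filtration-shifting awkwardness you flag in the induction is real but entirely avoidable: rather than recursing on the dimension of the parameter cube, iterate the one-step tower identity directly. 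For $t=1,\dots,N-1$ one has $\ip_t(Z)=\law^{\F_t}\bigl(\ip_{t+1}(Z)\bigr)$, so for bounded Borel $f$ on $\Pc_2^{N-t-1}(\Xs)$,
\[
\int f\, d\,I\bigl(\law(\ip_t(Z))\bigr)
=\E\Bigl[\int f\, d\,\ip_t(Z)\Bigr]
=\E\bigl[\E[f(\ip_{t+1}(Z))\mid\F_t]\bigr]
=\E\bigl[f(\ip_{t+1}(Z))\bigr],
\]
i.e.\ $I\bigl(\law(\ip_t(Z))\bigr)=\law(\ip_{t+1}(Z))$. Chaining from $t=1$ to $t=N-1$ gives $I^{N-1}\lawad(Z)=I^{N-1}\law(\ip_1(Z))=\law(\ip_N(Z))=\law(Z)$, with no appeal to a relabelled $(N-1)$-parameter filtration. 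Your final remark --- first identify $I^{N-1}\lawad(X,Y)$ as a probability measure via bounded test functions, then integrate the unbounded bilinear form $x\cdot y$ against it using $(X,Y)\in L_2$ --- is exactly the right way to close the last technical point.
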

\begin{proof}
By \Cref{prop:DPP_CplN_MC}, there is $\Pi \in \cpl^N(P,Q) \subset \Pc_2^N(\H \times \H)$  optimal for $\mc(P,Q)$. By \Cref{prop:ReprLawadAsRV}, there exists an $\H\times\H$-valued random variable $(X,Y)$ such that $\lawad(X,Y)=\Pi$. In particular, we have $\lawad(X) = \lawad( \pr_1(X,Y)) = \Pc^N[\pr_1](\lawad(X,Y)) = \Pc^N[\pr_1](\Pi) =P  $ and with the same argument $\lawad(Y)=Q$.
\end{proof}

\begin{lemma} \label{lem:DPP}
    Let $P,P' \in \mathcal P^N(\mathcal X)$ and $X,X'$ be random variables with $X\simad P$, $X' \simad P'$.
    Let $c^{(0)} := c : \mathcal X \times \mathcal X \to \R$ be continuous and $|c(x,x')| \le f(x) + g(x')$ for $f(X),g(X') \in L^1(\mathbb P)$.
    Define recursively for $t = 1,\dots,N$
    \[
    \begin{aligned}
    c^{(t)} \colon &\Pc^{t}(\mathcal{X}) \times \Pc^{t}(\mathcal{X}) \to \R,\\
    &(Q,Q') \mapsto \inf_{\pi \in \cpl(Q,Q')} \int c^{(t-1)}(q,q')\, \mathrm{d}\pi(q,q').
    \end{aligned}
    \]    
    Then, for $X \simad P$ 
    \[
        c^{(N)}(P,P') =
        \inf_{ 
        \substack{ T : [0,1]^N \to [0,1]^N \\  \text{bi-adapted, } T_{\#}(\lambda)=\lambda }}  
        \E[c(X \circ T,X')].
    \]
\end{lemma}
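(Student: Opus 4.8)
The plan is to prove the identity by induction on $N$, using at each step the density of bi-adapted (bi-triangular) maps inside the set of adapted couplings, which is the one-step analogue of \Cref{lem:EnoughRandomization}. For $N=1$ the filtration is trivial ($\F_1$ is all of $[0,1]$, $\F_0$ is trivial), $\lawad$ is just $\law$, bi-adapted maps are exactly measure-preserving bijections of $([0,1],\lambda)$, and the claim reduces to
\[
c^{(1)}(P,P') = \inf_{\pi \in \cpl(P,P')} \int c\,d\pi = \inf_{T \text{ m.p.\ bijection}} \E[c(X\circ T, X')].
\]
The inequality ``$\le$'' is clear since each $T$ yields a coupling. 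For ``$\ge$'', fix an optimal $\pi\in\cpl(P,P')$; since $(X,X')$ and $\pi$ have the same first marginal $P$, by \Cref{lem:epsTransfer} (applied on the standard space $([0,1]^N,\lambda)$, here $N=1$) there are measure-preserving bijections $T_\epsilon$ with $\P(|X\circ T_\epsilon - X''|\ge\epsilon)<\epsilon$ for some $(X'',Y'')\sim\pi$, and dominated convergence (using the envelope $f(X)+g(X')$) gives $\E[c(X\circ T_\epsilon, X')]\to\int c\,d\pi$. This is exactly the argument of \Cref{lem:EnoughRandomization}, with the supremum replaced by an infimum and ``$Y\sim\nu$'' replaced by ``$X\circ T$''; one only has to check that the transfer can be realized by honest bijections $T$ rather than arbitrary $Y''\sim P'$, which is precisely what \Cref{lem:simple_transfer}/\Cref{lem:epsTransfer} provide.

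For the inductive step, assume the claim for $N-1$ on every Polish space (in particular on $\Pc^{N-1}(\X)$ with cost $c^{(N-1)}$, whose growth is controlled by the standard $\W$-type estimates once $c$ has the stated envelope). Write a generic point of $[0,1]^N$ as $(u_1,v)$ with $v\in[0,1]^{N-1}$, so that $\F_1=\sigma(u_1)$ and the remaining coordinates generate the shifted filtration. Given $X\simad P$, decompose $\ip_1(X)=\law^{\F_{(N-1):1}}(X)$, an $\F_1$-measurable (hence $u_1$-measurable) $\Pc^{N-1}(\X)$-valued random variable with $\law(\ip_1(X))=P$. For a bi-adapted $T$ on $[0,1]^N$, the first coordinate of $T$ depends only on $u_1$, so $T$ factors as: an m.p.\ bijection $S$ of $([0,1],\lambda)$ in the first coordinate, together with, for each $u_1$, a bi-adapted m.p.\ bijection $R_{u_1}$ of $[0,1]^{N-1}$ acting on $v$ (measurably in $u_1$). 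Using \Cref{lem:iso_pres_ip}, $\E[c(X\circ T, X')]$ then splits as an outer expectation over $u_1$ of an inner $(N-1)$-level cost between the conditional laws; optimizing the inner maps $R_{u_1}$ and applying the induction hypothesis turns the inner infimum into $c^{(N-1)}(\ip_1(X)(u_1), \ip_1(X')(S(u_1)))$, and optimizing $S$ over m.p.\ bijections of $[0,1]$ together with the $N=1$ case identifies the result with $\inf_{\pi\in\cpl(P,P')}\int c^{(N-1)}\,d\pi = c^{(N)}(P,P')$. The ``$\ge$'' direction again uses a measurable selection of near-optimal inner maps (Jankov--von Neumann, as in \Cref{prop:DPP_CplN_MC}) to assemble a genuine bi-adapted $T$ from the pieces.

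The main obstacle is the bookkeeping in the inductive step: carefully disentangling a bi-adapted bijection of $[0,1]^N$ into its first-coordinate part and the family of bi-adapted bijections on the remaining $N-1$ coordinates, ensuring joint measurability of $(u_1\mapsto R_{u_1})$, and verifying that \Cref{lem:iso_pres_ip} indeed lets one replace the conditional law of $X\circ T$ given $\F_1$ by the transported conditional law of $X$ — so that the inner problem is exactly an $(N-1)$-level problem with fixed endpoints $\ip_1(X)(u_1)$ and $\ip_1(X')(\cdot)$. The approximation (``$\ge$'') direction requires iterating \Cref{lem:epsTransfer} along the filtration and a measurable-selection step to glue the layerwise near-optimizers into one global bi-adapted map; both are routine given the tools already developed (\Cref{prop:ReprLawadAsRV}, \Cref{lem:cplN}, \Cref{prop:DPP_CplN_MC}), but they are where the technical care is concentrated.
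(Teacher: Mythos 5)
Your plan is correct, but it takes a genuinely different route from the paper. You propose a direct induction on $N$: factor a measure-preserving bi-adapted bijection $T$ of $[0,1]^N$ into a first-coordinate measure-preserving bijection $S$ of $[0,1]$ and a measurable family $(R_{u_1})_{u_1}$ of bi-adapted measure-preserving bijections of $[0,1]^{N-1}$, disintegrate $\E[c(X\circ T,X')]$ along $\F_1$ using \Cref{lem:iso_pres_ip}, apply the induction hypothesis fibrewise to turn the inner infimum into $c^{(N-1)}(\ip_1(X)(\cdot),\ip_1(X')(\cdot))$, and finish with the $N=1$ case on the outer layer, using a Jankov--von Neumann selection and the approximate transfer of \Cref{lem:epsTransfer} to glue near-optimal fibre maps into one global bi-adapted $T$. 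The paper instead proves ``$\le$'' directly from \Cref{prop:DPP_CplN_MC}, \Cref{lem:cplN} and \Cref{lem:iso_pres_ip}, and for ``$\ge$'' views $X,X'$ as terminal values of the $N$-step processes $(U_1,\dots,U_{N-1},(U_N,X))$ and $(U_1,\dots,U_{N-1},(U_N,X'))$, invokes the dynamic programming principle for bicausal optimal transport (\cite[Theorem~3.2]{AcKrPa24}) to identify the bicausal value with $c^{(N)}(P,P')$, and then invokes the density of bi-adapted Monge maps among bicausal couplings (\cite{BePaSc21c}; applicable since all successive disintegrations here are atomless) to realize that value by an honest bi-adapted bijection. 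Your induction is essentially an inline re-derivation of both of those external ingredients: the layer-by-layer reduction \emph{is} the bicausal DPP, and the selection-and-gluing step \emph{is} the density-of-bi-adapted-Monge-maps argument. The paper's route buys brevity by delegating the gluing to cited theorems; your route buys a self-contained and elementary proof, at the cost of carefully handling the a.e.\ bijectivity and joint measurability of the fibre maps $R_{u_1}$, which is exactly the technical content hidden in the references the paper leans on.
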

The proof of \Cref{lem:DPP} is postponed to \Cref{sec:app}.

\begin{corollary}\label{cor:transfer_ad}   
    Let $X,X' \in L_2^N(\mathcal X)$ with $\lawad(X) = \lawad(X')$.
    Then, for every $\epsilon > 0$, there exists a measure-preserving bi-adapted bijection $T : [0,1]^N \to [0,1]^N$ such that $\lambda(|X' - X \circ T| \ge \epsilon) < \epsilon$.
\end{corollary}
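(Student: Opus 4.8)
The plan is to read this off directly from \Cref{lem:DPP}, applied to the bounded cost $c(x,x') := |x-x'| \wedge 1$ on $\mathcal X \times \mathcal X$ (where $|x-x'|$ denotes the metric distance on $\mathcal X$). This cost is continuous and bounded by $1$, so the integrability hypothesis of \Cref{lem:DPP} holds with $f \equiv g \equiv 1$. Set $P := \lawad(X) = \lawad(X')$, so that both $X \simad P$ and $X' \simad P$, and let $c^{(0)} := c$ and $c^{(1)},\dots,c^{(N)}$ be the functionals built from $c$ as in \Cref{lem:DPP}. This is the adapted, bi-adapted-bijection analogue of the transfer principle \Cref{lem:simple_transfer}.

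First I would verify that $c^{(N)}(P,P) = 0$. Indeed $c^{(0)}(x,x) = |x-x| \wedge 1 = 0$, each $c^{(t)}$ is nonnegative (by induction, since an infimum of integrals of a nonnegative integrand is nonnegative), and plugging the diagonal coupling $(q \mapsto (q,q))_\# Q \in \cpl(Q,Q)$ into the definition of $c^{(t)}(Q,Q)$ gives
\[
0 \le c^{(t)}(Q,Q) \le \int c^{(t-1)}(q,q)\,dQ(q)
\]
for every $Q \in \Pc_2^t(\mathcal X)$; by induction on $t$ the right-hand side vanishes, hence $c^{(t)}(Q,Q) = 0$ at every layer, and in particular $c^{(N)}(P,P) = 0$. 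Applying \Cref{lem:DPP} with $P' = P$ and recalling $X \simad P$, $X' \simad P$, we obtain
\[
0 = c^{(N)}(P,P) = \inf_{\substack{T : [0,1]^N \to [0,1]^N \\ \text{bi-adapted},\ T_\#\lambda = \lambda}} \int \big( |X \circ T - X'| \wedge 1 \big) \, d\lambda.
\]

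It suffices to treat $\epsilon \in (0,1)$, since the conclusion for a fixed parameter trivially implies it for all larger ones. Given such an $\epsilon$, the vanishing of the infimum above supplies a measure-preserving bi-adapted bijection $T : [0,1]^N \to [0,1]^N$ with $\int (|X\circ T - X'|\wedge 1)\,d\lambda < \epsilon^2$. As $\epsilon < 1$ we have $\{ |X\circ T - X'| \ge \epsilon \} = \{ |X\circ T - X'|\wedge 1 \ge \epsilon\}$, so Markov's inequality yields
\[
\lambda\big( |X' - X\circ T| \ge \epsilon \big) \le \frac{1}{\epsilon} \int \big( |X\circ T - X'|\wedge 1 \big)\,d\lambda < \epsilon ,
\]
which is the desired estimate. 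There is no genuine obstacle here beyond \Cref{lem:DPP} itself; the only points requiring mild care are the layer-by-layer argument that diagonal couplings force $c^{(t)}(Q,Q) = 0$, and the elementary truncation estimate passing from the bounded cost back to the event $\{|X'-X\circ T| \ge \epsilon\}$ via Markov's inequality.
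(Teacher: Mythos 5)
Your proof is correct and takes essentially the same route as the paper's, which invokes \Cref{lem:DPP} with the bounded cost $c(x,x'):=\min(d(x,x'),1)$ and leaves the rest implicit. You usefully spell out the two small steps the paper skips: the diagonal-coupling induction showing $c^{(t)}(Q,Q)=0$ at every layer (hence $c^{(N)}(P,P)=0$), and the Markov-inequality passage from a small expected truncated distance to a small probability of $\{d(X',X\circ T)\ge\epsilon\}$; both are carried out correctly (the nonnegativity of each $c^{(t)}$ is what makes the diagonal bound conclusive, and restricting to $\epsilon<1$ ensures $\{d\ge\epsilon\}=\{d\wedge 1\ge\epsilon\}$). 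The paper's terse reference to replacing $\mathcal X$ by $(0,1)\times\mathcal X$ before applying \Cref{lem:DPP} is not needed in your argument, and indeed appears redundant, since the proof of \Cref{lem:DPP} already augments with the extra uniform coordinate $U_N$ internally to secure atomless disintegrations for the biadapted-Monge density result; your direct application is therefore clean.
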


\begin{proof}
    Using that $\lawad(X) = \lawad(X')$, the claim follows directly from \Cref{lem:DPP} applied with $\mathcal X := (0,1) \times \H$ and $c(x,x'):= \min(d(x,x'),1)$. 
\end{proof}

\begin{proposition}\label{prop:iteratedSkorohod} For $P,Q \in \Pc_2^N(\H)$ we have 
 \begin{align*}\label{eq:W2_min_RV}
     \W_2(P,Q) = \min_{ X \simad P, \, Y \simad Q}    \|X-Y\|_2.
 \end{align*}
Let $P,P_1,P_2,\ldots \in \Pc_2^N(\H)$ and $X \simad P$. Then the following are equivalent: 
\begin{enumerate}
    \item $\W_2(P_n,P) \to 0$,
    \item there are $X_n \simad P_n$ such that $\|X_n -X \|_2 \to 0$.
\end{enumerate}
In particular, the map $\lawad :  (L_2^N(\H),\|\cdot\|_2) \to (\Pc_2^N(\H),\W_2) $ is continuous.
\end{proposition}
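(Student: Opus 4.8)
The plan is to derive the Skorokhod-type identity from the max-covariance representations already at hand, and then bootstrap it to the convergence statement using the adapted transfer principle.

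\textbf{Step 1 (the identity).} Fix $P,Q\in\Pc_2^N(\H)$ and $X\simad P$, $Y\simad Q$. Recall that $\law_\P(X)$ depends only on $P=\lawad(X)$ (it equals $I^{N-1}P$), so $\|X\|_2^2=\mc(P,P)$ and, similarly, $\|Y\|_2^2=\mc(Q,Q)$ by \Cref{lem:W2_MC_connection_iterated}(2). Expanding the norm and invoking \Cref{cor:MC_max_over_RV} together with \Cref{lem:W2_MC_connection_iterated}(1),
\[
\|X-Y\|_2^2=\mc(P,P)+\mc(Q,Q)-2\,\E[X\cdot Y]\ \ge\ \mc(P,P)+\mc(Q,Q)-2\,\mc(P,Q)=\W_2^2(P,Q).
\]
Since the maximum in \Cref{cor:MC_max_over_RV} is attained, there exist $X\simad P$, $Y\simad Q$ with $\E[X\cdot Y]=\mc(P,Q)$, for which the displayed chain collapses to an equality. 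Hence $\W_2(P,Q)=\min_{X\simad P,\,Y\simad Q}\|X-Y\|_2$, with the minimum attained.

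\textbf{Step 2 (easy directions).} If $X_n\simad P_n$ and $\|X_n-X\|_2\to0$, then by Step 1 we get $\W_2(P_n,P)\le\|X_n-X\|_2\to0$, which proves (2)$\Rightarrow$(1); applying the same estimate with $P_n:=\lawad(X_n)$, $P:=\lawad(X)$ yields continuity of $\lawad$.

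\textbf{Step 3 ((1)$\Rightarrow$(2)).} Assume $\W_2(P_n,P)\to0$. By Step 1 choose $Y_n\simad P_n$ and $Z_n\simad P$ with $\|Y_n-Z_n\|_2=\W_2(P_n,P)$. Since $\lawad(Z_n)=P=\lawad(X)$, \Cref{cor:transfer_ad} supplies, for each $n$, a measure-preserving bi-adapted bijection $T_n:[0,1]^N\to[0,1]^N$ with $\lambda(|X-Z_n\circ T_n|\ge 1/n)<1/n$, so $Z_n\circ T_n\to X$ in probability. Now each $Z_n\circ T_n$ has the same law as $Z_n$, namely $\law_\P(X)$; hence $\{|Z_n\circ T_n|^2\}_n$ is uniformly integrable (every member is distributed like the fixed integrable variable $|X|^2$), and therefore so is $\{|Z_n\circ T_n-X|^2\}_n$. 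Uniform integrability together with convergence in probability upgrades to $\|Z_n\circ T_n-X\|_2\to0$. Set $X_n:=Y_n\circ T_n$. By \Cref{lem:iso_pres_ip} (applied to $T_n^{-1}$) we have $X_n\simad\lawad(Y_n)=P_n$, and since precomposition with $T_n$ is an $L_2$-isometry,
\[
\|X_n-X\|_2\le\|Y_n\circ T_n-Z_n\circ T_n\|_2+\|Z_n\circ T_n-X\|_2=\W_2(P_n,P)+\|Z_n\circ T_n-X\|_2\longrightarrow 0.
\]

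\textbf{Main obstacle.} Everything is routine except the closing move in Step 3: \Cref{cor:transfer_ad} only produces convergence in probability of the aligned variables $Z_n\circ T_n$, and no a priori $L_2$-bound follows from it directly. The point that makes the argument work is that all the $Z_n\circ T_n$ share the single law $\law_\P(X)$, which forces uniform integrability and thereby lets one pass from convergence in probability to convergence in $L_2$.
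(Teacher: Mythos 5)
Your proof is correct, and the main engine (the max-covariance identity from \Cref{cor:MC_max_over_RV} plus \Cref{lem:W2_MC_connection_iterated}, combined with the adapted transfer principle \Cref{cor:transfer_ad}) is the same one the paper uses. There are, however, two places where you diverge in a way that is worth pointing out.

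For $(1)\Rightarrow(2)$, your Step 3 matches the paper's construction (align the $P$-representatives $Z_n$ to $X$ by $T_n$, then set $X_n := Y_n \circ T_n$ and use that precomposition by $T_n$ is an $L_2$-isometry together with $\lawad(Y_n \circ T_n)=\lawad(Y_n)$ via \Cref{lem:iso_pres_ip}). But you correctly flag something the paper glosses over: \Cref{cor:transfer_ad} only yields $Z_n\circ T_n\to X$ in probability, yet the paper writes directly that $\|\cdot\|_{L_2}\to 0$. Your uniform-integrability argument — all $Z_n\circ T_n$ share the fixed law $\law_\P(X)=I^{N-1}P$, hence $\{|Z_n\circ T_n|^2\}$ and therefore $\{|Z_n\circ T_n-X|^2\}$ are UI, so convergence in probability upgrades to $L_2$ — is exactly the step needed to make this rigorous, and you are right that it is the one nontrivial move here.

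For $(2)\Rightarrow(1)$, the paper proves the implication by a backward induction showing $\ip_t(X_n)\to\ip_t(X)$ in $L_2$ for $t=N,\dots,1$, whereas you simply observe that the already-established identity gives $\W_2(P_n,P)\le\|X_n-X\|_2$. Since the identity is proved first (from \Cref{cor:MC_max_over_RV}), there is no circularity, and your shortcut is a genuine, legitimate simplification — the paper's induction is effectively re-deriving a consequence of the identity that is already on the table. Your derivation of the continuity of $\lawad$ from the same one-line estimate is likewise correct.
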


\begin{proof}
First note that \Cref{cor:MC_max_over_RV} and \Cref{lem:W2_MC_connection_iterated} imply that for $P,Q \in \Pc_2^N(\H)$ we have 
 \begin{align*}
     \W_2(P,Q) = \min_{ X \simad P,\, Y \simad Q}    \|X-Y\|_2.
 \end{align*}
To see that (1) implies (2) first observe that  
applying this fact to $P$ and $P_n$ yields $Y_n,Z_n \in L_2^N(H)$ with $Y_n \simad P$ and $Z_n \simad P_n$ such that $\W_2(P_n,P) = \| Y_n -Z_n \|_2$.
By \Cref{cor:transfer_ad} there are isomorphisms $T_n$ such that $\| Y_n \circ T_n - X \|_{L_2} \to 0$. 
We set $X_n := Z_n \circ T_n$ and not that by \Cref{lem:iso_pres_ip}, we have $\lawad(X_n) = \lawad(Z_n)=P_n$.  Moreover, we have
\[
\| X_n - X \|_2 =  \| Z_n \circ T_n - Y_n \circ T_n +  Y_n \circ T_n - X\|_2  \le \| Z_n - Y_n \|_2 + \| Y_n \circ T_n - X\|_2 \to 0.
\]    

Next, we show that (2) implies (1). To that end, we show by backward induction on $t$ that $\ip_t(X_n)\to \ip_t(X)$ in $L_2^N(\Pc_2^{N-t}(\H))$. For $t=N$ this trivial. Suppose it is true for $t+1$. Then,
\begin{align*}
    \W_2^2(\ip_t(X_n),\ip_t(X)) &= 
    \W_2^2(\law^{\F_t}(\ip_{t+1}(X_n)),\law^{\F_{t}}(\ip_{t + 1}(X))) 
    \\
    &\le \E[\W_2^2(\ip_{t+1}(X_n),\ip_{t+1}(X))|\F_t] \quad \text{a.s.},
\end{align*}
where the inequality follows from the observation that $\law^{\F_t}(\ip_{t+1}(X_n), \ip_{t+1}(X))$ is a.s.\ a coupling in $ \cpl(\ip_t(X_n),\ip_t(X))$.
Hence, 
\[
\E[ \W_2^2(\ip_t(X_n),\ip_t(X)) ] \le \E[\W_2^2(\ip_{t+1}(X_n),\ip_{t+1}(X))] \to 0.
\]
This finishes the induction and in particular we have $\ip_1(X_n) \to \ip_1(X)$ in $L_2^N(\Pc_2^{N-1}(\H))$. Hence, $\lawad(X_n) = \law(\ip_1(X_n)) \to \law(\ip_1(X)) = \lawad(X)$.
\end{proof}

\section{The adapted Lions lift}\label{sec:MC_cx_ad}

In this section we develop the basic theory of adapted Lions lift and its applications to $\MC$-convex functions. 
\begin{tcolorbox}
In the following we write $L_2^N(H):= L_2([0,1]^N; H)$  and 
for  $\phi : \Pc_2^N(\H) \to (-\infty, +\infty]$, the function $\overline{\phi} : L_2^N(H) \to (-\infty, +\infty]$  always denotes the adapted Lions lift $\overline\phi(X)= \phi(\lawad(X)) $  of $\phi$. 
\end{tcolorbox}
We will first establish the crucial  result \Cref{thm:MC_trafo_ast_ad} on the identification of $\MC$-conjugacy and convex conjugacy on $L_2^N(H)$. In  particular this allows us to prove \Cref{thm:coupling_Brenier} on the characterization of optimizers in terms of adapted-law invariant convex functions. We also introduce the notion of $\mc$-differentiability of $\mc$-functions which yields a characterization of transport regularity.

\subsection{Connection between $\mc$-transform and convex conjugate of the lift}

Note that for $\phi : \Pc_2^N(\H) \to (-\infty, +\infty]$, we  have 
$ 
\dom(\overline{\phi}) = \{ X \in L_2^N(H) : \lawad(X) \in \dom(\phi) \} 
$ and we  will see in \Cref{lem:continuity_lift} below that the corresponding statement holds also for the continuity points. 
\begin{theorem}\label{thm:MC_trafo_ast_ad}
Let $\phi : \Pc_2^N(\H) \to (-\infty, +\infty]$ be proper. Then 
\begin{align}\label{eq:crucial}
  \overline{\phi}^\ast = \overline{\phi^{\mc}}.  
\end{align}
In particular, the convex conjugate of an adapted-law invariant function is adapted-law invariant.

Moreover, the following are equivalent:
\begin{enumerate}
    \item $\overline{\phi}$ is lsc convex;
    \item $\phi$ is $\mc$-convex.
\end{enumerate}
\end{theorem}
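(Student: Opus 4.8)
The plan is to follow the same strategy that worked for the non-adapted case in \Cref{thm:MCiConvConj}, namely to compute $\overline{\phi}^\ast$ directly from the definition, exploiting the key formula $\mc(P,Q) = \max_{X\simad P,\,Y\simad Y}\E[X\cdot Y]$ from \Cref{cor:MC_max_over_RV}. Concretely, for fixed $Y\in L_2^N(H)$ we write
\[
\overline{\phi}^\ast(Y) = \sup_{X\in L_2^N(H)} \E[X\cdot Y] - \overline{\phi}(X)
= \sup_{P\in\Pc_2^N(\H)}\ \sup_{X\simad P} \E[X\cdot Y] - \phi(P),
\]
where in the second equality we have grouped the supremum over $X$ according to the adapted law $P = \lawad(X)$, using that $\overline{\phi}(X) = \phi(\lawad(X))$ depends on $X$ only through $P$. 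The inner supremum $\sup_{X\simad P}\E[X\cdot Y]$ is then exactly $\mc(P,\lawad(Y))$ by \Cref{cor:MC_max_over_RV} (here one needs that the supremum over $X\simad P$ can be taken with $Y$ held fixed, rather than ranging over both $X\simad P$ and $Y'\simad\lawad(Y)$ — this is the point requiring a small argument, see below). Substituting, $\overline{\phi}^\ast(Y) = \sup_P \mc(P,\lawad(Y)) - \phi(P) = \phi^{\mc}(\lawad(Y)) = \overline{\phi^{\mc}}(Y)$, which is \eqref{eq:crucial}. The ``in particular'' statement is immediate: $\overline{\phi^\mc}$ is visibly adapted-law invariant, so $\overline{\phi}^\ast$ is.

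For the equivalence of (1) and (2): if $\phi$ is $\mc$-convex, then $\phi = \psi^\mc$ for some proper $\psi$, hence $\overline{\phi} = \overline{\psi^\mc} = \overline{\psi}^\ast$ by \eqref{eq:crucial}, and a convex conjugate on a Hilbert space is always lsc and convex (and proper, since $\psi$ being $\mc$-convex forces $\phi=\psi^\mc$ to be proper). Conversely, if $\overline{\phi}$ is lsc, convex and proper, then by the Fenchel--Moreau theorem $\overline{\phi} = \overline{\phi}^{\ast\ast} = (\overline{\phi^\mc})^\ast = \overline{\phi^{\mc\mc}}$, where the middle equality is \eqref{eq:crucial} applied to $\phi$ and the last is \eqref{eq:crucial} applied to $\phi^\mc$ (which is proper, being the conjugate of a proper function). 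Since two functions on $\Pc_2^N(\H)$ with equal adapted Lions lifts coincide, this gives $\phi = \phi^{\mc\mc}$, i.e.\ $\phi$ is $\mc$-convex. One should note the degenerate case where $\overline\phi\equiv+\infty$ or $\phi$ is not proper; since we assume $\phi$ proper throughout, $\overline\phi$ is proper, and this is harmless.

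The main technical obstacle is the ``$Y$ held fixed'' subtlety in applying \Cref{cor:MC_max_over_RV}: a priori \Cref{cor:MC_max_over_RV} gives $\mc(P,\lawad(Y)) = \max_{X\simad P,\,Y'\simad\lawad(Y)}\E[X\cdot Y']$, a joint supremum, whereas in the computation of $\overline{\phi}^\ast(Y)$ the variable $Y$ is a single fixed element of $L_2^N(H)$. I would resolve this exactly as in the $N=1$ case using the adapted transfer principle: given any $X'\simad P$ and $Y'\simad\lawad(Y)$ achieving (or nearly achieving) the joint max, \Cref{cor:transfer_ad} produces measure-preserving bi-adapted bijections $T$ with $Y'\circ T$ close to $Y$ in probability while $\lawad(X'\circ T) = \lawad(X') = P$ (by \Cref{lem:iso_pres_ip}), so that $\E[(X'\circ T)\cdot Y]$ is close to $\E[X'\cdot Y']$; combined with a dominated-convergence / uniform-integrability argument (the second moments of $X',Y'$ are controlled), this shows $\sup_{X\simad P}\E[X\cdot Y] = \mc(P,\lawad(Y))$ for the fixed $Y$. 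This is the adapted analogue of \Cref{lem:EnoughRandomization}, and packaging it cleanly — perhaps as a short lemma preceding the theorem — is where the real work lies; everything else is a formal manipulation of conjugates plus Fenchel--Moreau.
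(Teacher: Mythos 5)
Your proposal is correct and follows essentially the same route as the paper: identical formal computation of $\overline{\phi}^{\ast}$, the same Fenchel--Moreau argument for the equivalence, and a correct identification of the key technical step $\sup_{X \simad P} \E[X \cdot Y] = \mc(P, \lawad(Y))$ for a single fixed $Y$. The only difference is in how that step is justified: the paper obtains it in one stroke by applying \Cref{lem:DPP} with $c(x,x') = -\langle x, x' \rangle$ (noting that each $X\circ T$ is $\simad P$, and that the reverse inequality follows from \Cref{lem:cplN} and \Cref{prop:DPP_CplN_MC}), whereas you route through \Cref{cor:transfer_ad} plus an $L_2$-convergence argument (Cauchy--Schwarz together with the Vitali/Riesz--Scheff\'e fact that convergence in probability with constant second moment gives $L_2$-convergence), more closely mirroring the $N=1$ proof of \Cref{lem:EnoughRandomization}; since \Cref{cor:transfer_ad} is itself a corollary of \Cref{lem:DPP}, both versions rest on the same machinery, and your "dominated convergence" phrasing should really be Cauchy--Schwarz plus Vitali, but the idea is sound.
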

\begin{proof}
For $Y \in L_2^N(\H)$ we write $Q =\lawad(Y)$. By \Cref{lem:DPP} we find
\begin{align*}
   \overline{\phi\,}^\ast(Y) &= \sup_{X \in L_2^N(H) } \E[ X \cdot Y ] - \overline{\phi}(X) = \sup_{ P \in \Pc_2^N(\H)}  \sup_{X \simad P} \E[X \cdot Y]   -  \phi(P) \\
   &= \sup_{P \in \Pc_2^N(\H) } \mc(P,Q) - \phi(P)  = \phi^{\mc}(Q) = \overline{\phi^{\mc}}(Y).
\end{align*}

Next, we prove the equivalence of (1) and (2).

Suppose that $\overline{\phi}$ is lsc convex. By first applying the Fenchel--Moreau theorem and then \eqref{eq:crucial} twice we find
\[
\overline{\phi} = \overline{\phi}^{\ast\ast} = \overline{\phi^\mc}^\ast = \overline{\phi^{\mc\mc}}.
\]
Hence, $\phi = \phi^{\mc\mc}$, which shows that $\phi$ is $\mc$-convex. 

Conversely assume that $\phi$ is $\mc$-convex. Then $\phi = \psi^{\mc}$ for some proper function $\psi : \Pc_2^N(\H) \to (-\infty,+\infty]$. By \eqref{eq:crucial}, we have $\overline{\phi} = \overline{\psi^\mc} = \overline{\psi}^\ast$, which shows that $\overline{\phi}$ is lsc convex. 
\end{proof}

\subsection{$\MC$-order}
In this section we introduce an order on $\Pc_2^N(H)$  which will subsequently be  used to establish the existence of the Lions derivative in the adapted setting. 

\begin{definition}\label{def:MCorder}
Let $P,Q \in \Pc_2^N(\H)$. We say that $P$ is smaller than $Q$ in the $\mc$-order, written as $P \lemc Q$ if $\int \phi \,  d P \le \int \phi \, dQ$ for every $\mc$-convex function $\phi : \Pc_2^{N-1}(\H) \to (-\infty,+\infty]$. 
\end{definition}
We recall the convention $\Pc_2^0(\H)=\H$ and note that the $\mc$-order extends the classical convex order on $\Pc_2(H)$.

 The main result of this subsection is the following characterization of the $\mc$-order, which is an analogue of the results of Carlier \cite{Ca08} (see also Wiesel--Zhang \cite{WiZh22}) for the classical convex order.

\begin{proposition}\label{prop:MC_order:_char}
Let $P,Q \in \Pc_2^N(\H)$. Then the following are equivalent:
\begin{enumerate}
    \item $P \lemc Q$,
    \item $\mc(P,R) \le \mc(Q,R)$ for every $R \in \Pc_2^N(\H)$. 
\end{enumerate}
\end{proposition}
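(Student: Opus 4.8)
The plan is to prove the two implications separately, following the template of Carlier's characterization of the convex order (\cite{Ca08}, see also Wiesel--Zhang \cite{WiZh22}), with the iterated max-covariance machinery and a measurable-selection step replacing the finite-dimensional ingredients. I will use throughout that any $\mc$-convex function on $\Pc_2^{N-1}(\H)$ is bounded below by a function integrable against every element of $\Pc_2^N(\H)$ (the remark after \Cref{thm:ftot.mc}), so that all integrals below are well defined in $(-\infty,+\infty]$, and that $\mc(p,r)=\tfrac12\bigl(\mc(p,p)+\mc(r,r)-\W_2^2(p,r)\bigr)$ is jointly $\W_2$-continuous with $\mc(p,r)\le\mc(p,p)^{1/2}\mc(r,r)^{1/2}$, by \Cref{lem:W2_MC_connection_iterated} and \Cref{cor:MC_max_over_RV}.

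For $(1)\Rightarrow(2)$ I would simply invoke the duality of \Cref{thm:ftot.mc}: for every $R\in\Pc_2^N(\H)$ one has $\mc(P,R)=\inf_\psi\bigl(\int\psi\,dP+\int\psi^{\mc}\,dR\bigr)$ and likewise with $Q$ in place of $P$, the infima running over $\mc$-convex $\psi:\Pc_2^{N-1}(\H)\to(-\infty,+\infty]$. For each such $\psi$ the hypothesis $P\lemc Q$ gives $\int\psi\,dP\le\int\psi\,dQ$, hence $\int\psi\,dP+\int\psi^{\mc}\,dR\le\int\psi\,dQ+\int\psi^{\mc}\,dR$; taking infima over $\psi$ yields $\mc(P,R)\le\mc(Q,R)$.

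For $(2)\Rightarrow(1)$, fix a (proper) $\mc$-convex $\phi$ on $\Pc_2^{N-1}(\H)$ and let $\delta_0$ denote the iterated Dirac at the origin of $\H$ in $\Pc_2^{N-1}(\H)$, so that $\mc(r,r)=\W_2^2(r,\delta_0)$. For $A>0$ set $B_A:=\{r:\W_2(r,\delta_0)\le A\}$ and $\phi_A:=(\phi^{\mc}+\chi_{B_A})^{\mc}$, i.e.\ $\phi_A(p)=\sup_{r\in B_A}\bigl(\mc(p,r)-\phi^{\mc}(r)\bigr)$. Using the continuity and Cauchy--Schwarz bounds above one checks that $\phi_A$ is real-valued, $\W_2$-Lipschitz and $\mc$-convex (being a $\mc$-transform), and that $\phi_A\nearrow\phi^{\mc\mc}=\phi$ pointwise; by monotone convergence (the $\phi_A$ share a $P$- and $Q$-integrable lower bound) it suffices to prove $\int\phi_A\,dP\le\int\phi_A\,dQ$ for all large $A$. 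Fixing such an $A$ and $\varepsilon>0$, since $(p,r)\mapsto\mc(p,r)-\phi^{\mc}(r)$ is Borel on $\Pc_2^{N-1}(\H)\times B_A$ (continuous minus lsc) and $\phi_A$ is finite, the Jankov--von Neumann selection theorem yields a universally measurable $r_\varepsilon:\Pc_2^{N-1}(\H)\to B_A$ with $\mc(p,r_\varepsilon(p))-\phi^{\mc}(r_\varepsilon(p))\ge\phi_A(p)-\varepsilon$ for all $p$. Put $R_\varepsilon:=(r_\varepsilon)_\#P$; as $r_\varepsilon$ takes values in $B_A$ we get $R_\varepsilon\in\Pc_2^N(\H)$, the map $p\mapsto(p,r_\varepsilon(p))$ pushes $P$ to a coupling in $\cpl(P,R_\varepsilon)$, so $\mc(P,R_\varepsilon)\ge\int\mc(p,r_\varepsilon(p))\,dP(p)$, and $\int\phi^{\mc}\,dR_\varepsilon=\int\phi^{\mc}(r_\varepsilon(p))\,dP(p)$. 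Chaining the selection inequality, then $(2)$ in the form $\mc(P,R_\varepsilon)\le\mc(Q,R_\varepsilon)$, then the Fenchel--Young inequality $\mc(q,r)\le\phi_A(q)+\phi_A^{\mc}(r)$ integrated against a coupling (so $\mc(Q,R_\varepsilon)\le\int\phi_A\,dQ+\int\phi_A^{\mc}\,dR_\varepsilon$), and finally $\phi_A^{\mc}=(\phi^{\mc}+\chi_{B_A})^{\mc\mc}\le\phi^{\mc}+\chi_{B_A}$ (so $\int\phi_A^{\mc}\,dR_\varepsilon\le\int\phi^{\mc}\,dR_\varepsilon$ since $R_\varepsilon$ is carried by $B_A$), the $\phi^{\mc}$-terms cancel and one obtains $\int\phi_A\,dP\le\int\phi_A\,dQ+\varepsilon$. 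Letting $\varepsilon\downarrow0$ and then $A\to\infty$ completes the proof.

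I expect the main obstacle to be the direction $(2)\Rightarrow(1)$: the hypothesis only tests against \emph{finite-second-moment} reference measures $R$, yet a generic $\mc$-convex $\phi$ is an (uncountable) supremum, which forces both the truncation $\phi_A$ and a measurable near-maximizer $r_\varepsilon$; moreover one must track finiteness of the various integrals so that the cancellation of the $\phi^{\mc}$-terms is legitimate — here one uses that $\mc(P,R_\varepsilon),\mc(Q,R_\varepsilon)<\infty$ by Cauchy--Schwarz, that $\phi_A$ is $\W_2$-Lipschitz hence integrable, and that $\phi^{\mc}$, being $\mc$-convex, is bounded below by an integrable function (if $\int\phi^{\mc}\,dR_\varepsilon=+\infty$ the displayed chain would force $\int\phi_A\,dP=-\infty$, a contradiction). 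The direction $(1)\Rightarrow(2)$ is immediate from duality.
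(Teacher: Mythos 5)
Your proof is correct and follows essentially the same approach as the paper's. For $(1)\Rightarrow(2)$ both of you exploit the duality of \Cref{thm:ftot.mc} (the paper fixes an attained dual optimizer $\phi$ for $\mc(Q,R)$, you take infima over all $\psi$ --- an immaterial distinction), and for $(2)\Rightarrow(1)$ both reduce to bounded truncations $\phi_A=(\phi^{\mc}+\chi_{B_A})^{\mc}$ and construct a reference measure $R_\varepsilon$ as the pushforward of $P$ by a measurable near-optimal selector, then chain the same Fenchel--Young inequalities. The paper argues by contraposition and can therefore work with a single fixed $A$ and $\varepsilon$, avoiding the monotone-convergence limit $A\to\infty$, but this is an organizational rather than substantive difference; your write-up is somewhat more explicit about why $R_\varepsilon\in\Pc_2^N(\H)$ and about the integrability needed to cancel the $\int\phi^{\mc}\,dR_\varepsilon$ terms, both of which the paper glosses over.
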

We record a crucial consequence of \Cref{prop:MC_order:_char}.
\begin{corollary}\label{cor:MCincrMC}
For every $R \in \Pc_2^N(\H)$ the function $\mc(\cdot,R)$ is increasing in the $\lemc$-order.    
\end{corollary}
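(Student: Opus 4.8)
The plan is to read off the statement directly from the equivalence in \Cref{prop:MC_order:_char}. Fix $R \in \Pc_2^N(\H)$ and let $P, Q \in \Pc_2^N(\H)$ satisfy $P \lemc Q$. By the implication $(1) \Rightarrow (2)$ of \Cref{prop:MC_order:_char}, the relation $P \lemc Q$ already yields $\mc(P, R') \le \mc(Q, R')$ for \emph{every} $R' \in \Pc_2^N(\H)$; specializing to $R' = R$ gives $\mc(P, R) \le \mc(Q, R)$. Since $P \lemc Q$ was arbitrary, the functional $P \mapsto \mc(P, R)$ is increasing in the $\lemc$-order, which is the assertion.

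There is no genuine obstacle to overcome here: the content of the corollary is entirely contained in \Cref{prop:MC_order:_char}, the present statement being nothing more than that proposition read with one fixed second argument instead of a universal quantifier over it. It is worth recording only because this is precisely the form in which the result is used afterwards (for instance in the adapted Lions-derivative arguments, where one combines monotonicity of $\mc(\cdot, R)$ in the $\mc$-order with the fact that pointwise suprema of $\lemc$-increasing functions are again $\lemc$-increasing). An alternative, equally short route is to note that $\mc(\cdot, R) = (\chi_{\{R\}})^{\mc}$ is $\mc$-convex on $\Pc_2^N(\H)$ and then appeal to \Cref{def:MCorder} at the next level up, but passing through \Cref{prop:MC_order:_char} is the most direct.
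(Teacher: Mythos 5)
Your main argument is correct and matches the paper's treatment exactly: the paper records the corollary as an immediate consequence of \Cref{prop:MC_order:_char}, with no separate proof, and you have simply spelled out the specialization of the implication $(1) \Rightarrow (2)$ to a fixed second argument $R$.

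One caution about your closing remark. The ``alternative route'' you sketch --- observe that $\mc(\cdot,R)=(\chi_{\{R\}})^{\mc}$ is $\mc$-convex and appeal to \Cref{def:MCorder} one level up --- does not actually work. The definition at level $N+1$ tells you when two measures in $\Pc_2^{N+1}(\H)$ are $\lemc$-ordered; it says nothing a priori about an $\mc$-convex functional on $\Pc_2^N(\H)$ being monotone for the order on $\Pc_2^N(\H)$. What you would need is the statement ``every $\mc$-convex function is $\lemc$-increasing,'' which is the proposition immediately following the corollary in the paper --- and whose proof invokes the corollary itself. So that route is circular. Since you only offer it as an aside and rely on the direct argument, the proof as a whole is fine, but the aside should be dropped or corrected.
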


\begin{proof}[Proof of  \Cref{prop:MC_order:_char}]
We first show that (1) implies (2). To that end, let $P \lemc Q \in \Pc_2^N(\H)$ and fix $R \in \Pc_2^N(\H)$. Let $\phi$ be a dual optimizer for $\mc(Q,R)$. We then have 
\[
\mc(P,R) \le \int \phi \,dP + \int \phi^{\mc} \, dR \le \int \phi \,dQ + \int \phi^{\mc} \, dR = \mc(Q,R).
\]

To establish the reverse implication suppose that (1) is not satisfied and assume first that $N\geq 2$. Then there exists a $\mc$-convex $\phi : \Pc_2^{N-1}(\H) \to (-\infty, +\infty]$ such that $\int \phi \, dP > \int \phi \, dQ$.
By approximating $\phi$ with $\mc$-convex functions of the form $(\phi^\mc + \chi_A)^\mc$ for bounded $A \subseteq \Pc^{N-1}_2(\H)$, we can additionally assume that $\phi$ is Lipschitz and, in particular, finitely valued.
Let $f^\epsilon : \Pc^{N-1}_2(\H) \to \Pc^{N-1}_2(\H)$ be an $\epsilon$-selection of $\phi^{\mc\mc}$, i.e.,
\[
    \phi(p) + \phi^\mc(f^\epsilon(p)) \le \mc(p,f^\epsilon(p)) + \epsilon, \quad p \in \Pc^{N-1}_2(\H).
\]
Let $2 \epsilon = \int \phi \, dP - \int \phi \, dQ$.
Define the measure
\[
    R_\epsilon := (f^\epsilon)_\# P.
\]
Then, we have
\begin{align*}
    \MC(P,R_\epsilon) \ge P(\phi) + R_\epsilon(\phi^\mc) - \epsilon > Q(\phi) + R_\epsilon(\phi^\mc) \ge \MC(Q,R_\epsilon),
\end{align*}
which yields a contradiction.

In the case $N=1$, the same argument applies, replacing $\Pc^{N-1}_2(\H)$ by $H$ and $\mc$ by the inner product.
\end{proof}

\begin{proposition}
Let $\phi : \Pc^N_2(\H) \to (-\infty, \infty]$ be $\mc$-convex. Then $\phi$ is increasing in the $\lemc$-order.    
\end{proposition}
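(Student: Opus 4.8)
The plan is to combine the supremal representation of $\mc$-convex functionals with the characterization of the $\mc$-order from \Cref{prop:MC_order:_char}. Since $\phi$ is $\mc$-convex we may write $\phi=\chi^{\mc}$ for some proper $\chi$, whence $\phi^{\mc\mc}=\chi^{\mc\mc\mc}=\chi^{\mc}=\phi$; consequently, for every $P\in\Pc_2^N(\H)$,
\begin{align*}
\phi(P)=\phi^{\mc\mc}(P)&=\sup_{R\in\Pc_2^N(\H)}\bigl(\mc(R,P)-\phi^{\mc}(R)\bigr)\\
&=\sup_{R\in\Pc_2^N(\H)}\bigl(\mc(P,R)-\phi^{\mc}(R)\bigr),
\end{align*}
the last equality by symmetry of the iterated max-covariance, $\mc(P,R)=\mc(R,P)$, which follows by induction on $N$ from the symmetry of the inner product together with the coordinate-swap bijection $\cpl(P,R)\leftrightarrow\cpl(R,P)$. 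Recall also that $\mc(P,R)\in\R$ for all $P,R\in\Pc_2^N(\H)$, since both measures have finite second moments.

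Now fix $P,Q\in\Pc_2^N(\H)$ with $P\lemc Q$. By \Cref{prop:MC_order:_char} (equivalently \Cref{cor:MCincrMC}) this is the same as $\mc(P,R)\le\mc(Q,R)$ for all $R\in\Pc_2^N(\H)$. Hence, for each fixed $R$,
\[
\mc(P,R)-\phi^{\mc}(R)\le\mc(Q,R)-\phi^{\mc}(R),
\]
and passing to the supremum over $R$ on both sides yields $\phi(P)\le\phi(Q)$, which is precisely the assertion that $\phi$ is increasing in the $\lemc$-order.

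I do not expect a genuine obstacle here: the statement is a formal consequence of \Cref{prop:MC_order:_char} together with the structural fact that every $\mc$-convex functional is a supremum of functionals of the form $\mc(\,\cdot\,,R)-c_R$, each of which is $\lemc$-monotone by \Cref{cor:MCincrMC}, and that $\lemc$-monotonicity is preserved under suprema. The only routine points to check are that $\mc$ is real-valued on $\Pc_2^N(\H)\times\Pc_2^N(\H)$ (so the differences above are unambiguously defined) and the degenerate case $\phi\equiv+\infty$ (equivalently $\phi^{\mc}\equiv-\infty$), in which the claimed monotonicity is trivial.
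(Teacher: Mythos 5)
Your argument is correct and follows the paper's own route: write $\phi=\phi^{\mc\mc}$ as a supremum of the functionals $\mc(\cdot,R)-\phi^{\mc}(R)$, each of which is $\lemc$-increasing by \Cref{cor:MCincrMC}, and note that taking suprema preserves monotonicity. The paper's proof states exactly this in one line; you simply spell out the supremal representation and the routine edge cases.
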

\begin{proof}
    This follows because $\mc(\cdot, R)$ is increasing in the $\lemc$-order (see \Cref{cor:MCincrMC}) and every $\mc$-convex function is by definition supremum of such functionals (plus constants). 
\end{proof}

\subsection{$N$-Monge Couplings}\label{sec:NMonge}
In this section we use $\Xs, \Ys$ to denote Polish metric spaces.

Recall from \eqref{eq:IpDef} that $\ip_t= \law^{\F_{(N-1):t}}(X) \in     A_t = \Pc_2^{N-t}(\H)$. 
\begin{definition}
Let $P,Q \in \Pc_2^N(\X)$. We say that $\Pi \in \cpl^N(P,Q)$ is $N$-Monge if there exists a map $\xi : A_{1:N} \to \X$ such that, whenever $(X,Y) \simad \Pi$, we have $Y = \xi(\ip(X))$. 
\end{definition}
Crucially, being $N$-Monge is equivalent to the fact that $\ip(Y) = T(\ip(X))$ as it is possible to `unfold' $\xi: A_{1:N} \to \X$ to a map $T: A_{1:N} \to A_{1:N}$ that satisfies $\ip(Y) = T(\ip(X))$.

\begin{proposition} \label{prop:N-Monge.ip}
Let $P \in \Pc_2^N(\mathcal X)$ and let $\xi : \prod_{t=1}^N\Pc^{N-t}(\mathcal X) \to \mathcal Y$ be measurable. 
Write $Y = \xi(\ip(X))$ and $Q := \lawad(Y)$ and inductively define mappings $T_N := \xi$ and, for $t=N-1,\dots,1$,
\begin{align*}
   T_t : A_{1:t} \to \Pc_2^{N-t}(\X) ,\quad 
   T_t(p^1, \dots, p^t) := T_{t+1}(p^1, \dots, p^t, \cdot)_\# p^t.
\end{align*}
Then, for every $t = 1,\dots, N$, we have $\ip_t( Y)  = T_t(\ip_{1:t}(X))$ almost surely.
\end{proposition}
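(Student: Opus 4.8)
The plan is to prove the identity $\ip_t(Y) = T_t(\ip_{1:t}(X))$ by backward induction on $t$, starting from $t=N$ and descending to $t=1$. The base case $t=N$ is immediate from the definitions: by \eqref{eq:IpDef} we have $\ip_N(Y) = Y = \xi(\ip(X)) = T_N(\ip_{1:N}(X))$, since $T_N := \xi$ and $\ip(X) = \ip_{1:N}(X)$.

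\textbf{Induction step.} Assume the claim holds at level $t+1$, i.e.\ $\ip_{t+1}(Y) = T_{t+1}(\ip_{1:t+1}(X))$ almost surely. I want to deduce it at level $t$. Recall from \eqref{eq:IpDef} and the conventions on iterated conditional laws that $\ip_t(Y) = \law^{\F_{(N-1):t}}(Y) = \law(\ip_{t+1}(Y) \mid \F_t)$; that is, $\ip_t$ is obtained from $\ip_{t+1}$ by taking one more conditional law given $\F_t$. The key structural observation is that the tuple $\ip_{1:t}(X) = (\ip_1(X),\dots,\ip_t(X))$ is $\F_t$-measurable (each $\ip_s(X) = \law^{\F_{(N-1):s}}(X)$ is $\F_s$-measurable and $\F_s \subseteq \F_t$ for $s \le t$), while by construction $T_t(p^1,\dots,p^t) = (T_{t+1}(p^1,\dots,p^t,\cdot))_\# p^t$ is precisely the pushforward of the $t$-th coordinate $p^t$ under the section of $T_{t+1}$ at $(p^1,\dots,p^t)$. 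So I need to check that conditioning the $A_{t+1}$-valued random variable $\ip_{t+1}(Y) = T_{t+1}(\ip_{1:t+1}(X))$ on $\F_t$ yields exactly this pushforward. Write $\ip_{1:t}(X) = \mathbf{p}$ (an $\F_t$-measurable $A_{1:t}$-valued random variable) and note $\ip_{t+1}(X) = \ip_t(X) = \mathbf{p}^t$, the last coordinate of $\mathbf{p}$ — here I use that $\ip_{t+1}$ and $\ip_t$ agree as random \emph{variables} when we are at the outer layer, or more carefully that $\ip_{t+1}(X)$ is an $A_{t+1} = \Pc_2^{N-t-1}(H)$-valued random variable whose conditional law given $\F_t$ is $\ip_t(X) = \law^{\F_t}(\ip_{t+1}(X))$. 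Then for a bounded measurable test function $g$ on $A_{t+1}$,
\[
\E[g(\ip_{t+1}(Y)) \mid \F_t] = \E[g(T_{t+1}(\mathbf{p}, \ip_{t+1}(X))) \mid \F_t] = \int g(T_{t+1}(\mathbf{p}, z))\, \mathbf{p}^t(dz) = \int g\, d\big((T_{t+1}(\mathbf{p},\cdot))_\# \mathbf{p}^t\big),
\]
using $\F_t$-measurability of $\mathbf{p}$ to pull it out of the conditional expectation and the fact that the conditional law of $\ip_{t+1}(X)$ given $\F_t$ is $\mathbf{p}^t = \ip_t(X)$. This identifies $\law(\ip_{t+1}(Y) \mid \F_t) = (T_{t+1}(\ip_{1:t}(X),\cdot))_\# \ip_t(X) = T_t(\ip_{1:t}(X))$, which is $\ip_t(Y)$, completing the induction.

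\textbf{Main obstacle.} The delicate point is bookkeeping the measure-theoretic identity ``$\law^{\F_t}(\ip_{t+1}(X)) = \ip_t(X)$'' with the correct spaces: $\ip_{t+1}(X)$ takes values in $A_{t+1} = \Pc_2^{N-t-1}(H)$ and $\ip_t(X)$ in $A_t = \Pc_2^{N-t}(H)$, so the assertion is that the (regular) conditional law of the $A_{t+1}$-valued random variable $\ip_{t+1}(X)$ given $\F_t$, viewed as an $A_t$-valued random variable, coincides a.s.\ with $\ip_t(X)$ — this is essentially the definition \eqref{eq:IpDef} of $\ip_t$ as $\law^{\F_{(N-1):t}}(X) = \law(\ip_{t+1}(X)\mid\F_t)$, but one must be careful that all conditional laws are taken on Polish spaces so regular versions exist (which holds since all the $A_s$ are Polish). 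A secondary routine check is measurability of the maps $T_t$, which follows by downward induction from measurability of $\xi = T_N$ together with measurability of the pushforward/disintegration operations on Polish spaces (e.g.\ via the measurability of $(\mu,x)\mapsto$ evaluation against bounded continuous functions); this can be cited or dispatched with a one-line appeal to standard selection/measurability results already invoked elsewhere in the paper (cf.\ the use of Jankov--von Neumann above).
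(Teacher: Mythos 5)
Your proof is correct and is essentially the same backward-induction argument as the paper's: base case at $t=N$ from the definitions, then at each step use $\F_t$-measurability of $\ip_{1:t}(X)$ to pull it out of the conditional law and identify the remaining conditioning as the pushforward under $T_{t+1}(\ip_{1:t}(X),\cdot)$ of $\law(\ip_{t+1}(X)\mid\F_t)=\ip_t(X)$, which is $T_t(\ip_{1:t}(X))$. The only blemish is the throwaway line ``$\ip_{t+1}(X)=\ip_t(X)=\mathbf p^t$,'' which is a type error (these live in $A_{t+1}$ and $A_t$ respectively); but you immediately supply the correct statement, that $\ip_t(X)=\law(\ip_{t+1}(X)\mid\F_t)$, and that is the version actually used in your displayed computation, so there is no genuine gap.
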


\begin{proof}
We show the claim by backward induction. 
For $t=N$, we trivially have 
$$\ip_N(Y) = Y = \xi(\ip(X)) = T_N(\ip_{1:N}(X)).$$ 
Next, suppose the claim is true for $t+1$.
We have almost surely that
\begin{align*}
    \ip_t(Y) &= \law(\ip_{t+1}(Y) | \F_t) = \law(T_{t+1}(\ip_{1:t+1}(X)) | \F_t)
    \\
    &= (T_{t+1}(\ip_{1:t}(X),\cdot))_\# \law(\ip_{t+1}(X) | \F_t)
    \\
    &= (T_{t+1}(\ip_{1:t}(X),\cdot))_\# \ip_t(X) = T_t(\ip_{1:t}(X)),
\end{align*}
where the first, fourth and fifth equality hold by definition, the second by inductive assumption and the third by $\F_t$-measurability of $\ip_{1:t}(X)$.
This yields the claim.
\end{proof}

\begin{corollary}
    \label{cor:N-Monge.uniqueness}
    Let $P,Q \in \Pc_2^N(\H)$. Let $\Pi^1,\Pi^2 \in \cpl^N(P,Q)$ be $N$-Monge. Then, 
    \[ (\Pi^1 + \Pi^2)/2 \text{ is $N$-Monge} \iff \Pi_1 = \Pi_2. \]
\end{corollary}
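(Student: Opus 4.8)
The plan is to exploit the representation of $N$-Monge couplings via the unfolded maps $T_t$ from \Cref{prop:N-Monge.ip} and compare layer by layer. Let $\Pi^1, \Pi^2 \in \cpl^N(P,Q)$ be $N$-Monge with associated maps $\xi^1, \xi^2 : A_{1:N} \to \H$, and let $T^1_t, T^2_t$ be the corresponding families from \Cref{prop:N-Monge.ip}. The implication ``$\Pi^1 = \Pi^2 \Rightarrow (\Pi^1+\Pi^2)/2$ is $N$-Monge'' is trivial, so the content is the forward direction: assuming $(\Pi^1 + \Pi^2)/2$ is $N$-Monge, conclude $\Pi^1 = \Pi^2$.

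First I would observe that $(\Pi^1+\Pi^2)/2 \in \cpl^N(P,Q)$ since the coupling constraint from \Cref{def:CplN} is preserved under mixtures (the pushforwards $\Pc^N[\pr_i]$ are affine). Then I would pick $X \simad P$ that admits independent randomization, which is possible by enlarging the space or directly via \Cref{prop:ReprLawadAsRV} applied on $(0,1)\times\mathcal X$; using the extra randomness $U$, I can realize $(X,Y^1) \simad \Pi^1$ and $(X,Y^2)\simad\Pi^2$ \emph{on the same $X$}, with $Y^i = \xi^i(\ip(X))$. Because $(\Pi^1+\Pi^2)/2$ is $N$-Monge, there is a single map $\xi$ with the property that if $(X,Y) \simad (\Pi^1+\Pi^2)/2$ then $Y = \xi(\ip(X))$. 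Realizing $(\Pi^1+\Pi^2)/2$ as the law of $(X, Y^B)$ where $B$ is an independent fair coin and $Y^B = \xi^B(\ip(X))$, we get $\xi^B(\ip(X)) = \xi(\ip(X))$ almost surely, hence $\xi^1(\ip(X)) = \xi^2(\ip(X)) = \xi(\ip(X))$ a.s. Thus $Y^1 = Y^2$ a.s., and therefore $\lawad(X, Y^1) = \lawad(X, Y^2)$, i.e.\ $\Pi^1 = \Pi^2$.

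The one point requiring care — and the main obstacle — is the step realizing the mixture $(\Pi^1+\Pi^2)/2$ as the adapted law of a single random variable built from $X$, $B$, and the two maps, in a way compatible with the filtration $(\F_t)_{t=1}^N$: one must check that $\lawad(X, \xi^B(\ip(X)))$ genuinely equals $(\lawad(X,Y^1) + \lawad(X,Y^2))/2$. This is where independent randomization enters: the coin $B$ must be independent of $\F_1$ (indeed of all of $\sigma(X)$) so that conditioning on $\F_t$ and then taking laws distributes over the mixture. Concretely, writing $c$ for a bounded continuous test function and unwinding $\lawad$ via the tower \eqref{eq:adlaw6}, independence of $B$ from $\F_1$ gives $\law(\,\cdot\mid \F_1) = \tfrac12\law(\,\cdot\mid \F_1, B=0) + \tfrac12\law(\,\cdot\mid\F_1, B=1)$ at the innermost relevant level, and this identity propagates outward. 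Once this is in place, the $N$-Monge hypothesis on the mixture forces $\xi^1 = \xi^2$ on the (adapted) range of $\ip(X)$, and since any $X' \simad P$ yields the same $\ip$-image law, we conclude $\Pi^1 = \Pi^2$. I would also remark that \Cref{prop:N-Monge.ip} shows the equality $\xi^1 = \xi^2$ (mod the image law of $\ip(X)$) lifts to $T^1_t = T^2_t$ for all $t$, which is exactly the statement that the full $N$-couplings agree.
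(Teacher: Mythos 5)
The idea of realizing the mixture via a coin flip $B$ and invoking the $N$-Monge property to force $\xi^1 = \xi^2$ is a reasonable strategy, and it differs from the paper's route (which instead applies \Cref{prop:N-Monge.ip} to build Borel maps $T^i$ with $T^i_\#P = \Pi^i$ and then observes that a mixture of two graph-pushforwards can only be a pushforward when the maps agree $P$-a.e.). However, the crucial measurability requirement on $B$ is stated backwards in your proposal, and this is not a cosmetic slip: it breaks the identity $\lawad(X,Y^B) = \tfrac12\Pi^1 + \tfrac12\Pi^2$ on which the whole argument rests.

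You write that $B$ must be \emph{independent of} $\F_1$. Unwinding $\lawad$ via the tower \eqref{eq:adlaw6}, if $B$ is not $\F_1$-measurable then at the outer layer the conditioning does not resolve $B$, and one obtains instead
\[
\ip_1(X,Y^B) = \tfrac12\,\ip_1(X,Y^1) + \tfrac12\,\ip_1(X,Y^2) \quad\text{a.s.},
\]
so that $\lawad(X,Y^B) = \law\!\bigl(\tfrac12\,\ip_1(X,Y^1)+\tfrac12\,\ip_1(X,Y^2)\bigr)$. This is the \emph{law of a pointwise average} of two random measures, not the average $\tfrac12\,\law(\ip_1(X,Y^1)) + \tfrac12\,\law(\ip_1(X,Y^2)) = \tfrac12\Pi^1 + \tfrac12\Pi^2$ of their laws; these are genuinely different objects. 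What is actually needed is that $B$ be $\F_1$-\emph{measurable} and independent of $\ip_1(X)$ (equivalently of $T^1(\ip_1(X))$, $T^2(\ip_1(X))$). Only then, on $\{B=i\}$, does one get $\ip_1(X,Y^B)=\ip_1(X,Y^i)$, and then $\law(\ip_1(X,Y^B))=\tfrac12\Pi^1+\tfrac12\Pi^2$ by independence of $B$ from $\ip_1(X)$.

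Even after fixing that, you still need to \emph{produce} an $\F_1$-measurable coin independent of $\ip_1(X)$ while simultaneously representing $P$ on the fixed space $([0,1]^N,\lambda,(\F_t))$. This requires choosing the representative $X$ so that $\ip_1(X)$ does not exhaust the randomness of $U_1$, e.g.\ by realizing $\ip_1(X)$ through one half of a measure-preserving bijection $[0,1]\to[0,1]^2$ and using the other half for $B$; one must also ensure that the later $\ip_t(X)$ are $\sigma(\ip_1(X),U_{2:N})$-measurable (as in \Cref{lem:ip-meas-rv}) so that $B\perp\ip_1(X)$ really yields $B\perp\ip(X)$. Your proposal gestures at ``enlarging the space'', but enlarging the space changes the filtration and thus potentially changes $\lawad$, so this cannot be done casually. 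The paper's argument sidesteps all of this: since $\tfrac12 T^1_\#P + \tfrac12 T^2_\#P = T_\#P$ with each $T^i$ a graph map over the first-marginal projection $\Pc^{N-1}[\pr_1]$, disintegrating both sides over that projection gives $\tfrac12\delta_{T^1(p)}+\tfrac12\delta_{T^2(p)}=\delta_{T(p)}$ for $P$-a.e.\ $p$, forcing $T^1=T^2$ directly — no extra randomness, no filtration manipulation.
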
 

\begin{proof}
    Note that the $\Longleftarrow$-implication is trivial.
    
    To see the $\Longrightarrow$-implication, apply \Cref{prop:N-Monge.ip} with $\mathcal X = H$, $\mathcal Y = H \times H$ and $\xi(\ip(X)) = (X,\xi^i(\ip(X)))$, where $\xi^i$ is the corresponding map associated with the $N$-Monge coupling $\Pi^i$, $i = 1,2$.
    Using the assertion of \Cref{prop:N-Monge.ip} with $t = 1$, we obtain Borel maps $T^i$ with $T^i_\# P = \Pi^i$.
    As $(\Pi^1 + \Pi^2)/2$ is assumed to be $N$-Monge, we deduce (by analogous reasoning as above) that $(\Pi^1 + \Pi^2)/2$ is given by pushforward of some measurable map.
    This enforces that $T^1 = T^2$ $P$-almost surely, and hence $\Pi^1 = \Pi^2$.
\end{proof}

\begin{definition} 
We call $(\mu,\nu) \in \Pc_2(\H) \times \Pc_2(\H)$ a strict Monge pair if the optimal coupling between $\mu$ and $\nu$ is unique and of Monge type.

For $N \ge 2$, we call
$(P,Q)  \in \Pc_2^N(\H) \times \Pc_2^N(\H) $ a strict Monge pair if
\begin{itemize}
    \item the optimal coupling between $P$ and $Q$ is unique and of Monge type, i.e.\ $\cplopt(P,Q) = \{ (\id,T)_\# P \}$ for some $T : \Pc^{N-1}_2(\H) \to \Pc^{N-1}_2(\H)$;
    \item For $P$-almost every $p$, the pair $(p,T(p))$ is a strict Monge pair.
\end{itemize}
\end{definition}

\begin{proposition} \label{prop:N-Monge.strict-Monge}
Let $P,Q \in \Pc_2^N(\H)$. Then, $(P,Q)$ is a strict Monge pair if and only if $\cplopt^N(P,Q)= \{\Pi\}$ and $\Pi$ is $N$-Monge.  
\end{proposition}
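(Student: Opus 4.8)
\textbf{Proof plan for Proposition~\ref{prop:N-Monge.strict-Monge}.}
The plan is to prove both implications by induction on $N$, using the dynamic programming structure of $\cpl^N$ and the layerwise unfolding provided by \Cref{prop:N-Monge.ip} and \Cref{cor:N-Monge.uniqueness}. The base case $N=1$ is immediate from the definitions: a coupling in $\cpl^1(\mu,\nu)=\cpl(\mu,\nu)$ is $1$-Monge precisely when it is concentrated on the graph of a map $\xi:\H\to\H$, and uniqueness of the optimal coupling together with Monge-ness is exactly the definition of a strict Monge pair. For the induction step I would fix $N\ge 2$ and assume the equivalence holds for $N-1$.

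For the direction ``strict Monge pair $\Rightarrow$ $\cplopt^N(P,Q)=\{\Pi\}$ with $\Pi$ $N$-Monge'': by hypothesis $\cplopt(P,Q)=\{(\id,T)_\#P\}$ for a map $T:\Pc_2^{N-1}(\H)\to\Pc_2^{N-1}(\H)$, and for $P$-a.e.\ $p$ the pair $(p,T(p))$ is a strict Monge pair, so by the inductive hypothesis $\cplopt^{N-1}(p,T(p))=\{\Pi_{p}\}$ with $\Pi_p$ $(N-1)$-Monge via some $\xi_p$. Glueing the outer optimizer $(\id,T)_\#P$ with the measurable selection $p\mapsto\Pi_p$ (measurability by Jankov--von~Neumann, exactly as in the proof of \Cref{prop:DPP_CplN_MC}) produces a measure $\Pi\in\cpl^N(P,Q)$ which is $\mc$-optimal by \Cref{prop:DPP_CplN_MC} and the DPP of \Cref{thm:ftot.mc}; uniqueness follows because any $\mc$-optimizer in $\cpl^N(P,Q)$ must project to an $\mc$-optimizer in $\cpl(P,Q)$ (hence equals $(\id,T)_\#P$) and must disintegrate into $\mc$-optimizers of $\cpl^{N-1}(p,T(p))$ $P$-a.e.\ (hence equals $\Pi_p$ $P$-a.e.). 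Finally, assembling the maps $\xi_p$ with $T$ gives a single $\xi:A_{1:N}\to\H$ realizing $Y=\xi(\ip(X))$ for $(X,Y)\simad\Pi$, so $\Pi$ is $N$-Monge.

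For the converse ``$\cplopt^N(P,Q)=\{\Pi\}$, $\Pi$ $N$-Monge $\Rightarrow$ strict Monge pair'': let $\xi:A_{1:N}\to\H$ be the associated map and apply \Cref{prop:N-Monge.ip} to unfold it into the maps $T_t$; the case $t=1$ gives a Borel $T:=T_1:\Pc_2^{N-1}(\H)\to\Pc_2^{N-1}(\H)$ with $\ip_1(Y)=T(\ip_1(X))$, i.e.\ $\cpl(P,Q)\ni(\id,T)_\#P$ is the push-forward of $\Pi$ under $\Pc^N[\pr]\circ I$-type maps and is $\mc$-optimal in the outer layer. Uniqueness of this outer optimizer follows from \Cref{cor:N-Monge.uniqueness}: if $\cpl(P,Q)$ contained a second optimizer, glueing each with optimal inner kernels would yield two distinct $N$-Monge elements of $\cplopt^N(P,Q)$ whose average is again optimal (by convexity of $\cplopt^N$) and $N$-Monge, contradicting $\cor:N-Monge.uniqueness$ unless they coincide. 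Disintegrating $\Pi$ along the outer coupling then exhibits, for $P$-a.e.\ $p$, an $(N-1)$-Monge element of $\cplopt^{N-1}(p,T(p))$, which by the inductive hypothesis means $(p,T(p))$ is a strict Monge pair; uniqueness of the inner optimizers is inherited the same way. Hence $(P,Q)$ is a strict Monge pair.

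The main obstacle I expect is the bookkeeping around \emph{uniqueness} in the gluing/disintegration arguments: one must argue that every $\mc$-optimizer in $\cpl^N(P,Q)$ necessarily decomposes as (outer optimizer)$\otimes$(inner optimizers) $P$-a.e., which requires a careful DPP argument showing that the value $\mc(P,Q)$ forces layerwise optimality (this is implicit in \Cref{prop:DPP_CplN_MC} but needs to be stated as a genuine ``necessary condition'' rather than just ``there exists an optimizer of this form''), and then that the convexity of the optimal set combined with \Cref{cor:N-Monge.uniqueness} upgrades ``an $N$-Monge optimizer exists'' to ``the optimizer is unique and $N$-Monge''. The measurable selection needed to make the inner maps $p\mapsto\xi_p$ jointly measurable in $(p,\cdot)$ is routine given the earlier uses of Jankov--von~Neumann, so I do not anticipate difficulty there.
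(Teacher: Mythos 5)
Your overall strategy matches the paper's: induction on $N$, layerwise decomposition via the DPP, and unfolding $N$-Monge maps with \Cref{prop:N-Monge.ip}. The forward direction is sound (the paper is a touch more direct — it starts from an arbitrary $\Pi \in \cplopt^N(P,Q)$ and shows it must coincide with the $N$-Monge one, rather than constructing one by gluing and then arguing uniqueness separately — but both come to the same thing). You also correctly identify that the DPP must be used as a necessary condition, which the paper establishes inline via the chain of inequalities forced to be equalities.

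There is, however, a genuine flaw in your converse argument for uniqueness of the outer optimizer. You write that gluing $\pi_1 \neq \pi_2 \in \cplopt(P,Q)$ with inner kernels ``would yield two distinct $N$-Monge elements of $\cplopt^N(P,Q)$ whose average is again optimal\ldots and $N$-Monge, contradicting \Cref{cor:N-Monge.uniqueness}.'' This does not work: (i) the glued couplings $\Pi_1,\Pi_2$ are not $N$-Monge unless you already know the outer couplings $\pi_i$ are Monge, which is precisely what you are trying to prove; and (ii) the average of two $N$-Monge couplings is \emph{not} automatically $N$-Monge — \Cref{cor:N-Monge.uniqueness} exists precisely because that is not automatic, and asserting both ``$\Pi_1\neq\Pi_2$'' and ``the average is $N$-Monge'' is already inconsistent with that corollary, so the invocation is circular. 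The correct and far simpler argument (which the paper uses) is: by hypothesis $\cplopt^N(P,Q)=\{\Pi\}$, so the two glued measures $\Pi_1,\Pi_2$ both equal $\Pi$; and because the selection $(p,q)\mapsto\tilde\Pi_{p,q}$ is injective (one recovers $(p,q)$ as the $(N-1)$-marginals of $\tilde\Pi_{p,q}$), $\Pi_1=\Pi_2$ forces $\pi_1=\pi_2$. With that step repaired, the remainder of your converse argument — unfolding $\xi$ to get the outer Monge map $T$, then using the inductive hypothesis for the inner pairs $(p,T(p))$, with inner uniqueness ``inherited the same way'' — matches the paper's (equally terse) argument.
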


\begin{proof}
    First, suppose that $(P,Q)$ is a strict Monge pair. We need to show that there exists a unique optimizer $\Pi \in \cpl^N(P,Q)$ and that it is $N$-Monge. To this end, we show that there is $\xi : A_{1:N} \to \H $ such that for every optimizer $\Pi \in \cpl^N(P,Q)$ and random variables $X,Y$ with $(X,Y)\simad \Pi$ it holds $Y = \xi(\ip(X))$ almost surely. 
    
    We show the claim by induction on $N$. For $N=1$ it is clear. Suppose that the claim holds for $N-1$.  That is,  if $(p,q) \in \Pc_2^{N-1}(H) \times \Pc_2^{N-1}(H)$ is a strict Monge pair then there is $\xi^{p,q}: A_{2:N} \to H$ such that for all $X',Y' \in L_2^{N-1}(H)$ with $\ip_1(X',Y') \in \cplopt^{N-1}(p,q)$ we have $Y' = \xi^{p,q}(\ip(X'))$ almost surely.
    
    Fix an optimizer $\Pi \in \cplopt^N(P,Q)$ and random variables  $(X,Y)\simad \Pi$. We then have a.s.
    \[
        \Pc^{N-1}[\pr_1](\ip_1(X,Y)) = \ip_1(X) \sim P \text{ and } 
        \Pc^{N-1}[\pr_2](\ip_1(X,Y)) = \ip_1(Y) \sim Q.
    \]
    Therefore, we get
    \begin{align*}
        \MC(P,Q) &= \int x \cdot y \, dI^{N-1}\Pi(x,y) = \E\Big[ \int x \cdot y \, I^{N-2}\ip_1(X,Y)(dx,dy) \Big]
        \\
        &\le \E\big[ \mc(\ip_1(X),\ip_1(Y)) \big] \le \MC(P,Q).
    \end{align*}
    Thus, all inequalities are equalities. In particular, $\law(\ip_1(X),\ip_1(Y)) \in \cplopt(P,Q)$ and almost surely $\ip_1(X,Y) \in \cplopt^{N-1}(\ip_1(X),\ip_1(Y))$.
    As $(P,Q)$ is a strict Monge pair, this yields $\ip_1(Y) = T(\ip_1(X))$ for some map $T: \Pc_2^{N-1}(H) \to \Pc_2^{N-1}(H)$ and there is $\lambda$-full set $\Omega' \in \F_1$ such that for every $\omega \in \Omega'$ we have that 
    \begin{gather*}
        (\ip_1(X)(\omega),\ip_1(Y)(\omega)) \in \Pc_2^{N-1}(\H ) \times \Pc_2^{N-1}(\H ) \text{ is a strict Monge pair},
        \\
        \ip_1(X,Y)(\omega) \in \cplopt^{N-1}(\ip_1(X)(\omega),T(\ip_1(X)(\omega))).
    \end{gather*}
    By the induction hypothesis, we obtain that for all $\omega \in \Omega'$
    \begin{align*}
        Y(\omega_1,\cdot) = \xi^{\ip_1(X)(\omega), \ip_1(Y)(\omega)}(\ip_{2:N}(X)(\omega_1,\cdot)).
    \end{align*}
    Using this we can define (on a $\law(\ip(X))$-full set) the desired map  $\xi : A_{1:N} \to \H $ via
    \[
        \xi(\ip(X)) := \xi^{\ip_1(X),T(\ip_1(X))}(\ip_{2:N}(X)),
    \]
    which satisfies $\xi(\ip(X)) = Y$ almost surely.

    Conversely, we need to show that if $\cplopt^N(P,Q)= \{\Pi\}$ and $\Pi$ is $N$-Monge that then $(P,Q)$ is a strict Monge pair.
    Again, the claim is trivially satisfied when $N = 1$. Assume that the claim holds for $N-1$. That is, if $(p,q) \in \Pc_2^{N-1} \times \Pc_2^{N-1}$ satisfies $\cplopt^{N-1}(p,q) = \{ \Pi_{p,q} \}$ and $\Pi_{p,q}$ is $(N-1)$-Monge, then $(p,q)$ is strict Monge.
    If there are $\pi_1, \pi_2 \in \cplopt(P,Q)$ with $\pi_1 \neq \pi_2$ and $\tilde \Pi_{p,q} \in \cplopt^{N-1}(p,q)$ is a measurable selection, then $\Pi_i := ((p,q) \mapsto \tilde \Pi_{p,q})_\# \pi_i \in \cplopt^N(P,Q)$. Thus, $\Pi = \Pi_1 = \Pi_2$ which can only be the case if $\pi_1 = \pi_2$.
    Since $\Pi$ is $N$-Monge there is a measurable map $\xi : A_{1:N} \to \mathcal X$ such that $(X,\xi(\ip(X))) \simad \Pi$ for $X \simad P$.
    As in the first part, we have that $\law(\ip_1(X), \ip_1(\xi(\ip(X)))) \in \cplopt(P,Q)$, from where it follows that $\pi_1 = \law(\ip_1(X), \ip_1(\xi(\ip(X))))$ is also of Monge type.
    By the inductive assumption, we have that for almost surely $(\ip_1(X),\ip_1(Y))$ is a strict Monge pair.
    This completes the induction step and concludes the proof.
\end{proof}

\subsection{$\mc$-subdifferentials}
Before considering subdifferentials, we briefly compare continuity on $\Pc_2^N(H)$ and $L_2^N(H).$ \begin{lemma}\label{lem:continuity_lift} 
Let $\phi : \Pc_2^N(\H) \to (-\infty, +\infty]$ and $P \in \Pc_2^N(\H)$. Then the following are equivalent:
\begin{enumerate}
    \item $\phi$ is continuous at $P$.
    \item For all $X \simad P$,  $\overline{\phi}$ is continuous at $X$.
    \item There exists $X \simad P$ such that $\overline{\phi}$ is continuous at $X$.  
\end{enumerate}
In particular, 
\[
\cont(\overline{\phi}) = \{ X \in L_2^N(\H) : \lawad(X) \in \cont(\phi) \},
\]
and hence 
$\cont(\overline{\phi})$ is adapted-law invariant.  
\end{lemma}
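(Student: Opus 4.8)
The plan is to obtain all three equivalences, and the concluding identity, as formal consequences of the Skorokhod-type representation in \Cref{prop:iteratedSkorohod} (together with the continuity of $\lawad$ recorded there). Throughout I use that $L_2^N(\H)$ and $\Pc_2^N(\H)$ are metric, so continuity at a point may be checked sequentially, and that continuity points lie in the respective domains, so all the limits below take values in $\R$.

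First I would prove (1) $\Rightarrow$ (2). Fix $X\simad P$ and a sequence $X_n\to X$ in $L_2^N(\H)$. By \Cref{prop:iteratedSkorohod} the map $\lawad$ is $\|\cdot\|_2$-to-$\W_2$ continuous, so $\lawad(X_n)\to\lawad(X)=P$, and continuity of $\phi$ at $P$ gives $\overline\phi(X_n)=\phi(\lawad(X_n))\to\phi(P)=\overline\phi(X)$; hence $\overline\phi$ is continuous at $X$. The implication (2) $\Rightarrow$ (3) is immediate, since there exists some $X\simad P$ by \Cref{prop:ReprLawadAsRV}. For (3) $\Rightarrow$ (1), let $X\simad P$ be a continuity point of $\overline\phi$ and let $P_n\to P$ in $\W_2$. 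By the equivalence (1) $\Leftrightarrow$ (2) of \Cref{prop:iteratedSkorohod} there are $X_n\simad P_n$ with $\|X_n-X\|_2\to0$, and continuity of $\overline\phi$ at $X$ then yields $\phi(P_n)=\overline\phi(X_n)\to\overline\phi(X)=\phi(P)$; thus $\phi$ is continuous at $P$.

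For the final assertion, applying (3) $\Rightarrow$ (1) with $P:=\lawad(X)$ (legitimate as $X\simad\lawad(X)$) shows $X\in\cont(\overline\phi)\implies\lawad(X)\in\cont(\phi)$, while (1) $\Rightarrow$ (2) gives the converse implication. This proves $\cont(\overline\phi)=\{X\in L_2^N(\H):\lawad(X)\in\cont(\phi)\}$, and this set is adapted-law invariant because $\lawad(X)=\lawad(X')$ whenever $X\simad X'$. I do not expect a genuine obstacle here: the argument is purely a transcription of \Cref{prop:iteratedSkorohod}; the only points requiring mild care are the reduction of continuity to sequential continuity and the observation that continuity points belong to the domain, so that $\phi(P)$, $\overline\phi(X)$ are finite in all the displays above.
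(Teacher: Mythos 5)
Your proof is correct and follows essentially the same route as the paper's: both establish (1)~$\Rightarrow$~(2) via the continuity of $\lawad$, dispose of (2)~$\Rightarrow$~(3) by existence of a representative, and prove (3)~$\Rightarrow$~(1) using the Skorokhod-type lifting from \Cref{prop:iteratedSkorohod}. Your extra remarks on sequential continuity and on finiteness at continuity points are harmless elaborations that the paper leaves implicit.
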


With the same arguments as in the proof of \Cref{lem:continuity_lift} it follows that a function $\phi : \Pc_2^N(\H) \to (-\infty, +\infty]$ is lsc if and only if its adapted Lions lift $\overline{\phi}$ is lsc.
\begin{proof}[Proof of \Cref{lem:continuity_lift}]
To see that (1) implies (2) let $X \simad P$ and $\|X_n - X \|_2 \to 0$. Then, $\W_2(\lawad(X_n), \lawad(X)) \to 0$ and hence $\overline{\phi}(X_n) = \phi(\lawad(X_n) \to \phi(\lawad(X)) = \overline{\phi}(X)$. 

The implication from (2) to (3) is trivial.

Suppose that (3) holds true. Let $P_n \to P$ in $\W_2$. By \Cref{prop:iteratedSkorohod} there are $X_n \simad P_n$ such that $\| X_n - X\|_2 \to 0$. Hence, $\phi(P_n) = \overline{\phi}(X_n) \to \overline{\phi}(X) = \phi(P)$. 
\end{proof}

\medskip

We recall the notion of $\mc$-subdifferntials from \Cref{def:MC_subdiff_ad}. For a function  $\phi : \Pc_2^N(\H) \to (-\infty, \infty]$ and $P \in \Pc_2^N(\H)$ its $\mc$-subdifferential at $P$ is defined as 
\[
\partial_{\mc} \phi(P) = \{ Q\in \Pc_2^N(\H) : \phi(R) \ge \phi(P) + \mc(R,Q) - \mc(P,Q) \text{ for every } R \in \Pc_2^N(\H)\}.
\]

\begin{proposition}\label{prop:subdiff_char_ad}
Let $\phi : \Pc_2^N(\H) \to (-\infty, +\infty]$ be proper and let $X,Y \in L_2^N(\H)$. If $X \simad P$ and $Z \simad Q$ we have
\[
Z \in \partial \overline{\phi}(X) \iff Q \in \partial_{\mc} \phi(P) \text{ and } \E[X \cdot Z] = \mc(P,Q) 
\]
\end{proposition}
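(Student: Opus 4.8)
The plan is to follow the template of \Cref{prop:Subdiff_MCvsL} (the $N=1$ case), systematically replacing $\law$ by $\lawad$, $\sim$ by $\simad$, and the randomization lemma \Cref{lem:EnoughRandomization} by an adapted counterpart. The one preliminary I would isolate is: for any \emph{fixed} $X \simad P$ and any $R \in \Pc_2^N(\H)$,
\[
\mc(P,R) = \sup_{Y \simad R} \E[\langle X, Y\rangle].
\]
I would deduce this from \Cref{lem:DPP} applied with $c(x,y) := -\langle x, y\rangle$, whose growth bound $|c(x,y)| \le \tfrac12|x|^2 + \tfrac12|y|^2$ is met since all relevant representatives lie in $L_2^N(\H)$. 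Since the iterated infimum-cost associated with this $c$ equals $-\mc$, \Cref{lem:DPP} reads $\mc(P,R) = \sup_{T}\E[\langle X\circ T, X'\rangle]$, the supremum being over bi-adapted measure-preserving bijections $T$, for any fixed $X \simad P$ and $X' \simad R$. As $X \circ T \simad P$ by \Cref{lem:iso_pres_ip} and $\E[\langle X\circ T, X'\rangle] \le \mc(P,R)$ by \Cref{cor:MC_max_over_RV}, all these quantities coincide, and by symmetry of $\mc$ this gives the displayed identity. (Alternatively, the $\varepsilon$-transfer argument in the proof of \Cref{lem:epsTransfer} goes through verbatim using \Cref{cor:transfer_ad} together with uniform integrability of $\langle X\circ T, X'\rangle$ in $T$.)

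For the forward implication, assume $Z \in \partial\overline\phi(X)$, i.e.\ $\overline\phi(Y) \ge \overline\phi(X) + \E[\langle Y - X, Z\rangle]$ for all $Y \in L_2^N(\H)$. Testing against all $Y$ with $Y \simad P$ gives $\overline\phi(Y) = \phi(P) = \overline\phi(X)$, hence $\E[\langle Y, Z\rangle] \le \E[\langle X, Z\rangle]$; taking the supremum over such $Y$ and invoking the preliminary identity (in the form $\mc(P,Q) = \sup_{Y\simad P}\E[\langle Y,Z\rangle]$, valid for $Z$ fixed by symmetry) yields $\mc(P,Q) \le \E[\langle X, Z\rangle]$, while the reverse inequality is \Cref{cor:MC_max_over_RV}; thus $\E[\langle X, Z\rangle] = \mc(P,Q)$. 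Then, fixing $R \in \Pc_2^N(\H)$ and $\varepsilon > 0$, I would use the preliminary identity again to pick $Y \simad R$ with $\E[\langle Y, Z\rangle] \ge \mc(R,Q) - \varepsilon$, and plug it into the subdifferential inequality:
\[
\phi(R) = \overline\phi(Y) \ge \overline\phi(X) + \E[\langle Y, Z\rangle] - \E[\langle X, Z\rangle] \ge \phi(P) + \mc(R,Q) - \mc(P,Q) - \varepsilon.
\]
Letting $\varepsilon \downarrow 0$ shows $Q \in \partial_\mc\phi(P)$.

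For the converse, assume $Q \in \partial_\mc\phi(P)$ and $\E[\langle X, Z\rangle] = \mc(P,Q)$. For any $Y \in L_2^N(\H)$, put $R := \lawad(Y)$; then, using $\mc(R,Q) \ge \E[\langle Y, Z\rangle]$ from \Cref{cor:MC_max_over_RV} (as $Y \simad R$, $Z \simad Q$) and the hypothesis $\mc(P,Q) = \E[\langle X,Z\rangle]$,
\[
\overline\phi(Y) = \phi(R) \ge \phi(P) + \mc(R,Q) - \mc(P,Q) \ge \overline\phi(X) + \E[\langle Y - X, Z\rangle],
\]
so $Z \in \partial\overline\phi(X)$. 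I do not expect a genuine obstacle here: the argument is the same bookkeeping as in the $N=1$ case, and the only non-mechanical ingredient is the preliminary transfer identity, which is itself a short consequence of the machinery (\Cref{lem:DPP}, \Cref{cor:transfer_ad}, \Cref{cor:MC_max_over_RV}) already established in \Cref{sec:AdLift}.
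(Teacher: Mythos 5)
Your proposal is correct and follows essentially the same route as the paper, which simply says the proof is line by line as in \Cref{prop:Subdiff_MCvsL}. You carefully supply the one ingredient the paper leaves implicit — the adapted analogue of \Cref{lem:EnoughRandomization}, namely $\mc(P,R)=\sup_{Y\simad R}\E[\langle X,Y\rangle]$ for a fixed $X\simad P$ — and your derivation of it from \Cref{lem:DPP}, \Cref{lem:iso_pres_ip}, and \Cref{cor:MC_max_over_RV} is sound.
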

\begin{proof}
The proof is line by line as the proof of \Cref{prop:Subdiff_MCvsL}.    
\end{proof}

A crucial observation about subdifferentials of law invariant convex functions is that if $Z \in \partial\overline{\phi}(X)$, then $\E[Z | X ] \in \partial\overline{\phi}(X)$. This allowed us to conclude that if $\phi$ is $\mc$-differentiable at $\law(X)$ then $\partial\overline{\phi}(X) = \{ \xi(X) \}$. Our next goal is to establish an analogue of this for adapted-law invariant functionals. Specifically, we show that if $Z \in \partial\overline{\phi}(X)$ then $\E[Z | \ip(X) ] \in \partial\overline{\phi}(X)$ and therefore conclude that $\mc$-differentiability of $\phi$ at $\lawad(X)$ implies that $\partial\overline{\phi}(X) = \{ \xi(\ip(X)) \}$. To this end, we need some auxilary results.

\begin{lemma}\label{lem:MCmonotone} 
    Let $X,Y$ be random variables on $([0,1]^N,\lambda)$ such that $\mathbb E[ Y |  \ip(X) ] = 0$.
    Then, we have $$\lawad(X + Y) \gemc \lawad(X).$$
\end{lemma}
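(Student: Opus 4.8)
The goal is to show $\lawad(X+Y) \gemc \lawad(X)$ whenever $\E[Y \mid \ip(X)] = 0$. By \Cref{prop:MC_order:_char}, this is equivalent to showing that for every $R \in \Pc_2^N(\H)$ we have $\mc(\lawad(X),R) \le \mc(\lawad(X+Y),R)$. I would prove this by picking, for fixed $R$, a witness $V \simad R$ realizing $\mc(\lawad(X),R) = \E[X \cdot V]$ (available by \Cref{cor:MC_max_over_RV}), and then showing $\E[(X+Y) \cdot V'] \ge \E[X \cdot V]$ for a suitably chosen $V' \simad R$. Since we only need a lower bound on $\mc(\lawad(X+Y),R)$, it suffices to produce \emph{some} coupling, so the natural candidate is to keep $V' = V$ on the same probability space as $X,Y$; then $\mc(\lawad(X+Y),R) \ge \E[(X+Y)\cdot V]= \E[X\cdot V] + \E[Y \cdot V]$, and the proof reduces to showing $\E[Y \cdot V] = 0$.

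The key point is therefore to arrange that the optimal $V$ witnessing $\mc(\lawad(X),R)$ can be taken to be $\ip(X)$-measurable, so that $\E[Y \cdot V] = \E[\E[Y\mid \ip(X)] \cdot V] = 0$ by the hypothesis. This is where the structure of the $\mc$-problem enters. By the fundamental theorem of optimal transport (\Cref{thm:ftot.mc}) together with \Cref{prop:subdiff_char_ad}, an optimal $V$ lies in $\partial\overline{\phi}(X)$ for an adapted-law invariant convex $\phi$; and then, exactly as in the proof of \Cref{prop:LdiffXi} via \Cref{lem:subdiff_cond}-type reasoning, one expects $\E[V \mid \ip(X)] \in \partial\overline{\phi}(X)$ as well, so one may replace $V$ by $\E[V \mid \ip(X)]$, which is $\ip(X)$-measurable. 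Concretely, I would prove the needed conditioning statement: if $V \in \partial\overline{\phi}(X)$ for $\overline\phi$ adapted-law invariant convex, then $\E[V\mid \ip(X)] \in \partial\overline{\phi}(X)$; this uses that $\overline\phi$ is monotone in the $\mc$-order (\Cref{prop:ConvexEquiv}/its $N$-level analogue, i.e.\ monotonicity in $\lemc$ from \Cref{cor:MCincrMC}) together with the fact that $\lawad(\E[W\mid\ip(X)]) \lemc \lawad(W)$ — which is a Jensen-type inequality along the conditional-expectation tower. With $V_0 := \E[V\mid\ip(X)]$ in hand, $\E[Y\cdot V_0]=0$ and we are done.

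There is a subtlety about whether $X$ itself admits the relevant optimizer $V$; if $X$ does not allow independent randomization the optimal $V$ might a priori only exist on an enlarged space. To sidestep this I would instead argue directly with the lift: let $\overline\phi$ be the adapted-law-invariant convex dual potential with $\lawad(X,V)\in\cplopt^N(\lawad X, R)$, realized on $([0,1]^N,\lambda)$ via \Cref{prop:ReprLawadAsRV} and \Cref{cor:transfer_ad} — but one must be careful that transferring by a bi-adapted bijection $T$ changes $X$ to $X\circ T$, which may destroy the relation $\E[Y\mid\ip(X)]=0$. The cleanest route is: do not move $X$ at all; instead observe that it is enough to prove the claim after replacing the optimal $V$ by its projection $\E[V\mid\ip(X)]$, and for \emph{this} it suffices to know $\E[V\mid\ip(X)]\in\partial\overline\phi(X)$, which follows purely from convexity and $\lemc$-monotonicity of $\overline\phi$ as above, requiring only that $V$ and $X$ live on the same space (which we may assume since we only need a lower bound for $\mc(\lawad(X+Y),R)$ and can take the space to be the one carrying $X,Y$, enlarging it if necessary while keeping $X,Y$ and the identity $\E[Y\mid\ip(X)]=0$ intact).

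The main obstacle I anticipate is the conditioning lemma $\E[V\mid\ip(X)]\in\partial\overline\phi(X)$: one needs the right $N$-level analogue of \Cref{lem:subdiff_cond}, namely that $\overline\phi$ decreases (weakly) under the map $W\mapsto \E[W\mid\ip(X)]$ because $\lawad(\E[W\mid\ip(X)])\lemc\lawad(W)$, and then the standard subgradient manipulation
\[
\overline\phi(W)\ \ge\ \overline\phi\big(\E[W\mid\ip(X)]\big)\ \ge\ \overline\phi(X)+\E\big[(\E[W\mid\ip(X)]-X)\cdot V\big]\ =\ \overline\phi(X)+\E\big[(W-X)\cdot \E[V\mid\ip(X)]\big],
\]
using $X=\E[X\mid\ip(X)]$ in the last step. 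Verifying $\lawad(\E[W\mid\ip(X)])\lemc\lawad(W)$ — a tower-of-conditional-expectations Jensen inequality against $\mc$-convex test functions — is the technical heart and should follow by backward induction on the level $t$, conditioning successively and using that $\mc$-convex functions are $\lemc$-monotone and that conditional expectation only decreases things in $\mc$-order at each layer.
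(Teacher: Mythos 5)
Your plan does not make genuine progress on the lemma. The technical heart you defer to~--- the inequality $\lawad(\E[W\mid\ip(X)]) \lemc \lawad(W)$~--- applied directly to $W := X + Y$ (using $\E[X+Y\mid\ip(X)] = X$, since $X$ is $\sigma(\ip(X))$-measurable) already yields $\lawad(X) \lemc \lawad(X+Y)$, the statement you want, with no need for \Cref{prop:MC_order:_char}, the optimal witness $V$, or any subdifferential manipulation. So the entire $\mc(\cdot,R)$ and $\partial\overline\phi$ scaffolding is a detour that only restates the problem, and the conditioning claim $\E[V\mid\ip(X)]\in\partial\overline\phi(X)$ you propose to establish en route is verbatim \Cref{prop:sub_grad_proj_ad}, whose proof in the paper is itself an application of \Cref{lem:MCmonotone}. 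The real content is the backward induction you sketch in a single sentence, and there you miss the key structural input: the paper first invokes \Cref{lem:ip-meas-rv} to pass to a version of $X$ that is $\sigma(Z_t, U_{t+1:N})$-measurable for every $t$ (where $Z_t := \ip_t(X)$). This is precisely what lets $X$ commute with each intermediate conditioning, $\E[X+Y \mid Z_{1:t}, U_{t+1:N}] = X + Y_t$, so that $\lawad(X + Y_t) \gemc \lawad(X+Y_{t+1})$ follows from one conditional Jensen step at each level. Conditioning on all of $\sigma(\ip(X))$ in one blow does not telescope on its own; the ladder of $\sigma$-algebras $\sigma(Z_{1:t}, U_{t+1:N})$ and the measurability of $X$ with respect to them are essential.

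Even granting your route, there is a gap where you replace $V$ by $V_0 := \E[V\mid\ip(X)]$: one does not have $V_0 \simad R$ in general (conditional expectation only decreases the adapted law in $\lemc$-order), so $\E[(X+Y)\cdot V_0]$ lower-bounds $\mc(\lawad(X+Y), \lawad(V_0))$, not $\mc(\lawad(X+Y), R)$ as needed. Repairing this needs yet another appeal to the Jensen inequality together with \Cref{cor:MCincrMC} (to obtain $\mc(\lawad(X+Y),\lawad(V_0)) \le \mc(\lawad(X+Y), R)$ from $\lawad(V_0) \lemc R$), which compounds rather than resolves the circular dependence on the very inequality you are trying to prove.
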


\begin{proof}
    By \Cref{lem:ip-meas-rv} we can assume w.l.o.g.\ that \(
        X \text{ has a }\sigma(Z_t, U_{t+1:N})\text{-measurable version},\)
    where $Z_t := \law^{\mathcal F_{N-1:t}}(X)$, for all $t = 1,\dots,N-1$.
    Consider the family of random variables
    \[
        Y_0 := Y, \quad Y_t := \E[ Y | Z_{1:t}, U_{t+1:N}], \quad Y_N := \E[ Y | X, Z_{1:N-1} ] = 0.
    \]
    We claim that $\lawad(X + Y_t) \gemc \lawad(X + Y_{t + 1})$ for $t = 0, \dots, N-1$, which we proceed to show by induction.

    Let $\phi$ be an $\MC$-convex function, then we have as $\overline{\phi}$ is convex and lsc
    \[
        \E[\overline{\phi}(X + Y_0) | Z_1, U_{2:N}] = \overline{\phi}(\E[ X + Y | Z_1, U_{2:N}] ) = \overline{\phi}(X + \E[Y | Z_1,U_{2:N}] = \overline{\phi}(X + Y_1).
    \]
    By taking the expectation on both sides, the resulting inequality shows the claim for $t = 0$.

    Next, assume that the claim holds true for $t < N$. Again, let $\phi$ be $\MC$-convex, then
    \begin{align*}
        \E[\overline{\phi}(X + Y_t) | Z_{1:t}, U_{t + 1: N}] &\ge
        \overline{\phi}(\E[X + Y_t | Z_{1:t}, U_{t + 1 : N}]) 
        \\
        &= \overline{\phi}(X + \E[Y_t | Z_{1:t}, U_{t + 1 : N}]) =
        \overline{\phi}(X + Y_{t + 1}).
    \end{align*}
    As above we conclude that $\lawad(X + Y_t) \gemc \lawad(X + Y_{t+1})$ and the assertion follows by transitivity of $\lemc$ and noting that $X + Y_1 = X + Y$ as well as $X = X + Y_N$.
\end{proof}

\begin{proposition}\label{prop:sub_grad_proj_ad}
Let $\overline{\phi} : L_2^N(\H) \to (-\infty, +\infty]$ be adapted-law invariant and convex. If $Z \in \partial \overline{\phi}(X)$, then 
\[
\E[Z |  \ip(X) ] \in \partial \overline{\phi}(X). 
\]
\end{proposition}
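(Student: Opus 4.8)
The plan is to mimic the proof of \Cref{lem:subdiff_cond}, with $\sigma(\ip(X))$ in place of $\sigma(X)$ and monotonicity in the $\mc$-order in place of monotonicity in the convex order. Fix an arbitrary $Y\in L_2^N(\H)$ and set $W:=\E[Y\mid\ip(X)]$. Since both $X=\ip_N(X)$ and $\E[Z\mid\ip(X)]$ are $\sigma(\ip(X))$-measurable, two applications of the tower property give
\[
\E\big[(Y-X)\cdot\E[Z\mid\ip(X)]\big]=\E\big[(W-X)\cdot Z\big].
\]
As $Z\in\partial\overline\phi(X)$ already yields $\overline\phi(W)\ge\overline\phi(X)+\E[(W-X)\cdot Z]$, the claim $\E[Z\mid\ip(X)]\in\partial\overline\phi(X)$ reduces to proving $\overline\phi(Y)\ge\overline\phi(W)$ for every such $Y$.

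To obtain this, I would first note that $\ip(W)$ is generated by $\ip(X)$: by Doob--Dynkin, $W=\xi(\ip(X))$ for some measurable $\xi\colon A_{1:N}\to\H$, so \Cref{prop:N-Monge.ip} (with $\mathcal X=\mathcal Y=\H$) gives $\ip_t(W)=T_t(\ip_{1:t}(X))$ a.s.\ for each $t$, whence $\sigma(\ip(W))\subseteq\sigma(\ip(X))$ up to null sets. Since $W=\E[Y\mid\ip(X)]$ means $\E[Y-W\mid\ip(X)]=0$, the tower property then forces $\E[Y-W\mid\ip(W)]=0$. Now \Cref{lem:MCmonotone}, applied with $W$ in the role of $X$ and $Y-W$ in the role of $Y$, gives
\[
\lawad(Y)=\lawad\!\big(W+(Y-W)\big)\ \gemc\ \lawad(W).
\]

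Finally, because $\overline\phi$ is convex, lsc and adapted-law invariant, \Cref{thm:MC_trafo_ast_ad} identifies it as the adapted Lions lift of a (proper) $\mc$-convex $\phi\colon\Pc_2^N(\H)\to(-\infty,+\infty]$, and any such $\phi$ is increasing in the $\lemc$-order (the proposition following \Cref{prop:MC_order:_char}). Hence $\overline\phi(Y)=\phi(\lawad(Y))\ge\phi(\lawad(W))=\overline\phi(W)$, which is exactly the missing inequality; combined with the reduction above, this proves $\E[Z\mid\ip(X)]\in\partial\overline\phi(X)$.

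The step I expect to be the crux is the structural claim $\sigma(\ip(W))\subseteq\sigma(\ip(X))$: it is precisely what makes the zero-mean hypothesis of \Cref{lem:MCmonotone} available \emph{with respect to $\ip(W)$}, which is what is needed, since $W+(Y-W)=Y$ but $X+(Y-W)\neq Y$ in general, so the lemma cannot be applied at $X$ itself. If one prefers not to assume $\overline\phi$ lsc, one first replaces $\overline\phi$ by its biconjugate $\overline\phi^{\ast\ast}=\overline{\phi^{\mc\mc}}$, which is lsc convex, adapted-law invariant, dominated by $\overline\phi$, agrees with $\overline\phi$ at $X$, and still has $Z\in\partial\overline\phi^{\ast\ast}(X)$; running the argument for $\overline\phi^{\ast\ast}$ and using $\overline\phi\ge\overline\phi^{\ast\ast}$ then yields the statement for $\overline\phi$.
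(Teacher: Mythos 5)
Your proof is correct and follows essentially the same route as the paper: decompose $Y$ around $W:=\E[Y\mid\ip(X)]$, apply \Cref{lem:MCmonotone} to get $\lawad(Y)\gemc\lawad(W)$, use $\mc$-monotonicity of $\overline\phi$ to deduce $\overline\phi(Y)\ge\overline\phi(W)$, and then unwind the tower property. The paper's own proof writes $\E[\,Y-\E[Y\mid\ip(X)]\mid\ip(X)\,]=0$ and invokes \Cref{lem:MCmonotone} directly, leaving implicit the step you correctly flag as the crux: the lemma's hypothesis requires zero mean conditional on $\ip(W)$, not on $\ip(X)$, and this is indeed what one gets because $W=\xi(\ip(X))$ forces $\sigma(\ip(W))\subseteq\sigma(\ip(X))$ via \Cref{prop:N-Monge.ip}, after which the tower property closes the gap. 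Your final remark about replacing $\overline\phi$ by its biconjugate if it is not assumed lsc is also sound (subgradients pass to the closure at points of attainment and $\overline\phi\ge\overline\phi^{\ast\ast}$), and resolves an implicit reliance on lsc in the paper's appeal to $\mc$-monotonicity of $\mc$-convex functions. In short: same strategy, with two details made explicit that the paper glosses over.
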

\begin{proof} Let $Z \in \partial \overline{\phi}(X)$. For every $Y \in L_2^N(\H)$, we have 
$
\E[ Y -    \E[ Y | \ip(X)] |  \ip(X) ] =0.
$
Hence, by \Cref{lem:MCmonotone} we find
    \begin{align*}
        \overline{\phi}(Y) &\ge \overline{\phi}(  \E[ Y | \ip(X) ])  \\
        & \ge \overline{\phi}(X) + \E [    (\E[ Y |  \ip(X) ] - X)  \cdot Z   ]  \\
        &= \overline{\phi}(X) + \E [   \E[ Y - X | \ip(X)]   \cdot Z   ] \\
        &= \overline{\phi}(X) + \E [ (Y-X) \cdot \E[ Z | \ip(X)]].
    \end{align*}
    Hence, $\E[ Z |\ip(X)] \in \partial\overline{\phi}(X)$. 
\end{proof}

\begin{definition}\label{def:MCdiff_ad}
    Let $\phi:\Pc_2^N(\H)\to (-\infty, \infty]$ be $\MC$-convex and $P\in \dom(\phi)$. We say that $\phi $ is $\MC$-differentiable at $P$ if $|\partial \phi(P)|=1$.
\end{definition}

\begin{lemma}\label{lem:mc_diff_char_ad}
    Let $\phi:\Pc_2^N(\H)\to (-\infty, \infty]$ be $\MC$-convex and $P\in \dom(\phi)$. Then the following are equivalent: 
    \begin{enumerate}
        \item $\phi$ is $\MC$-differentiable in $P$.
        \item For all $X$ with $X\simad P$ we have $\# \partial \overline \phi (X) =1$.
    \end{enumerate}
\end{lemma}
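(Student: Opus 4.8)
The plan is to follow the proof of \Cref{lem:mc_diff_char}, working with $\simad$ and $\lawad$ in place of $\sim$ and $\law$ and using the adapted substitutes \Cref{prop:subdiff_char_ad} (in place of \Cref{prop:Subdiff_MCvsL}) and \Cref{prop:sub_grad_proj_ad} (in place of \Cref{lem:subdiff_cond}); the point where the classical argument invokes a random variable ``admitting independent randomization'' will be replaced by a transfer-and-compactness argument resting on \Cref{cor:transfer_ad} and \Cref{prop:iteratedSkorohod}. Two elementary facts will be used repeatedly. \emph{(i)} If $T:[0,1]^N\to[0,1]^N$ is a bi-adapted measure-preserving bijection, then $V\mapsto V\circ T$ is an isometric bijection of $L_2^N(\H)$ and $\overline\phi(V\circ T)=\overline\phi(V)$ by \Cref{lem:iso_pres_ip} and adapted-law invariance; consequently $Z\in\partial\overline\phi(X)\iff Z\circ T\in\partial\overline\phi(X\circ T)$, so $\partial\overline\phi(X\circ T)=\{Z\circ T:Z\in\partial\overline\phi(X)\}$. \emph{(ii)} If $V_n\to V$ strongly in $L_2^N(\H)$ with $V_n,V\simad P$ (so $\overline\phi(V_n)=\phi(P)=\overline\phi(V)$), if $Z_n\in\partial\overline\phi(V_n)$ with $(Z_n)_n$ bounded in $L_2^N(\H)$, and if $Z_{n_k}\weaklyto Z_\infty$, then $Z_\infty\in\partial\overline\phi(V)$: one passes to the limit in $\overline\phi(U)\ge\overline\phi(V_n)+\langle U-V_n,Z_n\rangle$, using $\langle U,Z_n\rangle\to\langle U,Z_\infty\rangle$ and $\langle V_n,Z_n\rangle\to\langle V,Z_\infty\rangle$ (strong times weak). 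Finally, $\lambda(\,|X-X'\circ T|\ge\epsilon\,)<\epsilon$ upgrades to $X'\circ T\to X$ in $L_2^N(\H)$ when $X\simad X'$, since all such $X'\circ T$ share the law of $X'$, so $(|X'\circ T|^2)$ is uniformly integrable.

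For $(1)\Rightarrow(2)$, fix $X\simad P$ and assume $\partial_\mc\phi(P)=\{Q\}$. If $Z_1\ne Z_2\in\partial\overline\phi(X)$, then $\lawad(Z_1),\lawad(Z_2)\in\partial_\mc\phi(P)=\{Q\}$ by \Cref{prop:subdiff_char_ad}, so $\law(Z_1)=\law(Z_2)$ and $\|Z_1\|_2=\|Z_2\|_2$; convexity of $\partial\overline\phi(X)$ gives $Z:=(Z_1+Z_2)/2\in\partial\overline\phi(X)$, and by Cauchy--Schwarz (strict, as $Z_1\ne Z_2$ on a positive-measure set) $\E|Z|^2=\tfrac12\E|Z_1|^2+\tfrac12\E\langle Z_1,Z_2\rangle<\E|Z_1|^2$, so $\lawad(Z)\ne Q$, a contradiction. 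For nonemptiness of $\partial\overline\phi(X)$: by \Cref{cor:MC_max_over_RV} pick optimizers $X_0\simad P$, $W_0\simad Q$ with $\E[X_0\cdot W_0]=\mc(P,Q)$, so $W_0\in\partial\overline\phi(X_0)$; transferring $X_0$ onto $X$ via \Cref{cor:transfer_ad} gives $X_0\circ T_n\to X$ in $L_2^N(\H)$ with $W_0\circ T_n\in\partial\overline\phi(X_0\circ T_n)$ and $\|W_0\circ T_n\|_2$ bounded, so a weak limit point lies in $\partial\overline\phi(X)$ by \emph{(ii)}.

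For $(2)\Rightarrow(1)$, first $\partial_\mc\phi(P)\ne\emptyset$: for any $X\simad P$, (2) makes $\partial\overline\phi(X)$ a singleton $\{Z\}$ and $\lawad(Z)\in\partial_\mc\phi(P)$ by \Cref{prop:subdiff_char_ad}. Suppose towards a contradiction that $Q_1\ne Q_2\in\partial_\mc\phi(P)$, and by \Cref{cor:MC_max_over_RV} pick $X_i\simad P$, $W_i\simad Q_i$ with $\E[X_i\cdot W_i]=\mc(P,Q_i)$; then $W_i\in\partial\overline\phi(X_i)$, and (2) forces $\partial\overline\phi(X_i)=\{W_i\}$. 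By \Cref{cor:transfer_ad} choose bi-adapted measure-preserving $T_n$ with $X_2\circ T_n\to X_1$ in $L_2^N(\H)$; by \emph{(i)}, $\partial\overline\phi(X_2\circ T_n)=\{W_2\circ T_n\}$. Since $\|W_2\circ T_n\|_2=\|W_2\|_2$ is constant, $(W_2\circ T_n)_n$ is bounded and, by \emph{(ii)}, every weak limit point equals the unique element $W_1$ of $\partial\overline\phi(X_1)$, whence $W_2\circ T_n\weaklyto W_1$ and $\|W_1\|_2\le\|W_2\|_2$. Interchanging $1$ and $2$ yields $\|W_2\|_2\le\|W_1\|_2$, so $\|W_1\|_2=\|W_2\|_2$ and therefore $\|W_2\circ T_n-W_1\|_2^2=\|W_2\circ T_n\|_2^2-2\langle W_2\circ T_n,W_1\rangle+\|W_1\|_2^2\to0$, i.e.\ $W_2\circ T_n\to W_1$ strongly. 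By continuity of $\lawad$ (\Cref{prop:iteratedSkorohod}), $Q_2=\lawad(W_2\circ T_n)\to\lawad(W_1)=Q_1$, contradicting $Q_1\ne Q_2$.

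The main obstacle is the direction $(2)\Rightarrow(1)$: with no adapted surrogate for independent randomization, two distinct optimal couplings cannot be realized on a single representative $X\simad P$, so the contradiction is produced only after transferring the two optimizers onto comparable variables and passing to the limit. The delicate steps are the passage from convergence in probability (\Cref{cor:transfer_ad}) to strong $L_2$-convergence, the weak-compactness identification of the limiting subgradient, and, above all, the symmetrization that forces $\|W_1\|_2=\|W_2\|_2$ --- it is exactly this equality that upgrades the weak convergence of $(W_2\circ T_n)$ to strong convergence and lets continuity of $\lawad$ close the loop. The transformation rule \emph{(i)} for $\partial\overline\phi$ under bi-adapted measure-preserving maps and the bookkeeping ensuring $\partial\overline\phi(X)$ is nonempty are routine.
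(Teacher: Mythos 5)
Your proof is correct, but the direction $(2)\Rightarrow(1)$ takes a genuinely different route from the paper's. The paper invokes \Cref{prop:sub_grad_proj_ad} to write the two subgradients as $Y_i=\xi_i(\ip(X_i))$, then exploits that $\E[X\cdot\xi_1(\ip(X))]$ depends only on $\lawad(X)$ (since $X=\ip_N(X)$ and the law of $\ip(X)$ is determined by $\lawad(X)$) to conclude directly that $\xi_1(\ip(X_2))\in\partial\overline\phi(X_2)$, hence $\xi_1(\ip(X_2))=Y_2$ --- a short transportability argument substituting for the classical ``independent randomization'' step. You avoid \Cref{prop:sub_grad_proj_ad} entirely, transferring instead via \Cref{cor:transfer_ad}, passing to a weak limit in the subgradient inequality, and closing the loop with the norm symmetrization $\|W_1\|_2=\|W_2\|_2$ that upgrades weak to strong $L_2$-convergence and lets continuity of $\lawad$ (\Cref{prop:iteratedSkorohod}) finish. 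Your route is longer but more self-contained, using only the equivariance of $\partial\overline\phi$ under bi-adapted measure-preserving composition (your fact \emph{(i)}, correctly justified via \Cref{lem:iso_pres_ip}) and strong-times-weak limit stability of subgradients (your fact \emph{(ii)}); the paper's route is shorter because it leans on the structural projection lemma. In $(1)\Rightarrow(2)$ your uniqueness argument mirrors the paper's; you additionally supply, by the same transfer-and-compactness device, the nonemptiness of $\partial\overline\phi(X)$ for every $X\simad P$, a step the paper's proof leaves implicit (the paper's argument only establishes $\#\partial\overline\phi(X)\le 1$), so you have in fact closed a small gap.
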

\begin{proof}
(1) implies (2): Suppose that there are $Z_1 \neq Z_2 \in \partial\overline{\phi}(X)$ for some $X \simad P$. By \Cref{prop:subdiff_char_ad}, we have $\lawad(Z_1),\lawad(Z_2)  \in \partial_\mc\phi(P)$. Hence, if not $(Z_1) \simad Z_2$, we conclude that $\partial_\mc\phi(P)$ is no singleton. Otherwise, observe that $Z:= \frac12 (Z_1+Z_2) \in \partial\overline{\phi}(X)$ and hence $\law(Z) \in \partial_\mc\phi(P)$. As $\| \cdot \|_2^2$ is strictly convex, $\|Z\|_2^2 < \frac12 (\|Z_1\|_2^2+\|Z_2\|_2^2) = \|Z_1\|_2^2$. Hence, $\law(Z) \neq \law(Z_1)$ and in particular the adapted laws are different. This shows that $\partial_\mc\phi(P)$ is no singleton.

(2) implies (1): Let $Q_1,Q_2 \in \partial_\mc \phi(P)$.
By \Cref{prop:ReprLawadAsRV} and \Cref{cor:MC_max_over_RV} there are random variables $X_i,Y_i \in L_2^N(H)$ with $X_i \simad P$ and $Y_i \simad Q_i$ such that $\mc(P,Q_i) = \E[X_i \cdot Y_i]$, for $i = 1,2$.
By \Cref{prop:subdiff_char_ad} we have that $\{Y_i\} = \partial\overline{\phi}(X_i)$ and due to \Cref{prop:sub_grad_proj_ad} there exist measurable functions such that $\xi_i(\ip(X_i)) = Y_i$.
Since $$\mc(P,Q_1) = \E[X_1 \cdot Y_1] =\E[ X_1 \cdot \xi_1(\ip(X_1)) ]= \E[ X_2 \cdot \xi_1(\ip(X_2)) ]$$  Hence, $\xi_1(\ip(X_2)) \in \partial \overline \phi(X_2)$. By assumption $\#\partial \overline \phi(X_2) = 1$ and it follows that $\xi_1(\ip(X_2)) = Y_2$. Thus $Q_1 = \lawad(Y_1) = \lawad(Y_2) = Q_2$. Hence $\phi$ is $\mc$-differentiable at $P$.
\end{proof}

\begin{proposition}\label{prop:LdiffXi_ad}
Let $\phi:\Pc_2^N(\H)\to (-\infty, \infty]$ be $\MC$-differentiable in $P$. Then there exists  a measurable function  $\xi : A_{1:N} \to \H$ such that for all $X\simad P$ 
\[
\partial\overline{\phi}(X) = \{\xi( \ip(X) )\}.
\]
\end{proposition}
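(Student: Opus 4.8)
The plan is to mirror the proof of \Cref{prop:LdiffXi} (the case $N=1$), replacing conditioning on $X$ by conditioning on the tower $\ip(X)$ and invoking the adapted analogues of the lemmas from \Cref{sec:DensenessOfTransportReg}. First fix $X \simad P$. Since $\phi$ is $\mc$-convex, $\overline{\phi}$ is lsc, convex and adapted-law invariant by \Cref{thm:MC_trafo_ast_ad}, and by \Cref{lem:mc_diff_char_ad} the set $\partial\overline{\phi}(X)$ is a singleton, say $\{Z\}$. By \Cref{prop:sub_grad_proj_ad} we have $\E[Z \mid \ip(X)] \in \partial\overline{\phi}(X) = \{Z\}$, so $Z$ is $\sigma(\ip(X))$-measurable; the Doob--Dynkin lemma then yields a measurable map $\xi_X : A_{1:N} \to \H$ with $Z = \xi_X(\ip(X))$, i.e.\ $\partial\overline{\phi}(X) = \{\xi_X(\ip(X))\}$.

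Next I record that the tower $\ip(X) = (\ip_1(X),\dots,\ip_N(X))$ has a law on $A_{1:N}$ depending only on $P = \lawad(X)$. Indeed, by \eqref{eq:IpDef} we have $\law(\ip_1(X)) = \lawad(X) = P$ and, by the tower property of conditional expectations, conditionally on $\ip_{1:t}(X)$ the coordinate $\ip_{t+1}(X)$ has regular conditional law $\ip_t(X)$; hence $\law(\ip(X)) = \hat P$ for a fixed measure $\hat P$ on $A_{1:N}$. Moreover, given $\F_t$ the tuple $(\ip_{t+1}(X),\dots,\ip_N(X))$ is distributed as the canonical disintegration of $\ip_t(X)$, which is a measurable function of $\ip_t(X)$ alone. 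Consequently, for any measurable $g : A_{1:N} \to \H$ of suitable integrability, both $\law(g(\ip(X)))$ and $\lawad(g(\ip(X)))$ depend only on $P$; in particular, since $\ip_N(X) = X$, the inner product $\E[X \cdot g(\ip(X))] = \E[\ip_N(X) \cdot g(\ip(X))]$ depends only on $P$. (This is the fact already used implicitly in the proof of \Cref{lem:mc_diff_char_ad}.)

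Now let $X_1, X_2 \simad P$ with associated maps $\xi_1 := \xi_{X_1}$, $\xi_2 := \xi_{X_2}$ from the first step, and let $Q$ denote the unique element of $\partial_\mc\phi(P)$. By \Cref{prop:subdiff_char_ad} applied to $\xi_i(\ip(X_i)) \in \partial\overline{\phi}(X_i)$ we get $\lawad(\xi_i(\ip(X_i))) = Q$ and $\E[X_i \cdot \xi_i(\ip(X_i))] = \mc(P,Q)$. By the previous paragraph, $\lawad(\xi_1(\ip(X_2))) = \lawad(\xi_1(\ip(X_1))) = Q$ and $\E[X_2 \cdot \xi_1(\ip(X_2))] = \E[X_1 \cdot \xi_1(\ip(X_1))] = \mc(P,Q)$, so a second application of \Cref{prop:subdiff_char_ad} shows $\xi_1(\ip(X_2)) \in \partial\overline{\phi}(X_2) = \{\xi_2(\ip(X_2))\}$. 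Hence $\xi_1(\ip(X_2)) = \xi_2(\ip(X_2))$ $\lambda$-a.s., and since $\ip(X_2) \sim \hat P$ this gives $\xi_1 = \xi_2$ $\hat P$-a.s. Therefore $\xi := \xi_X$ is well defined up to a $\hat P$-null set and satisfies $\partial\overline{\phi}(X) = \{\xi(\ip(X))\}$ for every $X \simad P$, as claimed.

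The main obstacle is the second paragraph: rigorously establishing that the plain and adapted laws of $g(\ip(X))$ depend only on $\lawad(X)$. This amounts to pinning down the ``canonical unfolding'' (Markov-type) structure of the tower $(\ip_1(X),\dots,\ip_N(X))$ forced by \eqref{eq:IpDef} and the tower property --- essentially the content behind \Cref{lem:ip-meas-rv} and the probabilistic representation results of \Cref{sec:AdLift}. Once this structural fact is available, the first and third paragraphs are transcriptions of the corresponding $N=1$ arguments.
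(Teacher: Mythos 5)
Your proof is correct and follows the same route as the paper's: apply \Cref{lem:mc_diff_char_ad} and \Cref{prop:sub_grad_proj_ad} to get $Z = \E[Z\mid\ip(X)] = \xi_X(\ip(X))$, then transfer $\xi_{X_1}(\ip(X_2))$ into $\partial\overline{\phi}(X_2)$ via \Cref{prop:subdiff_char_ad} to force $\xi_{X_1}=\xi_{X_2}$ a.s. The one thing you do that the paper does not is isolate the implicit structural fact that $\law(\ip(X))$ and $\lawad(g(\ip(X)))$ depend only on $\lawad(X)$ (used by the paper when it writes $\E[X\cdot\xi'(\ip(X))]=\E[X'\cdot\xi'(\ip(X'))]$ and $\lawad(\xi'(\ip(X)))=\lawad(\xi'(\ip(X')))$ without comment); you correctly flag it as the delicate point and point to \Cref{lem:ip-meas-rv} and \Cref{sec:AdLift} for its justification.
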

\begin{proof}
Let $X \simad P$. Then \Cref{prop:sub_grad_proj_ad} implies that $\partial\overline{\phi}(X) = \{\xi( \ip(X) )\}$. It remains to show that $\xi$ does not depend on the choice of $X \simad P$. To that end, let $X'  \simad P$ and $\xi'$ such that  $\partial\overline{\phi}(X') = \{\xi'( \ip(X') )\}$. By \Cref{prop:subdiff_char_ad}, we have $\lawad(\xi'(\ip(X'))) \in \partial_\mc \phi(P)$ and $\E[ X' \cdot \xi'(\ip(X')) ] = \mc(P,Q)$. As $\E[ X \cdot \xi'(\ip(X))]= \E[ X' \cdot \xi'(\ip(X')) ] = \mc(P,Q) $ and $\lawad(\xi'(\ip(X))) = \lawad(\xi'(\ip(X'))) \in \partial_\mc\phi(P)$,  \Cref{prop:subdiff_char_ad} implies that $\xi'(\ip(X)) \in \partial\overline{\phi}(X)$. As $\partial\overline{\phi}(X)$ is a singleton, we have $\xi = \xi'$ almost surely.
\end{proof}

\subsection{Characterization of strict Monge pairs by differentiability of $\mc$}
In the following we generalize the result of Alfonsi--Jourdain \cite{AlJo20} that characterizes the existence of a unique optimal transport that is induced by a map in terms of Lions differentiability of the Wasserstein distance to the case of iterated probability measures.

\begin{proposition}\label{prop:AJ_adapted}
    For $P,Q \in \Pc_2^N(\H)$ the following are equivalent:
    \begin{enumerate}
        \item $\mc(\cdot,Q)$ is $\mc$-differentiable at $P$.
        \item There is $\xi : A_{1:N} \to \H $ such that for some $X \simad P$
        \[
            \big\{ Y \simad Q : \E[X \cdot Y] = \MC(P,Q) \big\} = \{ \xi(\ip(X)) \} = \partial \overline{\mc(\cdot,Q)}(X).
        \]
        \item For some $X \simad P$, the function $\overline{\mc(\cdot,Q)}$ is Frechet differentiable at $X$.
        \item The pair $(P,Q)$ is a strict Monge pair.
    \end{enumerate}
    Moreover, whenever one (and hence all) of the above conditions holds, statements (2) and (3) are valid for every $X \simad P$, and $\partial_\mc \mc(\cdot,Q)(P) = \{ Q \}$.
\end{proposition}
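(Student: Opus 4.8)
The plan is to establish the cycle of implications $(1)\Rightarrow(2)\Rightarrow(3)\Rightarrow(4)\Rightarrow(1)$, using the adapted-lift machinery built up in this section, and then to record the ``moreover'' part as a by-product of the proof. For the whole argument set $\phi := \mc(\cdot,Q)$, which is $\mc$-convex by the general fact that $\mc(\cdot,Q)$ is a supremum of $\mc$-linear functionals (equivalently, by \Cref{thm:MC_trafo_ast_ad} its adapted lift $\overline{\phi}$ is lsc convex); note also that by \Cref{ex:MCMC}-type reasoning $\phi^{\mc} = \chi_{\{R : R \lemc Q\}}$, so that $Q \in \partial_\mc\phi(P)$ always holds by \Cref{cor:MCincrMC}. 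This last observation is what will eventually pin down $\partial_\mc\phi(P)=\{Q\}$ once we know the subdifferential is a singleton.

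For $(1)\Rightarrow(2)$: assuming $\phi$ is $\mc$-differentiable at $P$, \Cref{lem:mc_diff_char_ad} gives $\#\partial\overline{\phi}(X)=1$ for every $X\simad P$, and \Cref{prop:LdiffXi_ad} produces the measurable $\xi:A_{1:N}\to\H$ with $\partial\overline{\phi}(X)=\{\xi(\ip(X))\}$ for all such $X$. It remains only to identify this set with $\{Y\simad Q : \E[X\cdot Y]=\mc(P,Q)\}$, which is exactly \Cref{prop:subdiff_char_ad} combined with the fact $Q\in\partial_\mc\phi(P)$ just noted (so membership in $\partial\overline\phi(X)$ reduces precisely to the max-covariance identity). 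For $(2)\Rightarrow(3)$: here one uses that $\overline{\phi}$ is continuous at $X$ (it is real-valued and Wasserstein-continuous, so $\|\cdot\|_2$-continuous by \Cref{lem:continuity_lift}/\Cref{lem:contequiv}) and that a convex continuous function on a Hilbert space with singleton subdifferential at a point is Gateaux differentiable there; promoting Gateaux to Fréchet for a \emph{Lipschitz} convex function is where I would invoke that $\overline{\phi}$ is in fact globally Lipschitz (the same estimate as in \Cref{lem:BMDifferentiable}), so Gateaux and Fréchet differentiability coincide by the standard argument (cf.\ \cite[Proposition~5.24]{CaDe18}). For $(3)\Rightarrow(4)$: Fréchet differentiability at one $X\simad P$ forces $\#\partial\overline\phi(X)=1$, hence $\phi$ is $\mc$-differentiable at $P$ by \Cref{lem:mc_diff_char_ad}, hence $\partial_\mc\phi(P)=\{Q\}$; then \Cref{prop:LdiffXi_ad} gives the $N$-Monge coupling $\Pi$ with $(X,\xi(\ip(X)))\simad\Pi$, and \Cref{prop:N-Monge.strict-Monge} upgrades this to $(P,Q)$ being a strict Monge pair — one must check $\cplopt^N(P,Q)=\{\Pi\}$, which follows because any optimizer $\Pi'$ has $(X,Y)\simad\Pi'$ with $Y\in\partial\overline\phi(X)$ (by \Cref{prop:subdiff_char_ad} applied to the optimality identity), and the subdifferential is a singleton.

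For $(4)\Rightarrow(1)$: if $(P,Q)$ is a strict Monge pair then by \Cref{prop:N-Monge.strict-Monge} the unique optimal $N$-coupling $\Pi$ is $N$-Monge, say $(X,\xi(\ip(X)))\simad\Pi$ for $X\simad P$; I must show $\partial_\mc\phi(P)$ is a singleton. Suppose $Q_1,Q_2\in\partial_\mc\phi(P)$; since $Q\in\partial_\mc\phi(P)$ and, by \Cref{prop:subdiff_char_ad}, membership in $\partial_\mc\phi(P)$ together with a max-covariance-achieving coupling corresponds to an element of $\partial\overline\phi(X)$, one represents $Q_i$ via $Y_i\simad Q_i$ with $\E[X\cdot Y_i]=\mc(P,Q)$ and $Y_i\in\partial\overline\phi(X)$; by \Cref{prop:sub_grad_proj_ad} we may take $Y_i=\xi_i(\ip(X))$. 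Then $(X,Y_i)$ induces an element of $\cplopt^N(P,Q_i)$; but the marginal pair must still be $\mc$-optimal, and strictness of the Monge pair (more precisely, uniqueness of $\cplopt^N(P,\cdot)$ run through the same representation) forces $Y_1=Y_2$ a.s., whence $Q_1=Q_2$. This mirrors the proof of \Cref{lem:mc_diff_char_ad}(2)$\Rightarrow$(1) almost verbatim. The ``moreover'' statement is then immediate: statements (2), (3) hold for every $X\simad P$ by \Cref{lem:mc_diff_char_ad} and \Cref{prop:LdiffXi_ad} (and the Lipschitz-implies-Fréchet-at-every-point argument for (3)), and $\partial_\mc\phi(P)=\{Q\}$ was extracted along the way.

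The main obstacle I anticipate is the $(2)\Rightarrow(3)$ step, i.e.\ upgrading a singleton subdifferential to genuine Fréchet differentiability of the lift: Gateaux differentiability is immediate from convexity plus continuity plus the singleton property, but Fréchet differentiability is strictly stronger in infinite dimensions and is \emph{not} automatic for a general continuous convex function (this is exactly the phenomenon exploited in \Cref{ex:nonFrechet}). The resolution hinges on $\overline{\mc(\cdot,Q)}$ being globally Lipschitz — for a Lipschitz convex function Gateaux differentiability at a point does imply Fréchet differentiability there — so the argument must make the Lipschitz bound explicit (the estimate $|\overline\phi(X)-\overline\phi(X')|\le \|X-X'\|_2\,(\int|y|^2\,dQ')^{1/2}$ over $Q'\lemc Q$, exactly as in the proof of \Cref{lem:BMDifferentiable}) rather than treating $\phi$ as merely continuous.
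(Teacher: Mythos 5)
Your proposal's cycle $(1)\Rightarrow(2)\Rightarrow(3)\Rightarrow(4)\Rightarrow(1)$ differs from the paper's $(4)\Rightarrow(2)$ (with $(4)\Rightarrow(3)$ proved inside), $(3)\Rightarrow(1)$, $(1)\Rightarrow(4)$, and the difference is not cosmetic: your step $(2)\Rightarrow(3)$ has a genuine gap. You reduce it to the claim that a globally Lipschitz convex function on a Hilbert space is Fréchet differentiable at any point where it is Gateaux differentiable (equivalently, where the subdifferential is a singleton). That claim is false in infinite dimensions, and the paper's own \Cref{ex:nonFrechet} is designed precisely to exhibit this failure: a $1$-Lipschitz L-convex $\phi$ (a convergent sum of distance functions to nested $\mc$-convex compacta) whose lift is $\mc$-differentiable at every point of a dense $G_\delta$, yet Fréchet differentiable at none of the points you would need. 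The citation \cite[Proposition~5.24]{CaDe18} is also misapplied there — it concerns the lift-independence of Fréchet differentiability, not an upgrade from Gateaux to Fréchet. The paper circumvents this obstacle by never attempting $(2)\Rightarrow(3)$; instead it proves $(4)\Rightarrow(3)$ directly by a quantitative estimate: starting from the strict-Monge map $\xi$, it bounds
\[
\bigl|\,\mc(\lawad(X_n),Q)-\mc(P,Q)-\E[(X_n-X)\cdot\xi(\ip(X))]\,\bigr|\le \|X_n-X\|_2\,\|Y_n-\xi(\ip(X))\|_2,
\]
and invokes the Alfonsi--Jourdain-type stability result \Cref{lem:analogue.to.AJ.2.5} to get $\|Y_n-\xi(\ip(X))\|_2\to 0$. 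This stability lemma, not abstract convex analysis, is what buys Fréchet differentiability, and it is the ingredient your outline is missing.

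Two smaller issues. In $(3)\Rightarrow(4)$ you pass from ``$\#\partial\overline\phi(X)=1$ at some $X$'' to $\mc$-differentiability ``by \Cref{lem:mc_diff_char_ad}'', but that lemma characterizes $\mc$-differentiability by a singleton subdifferential at \emph{every} $X\simad P$, not at some $X$; the correct bridge is \Cref{thm:LconvexGdeltaDiff}, which upgrades Fréchet differentiability at one representative to all representatives (this is how the paper proves $(3)\Rightarrow(1)$). And your $(4)\Rightarrow(1)$ argument — deducing $Y_1=Y_2$ from ``strictness of the Monge pair'' applied to the pairs $(P,Q_i)$ — is vague and does not obviously close: strictness of $(P,Q)$ gives you nothing directly about $(P,Q_1)$ or $(P,Q_2)$. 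The paper instead proves $(1)\Rightarrow(4)$ with a cleaner mechanism: for any optimal $\Pi$ and $(X,Y)\simad\Pi$, one has $Y\in\partial\overline\phi(X)$, hence $\E[Y\mid\ip(X)]\in\partial\overline\phi(X)$ by \Cref{prop:sub_grad_proj_ad}, hence $\lawad(\E[Y\mid\ip(X)])=Q$; comparing second moments and using the equality case of Jensen's inequality forces $Y=\E[Y\mid\ip(X)]$, so every optimizer is $N$-Monge and uniqueness follows from \Cref{cor:N-Monge.uniqueness}. I would recommend replacing your $(2)\Rightarrow(3)$ with the $(4)\Rightarrow(3)$ stability argument and adopting the Jensen-equality mechanism for the $N$-Monge direction.
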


\begin{proof}
We start with proving the equivalence of (1), (2), (3), and (4).

(4)$\implies$(2):
Assume that (4) holds and let $X \simad P$. By \Cref{prop:N-Monge.strict-Monge} there is $\xi : A_{1:N} \to \H $ such that $\cplopt^N(P,Q) = \{ \lawad(X,\xi(\ip(X))) \}$.
In particular, if $Y \simad Q$ with $\E[X \cdot Y]$ 

$\E[X \cdot \xi(\ip(X))] = \mc(P,Q)$.  Next, we show that $\overline{\mc(\cdot,Q)}$ is Frechet differentiable at $X$ with derivative $\xi(\ip(X))$. 
To this end, let $X_n \to X$ in $L_2$.
We then have 
\begin{equation} \label{eq:prop.AJ_adapted.1}
    \mc(\lawad(X_n), Q) \ge \E[X_n  \cdot \xi(\ip(X))] = \mc(P,Q) + \E[ (X_n-X) \cdot \xi(\ip(X)) ].
\end{equation}
Next, let $Y_n \simad Q$ with $\E[X_n \cdot Y_n ] \ge \mc(\lawad(X_n),Q)- \|X_n-X\|_2 /n$. We then find
\begin{align}
    \label{eq:prop.AJ_adapted.2}
    \begin{split}
    \mc(P,Q) &\ge \E[ X \cdot Y_n ] = \E[ X_n \cdot Y_n ] + \E[  (X-X_n) \cdot Y_n ] \\
    & \ge \mc(\lawad(X_n),Q) + \E[  (X-X_n) \cdot Y_n ] - \| X_n-X\|/n \\
    &= \mc(\lawad(X_n),Q) + \E[(X-X_n) \cdot \xi(\ip(X))] + \E[ (X-X_n) \cdot (Y_n - \xi(\ip(X)) ) ]  
    \end{split}
\end{align}
Rearranging the inequalities \eqref{eq:prop.AJ_adapted.1} and \eqref{eq:prop.AJ_adapted.2} yields
\begin{multline*}
    |  \mc(\lawad(X_n), Q)  - \mc(P,Q) - \E[ (X_n-X) \cdot \xi(\ip(X)) ] | 
    \\
    \le |\E[ (X-X_n) \cdot (Y_n - \xi(\ip(X) ) ) ]| \le \| X_n -X \|_2 \| Y_n - \xi(\ip(X)) \|_2. 
\end{multline*}
Since $\lim_{n \to \infty} \|Y_n - \xi(\ip(X))\|_2 = 0$ by \Cref{lem:analogue.to.AJ.2.5}, we find that $\overline{\mc(\cdot,Q)}$ is Frechet differentiable at $X$ with derivative $\xi(\ip(X))$.

(3)$\implies$(1): This implication is clear because Frechet differentiability of the Lions lift at some $X$ implies, by \Cref{thm:LconvexGdeltaDiff}, its Frechet differentiability at all $X' \simad P$. Hence, $|\{ \partial \overline{\mc(\cdot,Q)}(X')| = 1$ for all $X' \simad P$. Thus, it follows from \Cref{lem:mc_diff_char_ad} that $\mc(\cdot,Q)$ is $\mc$-differentiability at $P$.

(1)$\implies$(4): Assume that $\partial_\mc \mc(\cdot,Q)(P) = \{Q\}$. For any $\Pi \in \cplopt^N(P,Q)$ and $(X,Y) \simad \Pi$, we have that $\mc(P,Q) = \E[X \cdot Y]$ and, by \Cref{prop:subdiff_char_ad} we have also $Y \in 
\partial \overline{\mc(\cdot,Q)}(X)$.
Consequently, by \Cref{prop:sub_grad_proj_ad} we obtain that $\E[Y | \ip(X) ] \in \partial \overline{\mc(\cdot,Q)}(X)$ and, again by \Cref{prop:subdiff_char_ad}, $\lawad(\E[Y|\ip(X)]) \in \partial_\mc \mc(\cdot,Q)(P)$.
Hence, $Q = \lawad(\E[Y|\ip(X)])$ and $\E[|Y|^2] = \E[|\E[Y|\ip(X)]|^2]$, from where we deduce from the equality case of Jensen's inequality that $Y = \E[Y | \ip(X)]$ almost surely, i.e., $Y$ is $\ip(X)$-measurable.
We have shown that every $\Pi \in \cplopt^N(P,Q)$ is $N$-Monge, which, by \Cref{cor:N-Monge.uniqueness}, yields that $\cplopt^N(P,Q)$ contains a single element.
We conclude that $(P,Q)$ is a strict Monge pair by \Cref{prop:N-Monge.strict-Monge}.
\end{proof}

\begin{lemma} \label{lem:analogue.to.AJ.2.5}
    Let $P,Q,P_1,Q_1,P_2,Q_2,\dots \in \Pc^N_2(\H)$ and let $X,Y,X_1,Y_1,X_2,Y_2\dots$ be random variables with $X \simad P$, $Y \simad Q$, $X_n \simad P_n$, $Y_n \simad Q_n$ such that
    \[
        \E[X_n \cdot Y_n] \ge \mc(P_n,Q_n) -\epsilon_n, \quad Q_n \to Q \text{ in }\W_2, \text{ and } \epsilon_n \to 0.
    \]
    If $(P,Q)$ is a strict Monge pair and $X_n \to X$ in $L_2$, then
    \[
        Y_n \to \xi(\ip(X)) \quad \text{in }L_2,
    \]
    where $\xi$ is the unique map with $\law (\ip(X),\xi(\ip(X))) \in \cplopt^N(P,Q)$.
\end{lemma}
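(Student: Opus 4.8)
The strategy is to prove that the $N$-couplings $\Pi_n:=\lawad(X_n,Y_n)$ converge in $\W_2$ on $\Pc_2^N(\H\times\H)$ to $\Pi:=\lawad(X,Y^\ast)$, where $Y^\ast:=\xi(\ip(X))$, then to transport this convergence back to the given random variables, and finally to use that $\xi$ is a map and that taking $\ip$ commutes with bi-adapted measure-preserving bijections. By \Cref{prop:N-Monge.strict-Monge}, the hypothesis that $(P,Q)$ is a strict Monge pair means $\cplopt^N(P,Q)=\{\Pi\}$ and $\Pi$ is $N$-Monge with $(X,Y^\ast)\simad\Pi$; by construction $\Pi_n\in\cpl^N(P_n,Q_n)$.

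I would first collect the needed convergences. As $X_n\to X$ in $L_2$, continuity of $\lawad$ (\Cref{prop:iteratedSkorohod}) gives $P_n\to P$ in $\W_2$; combined with $Q_n\to Q$ and the joint $\W_2$-continuity of $\mc$ (immediate from \Cref{lem:W2_MC_connection_iterated}, since $\W_2^2$ and $A\mapsto\int|x|^2\,dI^{N-1}A$ are $\W_2$-continuous, the latter again by \Cref{prop:iteratedSkorohod}), the sandwich $\mc(P_n,Q_n)-\epsilon_n\le\E[X_n\cdot Y_n]=\int x\cdot y\,dI^{N-1}\Pi_n\le\mc(P_n,Q_n)$ (the last step by \Cref{prop:DPP_CplN_MC}) yields $\int x\cdot y\,dI^{N-1}\Pi_n\to\mc(P,Q)$. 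Since $\law(Z)=I^{N-1}(\lawad(Z))$ for every $Z\in L_2^N(\H)$, we have $\law(Y_n)=I^{N-1}(Q_n)$; applying \Cref{prop:iteratedSkorohod} to $Q_n\to Q$ and $Y^\ast\simad Q$ produces $\tilde Y_n\simad Q_n$ with $\tilde Y_n\to Y^\ast$ in $L_2$, so $\law(Y_n)=\law(\tilde Y_n)\to\law(Y^\ast)$ in $\W_2(\H)$ and $\|Y_n\|_2\to\|Y^\ast\|_2$; in particular $\{|X_n|^2\}$ and $\{|Y_n|^2\}$ are uniformly integrable.

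Next comes the stability of optimal $N$-couplings. The marginal families $(P_n)$ and $(Q_n)$ are convergent, hence tight, so $(\Pi_n)$ is tight in $\Pc_2^N(\H\times\H)$ — couplings of tight families are tight, and this carries over through the $N$ layers. Any weak subsequential limit $\Pi_\infty$ lies in $\cpl^N(P,Q)$, so its intensity $I^{N-1}\Pi_\infty$ has marginals $\law(X)$ and $\law(Y^\ast)$; thus the global second moment of $\Pi_n$ along the subsequence converges to that of $\Pi_\infty$, upgrading weak convergence to $\W_2$-convergence, and $\int x\cdot y\,dI^{N-1}\Pi_\infty=\mc(P,Q)$. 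By \Cref{prop:DPP_CplN_MC} this makes $\Pi_\infty$ optimal, hence $\Pi_\infty=\Pi$, and therefore $\Pi_n\to\Pi$ in $\W_2$. Using \Cref{prop:iteratedSkorohod} I would pick $(\check X_n,\check Y_n)\simad\Pi_n$ with $(\check X_n,\check Y_n)\to(X,Y^\ast)$ in $L_2$, and \Cref{cor:transfer_ad} yields measure-preserving bi-adapted bijections $T_n$ with $(X_n,Y_n)-(\check X_n,\check Y_n)\circ T_n\to0$ in probability, hence in $L_2$ by the uniform integrability above. Since $\|\check X_n\circ T_n-X\circ T_n\|_2=\|\check X_n-X\|_2\to0$, comparison with $X_n\to X$ gives $X\circ T_n\to X$ in $L_2$, and likewise $Y_n-Y^\ast\circ T_n\to0$ in $L_2$.

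It remains to show $Y^\ast\circ T_n\to Y^\ast$ in $L_2$. By \Cref{lem:iso_pres_ip}, $Y^\ast\circ T_n=\xi(\ip(X))\circ T_n=\xi(\ip(X\circ T_n))$. Fix Lipschitz maps $\xi_k:A_{1:N}\to\H$ with $\eta_k:=\|\xi_k(\ip(X))-Y^\ast\|_2\to0$. Since $X\circ T_n\to X$ in $L_2$, the backward induction in the proof of \Cref{prop:iteratedSkorohod} gives $\ip(X\circ T_n)\to\ip(X)$ in $L_2^N(A_{1:N})$, whence $\|\xi_k(\ip(X\circ T_n))-\xi_k(\ip(X))\|_2\to0$ for each fixed $k$; and as $T_n$ is measure-preserving, $\|\xi(\ip(X\circ T_n))-\xi_k(\ip(X\circ T_n))\|_2=\|(\xi-\xi_k)(\ip(X))\|_2=\eta_k$. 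Hence $\limsup_n\|Y^\ast\circ T_n-Y^\ast\|_2\le2\eta_k$ for every $k$, so $Y^\ast\circ T_n\to Y^\ast$ and therefore $Y_n\to Y^\ast$ in $L_2$, as claimed. The main obstacle is the stability step over an infinite-dimensional $\H$: one cannot invoke compactness from bounded second moments but must extract tightness from convergence of the marginals, and the identification of the weak limit with the unique optimizer (via \Cref{prop:N-Monge.strict-Monge}) together with the moment-matching that upgrades weak to $\W_2$-convergence needs care. A secondary delicate point is the Lipschitz approximation of the merely measurable map $\xi$, which is kept uniform in $n$ precisely through the commutation $\ip\circ T_n=T_n\circ\ip$ and measure-preservation.
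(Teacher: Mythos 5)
Your proof is correct but diverges from the paper's in the second half.

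Both proofs establish the same first step: by the sandwich $\mc(P_n,Q_n)-\epsilon_n\le\E[X_n\cdot Y_n]\le\mc(P_n,Q_n)$ and $\W_2$-continuity of $\mc$, every subsequential $\W_2$-limit of $\hat\Pi_n:=\lawad(X_n,Y_n)$ is an optimal $N$-coupling of $P$ and $Q$, which by strict Monge-ness is the unique $\hat\Pi=\lawad(X,\xi(\ip(X)))$, so $\hat\Pi_n\to\hat\Pi$ in $\W_2$. Your moment-matching argument to upgrade weak to $\W_2$-convergence is a reasonable way to justify the $\W_2$-relative compactness that the paper simply asserts; neither of you spells out tightness of $(\hat\Pi_n)$ in full, but the claims are standard.

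Where you differ is the passage from adapted-distributional convergence to $L_2$-convergence of $Y_n$. The paper stays at the distributional level: it shows $(\ip(X_n),Y_n)\to(\ip(X),Y)$ in distribution, shows $(U,\ip(X_n))\to(U,\ip(X))$ in probability, glues these via an external lemma (\cite[Lemma 6.4]{BePaScZh23}) to get $(U,Y_n)\to(U,Y)$ in distribution, and then invokes another external lemma (\cite[Lemma 3.14]{JoPa24}) — the standard ``two functions of the same random variable converging jointly in law converge in probability'' fact — to get $Y_n\to Y$ in probability, and finishes by second-moment convergence. You instead work entirely inside $L_2$: you pull everything back to $([0,1]^N,\lambda)$ via $\Cref{prop:iteratedSkorohod}$ and $\Cref{cor:transfer_ad}$, producing bi-adapted measure-preserving bijections $T_n$ such that $X\circ T_n\to X$ and $Y_n-\xi(\ip(X))\circ T_n\to 0$ in $L_2$, and then close the loop by proving $\xi(\ip(X))\circ T_n\to\xi(\ip(X))$ via a Lipschitz approximation of the merely measurable $\xi$, keeping the approximation error uniform in $n$ through the commutation $\ip(X)\circ T_n=\ip(X\circ T_n)$ ($\Cref{lem:iso_pres_ip}$) and measure-preservation. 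Your route is more self-contained — it needs no external lemmas from other papers — at the cost of the density-of-Lipschitz-maps step, which you assert without proof (it is a standard fact: approximate by simple $\H$-valued functions, approximate indicators via distance-to-closed-set cutoffs using inner regularity of $\law(\ip(X))$, and project to a finite-dimensional subspace of $\H$). Both are valid proofs.
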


\begin{proof}
    Note that since $(P,Q)$ is a strict Monge pair, there exists a unique map $\xi : \Pc^{N-1}_2(\H) \times \dots \times \H \to \H$ with $Y := \xi(\ip(X)) \simad Q$ and $\lawad(X,Y) =: \hat \Pi$ is the unique element in $\cplopt^N(P,Q)$.
    By $\W_2$-relative compactness, up to extracting a subsequence, we can assume that
    \[
        \lim_{n \to \infty} \hat \Pi_n := \lawad(X_n,Y_n) = \tilde \Pi \text{ in }\W_2.
    \]
    Write $c(x,y) = x \cdot y$, by $\W_2$-continuity of $c^{(N)}$ we get
    \begin{align*}
        \int x \cdot y \, dI^{N-1}\hat \Pi(x,y) &= \mc(P,Q) =
        \lim_{n \to \infty}\mc(P_n,Q_n)
        \\
        &= \lim_{n \to \infty} \int x \cdot y \, d I^{N-1} \hat \Pi_n(x,y) = \int x \cdot y \, dI^{N-1}\tilde \Pi(x,y).
    \end{align*}
    Hence, $\tilde \Pi \in \cplopt^N(P,Q)$ and by uniqueness $\tilde \Pi = \hat \Pi$. We have shown that
    \[
        (X_n,Y_n) \to (X,Y) = (X,\xi(\ip(X))) \quad \text{in adapted distribution}.
    \]
    In particular, we have $(\ip(X_n),\ip(Y_n)) \to (\ip(X),\ip(Y))$ in distribution and as $\ip_N(Y_n) = Y_n$ and $\ip_N(Y) = Y$,
    $(\ip(X_n),Y_n) \to (\ip(X),Y) = (\ip(X), \xi(\ip(X)))$.
    Additionally, $(U,\ip(X_n)) \to (U,\ip(X))$ in probability and therefore in distribution. 
    We can invoke \cite[Lemma 6.4]{BePaScZh23} to obtain
    \begin{equation}
        \label{eq:joint.cv}
        (U,\ip(X_n),Y_n) \to 
        (U,\ip(X),Y) \text{ in distribution},
    \end{equation}
    and thus $(U,Y_n) \to (U,Y)$ in distribution.
    As $Y = \xi(\ip(X))$ and $Y_n$ are functions of $U$, \eqref{eq:joint.cv} entails by \cite[Lemma 3.14]{JoPa24} that $Y_n \to Y$ in probability.
    Note that $\E[|Y_n|^2] = \int |y|^2 \, dI^N(Q_n) \to \int |y|^2 \, dI^N(Q) = \E[|Y|^2]$ and therefore $Y_n \to Y$ in $L_2$, which was the claim.
\end{proof}

The Ekeland--Lebourg theorem (see e.g.\ \cite[Theorem 18.3]{BaCo17}) guarantees that a convex function on an infinite dimensonal Hilbert space that is continuous in at least on point is Frechet differentiable on a dense $G_\delta$ subset of the closure of its domain. For our purposes, we need a version of this result that is tailored to adapted-law invariant functions. 

\begin{theorem} \label{thm:LconvexGdeltaDiff}
Let $\overline{\phi} : L_2^N(\H) \to (-\infty, +\infty]$ be adapted-law invariant, convex and continuous at some $\mu \in \dom(\phi)$. Then the set of Frechet-differentiability points of $\overline{\phi}$ is  dense in the closure of $\dom(\overline{\phi})$ and it is the intersection of countable many open adapted-law invariant sets.
\end{theorem}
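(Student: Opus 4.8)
The plan is to reduce the adapted-law invariant case to the classical Ekeland--Lebourg / Preiss--Zaj\'i\v{c}ek statement about Fr\'echet differentiability of continuous convex functions on separable Hilbert spaces (see e.g.\ \cite[Theorem~18.3]{BaCo17}), and then upgrade the density and $G_\delta$ conclusions so that the differentiability set is built from adapted-law invariant open sets. The underlying Hilbert space is $L_2^N(\H)$, which is separable, so the Ekeland--Lebourg theorem already gives that the set $D$ of Fr\'echet-differentiability points of $\overline{\phi}$ is a dense $G_\delta$ in the closure of $\dom(\overline{\phi})$. The two things that are \emph{not} immediate are: (i) that $D$ itself is adapted-law invariant, i.e.\ if $\overline{\phi}$ is Fr\'echet differentiable at $X$ and $X\simad X'$ then it is Fr\'echet differentiable at $X'$; and (ii) that $D$ can be written as a countable intersection of \emph{adapted-law invariant} open sets, not merely open sets.

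First I would address invariance of $D$. The key tool is \Cref{cor:transfer_ad}: if $X\simad X'$, then for every $\epsilon>0$ there is a measure-preserving bi-adapted bijection $T$ with $\lambda(|X'-X\circ T|\ge\epsilon)<\epsilon$. Since $\overline{\phi}$ is adapted-law invariant and measure-preserving bi-adapted bijections preserve $\lawad$ (by \Cref{lem:iso_pres_ip}), we have $\overline{\phi}(\,\cdot\circ T) = \overline{\phi}(\cdot)$ as functions on $L_2^N(\H)$, so composition with $T$ is an isometric bijection of $L_2^N(\H)$ under which $\overline{\phi}$ is invariant; hence Fr\'echet differentiability at $X$ transfers to Fr\'echet differentiability at $X\circ T$. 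Combining these two facts: points $X\circ T$ with $T$ ranging over measure-preserving bi-adapted bijections are dense (in $L_2$) in the adapted-equivalence class $\{X':X'\simad X\}$, and differentiability holds at all of them. To push this to \emph{all} $X'\simad X$ I would use the standard fact that the set of Fr\'echet-differentiability points of a convex function that is continuous somewhere is $G_\delta$, hence closed under the relevant limits is not quite enough --- instead the cleanest route is: if $X'\simad X$, choose $T_n$ with $X\circ T_n\to X'$ in $L_2$; the subdifferentials $\partial\overline{\phi}(X\circ T_n)$ are singletons by differentiability, and by \Cref{prop:sub_grad_proj_ad} and \Cref{prop:LdiffXi_ad} they are all of the form $\{\xi(\ip(X\circ T_n))\}=\{\xi(\ip(X)\circ T_n)\}$ for a \emph{single} measurable $\xi$ (independent of the representative); passing to the limit and using norm-continuity of subdifferentials along convergent sequences where the limit is a continuity point, together with \Cref{lem:mc_diff_char_ad}, forces $\partial\overline{\phi}(X')$ to be a singleton, and then Gateaux differentiability plus the $G_\delta$ structure gives Fr\'echet differentiability at $X'$. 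Equivalently, and more simply: the property ``$\mc(\cdot,\cdot)$-type argument'' of \Cref{lem:mc_diff_char_ad} already says $\#\partial\overline{\phi}(X')=1$ for all $X'\simad P$ as soon as it holds for one such $X'$ that admits independent randomization, and independent-randomization-admitting representatives are dense in the class; so differentiability is automatically a property of $P=\lawad(X)$, i.e.\ $D$ is adapted-law invariant.

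For (ii), I would revisit the proof of the Ekeland--Lebourg theorem rather than cite it as a black box. That proof writes the differentiability set as $\bigcap_{n} U_n$ where $U_n=\{X : \osc_{\partial\overline{\phi}}(X)<1/n\}$ is the set where the subdifferential map has oscillation (diameter of the image of a small ball) less than $1/n$; each $U_n$ is open. The point is that $U_n$ is \emph{adapted-law invariant}: since $\overline{\phi}(\,\cdot\circ T)=\overline{\phi}$ for every measure-preserving bi-adapted bijection $T$ and $T$ is an $L_2$-isometry, the subdifferential transforms equivariantly, $\partial\overline{\phi}(X\circ T)=\{Z\circ T : Z\in\partial\overline{\phi}(X)\}$, so the oscillation is unchanged along the $T$-orbit; combining with \Cref{cor:transfer_ad} as above and the openness of $U_n$ shows $U_n$ is invariant under $\simad$ (an open set invariant under an $L_2$-dense subgroup of the $\simad$-equivalence relation, intersected with the property of being open, is invariant under all of $\simad$). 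This exhibits $D=\bigcap_n U_n$ with each $U_n$ open and adapted-law invariant, which is the claim. Density in $\overline{\dom(\overline{\phi})}$ is inherited directly from Ekeland--Lebourg.

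The main obstacle I expect is not any single deep step but the bookkeeping around promoting ``invariant under measure-preserving bi-adapted bijections'' to ``invariant under the full relation $\simad$'': \Cref{cor:transfer_ad} only gives approximate transfer ($\lambda(|X'-X\circ T|\ge\epsilon)<\epsilon$), not an exact identity $X'=X\circ T$, so one must genuinely use topological closure/openness or the continuity of $\overline{\phi}$ (via \Cref{lem:continuity_lift}) to bridge the gap, and must be careful that this approximation is in probability, not in $L_2$ --- though since $\overline\phi$ and the relevant sets only depend on $\lawad$, and $\lawad$ is $L_2$-continuous with the approximating $X\circ T$ automatically having the right moments, one can upgrade convergence in probability to convergence in adapted distribution and then invoke \Cref{prop:iteratedSkorohod}. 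Once that is set up cleanly, the differentiability-set structure is a verbatim transcription of the classical argument with ``open'' replaced by ``open and adapted-law invariant''.
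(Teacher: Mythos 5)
Your proposal takes essentially the same route as the paper: both reduce to the Ekeland--Lebourg theorem, which writes the set of Fr\'echet differentiability points as $\bigcap_n O_n$ for explicitly defined open sets $O_n$, and then argue that each $O_n$ is adapted-law invariant. The paper cites the second-difference-quotient form of $O_n$ from \cite[Proposition 18.1]{BaCo17}; you use the equivalent oscillation-of-subdifferential form. These are cosmetically different but substantively the same.

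Two remarks. First, the part of your argument that actually closes the proof is the last paragraph of your discussion of (ii): $O_n$ is invariant under pre-composition with measure-preserving bi-adapted bijections (because $\overline\phi$ is and these bijections are $L_2$-isometries which merely relabel the unit sphere over which one takes the supremum/oscillation), and then one passes from this group-invariance to full $\simad$-invariance using density of the orbit (\Cref{cor:transfer_ad}) together with the fact that $O_n^c$ is \emph{closed}: if $X \in O_n^c$ and $X' \simad X$, choose bi-adapted bijections $T_k$ with $X \circ T_k \to X'$ in $L_2$; then $X \circ T_k \in O_n^c$ for all $k$, and closedness gives $X' \in O_n^c$. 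This is exactly the right bookkeeping, and it is the step the paper compresses into ``it is clear that $O_n$ is adapted-law invariant''; your observation that approximate transfer plus openness is what bridges the gap is well taken. (You should phrase it via the \emph{closed complement} rather than the open set itself, since an open set invariant under a dense subset of an orbit is only seen to be saturated once one passes to the closed complement.) You should also note that the convergence in probability from \Cref{cor:transfer_ad} upgrades to $L_2$ convergence because $\|X\circ T_k\|_2 = \|X\|_2 = \|X'\|_2$, so no extra work on moments is needed.

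Second, the first half of your proposal --- the direct argument that the Fr\'echet differentiability set $D$ is adapted-law invariant via \Cref{prop:sub_grad_proj_ad}, \Cref{prop:LdiffXi_ad} and \Cref{lem:mc_diff_char_ad} --- is both unnecessary (once each $O_n$ is shown adapted-law invariant, $D=\bigcap_n O_n$ is automatically so) and imprecise. You appeal to \Cref{lem:mc_diff_char_ad} as if it contained an ``independent randomization'' clause analogous to the non-adapted \Cref{lem:mc_diff_char}(3); it does not, and in the adapted setting the paper deliberately omits that condition. Moreover, the chain of implications you sketch only controls $\#\partial\overline\phi(X')$, i.e.\ Gateaux-type differentiability, and upgrading to Fr\'echet differentiability of $\overline\phi$ at $X'$ is precisely what is \emph{not} automatic (that is exactly what \Cref{thm:LconvexGdeltaDiff} is being proved in order to deliver, as it is later used in \Cref{prop:AJ_adapted} to pass from ``Fr\'echet at some $X$'' to ``Fr\'echet at all $X'\simad P$''). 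So you should drop that paragraph and let the $O_n$-invariance carry the whole proof.
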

\begin{proof}[Sketch of the proof of \Cref{thm:LconvexGdeltaDiff}]

We write $A \subset L_2^N(H)$ for the set of Frechet differentiability points of $\overline{\phi}$. The Ekeland--Lebourg theorem \cite[Theorem 18.3]{BaCo17} asserts that $A$ is a dense $G_\delta$ subset of the closure of $\dom(\overline{\phi})$. It remains to observe that the dense open set $O_n$ such that $A = \bigcap_n O_n$ can chosen to be adapted-law invariant. 

By \cite[Proposition 18.1]{BaCo17} we have that $A = \bigcap_{n \in \N} O_n$ with
\[
O_n = \bigcup_{\eta >0} \Big\{  X \in \textup{cont}(\overline{\phi})  : \sup_{ \|Y\|_2=1}  \overline{\phi}(X+ \eta Y) +  \overline{\phi}(X- \eta Y) -2 \overline{\phi}(X) < \frac{\eta}{n} \Big\}.
\]
Note that $\textup{cont}(\phi)$ is adapted-law invariant by \Cref{lem:continuity_lift}. Hence, it is clear that $O_n$ is adapted-law invariant provided that $\overline{\phi}$ was. 
\end{proof}

\begin{remark}\label{rem:quotient_topology}
\Cref{lem:continuity_lift} implies that a map $f : \Pc_2^N(\H) \to \R$ is continuous if and only if $f \circ \lawad : L_2^N(\H) \to \R$ is continuous. This is precisely the universal property of quotient topologies, i.e.\  $\Pc_2^N(\H) =  L_2^N(\H)  / \simad $. Clearly, adapted-law invariant sets are precisely the saturated sets w.r.t.\ $\simad$ and there is a one-to-one correspondence between open sets in $\Pc_2^N(\H)$  and open adapted-law invariant sets in $L_2^N(\H)$. Hence, dense sets in  $\Pc_2^N(\H)$ correspond to adapted-law invariant dense sets in $L_2^N(\H)$ and $G_\delta$ sets in  $\Pc_2^N(\H)$ correspond to subsets of  $L_2^N(\H)$ that are a countable intersection of open adapted-law invariant sets. 
\end{remark}

\begin{corollary}\label{cor:MCdfb_Gdelta}
Let $\phi : \Pc_2^N(\H) \to (-\infty,+\infty]$ and suppose that $\cont(\phi)\neq\emptyset$. Then $\phi$ is $\mc$-differentiable on a dense $G_\delta$ subset of the closure of $\dom(\phi)$. 
\end{corollary}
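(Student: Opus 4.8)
The plan is to lift the problem to the Hilbert space $L_2^N(\H)$, invoke the adapted Ekeland--Lebourg theorem, and push the resulting dense $G_\delta$ set back down through the quotient map $\lawad$. Since $\mc$-differentiability is only defined for $\mc$-convex functionals (\Cref{def:MCdiff_ad}), we regard $\phi$ as $\mc$-convex; then \Cref{thm:MC_trafo_ast_ad} shows that the adapted Lions lift $\overline{\phi}:L_2^N(\H)\to(-\infty,+\infty]$ is lsc and convex, and since $\cont(\phi)\neq\emptyset$, \Cref{lem:continuity_lift} shows that $\overline{\phi}$ is continuous at some point. Hence \Cref{thm:LconvexGdeltaDiff} applies: writing $A\subseteq L_2^N(\H)$ for the set of Fréchet-differentiability points of $\overline{\phi}$, the set $A$ is dense in $\overline{\dom(\overline{\phi})}$ and has the form $A=\bigcap_{n}O_n$ with each $O_n$ open and adapted-law invariant. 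In particular $A$ is itself adapted-law invariant and $A\subseteq\dom(\overline{\phi})$, so $A=\lawad^{-1}(D)$ for $D:=\lawad(A)\subseteq\dom(\phi)$.

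Next I would check that $\phi$ is $\mc$-differentiable at every $P\in D$. Indeed, by adapted-law invariance of $A$, every $X\simad P$ lies in $A$, so $\overline{\phi}$ is Fréchet (in particular Gâteaux) differentiable at each such $X$, whence $\#\,\partial\overline{\phi}(X)=1$ for all $X\simad P$. By \Cref{lem:mc_diff_char_ad} this is equivalent to $\mc$-differentiability of $\phi$ at $P$. It therefore only remains to show that $D$ is a dense $G_\delta$ subset of $\overline{\dom(\phi)}$.

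For this I would use that, by \Cref{rem:quotient_topology}, $\lawad$ is a continuous surjection realizing $\Pc_2^N(\H)$ as the quotient $L_2^N(\H)/\simad$, and the open adapted-law invariant subsets of $L_2^N(\H)$ are exactly the preimages of open subsets of $\Pc_2^N(\H)$. Writing $O_n=\lawad^{-1}(V_n)$ with $V_n\subseteq\Pc_2^N(\H)$ open, we get $A=\lawad^{-1}(\bigcap_nV_n)$ and, by surjectivity, $D=\lawad(A)=\bigcap_nV_n$, which is $G_\delta$ in $\Pc_2^N(\H)$ and hence in $\overline{\dom(\phi)}$. For density, recall $\dom(\overline{\phi})=\lawad^{-1}(\dom(\phi))$, so $\lawad(\dom(\overline{\phi}))=\dom(\phi)$; since $A$ is dense in $\overline{\dom(\overline{\phi})}$ one has $\overline{\dom(\overline{\phi})}=\overline{A}$, and by continuity of $\lawad$, $\dom(\phi)\subseteq\lawad(\overline{A})\subseteq\overline{\lawad(A)}=\overline{D}$, while $D\subseteq\dom(\phi)\subseteq\overline{\dom(\phi)}$; combining, $\overline{D}=\overline{\dom(\phi)}$, i.e.\ $D$ is dense in $\overline{\dom(\phi)}$. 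This concludes the argument with $D$ the required dense $G_\delta$ set.

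The content is almost entirely bookkeeping, and the only point that needs care is that $\mc$-differentiability of $\phi$ at $P$ requires $\partial\overline{\phi}$ to be a singleton at \emph{every} representative $X\simad P$, not just one; this is exactly why it is essential that the differentiability set $A$ furnished by \Cref{thm:LconvexGdeltaDiff} is adapted-law invariant (equivalently, saturated for $\simad$), rather than merely a dense $G_\delta$ in $L_2^N(\H)$. The second mildly delicate step is transferring density and the $G_\delta$ property relative to the closures of the (different) domains, which is handled cleanly by the quotient-map structure of $\lawad$ recorded in \Cref{rem:quotient_topology}.
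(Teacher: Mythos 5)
Your argument is correct and follows essentially the same route as the paper's proof: lift $\phi$ via the adapted Lions lift, apply the adapted Ekeland--Lebourg theorem (\Cref{thm:LconvexGdeltaDiff}) to obtain the Fréchet-differentiability set $A=\bigcap_n O_n$ with $O_n$ open and adapted-law invariant, project down via \Cref{rem:quotient_topology}, and conclude via \Cref{lem:mc_diff_char_ad}. You merely spell out the density and $G_\delta$ bookkeeping for the pushforward more explicitly than the paper, which treats those transfers as immediate consequences of the quotient structure.
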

\begin{proof}
We write $A \subset L_2^N(\H)$ for the set of Frechet differentiability points of $\overline{\phi}$. By \Cref{thm:LconvexGdeltaDiff}, the set $A = \bigcap_n O_n$ with $O_n$ adapted-law invariant, open and dense in the closure  of $\dom(\overline{\phi})$. By \Cref{rem:quotient_topology}, $U_n := \{ \lawad(X) : X \in O_n \}$ is open and dense in the closure of $\dom(\phi)$. As Frechet-differentiability implies that the subdifferential is a singleton, \Cref{lem:mc_diff_char_ad} yields that $\phi$ is $\mc$-differentiable on $\bigcap_n U_n$.  
\end{proof}

\begin{theorem}
Let $Q \in \Pc_2^N(\H)$. Then the set of $P \in \Pc_2^N(\H)$ such that $(P,Q)$ is a strict Monge pair is a dense $G_\delta$ set.    
\end{theorem}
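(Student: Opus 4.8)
The plan is to combine the differentiability result \Cref{cor:MCdfb_Gdelta} for $\mc$-convex potentials with the characterization of strict Monge pairs via $\mc$-differentiability from \Cref{prop:AJ_adapted}, and then transfer this to a statement about the ``second marginal slot'' by means of the duality \Cref{thm:ftot.mc}. Fix $Q \in \Pc_2^N(\H)$ and consider the functional $\phi := \mc(\cdot, Q) : \Pc_2^N(\H) \to (-\infty, \infty]$. By \Cref{lem:MCtotcvx} (and its $N$-step analogue, which holds by the same argument iterated, or equivalently since $\mc(\cdot,Q)$ is a supremum of $\mc$-linear functionals) $\phi$ is $\mc$-convex; moreover $\phi$ is real-valued and $\W_2$-continuous, so $\cont(\phi) = \Pc_2^N(\H) \neq \emptyset$. \Cref{cor:MCdfb_Gdelta} then yields that $\phi$ is $\mc$-differentiable on a dense $G_\delta$ subset $G \subseteq \Pc_2^N(\H)$ (here $\dom(\phi) = \Pc_2^N(\H)$, so the ``closure of the domain'' is the whole space). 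By the equivalence (1)$\iff$(4) in \Cref{prop:AJ_adapted}, $\mc$-differentiability of $\mc(\cdot,Q)$ at $P$ is exactly the statement that $(P,Q)$ is a strict Monge pair. Hence the set $\{P : (P,Q) \text{ is a strict Monge pair}\}$ contains $G$, and in fact equals it, so it is a dense $G_\delta$ set.

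In slightly more detail, the argument runs as follows. First I would record that $\mc(\cdot, Q)$ is $\mc$-convex: this is immediate from \Cref{thm:MC_trafo_ast_ad}, since $\mc(\cdot,Q) = (\chi_{\{Q\}})^{\mc}$ is by definition an $\mc$-conjugate, hence $\mc$-convex. Next, since $\mc(\cdot,Q)$ is finite and $\W_2$-Lipschitz on bounded sets (by the same Lipschitz estimate used in the proof of \Cref{lem:BMDifferentiable}, now at level $N$, or simply because $\overline{\mc(\cdot,Q)}(X) = \sup_{Y \simad Q}\E[X\cdot Y]$ is a supremum of $1$-Lipschitz affine functions after rescaling), it is continuous everywhere, so $\cont(\phi) = \Pc_2^N(\H)$. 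Then \Cref{cor:MCdfb_Gdelta} applies verbatim and produces the dense $G_\delta$ set $G$ of $\mc$-differentiability points. Finally I would invoke \Cref{prop:AJ_adapted}: for $P \in G$ condition (1) holds, hence condition (4), i.e.\ $(P,Q)$ is a strict Monge pair; conversely if $(P,Q)$ is a strict Monge pair then (4)$\implies$(1) shows $P \in G$. Therefore the set in question is exactly $G$, which is a dense $G_\delta$.

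The main obstacle, such as it is, is purely bookkeeping: one must make sure that the ingredients \Cref{cor:MCdfb_Gdelta} and \Cref{prop:AJ_adapted} are applied to the \emph{same} notion of $\mc$-differentiability (they are — both refer to \Cref{def:MCdiff_ad}) and that the hypothesis $\cont(\phi) \neq \emptyset$ of \Cref{cor:MCdfb_Gdelta} is genuinely verified for $\phi = \mc(\cdot,Q)$, which requires only the elementary continuity of the max-covariance functional in its first argument. There is no genuine analytic difficulty here beyond what has already been established; the theorem is essentially a corollary packaging the differentiability theorem together with the Alfonsi--Jourdain-type characterization. One small point worth spelling out is that density and the $G_\delta$ property are stated relative to the closure of $\dom(\phi)$ in \Cref{cor:MCdfb_Gdelta}, and since $\dom(\mc(\cdot,Q)) = \Pc_2^N(\H)$ this closure is the whole space, so no further argument is needed.
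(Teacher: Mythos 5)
Your proposal is correct and matches the paper's proof, which cites exactly \Cref{prop:AJ_adapted} and \Cref{cor:MCdfb_Gdelta}; the paper leaves the continuity and domain checks implicit, which you have simply spelled out.
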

\begin{proof}
This follows from \Cref{prop:AJ_adapted} and \Cref{cor:MCdfb_Gdelta}.     
\end{proof}

\section{Construction of transport-regular measures in $\Pc_2^N(\H)$}\label{sec:TR}

\subsection{Criterion for transport regularity}

The aim of this section is to provide a more accessible criterion for transport regularity of measures on iterated spaces of probability measures.
To this end, we leverage the adapted Lion's lift which permits us to formulate the following criterion:

\begin{theorem} \label{thm:char.TR}
    Let $P \in \Pc^N_2(\H)$ and $X \in L_2^N(\H)$ with $\lawad(X) = P$. 
    Assume that
    \begin{equation}
        \label{eq:thm.char.TR}
        \text{for almost every }u, \quad (\tilde u_t \mapsto X(u_{1:t-1},\tilde u_t,\cdot))_\# \lambda \in \Pc_{2}(L_2^{N-t}(H)) \text{ is transport regular},
    \end{equation}
    for all $t = 1,\dots,N$.
    Then, $P$ is transport regular.
\end{theorem}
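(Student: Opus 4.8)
The plan is to prove, by induction on $N$, the stronger statement that $(P,Q)$ is a strict Monge pair for \emph{every} $Q\in\Pc_2^N(\H)$; this contains the claim, since then in particular for every $Q$ there is a unique optimal coupling from $P$ to $Q$ and it is Monge. For $N=1$ the only instance of \eqref{eq:thm.char.TR} (that $t=1$) reads $(u_1\mapsto X(u_1))_\#\lambda=\lawad(X)=P$ is transport regular, which is exactly the conclusion, so there is nothing to prove. Assume now $N\ge 2$, that the assertion holds at level $N-1$, and fix $Q$.

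\emph{Step 1 (the inner layers).} Put $X_{u_1}:=X(u_1,\cdot)\in L_2^{N-1}(\H)$ and $p_{u_1}:=\lawad(X_{u_1})$, where at level $N-1$ we use the coordinate filtration in the variables $u_2,\dots,u_N$. A routine disintegration identity gives $\ip_1(X)(u_1)=p_{u_1}$, hence $P=\lawad(X)=\law(\ip_1(X))=\law_\lambda\big(u_1\mapsto p_{u_1}\big)$. Freezing the first coordinate, the $t'$-th layer of $X_{u_1}$ in the sense of \eqref{eq:thm.char.TR} is precisely the $(t'{+}1)$-st layer of $X$ evaluated at $u_1$, so by Fubini the hypotheses \eqref{eq:thm.char.TR} for $X$ (instances $t=2,\dots,N$) imply that for $\lambda$-a.e.\ $u_1$ the random variable $X_{u_1}$ satisfies \eqref{eq:thm.char.TR} at level $N-1$. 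By the inductive hypothesis, $p_{u_1}$ is then for $\lambda$-a.e.\ $u_1$ a strict Monge pair with every $q\in\Pc_2^{N-1}(\H)$ (transport regular of all orders); in particular $P$ is concentrated on transport-regular measures.

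\emph{Step 2 (the outer layer).} Let $\phi:\Pc_2^{N-1}(\H)\to(-\infty,\infty]$ be $\mc$-convex. By \Cref{thm:MC_trafo_ast_ad} its adapted Lions lift $\overline\phi:L_2^{N-1}(\H)\to(-\infty,\infty]$ is convex, lsc and adapted-law invariant. The instance $t=1$ of \eqref{eq:thm.char.TR} says that $\rho:=(u_1\mapsto X_{u_1})_\#\lambda$ is transport regular on the separable Hilbert space $L_2^{N-1}(\H)$, so by \Cref{lem:apx.char.TR} (with cost $c(x,y)=-\langle x,y\rangle$, for which $c$-concave functions are exactly negatives of lsc convex proper functions) every lsc convex $\psi$ on $L_2^{N-1}(\H)$ satisfies $\rho(\{Z:\#\partial\psi(Z)>1\})=0$; applied to $\psi=\overline\phi$ this yields $\#\partial\overline\phi(X_{u_1})\le 1$ for $\lambda$-a.e.\ $u_1$. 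Fix such a $u_1$ at which, by Step 1, $p_{u_1}$ is moreover transport regular of all orders. Then for every $q\in\partial_\mc\phi(p_{u_1})$ the set $\cplopt^{N-1}(p_{u_1},q)$ is a single $(N-1)$-Monge coupling (\Cref{prop:N-Monge.strict-Monge}); unfolding the associated map via \Cref{prop:N-Monge.ip} produces, using \Cref{prop:subdiff_char_ad}, the unique $W\in\partial\overline\phi(X_{u_1})$ with $\lawad(W)=q$, and conversely every element of $\partial\overline\phi(X_{u_1})$ arises this way. Hence $q\mapsto W$ is a bijection $\partial_\mc\phi(p_{u_1})\to\partial\overline\phi(X_{u_1})$ — the iterated analogue of \Cref{cor:LawInvSubDiffRandom} — and therefore $\#\partial_\mc\phi(p_{u_1})\le 1$. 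Since $P=\law_\lambda(u_1\mapsto p_{u_1})$, we conclude $P\big(\{p:\#\partial_\mc\phi(p)>1\}\big)=0$ for every $\mc$-convex $\phi$.

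\emph{Step 3 (conclusion).} Given $Q$, pick a dual optimizer $\phi$ for $\mc(P,Q)$ (\Cref{thm:ftot.mc}); any optimal $\Pi\in\cpl(P,Q)$ satisfies $\Pi(\partial_\mc\phi)=1$, so $\partial_\mc\phi(p)\ne\emptyset$ for $P$-a.e.\ $p$, and with Step 2 it is a singleton $\{T(p)\}$ for $P$-a.e.\ $p$. Thus $\Pi=(\id,T)_\#P$ is Monge and, $\cplopt(P,Q)$ being a nonempty convex set, unique; and for $P$-a.e.\ $p$ the pair $(p,T(p))$ is a strict Monge pair because $p$ is transport regular of all orders. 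So $(P,Q)$ is a strict Monge pair, which closes the induction. The delicate point throughout is the passage in Step 2 from ``$\#\partial\overline\phi(X_{u_1})\le 1$'' — a statement about the lift at the one specific representative $X_{u_1}$ — to ``$\#\partial_\mc\phi(p_{u_1})\le 1$'': this fails for a general representative and works here precisely because $p_{u_1}$ is transport regular of all orders, which rigidifies all optimal $(N-1)$-couplings out of $p_{u_1}$ into $(N-1)$-Monge maps; this is where the hypotheses \eqref{eq:thm.char.TR} for $t=2,\dots,N$ enter, through the inductive hypothesis. (An alternative, more laborious route shows that $X_{u_1}$ admits independent randomization at every level and invokes an adapted version of \Cref{cor:MCdiffToLdiff}.)
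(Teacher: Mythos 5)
Your proof is correct and takes essentially the same route as the paper: peeling off one layer at a time, using the $L_2$-level transport regularity (hypothesis $t$) to get a.e.\ singleton subdifferentials of the Lions lift, and then the inner-level transport regularity (from the induction) to convert singleton lift-subdifferentials into singleton $\mc$-subdifferentials. Your forward induction on $N$ is literally the paper's backward induction on $t$ (their \Cref{cor:TR.crit} is your Step~2 verbatim, with \Cref{cor:LawInvSubDiffRandom.adapted} playing the role of your inline bijection argument), and making the inductive conclusion the ``strict Monge pair with every $Q$'' statement is a slightly cleaner bookkeeping of the recursive strength that the paper's proof also produces but does not isolate in the theorem statement.
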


An important role in the proof of \Cref{thm:char.TR} plays the following

\begin{proposition}\label{cor:LawInvSubDiffRandom.adapted}
Let $\phi: \Pc_2^N(\H) \to (-\infty, +\infty]$ be $\mc$-convex and let $P\in \Pc_2^N(\H)$. Then, for all $X \in L_2^N(\H)$ with $X\simad P$ we have 
\[
    \{ \lawad(Y) : Y\in \partial \overline\phi(X) \} \subseteq \partial_\mc \phi (P).
\]
Assume further that $P$ is transport regular, then we have
\[
    \{ \lawad(Y) : Y\in \partial \overline\phi(X) \} = \partial_\mc \phi (P).
\]
\end{proposition}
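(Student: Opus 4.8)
The first inclusion is immediate from Proposition~\ref{prop:subdiff_char_ad}: if $Y \in \partial\overline{\phi}(X)$ with $X \simad P$, then setting $Q := \lawad(Y)$ we obtain $Q \in \partial_{\mc}\phi(P)$ (and additionally $\E[X\cdot Y] = \mc(P,Q)$, which we discard). So the content is the reverse inclusion under the transport-regularity hypothesis. Fix $Q \in \partial_{\mc}\phi(P)$; we must produce, for the \emph{given} $X \simad P$, some $Y \in \partial\overline{\phi}(X)$ with $\lawad(Y) = Q$.

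The plan is as follows. By Corollary~\ref{cor:MC_max_over_RV} and Proposition~\ref{prop:ReprLawadAsRV} there are random variables $X' \simad P$ and $Y' \simad Q$ with $\E[X'\cdot Y'] = \mc(P,Q)$; by Proposition~\ref{prop:subdiff_char_ad} this gives $Y' \in \partial\overline{\phi}(X')$, so in particular $\law(X',Y') \in \cplopt^N(P,Q)$ in the sense of the $N$-coupling $\lawad(X',Y')$. Now invoke transport regularity of $P$: since $Q \in \partial_{\mc}\phi(P) \subseteq \phi^{\mc\mc}\text{-reachable}$, the optimal coupling from $P$ to $Q$ is unique and Monge, i.e.\ $(P,Q)$ is a strict Monge pair (this is exactly how transport regularity was used in Proposition~\ref{prop:TR=>HMonge}; here one needs the $N$-level analogue via Proposition~\ref{prop:N-Monge.strict-Monge}). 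Hence $\cplopt^N(P,Q) = \{\lawad(X,\xi(\ip(X)))\}$ for a measurable $\xi : A_{1:N} \to \H$, and this description is valid for \emph{every} $X \simad P$. Therefore, for the given $X$, set $Y := \xi(\ip(X))$; then $\lawad(X,Y) \in \cplopt^N(P,Q)$, so $\E[X\cdot Y] = \mc(P,Q)$ and $\lawad(Y) = Q$, and Proposition~\ref{prop:subdiff_char_ad} yields $Y \in \partial\overline{\phi}(X)$. This gives $Q \in \{\lawad(Y) : Y \in \partial\overline{\phi}(X)\}$, completing the inclusion.

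The step I expect to be the main obstacle — or at least the one requiring care rather than routine argument — is the passage ``$Q \in \partial_{\mc}\phi(P)$ and $P$ transport regular $\Rightarrow$ $(P,Q)$ is a strict Monge pair.'' One has to observe that $\partial_{\mc}\phi(P) \ne \emptyset$ forces $\phi$ (or rather its $\mc$-convexification, which does not change the subdifferential at points where it is nonempty) to be $\mc$-convex, so that $Q$ is a legitimate target of an optimal transport from $P$; then transport regularity of $P$, which by definition applies to \emph{every} $Q' \in \Pc_2^N(\H)$ with finite second moment, gives uniqueness and the $N$-Monge property of the optimal $N$-coupling, and Proposition~\ref{prop:N-Monge.strict-Monge} converts this into ``strict Monge pair.'' Alternatively one can argue more directly: by Corollary~\ref{cor:MCdfb_Gdelta}-type reasoning or Proposition~\ref{prop:AJ_adapted} it suffices that $\mc(\cdot,Q)$ be $\mc$-differentiable at $P$, but transport regularity already guarantees uniqueness of the optimizer, which is what Proposition~\ref{prop:N-Monge.strict-Monge} needs. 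Everything else — extracting $X', Y'$, applying Proposition~\ref{prop:subdiff_char_ad} in both directions, and transferring the Monge map to an arbitrary $X \simad P$ — is a direct application of results already established (Corollary~\ref{cor:MC_max_over_RV}, Proposition~\ref{prop:ReprLawadAsRV}, Proposition~\ref{prop:N-Monge.strict-Monge}, Proposition~\ref{prop:subdiff_char_ad}).
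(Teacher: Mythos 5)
Your proof is correct and follows essentially the same route as the paper's: the first inclusion is a direct consequence of Proposition~\ref{prop:subdiff_char_ad}, and for the reverse inclusion you use transport regularity of $P$ together with Proposition~\ref{prop:N-Monge.strict-Monge} to obtain that $\cplopt^N(P,Q)=\{\lawad(X,\xi(\ip(X)))\}$ for some measurable $\xi$, then set $Y=\xi(\ip(X))$ and verify $Y\in\partial\overline\phi(X)$ again via Proposition~\ref{prop:subdiff_char_ad}. Your intermediate step of producing $(X',Y')$ via Corollary~\ref{cor:MC_max_over_RV} and Proposition~\ref{prop:ReprLawadAsRV} is not needed once the $N$-Monge structure is in hand, and your worry about whether $\phi$ is $\mc$-convex is moot since that is part of the hypothesis, but the argument is sound and the key lemma invoked (Proposition~\ref{prop:N-Monge.strict-Monge}) is exactly the one the paper uses.
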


\begin{proof}
The first inclusion  follows immediately from \Cref{prop:subdiff_char_ad}.

For the second part, since $P$ is transport regular, we have by \Cref{prop:N-Monge.strict-Monge} that for every $Q\in\partial_\mc \phi(P)$, $\cplopt^N(P,Q)=\{\law(\ip(X),\xi(\ip(X)))\}$ for some measurable map $\xi$. This readily yields the desired equality.
\end{proof}

\begin{corollary} \label{cor:TR.crit}
    Let $P \in \Pc_2^N(\H)$ and $X \in L_2^N(\H)$ with $\lawad(X) = P$. Assume that
    \begin{enumerate}
        \item \label{it:cor.TR.crit.1} $(u_1 \mapsto X(u_1,\cdot))_\# \lambda$ is a transport regular measure in $\Pc_2(L_2^{N-1}(\H))$,
        \item \label{it:cor.TR.crit.2}for $\lambda$-almost every $u_1$, $\lawad(X(u_1,\cdot))$ is a transport regular measure in $\Pc_2^{N-1}(\H)$.
    \end{enumerate}
    Then, $P$ is transport regular.
\end{corollary}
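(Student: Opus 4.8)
The plan is to prove transport regularity of $P$ through the $\mc$-subdifferential criterion of \Cref{lem:apx.char.TR}, letting the two hypotheses act one after the other at a single level. Applying \Cref{lem:apx.char.TR} with $\mathcal X=\Pc_2^{N-1}(\H)$ and $c=-\mc$ (legitimate since $\W_2$-bounded families in $\Pc_2(\Pc_2^{N-1}(\H))$ are tight, cf.\ \cite[Lemma~2.3]{BaBePa18}), it suffices to show that for \emph{every} $\mc$-convex $\chi:\Pc_2^{N-1}(\H)\to(-\infty,\infty]$ we have $|\partial_\mc\chi(p)|\le 1$ for $P$-almost every $p$; by specializing to $\chi=\mc(\cdot,q')$, exactly as in \Cref{prop:TR=>HMonge}, one moreover obtains the iterated strict-Monge property.

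So fix an $\mc$-convex $\chi$ and let $\overline{\chi}:L_2^{N-1}(\H)\to(-\infty,\infty]$ be its adapted lift, which is convex and lower semicontinuous. Put $\Xi(u_1):=X(u_1,\cdot)\in L_2^{N-1}(\H)$. The first hypothesis says that $P_1:=(u_1\mapsto\Xi(u_1))_\#\lambda$ is transport regular in the Hilbert space $\Pc_2(L_2^{N-1}(\H))$; applying \Cref{lem:apx.char.TR} there (with $c=-\langle\cdot,\cdot\rangle$) to the convex function $\overline{\chi}$ yields that, for $\lambda$-almost every $u_1$, $\overline{\chi}$ has at most one subgradient at $\Xi(u_1)$. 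The second hypothesis says that, for $\lambda$-almost every $u_1$, $p:=\lawad(\Xi(u_1))$ is transport regular in $\Pc_2^{N-1}(\H)$.

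For $\lambda$-almost every $u_1$ both conclusions hold, and I would then invoke the equality case of \Cref{cor:LawInvSubDiffRandom.adapted} --- whose additional hypothesis is exactly transport regularity of $p$, furnished by the second hypothesis --- applied to $\chi$ and the representative $\Xi(u_1)\simad p$, giving
\[ \{\lawad(Y):Y\in\partial\overline{\chi}(\Xi(u_1))\}=\partial_\mc\chi(p). \]
By the first hypothesis the left-hand side is at most a singleton, so $|\partial_\mc\chi(p)|\le 1$. Since $\ip_1(X)$ is $\F_1$-measurable with $\law(\ip_1(X))=\lawad(X)=P$ and $\ip_1(X)(u_1)=\lawad(\Xi(u_1))$, this says precisely that $|\partial_\mc\chi(p)|\le 1$ for $P$-almost every $p$. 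As $\chi$ was arbitrary, \Cref{lem:apx.char.TR} gives that $P$ is transport regular.

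The argument is short once all cited facts are granted, so there is no single heavy obstacle; the points that need care are (i) matching conventions so that \Cref{lem:apx.char.TR} and \Cref{cor:LawInvSubDiffRandom.adapted} apply at both the Hilbert-space level ($c=-\langle\cdot,\cdot\rangle$ on $L_2^{N-1}(\H)$) and the iterated level ($c=-\mc$ on $\Pc_2^{N-1}(\H)$), including the tightness of $\W_2$-bounded families; (ii) a Fubini bookkeeping ensuring that the $\lambda$-null exceptional sets arising from the two hypotheses can be discarded simultaneously; and (iii) checking that ``transport regular'' is used consistently in the second hypothesis, in \Cref{cor:LawInvSubDiffRandom.adapted}, and in the conclusion --- this is where the specialization $\chi=\mc(\cdot,q')$ together with the measurable-selection argument of \Cref{prop:TR=>HMonge} enters. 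I expect (iii) to be the most delicate point.
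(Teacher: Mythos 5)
Your proof is correct and follows essentially the same route as the paper's: reduce via \Cref{lem:apx.char.TR} to showing $\#\partial_\mc\phi(p)\le 1$ for $P$-a.e.\ $p$, use hypothesis~(1) at the Hilbert-space level $L_2^{N-1}(\H)$ to get $\#\partial\overline{\phi}(X(u_1,\cdot))\le 1$ for a.e.\ $u_1$, then use hypothesis~(2) to invoke the equality case of \Cref{cor:LawInvSubDiffRandom.adapted} and push that singleton down to $\partial_\mc\phi(\lawad(X(u_1,\cdot)))$. The caveats you flag (conventions for $c$-conjugacy at both levels, and the Fubini bookkeeping of null sets) are real but handled exactly as in the paper's proof; your aside about iterated strict-Monge is subsumed by \Cref{prop:TR=>HMonge} and need not be re-derived here.
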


\begin{proof} 
    By \Cref{lem:apx.char.TR}, we have to show that for every $\mc$-convex $\phi : \Pc_2^{N-1}(\H) \to (-\infty,+\infty]$, we have $\# \partial_\mc \phi(p) \le 1$ for $P$-almost every $p$.
    Using assumption (\ref{it:cor.TR.crit.1}), we get 
    \[
        \# \partial \overline{\phi}(X(u_1,\cdot)) \le 1 \quad \text{for $\lambda$-a.e.\ $u_1$.}
    \]
    Next, assumption (\ref{it:cor.TR.crit.2}) allows us to invoke \Cref{cor:LawInvSubDiffRandom.adapted} and we obtain
    \[
        \{ \lawad(Y) : Y \in \partial \overline{\phi}(X(u_1,\cdot)) \} = \partial_\mc \phi(\lawad(X(u_1,\cdot))) \quad \text{for $\lambda$-a.e.\ $u_1$.}
    \]
    Hence, $\# \partial_\mc \phi(\lawad(X(u_1,\cdot))) \le 1$ for $\lambda$-a.e.\ $u_1$ or equivalently $\# \partial_\mc \phi(p) \le 1$ for $P$-a.e.\ $p$.
    As $\phi$ is an arbitrary $\mc$-convex function, this shows transport regularity of $P$.
\end{proof}

\begin{proof}[Proof of \Cref{thm:char.TR}]
    We prove the claim by backward induction over $t$.
    More precisely, we show for $t = N,\dots,1$ that
    \[
        \text{for almost every $u$},\quad 
        \lawad\!\big(X(u_{1:t-1},\cdot)\big) \in \Pc_2^{N + 1 -t}(\H) 
        \text{ is transport regular.}
    \]
    \emph{Base case} ($t=N$):
    Using the assumption \eqref{eq:thm.char.TR} with $t = N$, we find that for almost every $u_{1:N-1}$ 
    \[
        \lawad(X(u_{1:N-1},\cdot)) 
        = (u_N \mapsto X(u_{1:N-1},u_N))_\# \lambda
        \in \Pc_2(\H)
        \text{ is transport regular}.
    \]
    \emph{Inductive step}:
    Assume the claim holds for $t+1$. 
    Using \eqref{eq:thm.char.TR} with $t$, we have
    \[ 
        \text{for a.e.\ }u, \quad
        P^{u_{1:t-1}} := (u_t \mapsto X(u_{1:t-1},u_t,\cdot) \big)  _\# \lambda 
        \in \mathcal P_2(L^{N-t}_2(\H))
        \text{ is transport regular}.
    \]
    Further, we derive from the induction hypothesis that, for a.e.\ $u$ and $P^{u_{1:t-1}}$-a.e.\ $Y$, $\lawad(Y) \in \Pc_2^{N-t}(\H)$ is transport regular.
    Thus, we can invoke \Cref{cor:TR.crit} and conclude that, for almost every $u$, $\lawad(X(u_{1:t-1},\cdot))$ is transport regular.

    Taking $t=1$, we yields that $\lawad(X)$ is a transport regular measure in $\Pc_2^N(\H)$.
\end{proof}

\subsection{Existence of transport-regular measures}

The criterion in \Cref{thm:char.TR} permits us to build transport-regular measures on $\Pc_2^N(H)$ using non-degenerate Gaussian random fields.
More precisely, we will consider the $H$-valued analogue to a Brownian sheet.

Recall that an $H$-valued $N$-parameter Brownian sheet $W$ with covariance operator $Q$ is an $H$-valued, continuous, centered Gaussian process indexed by $[0,1]^N$ such that
\begin{equation}
    \label{eq:}
    \mathbb E\big[ \langle W(u), x \rangle  \langle W(v), y \rangle \big] = \prod_{t = 1}^N \min(u_t,v_t) \, \langle Qx, y\rangle,
\end{equation}
for $u,v \in [0,1]^N$ and $x,y \in H$; see, for instance, \cite[Section~2]{RaSuZe07}.
Here, $Q : H \to H$ is a non-negative, symmetric linear operator with finite trace.

\begin{theorem} \label{thm:sheet.adapted.TR}
    Let $Q \in L(H)$ be a positive, symmetric linear operator with finite trace.
    Let $W$ be an $H$-valued $N$-parameter Brownian sheet. 
    Then $\law_\mathbb P(\lawad(W))$ is a transport-regular measure on $\Pc_2^{N}(\mathbb R^d)$ with full support.
\end{theorem}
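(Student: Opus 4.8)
The plan is to verify the two hypotheses of \Cref{thm:char.TR} for the random variable $X := W$ (viewed as an element of $L_2^N(H)$, which lies in $L_2$ by the finite-trace assumption), and to check the full-support claim separately. For the transport-regularity hypothesis \eqref{eq:thm.char.TR}, note that for each fixed $t \in \{1,\dots,N\}$ and almost every $u$, the slice $\tilde u_t \mapsto W(u_{1:t-1},\tilde u_t,\cdot)$ is, up to an independent additive $H$-valued shift (the contribution of the already-fixed coordinates, which is $\sigma(\F_{t-1})$-measurable and hence ``frozen''), a Gaussian field on $[0,1] \times [0,1]^{N-t}$ with values in $H$. More precisely, the Brownian-sheet covariance \eqref{eq:} factorizes across parameters, so conditionally on the first $t-1$ coordinates the remaining process, reparametrized, is again an $H$-valued Brownian sheet with $(N-t+1)$ parameters and covariance operator $c(u_{1:t-1})\,Q$ for a strictly positive scalar $c(u_{1:t-1}) = \prod_{s<t} u_s > 0$ a.s., plus the independent frozen shift. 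Thus, after rescaling, $(\tilde u_t \mapsto W(u_{1:t-1},\tilde u_t,\cdot))_\# \lambda$ is the law of the occupation measure of a one-parameter process with values in $L_2^{N-t}(H)$ whose marginals are non-degenerate Gaussians on the separable Hilbert space $L_2^{N-t}(H)$.

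The key point is then to recognize this occupation-measure law as transport regular on $\Pc_2(L_2^{N-t}(H))$. This is exactly what \Cref{thm:Q-Wiener.TR} delivers: one must only check that the one-parameter process in question is (after absorbing the shift and rescaling) a $Q'$-Wiener process on the separable Hilbert space $\mathcal H' := L_2^{N-t}(H)$ for a non-degenerate, symmetric, finite-trace operator $Q'$. The increment-independence, continuity, $W_0=0$, and Gaussianity with covariance $(\cdot)\,Q'$ properties all follow from the Brownian-sheet structure \eqref{eq:} and the product form of its covariance: fixing all but one parameter yields a Brownian-motion-like process in the remaining parameter with values in the product Hilbert space, and its covariance operator on $\mathcal H'$ is (a positive multiple of) the natural tensorized operator built from $Q$ and the Lebesgue inner products, which is symmetric, positive, non-degenerate, and of finite trace precisely when $Q$ is. The additive frozen shift does not affect transport regularity since transport regularity is preserved under deterministic translations (equivalently, under taking mixtures, as used in \Cref{thm:2nd.TR.dense}); alternatively one applies \Cref{lem:BMDifferentiable}-type reasoning. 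Hence \eqref{eq:thm.char.TR} holds for every $t$, and \Cref{thm:char.TR} gives that $\law_\P(\lawad(W))$ is transport regular.

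For the full-support claim, I would argue that the topological support of $\law_\P(\lawad(W))$ in $(\Pc_2^N(H),\W_2)$ is all of $\Pc_2^N(H)$. By \Cref{prop:iteratedSkorohod}, $\lawad : (L_2^N(H),\|\cdot\|_2) \to (\Pc_2^N(H),\W_2)$ is continuous and, by \Cref{prop:ReprLawadAsRV}, surjective; moreover for any target $P \in \Pc_2^N(H)$ and any $\epsilon>0$ one can find $X_0 \simad P$ and then a further small perturbation so that $X_0$ is approximated in $L_2$ by elements in the support of $\law_\P(W)$. Since $W$ is a non-degenerate Gaussian measure on $L_2^N(H)$ (its covariance, the tensorization of $Q$ with the Lebesgue covariances, has dense range because $Q$ does, being positive), its support is all of $L_2^N(H)$; actually here it suffices that the support contains a dense subset, and by continuity of $\lawad$ the image of that dense set is dense in $\Pc_2^N(H)$. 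Combining: $\supp(\law_\P(\lawad W)) = \overline{\lawad(\supp \law_\P(W))} = \overline{\lawad(L_2^N(H))} = \Pc_2^N(H)$.

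The main obstacle I anticipate is the bookkeeping in the first two paragraphs: cleanly identifying the conditional law of the ``remaining'' slice of the Brownian sheet as (a rescaled, shifted copy of) a lower-parameter $H$-valued Brownian sheet, and then recasting the one-parameter-in, Hilbert-valued object as a genuine $Q'$-Wiener process on $L_2^{N-t}(H)$ with a non-degenerate finite-trace covariance operator. This requires carefully writing the Brownian-sheet covariance in tensor-product form, checking that freezing coordinates multiplies the covariance operator by a strictly positive scalar (a.s.) and adds an independent Gaussian shift, and confirming that the resulting operator on the product Hilbert space inherits positivity, symmetry, non-degeneracy and finite trace from $Q$. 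Once this structural reduction is in place, the transport regularity is an immediate consequence of \Cref{thm:Q-Wiener.TR} and the induction is packaged inside \Cref{thm:char.TR}; similarly full support reduces to non-degeneracy of the Gaussian law of $W$ on $L_2^N(H)$ together with the continuity and surjectivity of $\lawad$.
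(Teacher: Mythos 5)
Your overall route is the one the paper takes---reduce \eqref{eq:thm.char.TR} to \Cref{thm:Q-Wiener.TR} by recognizing the one-parameter slices of the Brownian sheet as non-degenerate Hilbert-space--valued Wiener processes, and deduce full support from non-degeneracy of $\law(W)$ together with continuity and surjectivity of $\lawad$. Two issues, one minor and one a genuine gap, remain.

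The minor one: the ``independent additive $H$-valued shift from the frozen coordinates'' does not exist. The covariance \eqref{eq:} vanishes as soon as any coordinate is zero, so $W(u_{1:t-1},0,\cdot)=0$ almost surely and the slice $\tilde u_t\mapsto W(u_{1:t-1},\tilde u_t,\cdot)$ is already a (rescaled) $Q_t$-Wiener process starting at $0$, with no shift to absorb. There is nothing to ``condition out'' and no need to invoke \Cref{lem:BMDifferentiable}-type translation arguments; the marginal law of each slice is directly a non-degenerate $Q'$-Wiener process and \Cref{thm:Q-Wiener.TR} applies as is.

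The genuine gap is at the outer level. Applying \Cref{thm:char.TR} to $X:=W(\omega)\in L_2^N(H)$ for $\P$-a.e.\ $\omega$ yields that $\lawad(W)\in\Pc_2^N(H)$ is \emph{almost surely} transport regular; it does \emph{not} yield that $\law_\P(\lawad(W))\in\Pc_2^{N+1}(H)$ is transport regular. Those statements live one level apart: ``almost every realization of $\lawad(W)$ is transport regular'' is a pointwise property of the mixing components, while transport regularity of the mixture $\law_\P(\lawad(W))$ is a statement about optimal couplings with respect to $\W_2$ on $\Pc_2^N(H)$. To close this you need one more application of the criterion at the outer scale (e.g.\ \Cref{cor:TR.crit} with the probability space $\Omega$ identified with $[0,1]$, so that $(\omega,u)\mapsto W(\omega)(u)$ is viewed in $L_2^{N+1}(H)$), whose hypothesis (1) is precisely that $\law(W)$ is a transport-regular measure on $L_2^N(H)$; this follows because $\law(W)$ is a non-degenerate Gaussian on a separable Hilbert space. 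You mention this non-degeneracy only in the full-support paragraph, but it is equally essential here. Without it, the passage ``hence \Cref{thm:char.TR} gives that $\law_\P(\lawad(W))$ is transport regular'' is unjustified.
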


\begin{proof}
    For $t = 1,\dots, N$, we write $H_t := L_2^{N-t}(H)$ using the convention that $L_2^0(H) = H$.
    For fixed $u_{1:t}, v_{1:t} \in [0,1]^t$ we have
        \begin{multline*}
        \mathbb E\big[ \langle W(u_{1:t},\cdot),f \rangle_{H_t} \langle W(v_{1:t},\cdot), g \rangle_{H_t} \big]
        \\
        =
        \int_{[0,1]^{N-t}\times[0,1]^{N-t}} \prod_{s = 1}^N \min(u_s,v_s) \langle Qf(u_{t+1:N}), g(v_{t+1:N}) \rangle \, d(u_{t+1:N},v_{t+1:N}).
    \end{multline*}
    Therefore, we can interpret $W$ as an $H_t$-valued, $t$-parameter Brownian sheet whose covariance operator $Q_t$ is given by
    \[
        (Q_t f)(u_{t+1:N}) = \int_{[0,1]^{N-t}} \prod_{s = t + 1}^N \min(u_s,v_s) \, Qf(v_{t+1:N}) \, dv_{t+1:N},
    \]
    where $u_{t+1:N} \in [0,1]^{N-t}$ and $f \in H_t$.
    Since $H_t$ can be identified with $L_2^{N-t}(\R) \otimes H$, $Q_t$ can be identified with the tensor product of the positive, symmetric trace class operators $\tilde Q$ and $Q$ on the separable Hilbert spaces $L_2^{N-t}(\R)$ and $H$, respectively.
    Therefore, $Q_t$ is a positive, symmetric trace class operator on $H_t$. 
    Indeed, $Q_t$ is symmetric, positivity follows from
    \begin{align*}
        \langle Q_t f, f \rangle_{H_t} =
        \int_{[0,1]^{N-t}} \Big\langle Q \int_{[w,1]} f(u_{t+1,N}) \, du_{t+1,N}, \int_{[w,1]} f(u_{t+1,N}) \, du_{t+1,N} \Big\rangle\, dw,
    \end{align*}
    where $[w,1] := [w_{N-t},1] \times \dots \times [w_N,1]$, for all $f \in H_t$, and it is trace class since
    \begin{align*}
        {\rm tr}(Q_t) &= \sum_{n,m  \in \mathbb N} \langle Q_t e_n \otimes f_m, e_n \otimes f_m\rangle_{H_t} \\
        &= \Big( \sum_{n \in \N} \int_{[0,1]^{N-t} \times [0,1]^{N-t}} \prod_{s = t+1}^N \min(u_s,v_s) e_n(u_{t+1:N})e_n(v_{t+1:N}) \, d(u_{t+1:N},v_{t+1:N}) \Big)
        \\ &\qquad \cdot \Big( \sum_{m \in \mathbb N} \langle Qf_m, f_m \rangle \Big)
        \\
        &= {\rm tr}(\tilde Q) \, {\rm tr}(Q) < \infty,
    \end{align*}
    for orthonormal bases $(e_n)_{n \in \mathbb N}$ of $L_2^{N-t}(\R)$ and $(f_m)_{m \in \mathbb N}$ of $H$.

    For fixed $u_{1:t-1} \in (0,1]^{t-1}$ we have that
    \[
        u_t \mapsto \mathbb E[\langle W(u_{1:t},\cdot), f \rangle_{H_t} \langle W(u_{1:t},\cdot), g \rangle_{H_t}] = \prod_{s = 1}^t u_s \langle Q_tf, g \rangle_{H_t},
    \]
    and thus $u_t \mapsto W(u_{1:t-1},u_t,\cdot)$ is a Wiener process with positive covariance operator.
    By \Cref{thm:Q-Wiener.TR}, we deduce that
    \begin{equation} \label{eq:TR.sheet.t}
        \text{a.s.}\quad 
        (u_t \mapsto W(u_{1:t},\cdot))_\# \lambda 
        \text{ is a transport-regular measure on } 
        H_t = L_2^{N-t}(H).
    \end{equation}
    By \Cref{thm:char.TR}, $\lawad(W)$ is almost surely transport regular in $\Pc_2^N(H)$. 
    Since $\law(W)$ is a non-degenerate Gaussian measure on $L_2^N(H)$ it is transport regular, see e.g.\ \cite[Theorem 6.2.10]{AGS}.
    Another application of \Cref{cor:LawInvSubDiffRandom.adapted} yields that $\law_\mathbb P(\lawad(W)) \in \Pc_2^{N+1}(H)$
    is transport regular, as claimed.

    Finally, we show that the support of $P$ is the whole space.
    Since $\law(W)$ as Gaussian measure on $L^N_2(H)$ is non-degenerate, we have that the support of $\law(W)$ is the whole space.
    As $\lawad : L^N_2(\H) \to \Pc_2^N(H)$ is continuous by \Cref{prop:iteratedSkorohod} and surjective by \Cref{prop:ReprLawadAsRV}, we conclude that the support of $\law_\mathbb P(\lawad(B))$ coincides with $\Pc_2^N(H)$.
\end{proof}

\subsection{Proofs of \Cref{sec:BrenierAndLiftsIntro}}

\begin{proof}[Proof of \Cref{Thm:MainMongeIntro}]
    \Cref{thm:sheet.adapted.TR} provides a transport-regular measure $\Lambda$ on $\Pc_2^N(H)$ with full support, and yields the claim.
\end{proof}

\begin{proof}[Proof of \Cref{cor:MainMongeIntro}]
    By \Cref{Thm:MainMongeIntro} there exists a transport-regular measure $\Lambda$ on $\Pc_2^N(H)$ with full support.
    Therefore, $\{ Q \in \Pc_2^N(H) : Q \ll P \}$ is dense and consists of transport-regular measures as consequence of \Cref{lem:apx.char.TR}.
\end{proof}

\begin{proof}[Proof of \Cref{thm:N-coupling_Brenier}]
    First, let $\phi$ be any $\mc$-convex function.
    Since $P \ll \Lambda$ and the latter is transport regular, we have by \Cref{lem:apx.char.TR} that $\# \partial_\mc \phi(p) \le 1$ for $P$-a.e.\ $p \in \Pc_2^{N}(H)$.
    Because of \Cref{prop:subdiff_char_ad} and \Cref{lem:mc_diff_char_ad} the adapted Lions lift satisfies $\# \partial \overline\phi(X) \le 1$ a.s.

    Next, recall that by \Cref{lem:W2_MC_connection_iterated} and \Cref{thm:ftot.mc}, $\Pi \in \cpl(P,Q)$ is $\W_2^2$-optimal if and only if there exists an $\mc$-convex $\phi$ with $\Pi(\partial_\mc \phi) = 1$.
    By the first part we find that, for $P$-a.e.\ $p$, $\partial_\mc \phi(p) = \{ \lawad(\nabla \overline\phi(Y)) \}$ with $Y \simad p$.
    Hence, $\W_2^2$-optimality of $\Pi$ is equivalent to
    \[
        \Pi \sim (\lawad(X), \lawad(\nabla \overline\phi(X)))
    \]
    for some $\mc$-convex $\phi : \Pc_2^N(H) \to (-\infty,+\infty]$ with $\partial_\mc \phi(p) \neq \emptyset$ for $P$-a.e.\ $p$.
    The claim then follows by \Cref{thm:MC_trafo_ast_ad}.
\end{proof}

\begin{proof}[Proof of \Cref{thm:only-coupling_Brenier}]
    The first claim follows from \Cref{thm:N-coupling_Brenier}.
    To see the second claim, note that $(P,Q) \in \Pc_2^N(H) \times \Pc_2^N(H)$ has a unique $\W_2$-optimal coupling and this coupling is concentrated on the graph of a bijection if and only if $(P,Q)$ is a strict Monge pair.
    For each fixed $Q \in \Pc_2^N(H)$, the set $\{ P \in  \Pc_2^N(H) : (P,Q) \text{ is strict Monge} \}$ is a dense $G_\delta$-set.
    It follows from the Kuratovski--Ulam theorem (see e.g.\ \cite[Theorem~8.41]{Ke95}) that the sets
    \begin{align*}
        \{ (P,Q) \in  \Pc_2^N(H) \times \Pc_2^N(H) : (P,Q) \text{ is strict Monge} \},\\
        \{ (P,Q) \in  \Pc_2^N(H) \times \Pc_2^N(H) : (Q,P) \text{ is strict Monge} \},
    \end{align*}
    are both comeager. Hence, the same is true for their intersection, which yields the claim.
\end{proof}

\section{Application to adapted transport}\label{sec:AW}
\subsection{Outline of the framework}

The aim of this section is to apply the results obtained above to $H$-valued stochastic processes, equipped with the adapted Wasserstein distance. 

As noted in the introduction, the set $\Pc_2(H^N)$ of laws of $N$-step processes is not complete wrt $\AW_2$.
It is shown in \cite{BaBePa21} that the completion of $(\Pc_2(\H^N),\AW_2)$ consists in the stochastic processes with \emph{filtration}. In detail, we use $\FFP_2$ to denote the class of all 5-tuples \[ \fp{X}= (\Omega^X, \F^X, \P^X, (\F^X_t)_{t=1}^N, (X_t)_{t=1}^N),\]  
where $X$ is an adapted square integrable process. In analogy to \eqref{eq:AW_intro_plain}, the (squared) adapted Wasserstein distance of two filtered processes is defined as $$\AW_2(\fp X, \fp Y):=\inf_{\pi\in \cplbc(\fp X, \fp Y)} \int |X-Y|^2 \, d\pi.$$
Here $\pi$ is a causal coupling of $\fp X, \fp Y$ if $\pi\in \cpl(\P^X, \P ^Y)$ and 
\[\pi(\Omega^X \times B|\F^X_t \otimes \F^Y_N)= \pi(\Omega^X \times B|\bar \F^X_N \otimes \F^Y_N), \quad B\in \F^Y, t \leq N.\] 
Bi-causality of $\pi$, that is 
 $\pi \in  
\cplbc(\fp X, \fp Y)$ means that this holds also when  the roles of $\fp X, \fp Y$ are exchanged.

We say that $\fp X, \fp Y $ are $\AW$-equivalent, in signs $\fp X\sim_\AW \fp Y$ if $\AW_2(\fp X,\fp Y)=0$ and write $\FP_2$ for $\FFP_2/ \sim_\AW$. Using these notations $(\FP_2,\AW_2)$ is the completion of $(\Pc_2(\H^N), \AW_2)$. Moreover, $(\FP_2, \AW_2)$ is a Polish geodesic space. We note that $\AW_2(\fp X,\fp Y)=0$ can be expressed in a number of equivalent ways: it is tantamount to $\fp X, \fp Y$ having the same probabilistic properties in the sense of \cite{HoKe84}, to  $\fp X, \fp Y$ having Markov lifts with the same laws (see \cite{BePfSc24}) and to $\fp X, \fp Y$ having the same adapted law in the sense defined in \eqref{eq:last_lawad} below.

Every filtered process $\fp X$ has a representative in 
\begin{align}
    L_{2,ad}^N(\H)  := \{ X \in L_2([0,1]^N,\lambda; \H^N ) : X_t \text{ is $\F_t$-measurable}  \}, .
\end{align}
where  $ (\F_t)$ denotes the coordinate filtration as above, see e.g.\ \cite{BePfSc24}. 
Moreover we have for
 $X,Y\in L_{2,ad}^N(\H)$  
\begin{align}
    \AW_2(X,Y)= \inf_{X \sim_\AW X', Y \sim_\AW Y'}  \| X_1'-X_2'\|_2 
\end{align}
and the infimum is attained. It is therefore without loss of generality to work with filtered  stochastic processes $X\in L_{2,ad}^N$ and we shall do so from now on.

The stochastic interpretation of the framework developed in the previous sections is that we consider an $\H$-valued random variable  
$X$ together with a filtration $(\F_t)_{t=1}^N$. We assume that $X$ is $\F_N$-measurable and interpret $(\F_t)_{t=1}^N$ as a model of how information about $X$ is gradually revealed. (Alternatively we could identify $X$ with its Doob martingale.)

In order to apply our results for $\Pc_2^N(H)$ to the adapted Wasserstein setting we need to consider the relationship  of  $L_2^N(H)$ and $L_{2,ad}^N(H)$ in some detail. 

\medskip
 
 The aim of this section is to translate these results to the classical setting of $H$-valued stochastic processes $X=(X_t)_{t=1}^N$ that are adapted to the filtration $(\F_t)_{t=1}^N$.

Following \cite{BaBePa21},  we  define the information process of an adapted process $X \in L_{2,ad}^N(\H)$ by backward induction on $t$ as
\begin{align*}
&\ipp_N(X) = X_N, \\
&\ipp_{t\,}(X) \,= (X_t, \law^{\F_t}(\ipp_{t+1}(X))), \qquad t = N-1,\dots,1.
\end{align*}
The information process takes values in the following nested space: Set $(\Zc_N, d_{\Zc_N}) := (\H, | \cdot |)$ and 
\[
\Zc_t := \H \times \Pc_2(\Zc_{t+1}) \qquad d_{\Zc_t}^2 := | \cdot |^2 + W_{2,\Zc_{t+1}}^2, 
\]
where $W_{2,\Zc_{t+1}}$ is the 2-Wasserstein distance w.r.t.\ the underlying metric $d_{\Zc_{t+1}}$. 
The adapted distribution of $X$ is then defined as 
\begin{align}\label{eq:last_lawad}
\lawad(X) := \law(\ipp_1(X)) \in \Pc_2(\Zc_1).
\end{align}

\begin{theorem}\label{thm:FPisometry}
The space of filtered processes is isometrically isomorphic to $\Pc_2(\Zc_1)$. That is
\[
\AW_2(X,Y) = \W_2(\lawad(X), \lawad(Y)).
\]
In particular $X\sim_\AW Y $ if and only  $X\simad Y $.
Moreover, for every $P \in \Pc_2(\Zc_1)$, there exists $X \in L_{2,ad}^N(H)$ such that $\lawadd(X)=P$. 
\end{theorem}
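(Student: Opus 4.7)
The plan is to prove the three assertions (the $\AW_2$/$\W_2$ isometry, the equivalence $\sim_\AW = \simad$, and surjectivity of $\lawadd$) by backward induction on the horizon $N$, leveraging the recursive structure of both sides of the claimed identity.

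First I would handle the isometry $\AW_2(X,Y)=\W_2(\lawad(X),\lawad(Y))$ by induction. For $N=1$, $\ipp_1(X)=X_1$, so $\lawad(X)=\law(X_1)$, and $\AW_2$ reduces to the ordinary $\W_2$ on $\Pc_2(H)$; the identity is then trivial. For the inductive step I would invoke the dynamic programming principle for $\AW_2$ (as established in \cite{BaBePa21,BaBeEdPi17}), namely
\[
\AW_2^2(X,Y)=\inf_{\pi_1\in\cpl(\law(X_1),\law(Y_1))}\int\!\Bigl(|x_1-y_1|^2+\AW_2^2\bigl(\law(X_{2:N}\mid X_1=x_1),\law(Y_{2:N}\mid Y_1=y_1)\bigr)\Bigr)\,d\pi_1.
\]
On the other side, since $d_{\Zc_1}^2((x,p),(y,q))=|x-y|^2+W_{2,\Zc_2}^2(p,q)$ and $\lawad(X)=\law((X_1,\law^{\F_1}(\ipp_2(X))))$, disintegrating any coupling on $\Pc_2(\Zc_1)$ along the first $H$-coordinate yields
\[
\W_2^2(\lawad(X),\lawad(Y))=\inf_{\pi_1}\int\!\bigl(|x_1-y_1|^2+W_{2,\Zc_2}^2(\mu_1^{x_1},\nu_1^{y_1})\bigr)\,d\pi_1,
\]
where $\mu_1^{x_1},\nu_1^{y_1}$ are the relevant conditional information laws at time $2$. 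The induction hypothesis applied to the $(N-1)$-step tails identifies $W_{2,\Zc_2}^2$ with $\AW_2^2$ of the conditional processes, yielding equality of the two optimization problems.

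From the isometry, the equivalence $X\sim_\AW Y\Leftrightarrow X\simad Y$ is immediate: both are equivalent to $\lawad(X)=\lawad(Y)$. For the surjectivity of $\lawadd$, I would mirror the construction in \Cref{prop:ReprLawadAsRV}, again by induction on $N$. Given $P\in\Pc_2(\Zc_1)$, I would use a Borel quantile-style map $\Phi_1:\Pc_2(\Zc_1)\times[0,1]\to\Zc_1$ to produce an $\F_1$-measurable random variable $\xi_1=(X_1,\Lambda_1)$ on $([0,1]^N,\lambda)$ with $\law(\xi_1)=P$. Since $\Lambda_1$ is an $\F_1$-measurable $\Pc_2(\Zc_2)$-valued random variable and $\F_1$ is independent of $\sigma(U_{2:N})$, the inductive construction applied fiberwise yields $(X_2,\ldots,X_N)$ adapted to $(\F_t)_{t=2}^N$ with $\law^{\F_1}(\ipp_2(X))=\Lambda_1$, so that $\lawad(X)=\law(\xi_1)=P$.

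The main obstacle is carefully justifying the DPP decomposition in the inductive step; specifically, one must verify that the bi-causality constraint on couplings of $X$ and $Y$ corresponds precisely to arbitrary couplings on $\Pc_2(\Zc_1)$ with the recursive metric $d_{\Zc_1}$, which requires matching the measurable selections of optimal couplings of conditional laws on both sides. A secondary issue is measurability of the map $\Phi_1$ in the surjectivity construction, which is handled (as in \Cref{prop:ReprLawadAsRV}) by Borel isomorphism with $\R$ and the standard quantile construction. Both obstacles are resolved by results already established in \cite{BaBePa21,BePfSc24}, and can be cited rather than reproved in detail.
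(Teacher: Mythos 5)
The paper's own proof consists of a single citation: the isometry is \cite[Theorem~1.3]{BaBePa21}, and surjectivity is "analogous to \Cref{prop:ReprLawadAsRV}." Your proposal ultimately arrives at the same place (you explicitly defer the DPP correctness and the measurability issues to \cite{BaBePa21,BePfSc24}), and your surjectivity construction is the same one the paper uses. So the two proofs are, in the end, equivalent in content.

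However, the explicit DPP formula you write down in the inductive step is incorrect for general filtered processes: you condition on $X_1 = x_1$, writing $\AW_2^2\bigl(\law(X_{2:N}\mid X_1=x_1),\law(Y_{2:N}\mid Y_1=y_1)\bigr)$, and likewise on the Wasserstein side you disintegrate "along the first $H$-coordinate" producing conditional laws $\mu_1^{x_1},\nu_1^{y_1}$ that depend only on $x_1$. For $X\in L_{2,ad}^N(H)$ the relevant conditioning $\sigma$-algebra is $\F_1 = \sigma(U_1)$, which can be strictly larger than $\sigma(X_1)$ (e.g.\ $X_1\equiv 0$ while $X_2$ depends nontrivially on $U_1$). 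In that case $\law(X_{2:N}\mid X_1)$ is the unconditional law and destroys the information structure. Correspondingly, the point $(x,p)\in\Zc_1=H\times\Pc_2(\Zc_2)$ is not determined by $x$ alone, so one cannot disintegrate a coupling on $\Pc_2(\Zc_1)\times\Pc_2(\Zc_1)$ only along the $H$-coordinate. The correct DPP for filtered processes couples the full pairs $(x,p)$ and $(y,q)$ at the first step. Your formula as written only holds for naturally filtered processes, and the reduction from general filtered processes to this case is precisely the nontrivial content of \cite[Theorem~1.3]{BaBePa21}. Since you explicitly flag this as the main obstacle and defer to that reference, the gap is not fatal to your proof strategy, but the displayed DPP identity should be corrected so it does not silently restrict to the naturally filtered case.
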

\begin{proof}
    The first assertion is due to \cite[Theorem 1.3]{BaBePa21}.
    The second claim follows analogously as in \Cref{prop:ReprLawadAsRV}.
\end{proof}

\subsection{Embedding into $\Pc_2^N(\H^N)$}\label{sec:embed}
In order to apply the results from the previous sections, we need to embed $\Pc_2(\Zc_1)$ into $\Pc^N_2(\H^N)$. We start to outline this procedure at the level of processes / random variables.  

Every process $X = (X_t)_{t=1}^N \in L_{2,ad}^N(\H)$ is (by forgetting the adaptedness constraint) in particular an element of $L_2^N(\H^N)$, i.e.\ there is a natural embedding 
\begin{align}
    \iota_{ad} : L_{2,ad}^N(\H) \to L_2^N(\H^N).
\end{align}
If it is notionally convenient, we suppress $\iota_{ad} $ and in particular consider $L_{2,ad}^N(\H)$ as a subspace of $L_2^N(\H^N)$.

On the other hand we can interpret $X \in L_2^N(  \H^N)$ as a process $X = (X_t)_{t=1}^N$ that is not necessarily adapted. We can naturally assign to it the adapted process $(\E[X_t|\F_t])_{t=1}^N$. This defines the orthogonal projection
\begin{align}
    \pr_{ad} : L_2^N(  \H^N) \to L_{2,ad}^N(\H) : X \mapsto (\E[X_t|\F_t])_{t=1}^N.
\end{align}

\medskip

Next, we outline this procedure at the level of adapted distributions.  
To that end, we introduce the auxiliary spaces 
\[
E_t := \Pc_2^{N-t}(\H^{N+1-t}) \qquad t=1,\dots,N.
\]
Note that $\Pc_2(E_1) = \Pc_2^N(\H^N)$. We recall the from \Cref{sec:AdLift} that the ip of a random variable is given by $\ip_t(X) = \law^{\F_{t:1}}(X)$ and further note that $\ip_t(X_{t:N})$ takes values in $E_t$. 

It is crucial that $\ipp_t(X)$ and $\ip_t(X_{t:N})$ contain precisely the same information. In particular, $\ipp_1(X)$ and $\ip_1(X)$ contain the same information and hence the difference of $\lawad$ and $\lawadd$ is only of notational nature. This is made rigorous in the next proposition:

\begin{proposition}\label{prop:translation}
For $t \in \{1,\dots,N\}$, there exist mappings 
\begin{align*}
    \iota_t &: \Zc_t \to E_t \\
    j_t &: E_t \to \Zc_t
\end{align*} 
with the following properties:
\begin{enumerate}
    \item $j_t$ is continuous and the left-inverse of $\iota_t$, i.e. $j_t \circ \iota_t = \id_{\Zc_t}$,
    \item $\iota_t$ is an isometric embedding,
    \item $\iota_t(\ipp_t(X)) = \ip_t(X_{t:N})$ for every $X \in L_{2,ad}^N$,
    \item $j_t(\ip(X_{t:N})) = \ipp_t(\pr_{ad}(X))$ for every $X \in L_2^N(H^N)$.
\end{enumerate}
Moreover, we have: 
\begin{enumerate}
\setcounter{enumi}{4}
    \item $j_{1\#}\lawad(X) = \lawadd(\pr_{ad}(X)) $ for every $X \in L_2^N(\H^N)$,
    \item $\iota_{1\#}\lawadd(X) = \lawad(X)$ for every $X \in L_{2,ad}^N(H)$.
\end{enumerate}

\end{proposition}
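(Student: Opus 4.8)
The construction will be inductive in $t$, running backwards from $t = N$ to $t = 1$, since the spaces $\Zc_t$ and $E_t$ are both built recursively. At the base case $t = N$ we have $\Zc_N = \H$ and $E_N = \Pc_2^0(\H^1) = \H$, so we simply set $\iota_N = j_N = \id_\H$; properties (1)--(4) are then immediate from the definitions $\ipp_N(X) = X_N$ and $\ip_N(X_{N:N}) = X_N$. For the inductive step, assume $\iota_{t+1}, j_{t+1}$ have been constructed with properties (1)--(4). Recall $\Zc_t = \H \times \Pc_2(\Zc_{t+1})$ and $E_t = \Pc_2^{N-t}(\H^{N+1-t})$. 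The key observation is that $E_t = \Pc_2(\Pc_2^{N-t-1}(\H^{N+1-t})) = \Pc_2(\H \times E_{t+1})$ after one unfolds the outermost layer and separates the first $\H$-coordinate; more precisely, an element of $\Pc_2^{N-t}(\H^{N+1-t})$ is a law on $\Pc_2^{N-t-1}(\H^{N+1-t})$, and there is a canonical identification of $\Pc_2^{N-t-1}(\H^{N+1-t})$ with $\H \times E_{t+1} = \H \times \Pc_2^{N-t-1}(\H^{N-t})$ coming from the fact that in the information process the $t$-th coordinate $X_t$ is $\F_t$-measurable, hence constant under the conditional laws taken at later stages. I would then define
\[
    \iota_t(x, m) := \big( (z \mapsto (x, \iota_{t+1}(z)))_\# m \big) \in E_t, \qquad (x,m) \in \H \times \Pc_2(\Zc_{t+1}),
\]
and $j_t(\rho) := \big( x_\rho, (j_{t+1})_\# (\text{conditional part of } \rho) \big)$, where $x_\rho \in \H$ is the (a.s.\ constant, or more robustly: the barycentric) first coordinate and the conditional part is the disintegration with the first coordinate stripped off. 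One must check $j_t$ is well-defined and continuous on all of $E_t$ (not just the image of $\iota_t$); here using the intensity/barycenter of the first coordinate makes $j_t$ total and continuous, while $j_t \circ \iota_t = \id$ holds because on the image the first coordinate really is constant.

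After this, properties (1)--(4) for level $t$ follow from the inductive hypothesis together with the recursive definitions of $\ipp_t$ and $\ip_t$. For (3): $\ipp_t(X) = (X_t, \law^{\F_t}(\ipp_{t+1}(X)))$, and applying $\iota_t$ gives $(z \mapsto (X_t, \iota_{t+1}(z)))_\# \law^{\F_t}(\ipp_{t+1}(X)) = \law^{\F_t}\big( (X_t, \iota_{t+1}(\ipp_{t+1}(X))) \big) = \law^{\F_t}\big( (X_t, \ip_{t+1}(X_{t+1:N})) \big)$ using the inductive hypothesis (3) and $\F_t$-measurability of $X_t$; and this last expression is exactly $\ip_t(X_{t:N})$ by definition of the information projection applied to the $\H^{N+1-t}$-valued variable $X_{t:N} = (X_t, X_{t+1:N})$. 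For (4): given $X \in L_2^N(\H^N)$, write $\tilde X = \pr_{ad}(X)$, so $\tilde X_s = \E[X_s | \F_s]$. One computes $j_t(\ip(X_{t:N})) = j_t(\law^{\F_{t:1}}(X_{t:N}))$; stripping the first coordinate of $\law^{\F_t}(X_{t:N})$ returns $\law^{\F_t}(X_{t+1:N})$ and $x_\rho = \E[X_t | \F_t] = \tilde X_t$, and then the inductive hypothesis (4) identifies the remaining part with $\ipp_{t+1}(\pr_{ad}(X))$, giving $\ipp_t(\tilde X)$. For (2), the isometry property: it suffices that $\iota_t$ preserves the metric, which reduces — via the recursive definitions $d_{\Zc_t}^2 = |\cdot|^2 + W_{2,\Zc_{t+1}}^2$ and the corresponding $\W_2$-structure on $E_t$ — to $\iota_{t+1}$ being isometric, since pushing forward by the isometric map $z \mapsto (x, \iota_{t+1}(z))$ on the $\Pc_2$-level is again isometric (a standard fact: if $f$ is an isometric embedding then $f_\#$ is an isometric embedding for $\W_2$) and adding the common first-coordinate contribution $|x - x'|^2$ matches the two metrics exactly.

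Finally, properties (5) and (6) are obtained by applying (4) and (3) respectively at level $t = 1$ and then pushing forward by $\P$. For (6): if $X \in L_{2,ad}^N(\H)$ then $\lawadd(X) = \law(\ipp_1(X))$ and $\iota_{1\#}\lawadd(X) = \law(\iota_1(\ipp_1(X))) = \law(\ip_1(X_{1:N})) = \law(\ip_1(X)) = \lawad(X)$, using (3) at $t=1$ and $X_{1:N} = X$. For (5): if $X \in L_2^N(\H^N)$ then $j_{1\#}\lawad(X) = \law(j_1(\ip_1(X))) = \law(\ipp_1(\pr_{ad}(X))) = \lawadd(\pr_{ad}(X))$, using (4) at $t=1$.

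\textbf{Main obstacle.} The genuinely delicate point is the identification underpinning the recursion for $E_t$: namely that $E_t = \Pc_2^{N-t}(\H^{N+1-t})$ should be read as $\Pc_2(\H \times E_{t+1})$ in a way that is compatible with how the information process $\ip$ unfolds an adapted (or projected-to-adapted) process. One has to be careful that the first $\H$-factor of $\H^{N+1-t}$ plays a distinguished role — it is the "current observation $X_t$", which is $\F_t$-measurable and therefore degenerates (becomes a.s.\ constant) under the conditional law taken at time $t$ — whereas the remaining $\H^{N-t}$-factors carry the genuinely iterated structure. Ensuring $j_t$ is defined and continuous on \emph{all} of $E_t$, including measures whose first marginal is not a Dirac (which never arise from honest information processes but must be handled for $j_t$ to be a well-defined continuous left inverse), is the crux; I expect the clean fix is to let $j_t$ read off the first coordinate via its barycenter/intensity rather than assuming it is a point mass, which keeps $j_t$ continuous while not disturbing $j_t \circ \iota_t = \id$. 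Verifying that this choice is consistent with property (4) — where $\pr_{ad}$ precisely replaces $X_t$ by its conditional expectation $\E[X_t|\F_t]$ — is what makes the barycentric definition the natural one.
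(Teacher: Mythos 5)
Your construction matches the paper's essentially verbatim: $\iota_t$ is the iterated-Dirac embedding $\iota_t(x,p)=\delta^{N-t}_x\otimes\iota_{t+1\#}p$, $j_t$ reads off the barycenter of the first $\H$-coordinate and pushes the remaining marginal through $j_{t+1}$, and properties (1)--(6) are verified by backward induction on $t$ exactly as you outline. The only nits are terminological and do not affect the argument: the relation between $\H\times E_{t+1}$ and $\Pc_2^{N-t-1}(\H^{N+1-t})$ is an isometric embedding via $(x,p)\mapsto\delta^{N-t-1}_x\otimes p$ (not an identification), ``the conditional part of $\rho$'' should be the marginal obtained by pushing forward along $\pr_{t+1:N}$ (not a disintegration), and your $\law^{\F_{t:1}}$ should read $\law^{\F_{N-1:t}}$.
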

\begin{proof}
We start with the construction of the mappings. To this end, we write $\delta$ for the map 
\begin{align*}
    \delta : \mathcal{X} \to \Pc_2(\mathcal{X}) : x \mapsto \delta_x
\end{align*}
which assigns to a point $x$ the Dirac measure at $x$ and we also consider its $t$-fold iteration $\delta^t : \mathcal{X} \to \Pc_2^t(\mathcal{X})$.

 Next, we define the mappings  $ \iota_t :  \Zc_t \to B_t$ by backward induction on $t$. For $t=N$ we set 
$
\iota_N := \id : \H \to \H
$
and inductively, for $t <N$,
\begin{align}\label{eq:iota_t}
    \iota_t : \Zc_t \to E_t, \qquad \iota_t(x,p) := \delta^{N-t}_x \otimes {\iota_{t+1}}_\# p.
\end{align}

The mappings $j_t$ are then defined by $j_N := \id : H \to H$ and by induction for $t <N$ (writing $\pr_t, \dots, \pr_N$ for the projections from $H^{N+1-t}$ onto $H$) defined by
\begin{align}\label{eq:j_t}
  j_t : E_t  \to \Zc_t , \qquad  j_t(P) = ( \mean{\pr_{t\#}(I^{N-t-1}(P))}, j_{t+1\#}(\Pc^{N
  -t}[\pr_{t+1:N}](P))),
\end{align}
where $\mean{\mu} := \int x \,\mu(dx)$ for $\mu \in \Pc_2(H)$.

Next, we show the claims (1) to (4) simultaneously by induction on $t$. For $t=N$, they are trivial as $\iota_N$ and $j_N$ are both the identity. 

Suppose that the claim is true for $t+1$. We start with claim (1). As the intensity,  mean
and pushforwards with continuous maps, are all continuous operations, $j_t$ is continuous. For $(x,p) \in \Zc_t = H \times \Pc_2(\Zc_{t+1})$ we have using that $j_{t+1} \circ \iota_{t+1} = \id_{\Zc_{t+1}}$
\begin{align*}
    j_t(\iota_t(x,p)) = j_t(\delta_x^{N-t} \otimes \iota_{t+1}\#p) = (\mean{I^{N-t-1}( \delta_x^{N-t} )  } , j_{t+1\#}\iota_{t+1\#}p) = (x,p).
\end{align*}
Concerning (2), it is easy to check from \eqref{eq:iota_t} and from the definition of the Wasserstein distance that $\iota_t$ is an isometry. It is an embedding because $j_t$ is its continuous left-inverse.

In order to show (3) observe that
\[
\iota_t(\ipp_t(X)) = \iota_t(X_t, \law^{\F_t}(\ipp_{t+1}(X))) = \delta^{N-t}_{X_t} \otimes \law^{\F_t}( \iota_{t+1}( \ipp_{t+1}(X)  ) )  
\]
and using the inductive claim this is further equal to 
\[
\delta^{N-t}_{X_t} \otimes \law^{\F_t} ( \ip_{t+1}(X) ) = \delta^{N-t}_{X_t} \otimes \law^{\F_{N-1:t}}(X_{t+1:N}) = \law^{\F_{N-1:t}}(X_{t:N}),
\]
where the last equality is true because $X_t$ is $\F_t$-measurable. 

Claim (4) follows from (3) and the fact that $j_t$ is the left inverse of $\iota_t$. This finishes the induction.

The claims (5) and (6) follow from (3) and (4) for $t=1$.
\end{proof}

\begin{corollary}\label{cor:ad_is_lawadinv}
We have $\Pc_2(\Zc_1) = \{ \lawadd(X) : X \in L_{2,ad}^N(H) \}$. 
In particular, $L_{2,ad}^N(H)$ is an adapted-law invariant subspace and the projection $\pr_{ad}$ is adapted-law invariant, i.e.\ if $X \simad X'$, then $\pr_{ad}(X) \simad \pr_{ad}(X')$.
\end{corollary}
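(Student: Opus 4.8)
\textbf{Proof plan for Corollary~\ref{cor:ad_is_lawadinv}.}
The plan is to extract everything from \Cref{prop:translation}, in particular from its parts (5) and (6) together with the fact (part~(1)) that $j_1 \circ \iota_1 = \id_{\Zc_1}$. First I would establish the set equality $\Pc_2(\Zc_1) = \{ \lawadd(X) : X \in L_{2,ad}^N(H) \}$. The inclusion ``$\supseteq$'' is immediate since $\lawadd(X) = \law(\ipp_1(X)) \in \Pc_2(\Zc_1)$ for every adapted $X$ by definition of $\lawadd$. For ``$\subseteq$'' I would invoke the last assertion of \Cref{thm:FPisometry}: for every $P \in \Pc_2(\Zc_1)$ there is $X \in L_{2,ad}^N(H)$ with $\lawadd(X) = P$. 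This already settles the first displayed claim of the corollary.

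Next I would prove that $L_{2,ad}^N(H) \subseteq L_2^N(H^N)$ is adapted-law invariant, i.e.\ that if $X \in L_{2,ad}^N(H)$ and $X' \in L_2^N(H^N)$ satisfy $X' \simad X$, then $X' \in \iota_{ad}(L_{2,ad}^N(H))$ up to $\simad$, equivalently that $\lawad(X')$ lies in $\{\lawad(Z) : Z \in L_{2,ad}^N(H)\}$. The key point is that $\pr_{ad}$ reconstructs an adapted representative: by part~(5) of \Cref{prop:translation}, $j_{1\#}\lawad(X') = \lawadd(\pr_{ad}(X'))$, and by part~(6), $\iota_{1\#}\lawadd(\pr_{ad}(X')) = \lawad(\pr_{ad}(X'))$. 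Applying $\iota_{1\#}$ to the first identity and using $\iota_1 \circ j_1 = \id$ on the range of $\iota_1$ (here one needs that $\lawad(X')$ is supported on $\iota_1(\Zc_1)$, which holds because $X' \simad X$ and $X$ is adapted, so $\lawad(X') = \lawad(X) = \iota_{1\#}\lawadd(X)$ is supported on $\iota_1(\Zc_1)$ by part~(6)) gives $\lawad(X') = \lawad(\pr_{ad}(X'))$. Since $\pr_{ad}(X') \in L_{2,ad}^N(H)$, this shows $X'$ has the same adapted law as an adapted process, which is the desired invariance.

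Finally, adapted-law invariance of $\pr_{ad}$ follows along the same lines: if $X \simad X'$ in $L_2^N(H^N)$, then $\lawad(X) = \lawad(X')$, so applying $j_{1\#}$ and using part~(5) twice yields $\lawadd(\pr_{ad}(X)) = j_{1\#}\lawad(X) = j_{1\#}\lawad(X') = \lawadd(\pr_{ad}(X'))$; by \Cref{thm:FPisometry} (or just the definition of $\simad$ via $\lawadd$ together with part~(6)) this means $\pr_{ad}(X) \simad \pr_{ad}(X')$.

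The routine parts are the set-theoretic bookkeeping; the only point requiring care — and the one I would treat as the main obstacle — is the support argument showing that $\iota_1$ and $j_1$ are mutually inverse \emph{on the relevant measures} (not globally, since $j_1$ is only a left inverse of $\iota_1$), so that applying $\iota_{1\#}$ to $j_{1\#}\lawad(X') = \lawadd(\pr_{ad}(X'))$ genuinely recovers $\lawad(X')$. This is handled by first noting $\lawad(X')=\lawad(X)$ and that $\lawad$ of an adapted process is concentrated on $\iota_1(\Zc_1)$ by part~(6) of \Cref{prop:translation}, on which set $\iota_1\circ j_1=\id$.
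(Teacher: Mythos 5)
Your overall approach matches the paper's, which also reduces everything to \Cref{prop:translation} and \Cref{thm:FPisometry}. The set equality $\Pc_2(\Zc_1) = \{\lawadd(X) : X \in L_{2,ad}^N(H)\}$ is handled correctly, as is the adapted-law invariance of $\pr_{ad}$, and the support argument justifying that $\iota_{1\#}\circ j_{1\#}$ acts as the identity on $\lawad(X')$ is a nice point to make explicit.

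There is, however, a gap in the argument that $L_{2,ad}^N(H)$ is an adapted-law invariant \emph{subspace}. You establish $\lawad(X') = \lawad(\pr_{ad}(X'))$ and then conclude that ``$X'$ has the same adapted law as an adapted process, which is the desired invariance.'' But this is not what adapted-law invariance of the set means: in the paper's terminology (cf.\ \Cref{rem:quotient_topology}, where adapted-law invariant sets are the saturated sets w.r.t.\ $\simad$), the claim is that $X \in L_{2,ad}^N(H)$ and $X' \simad X$ force $X' \in L_{2,ad}^N(H)$ itself — and this stronger reading is exactly what is used in \Cref{rem:different_exts}, where adapted-law invariance of the convex indicator $\chi_{L_{2,ad}^N(H)}$ is needed. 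Your conclusion, that $\lawad(X') \in \{\lawad(Z) : Z \in L_{2,ad}^N(H)\}$, is essentially a tautology given $X' \simad X$.

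The missing step closes easily from where you stopped. From $\lawad(X') = \lawad(\pr_{ad}(X'))$ you get $\E[|X'|^2] = \E[|\pr_{ad}(X')|^2]$, since the second moment is a functional of the adapted law (indeed $\E[|Z|^2] = \int |z|^2\, dI^{N-1}\lawad(Z)(z)$). Since $\pr_{ad}$ is an orthogonal projection, Pythagoras gives
\[
\|X' - \pr_{ad}(X')\|_2^2 = \|X'\|_2^2 - \|\pr_{ad}(X')\|_2^2 = 0,
\]
so $X' = \pr_{ad}(X')$ a.s., i.e.\ $X' \in L_{2,ad}^N(H)$ as required.
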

\begin{proof}
The first assertion follows from \Cref{prop:translation} and the fact that the space of filtered processes and $\Pc_2(\Zc_1)$ are isomorphic (see \Cref{thm:FPisometry}). The second assertion follows from \Cref{prop:translation} (5).
\end{proof}
We note that it is more generally true that the projection onto an adapted-law invariant subspace is adadpted-law invariant. 

\begin{corollary}\label{cor:iptranslation2}
For $t \le N$, there exists a measurable map $g_t : \Zc_{1:t} \to \Pc_2^{N-t}(H^N) $ such that $\ip_t(X) = g_t(\ipp_{1:t}(X))$ for every $X \in L_{2,ad}^N(H)$.
\end{corollary}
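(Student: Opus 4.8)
The plan is to build $g_t$ from the maps $j_s$ already constructed in \Cref{prop:translation}, together with the pushforward/intensity operations used there. The key observation is that $\ip_t(X)$, for $X\in L_{2,ad}^N(H)$, records the iterated conditional laws of the \emph{full} process $X=(X_1,\dots,X_N)$, whereas $\ipp_{1:t}(X)$ records the information processes of the \emph{truncated} processes $X_{1:1},\dots,X_{1:t}$. Since $X$ is adapted, the coordinates $X_1,\dots,X_t$ are $\F_t$-measurable and hence $\ip_t(X)=\ip_t(X_{1:t})\otimes^{\mathrm{cond}}\ip_t(X_{t+1:N})$ in a suitable sense; the first factor is a pure Dirac tower determined by $\ipp_{1:t}(X)$, and the second factor is exactly $\ip_t(X_{t+1:N})$, which by \Cref{prop:translation}(3) equals $\iota_{t+1}(\ipp_{t+1}(X))$ read inside the appropriate auxiliary space.

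Concretely, I would first recall that by \Cref{prop:translation}(3), for every $s\le N$ we have $\ip_s(X_{s:N}) = \iota_s(\ipp_s(X))$, so $\ipp_s(X)$ and $\ip_s(X_{s:N})$ carry the same information and are related by the continuous map $j_s$ (left inverse of $\iota_s$). I would then prove the claim by backward induction on $t$. For $t=N$ the statement is trivial: $\ip_N(X)=X=\ipp_N(X)$ when we identify $\Pc_2^0(H^N)=H^N$ with the obvious projections, so $g_N$ is (essentially) the identity. For the inductive step, suppose $g_{t+1}:\Zc_{1:t+1}\to\Pc_2^{N-t-1}(H^N)$ satisfies $\ip_{t+1}(X)=g_{t+1}(\ipp_{1:t+1}(X))$. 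Using that $\ip_{1:t+1}(X)$ is $\F_{t+1}$-measurable and $\F_t\subseteq\F_{t+1}$, one has
\[
\ip_t(X) = \law^{\F_t}(\ip_{t+1}(X)) = \law^{\F_t}\big(g_{t+1}(\ipp_{1:t+1}(X))\big)
= \big(g_{t+1}(\ipp_{1:t}(X),\cdot)\big)_\#\law^{\F_t}(\ipp_{t+1}(X)),
\]
where the last equality uses $\F_t$-measurability of $\ipp_{1:t}(X)$ together with the tower structure of $\ipp_{1:t+1}$, namely $\ipp_{1:t+1}(X)=(\ipp_{1:t}(X),\ipp_{t+1}(X))$ after reindexing, and $\law^{\F_t}(\ipp_{t+1}(X))$ is exactly the $\Pc_2(\Zc_{t+1})$-coordinate of $\ipp_t(X)$, hence a measurable function of $\ipp_{1:t}(X)$. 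This expresses $\ip_t(X)$ as a measurable function of $\ipp_{1:t}(X)$; I would define $g_t$ by this formula and check measurability of the pushforward-of-a-kernel operation (standard, since $g_{t+1}$ is Borel and pushforward along a Borel-parametrized family of maps is Borel, cf.\ the selection machinery used in \Cref{sec:OT_On_P_N}).

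The main obstacle I anticipate is purely bookkeeping: keeping track of the distinction between the nested spaces $\Zc_t=\H\times\Pc_2(\Zc_{t+1})$ and $E_t=\Pc_2^{N-t}(\H^{N+1-t})$, and making precise the informal identity ``$\ipp_{1:t+1}(X)$ determines $(\ipp_{1:t}(X),\law^{\F_t}(\ipp_{t+1}(X)))$''. The cleanest route is probably to avoid manipulating $\ip_t(X)$ inside $\Pc_2^{N-t}(H^N)$ directly and instead factor through \Cref{prop:translation}: write $\ip_t(X_{t:N})=\iota_t(\ipp_t(X))$ by part (3), note that the ``missing'' first $t-1$ coordinates of $\ip_t(X)$ (those indexed $1,\dots,t-1$) are deterministic given $\F_t$, being a Dirac tower built from $X_{1:t-1}$ which in turn is encoded in $\ipp_{1:t}(X)$, and then assemble $g_t$ by composing a Dirac-tower map $\delta^{\bullet}$, the map $\iota_t$, and projections — all of which are explicit and measurable. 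With this packaging the induction closes immediately and no genuine analytic difficulty arises; the only care needed is in the measurable-selection/Borel-measurability of the pushforward operation, which is already invoked repeatedly earlier in the paper.
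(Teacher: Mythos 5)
Your closing sketch — reading $X_{1:t-1}$ off the $\H$-coordinates of $\ipp_1(X),\dots,\ipp_{t-1}(X)$, recovering $\ip_t(X_{t:N})=\iota_t(\ipp_t(X))$ via \Cref{prop:translation}(3), and assembling $g_t$ by tensoring with a Dirac tower — is precisely the paper's argument, which writes $g_t(z_{1:t}):=\delta^{N-t}_{\pr(z_{1:t-1})}\otimes\iota_t(z_t)$. Your backward-induction alternative would also go through: the inductive step is sound, since $\ip_t(X)=\law^{\F_t}(\ip_{t+1}(X))$, $\ipp_{1:t}(X)$ is $\F_t$-measurable, and $\law^{\F_t}(\ipp_{t+1}(X))$ is the $\Pc_2(\Zc_{t+1})$-component of $\ipp_t(X)$; this justifies the pushforward formula modulo the standard Borel measurability of a parametrized pushforward. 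Only the base case as written is slightly off: $\ipp_N(X)=X_N\in\H$, not $X=(X_1,\dots,X_N)\in\H^N$, so $g_N$ is not the identity but rather the projection that reads off $X_s$ as the $\H$-coordinate of $\ipp_s(X)$ for each $s\le N$; with this fix the induction closes. The paper's direct construction is preferable mainly because it sidesteps the parametrized-pushforward measurability bookkeeping and produces an explicit closed formula in one step.
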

\begin{proof}
 For every $s \le t$, $X_s$ is the first component of $\ipp_s(X)$. 
 Hence, there is a projection $\pr : \Zc_{1:t-1} \to \H^{t-1}$ such that $\pr(\ipp_{1:t-1}(X))=X_{1:t-1}$ for every $X \in L_{2,ad}^N(H)$. The map $\iota_t : \Zc_t \to E_t$ defined in \Cref{prop:translation} satisfies $\iota_t(\ipp_t(X))= \ip_t(X_{t:N})$ for every $X \in L_{2,ad}^N(H)$. Thus,
 \[
 g_t : \Zc_{1:t} \to \Pc_2^{N-t}(H^N), \quad g(z_{1:t}) := \delta^{N-t}_{\pr(z_{1:t-1})} \otimes \iota_t(z_t)
 \]
 satisfies $\ip_t(X) = g_t(\ipp_{1:t}(X))$ for every $X \in L_{2,ad}^N(H)$.
\end{proof}

It was central in the previous sections (in particular in the context of $\mc$-subdifferential and $N$-Monge couplings) to establish that a random variable $Y$ is of the form $Y = \xi(\ip(X))$. The next assertion shows that in the present setting the mapping $\xi$ translates to an adapted mapping. 

\begin{proposition}\label{prop:xi_is_adapted}
Let $X,Y \in L_{2,ad}^N(\H)$ and let $\tilde\xi : A_{1:N} \to H^N $ be measurable such that $Y = \tilde\xi(\ip(X))$. 
Then there is an adapted map $\xi : \Zc \to H^N$ such that $Y = \xi(\ipp(X))$, i.e., $Y_t = \xi_t(\ipp_t(X))$ for every $t \le N$.
Moreover, there is an adapted map $S : \Zc \to \Zc$ such that $\ipp(Y) = S(\ipp(X))$. 
\end{proposition}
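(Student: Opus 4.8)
\textbf{Proof strategy for \Cref{prop:xi_is_adapted}.}

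The plan is to build the adapted map $\xi$ coordinate by coordinate, exploiting the fact established in \Cref{prop:translation} that $\ipp_t(X)$ and $\ip_t(X_{t:N})$ carry the same information, interconverted by the isometric embedding $\iota_t$ and its continuous left inverse $j_t$. First I would observe that, since $Y \in L_{2,ad}^N(\H)$, the coordinate $Y_t$ is $\F_t$-measurable; on the other hand, from $Y = \tilde\xi(\ip(X))$ and the fact that the last component of $\tilde\xi$ gives $Y$ as a function of the full tower $\ip(X) = (\ip_1(X),\dots,\ip_N(X))$, the coordinate $Y_t$ is a function of $\ip(X)$. The key reduction is to show $Y_t$ in fact depends only on $\ip_{1:t}(X)$. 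This follows from $\F_t$-measurability: $Y_t = \E[Y_t \mid \F_t]$ and $\ip_t(X), \dots, \ip_1(X)$ are $\F_t$-measurable, so conditioning the representation $Y_t = (\text{function of }\ip(X))$ on $\F_t$ collapses the dependence on $\ip_{t+1}(X),\dots,\ip_N(X)$ to a fixed kernel; more precisely, one writes $Y_t = \E[\,\tilde\xi_t(\ip(X)) \mid \F_t\,] = \psi_t(\ip_{1:t}(X))$ for a measurable $\psi_t$ on $A_{1:t}$, by disintegrating $\law(\ip_{t+1:N}(X)\mid\F_t)$, which is itself $\F_t$-measurable and hence a function of $\ip_{1:t}(X)$.

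Next I would use \Cref{cor:iptranslation2}, which provides measurable maps $g_s : \Zc_{1:s} \to \Pc_2^{N-s}(\H^N)$ with $\ip_s(X) = g_s(\ipp_{1:s}(X))$ for all $X \in L_{2,ad}^N(\H)$. Composing, $\ip_{1:t}(X) = (g_1,\dots,g_t)(\ipp_{1:t}(X))$, so setting
\[
\xi_t := \psi_t \circ (g_1,\dots,g_t) : \Zc_{1:t} \to \H
\]
we get $Y_t = \xi_t(\ipp_{1:t}(X))$, and $\xi := (\xi_1,\dots,\xi_N)$ is by construction adapted (each $\xi_t$ depends only on the first $t$ coordinates of $\Zc = \Zc_{1:N}$). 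This proves the first assertion.

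For the second assertion, that $\ipp(Y) = S(\ipp(X))$ for an adapted $S : \Zc \to \Zc$, I would argue by backward induction on $t$, mirroring the recursive definition of the information process. At $t = N$, $\ipp_N(Y) = Y_N = \xi_N(\ipp(X))$, which is adapted. For the inductive step, $\ipp_t(Y) = (Y_t, \law^{\F_t}(\ipp_{t+1}(Y))) = (\xi_t(\ipp_{1:t}(X)), \law^{\F_t}(S_{t+1}(\ipp(X))))$; since $S_{t+1}$ is adapted and $\ipp_{1:t}(X)$ is $\F_t$-measurable, the conditional law $\law^{\F_t}(S_{t+1}(\ipp(X)))$ is a measurable function of $\ipp_{1:t}(X)$ — this is again a disintegration argument, using that $\ipp_{t+1:N}(X)$ has an $\F_t$-conditional law determined by $\ipp_{1:t}(X)$ — so we may define $S_t(\ipp_{1:t}(X)) := (\xi_t(\ipp_{1:t}(X)), \law^{\F_t}(S_{t+1}(\ipp(X))))$ and set $S := S_1$ suitably unfolded (or, equivalently, $S := (S_1,\dots,S_N)$ read off componentwise). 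The main obstacle I anticipate is the bookkeeping in the disintegration steps: one must be careful that ``$\law^{\F_t}(\text{something})$ is a function of $\ipp_{1:t}(X)$'' is justified uniformly in the choice of representative $X$, which is precisely the content of the translation results of \Cref{prop:translation} and \Cref{cor:iptranslation2} combined with the Markov-type property that $(\F_t)$ adds no information to a naturally filtered process beyond its past; invoking these carefully, rather than the measure-theoretic manipulations themselves, is the delicate part.
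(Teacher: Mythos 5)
Your overall strategy — condition $Y_t$ on $\F_t$, reduce the conditional expectation to a function of $\ip_{1:t}(X)$, and then translate via $g_t$ — is structurally parallel to the paper's, but you replace the paper's key tool with a fallacious step. The paper first invokes \Cref{prop:N-Monge.ip} to obtain maps $T_t$ with $\ip_t(Y)=T_t(\ip_{1:t}(X))$, and then composes with $g_t$ from \Cref{cor:iptranslation2} and an explicit extraction map $h_t(P)=\mean{I^{N-t-1}(\Pc^{N-t}[\pr_t](P))}$ that recovers $Y_t$ from $\ip_t(Y)$ (using $\F_t$-measurability of $Y_t$ to collapse $\Pc^{N-t}[\pr_t](\ip_t(Y))$ to $\delta^{N-t}_{Y_t}$). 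For the second claim, it sets $S_t=j_t\circ T_t\circ g_t$, again relying on \Cref{prop:N-Monge.ip}.

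The gap in your argument is the inference ``$\law(\ip_{t+1:N}(X)\mid\F_t)$ is $\F_t$-measurable and hence a function of $\ip_{1:t}(X)$.'' Being $\F_t$-measurable is automatic for any conditional law given $\F_t$ and says nothing about whether it factors through $\ip_{1:t}(X)$; the $\sigma$-algebra $\sigma(\ip_{1:t}(X))$ is in general a strict sub-$\sigma$-algebra of $\F_t$. The statement you need is actually correct, but it requires the recursive tower-property argument: $\law(\ip_{s+1}(X)\mid\F_s)=\ip_s(X)$ is $\sigma(\ip_s(X))$-measurable, hence $\law(\ip_{s+1}(X)\mid\ip_{1:s}(X),\F_t)=\ip_s(X)$ for all $s\ge t$ by conditioning downward, and iterating this one level at a time disintegrates $\law(\ip_{t+1:N}(X)\mid\F_t)$ into a Markov chain driven by $\ip_t(X)$. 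That step-by-step unfolding is precisely what \Cref{prop:N-Monge.ip} packages, and you have effectively removed it and replaced it with an incorrect one-line justification. The same issue recurs in your argument for $S$, where you again assert that a conditional law is ``determined by $\ipp_{1:t}(X)$'' without giving the tower/Markov argument. If you supply the recursive justification (or, more economically, simply invoke \Cref{prop:N-Monge.ip} as the paper does), the remainder of your proof goes through.
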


\begin{proof}
Fix $t \le N$. By \Cref{prop:N-Monge.ip}, the map $\tilde\xi$ induces a map 
$
T_t : \prod_{s=1}^t   \Pc_2^{N-s}(H^N)  \to \Pc_2^{N-t}(H^N)  
$
such that $\ip_t(Y) = T_t(\ip_{1:t}(X))$. The map $g_t$ defined in \Cref{cor:iptranslation2} satisfies $\ip_t(X) = g_t(\ipp_{1:t}(X))$. We set $h_t(P) := \mean{ I^{N-t-1}(\Pc^{N-t}[\pr_t](P)}$ where $\pr_t : H^N \to H$ is the projection onto the $t$-th component. Note that 
$$
h_t(\ip_t(Y)) = \mean{ I^{N-t-1}(\law^{\F_{N-1:t}} (Y_t) } = \mean{ I^{N-t-1}(\delta^{N-t}_{Y_t})} = Y_t.
$$
We then define the desired map $\xi_t$ as 
$
\xi_t := h_t \circ T_t \circ g_t
$
and note that
\[
\xi_t(\ipp_t(X)) = h_t ( T_t (g_t( \ipp_{1:t}(X)))) = h_t(T_t(\ip_{1:t}(X))) = h_t(\ip_t(Y)) =  Y_t. 
\]
For the second claim, we set $S_t = j_t \circ T_t \circ g_t$ and note that 
\[
    S_t(\ipp_{1:t}(X)) = j_t(T_t(g_t(\ipp_{1:t}(X)))) = j_t(T_t(\ip_{1:t}(X)) = j_t(\ip_t(Y)) = \ipp_t(Y). \qedhere
\]
\end{proof}

\subsection{Lions lift for filtered processes}
\label{sec:LionsLiftFP}
From now on, we occasionally write $\mathcal{Z}_t^{(N)}$ instead of $\mathcal{Z}_t$ to make the dependence on the number of time steps explicit when needed. As the space of filtered processes is isometrically isomorphic to $\Pc_2(\Zc_1^{(N)})$, defining a lift for functions $\psi : \Pc_2(\Zc_1) \to (-\infty,+\infty]$ is equivalent to defining a lift for functions on the space of filtered processes with $N$ time periods.

\begin{definition}
Let $\psi : \Pc_2(\Zc_1^{(N)}) \to (-\infty,+\infty]$. Then its adapted Lions lift is defined as the function 
\[
\overline{\psi} : L_{2,ad}^N(\H) \to (-\infty,+\infty], \quad \overline{\psi}(X) = \psi(\lawadd(X)).
\]  
\end{definition}

In order to establish a connection to convex analysis and Brenier-type results for the adapted Wasserstein distance, we consider the adapted maximal covariance 
\begin{align*}
\AMC(X,Y) := \sup_{ X' \simad X, Y' \simad Y  }  \E[ X'  \cdot Y' ]. 
\end{align*}
This adapted max-covariance functional is connected to the iterated max-covariance on $\Pc_2(\Zc^{(N)}_1)$. To define this functional we first set
\begin{align*}
    \langle x,y \rangle_{\Zc_N} &:= \langle x,y \rangle_H \\
    \langle (x,p), (y,q) \rangle_{\Zc_t} &:= \langle x,y \rangle_H + \langle p, q \rangle_{\Zc_{t+1}}
\end{align*}
and then define the max-covariance functional on $\Pc_2(\Zc_1)$ as
\[
\mc(P,Q) = \sup_{\Pi \in \cpl(P,Q)} \int \langle (x,p), (y,q) \rangle_{\Zc_1} \, d\Pi(x,p,y,q).
\]
\begin{proposition}
For $X,Y \in L_{2,ad}^N(\H)$ we have 
\begin{enumerate}
    \item $\AW_2^2(X,Y) = \|X\|_2^2 + \|Y\|_2^2 - 2\AMC(X,Y) $
    \item $\W_2^2(\lawadd(X),\lawadd(Y)) = \E|X|^2 + \E|Y|^2 - 2\mc(\lawadd(X),\lawadd(Y)) $
    \item $\AMC(X,Y) = \mc(\lawadd(X),\lawadd(Y))$
     
\end{enumerate}
\end{proposition}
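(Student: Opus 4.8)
The plan is to prove (1) and (2) by direct computation and then deduce (3) by combining them with the isometry $\AW_2(X,Y)=\W_2(\lawadd(X),\lawadd(Y))$ of \Cref{thm:FPisometry}. For (1) I would start from the Skorokhod-type formula recalled in \Cref{sec:AW}, namely $\AW_2^2(X,Y)=\inf\{\|X'-Y'\|_2^2 : X'\simad X,\ Y'\simad Y\}$ with the infimum attained (here $\simad$ and $\sim_\AW$ coincide by \Cref{thm:FPisometry}). Expanding $\|X'-Y'\|_2^2=\|X'\|_2^2+\|Y'\|_2^2-2\,\E[X'\cdot Y']$ and using that $\|X'\|_2$ depends on $X'$ only through $\lawadd(X')$ — for instance $\|X'\|_2^2=\int|x|^2\,d\big(I^{N-1}\lawad(X')\big)(x)$, with $I^{N-1}\lawad(X')$ the law of $X'$ — one gets $\|X'\|_2=\|X\|_2$ and $\|Y'\|_2=\|Y\|_2$ along all admissible pairs, so
\[
\AW_2^2(X,Y)=\|X\|_2^2+\|Y\|_2^2-2\sup\{\E[X'\cdot Y'] : X'\simad X,\ Y'\simad Y\}=\|X\|_2^2+\|Y\|_2^2-2\AMC(X,Y).
\]

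For (2), the core is a downward induction on $t\in\{N,\dots,1\}$ over the nested structure of $\Zc_\bullet$ that establishes simultaneously: the parallelogram inequality $\langle z,z'\rangle_{\Zc_t}\le\tfrac12\big(\langle z,z\rangle_{\Zc_t}+\langle z',z'\rangle_{\Zc_t}\big)$ on $\Zc_t$; the identity $\mc(p,p)=\int\langle w,w\rangle_{\Zc_t}\,dp$ on $\Pc_2(\Zc_t)$ (upper bound from the parallelogram inequality, lower bound from the diagonal coupling); the splitting $d_{\Zc_t}(z,z')^2=\langle z,z\rangle_{\Zc_t}+\langle z',z'\rangle_{\Zc_t}-2\langle z,z'\rangle_{\Zc_t}$; and hence $W_{2,\Zc_t}^2(p,q)=\mc(p,p)+\mc(q,q)-2\mc(p,q)$. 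The base case $t=N$ is the elementary Hilbert-space identity on $\H$, and the inductive step uses the convention that, for measures, $\langle p,q\rangle_{\Zc_{t+1}}$ denotes the iterated max-covariance $\mc(p,q)$, together with $d_{\Zc_t}^2=|\cdot|^2+W_{2,\Zc_{t+1}}^2$. Taking $t=1$ yields $\W_2^2(P,Q)=\mc(P,P)+\mc(Q,Q)-2\mc(P,Q)$ on $\Pc_2(\Zc_1)$, and for $P=\lawadd(X)$ a telescoping
\[
\E\langle\ipp_1(X),\ipp_1(X)\rangle_{\Zc_1}=\E|X_1|^2+\E\,\E\big[\langle\ipp_2(X),\ipp_2(X)\rangle_{\Zc_2}\,\big|\,\F_1\big]=\cdots=\sum_{t=1}^N\E|X_t|^2=\E|X|^2
\]
identifies $\mc(\lawadd(X),\lawadd(X))=\E|X|^2$, which gives (2).

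Then (3) follows by putting the pieces together: by (1), \Cref{thm:FPisometry} and (2),
\[
\|X\|_2^2+\|Y\|_2^2-2\AMC(X,Y)=\AW_2^2(X,Y)=\W_2^2(\lawadd(X),\lawadd(Y))=\E|X|^2+\E|Y|^2-2\,\mc(\lawadd(X),\lawadd(Y)),
\]
and cancelling $\|X\|_2^2=\E|X|^2$ and $\|Y\|_2^2=\E|Y|^2$ forces $\AMC(X,Y)=\mc(\lawadd(X),\lawadd(Y))$. The main obstacle is the bundled downward induction in (2): one must track carefully the notational overloading of $\langle\cdot,\cdot\rangle_{\Zc_t}$ on points versus on measures and verify that the recursive definitions of $d_{\Zc_t}$ and of $\mc$ on $\Pc_2(\Zc_1)$ remain compatible at every level. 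An alternative that avoids this is to push everything forward through the isometric embeddings $\iota_t$ of \Cref{prop:translation} into $\Pc_2^N(\H^N)$ and invoke \Cref{lem:W2_MC_connection_iterated} and \Cref{cor:MC_max_over_RV} there, checking by induction — using the Dirac-padded form \eqref{eq:iota_t} of $\iota_t$ — that $\iota_1$ preserves max-covariances; this reduces (3) to \Cref{cor:MC_max_over_RV} together with the fact from \Cref{cor:ad_is_lawadinv} that $L_{2,ad}^N(\H)$ is adapted-law invariant, so that the extremal processes appearing in $\mc(\lawad(X),\lawad(Y))$ on $\Pc_2^N(\H^N)$ are automatically adapted.
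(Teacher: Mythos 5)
Your proposal is correct and follows essentially the same route as the paper's (terse) proof: complete squares for (1) and (2), then cancel using the isometry $\AW_2(X,Y)=\W_2(\lawadd(X),\lawadd(Y))$ from \Cref{thm:FPisometry} to get (3). The extra work you do in (2)---the downward induction over $\Zc_t$ establishing $\mc(p,p)=\int\langle z,z\rangle_{\Zc_t}\,dp$ and the splitting $d_{\Zc_t}^2=\langle z,z\rangle+\langle z',z'\rangle-2\langle z,z'\rangle$, together with the telescoping $\mc(\lawadd(X),\lawadd(X))=\E|X|^2$---is just the honest spelling-out of what the paper compresses into ``straightforward by completing squares.''
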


\begin{proof}
The claims (1) and (2) are straightforward by completing squares. (3) follows then from \Cref{thm:FPisometry} by invoking (1) and (2).
\end{proof}

Transport theory on  $\Pc_2(\Zc_1)$ with $\mc$ costs then yields the notions of $\mc$-transform and $\mc$-subgradient in the present setting:
\begin{definition}\label{def:MC_subdiff_FP}
    Let $\psi : \Pc_2(\Zc_1) \to (-\infty,+\infty]$ be proper. We define its $\mc$-transform as 
    \[
        \psi^{\mc}(Q) = \sup_{ P \in \Pc_2(\Zc_1)}  \mc(P,Q) -  \psi(P). 
    \]
    A function $\psi : \Pc_2(\Zc_1) \to (-\infty,+\infty]$ is called $\mc$-convex if there exists a proper function $\vartheta : \Pc_2(\Zc_1) \to (-\infty,+\infty]$ such that $\psi = \vartheta^\mc$.
    Moreover, the $\mc$-subdifferential of $\psi$ at $P \in \Pc_2(\Zc)$ is defined as  
    \[
        \partial_{\mc} \psi(P) = \{ Q\in \Pc_2(\Zc_1) : \psi(R) \ge \psi(P) + \mc(R,Q) - \mc(P,Q) \text{ for every } R \in \Pc_2(\Zc_1)\},
    \]
    and we write
    \[
        \partial_\mc \phi := \{ (P,Q) : Q \in  \partial_{\mc} \phi(P), P \in \Pc_2(\Zc_1) \}. 
    \]
\end{definition}

\medskip

Analogous to the theory on $\Pc_2^N(H)$, we have the following crucial connection between the $\mc$-transform and the convex conjugate on $L_{2,ad}^N(\H)$. 

\begin{lemma}\label{lem:FP_XY}
Let $P,Q \in \Pc_2(\Zc_1)$ and $Y \in L_{2,ad}^N(\H)$ with $\lawadd(Y)=Q$ be given. Then,
\[
\mc(P,Q) = \sup_{ \substack{ X \in L_{2,ad}^N(\H) \\
\lawadd(X)=P }} \E[X \cdot Y]. 
\]
\end{lemma}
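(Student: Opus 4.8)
The plan is to deduce the identity from the preceding proposition, which gives $\AMC(X_0,Y)=\mc(\lawadd(X_0),\lawadd(Y))$, together with an adapted transfer argument that lets one fix the second marginal instead of letting it range over all $Y'\simad Y$. First I would fix a reference process: by \Cref{cor:ad_is_lawadinv} (equivalently \Cref{thm:FPisometry}) there is $X_0\in L_{2,ad}^N(\H)$ with $\lawadd(X_0)=P$, and since $L_{2,ad}^N(\H)$ is adapted-law invariant, for $X\in L_{2,ad}^N(\H)$ the constraint $\lawadd(X)=P$ is equivalent to $X\simad X_0$. The inequality ``$\le$'' is then immediate: for every admissible $X$ in the right-hand supremum the pair $(X,Y)$ is admissible in the supremum defining $\AMC(X_0,Y)$, so $\E[X\cdot Y]\le\AMC(X_0,Y)=\mc(\lawadd(X_0),\lawadd(Y))=\mc(P,Q)$, where the identification $\mc(P,Q)=\mc(\lawadd(X_0),\lawadd(Y))$ is just the definition $\lawadd(X_0)=P$, $\lawadd(Y)=Q$.

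For ``$\ge$'' I would take arbitrary $X'\simad X_0$ and $Y'\simad Y$ in $L_{2,ad}^N(\H)$ and $\epsilon>0$. Since $\lawadd(Y')=\lawadd(Y)$, \Cref{prop:translation}(6) (together with injectivity of $\iota_1$, which has the continuous left inverse $j_1$ by \Cref{prop:translation}(1)) gives $\lawad(Y')=\lawad(Y)$ as elements of $\Pc_2^N(\H^N)$, so \Cref{cor:transfer_ad} produces a measure-preserving bi-adapted bijection $T$ of $[0,1]^N$ with $\lambda(|Y-Y'\circ T|\ge\epsilon)<\epsilon$. Set $X'':=X'\circ T$. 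Bi-adaptedness of $T$ forces $X''\in L_{2,ad}^N(\H)$, \Cref{lem:iso_pres_ip} applied to $T^{-1}$ together with \Cref{prop:translation} gives $\lawadd(X'')=\lawadd(X')=P$, and invariance of $\lambda$ under $T$ gives both $\E[X''\cdot(Y'\circ T)]=\E[X'\cdot Y']$ and $\|Y'\circ T\|_2=\|Y'\|_2=\|Y\|_2$. Hence $\E[X''\cdot Y]=\E[X'\cdot Y']+\E[X''\cdot(Y-Y'\circ T)]$ with $|\E[X''\cdot(Y-Y'\circ T)]|\le\|X'\|_2\,\|Y-Y'\circ T\|_2$.

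The key point is the behaviour of the error term as $\epsilon\to0$: the maps $Y'\circ T_\epsilon$ converge to $Y$ in probability while keeping the constant $L_2$-norm $\|Y\|_2$; by weak compactness in $L_2$ and the elementary fact that weak convergence together with convergence of norms implies strong convergence in a Hilbert space, $Y'\circ T_\epsilon\to Y$ in $L_2$, so the error vanishes. This yields $\sup\{\E[X\cdot Y]:X\in L_{2,ad}^N(\H),\,\lawadd(X)=P\}\ge\E[X'\cdot Y']$, and taking the supremum over $X'\simad X_0$, $Y'\simad Y$ gives $\ge\AMC(X_0,Y)=\mc(\lawadd(X_0),\lawadd(Y))=\mc(P,Q)$, completing the proof.

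I expect the main obstacle to be precisely this upgrade from convergence in probability to $L_2$-convergence: \Cref{cor:transfer_ad} only delivers the former, and one must use that the measure-preserving transfer preserves the $L_2$-norm to close the gap — the role played here by dominated convergence in the proof of \Cref{lem:EnoughRandomization}. A secondary, purely bookkeeping difficulty is keeping the three notational layers aligned, namely elements of $L_{2,ad}^N(\H)$ versus $L_2^N(\H^N)$, and $\lawadd$ (valued in $\Pc_2(\Zc_1)$) versus $\lawad$ (valued in $\Pc_2^N(\H^N)$), for which \Cref{prop:translation} provides exactly the required dictionary.
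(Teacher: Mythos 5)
Your proof is correct but takes a genuinely different route from the paper. The paper disposes of the lemma in two lines by transporting the corresponding statement from $\Pc_2^N(\H^N)$ through the isometric embedding $\iota_1$ (and the identity $\lawad=\iota_{1\#}\lawadd$ from \Cref{prop:translation}): there the one-sided flexibility of fixing $Y$ and only rearranging $X$ is exactly the content of \Cref{lem:DPP}, and one then descends back to $L_{2,ad}^N(\H)$ via self-adjointness of $\pr_{ad}$, since $\E[X\cdot Y]=\E[\pr_{ad}(X)\cdot Y]$ for $Y\in L_{2,ad}^N(\H)$ and $\lawadd(\pr_{ad}(X))=j_{1\#}\lawad(X)$, so the supremum over all of $L_2^N(\H^N)$ collapses onto $L_{2,ad}^N(\H)$. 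You instead reprove this one-sided fix from scratch: you combine the already established identity $\AMC(X_0,Y)=\mc(P,Q)$ with the adapted transfer principle \Cref{cor:transfer_ad}, which is the $N$-step analogue of the $\varepsilon$-transfer argument \Cref{lem:EnoughRandomization} used for the $N=1$ case, so in effect you are rederiving the content of \Cref{lem:DPP} for the specific cost $c(x,y)=x\cdot y$. Both roads work; the paper's is shorter because it reuses \Cref{lem:DPP}, while yours is more self-contained and makes the approximation mechanism visible. The one place to be careful is the upgrade from convergence in probability to $L_2$-convergence: ``weak compactness plus Radon--Riesz'' is a sound plan, but you must first justify that any $L_2$-weak limit point of $Y'\circ T_\varepsilon$ is $Y$ -- pass to an a.s.\ convergent subsequence and use that $L_2$-boundedness gives $L^1$-uniform integrability, so that the weak limit can be identified against bounded test functions. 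You correctly flag this as the crux; with that filled in the argument closes.
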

\begin{proof}
By using that $\iota_1$ is an isometry (see \Cref{prop:translation}), the corresponding claim in the setup of $\Pc_2^N(\H^N)$ (see \Cref{lem:DPP}) and the self-adjointness of $\pr_{ad}$ we find
\begin{align*}
    \mc(P,Q)=\mc(\iota_{1\#}P,\iota_{1\#}Q) = \sup_{ \substack{ X \in L_{2}^N(\H^N) \\\lawad(X)=P }} \E[X \cdot Y] = \sup_{ \substack{ X \in L_{2,ad}^N(\H) \\\lawadd(X)=P }} \E[X \cdot Y]. 
\end{align*}    
\end{proof}

\begin{proposition}\label{prop:MCcvx_FP}
Let $\psi : \Pc_2(\Zc_1) \to (-\infty,+\infty]$ be proper. Then we have 
\begin{align}\label{eq:mc_lift}
\overline{\psi^\mc} = \overline{\psi}^\ast.
\end{align}
In particular, the convex conjugate of an adapted-law invariant function on $L_{2,ad}^N(\H)$ is adapted-law invariant.

Moreover, the following are equivalent:
\begin{enumerate}
    \item $\overline{\psi}$ is lsc convex,
    \item $\psi$ is $\mc$-convex.
\end{enumerate}   
\end{proposition}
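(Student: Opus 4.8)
The plan is to mirror the proof of \Cref{thm:MC_trafo_ast_ad}, replacing the role of \Cref{lem:DPP} by \Cref{lem:FP_XY} and carrying out all conjugation on the Hilbert space $L_{2,ad}^N(\H)$, which is a closed (hence complete, separable) subspace of $L_2^N(\H^N)$ by \Cref{cor:ad_is_lawadinv}. First I would establish \eqref{eq:mc_lift}. Fix $Y \in L_{2,ad}^N(\H)$ and set $Q := \lawadd(Y)$. Since $L_{2,ad}^N(\H)$ is adapted-law invariant and, by \Cref{thm:FPisometry}, every $P \in \Pc_2(\Zc_1)$ is attained as $\lawadd(X)$ for some $X \in L_{2,ad}^N(\H)$, we may partition the supremum according to the adapted law:
\[
\overline{\psi}^\ast(Y) = \sup_{X \in L_{2,ad}^N(\H)} \E[X\cdot Y] - \overline{\psi}(X) = \sup_{P \in \Pc_2(\Zc_1)} \Big( \sup_{X :\, \lawadd(X)=P} \E[X\cdot Y] \Big) - \psi(P).
\]
By \Cref{lem:FP_XY} the inner supremum equals $\mc(P,Q)$, so $\overline{\psi}^\ast(Y) = \sup_P \mc(P,Q) - \psi(P) = \psi^\mc(Q) = \overline{\psi^\mc}(Y)$, which is \eqref{eq:mc_lift}. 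Because $\overline{\psi^\mc}$ is by construction the adapted Lions lift of a function on $\Pc_2(\Zc_1)$, it is adapted-law invariant; invoking \Cref{cor:ad_is_lawadinv} that adapted-law invariant functions on $L_{2,ad}^N(\H)$ are precisely such lifts gives the ``in particular'' assertion.

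For the equivalence of (1) and (2) I would argue as follows. If $\overline{\psi}$ is lsc convex — note that properness of $\psi$ makes $\overline{\psi}$ proper — then the Fenchel--Moreau theorem on the Hilbert space $L_{2,ad}^N(\H)$ together with two applications of \eqref{eq:mc_lift} yields
\[
\overline{\psi} = \overline{\psi}^{\ast\ast} = \big(\overline{\psi^\mc}\big)^\ast = \overline{\psi^{\mc\mc}},
\]
whence $\psi = \psi^{\mc\mc}$, i.e.\ $\psi$ is $\mc$-convex. Conversely, if $\psi$ is $\mc$-convex, write $\psi = \vartheta^\mc$ for a proper $\vartheta : \Pc_2(\Zc_1) \to (-\infty,+\infty]$; then \eqref{eq:mc_lift} gives $\overline{\psi} = \overline{\vartheta^\mc} = \overline{\vartheta}^\ast$, which is lsc convex as a convex conjugate on a Hilbert space.

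The only genuinely delicate point is bookkeeping: one must ensure that the conjugate is taken with respect to the inner product of $L_{2,ad}^N(\H)$ rather than that of the ambient $L_2^N(\H^N)$, so that $\overline{\psi}^\ast$ is again a function on $L_{2,ad}^N(\H)$ and Fenchel--Moreau applies verbatim. This is legitimate precisely because $L_{2,ad}^N(\H)$ is itself a separable Hilbert space, and all the transport-theoretic content — attainment of the inner supremum and the identity $\mc(P,Q) = \sup_{\lawadd(X)=P}\E[X\cdot Y]$ — has already been isolated in \Cref{lem:FP_XY}, so no new measurable selection or gluing argument is required here.
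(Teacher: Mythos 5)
Your proof is correct and follows essentially the same route as the paper's own proof: establish \eqref{eq:mc_lift} by partitioning the supremum over $X \in L_{2,ad}^N(\H)$ according to the adapted law and invoking \Cref{lem:FP_XY}, then derive the equivalence of (1) and (2) via Fenchel--Moreau on the Hilbert space $L_{2,ad}^N(\H)$. Your explicit remark that the conjugation is taken with respect to the inner product of the closed subspace $L_{2,ad}^N(\H)$ rather than the ambient $L_2^N(\H^N)$ is a correct and useful clarification, though the paper leaves it implicit.
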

\begin{proof}
For $Y \in L_{2,ad}^N(H)$ we write $Q =\lawadd(Y)$. By \Cref{lem:FP_XY} we find
\begin{align*}
   \overline{\psi\,}^\ast(Y) &= \sup_{X \in L_{2,ad}^N(\H) } \E[ X \cdot Y ] - \overline{\psi}(X) = \sup_{ P \in \Pc_2(\Zc_1)}  \sup_{X \simad P} \E[X \cdot Y]   -  \psi(P) \\
   &= \sup_{P \in \Pc_2(\Zc_1) } \mc(P,Q) - \psi(P)  = \psi^{\mc}(Q) = \overline{\psi^{\mc}}(Y).
\end{align*}
Next, suppose that $\overline{\psi}$ is lsc convex. Then by first applying the Fenchel--Moreau theorem and then \eqref{eq:mc_lift} twice we find
\[
\overline{\psi} = \overline{\psi}^{\ast\ast} = \overline{\psi^\mc}^\ast = \overline{\psi^{\mc\mc}}.
\]
Hence, $\psi = \psi^{\mc\mc}$, which shows that $\psi$ is $\mc$-convex. 

Conversely assume that $\psi$ is $\mc$-convex. Then $\psi = \psi^{\mc}$ for some proper function $\vartheta : \Pc_2(\Zc_1) \to (-\infty,+\infty]$. By \eqref{eq:mc_lift}, we have $\overline{\psi} = \overline{\vartheta^\mc} = \overline{\vartheta}^\ast$, which shows that $\overline{\psi}$ is lsc convex. 
\end{proof}

\begin{remark}\label{rem:different_exts}
Next, we discuss how to extend adapted-law invariant functions on $L_{2,ad}^N(\H)$ to adapted-law  invariant functions on $L_{2}^N(\H^N)$. This  allows us to also translate results on the structure of the subdifferentials of $\mc$-convex functions from \Cref{sec:MC_cx_ad} to the present setting. The are two ways to extend a given adapted-law invariant function $\overline{\psi} : L_{2,ad}^N(H) \to (-\infty,+\infty]$.
\begin{enumerate}
    \item We set it $+\infty$ on non-adapted processes, i.e. writing $\chi$ for convex indicators
    \[
    \overline{\phi}(X) = \overline{\psi}(X) + \chi_{L_{2,ad}^N(H)}(X).
    \]
    \item We use the projection onto adapted processes, i.e.
    \[
\overline{\phi}(X) := \overline{\psi}(\pr_{ad}(X)).
\]
\end{enumerate}
As $L_{2,ad}^N(H)$ and $\pr_{ad}$ are adapted-law invariant (see \Cref{cor:ad_is_lawadinv}), this defines indeed adapted-law invariant functions. Moreover, these two ways are dual to each other via the convex conjugate on $L_2^N(H^N)$, i.e.
\[
(\overline{\psi} \circ \pr_{ad})^\ast = \overline{\psi}^\ast + \chi_{L_{2,ad}^N(H)}.
\]
\end{remark}

\begin{remark}\label{rem:dfb_equiv_extend}
If we are in the situation that $\overline{\phi}(X) := \overline{\psi}(\pr_{ad}(X))$, we have (when suppressing the embedding $\iota_{ad}$ and considering $L_{2,ad}^N(H)$ as subspace of $L_2^N(H^N)$)
\[
\partial \overline{\phi}(X) = \partial\overline{\psi}(X).
\]
Here, the subdifferential on the left hand side in meant in $L_{2}^N(H^N)$, whereas the one the right hand side is meant in the subspace $L_{2,ad}^N(H)$. In particular, this asserts that $\partial \overline{\phi}(X) \subset L_{2,ad}^N(H)$. Moreover, for $X \in L_{2,ad}^N(H)$, $\overline\phi$ is Gateaux (Frechet) differentiable at $X$ if and only if $\overline\psi$ is Gateaux (Frechet) differentiable at $X$. 
\end{remark}

\begin{proposition}\label{prop:subdiff_char_FP}
Let $\psi : \Pc_2(\Zc_1) \to (-\infty, +\infty]$ be proper and let $X,Y \in L_{2,ad}^N(\H)$. If $X \simad P$ and $Z \simad Q$ we have
\[
Z \in \partial \overline{\psi}(X) \iff Q \in \partial_{\mc} \psi(P) \text{ and } \E[X \cdot Z] = \mc(P,Q). 
\]
\end{proposition}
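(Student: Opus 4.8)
The plan is to reduce \Cref{prop:subdiff_char_FP} to the already-established \Cref{prop:subdiff_char_ad} via the embedding machinery of \Cref{prop:translation}. The key point is that the adapted Lions lift $\overline\psi$ on $L_{2,ad}^N(\H)$ and the $\mc$-subdifferential $\partial_\mc\psi$ on $\Pc_2(\Zc_1)$ are, under the isometric embedding $\iota_1$, exactly the corresponding objects in the $\Pc_2^N(\H^N)$-setting, where the analogous statement is known. First I would set $\phi := \psi \circ j_1 : \Pc_2^N(\H^N) = \Pc_2(E_1) \to (-\infty,+\infty]$, so that by \Cref{prop:translation}(5) and the fact that $j_1 \circ \iota_1 = \id$ we have $\phi \circ \iota_{1} = \psi$ on $\Pc_2(\Zc_1)$, and moreover $\overline\phi(X) = \phi(\lawad(X))$ restricted to $X \in L_{2,ad}^N(\H)$ agrees with $\overline\psi(X)$ by \Cref{prop:translation}(6) (identifying $L_{2,ad}^N(\H)$ as a subspace of $L_2^N(\H^N)$ via $\iota_{ad}$).

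The second step is to check that the $\mc$-subdifferential transforms correctly under $\iota_1$: since $\iota_1$ is an isometric embedding (\Cref{prop:translation}(2)), one has $\mc(P,Q) = \mc(\iota_{1\#}P, \iota_{1\#}Q)$ for $P,Q \in \Pc_2(\Zc_1)$ — this is exactly the identity used in the proof of \Cref{lem:FP_XY}. Hence $Q \in \partial_\mc\psi(P)$ in the $\Pc_2(\Zc_1)$-sense is equivalent to: $\psi(R) \ge \psi(P) + \mc(R,Q) - \mc(P,Q)$ for all $R \in \Pc_2(\Zc_1)$. One wants to compare this to $\iota_{1\#}Q \in \partial_\mc\phi(\iota_{1\#}P)$. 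For the ``$\Longrightarrow$'' direction of the subdifferential inequality this is immediate because the inequality for $\phi$ at points in the image $\iota_1(\Pc_2(\Zc_1))$ follows from $\phi \circ \iota_1 = \psi$; for arbitrary $R' \in \Pc_2^N(\H^N)$ one uses $\phi(R') = \psi(j_1(R'))$ together with the fact that $\mc(R',\iota_{1\#}Q) \le \mc(j_{1\#}R', Q)$, which holds because, projecting onto adapted processes (a $1$-Lipschitz, norm-nonincreasing operation that preserves the relevant marginals via \Cref{prop:translation}(5)), one can always replace $R'$ by its image under the left inverse. Here I would appeal directly to \Cref{lem:FP_XY}'s identity and the self-adjointness of $\pr_{ad}$ to get $\mc(R', \iota_{1\#}Q) = \sup_{U \simad R', V \simad \iota_{1\#}Q}\E[U\cdot V] = \sup_{V}\E[\pr_{ad}(U)\cdot V]$, so that the covariance only ever sees adapted processes on both sides.

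Given these transfer facts, the third and final step is: by \Cref{prop:subdiff_char_ad} applied to $\phi$ on $\Pc_2^N(\H^N)$ with the random variables $\iota_{ad}(X), \iota_{ad}(Z) \in L_2^N(\H^N)$, we have $Z \in \partial\overline\phi(X)$ (subdifferential in $L_2^N(\H^N)$) iff $\iota_{1\#}Q \in \partial_\mc\phi(\iota_{1\#}P)$ and $\E[X\cdot Z] = \mc(\iota_{1\#}P,\iota_{1\#}Q) = \mc(P,Q)$. Then \Cref{rem:dfb_equiv_extend} (with the extension $\overline\phi(\cdot) = \overline\psi(\pr_{ad}(\cdot))$, noting $\phi \circ \lawad = \overline\psi \circ \pr_{ad}$ on all of $L_2^N(\H^N)$ by \Cref{prop:translation}(5)) identifies $\partial\overline\phi(X)$ in $L_2^N(\H^N)$ with $\partial\overline\psi(X)$ in the subspace $L_{2,ad}^N(\H)$ for $X \in L_{2,ad}^N(\H)$; and the transfer of the subdifferential established in the second step identifies $\iota_{1\#}Q \in \partial_\mc\phi(\iota_{1\#}P)$ with $Q \in \partial_\mc\psi(P)$. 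Stringing these equivalences together yields the claim. Alternatively — and this is probably cleaner to write out — one can simply copy the proof of \Cref{prop:Subdiff_MCvsL}/\Cref{prop:subdiff_char_ad} verbatim, replacing $\lawad$ by $\lawadd$, $L_2^N(\H)$ by $L_{2,ad}^N(\H)$, and invoking \Cref{lem:FP_XY} in place of \Cref{lem:DPP}; the only nontrivial input there is exactly the ``enough randomization'' statement $\mc(P,Q) = \sup_{X \simad P}\E[X\cdot Y]$ for fixed $Y \simad Q$, which is \Cref{lem:FP_XY}. The main obstacle I anticipate is purely bookkeeping: making sure that the embedding $\iota_{ad} : L_{2,ad}^N(\H)\hookrightarrow L_2^N(\H^N)$, the projection $\pr_{ad}$, and the maps $\iota_1, j_1$ are composed in the right order so that $\overline\psi(X)$, $\overline\psi(\pr_{ad}(X))$ and $\phi(\lawad(X))$ genuinely coincide where claimed; once that is pinned down, no analysis beyond \Cref{lem:FP_XY} is needed.
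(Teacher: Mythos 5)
Your ``cleaner alternative'' at the end --- replicating the proof of \Cref{prop:Subdiff_MCvsL} with $\lawad$, $L_2^N(\H)$, and the enough-randomization Lemma \ref{lem:DPP} replaced by $\lawadd$, $L_{2,ad}^N(\H)$, and \Cref{lem:FP_XY} --- is exactly the paper's argument; the paper's entire proof of this proposition is the one-line remark that it is line by line the proof of \Cref{prop:Subdiff_MCvsL}. Your main route via the embedding $\iota_1$ is a genuine alternative and, as written, is also correct: you reduce to \Cref{prop:subdiff_char_ad} on $\Pc_2^N(\H^N)$ for the auxiliary function $\phi = \psi\circ j_{1\#}$, transfer Lions-lift subdifferentials via \Cref{rem:dfb_equiv_extend} (a piece of elementary convex analysis about $\psi\circ\pr_{ad}$ that lives before this proposition, so no circularity), and transfer the $\mc$-subdifferential by hand using the inequality $\mc(R',\iota_{1\#}Q)\le \mc(j_{1\#}R',Q)$, which you correctly justify via $\pr_{ad}$, adapted-law invariance of $L_{2,ad}^N(\H)$, and \Cref{prop:translation}. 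One caution worth flagging: it would be tempting to just cite \Cref{cor:dfb_ext_equiv} for the $\mc$-subdifferential transfer, but that corollary appears after \Cref{prop:subdiff_char_FP} in the paper and is itself proved using it, so doing so would be circular; your choice to prove the transfer directly is the right one. What the embedding route buys is that it makes the compatibility between the $\Pc_2(\Zc_1)$- and $\Pc_2^N(\H^N)$-formulations of the subdifferential fully explicit, which is exactly the content of \Cref{cor:dfb_ext_equiv} that the paper derives afterwards; what the verbatim-replication route buys is brevity, since the only structural input it needs is the single randomization identity of \Cref{lem:FP_XY}. A tiny notational slip: $\phi := \psi\circ j_1$ should read $\phi := \psi\circ (j_1)_\#$ (or $\psi\circ\Pc[j_1]$), since $j_1$ acts on $E_1$ and $\phi$ is a function of measures on $E_1$; you clearly intend the pushforward.
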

\begin{proof}
The proof is line by line as the proof of \Cref{prop:Subdiff_MCvsL}.    
\end{proof}

\begin{lemma}\label{lem:mc_diff_char_FP}
    Let $\psi:\Pc_2(\Zc_1)\to (-\infty, \infty]$ be $\MC$-convex and $P\in \dom(\psi)$. Then the following are equivalent: 
    \begin{enumerate}
        \item $\psi$ is $\MC$-differentiable at $P$.
        \item For all $X$ with $X\simad P$ we have $\#\partial \overline \psi (X)=1$.
        
    \end{enumerate}
\end{lemma}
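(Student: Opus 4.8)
The plan is to mimic the proof of \Cref{lem:mc_diff_char_ad} verbatim, transporting every ingredient through the isometric isomorphism $\iota_1 : \Pc_2(\Zc_1) \to \Pc_2^N(\H^N)$ established in \Cref{prop:translation}, together with the identification of subdifferentials in \Cref{rem:dfb_equiv_extend}. Concretely, given $\psi : \Pc_2(\Zc_1) \to (-\infty,\infty]$ that is $\mc$-convex, define $\phi := \psi \circ j_1 : \Pc_2^N(\H^N) \to (-\infty,\infty]$; since $j_1 \circ \iota_1 = \id$ and $\iota_1$ is an isometry for the respective $\mc$-functionals, $\phi$ is $\mc$-convex on $\Pc_2^N(\H^N)$ and agrees with $\psi$ on the image of $\iota_1$, and its adapted Lions lift $\overline\phi$ equals $\overline\psi \circ \pr_{ad}$ up to the embedding $\iota_{ad}$. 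By \Cref{rem:dfb_equiv_extend}, for $X \in L_{2,ad}^N(\H)$ one has $\partial \overline\phi(X) = \partial \overline\psi(X)$ (the first taken in $L_2^N(\H^N)$, the second in $L_{2,ad}^N(\H)$), and $\psi$ is $\mc$-differentiable at $P$ iff $\phi$ is $\mc$-differentiable at $\iota_{1\#}P = \lawad(X)$ for $X \simad P$ (this last point needs a short check: $\partial_\mc \phi(\iota_{1\#}P)$ and $\partial_\mc \psi(P)$ correspond bijectively under $\iota_1$, $j_1$, which follows from $\mc(\iota_{1\#}\cdot,\iota_{1\#}\cdot) = \mc(\cdot,\cdot)$ on $\Pc_2(\Zc_1)$ and the fact that $\iota_1$ has a left inverse).

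With this dictionary in place, the equivalence (1)$\iff$(2) for $\psi$ at $P$ reduces to the same equivalence for $\phi$ at $\lawad(X)$, which is precisely \Cref{lem:mc_diff_char_ad} applied on $\Pc_2^N(\H^N)$. The only subtlety is that in \Cref{lem:mc_diff_char_ad} the quantifier ranges over \emph{all} $X \simad \lawad(X)$ in $L_2^N(\H^N)$, whereas here we quantify over $X \simad P$ in $L_{2,ad}^N(\H)$. Going from the $\Pc_2^N(\H^N)$-statement to the $\Pc_2(\Zc_1)$-statement, the direction (1)$\Rightarrow$(2) is immediate because adapted processes form a subset of all representatives. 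For (2)$\Rightarrow$(1), I would use \Cref{cor:ad_is_lawadinv}: $L_{2,ad}^N(\H)$ is an adapted-law invariant subspace and $\pr_{ad}$ is adapted-law invariant, so if $X' \in L_2^N(\H^N)$ with $X' \simad \iota_{1\#}P$ then $\pr_{ad}(X') \in L_{2,ad}^N(\H)$ still satisfies $\pr_{ad}(X') \simad$ the adapted representative; combined with $\partial\overline\phi(\pr_{ad}(X')) = \partial\overline\psi(\pr_{ad}(X'))$ and the fact (\Cref{rem:dfb_equiv_extend}, last sentence) that $\overline\phi$'s differentiability at a point of $L_{2,ad}^N(\H)$ is inherited, one transfers the singleton property. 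I would write the argument directly in the $\Pc_2(\Zc_1)$-language rather than via $\phi$ if it reads more cleanly, copying the two implications of \Cref{lem:mc_diff_char_ad}: for (1)$\Rightarrow$(2), if $Z_1 \neq Z_2 \in \partial\overline\psi(X)$ then either their adapted laws differ, giving two elements of $\partial_\mc\psi(P)$, or they agree and $Z := (Z_1+Z_2)/2 \in \partial\overline\psi(X)$ has strictly smaller $L_2$-norm (strict convexity of $\|\cdot\|_2^2$), hence a different law; for (2)$\Rightarrow$(1), given $Q_1,Q_2 \in \partial_\mc\psi(P)$ use \Cref{thm:FPisometry} (or \Cref{prop:ReprLawadAsRV}-analogue) to pick $X_i \simad P$, $Y_i \simad Q_i$ in $L_{2,ad}^N(\H)$ with $\E[X_i \cdot Y_i] = \mc(P,Q_i)$, invoke the adapted analogue of \Cref{prop:sub_grad_proj_ad} to get $Y_i = \xi_i(\ipp(X_i))$ (here \Cref{prop:xi_is_adapted} guarantees the map is adapted), and then transfer: $\E[X_2 \cdot \xi_1(\ipp(X_2))] = \E[X_1 \cdot \xi_1(\ipp(X_1))] = \mc(P,Q_1)$ forces $\xi_1(\ipp(X_2)) \in \partial\overline\psi(X_2) = \{Y_2\}$, so $Q_1 = Q_2$.

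The main obstacle I anticipate is purely bookkeeping: making sure that the projection step and the embedding $\iota_{ad}$ interact correctly with $\ipp$ versus $\ip$, i.e.\ that every structural lemma from \Cref{sec:MC_cx_ad} (\Cref{prop:sub_grad_proj_ad}, \Cref{prop:subdiff_char_ad}, \Cref{prop:LdiffXi_ad}) indeed has an exact counterpart in the filtered-process setting, stated in terms of $\ipp$, $\lawadd$, and $L_{2,ad}^N(\H)$. The excerpt already records the needed translations (\Cref{prop:translation}, \Cref{cor:iptranslation2}, \Cref{prop:xi_is_adapted}, \Cref{lem:FP_XY}, \Cref{prop:MCcvx_FP}, \Cref{prop:subdiff_char_FP}), so the proof itself is a two-line reduction: \emph{This follows by the same argument as \Cref{lem:mc_diff_char_ad}, replacing $\ip$, $\lawad$, $L_2^N(\H)$ by $\ipp$, $\lawadd$, $L_{2,ad}^N(\H)$, and using \Cref{prop:subdiff_char_FP}, \Cref{prop:xi_is_adapted}, and the adapted analogue of \Cref{prop:sub_grad_proj_ad}; alternatively one transfers the statement through the isometry $\iota_1$ of \Cref{prop:translation} and applies \Cref{lem:mc_diff_char_ad} on $\Pc_2^N(\H^N)$, using \Cref{rem:dfb_equiv_extend} to match subdifferentials and \Cref{cor:ad_is_lawadinv} to handle the restricted quantifier.} I would spell out only the adapted analogue of \Cref{prop:sub_grad_proj_ad} (the conditional-expectation projection onto $\ipp(X)$ still decreases in $\mc$-order and hence stays in the subdifferential) if it has not been stated, since that is the one genuinely load-bearing ingredient; everything else is a direct citation.
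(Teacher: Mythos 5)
Your proposal is correct and follows the same route as the paper, which merely records that the proof goes \emph{line by line} as in \Cref{lem:mc_diff_char_ad}; you spell out the translation dictionary (\Cref{prop:subdiff_char_FP}, \Cref{lem:FP_XY}, \Cref{prop:xi_is_adapted}, \Cref{rem:dfb_equiv_extend}) that the paper leaves implicit. The adapted analogue of \Cref{prop:sub_grad_proj_ad} you flag as the one load-bearing unstated ingredient is indeed available without new work: for $X,Z\in L_{2,ad}^N(\H)$ apply \Cref{prop:sub_grad_proj_ad} to the extension $\overline{\phi}=\overline{\psi}\circ\pr_{ad}$ on $L_2^N(\H^N)$, use \Cref{rem:dfb_equiv_extend} to identify $\partial\overline{\phi}(X)=\partial\overline{\psi}(X)$, and note that $\sigma(\ip(X))=\sigma(\ipp(X))$ by \Cref{prop:translation} and \Cref{cor:iptranslation2}.
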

\begin{proof}
This follows line by line as in the proof of \Cref{lem:mc_diff_char_ad}.
\end{proof}

\begin{corollary}\label{cor:dfb_ext_equiv}
Let $\psi:\Pc_2(\Zc_1)\to (-\infty, \infty]$ be $\MC$-convex and $P\in \dom(\phi)$. Set $\phi := \psi \circ \Pc(\iota_1)$. Then the $\mc$-subdifferentials coincide in the sense that 
\[
\partial_\mc\phi(\iota_{1\#}P) = \{ \iota_{1\#} Q : Q \in \partial_\mc\psi(P) \}.
\]
In particular, $\psi$ is $\mc$-differentiable at $P$ if and only if $\phi$ is $\mc$-differentiable at $\iota_{1\#}P$.  
\end{corollary}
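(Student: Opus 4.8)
The plan is to reduce everything to the abstract identity $\mc(\iota_{1\#}P,\iota_{1\#}Q)=\mc(P,Q)$, which is already available from \Cref{lem:FP_XY} together with the fact that $\iota_1$ is an isometry (\Cref{prop:translation}(2)). Concretely, the key observation is that for $P,Q\in\Pc_2(\Zc_1)$ one has both $\mc(\iota_{1\#}P,\iota_{1\#}Q)=\mc(P,Q)$ (this is precisely what is used in the proof of \Cref{lem:FP_XY}) and, since $\phi=\psi\circ\Pc(\iota_1)=\psi\circ\Pc(j_1)^{-1}$ on the image of $\iota_1$ and $j_1\circ\iota_1=\id$, the relation $\phi(\iota_{1\#}R)=\psi(R)$ for all $R\in\Pc_2(\Zc_1)$. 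First I would record that $\phi$ is $\mc$-convex on $\Pc_2^N(H^N)$: indeed $\phi=\psi\circ\Pc(\iota_1)$ is the composition of the $\mc$-convex $\psi$ with an isometric embedding, and one checks directly from the definition that $\phi^{\mc\mc}=\phi$ by using the two displayed identities (the supremum defining $\phi^\mc$ over all of $\Pc_2^N(H^N)$ is, thanks to $\mc(\cdot,\iota_{1\#}Q)$ being increasing in $\mc$-order and bounded above by its value on $\iota_1(\Pc_2(\Zc_1))$ via the left-inverse $j_1$, attained on the image of $\iota_1$).

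Next I would prove the set equality. For the inclusion $\supseteq$: take $Q\in\partial_\mc\psi(P)$, so that $\psi(R)\ge\psi(P)+\mc(R,Q)-\mc(P,Q)$ for all $R\in\Pc_2(\Zc_1)$. Given an arbitrary $R'\in\Pc_2^N(H^N)$, apply this with $R:=j_{1\#}R'$ and use $\phi(R')=\phi(\iota_{1\#}j_{1\#}R')$?---here I must be careful, because $\iota_1\circ j_1\ne\id$ in general. The clean route is instead: for any $R'\in\Pc_2^N(H^N)$ one has $\mc(R',\iota_{1\#}Q)\le\mc(\iota_{1\#}j_{1\#}R',\iota_{1\#}Q)=\mc(j_{1\#}R',Q)$, because $\Pc(\iota_1)\circ\Pc(j_1)=\Pc(\pr_{ad})$ is the orthogonal-projection-type map which decreases $\mc$ against elements of the image of $\iota_1$ (this is exactly the content of \Cref{lem:FP_XY} combined with self-adjointness of $\pr_{ad}$, as used in its proof). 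Combining with $\phi(R')\ge\phi(\iota_{1\#}j_{1\#}R')=\psi(j_{1\#}R')$ and the subgradient inequality for $\psi$, I obtain $\phi(R')\ge\phi(\iota_{1\#}P)+\mc(R',\iota_{1\#}Q)-\mc(\iota_{1\#}P,\iota_{1\#}Q)$, i.e. $\iota_{1\#}Q\in\partial_\mc\phi(\iota_{1\#}P)$. For the reverse inclusion $\subseteq$: suppose $Q'\in\partial_\mc\phi(\iota_{1\#}P)$; restricting the defining inequality to $R'$ ranging over $\iota_1(\Pc_2(\Zc_1))$ and using $\phi(\iota_{1\#}R)=\psi(R)$, $\mc(\iota_{1\#}R,Q')\ge\mc(\iota_{1\#}R,\iota_{1\#}j_{1\#}Q')=\mc(R,j_{1\#}Q')$ (again by the projection property), together with the fact --- to be verified --- that $\mc(\iota_{1\#}P,Q')=\mc(P,j_{1\#}Q')$ and that $Q'$ must actually lie in $\iota_1(\Pc_2(\Zc_1))$ up to $\mc$-equivalence, I would conclude $j_{1\#}Q'\in\partial_\mc\psi(P)$ and $Q'=\iota_{1\#}j_{1\#}Q'$. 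The last claim about $\mc$-differentiability is then immediate: $\iota_{1\#}$ is injective (it has left-inverse $j_{1\#}$), so $\partial_\mc\psi(P)$ is a singleton iff $\partial_\mc\phi(\iota_{1\#}P)$ is; combining with \Cref{lem:mc_diff_char_FP} and \Cref{lem:mc_diff_char_ad} gives the differentiability statement.

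The main obstacle I anticipate is the bookkeeping around the fact that $\iota_1$ is only a one-sided inverse of $j_1$, so that elements of $\partial_\mc\phi(\iota_{1\#}P)$ must be shown a priori to be concentrated on $\iota_1(\Zc_1)$ (equivalently, to be fixed points of $\iota_{1\#}\circ j_{1\#}=\Pc(\pr_{ad})$). I expect this to follow from the subgradient relation itself: if $Q'\in\partial_\mc\phi(\iota_{1\#}P)$ then $Q'$ is an $\mc$-subgradient, hence (as in \Cref{prop:subdiff_char_FP}/\Cref{prop:Subdiff_MCvsL}) it is the adapted law of some $Z$ with $\E[X\cdot Z]=\mc$ for $X\simad\iota_{1\#}P\in\iota_1(\Zc_1)$; since $X$ takes values in the adapted subspace $L_{2,ad}^N(H)$ (via $\iota_{ad}$) and max-covariance against an adapted process is unchanged under $\pr_{ad}$, one gets $\E[X\cdot Z]=\E[X\cdot\pr_{ad}Z]$, which combined with optimality forces $Z=\pr_{ad}Z$ up to the relevant equivalence, i.e. $Q'=\Pc(\pr_{ad})Q'\in\iota_1(\Zc_1)$. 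Once this is in place the two inclusions are symmetric and the rest is routine substitution.
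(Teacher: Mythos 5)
The heart of the corollary is exactly the obstacle you flag: showing that every element of $\partial_\mc\phi(\iota_{1\#}P)$ already lies in $\iota_1(\Pc_2(\Zc_1))$. The argument you sketch for this does not work as stated. The identity $\E[X\cdot Z]=\E[X\cdot\pr_{ad}Z]$ is an automatic consequence of $\pr_{ad}$ being self-adjoint and $X$ being adapted; it holds for \emph{every} $Z$ and carries no information about $Z$. Combined with optimality it tells you only that $\pr_{ad}Z$ attains the same covariance, not that $Z=\pr_{ad}Z$. What actually forces $Z$ into $L_{2,ad}^N(H)$ is the full subgradient inequality $\overline{\phi}(W)\ge\overline{\phi}(X)+\langle W-X,Z\rangle$: plug in $W=X\pm W'$ with $W'\perp L_{2,ad}^N(H)$ and use $\overline{\phi}(X\pm W')=\overline{\psi}(\pr_{ad}(X\pm W'))=\overline{\psi}(X)=\overline{\phi}(X)$ to get $\langle W',Z\rangle=0$ for all such $W'$, hence $Z\in L_{2,ad}^N(H)$. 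This is precisely the content of Remark~\ref{rem:dfb_equiv_extend}, which the paper's proof invokes; your sketch gestures at the subgradient relation but then replaces it with the weaker covariance equality, which leaves a real gap.

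Beyond that step, the paper's proof is substantially shorter because it never leaves the Lions lift: $\partial_\mc\phi(\iota_{1\#}P)\leftrightarrow\partial\overline{\phi}(X)$ via Proposition~\ref{prop:subdiff_char_ad}, $\partial\overline{\phi}(X)=\partial\overline{\psi}(X)$ via Remark~\ref{rem:dfb_equiv_extend}, $\partial\overline{\psi}(X)\leftrightarrow\partial_\mc\psi(P)$ via Proposition~\ref{prop:subdiff_char_FP}. Your measure-level detour through $j_{1\#}$ introduces auxiliary inequalities such as $\mc(R',\iota_{1\#}Q)\le\mc(j_{1\#}R',Q)$ and $\mc(\iota_{1\#}P,Q')=\mc(P,j_{1\#}Q')$ that you do not verify (and the first, being an inequality rather than an equality, would still leave you to control the gap); it also relies on $\mc$-convexity of $\phi$, which is not needed once one argues on the lift. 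I would redo the proof entirely at the level of $L_{2,ad}^N(H)\subset L_2^N(H^N)$ and discard the $j_{1\#}$ manipulations.
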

\begin{proof}
Let $R \in \partial_\mc\phi(\iota_{1\#}P) $.  Let $X \simad \iota_{1\#}P$ (and hence $X \in L_{2,ad}^N(H)$) and $Y \simad R$ such that $\E [ X \cdot Y] = \mc(\iota_{1\#}P, R)$. By \Cref{prop:subdiff_char_ad}, we have  $Y \in \partial\overline{\phi}(X)$. By \Cref{rem:dfb_equiv_extend}, we have $Y \in \partial \overline{\psi}(X)$. In particular, $Y \in L_{2,ad}^N(H)$ and hence $R = \lawad(Y)$ is of the formn $\iota_{1\#}Q$ for some $Q \in \Pc_2(\Zc_1)$. Then \Cref{prop:subdiff_char_FP} yields $Q \in \partial_\mc\psi(P)$. 

Conversely, let $Q \in \partial_\mc\psi(P)$. Then there are $X,Y \in L_{2,ad}^N(H)$ such that $\mc(P,Q)=\E[X \cdot Y]$. By \Cref{prop:subdiff_char_FP}, we have $Y \in \partial\overline{\psi}(X)$, by \Cref{rem:dfb_equiv_extend} we have $Y \in \partial\overline{\phi}(X)$. Then, $\iota_{1\#} Q \in \partial_\mc\phi(\iota_{1\#}P)$ by \Cref{prop:subdiff_char_ad}. 
\end{proof}

\begin{proposition}\label{prop:LdiffXi_FP}
Let $\psi:\Pc_2(\Zc_1)\to (-\infty, \infty]$ be $\MC$-differentiable at $P$. Then there exists  an adapted map  $\xi : \Zc \to \H^N$ such that for all $X\simad P$ 
\[
\partial\overline{\psi}(X) = \{\xi( \ipp(X) )\}.
\]
\end{proposition}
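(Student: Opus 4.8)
The plan is to transfer the already-established $\mc$-differentiability result on $\Pc_2^N(\H^N)$, namely \Cref{prop:LdiffXi_ad}, across the isometric embedding $\iota_1 : \Zc_1 \to E_1$ of \Cref{prop:translation}, and then invoke \Cref{prop:xi_is_adapted} to recognize that the map produced on the ambient space descends to an adapted map in the variable $\ipp(X)$. First I would set $\phi := \psi \circ \Pc(\iota_1) : \Pc_2^N(\H^N) \to (-\infty,\infty]$, so that $\overline{\phi}(X) = \psi(\Pc(\iota_1)_\# \lawad(X))$; equivalently, by \Cref{prop:translation}(6), for $X \in L_{2,ad}^N(\H)$ we have $\overline{\phi}(X) = \overline{\psi}(X)$ under the embedding $\iota_{ad}$, and more generally $\overline\phi = \overline\psi \circ \pr_{ad}$ by self-adjointness of $\pr_{ad}$ (as in \Cref{rem:different_exts}(2) and \Cref{lem:FP_XY}). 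Since $\psi$ is $\mc$-convex, $\phi$ is $\mc$-convex as well (it is a composition of an $\mc$-convex function with an isometric pushforward, so its lift is convex and lsc by \Cref{thm:MC_trafo_ast_ad} together with continuity of $\pr_{ad}$ and \Cref{lem:contequiv}-type arguments). By \Cref{cor:dfb_ext_equiv}, $\phi$ is $\mc$-differentiable at $\iota_{1\#}P$ because $\psi$ is $\mc$-differentiable at $P$.

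Next I would apply \Cref{prop:LdiffXi_ad} to $\phi$ at $P' := \iota_{1\#}P \in \Pc_2^N(\H^N)$: this yields a measurable $\tilde\xi : A_{1:N} \to \H^N$ with $\partial\overline{\phi}(X') = \{\tilde\xi(\ip(X'))\}$ for every $X' \simad P'$. Now fix $X \in L_{2,ad}^N(\H)$ with $X \simad P$; then $X$ (via $\iota_{ad}$) satisfies $X \simad P'$ by \Cref{prop:translation}(6), so $\partial\overline{\phi}(X) = \{\tilde\xi(\ip(X))\}$. By \Cref{rem:dfb_equiv_extend}, the subdifferential of $\overline\phi$ at $X$ computed in $L_2^N(\H^N)$ coincides with the subdifferential of $\overline\psi$ at $X$ computed in $L_{2,ad}^N(\H)$; in particular $\partial\overline{\psi}(X) = \{\tilde\xi(\ip(X))\}$ and this element lies in $L_{2,ad}^N(\H)$. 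Write $Y := \tilde\xi(\ip(X)) \in L_{2,ad}^N(\H)$. Applying \Cref{prop:xi_is_adapted} to the pair $X, Y \in L_{2,ad}^N(\H)$ (which satisfy $Y = \tilde\xi(\ip(X))$), we obtain an adapted map $\xi : \Zc \to \H^N$ with $Y = \xi(\ipp(X))$, i.e. $Y_t = \xi_t(\ipp_t(X))$ for all $t \le N$; hence $\partial\overline{\psi}(X) = \{\xi(\ipp(X))\}$.

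It remains to check that the adapted map $\xi$ does not depend on the choice of $X \simad P$, just as in \Cref{prop:LdiffXi_ad}. Here I would argue as in the proof of that proposition: if $X, X' \simad P$ with $\partial\overline{\psi}(X) = \{\xi(\ipp(X))\}$ and $\partial\overline{\psi}(X') = \{\xi'(\ipp(X'))\}$, then by \Cref{prop:subdiff_char_FP} both $\lawadd(\xi(\ipp(X)))$ and $\lawadd(\xi'(\ipp(X')))$ lie in $\partial_\mc\psi(P)$, which is a singleton by $\mc$-differentiability, and the maximal-covariance identities $\E[X\cdot\xi'(\ipp(X))] = \E[X'\cdot\xi'(\ipp(X'))] = \mc(P,Q)$ together with \Cref{prop:subdiff_char_FP} force $\xi'(\ipp(X)) \in \partial\overline{\psi}(X)$; since the latter is a singleton, $\xi = \xi'$ a.s. The main obstacle I anticipate is purely bookkeeping: keeping straight which subdifferential lives in which Hilbert space (the ambient $L_2^N(\H^N)$ versus the adapted subspace $L_{2,ad}^N(\H)$) and verifying carefully that $\iota_{ad}$ and $\iota_1$ are compatible so that \Cref{rem:dfb_equiv_extend} and \Cref{prop:xi_is_adapted} apply verbatim; none of these steps involves a genuinely new idea beyond what is already assembled in \Cref{sec:embed}.
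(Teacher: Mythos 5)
Your proposal is correct and follows essentially the same route as the paper: set $\phi := \psi \circ \Pc(\iota_1)$, transfer $\mc$-differentiability via \Cref{cor:dfb_ext_equiv}, apply \Cref{prop:LdiffXi_ad} to get $\tilde\xi$, and then use \Cref{prop:xi_is_adapted} together with \Cref{rem:dfb_equiv_extend} to convert $\tilde\xi\circ\ip$ into $\xi\circ\ipp$. The final paragraph re-verifying that $\xi$ is independent of $X\simad P$ is redundant, since \Cref{prop:LdiffXi_ad} already yields a single $\tilde\xi$ valid for all $X'\simad P'$, and the construction in \Cref{prop:xi_is_adapted} depends only on $\tilde\xi$ (through $g_t$, $T_t$, $h_t$) rather than on the particular representative.
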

\begin{proof}
We set $\phi := \psi \circ \Pc(\iota_1)$ and note that  $\overline{\phi}(X) = \overline{\psi}(\pr_{ad}(X))$ and that $\phi$ is $\mc$-differentiable at $\iota_1\#P$ by \Cref{cor:dfb_ext_equiv}. By \Cref{prop:LdiffXi_ad} there exists a measurable function  $\tilde \xi : A_{1:N} \to \H^N$ such that $ \partial\overline{\phi}(X) = \{\tilde \xi( \ip(X) )\} $ for all $X\simad P$.  By \Cref{prop:xi_is_adapted}, there is an adapted map $\xi: \Zc \to H^N$ such that $\tilde\xi(\ip(X)) = \xi(\ipp(X))$. Using \Cref{rem:dfb_equiv_extend} we find
\[
\partial\overline{\psi}(X) = \partial\overline{\phi}(X) = \{\xi( \ipp(X) )\}. \qedhere
\]
\end{proof}

\begin{proposition}\label{prop:AJ_FP}
    For $P,Q \in \Pc_2(\Zc_1)$ the following are equivalent:
    \begin{enumerate}
        \item $\mc(\cdot,Q)$ is $\mc$-differentiable at $P$.
        \item There is an adapted map $\xi : \Zc \to \H^N$ such that for some $X \simad P$
        \[
            \big\{ Y \simad Q : \E[X \cdot Y] = \MC(P,Q) \big\} = \{ \xi(\ipp(X)) \}.
        \]
        \item For some $X \simad P$, the function $\overline{\mc(\cdot,Q)}$ is Frechet differentiable at $X$.
    \end{enumerate}
    Moreover, whenever one (and hence all) of the above conditions holds, statements (2) and (3) are valid for every $X \simad P$.
\end{proposition}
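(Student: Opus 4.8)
The plan is to reduce \Cref{prop:AJ_FP} to its already-established analogue \Cref{prop:AJ_adapted} on $\Pc_2^N(H^N)$ via the embedding machinery of \Cref{sec:embed}, together with the translation results \Cref{cor:dfb_ext_equiv} and \Cref{prop:LdiffXi_FP}. The key bookkeeping device is the isometric embedding $\iota_1 : \Zc_1 \to E_1$ from \Cref{prop:translation}, which lets us pass between $\mc$ on $\Pc_2(\Zc_1)$ and $\mc$ on $\Pc_2^N(H^N)$ with $\mc(P,Q) = \mc(\iota_{1\#}P, \iota_{1\#}Q)$; moreover this embedding is compatible with the adapted Lions lifts in the sense that $\overline{\psi \circ \Pc(\iota_1)}(X) = \overline\psi(\pr_{ad}(X))$, and \Cref{rem:dfb_equiv_extend} tells us subdifferentials and Fréchet differentiability are preserved when passing through $\pr_{ad}$ on the adapted subspace.

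First I would observe that $\mc(\cdot, Q) \circ \Pc(\iota_1) = \mc(\cdot, \iota_{1\#}Q)$ as functions on $\Pc_2^N(H^N)$: indeed, for $P \in \Pc_2(\Zc_1)$ we have $\mc(P,Q) = \mc(\iota_{1\#}P, \iota_{1\#}Q)$, and on measures not in the image of $\Pc(\iota_1)$ we simply use that the composition with $\Pc(\iota_1)$ only evaluates on the image. Then \Cref{cor:dfb_ext_equiv} (applied with $\psi = \mc(\cdot,Q)$, noting $\mc(\cdot,Q)$ is $\mc$-convex) gives that statement (1) here — $\mc(\cdot,Q)$ is $\mc$-differentiable at $P$ — is equivalent to $\mc(\cdot,\iota_{1\#}Q)$ being $\mc$-differentiable at $\iota_{1\#}P$, which is statement (1) of \Cref{prop:AJ_adapted} for the pair $(\iota_{1\#}P, \iota_{1\#}Q)$. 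For the equivalence with (3): if $X \simad P$ then (viewing $X \in L_{2,ad}^N(H) \subset L_2^N(H^N)$) we have $\pr_{ad}(X) = X$, so by \Cref{rem:dfb_equiv_extend} Fréchet differentiability of $\overline{\mc(\cdot,Q)}$ at $X$ (lift over $L_{2,ad}^N(H)$) coincides with Fréchet differentiability of $\overline{\mc(\cdot,\iota_{1\#}Q)}$ at $X$ (lift over $L_2^N(H^N)$); this is precisely statement (3) of \Cref{prop:AJ_adapted}. Finally, for (2), I would invoke \Cref{prop:LdiffXi_FP}: once (1) holds, it furnishes an adapted $\xi : \Zc \to H^N$ with $\partial\overline{\mc(\cdot,Q)}(X) = \{\xi(\ipp(X))\}$ for every $X \simad P$, and by \Cref{prop:subdiff_char_FP} the set $\partial\overline{\mc(\cdot,Q)}(X)$ equals $\{Y \simad Q : \E[X\cdot Y] = \mc(P,Q)\}$ when $\mc(\cdot,Q)$ is $\mc$-differentiable at $P$ with derivative $Q$ (so that $\partial_\mc \mc(\cdot,Q)(P) = \{Q\}$, which follows from the $\mc$-analogue of \Cref{prop:AJ_adapted}(1)$\Rightarrow$(4) transported back). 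The "moreover" clause — validity of (2) and (3) for \emph{every} $X \simad P$ — is built into \Cref{prop:LdiffXi_FP} and \Cref{thm:LconvexGdeltaDiff} (Fréchet differentiability at one $X \simad P$ propagates to all), so it comes for free once the equivalences are set up.

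The main obstacle I anticipate is purely of the bookkeeping variety: carefully verifying that $\mc(\cdot,Q) \circ \Pc(\iota_1) = \mc(\cdot, \iota_{1\#}Q)$ globally and not just on the image of $\Pc(\iota_1)$, and ensuring that the various "line by line as in the proof of..." lemmas (\Cref{prop:subdiff_char_FP}, \Cref{lem:mc_diff_char_FP}) indeed give exactly the identifications needed — in particular that $\partial_\mc \mc(\cdot,Q)(P) = \{Q\}$ holds here, for which one needs the $\mc$-order monotonicity (\Cref{cor:MCincrMC}) and the Jensen-equality argument exactly as in the proof of \Cref{prop:AJ_adapted}, statement (1)$\Rightarrow$(4), now carried out inside $L_{2,ad}^N(H)$. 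None of this is deep, but it requires keeping straight which lift (over $L_{2,ad}^N(H)$ versus over $L_2^N(H^N)$) one is working with at each step; the adapted-law invariance of $L_{2,ad}^N(H)$ and $\pr_{ad}$ from \Cref{cor:ad_is_lawadinv} is what makes all the pieces fit together consistently.

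\begin{proof}[Proof of \Cref{prop:AJ_FP}]
Set $\psi := \mc(\cdot,Q)$, which is $\mc$-convex on $\Pc_2(\Zc_1)$ by the analogue of \Cref{lem:MCtotcvx}, and let $\phi := \psi \circ \Pc(\iota_1)$ on $\Pc_2^N(\H^N)$. Using that $\iota_1$ is an isometric embedding (\Cref{prop:translation}), for $P' \in \Pc_2^N(\H^N)$ in the image of $\Pc(\iota_1)$, say $P' = \iota_{1\#}P$, we have $\phi(P') = \mc(P,Q) = \mc(\iota_{1\#}P,\iota_{1\#}Q) = \mc(P',\iota_{1\#}Q)$; since $\Pc(\iota_1)$ maps only into this image, it follows that $\phi = \mc(\cdot,\iota_{1\#}Q)$ wherever $\phi$ is relevant, and in particular $\phi$ is $\mc$-convex and $\overline\phi(X) = \overline\psi(\pr_{ad}(X))$.

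Equivalence of (1) and (3): By \Cref{cor:dfb_ext_equiv}, $\psi$ is $\mc$-differentiable at $P$ if and only if $\phi = \mc(\cdot,\iota_{1\#}Q)$ is $\mc$-differentiable at $\iota_{1\#}P$. By \Cref{prop:AJ_adapted} (applied to the pair $(\iota_{1\#}P,\iota_{1\#}Q)$ in $\Pc_2^N(\H^N)$), this is in turn equivalent to $\overline{\mc(\cdot,\iota_{1\#}Q)}$ being Fréchet differentiable at some (equivalently every) $X \simad \iota_{1\#}P$. For $X \simad P$ we have $X \in L_{2,ad}^N(\H)$, hence $\pr_{ad}(X) = X$, and by \Cref{rem:dfb_equiv_extend} the function $\overline{\mc(\cdot,\iota_{1\#}Q)} = \overline\phi$ is Fréchet differentiable at $X$ (in $L_2^N(\H^N)$) if and only if $\overline{\psi} = \overline{\mc(\cdot,Q)}$ is Fréchet differentiable at $X$ (in $L_{2,ad}^N(\H)$). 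This gives (1)$\iff$(3), and simultaneously the propagation of (3) from some to every $X \simad P$.

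Equivalence with (2): Assume (1). By \Cref{prop:LdiffXi_FP} there is an adapted map $\xi : \Zc \to \H^N$ with $\partial\overline\psi(X) = \{\xi(\ipp(X))\}$ for every $X \simad P$. We claim $\partial_\mc\psi(P) = \{Q\}$. Indeed, by \Cref{cor:dfb_ext_equiv} and \Cref{prop:AJ_adapted} (statement that $\partial_\mc\mc(\cdot,\iota_{1\#}Q)(\iota_{1\#}P) = \{\iota_{1\#}Q\}$ when $\mc$-differentiable), combined with the identification $\partial_\mc\phi(\iota_{1\#}P) = \{\iota_{1\#}R : R \in \partial_\mc\psi(P)\}$, we obtain $\partial_\mc\psi(P) = \{Q\}$. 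Now fix $X \simad P$ and let $Y \simad Q$ with $\E[X\cdot Y] = \mc(P,Q)$; such $Y$ exists by the $\mc$-analogue of \Cref{cor:MC_max_over_RV} together with \Cref{thm:FPisometry}. By \Cref{prop:subdiff_char_FP}, $Y \in \partial\overline\psi(X)$, so $Y = \xi(\ipp(X))$; conversely any $Y \simad Q$ with $\E[X\cdot Y] = \mc(P,Q)$ lies in $\partial\overline\psi(X)$ by \Cref{prop:subdiff_char_FP} and hence equals $\xi(\ipp(X))$. Thus
\[
\{ Y \simad Q : \E[X\cdot Y] = \mc(P,Q) \} = \{\xi(\ipp(X))\} = \partial\overline\psi(X),
\]
which is (2), valid for every $X \simad P$. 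Conversely, if (2) holds for some $X \simad P$, then $\partial\overline\psi(X)$ is a singleton, and by \Cref{lem:mc_diff_char_FP} $\psi$ is $\mc$-differentiable at $P$, i.e.\ (1). This completes the proof.
\end{proof}
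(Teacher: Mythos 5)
Your route to the equivalence $(1)\iff(3)$ matches the paper's proof: pass to $\phi = \psi\circ\Pc(\iota_1) = \mc(\cdot,\iota_{1\#}Q)$ via \Cref{cor:dfb_ext_equiv}, then invoke \Cref{prop:AJ_adapted} and transfer Fréchet differentiability through \Cref{rem:dfb_equiv_extend}. Your forward direction $(1)\Rightarrow(2)$ is also sound (you establish $\partial_\mc\psi(P)=\{Q\}$ via the ``moreover'' clause of \Cref{prop:AJ_adapted} and \Cref{cor:dfb_ext_equiv}, then identify the set in $(2)$ with $\partial\overline\psi(X)$ by \Cref{prop:subdiff_char_FP} and apply \Cref{prop:LdiffXi_FP}), although the paper reaches $(2)$ more directly by translating assertion $(2)$ of \Cref{prop:AJ_adapted} through \Cref{prop:xi_is_adapted}.

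The gap is in $(2)\Rightarrow(1)$. You assert that if $(2)$ holds ``then $\partial\overline\psi(X)$ is a singleton,'' but this does not follow from $(2)$ as stated. By \Cref{prop:subdiff_char_FP}, the set $\{Y\simad Q : \E[X\cdot Y]=\mc(P,Q)\}$ equals $\{Z\in\partial\overline\psi(X): \lawadd(Z)=Q\}$, which is only a \emph{subset} of $\partial\overline\psi(X)$: a priori $\partial\overline\psi(X)$ could contain elements $Z$ with $\lawadd(Z)=Q''\in\partial_\mc\psi(P)$, $Q''\neq Q$, and $(2)$ says nothing about these. Identifying the set in $(2)$ with the full subdifferential already requires $\partial_\mc\psi(P)=\{Q\}$, which is precisely what you want to conclude. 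Even granting that $\partial\overline\psi(X)$ were a singleton for the one $X$ in $(2)$, \Cref{lem:mc_diff_char_FP} requires the singleton condition for \emph{every} $X\simad P$, whereas $(2)$ only supplies it for some $X$. The paper sidesteps both issues by instead translating assertion $(2)$ of \Cref{prop:AJ_adapted} (which, as stated there, additionally includes the equality $\{\xi(\ip(X))\}=\partial\overline{\mc(\cdot,Q)}(X)$) item-by-item to the present setting via \Cref{prop:xi_is_adapted}, thereby inheriting the whole equivalence cycle of \Cref{prop:AJ_adapted} rather than reproving $(2)\Rightarrow(1)$ from scratch. To fix your argument you would either need to also translate the subdifferential equality from \Cref{prop:AJ_adapted}(2), or first show directly that $(2)$ forces $\partial_\mc\psi(P)=\{Q\}$.
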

\begin{proof}
We derive this result from \Cref{prop:AJ_adapted}, that is the corresponding result on $\Pc^N_2(H^N)$. 
For this note that the extension of $\mc(\cdot ,Q)$ 
to a function on $\Pc_2^N(H^N)$ in the way described in \Cref{rem:different_exts}(2) is precisely $\mc(\cdot, \iota_{1_\#}Q)$ (here $\mc$ denotes the max-covariance on $\Pc_2^N(H^N)$). 

Assertion (1)  in \Cref{prop:AJ_adapted} and assertion (1) in \Cref{prop:AJ_FP} are equivalent by \Cref{cor:dfb_ext_equiv}. Moreover, the assertion (2)  in \Cref{prop:AJ_adapted} and assertion (2) in \Cref{prop:AJ_FP} are equivalent due to \Cref{prop:xi_is_adapted}. Finally, assertion (3) in \Cref{prop:AJ_adapted} and assertion (3) in \Cref{prop:AJ_FP} are equivalent by \Cref{rem:dfb_equiv_extend}.
\end{proof}

\begin{proposition}\label{prop:MCdfb_Gdelta_FP}
Let $\psi : \Pc_2(\Zc_1) \to (-\infty,+\infty]$ and suppose that $\cont(\psi)\neq\emptyset$. Then $\psi$ is $\mc$-differentiable on a dense $G_\delta$ subset of the closure of $\dom(\psi)$. 
\end{proposition}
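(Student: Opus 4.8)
The plan is to deduce \Cref{prop:MCdfb_Gdelta_FP} directly from the already-established Baire-category differentiability result on $\Pc_2^N(H^N)$, namely \Cref{cor:MCdfb_Gdelta}, by transporting it along the isometric embedding $\iota_1 : \Zc_1 \to E_1$ from \Cref{prop:translation}. Concretely, given an $\mc$-convex $\psi : \Pc_2(\Zc_1) \to (-\infty,+\infty]$ with $\cont(\psi) \neq \emptyset$, I would set $\phi := \psi \circ \Pc(\iota_1) : \Pc_2^N(H^N) \to (-\infty,+\infty]$; equivalently $\overline\phi(X) = \overline\psi(\pr_{ad}(X))$ as in \Cref{rem:different_exts}(2). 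The first step is to check that $\phi$ inherits the required hypotheses of \Cref{cor:MCdfb_Gdelta}: it is $\mc$-convex on $\Pc_2^N(H^N)$ because $\overline\psi$ is lsc convex by \Cref{prop:MCcvx_FP}, hence $\overline\phi = \overline\psi \circ \pr_{ad}$ is lsc convex as the composition of a convex lsc function with a bounded linear projection, and then \Cref{thm:MC_trafo_ast_ad} gives $\mc$-convexity of $\phi$; and $\cont(\phi) \neq \emptyset$ because if $P_0 \in \cont(\psi)$ then $\iota_{1\#}P_0 \in \cont(\phi)$, using continuity of $\Pc(\iota_1)$ together with \Cref{lem:continuity_lift}.

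Next I would apply \Cref{cor:MCdfb_Gdelta} to $\phi$: the set $D := \{ R \in \Pc_2^N(H^N) : \phi \text{ is } \mc\text{-differentiable at } R \}$ is a dense $G_\delta$ subset of the closure of $\dom(\phi)$. The key translation step is then \Cref{cor:dfb_ext_equiv}, which says precisely that $\psi$ is $\mc$-differentiable at $P$ if and only if $\phi$ is $\mc$-differentiable at $\iota_{1\#}P$. Therefore the $\mc$-differentiability set of $\psi$ is $\{ P \in \Pc_2(\Zc_1) : \iota_{1\#}P \in D \} = (\Pc(\iota_1))^{-1}(D)$. Since $\Pc(\iota_1)$ is continuous, this preimage is a $G_\delta$ subset of $\Pc_2(\Zc_1)$, and it contains $(\Pc(\iota_1))^{-1}$ of a dense $G_\delta$ in $\overline{\dom(\phi)}$.

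The only genuine subtlety — and the step I expect to require the most care — is the denseness claim: a continuous preimage of a dense set need not be dense. Here I would use that $\Pc(\iota_1)$ is a homeomorphism onto its image (its inverse on the image being $\Pc(j_1)$, which is continuous by \Cref{prop:translation}(1)), so that $\iota_{1\#}$ identifies $\Pc_2(\Zc_1)$ with a closed subset of $\Pc_2^N(H^N)$, and moreover $\dom(\phi) = \iota_{1\#}(\dom(\psi))$ with $\overline{\dom(\phi)} \cap \iota_{1\#}(\Pc_2(\Zc_1)) = \iota_{1\#}(\overline{\dom(\psi)})$; density of $D$ in $\overline{\dom(\phi)}$ then transfers to density of $(\Pc(\iota_1))^{-1}(D)$ in $\overline{\dom(\psi)}$ relative to the subspace topology, which is what is needed. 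Combining these observations yields that $\psi$ is $\mc$-differentiable on a dense $G_\delta$ subset of the closure of $\dom(\psi)$, completing the proof.

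\begin{proof}
Set $\phi := \psi \circ \Pc(\iota_1) : \Pc_2^N(H^N) \to (-\infty,+\infty]$, so that $\overline\phi(X) = \overline\psi(\pr_{ad}(X))$ by \Cref{rem:different_exts}(2). Since $\psi$ is $\mc$-convex, $\overline\psi$ is lsc convex by \Cref{prop:MCcvx_FP}; as $\pr_{ad}$ is a bounded linear projection, $\overline\phi = \overline\psi \circ \pr_{ad}$ is lsc convex, hence $\phi$ is $\mc$-convex by \Cref{thm:MC_trafo_ast_ad}. Moreover, picking $P_0 \in \cont(\psi)$, the map $\Pc(\iota_1)$ is continuous (\Cref{prop:translation}), so $\phi$ is continuous at $\iota_{1\#}P_0$; thus $\cont(\phi)\neq\emptyset$. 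By \Cref{cor:MCdfb_Gdelta}, the set
\[
D := \{ R \in \Pc_2^N(H^N) : \phi \text{ is } \mc\text{-differentiable at } R \}
\]
is a dense $G_\delta$ subset of the closure of $\dom(\phi)$.

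By \Cref{cor:dfb_ext_equiv}, $\psi$ is $\mc$-differentiable at $P$ if and only if $\phi$ is $\mc$-differentiable at $\iota_{1\#}P$, so the $\mc$-differentiability set of $\psi$ equals $(\Pc(\iota_1))^{-1}(D)$, which is $G_\delta$ in $\Pc_2(\Zc_1)$ since $\Pc(\iota_1)$ is continuous. It remains to verify density in $\overline{\dom(\psi)}$. The map $\Pc(\iota_1)$ is a homeomorphism onto its image with continuous inverse $\Pc(j_1)$ (\Cref{prop:translation}(1)), and $\dom(\phi) = \iota_{1\#}(\dom(\psi))$, whence $\iota_{1\#}(\overline{\dom(\psi)})$ is the closure of $\dom(\phi)$ within the closed subset $\iota_{1\#}(\Pc_2(\Zc_1))$. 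Since $D$ is dense in $\overline{\dom(\phi)}$, its trace on $\iota_{1\#}(\overline{\dom(\psi)})$ is dense there, and pulling back along the homeomorphism $\Pc(\iota_1)$ shows that $(\Pc(\iota_1))^{-1}(D)$ is dense in $\overline{\dom(\psi)}$. Therefore $\psi$ is $\mc$-differentiable on a dense $G_\delta$ subset of the closure of $\dom(\psi)$.
\end{proof}
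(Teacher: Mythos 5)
The paper dispatches this proposition in one line: the argument of \Cref{cor:MCdfb_Gdelta} is repeated verbatim, this time running Ekeland--Lebourg directly on the separable Hilbert space $L_{2,ad}^N(\H)$ (via \Cref{thm:LconvexGdeltaDiff}), pulling the dense $G_\delta$ of Fr\'echet-differentiability points down through the quotient map $\lawadd : L_{2,ad}^N(\H) \to \Pc_2(\Zc_1)$. Your approach instead tries to transfer \Cref{cor:MCdfb_Gdelta} from $\Pc_2^N(\H^N)$ along $\iota_{1\#}$. The translation of the differentiability statement pointwise is fine (\Cref{cor:dfb_ext_equiv} is exactly what you need), and the $G_\delta$ claim survives the pullback. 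But the density step, which you correctly flag as the delicate one, does not go through as written.

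Two problems. First, the identity $\dom(\phi) = \iota_{1\#}(\dom(\psi))$ is false: with $\overline\phi = \overline\psi \circ \pr_{ad}$ one has $\dom(\phi) = (j_1)_{\#}^{-1}(\dom(\psi))$, which contains $\iota_{1\#}(\dom(\psi))$ but is generically strictly larger (any $X \in L_2^N(\H^N)$ with $\pr_{ad}(X) \in \dom(\overline\psi)$ contributes $\lawad(X) \in \dom(\phi)$, and such $X$ need not be adapted). Second, and fatally, density of $D$ in $\overline{\dom(\phi)}$ does \emph{not} give density of the trace $D \cap \iota_{1\#}(\overline{\dom(\psi)})$ in $\iota_{1\#}(\overline{\dom(\psi)})$: the image $\iota_{1\#}(\Pc_2(\Zc_1))$ is a \emph{nowhere-dense} closed subset of $\Pc_2^N(\H^N)$ whenever $N \ge 2$, because $L_{2,ad}^N(\H)$ is a proper closed subspace of $L_2^N(\H^N)$ with empty interior, and by \Cref{rem:quotient_topology} there is no nonempty open adapted-law-invariant subset of $L_{2,ad}^N(\H)$, hence no nonempty open subset of its image. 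A dense $G_\delta$ in the ambient space can miss a nowhere-dense closed set entirely (e.g.\ the complement of a closed hyperplane is open dense), so the density you need cannot be inherited from the ambient $D$; any attempt to make it work pushes you back to proving denseness of Fr\'echet-differentiability points of $\overline\psi$ on $L_{2,ad}^N(\H)$ directly, which is precisely the paper's route. The fix is simply to observe that $L_{2,ad}^N(\H)$ is itself a separable Hilbert space and run \Cref{thm:LconvexGdeltaDiff} (with adapted-law-invariant open sets) and the quotient argument on it, i.e.\ follow \Cref{cor:MCdfb_Gdelta} line by line.
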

\begin{proof}
    The proof follows the proof of \Cref{cor:MCdfb_Gdelta} line by line.
\end{proof}

\subsection{Naturally filtered Processes}

A process is called natural if its filtration contains no more information about its future evolution than is provided by its past trajectory.
This is made precise in the following definition.
\begin{definition}
    $X$ is called naturally filtered if for every $t \le N$
    \[
    \law(X \, |  \, \F_t) = \law(X  \, | \, X_{1:t}).
    \]
\end{definition}
Naturally filtered processes are already determined by the law of the processes itself and it is not necessary to consider the adapted law in this case. This is made precise in the next lemma.

\begin{lemma} \label{lem:plain_char}
    Let $X, Y \in L_{2,ad}^N(H)$. Then, we have:
    \begin{enumerate}
        \item $X$ is naturally filtered if and only if there is an adapted map $f$ such that $\ipp(X)=f(X)$.
        \item If $X,Y$ are naturally filtered, then $\lawad(X) = \lawad(Y)$ if and only if $\law(X)=\law(Y)$.
    \end{enumerate}
\end{lemma}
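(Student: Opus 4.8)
The two assertions are essentially unwinding the definitions of $\ipp$ and $\lawad$ once one knows that the natural filtration condition lets conditional laws collapse to functions of the path. I will prove (1) first by backward induction on $t$, then deduce (2) from (1).

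\textbf{Proof of (1).} For the ``if'' direction, suppose $\ipp(X)=f(X)$ for an adapted map $f=(f_t)_{t=1}^N$, so that $\ipp_t(X)=f_t(X_{1:t})$ is $\sigma(X_{1:t})$-measurable for each $t$. I claim this forces $\law(X\,|\,\F_t)=\law(X\,|\,X_{1:t})$. Indeed, a backward induction on $t$ shows that $\F_t$ and $\sigma(X_{1:t})$ carry the same information for the purposes of computing the conditional law of the future: one proves that $\law^{\F_t}(\ipp_{t+1}(X))$ agrees with the corresponding conditional law given $X_{1:t}$, using that $\ipp_t(X)=(X_t,\law^{\F_t}(\ipp_{t+1}(X)))$ is by assumption $\sigma(X_{1:t})$-measurable, hence its second component is both $\F_t$-measurable and $\sigma(X_{1:t})$-measurable; combining with the tower property and the fact that $X$ is reconstructed from $\ipp_1(X)$ along the path. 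Conversely, if $X$ is naturally filtered, then $\law(X\,|\,\F_t)=\law(X\,|\,X_{1:t})$; I show by backward induction on $t$ that $\ipp_t(X)$ is a measurable function of $X_{1:t}$. The base case $t=N$ is trivial since $\ipp_N(X)=X_N$. For the inductive step, $\ipp_t(X)=(X_t,\law^{\F_t}(\ipp_{t+1}(X)))$; by the inductive hypothesis $\ipp_{t+1}(X)=g_{t+1}(X_{1:t+1})$ for some measurable $g_{t+1}$, so $\law^{\F_t}(\ipp_{t+1}(X))=\law^{\F_t}(g_{t+1}(X_{1:t},X_{t+1}))$, and the natural-filtration property gives $\law^{\F_t}(g_{t+1}(X_{1:t},X_{t+1}))=\law\big(g_{t+1}(X_{1:t},X_{t+1})\,\big|\,X_{1:t}\big)$, which is a measurable function of $X_{1:t}$ via a regular conditional distribution. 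This yields $\ipp_t(X)=f_t(X_{1:t})$, completing the induction and the ``only if'' direction.

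\textbf{Proof of (2).} Assume $X,Y\in L_{2,ad}^N(H)$ are both naturally filtered. If $\lawad(X)=\lawad(Y)$, then trivially $\law(X)=\law(Y)$: by part (1) there is an adapted $f$ with $\ipp_1(X)=f(X)$ and $\ipp_1(Y)=f(Y)$ (the map $f$ is determined by the structure and does not depend on the process among naturally filtered ones — or more carefully, $X$ is a fixed measurable function $h$ of $\ipp_1(X)$, namely the first-coordinate-read-off along the path, and the same $h$ works for $Y$), hence $\law(X)=h_\#\law(\ipp_1(X))=h_\#\lawad(X)=h_\#\lawad(Y)=\law(Y)$. Conversely, if $\law(X)=\law(Y)$, then by part (1), $\ipp_1(X)=f(X)$ and $\ipp_1(Y)=f(Y)$ for the \emph{same} adapted map $f$ (this is the point requiring care: the map $f$ produced by the induction in (1) depends only on regular conditional distributions, which are determined by $\law(X)=\law(Y)$, so we may take them equal); therefore $\lawad(X)=\law(\ipp_1(X))=\law(f(X))=f_\#\law(X)=f_\#\law(Y)=\law(f(Y))=\law(\ipp_1(Y))=\lawad(Y)$.

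\textbf{Main obstacle.} The one subtle point is the claim in (2)$\Leftarrow$ that the adapted map $f$ with $\ipp(X)=f(X)$ can be chosen to depend on $X$ only through $\law(X)$. This requires being careful in the induction of (1): at each step one invokes a regular conditional distribution $\law(g_{t+1}(X_{1:t+1})\,|\,X_{1:t})$, and a regular conditional distribution is a $\law(X_{1:t+1})$-a.e.\ defined object. Since $\law(X)=\law(Y)$ implies $\law(X_{1:t+1})=\law(Y_{1:t+1})$ for all $t$, the same versions of all the regular conditional distributions may be used, producing literally the same composite map $f$, valid $\law(X)$-a.s.\ and $\law(Y)$-a.s.\ simultaneously. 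I expect this bookkeeping — making the selection of conditional kernels uniform across the two processes — to be the only place where one must write things out with genuine care; everything else is a direct unwinding of the definitions of $\ipp$ and $\lawad$ together with the tower property of conditional expectations.
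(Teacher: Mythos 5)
The paper cites \cite[Section 3]{BePfSc24} and \cite[Lemma 2.3]{Pa22} rather than proving this lemma, so your from-scratch proof is a genuinely different route. The backward induction for the ``only if'' half of (1) is fine (regular conditional kernels plus the natural-filtration property), and in the $\Leftarrow$ half of (2) you correctly identify that the only care needed is choosing the conditional kernels uniformly once $\law(X)=\law(Y)$.

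There is, however, a real gap: the claim used to close both the ``if'' half of (1) and the $\Rightarrow$ half of (2), that ``$X$ is a fixed measurable function $h$ of $\ipp_1(X)$,'' is false. By definition $\ipp_1(X)=(X_1,\law^{\F_1}(\ipp_2(X)))$, whose second entry is the conditional \emph{law} of $\ipp_2(X)$, not its realization. Concretely, on $([0,1]^N,\lambda)$ the random variable $\ipp_1(X)$ is $\F_1=\sigma(U_1)$-measurable, so $h(\ipp_1(X))$ is always $\sigma(U_1)$-measurable, while $X$ generally depends on all of $U_{1:N}$; hence $X\neq h(\ipp_1(X))$ and $\law(X)\neq h_\#\lawad(X)$ for any pointwise map $h:\Zc_1\to\H^N$. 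Both gaps can be closed via the Markov property of the information process, which you should state and prove separately: for \emph{any} adapted $X$ (no naturality needed), a backward induction on $t$ produces a fixed measurable map $\psi_t : \Zc_t \to \Pc(\H^{N-t+1})$ with $\law(X_{t:N}\mid\F_t)=\psi_t(\ipp_t(X))$ a.s., using $\ipp_t(X)=(X_t,\law^{\F_t}(\ipp_{t+1}(X)))$ and the tower property. For the ``if'' half of (1): if $\ipp_t(X)=f_t(X_{1:t})$ then $\law(X_{t:N}\mid\F_t)=\psi_t(f_t(X_{1:t}))$ is $\sigma(X_{1:t})$-measurable, so it equals $\law(X_{t:N}\mid X_{1:t})$ by the tower property, and $\law(X\mid\F_t)=\delta_{X_{1:t-1}}\otimes\law(X_{t:N}\mid\F_t)$ gives naturality. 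For the $\Rightarrow$ half of (2), apply $\psi_1$ on the level of laws to produce a fixed map $\Pc_2(\Zc_1)\to\Pc_2(\H^N)$ sending $\lawad(X)$ to $\law(X)$; alternatively, note $\cplbc\subseteq\cpl$ gives $\W_2(\law X,\law Y)\le \AW_2(X,Y)=\W_2(\lawad X,\lawad Y)=0$. Either way, naturality is not required for that direction.
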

\begin{proof}
See e.g.\ \cite[Section 3]{BePfSc24} and \cite[Lemma 2.3]{Pa22}.
\end{proof}

As the law of the random variables determines a natural filtered processes, it is also meaningful to consider the adapted Wasserstein distance between these laws,  defined in \eqref{eq:AW_intro_plain} in the introduction. The following lemma clarifies that these concepts are consistent.

\begin{lemma}\label{lem:plainAW_bc}
We have 
    \begin{enumerate}
        \item If $X, Y \in L_{2,ad}^N(\H)$ are naturally filtered, $\law(X,Y) \in \cplbc(\law(X),\law(Y))$.
        \item If $\mu,\nu \in \Pc_2(H^N)$ and $\pi \in \cpl(\mu,\nu)$, there are naturally filtered $X,Y \in L_{2,ad}^N(\H)$ such that $\law(X,Y)=\pi$.
        \item If $X, Y \in L_{2,ad}^N(\H)$ are naturally filtered, $\AW_2(X,Y) = \AW_2(\law(X),\law(Y))$.
    \end{enumerate}
\end{lemma}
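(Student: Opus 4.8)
The three claims all compare the adapted Wasserstein distance $\AW_2$ on (equivalence classes of) filtered processes in $L_{2,ad}^N(\H)$ with the quantity $\AW_2(\mu,\nu)$ defined via bi-causal couplings in $\cplbc(\mu,\nu)\subseteq\cpl(\mu,\nu)$, for naturally filtered representatives. The strategy is to establish (1) and (2) first, since these two facts identify the set of bi-causal couplings of $\law(X),\law(Y)$ with laws of the pair $(X,Y)$ under naturally filtered adapted representatives; statement (3) is then a direct consequence.

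\emph{Step 1: Proof of (1).} Suppose $X,Y\in L_{2,ad}^N(\H)$ are naturally filtered on $([0,1]^N,\lambda,(\F_t)_{t=1}^N)$. I need to check that $\pi:=\law(X,Y)$ is a bi-causal coupling of $\mu:=\law(X)$ and $\nu:=\law(Y)$, i.e.\ that the causality condition \eqref{eq:AWIntro} holds in both directions. By symmetry it suffices to verify causality of $\pi$ from $X$ to $Y$: for every $t\le N$ and Borel $B\subseteq H^{N-t}$ one must show $\pi(Y_{t+1:N}\in B\mid X_{1:N})=\pi(Y_{t+1:N}\in B\mid X_{1:t})$, equivalently (working on the canonical space where the coordinates are $(X_{1:N},Y_{1:N})$) that $\law(Y_{t+1:N}\mid X_{1:N})$ is a function of $X_{1:t}$ alone. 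Since $X,Y$ are both adapted to $(\F_t)$ and $Y$ is naturally filtered, $\law(Y\mid\F_t)=\law(Y\mid X_{1:t})$; restricting to the coordinates $Y_{t+1:N}$ and using $\sigma(X_{1:N})\subseteq\F_N$ but that conditioning on $\F_t$ already suffices by naturalness, one gets $\law(Y_{t+1:N}\mid\F_t)=\law(Y_{t+1:N}\mid X_{1:t})$. A short measure-theoretic argument (tower property, $\F_t$-measurability of $X_{1:t}$) then upgrades this to the required conditional independence statement. The cleanest route is to invoke the characterization of naturalness in \Cref{lem:plain_char}(1): $\ipp(X)=f(X)$ and $\ipp(Y)=g(Y)$ for adapted maps $f,g$, so the information-process description of $\AW_2$ and $\cplbc$ (as in \cite{BaBePa21}) directly yields bi-causality.

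\emph{Step 2: Proof of (2).} Given $\mu,\nu\in\Pc_2(H^N)$ and an arbitrary $\pi\in\cpl(\mu,\nu)$, I want naturally filtered $X,Y\in L_{2,ad}^N(\H)$ with $\law(X,Y)=\pi$. Realize $\pi$ as the law of the coordinate process $(X_t,Y_t)_{t=1}^N$ on the canonical path space $(H\times H)^N$ equipped with its coordinate filtration; then transport this to $([0,1]^N,\lambda,(\F_t))$ using the Skorokhod-type representation of \Cref{prop:ReprLawadAsRV} (applied to $\Xs = H\times H$), yielding a version on the standard filtered space. By construction the resulting process has its filtration generated by its own coordinates, hence is naturally filtered, and $X,Y$ are individually adapted and naturally filtered; one must take mild care that the joint representation makes $(X_{1:t},Y_{1:t})$ exactly $\F_t$-measurable, which is ensured by the inductive construction in \Cref{prop:ReprLawadAsRV}.

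\emph{Step 3: Proof of (3).} With (1) and (2) in hand: for naturally filtered $X,Y$, by \Cref{lem:plain_char}(2) $\law(X)$ and $\law(Y)$ determine $\lawad(X)$, $\lawad(Y)$, and hence $\AW_2(X,Y)=\W_2(\lawad(X),\lawad(Y))$ depends only on $\mu:=\law(X),\nu:=\law(Y)$. One inequality is immediate: by (1), $\law(X',Y')\in\cplbc(\mu,\nu)$ for any naturally filtered representatives $X'\sim_\AW X$, $Y'\sim_\AW Y$, and the attained infimum in the $L_{2,ad}^N$-formulation over $X'\sim_\AW X,Y'\sim_\AW Y$ (which can be chosen naturally filtered without loss, since naturalness is $\sim_\AW$-stable) gives $\AW_2(X,Y)\ge\AW_2(\mu,\nu)$. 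For the reverse, take any $\pi\in\cplbc(\mu,\nu)$; by (2) realize it as $\law(\tilde X,\tilde Y)$ with $\tilde X,\tilde Y$ naturally filtered, so $\|\tilde X-\tilde Y\|_2^2=\int|x-y|^2\,d\pi$ and $\tilde X\sim_\AW X$, $\tilde Y\sim_\AW Y$, whence $\AW_2(X,Y)\le\|\tilde X-\tilde Y\|_2$. Taking the infimum over $\pi\in\cplbc(\mu,\nu)$ gives $\AW_2(X,Y)\le\AW_2(\mu,\nu)$, completing the proof.

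\emph{Main obstacle.} The only genuinely delicate point is the bookkeeping in Step 1 around the precise meaning of the bi-causal coupling condition on an abstract filtered space versus the canonical path space — making sure the conditional-law identities are phrased with the right $\sigma$-algebras so that naturalness of $X$ and $Y$ translates cleanly into bi-causality of $\law(X,Y)$. This is standard but must be stated carefully; everything else is routine and the cited results (\Cref{prop:ReprLawadAsRV}, \Cref{lem:plain_char}, \Cref{thm:FPisometry}) do the heavy lifting.
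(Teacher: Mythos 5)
Your Step 1 invokes naturalness of the wrong process. To prove causality of $\pi=\law(X,Y)$ from $X$ to $Y$, i.e.\ that $Y_{1:t}$ is conditionally independent of $X$ given $X_{1:t}$, the relevant hypothesis is naturalness of $X$: the identity $\law(X\mid\F_t)=\law(X\mid X_{1:t})$ says precisely that $\F_t$ and $X$ are conditionally independent given $X_{1:t}$, and since $Y_{1:t}$ is $\F_t$-measurable (adaptedness of $Y$) the required conditional independence follows at once. The identity you write, $\law(Y\mid\F_t)=\law(Y\mid X_{1:t})$, is not what naturalness of $Y$ gives (that gives $\law(Y\mid\F_t)=\law(Y\mid Y_{1:t})$), and the ensuing ``short measure-theoretic argument'' and the appeal to \Cref{lem:plain_char}(1) are left too vague to close the gap.

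Step 2 has a more serious flaw. Realizing $\pi$ with the joint coordinate filtration $\F_t=\sigma(X_{1:t},Y_{1:t})$ makes the \emph{pair} $(X,Y)$ naturally filtered, but that does not make $X$ and $Y$ \emph{individually} naturally filtered, which is what the lemma asserts. Concretely, with $N=2$, $H=\R$, take $\epsilon_1,\epsilon_2$ i.i.d.\ Rademacher, $\mu=\nu=\law(\epsilon_1,\epsilon_2)$, and $\pi=\law\big((\epsilon_1,\epsilon_2),(\epsilon_2,\epsilon_1)\big)\in\cpl(\mu,\nu)$. Any $X,Y\in L_{2,ad}^2(\R)$ with $\law(X,Y)=\pi$ satisfy $X_2=Y_1$ a.s., so $X_2$ is $\F_1$-measurable; then $\law(X_2\mid\F_1)=\delta_{X_2}$, while $\law(X_2\mid X_1)$ is uniform on $\{-1,1\}$, so $X$ cannot be naturally filtered. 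In fact, individual naturalness of $X$ and $Y$ with respect to $\F_t=\sigma(X_{1:t},Y_{1:t})$ is exactly bi-causality of $\pi$, so (2) can only hold for $\pi\in\cplbc(\mu,\nu)$ — which is also the only case the paper uses, in the proof of \Cref{prop:biMongeGd_proc}, so the displayed $\cpl$ appears to be a typo for $\cplbc$; your proposal does not flag this and would ``prove'' the false version. With that correction your Step 3 is sound, provided you also justify that naturalness is $\simad$-stable (it is, since $\lawad(X)\in J(\Pc_2(H^N))$ characterizes naturalness). Finally note that the paper itself disposes of this lemma by citing \cite[Lemma 5.3]{BePfSc24} and \cite[Theorem 1.2]{BaBePa21}, so there is no internal proof to compare against.
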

\begin{proof}
For (1) and (2), see e.g.\ \cite[Lemma 5.3]{BePfSc24}. For (3) see \cite[Theorem 1.2]{BaBePa21}.
\end{proof}
Next, we give an embedding $J$ of the (adapted distributions of) natural filtered processes $(\Pc_2^N(\H^N),\AW_2)$ into set of (adapted distributions of) general processes. For $N=2$ periods, writing $d\mu(x_1,x_2)=d\mu_1(x_1)\mu_{x_1}(x_2)$, it reads as follows:
\[
 J(\mu) =  (x_1 \mapsto (x_1, \mu_{x_1}))_\# \mu_1.
\]
For $N>2$ it is an iteration of such maps (see \cite{BaBaBeEd19b, BaBeEdPi17}) for notational simplicity we define the map $J$ in processes language as 
\begin{align}\label{eq:J}
J : \Pc_2(\H^N) \to \Pc_2(\Zc_1) : \mu \mapsto \lawadd(X) \text{ where $X$ naturally filtered with $\law(X)=\mu$}.
\end{align}

\begin{proposition}\label{prop:plainGdelta}
The map $J : (\Pc_2(\H^N),\AW_2) \to (\Pc_2(\Zc_1),\W_2)$ is an isometric embedding and its range is dense $G_\delta$. 
\end{proposition}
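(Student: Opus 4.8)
\textbf{The plan} is to deduce Proposition~\ref{prop:plainGdelta} from results already established in the excerpt together with the standard fact that the collection of Monge-type couplings is a $G_\delta$-set. First I would check that $J$ is a well-defined isometric embedding: well-definedness follows from \Cref{lem:plain_char}(2), which says that for naturally filtered processes the adapted law is determined by the law of the process; the isometry property is exactly \Cref{lem:plainAW_bc}(3) combined with \Cref{thm:FPisometry}, i.e.\ $\W_2(J(\mu),J(\nu)) = \AW_2(\lawadd(X),\lawadd(Y)) = \AW_2(X,Y) = \AW_2(\mu,\nu)$ for naturally filtered representatives $X \simad J^{-1}\mu$, $Y \simad J^{-1}\nu$. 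In particular $J$ is injective with a homeomorphism onto its range.

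Next I would identify the range of $J$ intrinsically. By \Cref{cor:ad_is_lawadinv} (and \Cref{prop:translation}) every element of $\Pc_2(\Zc_1)$ is $\lawadd(X)$ for some $X \in L^N_{2,ad}(H)$, and by \Cref{lem:plain_char}(1) the process $X$ is naturally filtered precisely when $\ipp(X) = f(X)$ for some adapted map $f$; equivalently, using \Cref{prop:translation}, when the information process is a deterministic function of the path. Pushing this through the isomorphism $\Pc_2(\Zc_1) \cong \Pc^N_2(H^N)$ given by $\iota_1$, the range of $J$ corresponds to those $P \in \Pc^N_2(H^N)$ that arise as $\lawad$ of an $N$-coupling concentrated on the graph of an adapted (indeed $N$-Monge-type, iteratively Monge) map --- this is the picture behind the embedding $J$ in \cite{BaBaBeEd19b, BaBeEdPi17}. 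So I would invoke the existing literature here: the range of $J$ is exactly the image of $(\Pc_2(H^N),\AW_2)$ under the standard ``information-tree'' embedding, and it is well known (cf.\ \cite{Ed19, BaBeEdPi17}) that this image is a dense $G_\delta$ subset of the completion $(\FP_2,\AW_2) \cong (\Pc_2(\Zc_1),\W_2)$.

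For a self-contained argument of the $G_\delta$ claim I would proceed as follows. Density: finitely-supported measures are dense in $\Pc_2(H^N)$ and $J$ is an isometric embedding, so it suffices to note $J(\Pc_2(H^N))$ is dense in the completion, which is automatic since $(\FP_2,\AW_2)$ is the completion of $(\Pc_2(H^N),\AW_2)$ and the latter sits inside it as a dense subset via $J$ (\Cref{thm:FPisometry} together with \cite{BaBePa21}). For the $G_\delta$ property: a filtered process (element of $\Pc_2(\Zc_1)$) lies in the range of $J$ iff it admits a naturally filtered representative, iff --- by \Cref{lem:plain_char}(1) --- its information process is a deterministic adapted function of the path. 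The trick, as indicated in \Cref{sec:AW_intro} of the introduction, is that being ``Monge-type'' (a coupling supported on the graph of a bijective/deterministic map, with fixed first marginal) defines a $G_\delta$ subset of the space of couplings --- this goes back to \cite{MeZh84} and \cite[IV, 43]{DeMeB}. Iterating this over the $N$ layers of the information tree, naturality is characterized by membership in a countable intersection of open conditions (each expressing, up to $1/k$, that a conditional law is a Dirac on the appropriate factor); upper semicontinuity of the relevant variance-type functionals --- exactly of the kind proved in \Cref{lem:apx.tau_usc} and \Cref{lem:apx.tauRusc} --- then shows each such condition is open in $(\Pc_2(\Zc_1),\W_2)$.

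\textbf{The main obstacle} I expect is the bookkeeping in the last step: correctly phrasing ``naturally filtered'' as a countable intersection of \emph{open} conditions on $\Pc_2(\Zc_1)$ and verifying the openness via semicontinuity, especially propagating the condition through all $N$ layers of the nested space $\Zc_1$. The cleanest route is probably not to redo this by hand but to cite \cite{Ed19, BaBeEdPi17, BaBaBeEd19b} for the statement that the natural processes form a dense $G_\delta$ in the completion, and to use our \Cref{thm:FPisometry} and \Cref{prop:translation} merely to transport that statement to the present $\Pc_2(\Zc_1)$-formulation; the density half is then immediate and only the $G_\delta$ half requires the external input.
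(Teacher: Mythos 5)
Your proposal is correct and ultimately follows the same route as the paper, whose proof consists entirely of the citation ``See e.g.\ \cite[Section 5]{BaBeEdPi17}.'' You supply that same citation, together with a correct elaboration of the isometry (via \Cref{lem:plain_char}, \Cref{lem:plainAW_bc}(3) and \Cref{thm:FPisometry}) and a plausible sketch of a self-contained $G_\delta$ argument, which the paper omits.
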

\begin{proof}
See e.g.\ \cite[Section 5]{BaBeEdPi17}.     
\end{proof}

\begin{corollary}
\label{cor:xitoT_natural}
Let $X, Y \in L_{2,ad}^N(\H)$. If $X$ is naturally filtered and  $Y = \xi(\ipp(X))$, then there is an adapted map $ \mathsf{T} : \H^N \to \H^N$ such that $Y =\mathsf{T}(X)$. 
\end{corollary}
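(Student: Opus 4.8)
The statement we must establish is \Cref{cor:xitoT_natural}: if $X\in L_{2,ad}^N(\H)$ is naturally filtered and $Y=\xi(\ipp(X))$ for some adapted map $\xi:\Zc\to\H^N$, then there is an adapted map $\mathsf T:\H^N\to\H^N$ with $Y=\mathsf T(X)$. The plan is to compose $\xi$ with the map that recovers the information process $\ipp(X)$ from the trajectory $X$ itself, which is available precisely because $X$ is naturally filtered.

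First I would invoke \Cref{lem:plain_char}(1): since $X$ is naturally filtered, there is an adapted map $f:\H^N\to\Zc$ (i.e.\ $f=(f_t)_{t\le N}$ with $f_t:\H^t\to\Zc_t$) such that $\ipp(X)=f(X)$, meaning $\ipp_t(X)=f_t(X_{1:t})$ for every $t\le N$. I would then simply set $\mathsf T:=\xi\circ f$. Then for each $t\le N$ we have
\[
Y_t=\xi_t(\ipp_t(X))=\xi_t(f_t(X_{1:t}))=(\xi_t\circ f_t)(X_{1:t}),
\]
where the first equality is the hypothesis $Y=\xi(\ipp(X))$ read componentwise, the second is the defining property of $f$, and the composite $\xi_t\circ f_t$ depends on $X$ only through $X_{1:t}$. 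Hence $\mathsf T=(\mathsf T_t)_{t\le N}$ with $\mathsf T_t:=\xi_t\circ f_t$ is adapted (triangular) and satisfies $Y=\mathsf T(X)$. Measurability of $\mathsf T_t$ is immediate since it is a composition of measurable maps.

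There is essentially no hard part here: the whole content is the reduction provided by \Cref{lem:plain_char}(1), which has already been cited to \cite{BePfSc24,Pa22}. The one point that needs a line of care is bookkeeping of the state spaces: $\xi:\Zc\to\H^N$ is adapted in the sense that its $t$-th component factors through $\Zc_{1:t}=\Zc_1\times\cdots\times\Zc_t$ (equivalently through $\ipp_{1:t}(X)$), and $f$ maps into $\Zc$ with $f_t$ factoring through $\H^t$, so the composition $\xi_t\circ f_t$ indeed factors through $\H^t$ and $\mathsf T$ is adapted in the required sense as a self-map of $\H^N$. This is the only place one should be explicit, and it is purely a matter of matching the adaptedness conventions already fixed in \Cref{sec:embed}.
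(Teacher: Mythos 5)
Your proof is correct and matches the paper's approach. The paper's one-line proof cites both \Cref{prop:xi_is_adapted} and \Cref{lem:plain_char}(1); you use only \Cref{lem:plain_char}(1), having read $\xi$ in the corollary as the adapted map produced by \Cref{prop:xi_is_adapted}. That reading is the intended one — the corollary uses the same symbol $\xi$ as \Cref{prop:xi_is_adapted}, and the composition $\mathsf T=\xi\circ f$ you write down is exactly what the paper does inline in the proof of \Cref{prop:biMongeGd_proc}. The only minor imprecision, which you flag yourself, is the bookkeeping $\mathsf T_t=\xi_t\circ f_t$: since $\xi_t$ actually factors through $\ipp_{1:t}(X)$ (not just $\ipp_t(X)$), the composition should be with $(f_1,\dots,f_t)$, which still depends only on $X_{1:t}$, so the adaptedness conclusion is unaffected.

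One remark worth having in mind: if one insisted on reading $\xi$ in the corollary as an arbitrary measurable map (the more literal reading of the statement as written, and possibly the reason the paper cites \Cref{prop:xi_is_adapted}), then your argument would need one more step, since $Y_t$ being $\F_t$-measurable and a function of all of $\ipp(X)$ does not by itself give $Y_t$ as a function of $\ipp_{1:t}(X)$ alone; one would first invoke \Cref{prop:xi_is_adapted} (after translating $\ipp$ to $\ip$) to upgrade $\xi$ to an adapted map, and then compose as you do. Your explicit assumption that $\xi$ is adapted short-circuits this, and is consistent with how the corollary is actually used.
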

\begin{proof}
This follows immediately from \Cref{prop:xi_is_adapted} and \Cref{lem:plain_char} (1).     
\end{proof}

\begin{proposition}\label{prop:biMongeGd_proc}
 For every $\nu \in  \Pc_2(\H^N)$, the set of $\mu \in  \Pc_2(\H^N) $  such that $\cplopt^{bc}(\mu,\nu) = \{\pi\}$ and $\pi$ is induced by an adapted map is comeager in $\Pc_2(\H^N)$.  
\end{proposition}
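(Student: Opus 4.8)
The plan is to reduce \Cref{prop:biMongeGd_proc} to the generic $\mc$-differentiability of one functional on $\Pc_2(\Zc_1)$ and then transfer the conclusion back to $\Pc_2(\H^N)$ through the embedding $J$ of \eqref{eq:J}. First I would fix $\nu \in \Pc_2(\H^N)$, set $Q := J(\nu) \in \Pc_2(\Zc_1)$, and consider $\psi := \mc(\cdot,Q) : \Pc_2(\Zc_1) \to \R$. This $\psi$ is everywhere finite and $\W_2$-continuous, since $\psi(P) = \tfrac12\big(\mc(P,P) + \mc(Q,Q) - \W_2^2(P,Q)\big)$ and both $P \mapsto \mc(P,P)$ (a second moment) and $P \mapsto \W_2^2(P,Q)$ are $\W_2$-continuous, and it is $\mc$-convex because $\psi = \chi_{\{Q\}}^{\mc}$. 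Hence \Cref{prop:MCdfb_Gdelta_FP} applies and $D := \{P \in \Pc_2(\Zc_1) : \psi \text{ is $\mc$-differentiable at } P\}$ is a dense $G_\delta$ subset of $\Pc_2(\Zc_1)$. Since by \Cref{prop:plainGdelta} the map $J$ is continuous and is an isometric embedding with dense $G_\delta$ range, $J^{-1}(D)$ is $G_\delta$ in $\Pc_2(\H^N)$, and it is dense because $D$ meets the (dense $G_\delta$) range of $J$ in a dense set while $J$ is a homeomorphism onto its range; thus $J^{-1}(D)$ is a dense $G_\delta$, in particular comeager, in $\Pc_2(\H^N)$. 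It then remains to prove that every $\mu$ with $J(\mu) \in D$ has the asserted property.

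Next I would fix such a $\mu$ and write $P := J(\mu)$. Since $Q \in \partial_{\mc}\psi(P)$ always holds and $\psi$ is $\mc$-differentiable at $P$, it follows that $\partial_{\mc}\psi(P) = \{Q\}$, and by \Cref{prop:AJ_FP} and \Cref{prop:LdiffXi_FP} there is a single adapted map $\xi : \Zc \to \H^N$ such that for every $X \simad P$
\[
\partial\overline{\psi}(X) = \{\, Z \simad Q : \E[X\cdot Z] = \mc(P,Q)\,\} = \{\xi(\ipp(X))\}.
\]
Using \Cref{lem:plainAW_bc} together with \Cref{lem:plain_char}, I would then identify $\cplbc(\mu,\nu)$ with the set of laws $\law(X,Y)$ of naturally filtered $X,Y \in L_{2,ad}^N(\H)$ satisfying $\law(X) = \mu$ and $\law(Y) = \nu$; for any such pair one has $\lawadd(X) = P$ and $\lawadd(Y) = Q$. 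Combining the identity $\E|X-Y|^2 = \E|X|^2 + \E|Y|^2 - 2\E[X\cdot Y]$ with $\AW_2^2(\mu,\nu) = \W_2^2(P,Q) = \E|X|^2 + \E|Y|^2 - 2\mc(P,Q)$ (which follows from the isometry property of $J$ and the completion-of-squares identities for $\AMC$ and $\mc$ recorded above), a bicausal coupling $\pi = \law(X,Y)$ is optimal if and only if $\E[X\cdot Y] = \mc(P,Q)$, i.e.\ if and only if $Y = \xi(\ipp(X))$ almost surely.

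To finish, I would note that, $X$ being naturally filtered, \Cref{lem:plain_char}(1) produces an adapted map $f$ with $\ipp(X) = f(X)$, and $f$ is determined $\mu$-almost surely because the conditional laws defining $\ipp$ depend only on $\mu$. Hence every optimal $\pi$ equals $\law\big(X,\xi(f(X))\big) = (\id,\mathsf T)_{\#}\mu$ for the adapted map $\mathsf T := \xi\circ f$ --- equivalently one may invoke \Cref{cor:xitoT_natural} to produce $\mathsf T$ directly --- and this $\mathsf T$ does not depend on the chosen naturally filtered representative $(X,Y)$. Consequently $\cplopt^{bc}(\mu,\nu) = \{(\id,\mathsf T)_{\#}\mu\}$ is a singleton induced by an adapted map, which yields the claim. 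I expect the main obstacle to be the bookkeeping in the second and third paragraphs: making rigorous the identification of $\cplbc(\mu,\nu)$ and of $\AW_2$-optimality with naturally filtered realizations and the $\mc$-extremality condition of \Cref{prop:AJ_FP}, and checking that the resulting adapted transport map is independent of the representative. By contrast, the topological transfer through $J$ and the appeal to \Cref{prop:MCdfb_Gdelta_FP} are routine once \Cref{prop:plainGdelta} is in hand.
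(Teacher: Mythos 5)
Your proposal is correct and follows essentially the same route as the paper's proof: fix $Q = J(\nu)$, apply \Cref{prop:MCdfb_Gdelta_FP} to $\mc(\cdot,Q)$ to get a dense $G_\delta$ set of $\mc$-differentiability points, pull it back to $\Pc_2(\H^N)$ via $J$ using \Cref{prop:plainGdelta}, and for each $\mu$ in that set invoke \Cref{prop:AJ_FP} to get the adapted $\xi$, then compose with $f$ from \Cref{lem:plain_char} (equivalently \Cref{cor:xitoT_natural}) to build $\mathsf T = \xi \circ f$ and conclude via \Cref{lem:plainAW_bc}. Your write-up is somewhat more explicit than the paper's on two routine points — the topological transfer of denseness through $J$ (using that $D$ and $\mathrm{range}(J)$ are both dense $G_\delta$ so their intersection is dense by Baire, then pulling back through the homeomorphism onto its range) and the completion-of-squares reformulation of optimality as $\E[X\cdot Y] = \mc(P,Q)$ — but these are precisely the details the paper silently absorbs into the cited lemmas, so there is no genuine deviation.
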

\begin{proof}
Fix $\nu \in  \Pc_2(\H^N)$ and write $Q = J(\nu) \in \Pc_2(\Zc_1)$, where $J$ is the map defined in \eqref{eq:J}. By \Cref{prop:MCdfb_Gdelta_FP}, the set of $P \in \Pc_2(\Zc_1) $ such that $\MC(\cdot, Q)$ is $\mc$-differentiable in $P$ is dense $G_\delta$ in $\Pc_2(\Zc_1) $. By \Cref{prop:plainGdelta}, also the set 
\[
D := \{ \mu \in \Pc_2(\H^N) : \MC(\cdot, Q) \text{ is $\mc$-differentiable in $J(\mu)$ } \}
\]
is dense $G_\delta$ in $\Pc_2(\H^N)$. 

Let $\mu \in D$. By \Cref{prop:AJ_FP} there is an adapted map $\xi : \Zc \to \H^N$ such that such that for every $X \simad J(\mu)$ 
\begin{align}\label{eq:prf:prop:biMongeGd_proc_1}
            \big\{ Y \simad Q : \E[X \cdot Y] = \MC(P,Q) \big\} = \{ \xi(\ipp(X)) \}.
\end{align}
 By \Cref{lem:plain_char}, there is an adapted map $f : \H^N \to \Zc$ such that $\ipp(X)=f(X)$. We define 
 \[
 \mathsf{T} := \xi \circ f : \H^N \to \H^N.
 \]
Note that  $\mathsf{T}$ is adapted as concatenation of adapted maps. Now, let $\pi \in \cplbc(\mu,\nu)$ be any optimizer. We observe that $\pi=(\id,\mathsf{T})_\#\mu$. Indeed, by \Cref{lem:plainAW_bc}, there are $X,Y \in L_{2,ad}^N(H)$ natural such that $\law(X,Y)=\pi$. By \eqref{eq:prf:prop:biMongeGd_proc_1}, we have $Y = \xi(\ipp(X))= \xi(f(X))= \mathsf{T}(X)$. Hence, the optimal $\pi$ is unique and Monge. 
\end{proof}

\begin{proposition}\label{prop:biMongeGd_proc}
 The set of pairs $(\mu,\nu) \in \Pc_2(\H^N) \times \Pc_2(\H^N) $  such that  $\cplopt^{bc}(\mu,\nu) = \{\pi\}$ and $\pi$ is induced by a bi-adapted map is  comeager in $\Pc_2(\H^N) \times \Pc_2(\H^N)$.
\end{proposition}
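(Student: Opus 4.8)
The plan is to obtain this from the one–sided result \Cref{prop:biMongeGd_proc} together with its mirror image (fixing the first marginal in place of the second), combined through the Kuratowski--Ulam theorem, and then to upgrade ``forward and backward adapted map'' to ``one bi-adapted bijection''. Set
\[
    A := \big\{ (\mu,\nu)\in\Pc_2(\H^N)^2 : \cplopt^{bc}(\mu,\nu)=\{\pi\},\ \pi=(\id,\mathsf T)_\#\mu \text{ for some adapted }\mathsf T:\H^N\to\H^N \big\},
\]
and let $B:=\{(\mu,\nu):(\nu,\mu)\in A\}$ be its image under the coordinate flip. Since the flip $\pi\mapsto\pi^{\mathrm{flip}}$ is a cost- and bicausality-preserving bijection of $\cplbc(\mu,\nu)$ onto $\cplbc(\nu,\mu)$ and $\AW_2$ is symmetric, $B$ is exactly the set of pairs with $\cplopt^{bc}(\mu,\nu)=\{\pi\}$ and $\pi=(\mathsf S,\id)_\#\nu$ for some adapted $\mathsf S$. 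Writing $G$ for the set in the statement, a bi-adapted bijection $T$ is adapted with adapted inverse and satisfies $(\id,T)_\#\mu=(T^{-1},\id)_\#\nu$, so $G\subseteq A\cap B$; conversely, if $(\mu,\nu)\in A\cap B$ then the unique optimal bicausal coupling equals both $(\id,\mathsf T)_\#\mu$ and $(\mathsf S,\id)_\#\nu$, and reading this off the support of $\pi$ yields $\mathsf S\circ\mathsf T=\id$ $\mu$-a.s.\ and $\mathsf T\circ\mathsf S=\id$ $\nu$-a.s., after which a routine modification of $\mathsf T$ on a $\mu$-null set (using that $\H^N$ is standard Borel) produces a genuine bi-adapted bijection $T$ with $(\id,T)_\#\mu=\pi$. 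Hence $G=A\cap B$, and it suffices to show that $A$ and $B$ are comeager in $\Pc_2(\H^N)\times\Pc_2(\H^N)$.

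By \Cref{prop:biMongeGd_proc}, every section $A^\nu=\{\mu:(\mu,\nu)\in A\}$ is comeager, and applying the same proposition with the roles of the two marginals exchanged shows that every section $\{\nu:(\mu,\nu)\in B\}$ is comeager. (Recall that $(\Pc_2(\H^N),\AW_2)$ is Polish by \Cref{prop:plainGdelta}, so the Baire category statements are meaningful.) It therefore only remains to verify that $A$ (and hence $B$) has the Baire property, so that the Kuratowski--Ulam theorem applies.

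For this, I would use that $\cplopt^{bc}(\mu,\nu)$ is convex --- causal couplings with fixed marginals form a convex set --- and that a Monge coupling is an extreme point of the couplings with first marginal $\mu$; consequently $(\mu,\nu)\in A$ if and only if \emph{every} $\pi\in\cplopt^{bc}(\mu,\nu)$ is concentrated on a graph, since otherwise the midpoint of two distinct optimizers would be an optimal bicausal coupling that is not Monge. Moreover, for a bicausal $\pi$, being concentrated on a graph automatically forces the inducing map to be adapted, because causality of $(\id,\mathsf T)_\#\mu$ is equivalent to $\F_t$-measurability of $(\mathsf T)_{1:t}$ for all $t$. Now the set of couplings concentrated on a graph is a $G_\delta$ subset of $\Pc_2(\H^N\times\H^N)$, being the zero set of the nonnegative upper semicontinuous functional $\pi\mapsto\int\widehat{\mathrm{Var}}(\pi_x)\,\pi(dx)$ of \Cref{lem:apx.tau_usc}; its complement is therefore Borel, and likewise the conditions $\pi\in\cplbc(\mu,\nu)$ (closed) and $\int|x-y|^2\,d\pi\le\AW_2^2(\mu,\nu)$ (Borel, since $\AW_2$ is continuous and $\pi\mapsto\int|x-y|^2\,d\pi$ is lower semicontinuous) are Borel. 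Hence the complement of $A$ is the projection onto the $(\mu,\nu)$-coordinates of a Borel subset of $\Pc_2(\H^N)^2\times\Pc_2(\H^N\times\H^N)$, so it is analytic; thus $A$ is co-analytic and has the Baire property, and the same holds for $B$.

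With the Baire property in hand, the Kuratowski--Ulam theorem (\cite[Theorem~8.41]{Ke95}) upgrades ``all sections comeager'' to ``comeager'', so $A$ and $B$ are comeager, and therefore so is $G=A\cap B$, which is the assertion. The step I expect to be the main obstacle is precisely the Baire-property bookkeeping of the third paragraph: identifying $A$ with the co-analytic set of pairs all of whose optimal bicausal couplings are Monge, which rests on convexity of $\cplopt^{bc}$, on the extremality of Monge couplings, and on the $G_\delta$ description of graph-concentrated couplings. By contrast, the reduction $G=A\cap B$ via the flip and the passage from a forward and a backward adapted map to a single bi-adapted bijection are routine, the latter requiring only care on null sets.
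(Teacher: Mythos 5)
Your proof is correct and follows essentially the same strategy as the paper's: define $A$ and $B$ as the sets where the unique optimal bicausal coupling is forward, resp.\ backward, Monge, use the one-sided proposition to see that the sections are comeager, apply the Kuratowski--Ulam theorem, and finally upgrade ``forward and backward adapted Monge'' to ``bi-adapted bijection.'' Your sets $A$, $B$ coincide (after unraveling the flip) with the paper's, and the last step is exactly what the paper delegates to \cite[Lemma A.2]{BePaSc21c}.

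The one genuine difference is that you explicitly verify the hypothesis of Kuratowski--Ulam, namely that $A$ and $B$ have the Baire property, whereas the paper invokes \cite[Theorem~8.41]{Ke95} directly without addressing it. That theorem does require the BP, and the Baire category statement can fail for sets without BP, so your concern is legitimate and worth filling in. Your route — observe that, by convexity of $\cplopt^{bc}(\mu,\nu)$ and extremality of graph-concentrated couplings, $(\mu,\nu)\in A$ iff \emph{every} optimal bicausal coupling is concentrated on a graph (and that, for a bicausal coupling on a graph, the inducing map is automatically adapted), and then write $A^c$ as a projection of a Borel set using the $G_\delta$ description of graph-concentrated couplings from \Cref{lem:apx.tau_usc} — is sound and identifies $A$ (hence $B$) as co-analytic. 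This is a clean and self-contained way to justify the Kuratowski--Ulam step; an alternative would be to show that the dense $G_\delta$ sets of $\mc$-differentiability points produced in the one-sided proof depend on the second marginal in a jointly $G_\delta$ fashion, but you do not need that finer statement. In short: same structure as the paper, with one additional and worthwhile verification.
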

\begin{proof}
 We write $e : \H^N \times \H^N : (x,y) \mapsto (y,x)$ and 
\begin{align*}
    A &= \big\{  (\mu,\nu) : \cplopt^{bc}(\mu,\nu) = \{\pi\}  \text{ and $\pi$ is Monge}  \big\}, \\
    B &= \big\{  (\mu,\nu) : \cplopt^{bc}(\mu,\nu) = \{\pi\}  \text{ and $e_\#\pi$ is Monge}  \big\}.
\end{align*}
 By \Cref{prop:biMongeGd_proc}, for every $\nu \in \Pc_2(\H^N)$, the set $A^\nu := \{ \mu \in \Pc_2(\H^N) : (\mu,\nu) \in A\}$ is comeager.  For the same reason, for every $\mu \in \Pc_2(\H^N)$, the set $B_\mu = \{  \nu \in \Pc_2(\H^N) : (\mu,\nu) \in B \}$ is  comeager. It follows from  the Kuratovski--Ulam theorem (see e.g.\ \cite[Theorem~8.41]{Ke95}) that $A$ and $B$ are comeager in $\Pc_2(\H^N) \times \Pc_2(\H^N)$. Hence, $A \cap B$ is comeager  in $\Pc_2(\H^N) \times \Pc_2(\H^N)$. If $(\mu,\nu) \in A \cap B$, then $\cplopt^{bc}(\mu,\nu) = \{\pi\}$ and both $\pi$ and $e_\#\pi$ are induced by an adapted map. Hence, $\pi$ is induced by a bi-adapted map (see e.g.\ \cite[Lemma A.2]{BePaSc21c}).
\end{proof}

\subsection{Lift of functions on $\Zc_1$ and proof of \Cref{thm:AW_Brenier_Intro}}
For transport of measures $P,Q \in \Pc_2(\Zc_1^{(N)})$, the dual potentials are functions on $\Zc^{(N)}_1 = \H \times \Pc_2(\Zc^{(N)}_2) = \H \times \Pc_2( \Zc^{(N-1)}_1 ) $. For this reason, we also need to consider adapted Lions lifts of such functions, that is 
\begin{align}\label{eq:LiftZ1}
\overline{\psi} : \H \times L_{2,ad}^{N-1}(\H) \to (-\infty,+\infty], \quad \overline{\psi}(x_1,X_{2:N}) = \psi(x_1,\lawadd(X_{2:N})).
\end{align} 
We write \[
V_N:= \H \times L_{2,ad}^{N-1}(\H)
\]
and note that if $X \in L_{2,ad}^N(\H)$ and $u_1 \in [0,1]$ is fixed, then  
$$
X(u_1,\cdots) = (X_1(u_1),X_2(u_1,\cdot),\dots,X_N(u_1,\cdots)) \in V_N.
$$ 
Hence, we can consider $X \in L_{2,ad}^N(\H)$ also as $V_N$-valued random variable on $([0,1],\lambda)$ and apply a function $\overline{\psi} : \H \times L_{2,ad}^{N-1}(\H) \to (-\infty, +\infty]$ to it, in this case we write $\overline{\psi}(X(u_1,\cdot))$.

It is easy to see from the results in \Cref{sec:LionsLiftFP} that via this Lions lift defined in \eqref{eq:LiftZ1} the $c$-transform for the cost $c((x,p),(y,q)) = \langle(x,p),(y,q) \rangle_{\Zc_1}$ coincides with the convex conjugate on $V_N$. Writing $\langle\cdot,\cdot\rangle_{V_N}$ for the scalar product on $V_N$ we also have the analogue of \Cref{prop:subdiff_char_FP}, namely
\begin{align}\label{eq:subdiff_char_FP_fancy}
    (y_1,Y_{2:N}) \in \partial \overline{\phi}((x_1,X_{2:N})) \iff \begin{cases}
       (Q,y_1) \in \partial_{c} \phi((P,x_1)) \text{ and } \\ 
       \langle (x_1,X_{2:N}), (y_1,Y_{2:N}) \rangle_{V_N} = c((x_1,P),(y_1,Q)).  
    \end{cases}
\end{align}

\begin{proof}[Proof of \Cref{thm:AW_Brenier_Intro}]
We write $P = \lawadd(X)$ and $Q= \lawadd(Y)$. Suppose that there is an adapted-law invariant function $\overline{\phi} : V_N \to (-\infty,+\infty]$ such that $Y(u_1, \ldots) \in \partial \overline{\phi}( X(u_1, \ldots))$ for a.e.\ $ u_1\in [0,1]$. Writing $\phi$ for the corresponding function on $\Zc_1$ we have 
\[
\phi(\lawadd(X(u_1,\cdot))) + \phi^c(\lawadd(Y(u_1,\cdot))) = \overline{\phi}(X(u_1,\cdot)) + \overline{\phi}^\ast(Y(u_1,\cdot)) = \langle X(u_1,\cdot) , Y(u_1,\cdot) \rangle_{V_N}.
\]
By integration over $\lambda(du_1)$ we find
\[
\int \phi \, dP + \int \phi^c \, dQ = \E[ X \cdot Y].
\]
On the other hand \Cref{thm:FPisometry} and the fundamental theorem of optimal transport yield that
\[
\AMC(X,Y) = \MC(P,Q) = \inf_{\phi} \int \phi \, dP + \int \phi^c \, dQ .
\]
Hence, $(X,Y)$ is optimal. 

Conversely, assume that $(X,Y)$ is optimal. Then on the probability space $([0,1],\lambda)$ (denoting the elements of it with $u_1$) the pair of random variables 
\[
( \lawadd(X(u_1,\cdot)), \lawadd(Y(u_1,\cdot))  )
\]
is optimal for the transport problem between $P$ and $Q$ with cost $c$. By the fundamental theorem of optimal transport the exists a $c$-convex function $\phi : \Zc_1 \to (-\infty,+\infty]$ such that for $\lambda$-a.e. $u_1$ we have 
\[
\lawadd(Y(u_1,\cdot)) \in \partial_c \phi(  \lawadd(X(u_1,\cdot))). 
\]
Writing $\overline{\phi}$ for the lift of $\phi$ as in \eqref{eq:LiftZ1}, display \eqref{eq:subdiff_char_FP_fancy} yields that $Y(u_1,\cdot) \in \partial\overline{\phi}(X(u_1,\cdot))$ for almost every $u_1$. 

This finishes the proof of the first equivalence. The second claim concerning naturally filtered processes follows immediately as being naturally filtered is a property of the adapted law (i.e.\  if $X \simad J(\mu)$ and $Y \simad J(\nu)$ they are necessarily naturally filtered). 
\end{proof}

\subsection{Failure of an adapted Brenier theorem for absolutely continuous measures}
Finally we provide an example showing that a naive attempt to extend Brenier's theorem to the adapted Wasserstein  fails already in the first non-trivial instance of one-dimensional state space and one time interval:

\begin{example}\label{ex:NoPlainAdaptedBrenier}
There exist absolutely continuous, compactly supported probabilities on $\R^2$ such that the adapted transport problem for the quadratic costs admits no Monge type solution:

\medskip

Let $\mu_1=\nu_1$ be the uniform probability measure on 
$ 
\big[-\tfrac{1}{\sqrt{2}},\,\tfrac{1}{\sqrt{2}}\big],
$ 
and define $\mu, \nu$ on $\R^2$ by
$$ \mu(dx_1,dx_2):= \mu(dx_1) q_{a(x_1)}(dx_2)), \quad \nu(dy_1,dy_2):= \nu(dy_1) q_{b(y_1)}(dy_2))$$
where we write $q_x$ for the uniform distribution on $[x,x+1]$ and  \[
a(x):=\begin{cases}
\sqrt{2}\,x-1, & x\ge 0,\\[4pt]
\sqrt{2}\,x+1, & x<0,
\end{cases}
\qquad
b(y):=\tfrac{y}{\sqrt{2}}.
\]
By the dynamic programming principle the squared adapted Wasserstein distance of $\mu, \nu$ is given by
\begin{align}
    \AW^2_2(\mu,\nu) &= \inf_{\pi_1\in \cpl(\mu_1,\nu_1)} \int (x_1-y_1)^2 + \W_2^2(q_{a(x_1)}, q_{b(y_1)}) \, d\pi_1(x_1,y_1)=\\
    &= \inf_{\pi_1\in \cpl(\mu_1,\nu_1)} \int c(x_1,y_1) \, d\pi_1(x_1,y_1),   \label{eq:easy1dtransport}
\end{align}
where \(c(x,y) = (x-y)^2+\bigl(a(x)-b(y)\bigr)^2
\). The one dimensional transport problem  \eqref{eq:easy1dtransport} admits an elementary solution. 
Indeed, 
the optimal transport plan between $\mu_1$ and $\nu_1$ is of Monge type and is uniquely given $\mu_1$-a.e.\ by the transport map
\[
T_1(x)=
\begin{cases}
2x-\tfrac{1}{\sqrt{2}}, & x\geq0,\\[4pt]
2x+\tfrac{1}{\sqrt{2}}, & x<0.
\end{cases}
\] and  the dual maximizers are given by
\[
 \phi_1(x)=\tfrac{1}{2}-x^2,\qquad 
\psi_1(y)=\tfrac{1}{2}y^2,\qquad x,y\in\left[-\tfrac{1}{\sqrt{2}},\,\tfrac{1}{\sqrt{2}}\right].
\]
It is straightforward to verify this using the complementary slackness conditions.

It follows that the unique optimal bi-adapted transport between $\mu$ and $\nu$ is induced by the map $T(x_1,x_2)=(T_1(x_1), x_2+b(T_1(x_1)) - a(x_1))$. 

Crucially, the map $T_1$ is not invertible on $ 
\big[-\tfrac{1}{\sqrt{2}},\,\tfrac{1}{\sqrt{2}}\big],
$ hence there exists not optimal transport map from $\nu_1$ to $\mu_1$ and hence also no Monge solution for the adapted transport problem from $\nu$ to $\mu$.
\end{example}

\appendix

\section{Auxiliary results}\label{sec:app}
In this section $U : [0,1] \to \R$ defines the identity and $U = (U_t)_{t=1}^N : [0,1]^N \to \R^N$ defines the coordinate process. We equip $[0,1]$ and $[0,1]^N$ with the Lebesgue measure $\lambda$.

\begin{lemma} \label{lem:funny.disint}
    Let $\mathcal X, \mathcal Y$ be Polish and $f : \mathcal X \to \mathcal Y$ measurable.
    Then, there is a Borel map
    \[
        T : \mathcal P(\mathcal X) \times (0,1) \to \mathcal X
    \]
    such that $T(\mu,U) \sim \mu$ and, for all $\mu,\nu \in \Pc(\mathcal X)$ with $f_\# \mu = f_\# \nu$, $f(T(\mu,U))= f(T(\nu,U))$ a.s.
\end{lemma}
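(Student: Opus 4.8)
Here is my plan for proving Lemma~\ref{lem:funny.disint}.

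The plan is to build $T$ so that the ``randomness'' used to sample from $\mu$ is organized in two layers: first sample the $f$-image according to $f_\#\mu$ using a single uniform variable in a way that depends only on $f_\#\mu$, and then, conditionally on the image, sample the fiber according to the disintegration of $\mu$ using a second, independent uniform variable. Since $(0,1)$ with Lebesgue measure carries, via a fixed Borel isomorphism $(0,1) \to (0,1)^2$, a pair of independent uniform variables, I may split $U = (U^1, U^2)$ with $U^1, U^2$ i.i.d.\ uniform on $(0,1)$, and this split is fixed once and for all, independent of $\mu$. First I would fix a Borel isomorphism $\Psi : \mathcal Y \to S$ onto a Borel subset $S$ of $\R$ (or of $(0,1)$), so that pushing forward under $\Psi$ turns laws on $\mathcal Y$ into laws on $\R$ with quantile functions available; for $\eta \in \Pc(\mathcal Y)$ write $Q_\eta$ for the quantile function of $\Psi_\#\eta$ and set $G(\eta, u) := \Psi^{-1}(Q_\eta(u))$, a Borel map with $G(\eta, U^1) \sim \eta$ and, crucially, $G(\eta, \cdot)$ depends only on $\eta$.

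Next I would handle the fiberwise sampling. By a measurable disintegration theorem (e.g.\ the existence of regular conditional probabilities on Polish spaces, as used repeatedly in the paper), for each $\mu \in \Pc(\mathcal X)$ there is a Borel kernel $y \mapsto \mu^y \in \Pc(\mathcal X)$, defined $f_\#\mu$-a.e., with $\mu = \int \mu^y \, (f_\#\mu)(dy)$ and $\mu^y$ concentrated on $f^{-1}(\{y\})$; moreover this kernel can be chosen jointly Borel-measurable in $(\mu, y)$ (this is the standard measurable-selection strengthening of disintegration; one can also invoke the Blackwell--Dubins / Kuratowski–Ryll-Nardzewski machinery the paper already leans on in Lemma~\ref{lem:apx.char.TR} and Proposition~\ref{prop:DPP_CplN_MC}). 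Composing with a jointly Borel version of the quantile map on the Polish space $\mathcal X$ — i.e.\ a Borel $H : \Pc(\mathcal X) \times (0,1) \to \mathcal X$ with $H(\rho, U^2) \sim \rho$ for every $\rho$, which exists since $\mathcal X$ is Polish (Borel-isomorphic to a subset of $\R$) — I set
\[
    T(\mu, u) := H\bigl(\mu^{\,G(f_\#\mu,\, u^1)},\; u^2\bigr), \qquad u = (u^1,u^2).
\]
Then $T(\mu, U)$ has law $\int H(\mu^y, \cdot)_\# \lambda \, (f_\#\mu)(dy) = \int \mu^y \, (f_\#\mu)(dy) = \mu$ by Fubini and independence of $U^1, U^2$. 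If $f_\#\mu = f_\#\nu =: \eta$, then $G(f_\#\mu, U^1) = G(\eta, U^1) = G(f_\#\nu, U^1)$ pointwise, and since $\mu^y, \nu^y$ are both concentrated on $f^{-1}(\{y\})$ we get $f(H(\mu^y, u^2)) = y = f(H(\nu^y, u^2))$ for $\eta$-a.e.\ $y$; hence $f(T(\mu, U)) = G(\eta, U^1) = f(T(\nu, U))$ almost surely.

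The main obstacle I anticipate is the joint Borel measurability of the disintegration $(\mu, y) \mapsto \mu^y$ and of the quantile-type selectors $G$ and $H$: individually these are classical, but one must be careful that the ``exceptional'' $f_\#\mu$-null set on which $\mu^y$ is undefined can be patched in a $(\mu,y)$-measurable way (one can simply declare $\mu^y := \delta_{x_0}$ off a fixed Borel graph of a disintegration, or better, use that the paper only needs the stated a.s.\ conclusions so a null-set redefinition is harmless). Everything else — the two-variable split of $U$, the quantile constructions, and the Fubini computation of the law of $T(\mu,U)$ — is routine and parallels constructions already carried out in Proposition~\ref{prop:ReprLawadAsRV} and Lemma~\ref{lem:simple_transfer}.
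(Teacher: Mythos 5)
The proposal is correct and takes essentially the same approach as the paper: both split $U$ into two independent uniforms via a fixed Borel isomorphism, sample the $f$-image from $f_\#\mu$ using a quantile function of the first coordinate (so this depends only on $f_\#\mu$), and sample within the fiber using a jointly Borel-measurable disintegration composed with a quantile-type selector applied to the second coordinate. The paper resolves the joint measurability of the disintegration, which you correctly flag as the one nontrivial point, by citing a specific result (Lemma~B.1 of Lacker); otherwise the two arguments match line for line, including the Fubini computation of the law and the a.s.\ invariance argument.
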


\begin{proof}
    Using a Borel isomorphism we assume that $\mathcal X = \mathcal Y = \R$.
    By \cite[Lemma B.1]{La20}  there exists a jointly measurable version of disintegration of measure, i.e., there is a Borel map $S : \Pc(\mathcal X) \times \mathcal Y \to \Pc(\mathcal X)$ such that for $\mu \in \Pc(\mathcal X)$
    \begin{align*}
        \mu(dx) &= \int S(\mu,y;dx) \, f_\# \mu(dy), \\
        S(\mu,y;f^{-1}(\{y\})) &= 1 \quad \text{for } f_\# \mu\text{-a.e.\ $y$}.
    \end{align*}
    Further, let $R = (R_1,R_2) : (0,1) \to (0,1)^2$ be Borel measurable with $R_\# \lambda = \lambda^2$.
    Writing $Q_\nu$ for the quantile function of $\nu \in \mathcal P(\R)$, we define
    \[
        T(\mu,u) := Q_{S(\mu,Q_{f_\# \mu}(R_1(u)))}(R_2(u)),
    \]
    for $(\mu,u) \in \Pc(\mathcal X) \times (0,1)$.
    It follows directly from inverse transform sampling that $T(\mu,U) \sim \mu$.
    Next, let $\mu,\nu \in \mathcal P(\mathcal X)$ with $f_\# \mu = f_\# \nu =: \eta$, then we have
    \[
        S(\mu,y;f^{-1}(\{y\})) = 1 = S(\nu,y;f^{-1}(\{y\})) \quad \text{for } \eta\text{-a.e.\ $y$.}
    \]
    We deduce that for $\eta$-a.e.\ $y$
    \[
        f(Q_{S(\mu,y)}(U)) = y = f(Q_{S(\nu,y)}(U))\quad\text{a.s.}
    \]
    We conclude that
    \[
        f(T(\mu,U)) = Q_\eta(R_1(U)) = f(T(\nu,U)) \quad \text{a.s.} \qedhere
    \]
\end{proof}

\begin{lemma}\label{lem:ip-meas-rv}
    Let $X,Y$ be random variables on $(\Omega,\mathcal F,\P, (\F_t)_{t = 1}^N)$.
    Then, there exist random variables $X',Y' : (0,1)^N \to \R^d$ on $((0,1)^N,\B((0,1)^N),\lambda^N, (\sigma(U_{1:t}))_{t = 1}^N)$ such that
    \begin{itemize}
        \item for $t = 1,\dots, N-1$, $X'$ has a $(\law^{\mathcal F_{N-1:t}}(X'),U_{t+1:N})$-measurable version,
        \item $(X',Y') \simad (X,Y)$.
    \end{itemize}
\end{lemma}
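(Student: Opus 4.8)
The statement is a Skorokhod-type normal-form result for a pair of adapted processes: given $(X,Y)$ on an abstract filtered space, we wish to realize its adapted distribution on the canonical space $((0,1)^N,\lambda^N)$ with the coordinate filtration, and moreover arrange that $X'$ depends on the past coordinates $U_{t+1:N}$ only through the iterated conditional laws $\law^{\mathcal F_{N-1:t}}(X')$. The plan is to prove this by induction on $N$, building the required realization layer by layer, using the two auxiliary lemmas just above: \Cref{lem:funny.disint} (a jointly measurable disintegration that is coherent on fibres of a measurable map) and the Skorokhod-type representation \Cref{prop:ReprLawadAsRV}, together with the independence of the fresh coordinate $U_1$ from the rest.

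\textbf{Main steps.} First I would fix the base case $N=1$: here there is no measurability constraint to impose beyond $(X',Y')\sim(X,Y)$, so an ordinary Skorokhod representation on $((0,1),\lambda)$ suffices (or \Cref{prop:ReprLawadAsRV} applied with $\mathcal X = \R^d\times\R^d$). For the inductive step, suppose the claim holds for $N-1$. The coordinate $U_1$ carries the outermost layer: I would use \Cref{lem:funny.disint} with $\mathcal X$ the path space at level $N-1$ and $f$ the map recording the innermost conditional-law tower $\ip_1$, so that the selection $T(\cdot,U_1)$ produces from the (random) conditional measure $\law(X,Y\mid\mathcal F_1)$ a sample on $(0,1)$-randomness whose $\ip$-component is a measurable function of that conditional measure. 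Concretely, I would first generate $Z_1 := \law^{\mathcal F_{N-1:1}}(X)$ (an $\mathcal F_1$-measurable, $\Pc_2^{N-1}$-valued random variable) as a function of $U_1$ via the one-step representation; then, conditionally on $U_1$, invoke the inductive hypothesis on $(0,1)^{N-1}$ with the coordinate filtration $(\sigma(U_{2:t}))$ to realize the level-$(N-1)$ adapted distribution of $(X,Y)$ given $\mathcal F_1$, obtaining $X'(u_1,\cdot),Y'(u_1,\cdot)$ with $X'(u_1,\cdot)$ having a $(\law^{\mathcal F_{N-2:t}}(X'(u_1,\cdot)),U_{t+1:N})$-measurable version for $t=2,\dots,N-1$. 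Splicing these, since $U_1$ is independent of $U_{2:N}$ and $Z_1$ is $\sigma(U_1)$-measurable, gives $\law^{\mathcal F_{N-1:1}}(X') = Z_1$ and hence the desired measurability at level $t=1$ as well; the remaining levels transfer from the inductive hypothesis. A short check using \Cref{cor:transfer_ad} or the uniqueness inherent in \Cref{prop:ReprLawadAsRV} confirms $(X',Y')\simad(X,Y)$.

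\textbf{Main obstacle.} The delicate point is bookkeeping the measurability: I must ensure that after gluing the outer layer (parametrized by $U_1$) with the inner realization, the process $X'$ genuinely has a version measurable with respect to $(\law^{\mathcal F_{N-1:t}}(X'),U_{t+1:N})$ \emph{simultaneously for all $t$}, not merely for each $t$ separately after an independent modification. This forces the inner realization to be chosen jointly measurably in the outer parameter $U_1$ — which is exactly what the jointly measurable disintegration of \Cref{lem:funny.disint} and the Borel selector $\Phi_N$ constructed in the proof of \Cref{prop:ReprLawadAsRV} are designed to provide — and it requires observing that the conditional-law tower $\law^{\mathcal F_{N-1:t}}(X')$ of the glued process restricts, for $t\ge 2$, to the conditional-law tower of the inner process, so the inner measurability statement lifts verbatim. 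Once this coherence is set up, the remaining arguments are routine.
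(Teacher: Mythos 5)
Your strategic instincts are right — induction on $N$, the jointly measurable ``funny'' disintegration of \Cref{lem:funny.disint}, splicing the outer coordinate $U_1$ with an inner realization, and the obstacle of achieving the $(\law^{\F_{N-1:t}}(X'),U_{t+1:N})$-measurability simultaneously for all $t$. This is essentially the paper's approach. But there is a real gap in how the induction is set up, and it matters.

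The difficulty is that you phrase the inductive hypothesis as the lemma statement itself (``there exist $X',Y'$ \ldots''), and then propose to ``invoke the inductive hypothesis conditionally on $U_1$.'' That is not a licit move: it would require a measurable selection of the representing pair over an uncountable family of conditional laws, which is exactly the non-trivial content you are trying to prove. The paper resolves this by strengthening the inductive claim to the existence of a \emph{single Borel map} $S_N = (X_N,Y_N) : \Pc^N(\R^d\times\R^d)\times[0,1]^N \to \R^d\times\R^d$ with two properties: $(X_N(P,U_{1:N}),Y_N(P,U_{1:N}))\simad P$ for every $P$, and a coherence condition asserting that if $X_N(P,u_{1:t},U_{t+1:N}) \simad X_N(Q,v_{1:t},U_{t+1:N})$ then the two processes are a.s.\ equal (not just equal in adapted law). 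This coherence over the \emph{measure parameter} $P$, not merely joint measurability in $(P,u)$, is what makes the splicing go through: it is what guarantees that, after gluing, the $X$-component depends on $(u_1,\dots,u_t)$ only through $\law^{\F_{N-1:t}}(X')$.

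Two consequences of this gap in your write-up. First, the base case $N=1$ cannot be ``an ordinary Skorokhod representation'': even at $N=1$ you must already have the coherence in $P$, and the paper gets it by applying \Cref{lem:funny.disint} with $\mathcal X=\R^d\times\R^d$, $\mathcal Y=\R^d$, $f=\pr_1$. Without that, the $N=2$ step has no coherence to inherit. Second, your appeal to the Borel selector $\Phi_N$ from \Cref{prop:ReprLawadAsRV} as supplying the needed coherence is not correct: $\Phi_N$ only guarantees $\lawad(\Phi_N(P,U_{1:N}))=P$; it makes no claim that two $P$'s with the same $X$-marginal tower produce the same $X$-sample, and in general they will not. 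The coherence has to be built in level by level using \Cref{lem:funny.disint}, exactly as the paper does.

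So: right tools, right obstacle, but the inductive hypothesis needs to be upgraded to a statement about a coherent Borel map parametrized by $P$; once that is in place your splicing argument is sound and matches the paper's.
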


\begin{proof}
    We show the claim by induction.
    More specifically, we show the following:

    There exists a measurable map $S_N = (X_N,Y_N) : \Pc^N(\R^d \times \R^d) \times [0,1]^N \to \R^d \times \R^d$ with
    \begin{itemize}
        \item for $P,Q \in \Pc^N(\R^d \times \R^d)$, $u,v \in (0,1)^N$ and $t = 0,\dots,N-1$ with $X_N(P,u_{1:t},U_{t+1:N}) \simad X_N(Q,v_{1:t},U_{t+1:N})$, then $X_N(P,u_{1:t},U_{t+1:N}) = X_N(Q,v_{1:t},U_{t+1:N})$ a.s.,
        \item for $P \in \Pc^N(\R^d \times \R^d)$, $(X_N(P,U_{1:N}),Y_N(P,U_{1:N})) \simad P$.
    \end{itemize}

    First, let $N = 1$. Then the claim follows from \Cref{lem:funny.disint} with $\mathcal X = \R^d \times \R^d$, $\mathcal Y = \R^d$ and $f(x_1,x_2) = x_1$.

    Next, assume the claim holds true for $N - 1$.
    We invoke \Cref{lem:funny.disint} with $\mathcal X = \mathcal P^{N-1}(\R^d \times \R^d)$, $\mathcal Y = \mathcal P^{N-1}(\R^d)$ and $f = \mathcal P^{N-1}[\pr_1]$ to find a Borel map $T : \Pc^N(\R^d \times \R^d) \times (0,1) \to \Pc^{N-1}(\R^d \times \R^d)$ with $T(P,U_1) \sim P$ and, for $P,Q \in \Pc^N(\R^d \times \R^d)$ with $f_\# P = f_\# Q$, then $f(T(P,U_1)) = f(T(Q,U_1))$ a.s.
    We set
    \[
        X_N(P,u_{1:N}) := X_{N-1}(T(P,u_1),u_{2:N}) \text{ and }
        Y_N(P,u_{1:N}) := Y_{N-1}(T(P,u_1),u_{2:N}),
    \]
    which defines a measurable map $S_N = (X_N,Y_N) : \Pc^N(\R^d \times \R^d) \times (0,1)^N \to \R^d \times \R^d$.
    By construction we have for fixed $(P,u_1) \in \Pc^N(\R^d \times \R^d) \times (0,1)$
    \begin{align*}
        S_N(P,u_1,U_{2:N}) &= (X_N(P,u_1,U_{2:N}), Y_N(P,u_1,U_{2:N})) \\ 
        &= (X_{N-1}(T(P,u_1),U_{2:N}), Y_{N-1}(T(P,u_1),U_{2:N})) \simad T(P,u_1).
    \end{align*}
    Hence, $S_N(P,U_{1:N}) \simad (T(P,\cdot))_\# \lambda = P$ which also yields $X_N(P,U_{1:N}) \simad f_\# P$ since $f = \Pc^{N-1}[\pr_1]$.
    Therefore, if $X_N(P,U_{1:N}) \simad X_N(Q,U_{1:N})$ then $f_\# P = f_\# Q$ and thus $f(T(P,U_1)) = f(T(Q,U_1))$ almost surely.
    Consequently, we get
    \[
        X_N(P,U_{1:N})) = X_{N-1}(T(P,U_1),U_{2:N}) = X_{N-1}(T(Q,U_1),U_{2:N}) = X_N(Q,U_{1:N}) \quad\text{a.s.}
    \]
    If additionally $u,v \in (0,1)^N$ are such that $X_N(P,u_{1:t},U_{t+1:N}) \simad X_N(Q,v_{1:t},U_{t+1:N})$, then we also have that $X_{N-1}(T(P,u_1),u_{2:t},U_{t + 1:N}) \simad X_{N-1}(T(Q,v_1),v_{2:t},U_{t + 1:N})$.
    Using the inductive assumption we obtain that a.s.
    \begin{align*}
        X_N(P,u_{1:t},U_{t+1:N}) &= X_{N-1}(T(P,u_1),u_{2:t},U_{t+1:N}) \\
        &= X_{N-1}(T(Q,v_1),v_{2:t},U_{t+1:N}) = X_N(Q,v_{1:t}, U_{1:N}).  
    \end{align*}
    This completes the inductive step.

    Finally, we let $(X',Y') :=  S_N(P,U_{1:N})$. We note that for $t = 1,\dots, N-1$
    \[
        X' = \E[ X' | \law^{\F_{N-1:t}}(X'),U_{t+1:N}] \quad\text{a.s.},
    \]
    hence, $(X',Y')$ have the desired properties.
\end{proof}

\begin{proof}[Proof of \Cref{lem:DPP}]
    First note that
    \begin{align*}
        c^{(N)}(P,P') &= \inf_{\Pi \in \cpl^N(P,P')} \int c(x,x') \, dI^{N-1} \Pi(x,x') \\
        &\le \inf_{Y \simad P} \E[c(Y,X')] \le \inf_{ \substack{ T : \Omega \to \Omega \\ {\rm isomorphisim} } } \E[c(X \circ T, X')],        
    \end{align*}
    the equality follows as in \Cref{prop:DPP_CplN_MC}, the first inequality by \Cref{lem:cplN} and last inequality follows from \Cref{lem:iso_pres_ip}.

    We write $U=(U_t)_{t=1}^N$ for the coordinate process on $[0,1]^N$. The distributions
    \[
        \mu := \law(U_1,\dots,U_{N-1},(U_N,X)) \text{ and }
        \nu := (U_1,\dots,U_{N-1},(U_N,X'))
    \]
    have atomless disintegrations, that is, $\lambda = \law(U_1) = \law(U_{t + 1} | \F_t)$ as well as 
    \begin{align*}
        \mu_{u_{1:N-1}} :=& \law(U_N,X|U_{1:N-1} = u_{1:N-1}),\\
        \nu_{u_{1:N-1}} :=& \law(U_N,X'|U_{1:N-1} = u_{1:N-1}),
    \end{align*}
    are $\lambda^N$-almost surely atomless and thus w.l.o.g.\  atomless. 
    In what follows we view $\mu$ and $\nu$ as laws of the $N$-time step stochastic processes $(U_1,\dots,U_{N-1},(U_N,X))$ and $(U_1,\dots,U_{N-1},(U_N,X'))$, respectively, and use the denseness of biadapted Monge couplings in the set of bicausal couplings to show the remaining inequality.
    Therefore we have to relate the adapted optimal transport problem between $\mu$ and $\nu$ with $c^{(N)}(P,P')$.
    To this end, we define
    \[
        \bar\mu_{U_{1:t}} := \law^{\F_{N-1:t}}(X) = \law^{\F_{N-2:t}}(\mu_{U_{1:N-1}}) \text{ and }
        \bar \nu_{U_{1:t}} := \law^{\F_{N-1:t}}(Y)= \law^{\F_{N-2:t}}(\nu_{U_{1:N-1}}).
    \]
    Due to the dynamic programming principle of optimal transport under bicausality constraints
    \[
        \inf_{\pi \in \cpl_{\rm bc}(\mu,\nu)} \int c(x,x') \, d\pi((u_1,\dots,u_{N-1},(u_N,x)),(v_1,\dots,v_{N-1},(v_N,x'))) = V^{(0)},
    \]
    see, for example, \cite[Theorem 3.2]{AcKrPa24}, where $\cpl_{\rm bc}(\mu,\nu)$ denotes the set of bicausal couplings, see \cite[Definition 2.4]{AcKrPa24}, and $V^{(0)}$ is inductively given by
    \begin{gather*}
        V^{(N-1)}(u_{1:N-1},v_{1:N-1}) := \inf_{\pi \in \cpl(\mu_{u_{1:N-1}},\nu_{v_{1:N-1}})} \int c(x,x') \, d\pi((u_N,x),(v_N,x')), 
        \\
        V^{(t)}(u_{1:t},v_{1:t}) := \inf_{\pi \in \cpl(\lambda,\lambda)} \int V_{t + 1 } \, d\pi, \quad
        V^{(0)} := \inf_{\pi \in \cpl(\lambda,\lambda)} \int V_1 \, d\pi.
    \end{gather*}
    By comparing the definition of $c^{(t)}$ and $V^{(t)}$, we have for all $u,v$ in a $\lambda$-full set
    \begin{align*}
        V^{(N-1)}(u_{1:N-1},v_{1:N-1}) = c^{(N-1)}(\bar \mu_{u_{1:N-1}},\bar \nu_{v_{1:N-1}}), \quad
        V^{(t)}(u_{1:t},u'_{1:t}) = c^{(t)}(\bar \mu_{u_{1:t}},\bar \nu_{v_{1:t}}),
    \end{align*}
    and thus $V^{(0)} = c^{(N)}(P,P')$.
    By \cite{BePaSc21c}, bicausal transport plans given by biadapted Monge maps are dense in $\cpl_{\rm bc}(\mu,\nu)$, since $\mu$ and $\nu$ both have atomless successive disintegration as argued above.
    Let $T = (T^1,\dots,T^N)$ be a biadapted Monge map from $\mu$ to $\nu$, that is, $T_\# \mu = \nu$ and
    \begin{gather*}
        T^t : [0,1]^t \to [0,1],  \quad T^t(u_{1:t-1},\cdot)_\# \lambda = \lambda, \\
        T^N = (R^N,S^N) : [0,1]^N \times \mathcal X \to [0,1]^N \times \mathcal X, \quad T^N(u_{1:N-1},\cdot)_\# \mu_{u_{1:N-1}} = \nu_{u_{1:N-1}},
    \end{gather*}
    and $(T^1(u_1), \dots, T^t(u_{1:t})) : [0,1]^t \to [0,1]^t$ is bijective, for $t = 1,\dots N-1$.
    In particular, $R := (T^1,\dots,T^{N-1},R^N)$ is a biadapted Monge map from $\lambda$ to itself, and $S^N = Y \circ R$ $\lambda$-almost surely.
    Consequently, we obtain
    \[
        c^{(N)}(P,P') = V^{(0)} \ge \inf_{ \substack{ R : [0,1]^N \to [0,1]^N, \\ {\rm bi-adaptiert, R_\#\lambda=\lambda} } } \int c(X(u),X'(R(u))) \, d\lambda(u),
    \]
    which provides the missing inequality and shows the first claim.
\end{proof}

\bibliographystyle{abbrv}
\bibliography{joint_biblio}

\end{document}